 \newtheorem{thm}{THEOREM}[section]
 \newtheorem{lem}[thm]{LEMMA}
 \newtheorem{prop}[thm]{Proposition}
 \theoremstyle{definition}
 \newtheorem{defn}[thm]{Definition}
 \newtheorem{exam}[thm]{Example}
 \newtheorem{rem}[thm]{Remark}
 \numberwithin{equation}{section}
 \newcommand{\A}{\mathcal{A}}
\def\C{{\mathbf C}}
\def\ZZ{{\mathbb Z}}
\def\cC{{\mathcal C}}
\def\cD{\mathcal D}
\def\cE{{\mathcal E}}
\def\cG{{\mathcal G}}
\def\cO{{\mathcal O}}
\def\cP{{\mathcal P}}
\def\cR{{\mathcal R}}
\def\cS{{\mathcal S}}
\def\cT{{\mathcal T}}
\def\cZ{{\mathcal Z}}
\def\FF{{\mathbb F}}
\def\FF{{\mathbb F}}
\def\ZZ{{\mathbb Z}}
\def\be{\begin{equation}}
\def\ee{\end{equation}}
\def\ba{\begin{eqneqnarray}}
\def\ea{\end{eqneqnarray}}
\def\tilde{\widetilde}
\def\e1{\epsilon}
\def\FFl{\mathcal{A}_{\lambda}}
\def\A0{\stackrel{\circ}{\FFl}}
\def\o1{\omega}
\def\01{\Omega}
\def\c1{\gamma}
\def\g1{\Sigma}
\def\l1{\Lambda}
\def\v1{\varphi}
\def\d1{\delta}
\def\f1{\frac}
\def\t1{\theta}
\def\b1{\beta}
\def\bar{\overline}
\def\bs{\begin{eqneqnarray*}}
\def\es{\end{eqneqnarray*}}
\def\m1{\Theta}
\def\w1{\wedge}
\def\ee{\epsilon}
\begin{document}

\title{ Explicit Acyclic Models and (Co)Chain Operations}

\date{ }
\author{Greg Brumfiel and John Morgan }
\maketitle

\tableofcontents

\setcounter{section}{-1}
 {. }\\

\newpage
\addcontentsline{toc}{section}{PART I: Contractions and Construction of Chain Maps}

\section*{ I: Contractions and Construction of Chain Maps}

\section{Introduction}  The purpose of this elementary and largely expository four  part project is to record some observations about acyclic model methods and their use for constructing chain maps between chain complexes, related to an operadic approach to cochain operations and cohomology operations.  More specifically, we were originally  interested in making more explicit the construction of Steenrod operations for all primes and establishing  their properties at the cochain level.  Classically the development of cohomology operations  was carried out in the 1940's and 1950's.  Steenrod began the study using direct cochain constructions, the $\cup_i$ operations leading to the Steenrod Squares, but subsequently homotopy theoretical methods proved more powerful.  More recently, in the 1990's and early 2000's, certain classical methods of Steenrod were extended to give a general treatment of multivariable cochain operations using the machinery of operads,  [3], [19].  Although the authors of those works understood that they were generalizing early constructions of Steenrod for the Steenrod Squares, and providing a framework for a more explicit cochain level discussion of odd prime Steenrod $p^{th}$ power operations than was carried out in the 1950's, their primary interest was not necessarily Steenrod operations, but rather other problems involving cochain algebras  and operations for which operad machinery was relevant.\\

In the papers [22] and [7], cochain level proofs of the Cartan formula and Adem relations for Steenrod Squares were presented, using the operad methods.  Substantial difficulties remained for giving analogous cochain level proofs of the Cartan formula and the Adem relations for  odd prime Steenrod operations.  In Part IV of this project we plan to give explicit cochain level proofs of  properties of the Steenrod operations, including  additivity,  the odd prime Cartan formula, $P^0 = Id$, stability under suspension, and the Adem relations, using the operad methods.\\

However, in the process, we realized that many explicit chain maps involved in the operadic discussion could be `explained' using a universal procedure, based on classical  ideas behind acyclic model methods.  Our basic procedure and many examples are detailed in Part I of our project. This includes some clarification of  the classical Alexander-Whitney and Eilenberg-Zilber chain maps.  Our procedure also eventually includes the operad structure maps of the operads known as the Barratt-Eccles operad  $\cE$ and the surjection operad $\cS$,   and certain  morphisms between them $\cE \leftrightarrows \cS \to \cZ$, where $\cZ$ is the Eilenberg-Zilber operad. We briefly explain this terminology.\\

The Eilenberg-Zilber operad  is the $CoEnd$ operad in the functor category for the functor $N_*(X)$ of simplicial sets. That is, an element of $\cZ(n)$ means  a collection of graded module maps $N_*(X) \to N_*(X)^{\otimes n}$ for all $X$ that satisfies the natural transformation requirement for maps between simplicial sets. Each $\cZ(n)$ is a chain complex, with degree of elements being the degree of the corresponding graded module maps. A simple duality yields the Eilenberg-Zilber $End$ operad. An element of this $End$ operad in arity $n$ is a natural transformation of graded module maps for simplicial sets $N^*(X)^{\otimes n} \to N^*(X)$, that is, a natural multilinear cochain operation.\\

We study three isomorphic versions of the surjection operad $\cS$ in Part II. The original due to McClure and Smith [19], [20], a somewhat simpler one due to Berger and Fresse [3], [4], and a third due to  Adamaszek and Jones [1].  McClure and Smith discovered the chain complexes $\cS_*(n) $ as a suboperad of the Eilenberg-Zilber operad, that is, with an operad morphism  {\it injection} $\cS(n) \hookrightarrow \cZ(n)$. Berger and Fresse defined an operad morphism {\it surjection} $\cE(n) \twoheadrightarrow \cS(n)$, where the components $\cE(n)$ of the  Barratt-Eccles operad are the chains $N_*(E\Sigma_n)$ on the  standard MacLane model of a contractible free $\Sigma_n$ simplicial set.\\

The operad results are covered in Part III of the project.  Part II should be viewed as a mini-course that organizes in a self-contained manner the three versions of the surjection complexes, and establishes their basic properties  relatively painlessly.  In the end, we found the observations in Parts II and III interesting by themselves, and more useful than our original goals of explicit cochain level proofs of the Cartan formula and the Adem relations.\\

This paper consists of Parts I, II, and III of our project, which accounts for its length.  Parts II and III are largely independent of Part I, once some basic ideas about contractions{\footnote{A contraction of a complex is a null homotopy of the complex extended by an augmentation. So the chain complex is homotopy equivalent to the chain complex  of a point. Details are in Section 4.}  $h$ of  chain complexes with $h^2 = 0$, and their use for constructing chain maps, are absorbed from Part I.  We contemplated submitting three separate articles.  Instead, we strongly encourage readers to view  this long paper  as three separate papers, skip around, discover what is accomplished in the separate parts. Some of the technical results will be  used in Part IV to establish properties of Steenrod operations at the cochain level.  Part IV of our project will be submitted separately.\\

What is the point of  explicit cochain level arguments concerning cohomology operations that we pursue in Part IV?  The actual cochain formulas for Steenrod operations and relations between operations turn out to be hopelessly large to be of any use.  The homological and homotopy theoretical methods that were used to develop the Steenrod algebra in the 1950's were profound and exquisite.  The classical applications and computations in examples don't use underlying cochain formulas.  However, it seems relevant to clarify, for  simplicial sets or  topological spaces, exactly how the Steenrod operations on cohomology groups can be defined {\it explicitly} at the cochain level, and their properties understood using {\it explicit cochain arguments.}  We present an answer in Part IV,  using results from Parts I, II, and III, that supplement the original constructions of the 1950's.\\

Originally we believed cochain level formulas for  Steenrod operations and  for Adem relations between compositions of operations would be useful for studying explicit cochain level simplicial set models of two and three stage Postnikov towers, as in our papers [9], [10] on low dimensional Spin bordism and [11] on Pin$^-$ structures.\\

But it became clear that  was not going to go very far.  It is already rather difficult  to find cocycle formulas $k_1$ and $k_2$ for the first two $k$-invariants of a three stage Postnikov tower $E$.  This is where explicit cochain formulas for cohomology operations and relations between operations are required to produce simplicial set models for three stage Postnikov towers.  Among other things, one needs {\it names} for cochain operations that go far beyond Steenrod's two variable $\cup_i$ operations.   Once one has enough names, maps $X \to E$ for simplicial sets $X$  are then described by triples of cochains $(w,p,a)$ on $X$ with $da = 0, dp = k_1 = k(a)$ and $dw = k_2 = k(p, a)$.  The simplicial sets $E$ that we define using explicit cochain formulas for $k$-invariants are minmal Kan complexes.\\

But it is not enough to simply enumerate the simplicial maps $X \to E$ in this way.  One also wants to describe with  formulas the homotopy equivalence relation on triples $(w, p, a)$.  This becomes a harder problem at the cochain level.  Even if it is known that $E$ is an $H$-space, it is an added level of difficulty to describe with formulas an explicit  simplicial set product map $E \times E \to E$, and prove that it is homotopy commutative and associative.  Essentially one wants some kind of explicit simplicial presentation of the abelian group of homotopy classes of maps $[X, E]$.  In the case of a loop space $E = \Omega \widehat{E}$ one also wants to understand the isomorphism $[\Sigma X, \widehat{E}] \simeq [X, E]$ in terms of formulas involving cochain operations and cochain suspension.  Our papers on the Pontrjagin duals of 3 and 4 dimensional Spin bordism provide   examples that confirm how difficult all this gets.\footnote{Our results on Spin bordism are closely related to explicit simplicial set models for the first three stages of the Postnikov towers of $S^3, S^4, S^5$, mostly based on the Adem relation $Sq^2Sq^2 +S^3Sq^1 =0$. How much simpler could it get?  Viewing $K(\ZZ/8, n)$ as a three stage explicit simplicial tower with building blocks $K(\ZZ/2, n)$, based on the relation $Sq^1 Sq^1 = 0$, sounds simpler but is also provocatively complicated even for small $n$.} For $k$-stage Postnikov towers with $k > 3$ these problems seem almost hopeless.\\

The operad methods produce much more than an alternate development of Steenrod operations and their properties.  The full structure of the normalized cochain algebra $N^*(X, \ZZ)$ as an algebra over the Barratt-Eccles operad $\cE$ or the surjection operad $\cS$  actually determines the homotopy type of a finite type simply connected simplicial set $X$. Only a very small part of this structure is needed to define Steenrod operations and establish their properties at the cochain level.  Versions of the full theorem, including deep results relating  homotopy categories of spaces and homotopy categories of operad algebras, are often referred to as Mandell's Theorem [16], [17].  Much work on such issues was also carried out by Justin R. Smith [27], [28], [29].  Earlier attempts were made by the Russian mathematician  V. A. Smirnov [26]. Several researchers have contributed to further refinements.\\

Before proceeding we will insert here some comments about base rings and coefficient modules for the chain complexes and homology and cohomology groups that we study.  Whenever a connection with Steenrod operations is mentioned, the base ring and coefficient modules will be understood to be a prime field $\FF_p$. Other times, for example in the discussion of operads in  categories of chain complexes,  we can work with chain complexes of  modules over any commutative ground ring, for example $\ZZ$.   But we also generally do not include notation for coefficients in our symbols $N_*(X)$ and $N^*(X)$ for chain  and cochain groups.  If important, we clarify.\\

So the functorial $\cE$-operad or $\cS$-operad algebra structure of $N^*(X, \ZZ)$ determines in some difficult to make precise manner not only the module structure of $H^*(X)$ over  Steenrod algebras, but also higher order cohomology operations, differentials in Adams spectral sequences, the homotopy groups of $X$, and Postnikov towers for $X$. It was always a mantra that  the functorial   structure of the $H^*(X)$  as algebras and modules over  Steenrod algebras was not enough to fully deal with  such questions, one needed to dig deeper into the chain and cochain level.  The operad results can be viewed as a modern take on the developement of (semi)simplicial methods for homotopy theory in the 1950's, including E. H. Brown's result that homotopy groups of finite simply connected complexes were algorithmically computable by simplicial  methods.  One cannot predict what the power of future computers might bring to the table in the study of direct simplicial methods, but the complexity of algorithms is  seriously exponential.\\

Perhaps our project of a cochain level study of Steenrod operations in Part IV should sort of be viewed as meeting a challenge, like climbers scaling some mountain by a new difficult route.  Of what use is that?  Not  to get to the top. You could maybe hike up steps, drive up a road, or take a helicopter.  But beyond the challenge, we found that some of the computations we made, and some of the connections between older and newer ideas related to cochain operations, acyclic models, and operads, were rather interesting to us, and might interest others.\\

Our paper is lengthy, because we include many details and examples and give essentially complete proofs of the major results, which takes many pages.  But it is conceptually relatively elementary compared to thousands of other papers on operads and homotopy theory  written by many hundreds of authors during the last 50 or more years.  We find versions of some of our results included in some of these recent and not so recent papers.    It seems clear that the operad algebra approach to the homotopy category and stable homotopy category deserves a permanent place in algebraic topology, of which our goal of a cochain level development of the Steenrod algebra and its action on cohomology rings is a very small piece.  Our paper can  be viewed as a reformulation of some historical results, an introductory work, or an insert  that should come before  deeper results, which likely have not reached a final form.\\

We will conclude this introductory section with a brief summary.  The final goal of these Parts I, II, III is a {\it new} development of the $E_\infty$ Barratt-Eccles operad $\cE $ and the $E_\infty$ surjection operad $\cS $ of McClure-Smith and Berger-Fresse, along with operad morphisms to the Eilenberg-Zilber $CoEnd$ operad  $\cZ$ for simplicial sets,\footnote{For chain complexes, $HOM(B_*, C_*)$ means the chain complex of graded linear maps between  graded  modules.  We will study these complexes in Section 3.} $$\cE(n) = N_*(E \Sigma_n) \twoheadrightarrow  S_*(n) = \cS(n) \hookrightarrow \cZ(n) = HOM_{func}(N_*( - ), N_*( - )^{\otimes n}).$$  
Any such operad morphisms yield functorial $E_\infty$ operad algebra structure extending the differential graded associative (DGA) algebra structure of cochains $N^*(X)$. We use our contraction based recursive procedure to rather easily define operad structure  maps for the $\cE(n)$ and $\cS(n)$. We can also use our contraction based procedure to easily define morphisms $\cE(n) \to \cS(n) \to \cZ(n)$.  We prove that these morphisms are operad morphisms, without using the fact that  the morphisms of Berger-Fresse and McClure-Smith are operad morphisms.  To obtain explicit formulas for our recursive procedure, we do compare with the formulas of Berger and Fresse, or equivalently with those of McClure and Smith, and proceed inductively.  But it is certainly plausible that we could more directly have found the explicit formulas. In any case, {\it recursively} producing  operad structure maps  and operad morphisms, then {\it comparing} with other formulas, gives {\it new proofs} that these other formulas do indeed define operads and operad morphisms.\\  

Neither the original proofs that the Berger-Fresse  maps are chain maps and operad structure maps nor the proofs that our easily defined chain maps fully agree with the Berger-Fresse maps are easy.  But replication of important results should have a role in mathematics. Also, our new proofs  introduce alternate methods and points of view.\\

The recursive contraction based method itself is far more elementary than the technical details of these final goals, which are found in Sections 16, 17, 19, and 20. The basic method  involves a very simple explicit version of classical acyclic model methods.  It made no sense to us to just focus on the final goals.  Instead, we wanted to advertise  the ubiquity of the method and exhibit systematically many results about it, both  specific examples and more theoretical principles. Several results are included in Parts I, II, III because they are relevant for our cochain level approach to Steenrod operations to appear in Part IV. \\

This long paper is  more like a textbook than a research announcement.  But readers who are interested in deciphering what we are doing are strongly advised to not try to initially follow all details in the order written.  Certainly the review of acyclic models in Section 1 and  the previews of the  sections of Parts I, II, and III in Section 2 is the way to start.  In the main text, begin with Part I but become familiar with what is done in Parts II and III before getting bogged down with technical results in Part I.  Look at paragraphs, remarks, statements of results, and examples with a first goal of just getting the gist of what they say.  Many results are included because a goal of our treatment is to be comprehensive, but often  results are not used again until later sections.   Keep turning pages and skip ahead to later sections.  One can always back up and focus on earlier details and proofs. There are literally no prerequisites other than a basic course in algebraic topology along with its underlying algebra, and some familiarity with simplicial sets\\

We have marked with asterisks various subsections, remarks, and examples that can be skimmed or skipped without any loss of local continuity.  Those paragraphs do contain about 50 pages of reasonably interesting results, and sometimes are referred to and used later in the paper, at which time they can be reviewed. But a reader who wants to get on with it in the paper can simply skip some or all of these asterisked pages.\\

We will continue our paper with two more introductory sections.  First we present a perspective on the acyclic model method in topology.  After that we give brief previews of each section of Parts I, II, III, along with some additional discussion.  The purpose of our three sections of introduction is to dilute the fact  that the full paper is indeed very long.  It would seem difficult to get a good understanding of what we are doing by just starting  at the beginning and trying to read the sections  one after the other. We believe the introductions provide a good overview of the entire paper and provide a head start on individual sections.

\newpage

\section{Review of Acyclic Models}
\subsection{Contraction Based Acyclic Model Methods}
Let us review the acyclic model method.  First, suppose given chain complexes $B_*$ and $C_*$  over some commutative ground ring $\FF$, not necessarily a field, with $B _*$ free and $C_*$ contractible.\footnote{An acyclic complex means zero homology, and a contractible complex means chain homotopy equivalent to the complex $\FF$ concentrated in degree 0. In this introductory section we will be somewhat casual with this distinction.}   Both $B_*$ and $C_*$ may have $G$-actions in which case we assume $B_*$ is free over $\FF[G]$.  Differentials in chain complexes are always $\FF[G]$ linear. If $B_*$ is graded in non-negative degrees then beginning with a suitable  map in degree 0  one can construct (equivariant) chain maps $\phi \colon B_* \to C_*$ that are  unique up to (equivariant) chain homotopy.  By freeness of the domain, one just needs to define $\phi(b)$ for a set of basis elements $\{b\} \subset B_*$.  One does this recursively on degree, using the fact that a cycle in $C_{n-1}$ is the boundary of an element of $C_n$.  Given a basis element $b \in B_n$, one chooses $\phi(b) \in C_n$   so that $d \phi(b) = \phi (db)$, this latter element being a cycle by induction.   If $C_*$ is merely contractible, not acyclic,  `suitability' of $\phi \colon B_0 \to C_0$ will include $\phi(dB_1) \subset dC_1$. One then extends $\phi$ in degree $n$ by linearity or equivariant linearity and moves on to the next degree.  In a similar recursive manner one can construct (equivariant) chain homotopies between two (equivariant) chain maps.\\

Now, there are three levels of explicitness that one can consider.  First, if one uses only the fact that $C_*$ is contractible, then the construction is not at all explicit.  But if one has an explicit contraction, roughly a chain homotopy   $h \colon C_* \to C_{*+1}$  with $dh + hd = Id$ in non-zero degrees and also on $ dC_1$, then one has an explicit recursive formula and an easy inductive proof that it is a chain map. For $\FF[G]$-basis generators $b$,  define inductively $\boxed{\phi(b) = h \phi (db).}$ Then $$\boxed{d \phi(b) = dh \phi(db) = \phi(db) - hd \phi(db) = \phi(db) - h \phi( d db) = \phi(db).}$$ 
Extend $\phi$ to an $\FF$ basis by $\phi(gb) = g\phi(b)$. It is routine to prove the further $\FF$-linear extension is fully equivariant,  $\phi(gx) = g\phi(x)$ for all $x \in B_*$.\\

In practice however, this method only gives an explicit recursive procedure for defining $\phi(b)$.  The third level of explicitness  then is that one might be lucky and find a `closed' formula for the recursively defined $\phi(b)$.  Once one has a candidate for a formula, an inductive proof can often be found.\footnote{Closed formulas are appealing.  However, for computer work when formulas get large, or for proving theorems, it isn't so clear that a closed formula is better than a recursive procedure.}  \\

The constructed map $\phi$ may depend on the choice of basis in $B_*$.  However, in practice, our complexes $B_*$  come with a preferred choice of basis.  Also, $\phi$ will certainly depend on the choice of the contraction chain homotopy $h$ of $C_*$.  Again, in practice, our complexes $C_*$ come with  preferred contractions.  In fact, they come with  preferred contractions that satisfy $h^2 = 0$, which turns out to be a remarkably useful assumption for establishing many results.\\

The above paragraphs are a precursor to the functorial acyclic model method that constructs natural transformations $$\phi_{func} \colon F_*(X_1, \ldots , X_k) \to K_*(X_1, \ldots , X_k)$$ between chain complex valued functors using  acyclic models.  For us, the $X_i$ will be simplicial sets,  and $F_*(X_1, \ldots , X_k)$ will be a functor  that is free over $\FF$ or $\FF[G]$.  A basis $\{u\}$ will have the form $\{u = \sigma^F_*(\bar{u})\}$, where the $\{\bar{u} \}$ are certain universal  elements  $\bar{u} \in F_*(\Delta^{n_{1}}, \ldots, \Delta^{n_{k}})$, and  $\sigma_{i} \colon \Delta^{n_{i}} \to X_i$ are `simplices' in the $X_i$,   {\it canonically determined by $u$}. `Universality' of the $\{\bar{u}\}$ is meant to express that the chosen sets of basis elements  of the $F_*(X_1, \ldots , X_k) $ are `functorial' for simplicial maps in the variables $X_i$.   Since simplicial maps can send non-degenerate simplices to degenerate simplices, it will actually be the sets $\{u, 0\}$ of basis elements together with 0 that are  functorial.   We will also have  preferred contractions $h_K$ of the complexes $K_*(\Delta^{n_1}, \ldots, \Delta^{n_k})$, and in the equivariant case a $G$-action on $K_*(X_1, \ldots, X_k)$. \\

Then $\phi_{func} \colon F_*(X_1, \ldots , X_k) \to K_*(X_1, \ldots , X_k)$ is constructed recursively as follows.  One always begins with some functorial map in degree 0, and assumes a functorial chain map has been constructed in degrees less than $n$.  Given a basis element $u = \sigma_*^F(\bar{u}) \in F_n(X_1, \ldots , X_k) $, one defines
$$\boxed{\phi_{func}(u) = \sigma^K _*\circ h_K\circ \phi_{func} (d\bar{u}) \in K_n(X_1, \ldots, X_k).}$$ As before, extend by linearity over $\FF$ or $\FF[G]$.  It is the assumption that the universal elements $\bar{u}$ and simplices $\sigma_i$ are canonically determined by $u$ that guarantees that $\phi$ is well-defined. A routine argument shows that this procedure defines a chain map that is equivariant and functorial in the $X_i$.\footnote{All that is important about this somewhat vague attempt to describe a multivariable functorial  acyclic model process in general is that it makes clear sense in our examples.}

\subsection{Preview of Examples}
Here are a couple of examples.  For a simplicial set $X$, let $N_*(X)$ denote the normalized simplicial set chain complex.  There are iconic functorial chain homotopy equivalences of Alexander-Whitney and Eilenberg-Zilber $$AW\colon N_*(X \times Y) \to N_*(X) \otimes N_*(Y)$$  $$EZ \colon  N_*(X ) \otimes N_*Y) \to N_*(X \times Y).$$  Formulas for these maps are well-known, but where do they come from really?\\

Preferred basis elements  of the normalized chain complex $N_*(X \times Y)$   in degree $k$ arise uniquely from a pair of simplices $\Delta^k \to X$ and $\Delta^k \to Y$, so that composition with the diagonal $ \Delta^k \to \Delta^k \times \Delta^k \to X \times Y $ is non-degenerate.  Preferred basis elements of degree $k$  in $N_*(X) \otimes N_*(Y)$ are uniquely written as tensors of pairs of non-degenerate simplices $\Delta^i \to X$ and $\Delta^j \to Y$ with $i+j = k$. If $\Delta, \Delta'$ are simplices, there are canonical contractions of  $N_*(\Delta) \otimes N_*(\Delta')$ and $N_*(\Delta \times\Delta')$.  Easy inductive arguments prove the  standard functorial recursive procedure using these bases and these contractions produce exactly the classical $AW$ and $EZ$ formulas.\footnote{The $EZ$ chain map can be `found', more or less inductively, by thinking about the geometry and combinatorics  of triangulating prisms. The paper [31] of Whitney describes in some detail the early history in the 1930's  of cup products and the dual $AW$ map.}\\

For three or more simplicial set variables, one also has functorial  maps  relating tensor products of chain complexes and chain complexes of products of simplicial sets, directly defined using our preferred contractions of multiple tensor products of normalized chains on simplices or normalized chains on multiple products of simplices. Uniqueness theorems for maps produced by our procedure trivially imply without any formulas that the multivariable maps agree with any way of associating and composing  two variable maps. Thus, the classical $AW$ and $EZ$ maps are fully associative. An obviously stated commutativity property also holds for the $EZ$ map, which again we prove without  any formulas.\footnote{In more sophisticated language, the associativity and commutativity  results we prove, without any formulas,  for our recursively defined maps exactly say that the assignment of $N_*(X)$ to simplicial sets is a symmetric monoidal functor from the category of simplicial sets to the category of chain complexes.}\\

Repeating somewhat, we point out that various texts write down formulas for $AW$ and $EZ$, followed by rather awkward proofs, at least in the $EZ$ case, that they are chain maps and are associative.  Our recursive procedure easily defines functorial chain maps, using explicit contractions of models, then observes without much work that these functorial chain maps are associative and are given by the classical $AW$ and $EZ$ formulas. This {\it proves} the $AW$ and $EZ$ formulas are associative chain maps.\\

Of course the recognition of the abstract acyclic model method in the early 1950's was a major conceptual advance in algebraic topology.  It explains rather quickly many things, such as  why the cohomology of a space is a (skew)-commutative graded ring. The ring structure  was initially regarded as sort of a mystery, dependent on properties of odd little formulas.  Acyclic models instantly gives diagonal approximations, unique up to chain homotopy, and the associativity and commutativity of the cohomology product becomes obvious.  On the other hand, the fact that a cochain complex  itself is a DGA algebra, that is, with good diagonal formulas multiplication is strictly associative and the coboundary is a derivation, was recognized as important.   {\it So some diagonals are much better than others.} \\

A more dramatic example than the $AW$ and $EZ$ formulas  is given by natural transformation $\Sigma_n$-equivariant chain maps $$\Phi^{(n)}(X) \colon N_*(E\Sigma_n) \otimes N_*(X) \to N_*(X)^{\otimes n}$$ constructed by our explicit functorial acyclic model procedure, also using canonical contractions of $n$-fold tensor products of normalized chains on simplices.  As mentioned previously,  $N_*(E\Sigma_n)$ denotes normalized chains on the MacLane model of a free contractible $\Sigma_n$ simplicial set, and will be specifically described in Example \ref{5.3}.  An $\FF[\Sigma_n]$-basis of $N_*(E\Sigma_n) \otimes N_*(X)$ in degree $k$ is given by tensors of $\FF[\Sigma_n]$ generators of $N_*(E\Sigma_n)$ of degree $i$  with non-degenerate simplices $\Delta^j \to X$, $i+j = k$.  The map $\Phi^{(n)}(X)$ is a $\Sigma_n$-equivariant extension of the Alexander-Whitney multidiagonal $AW^{(n)} \colon \{e\} \otimes N_*(X) \to N_*(X)^{\otimes n}$, where $e \in N_0(E\Sigma_n) = \FF[\Sigma_n]$ is the identity group element.   The map $AW^{(n)}$ is not fixed by  the $\Sigma_n$ action on the range and equivariant extensions defined on $N_*(E\Sigma_n) \otimes N_*(X)$, unique up to equivariant chain homotopy, were very important in the development of  Steenrod operations.  Arbitrary choices of equivariant functorial maps $\Phi^{(n)}$ are adequate for constructing well-defined operations at the cohomology level.  But   some equivariant extensions, like the maps $\Phi^{(n)}(X)$ we construct, have additional good properties.  We discuss this further.\\

As mentioned previously in Section 0,  chain complex components of the Barratt-Eccles operad $\cE$ are given by $\cE(n) = N_*(E\Sigma_n)$.  Chain complex components of the Eilenbeg-Zilber operad $\cZ(n)$ are given by natural transformations $HOM_{func}(N_*( - ), N_*( - )^{\otimes n})$, or dually $HOM_{func}( N^*( - )^{\otimes n}, N^*( - ))$.  Some of our main points are that the collection of adjoints $$\phi^{(n)} \colon  \cE(n) = N_*(E\Sigma_n) \to HOM_{func}(N_*( - ), N_*( - )^{\otimes n}) = \cZ(n)$$  of our functorial chain maps $\Phi^{(n)}(X)$  defined by the canonical recursive procedure  factor through the surjection operad components $\cS_*(n)$ and {\it coincide} with the operad morphisms $\cE(n) \twoheadrightarrow \cS(n) \hookrightarrow \cZ(n)$ constructed by hand by Berger and Fresse in [3], [4], expanding on work of McClure and Smith in [19],  [20].\\

In other words, the collection of adjoint maps $\phi^{(n)}$ produced by the functorial  contraction procedure satisfy serious strict associativity and strict equivariance properties, equivalent to the strictly commutative diagrams expressing  operad morphism axioms.  {\it So some equivariant maps $\Phi^{(n)}$ are much better than others.} This point is ignored in the classical acyclic model treatments of cohomology operations, which only use the existence of functorial equivariant maps $N_*(E\Sigma_n) \otimes N_*(X) \to N_*(X)^{\otimes n}$, well-defined up to equivariant chain homotopy.\\  

The situation is quite analogous to the discussion above about diagonal approximations.  Any diagonal approximation yields products in cohomology.  Good diagonals, like $AW$, which our contraction procedure produces, result in the cochain algebras $N^*(X)$ themselves being differential graded associative algebras.  Any equivariant maps $\Phi^{(n)}$ yield cohomology operations. Good such maps, which our contraction procedure also produces, result in the cochains $N^*(X)$ being algebras over  $E_\infty$ operads, like $\cE$ and $\cS$. This is fantastically more structure than just $DGA$ algebra structure.  In fact, it is enough structure to easily define Steenrod operations at the cocycle level and ultimately prove their properties.  Amazingly, this structure also determines the homotopy type of finite type simply connected $X$.\\

We do not reach the full explanation of these examples until the end of Part III.  Among other things, the full explanation includes the facts that the operad structure maps themselves for the Barratt-Eccles and surjection operads coincide with maps between contractible complexes constructed by a standard recursive contraction procedure. We discuss this in the following subsection. \\

Justin R. Smith [28] was  aware that an  operad morphism $\cE \to \cZ$ could be recursively constructed, without formulas,  by essentially  the same method as ours, including using the same contraction of  tensor products of normalized chains on a simplex.  But his primary interests were elsewhere, and it is not clear if he wrote down full details. \\

\subsection{A Contraction Based Approach to Some $E_\infty$ Operads}  In a long paper like this  it can be difficult to see the forest for the trees.  In an attempt to illuminate at least a part of the forest, we will insert here, perhaps prematurely,  some comments concerning $E_\infty$ operads, elaborating somewhat on the discussion in the previous subsection.  Roughly, the  definition of an $E_\infty$ operad in a category of chain complexes is a collection of contractible, augmented, based, free $\FF[\Sigma_n]$ chain complexes $\epsilon, \iota \colon B_*(n) \rightleftarrows \FF$, together with structure maps $$\cO_B  \colon B_*(r) \otimes B_*(s_1) \otimes \ldots \otimes B_*(s_r) \to B_*(s),\ s = \sum s_i,$$ that satisfy certain identity, equivariance, and associativity axioms.\\

We propose  {\it a new  starting point} for operads in some special cases, namely choices of contractions $h_n \colon B_*(n) \to B_{*+1}(n)$ with $hd + dh = Id - \iota \epsilon$, where $\epsilon \colon B_*(n) \to \FF$ is an augmentation and $\iota \colon \FF\to B_*(n)$ is a basepoint, with $\epsilon \circ \iota = Id$.  We also assume the contractions satisfy $h_n^2 = 0$ and $h_n  \iota = 0$, and that $\FF[\Sigma_n]$-bases of the $B_*(n)$ are chosen in $Im(h_n) = Ker(h_n)$, including $\iota(1) = e \in B_0(n) = \FF[\Sigma_n]$.  With these assumptions, a twisted variant of our basic recursive contraction procedure for constructing chain maps produces {\it candidates} for operad structure maps $\cO_B$ that satisfy everything except possibly the associativity axiom.\\

The associativity axiom holds if for all elements $c_0 \in h_r B_*(r)$ and $c_i \in h_{s_i} B_*(s_i)$ one has $\cO_B(c_0 \otimes c_1 \otimes \ldots \otimes c_r) \in h_s B_*(s)$.\\

{\it So operad structure  emerges from suitable  contraction structure.}\\

All these additional assumptions hold {\it trivially}  for the Barratt-Eccles chain complexes $B_*(n) = N_*(E\Sigma_n)$. The reason is,  focusing  on the already assumed enhanced  chain complex structure, one more assumption, namely that $h_n B_*(n) $ coincides with the $\FF$-span of an $\FF[\Sigma_n]$ basis, makes it easy to verify the condition for the associativity axiom.  Also interestingly, these combined properties characterize $(N_*(E\Sigma_n), h_n)$ uniquely among such complexes.\\

Thus we have a new recursive construction of {\it some} operad whose components are the $N_*(E\Sigma_n)$.  It is then {\it trivial} to prove using  our uniqueness theorems that the original  Barratt-Eccles operad structure maps, defined using the symmetric monoidal functor properties of $N_*(X)$, coincide with our recursively defined structure maps.\\
   
The basic extra structure, including the associativity condition,  holds  less trivially for the surjection complexes $B_*(n) = \cS_*(n)$.     Any chain complex $(B_*(n), h_n)$ with the basic extra structure is canonically a direct summand of $(N_*(E\Sigma_n), h_n)$.  If the associativity condition holds for the operad structure candidate maps  $\cO_B$ then the $B_*(n)$ operad is canonically a quotient of the Barratt-Eccles operad.  In this sense, the Barratt-Eccles operad is a kind of universal model for (contraction based) $E_\infty$ operads.\\

We will give many examples in the course of this work, some rather surprising,  of using the explicit contraction based recursive  procedures  to construct maps between chain complexes.  Many times it is quite easy to find closed formulas for the results of the recursive procedures.  Other times, as in the examples involving the operads $\cE, \cS, \cZ$, this is harder, so we are content to prove inductively that our recursive procedure leads to closed formulas already  found by others.   Also,  our proofs using uniqueness theorems that the chain maps $\cO_B$ we define recursively  are operad structure maps {\it reproves} that the  formulas of Berger and Fresse, and McClure and Smith, define operad structure chain maps.  The original proofs of these results amounted to  quite complicated computations.
\newpage

\section{Preview of Parts I, II, and III}

\subsection{Preview of Part I}
Section 3  is a digression before we really begin the paper that summarizes some basic properties of tensor products of chain complexes $C_* \otimes D_*$ and chain complexes of linear maps of graded modules $HOM(C_*, D_*)$. The latter include dual cochain complexes.  We state a number of adjoint relationships and duality relationships between such complexes.  We  discuss actions of symmetric groups on multitensor products of chain and cochain complexes. We also make a few basic remarks about chain homotopies. It seems reasonable to just set our conventions on signs and other aspects of chain and cochain complexes and tensor products early on.  Section 3 can be skimmed, or essentially ignored, before moving on. The results are used much later to explain the $CoEnd$ and $End$ operad structures associated to chain and cochain complexes.  In the final (asterisked) subsection of Section 3 we  preview how additive cohomology operations, including the Steenrod cyclic reduced power operations, arise from certain equivariant chain maps $B_* \otimes C_* \to C_*^{\otimes p}$.\\

In Section 4 we make precise the notion of a contraction of a base pointed, augmented chain complex $\FF\xrightarrow{\iota} C_* \xrightarrow {\epsilon} \FF$ with $\epsilon \iota = Id_\FF$. A contraction is a chain homotopy $h \colon C_* \to C_{*+1}$ from $\iota \epsilon$ to the identity.   So $dh+hd = Id_{C_*} - \iota \epsilon$ and  $C_*$ is chain homotopy equivalent to the complex $\FF$ concentrated in degree 0. We show that one can always find a contraction satisfying  $h^2 = 0$ and $h \iota = 0$, which is an extremely useful property that plays a major role throughout the paper.  The symmetrical structure consisting of a graded module and two self-maps $d, h$ of degrees $-1$ and $+1$ satisfying $d^2 = 0, h^2 = 0, dh+hd = Id$  seems provocative.\footnote{Symmetry is broken if one works primarily with complexes $C_*$ that are bounded below.   Also, with $G$-complexes, $d$ is equivariant but $h$ is not.}  Any contraction says more than  that a chain complex with differential $d$ has the same homology as a point.  It provides a specific somewhat sophisticated reason why that is true as part of the structure. It is more mysterious why the additional property $h^2 = 0$ is so useful. \\

In Section 5 we give many examples of contractions. Given contractions of $C_*$ and $D_*$, we construct preferred contractions of the tensor product complex and the $HOM$ complex. The contraction we choose of a tensor complex plays a key role in many results of our paper.  We study a standard contraction $h_G$ of MacLane models $N_*(EG)$,  similar to a standard contraction $h_{\Delta}$ of chains on a simplex $N_*(\Delta)$.    We also study a contraction of a minimal model $M_*$ of chains on a contractible free $C$-complex, where $C$ is a cyclic group of order $n$.  $M_*$ is much smaller than the MacLane model $N_*(EC)$.   In fact, each $M_{k} = \FF[C]\{y_k\} $ is free on one generator.  $M_*$ is a very useful complex when $n=p$ is prime in our study of Steenrod operations in Part IV. There is nothing new about $M_*$. It is the chain complex associated to a free action of $C$ on a regular cell complex structure on the infinite sphere $S^{\infty}$, with $n$ cells in each degree. This complex played  a major role in the history of  classifying spaces, group homology, and cohomology operations.\\

In the case of $\FF[G]$ complexes, we discuss in Section 5 contractible complexes as acyclic resolutions of the trivial $G$-module $\FF$, but also possibly of some other $G$-module structure on $\FF$.  This turns out to be useful in Part II for studying one of the surjection complexes and in Part IV for studying Steenrod operations acting on odd degree cocycles.\\

Section 6 begins what should be regarded as the main point of Part I of the project, namely, the use of explicit contractions to construct chain maps $B_* \to C_*$ by the methods outlined in Section 1.  We prove two very useful uniqueness theorems that provide conditions that guarantee that a map between complexes is necessarily the standard procedure chain map. Our basic uniqueness theorem says that if the contraction of the range satisfies $h^2 = 0$ then an equivariant map in degree 0 has a {\it unique} extension to an equivariant chain map with the property that basis generators of the domain map to elements in the image of the contraction of the range.  This result has useful extensions to functorial chain maps and to `twisted' equivariant chain maps that are of importance  in later sections.\\

Also of particular importance is the following uniqueness phenomenon.  Suppose $B_*$ and $C_*$ are  $\FF[G]$-complexes, both with contractions satisfying $h^2 = 0$ and with $B_*$ free.  Then with mild hypotheses, {\it any  $\FF[G]$-linear map $B_* \to C_*$} that commutes with the contractions is automatically a chain map and is identical to the chain map constructed by the recursive contraction procedure.  In particular, the necessary hypotheses always hold when $B_* = N_*(EG)$ is a MacLane model. Many of the maps $\phi \colon N_*(EG) \to C_*$ that have roles in our work when $C_*$ is contractible commute with contractions.  In fact, the $\FF[G]$-basis elements of $N_*(EG)$ span the image of $h_G$ over $\FF$ and such chain maps can  be defined   by $\phi h_{G}(x) = h_C \phi(x)$, for all $x$.\\

As examples of chain maps important in our paper, we mention  the standard contraction procedure equivariant chain map $M_* \to N_*(EC)$ for a cyclic group, and a retraction $N_*(EC) \to M_*$. The retraction commutes with contractions.   We also give in Section 6 the explanation that the Alexander-Whitney map $AW\colon N_*(EH \times EG) \to N_*(EH) \otimes N_*(EG)$ for MacLane models is a special case of our standard contraction constructions.  The Alexander-Whitney map commutes with contractions.  The Eilenberg-Zilber map $EZ \colon N_*(EH) \otimes N_*(EG) \to N_*(EH \times EG)$ for MacLane models is also described as a special case of our procedure.  The functorial versions of $AW$ and $EZ$ for simplicial sets are postponed until Section 8.\\

We also discuss in Section 6 some delicate issues about compositions of standard procedure chain maps.  These need not be standard procedure maps. But we establish a number of conditions that imply such compositions are standard procedure maps. The most useful of these condition is that the second map sends  the image of the contraction of the domain to  the image of the contraction of the range, again always assuming $h^2 = 0$.  This is weaker than the condition that the second map commutes with contractions. There are equivariant, functorial, and twisted equivariant versions of this composition result that become quite important in Parts II and III.\\

Section 7 continues the construction of chain maps, using the standard contraction procedure, to construct diagonal maps and equivariant diagonal maps $ C_* \to C_* \otimes C_*$ for the  complexes $C_* = M_*$ and $C_* = N_*(EG)$. We also extend the constructions to multidiagonals $ C_* \to C_*^{\otimes n}$.  Such multidiagonals and certain  equivariant enhancements $ N_*(E\Sigma_n) \otimes C_* \to C_*^{\otimes n}$, extending the seminal work of  Steenrod,  were absolutely crucial in the study of cohomology operations from the very beginning.  The final (asterisked) subsection of Section 7 initiates the study of these equivariant enhancements, with some results eventually useful for a cochain level proof of Adem relations in Part IV. But explicit formulas are not obtained until Section 17 of Part III, where we give our version of results of Berger and Fresse from [3], [4].\\ 

Section 8 deals with the construction of natural transformations between functors.  We use  our  explicit contractions for chains on simplicies, chains on products of simplicies, and tensor products of chains on simplices, to define functorial chain maps using minimal contractible carriers.  Of course this is just a reformulation of the classical acyclic model method, but made more explicit functorially at the chain level.  We give two uniqueness theorems in the functorial context that extend the uniqueness theorems from Section 6.  As two important examples, we discuss in  detail the functorial Alexander-Whitney map $AW \colon N_*(X \times Y) \to N_*(X) \otimes N_*(Y)$ and the functorial Eilenberg-Zilber map $EZ \colon N_*(X) \otimes N_*(Y) \to N_*(X \times Y)$ when $X$ and $Y$ are arbitrary simplicial sets.  Our methods easily imply that both $AW$ and $EZ$ are associative, $EZ$ is commutative, and $AW \circ EZ = Id$, even without knowing the formulas.  Our method also produces a canonical chain homotopy between $EZ \circ AW$ and $Id$.   The details of both the Alexander-Whitney map and the Eilenberg-Zilber map are used in fundamental ways in Parts II and III in our study of the Barratt-Eccles and surjection operads.\\

Finally in Section 9 we discuss a standard recursive procedure method for constructing (equivariant) chain homotopes between suitable  pairs of (equivariant) chain maps $B_* \rightrightarrows C_*$ We also review the method from [7] of using joins of chains on MacLane models to define equivariant chain homotopies between pairs of maps $B_* \rightrightarrows N_*(EG)$ for certain complexes $B_*$, and we discuss  how join homotopies are related to standard procedure recursive homotopies. We discuss in detail some examples that will link results in Part I to results on cohomology operations in Part IV.  Specifically, the examples involve  relations between the minimal model $M_*$ for the cyclic group $C_p$ of prime order $p$ and the MacLane models $N_*(EC_p)$ and $N_*(E\Sigma_p)$ for the cyclic group and the symmetric group.  The applications in Part IV will be to a cochain level proof of the Cartan formula for Steenrod operations, extending the arguments of Medina-Mardones [22] in the $p = 2$ case, and to an analysis of which cycles in $N_*(EC_p)$ map to explicit boundaries in $N_*(E\Sigma_p)$, and therefore determine zero cohomology operations.\\

Some of the examples and remarks in Part I, especially toward the end, are somewhat complicated, but worth following.  We chose examples for their historical context, to illustrate the ubiquity of the contraction procedure, and for later use in Parts II, III, and IV in our study of some operads and in our cochain development of Steenrod operations.

\subsection{Preview of Part II}
In Section 10 we make a few introductory remarks about common features of three versions of surjection chain complexes $\cS_*(n)$ that underlie the surjection operad.  These complexes are acyclic free $\FF[\Sigma_n]$ resolutions of $\FF$ with the trivial group action.\\

Sections 11 and 12 develop details of the version of the surjection complex that we call $\cS_*^{aj}(n)$ because it appears in the paper of Adamaszek and Jones [1]. The rough geometric  idea is that the ordinary normalized relative simplicial singular chain complex of a simplex modulo its boundary, shifted down in degrees,  can be viewed as an acyclic free $\FF[\Sigma_n]$ resolution $\widetilde{\cS}_*(n)$ of $\widetilde{\FF}$,  where $\widetilde{\FF}$ is a twisted module structure on $\FF$.  Generators of $\widetilde{\cS}_k(n)$ are  simplicial maps $\Delta^{n+k} \to \Delta^n$ that are surjectve and do not map two consecutive vertices in the domain to the same vertex in the range. A simple untwisting construction then produces the augmented surjection complex $\cS_*^{aj}(n)$, which is an acyclic free resolution of $\FF$.  The boundary operator is simpler and more geometrically motivated than that of other surjection complexes.  It is immediate from basic topology that our complex is contractible, but somewhat challenging to produce a contraction with $h^2 = 0$.\\

Section 13 develops properties of the surjection complex $\cS_*^{bf}(n)$ studied by Berger and Fresse [4], [5].  Their boundary operator $d$ is not so easy to motivate, and it is not completely trivial to even see why $d^2 = 0$.  Also, the Berger-Fresse complex was known to be contractible but we go beyond that and produce an explicit contraction, closely related to but somewhat simpler than our contraction of $\cS_*^{aj}(n)$.  \\

Section 14 develops properties of the surjection complex $\cS_*^{ms}(n)$ studied by McClure and Smith [19].  We focus from the outset on  clear geometric motivation for both the boundary operator and the $\FF[\Sigma_n]$ action, although this was certainly implicit in their original work.  We show that $\cS_*^{ms}(n)$ is in a very natural way the  chain complex associated to a geometric cell complex whose open cells are interiors of prisms.  A contraction is given by exactly the same formula as the contraction of $\cS_*^{bf}(n)$.\\

In Section 15 we establish isomorphisms between the three surjection complexes that preserve all the structure.  We find  all three surjection complexes interesting.   Also, it is sometimes more natural, and easier,  to first express and prove a result using a specific one of the surjection complexes.\\  

 The Sections 11-15 provide some details about the three surjection complexes that are not found in the original references.  In particular, we mention the contractions with $h^2 = 0$, the isomorphisms between the complexes commuting with contractions, and additional geometric motivation for the boundary operators and the symmetric group actions.  We found all these things  deserving of a simple unified  treatment.  But readers need not get bogged down with every detail, a light reading suffices for continuing with the rest of Part II and then Part III.\\
 
 Section 16 is one of the longest  sections of the paper.  We develop and study  in great detail the equivariant chain maps $N_*(E\Sigma_n) \leftrightarrows \cS_*^{aj}(n),  \cS_*^{bf}(n), \cS_*^{ms}(n)$ that arise from  the standard procedure constructions, using  our bases of the domains and contractions of the ranges of various complexes. We prove that our maps for the Berger-Fresse complex are the same as the maps they found and studied in [3], [4] and [5], but we provide additional motivation and insight using the explicit contraction procedure ideas.  In particular, we give independent self-contained proofs that the Berger-Fresse maps {\it are} equivariant chain maps. In the case of the newer surjection complex $\cS_*^{aj}(n)$, the geometric viewpoint that generators are given by actual simplical maps between simplices, and the resulting identification of the inverse image of the base barycenter with a geometric prism, allows us to motivate clearly the equivariant chain maps $\cS_*^{aj}(n) \to \cS_*^{ms}(n) \to N_*(E\Sigma_n)$ as expressing facts about prisms and the Eilenberg-Zilber map that triangulates prisms.  Berger and Fresse also described their map $\cS_*^{bf}(n) \to N_*(E\Sigma_n)$  in terms of prisms, which is where we learned it, but their discussion focused somewhat less on the geometry of prisms.  That said, at the end of the day the use of geometric simplices, simplicial maps, and triangulations of prisms is a mental crutch, everything  is  ultimately just algebra and combinatorics.  \\

In the last very technical (asterisked) subsection of Section 16 we treat in detail some interesting additional facts about the surjection complexes $\cS_*(n)$ and their relation with the MacLane complexes $N_*(E \Sigma_n)$ that were  briefly hinted at by Berger and Fresse in [5]. This  technical subsection is not needed to continue to Part III.
 
\subsection {Preview of Part III}

In Section 17, which is also long,  the equivariant  functorial map of Berger and Fresse $\Phi \colon \cS_*^{bf}(n) \otimes N_*(X) \to N_*(X)^{\otimes n}$ is proved to coincide with our standard procedure functorial construction.  This is a rather difficult result, although the argument is formally similar to the argument in Part I that the classic functorial $EZ$ map is produced by our standard procedure. It is also somewhat tricky to prove that the standard procedure functorial map  $N_*(E\Sigma_n) \otimes N_*(X) \to N_*(X)^{\otimes n}$ factors through $\Phi$.  These maps were studied by Berger and Fresse by seemingly ad hoc methods, partly based on a reorganization of the work of McClure and Smith.    We also describe as compositions the standard procedure maps $A_* \otimes N_*(X) \to N_*(X)^{\otimes n}$ for $A_* = \cS_*^{ms}(n), \cS_*^{aj}(n), N_*(EC_n), M_*$. All these maps give rise by  duality to operadic multilinear actions of the various complexes on tensors of cochains, which is the starting point for our study of Steenrod operations in Part IV.  \\

Towards the end of Section 17 we compute the Berger-Fresse map in the specific degrees $M_{q(p-1)} \otimes N_q(\Delta^q) \to (N_*(\Delta^q)^{\otimes p})_{qp}$, namely  $\Phi(y_{q(p-1)} \otimes \Delta^q) = c_{q,p}(\Delta^q) ^{\otimes p}$ for specific constants $c_{q,p}$.  In the classic reference [30], determination of these constants is also carried out by a long chain level computation.  It is one of the few direct chain level computations in that book.  The constants are needed in order to define the Steenrod cyclic reduced power oerations $P^j$ on cocycles in terms of the Berger-Fresse map,  to prove  $P^0 = Id$, and to prove the Cartan formula for the $P^j$.\\

It seems appropriate here to recall a little history, although we do not know it well.  The basic functorial map $\cS_*^{ms}(n) \otimes N_*(X) \to N_*(X)^{\otimes n}$ was first studied by McClure and Smith using their surjection complex. They also found the operad interpretation, which was a major advance.  Their method was to establish their complexes as a suboperad of the Eilenberg-Zilber operad of natural transformations of functors $\cZ(n) = HOM_{func}(N_*( - ), N_*( - )^{\otimes n})$. They did point out that there was earlier separate work in the 1990's and before by D. Benson, R.J. Milgram, and E. Getzler, using multilinear cochain formulas to generalize Steenrod's original two variable $\cup_i$ products, with the goal of studying odd prime Steenrod operations. As mentioned previously, Justin R. Smith also studied an operad action of the $N_*(E\Sigma_n)$ on tensors of cochains, although not with explicit formulas.  His work seems to use some of the same contractions that we use to recursively construct chain maps.  In particular, he realized that the operad structure maps on the collection of complexes $N_*(E\Sigma_n)$, including the verification of the operad  axioms and the operad map to the Eilenberg-Zilber operad, could be formulated recursively using explicit contractions and a uniqueness theorem for chain maps with certain properties. But his primary interest was elsewhere and we do not know if he wrote down full details.\\

A question, more philosophical than mathematical,  might be how many linear natural transformation chain operations $HOM_{func}(N_*( - ), N_*( - )^{\otimes n})$, or natural cochain operations $HOM_{func}(N^*( - )^{\otimes n}, N^*( - ))$, do we really need?  A reasonable answer might be {\it exactly} the suboperad given by  the surjection complexes $\cS_*(n)$.  This suboperad seems to contain all that is needed to fully and faithfully capture a lot of homotopy theory.  Also, the chain and cochain operations defined by the surjection complexes are very natural direct extensions of Steenrod's seminal discovery of the explicit two-variable $\cup_i$ operations and the Steenrod Squares, which, it could be argued, launched homotopy theory into the modern era.\\  

In Section 18, to prepare for our treatment of the Barratt-Eccles and surjection operads, we review some basic definitions concerning operads and we develop carefully the permutation group operad in the category of sets, with components the symmetric groups $\Sigma_n$.  We also review the $End$ and $CoEnd$ operads for chain complexes and the Eilenberg-Zilber operad   $\cZ$ of natural transformation chain maps  $HOM_{func}(N_*( - ), N_*( - )^{\otimes n})$.  We found some of the details, which are taken for granted in nearly all papers on operads, trickier than we expected.  These details are needed in our non-standard treatment of the Barratt-Eccles  and surjection operads.\\

Section 19 contains our treatment  of the Barratt-Eccles operad with components $N_*(E\Sigma_n)$. Our approach is to use a `twisted' variant of our equivariant standard contraction procedure, discussed briefly in Subsection 1.3, to define {\it candidates} for operad structure maps $$\cO_B \colon B_*(r) \otimes B_*(s_1) \otimes \ldots \otimes B_*(s_r) \to B_*(s),\  s = s_1 + \ldots + s_r,$$  for  certain  complexes $B_*(n)$. The candidates always satisfy the unit and equivariance axioms for operads.  A closer look is needed to establish the associativity axiom.  For that purpose, we extend previous uniqueness theorems to the twisted equivariant case. This provides a criterion involving images of contractions, mentioned in Subsection 1.3,  for the strict commutativity of various diagrams involving the $B_*(n)$.  The criterion is easy to check for the $N_*(E\Sigma_n)$. We also reconcile our treatment of the Barratt-Eccles operad with the treatment expressed in terms of  symmetric monoidal functors, found for example in [3] and many other places.\\

In Section 20, which is certainly the longest and most difficult section of the paper,   the complexes $\cS_*^{bf}(n)$ are proved to form an operad, again using our contraction based constructions and our twisted candidates for operad structure maps.  It is somewhat trickier to use our twisted uniqueness theorem to establish the criterion for associativity than it is  for the Barratt-Eccles operad.   Using essentially the same uniqueness theorem, we recover the result of Berger and Fresse that the surjection operad is a quotient of the Barratt-Eccles operad.  We also prove using the uniqueness theorem  that  the inclusion map studied in Section 17 from the surjection operad $\cS$ to the Eilenberg-Zilber operad $ \cZ$  is an operad morphism.\\

Many details in Section 17 through Section 20 of our paper are not easy.  But we believe the details in the original papers by Berger and Fresse [4], [5]  and McClure and Smith [19] are also not easy, in different ways than ours.  One might say ``pick your poison".  Replication of results should have a place in mathematics.  One can read and check the details in the original papers or read and check our details.  Or both.  All these chain maps, operad structure maps, and operad morphisms seemed originally  to depend on brute force analysis of complicated formulas, which was not always included.\footnote{The papers  of Adamaszec-Jones [1] and  of McClure-Smith [21] are more conceptual.} We believe we provide an alternate self-contained context for many of these important results.  Every single one of these maps is an example of our standard contraction based procedure for constructing equivariant chain maps.  One could argue that the actual formulas are secondary, although it is certainly good to know them.  In fact, we combine partial information about the formulas of others with our recursive method.  Many of the properties of the maps drop out relatively painlessly from general results about our contraction procedure that we develop in Part I.\\

\subsection{Common Features of Some Important Chain Maps}  Many of the chain maps $\phi \colon B_* \to C_*$ that we study  in Parts I, II, and III are rather simple. This includes the $AW$  maps, multidiagonal maps, tensor products of maps, and chain maps extending functions between groups. Sometimes we look at familiar maps that are known to be chain maps, and we observe directly that they coincide with our standard procedure maps, either by means of a little computation and an easy induction or as an application of a uniqueness theorem.\\

Other times formulas for maps are `known' but are much less familiar, seem rather ad hoc, and it is sometimes  rather difficult to check that the formulas even define  chain maps.  This is the case for many of  the maps related to the operads $\cE, \cS$ and $\cZ$.   But we can often prove by induction that these maps  coincide with our standard procedure chain maps, without too much difficult computation.  So this simultaneously proves certain complicated maps are chain maps, and establishes a natural conceptual framework for the origin of these maps.\\

In important cases this second, more dramatic, situation arises as follows. The standard procedure map applied to generators will be sums of terms of a certain form, along with $\pm$ signs.  That is, $\phi(x) = \sum \pm \cT x$.  In  standard equivariant situations the parameter set for the operators $\cT$ is the same for $x$ and $gx$.  The terms $\cT x$ are often easily described, and satisfy a crude equivariance property $g (\cT x) = \pm \cT(gx).$   Once one has a  candidate formula for the $\cT x$, an inductive proof that such a formula for the standard procedure map $\phi$ is correct can be given, without knowing the signs.\\

Namely, on basis generators $b$ of the domain the standard procedure map is given  recursively by  the formula $\phi(b) = h \phi (db)$, where $h$ is the contraction of the range.  Now $d$ and $h$ may each involve many  summands. But surprisingly often most of these are seen to contribute 0.  In fact, in important cases $h \phi(db)$ reduces to a sum of $h$ applied to a small number of boundary terms of $b$, sometimes just two. Moreover, on those boundary terms only a small number of summands of $h$ are non-zero, sometimes just one, and their evaluation is easy to understand.  The reason so many summands of $h \phi (db)$ are 0 is that $h^2 = 0$, so any terms in $\phi(db)$ that are already in $Im(h)$ will contribute 0.\\

Thus, if by some stroke of insight or theft one is given the candidates $\cT x$ in all degrees, an inductive proof that such a formula for $\phi(b)$ is correct can be fairly easy.  One just needs to match the formula for $\phi(b)$ with a sum of $h$ applied to a few similar looking terms from one degree lower.  Then for other generators $ x = gb$ $$\phi(x) = g\phi(b) = \sum \pm g(\cT b) = \sum \pm \pm \cT(gb)$$ also has the desired form.  As three examples we mention $$EZ \colon N_*(X) \otimes N_*(Y) \to N_*(X \times Y)$$
$$\Phi^{(n)} \colon S_*(n) \otimes N_*(X) \to N_*(X)^{\otimes n}$$
$$\cO_{\cS} \colon S_*(r) \otimes S_*(s_1) \otimes \ldots \otimes S_*(s_r) \to S_*(s).$$ 
The third example, a twisted equivariant standard procedure map, is somewhat more complicated, but the same basic strategy applies. \\

The formula $\phi(b) = h \phi (db)$ for basis elements always recursively forces the signs. The contractions $h$ often involve no signs and signs in $db$ terms are generally standard signs occurring in boundary formulas.  Thus if one also  has candidate formulas for the $\phi$ signs, one has the possibility of showing relatively painlessly by induction that these signs behave correctly in the verification of  $\phi(b) = h \phi (db)$  and $\phi(gx) = g\phi(x)$. This completes an inductive proof that the asserted full formula for $\phi(x)$ is the equivariant  standard procedure chain map, correct for all $x$.\\

It is interesting to compare our arguments with a direct computation that $d \phi = \phi d$. Direct computations can be awkward and seem to rely on large amounts of fortuitous cancellation of terms with opposite signs.  Our arguments require no such cancellation.\\  

It is true that if one checks or repeats or simply accepts previous arguments that $d \phi = \phi d$, and that $\phi$ is equivariant in a group action case, then the statement that $\phi$ coincides with the standard procedure map can be quite easy.  One just observes that for basis elements $b$ of the domain the summands $\cT b$ of $\phi(b)$ are in the image of the contraction of the range and therefore the basic uniqueness theorems apply.  This by itself is interesting.  But we believe our alternate proofs add value by simplifying some arguments, revealing hidden structure,  and putting results in a new context.\\

 Other issues involving complicated chain maps arise in the operad context.  One needs to understand why certain diagrams strictly commute.  These commutative diagrams correspond to operad axioms for a collection of maps, or to verification that certain collections of  chain maps define operad morphisms.  For this our uniqueness theorems seem very useful, since the compositions in question often are seen to be compositions of standard procedure chain maps.  Therefore  criteria that imply compositions of standard procedure chain maps are themselves standard procedure chain maps can be used to prove two different compositions around a diagram must coincide.\\
 
Of course the above few paragraphs are rather vague.  But having read them should make it easier to be motivated and follow details in specific examples.\\

\newpage

\section{Chain and Cochain Complexes}

First we make some comments about our conventions for  functions and permutations.  We write functions on the left of their arguments, $f(x)$.  Thus $fg$ means apply $g$ first, then $f$, so $f(g(x))$.  Permutations are functions, so we compose permutations $\sigma \tau$ by performing $\tau$ first, then $\sigma$, as in $\{1, \ldots, n\} \xrightarrow{\tau} \{1, \ldots, n\}  \xrightarrow{\sigma} \{1, \ldots, n\}.$ We often write permutations in $\Sigma_n$  as a sequence of values $(\sigma(1) ,\ldots, \sigma(n)) = (\sigma_1, \ldots, \sigma_n)$, although at times we use the  disjoint cycle notation without commas to name permutations. For example, $(12)(345) = (2,1,4,5,3)$.

\subsection{Tensor and $HOM$ Complexes}
We will be making extensive use of constructions such as tensor products of chain complexes, complexes of linear maps between chain complexes, including dual cochain complexes, and various sorts of maps between such constructions.  Among other things, there are sign conventions needed in all these constructions.   Historically, various sign conventions have been used in topology.  We feel that there is a  consensus of preferred conventions and we will clarify in this section  the conventions we will use throughout the paper. \\

Since our interest is topology we do not strive for great algebraic generality.  We work  with chain complexes $C_*$ over a commutative ground ring, which will usually be the integers  or a field.  In the presence of group actions, complexes will be  differential graded modules over a group ring.  In particular, the differential is equivariant. But the ground ring doesn't change.  We will be taking tensor products and modules of homomorphisms just over the original ground ring.\\

Our starting points are usually  positively graded\footnote{By a slight abuse of language we write `positively graded' to mean 0 in negative degrees and 'negatively graded' to mean 0 in positive degrees.} complexes that are  free over the ground ring in each degree.  However, then  dual chain complexes are negatively graded chain complexes and occasionally we  work with more general  complexes of homomorphisms that can have components in all integral degrees.  Neither of these will be free unless the ground ring is a field or additional finiteness assumptions hold \\

{\bf Tensor Complexes.} Given {\it arbitrary}  chain complexes $C_*$ and $D_*$ over the ground ring we have the graded tensor product module $C_* \otimes D_*$, which in degree $n$ is given by $\oplus_{i+j = n}C_i \otimes C_j$.  It  is free if $C_*$ and $D_*$ are free.  The boundary operator in a tensor complex  is
$$d(a \otimes b) = da \otimes b + (-1)^{|a|} a \otimes db.$$
where $|a|$ denotes the degree of $a$. It is a simple computation that $d^2 = 0$.\\

Suppose the ground ring is $\FF$, which can be any commutative ring, not necessarily a field. We regard $\FF$ as a chain complex concentrated in degree 0. The differential $d$ just defined on a tensor product  is the only choice that is natural with respect to chain maps and respects the obvious identifications $\FF\otimes C_* = C_* = C_* \otimes \FF$.  So $\FF$ is a unit object in the category of chain complexes over $\FF$. Naturality means the obvious linear map induced between tensor products by maps between the factors is in fact a chain map.  That is, the tensor product is a functor of two variable $\cC \times \cC \to \cC$, where $\cC$ is the category of chain complexes over $\FF$.\\

Geometrically, the  boundary  is related to a point being a unit object for spaces, $* \times X = X = X \times *$.  Also, for manifolds, one wants orientations to satisfy $\partial (M \times N) = \partial M \times N \cup (-1)^{dim(M)} M \times \partial N$, where boundaries are oriented by the ``outward  normal first" convention.  Products are oriented locally  by following an orientation tangent basis of the first factor by an orientation tangent basis of the second factor.\\

{\bf HOM Complexes.} There is  a complex of homomorphisms, $HOM(C_*, D_*)$, which in degree $n$ is given by $\prod _k Hom_{\FF}(C_k, D_{k+n}).$ So $HOM(C_*, D_*)  = Hom_{\cG\cR}(C_*, D_*)$ as graded modules, where $\cG\cR$ is the category of graded $\FF$ modules and graded module homomorphims. These complexes are more complicated because they can be non-zero in all degrees $n \in \ZZ$, even if $C_*$ and $D_*$ are positively graded.\\

To get a chain complex we will impose a differential on $HOM(C_*, D_*)$. Included in the discussion are dual cochain complexes $C^* = HOM(C_*, \FF)$, which are negatively graded when the $C_*$ are positively graded.\footnote{All complexes are chain complexes, that is, all differentials have degree $-1$. But we often refer to elements of cochain complexes as cochains, cocycles, or coboundaries. And we refer to the differential as the coboundary operator.}\\

As mentioned previously, we write functions on the left of their arguments, with the consequential understanding about compositions.  The  boundary operator in a $HOM$ complex   is $$du = d_D \circ u - (-1)^{|u|} u \circ d_C.$$  It is a simple computation that $d^2  = 0$\\

This is the only choice of a differential that makes evaluation of functions $HOM(C_*, D_*) \otimes C_* \to D_*$, $u \otimes x \mapsto u(x)$,  a chain map. Namely, since we must have $d (u \otimes x) = (du) \otimes x + (-1)^{|u|} u\otimes d_Cx$, evaluation being a chain map is equivalent to $ (du)(x) + (-1)^{|u|} u(d_C(x)) =  d_D (u(x)).$\\

As a special case, the coboundary operator in a cochain complex $C^* = HOM(C_*, \FF)$ is defined for $u \in C^*, \ x \in C_*$ by $$< du, x> = (-1)^{|x|} < u, dx >.$$

It is  an easy observation that  $HOM(C_*, D_*)$ is  a functor of both chain complex variables, contravariant in $C_*$ and covariant in $D_*$. Another easy observation is that cycles of degree 0,  $ u \in HOM_0(C_*, D_*)$ with  $0 = du = d_D\circ u - u \circ d_C $, are exactly the morphisms $Hom_{\cC} (C_*, D_*)$ in the category of chain complexes.  The degree 0 cycles also correspond to $Hom_{\cC}( \FF, HOM(C_*, D_*))$.\\

\subsection {Iterations of Tensor and HOM Constructions.} There are many  important chain maps between iterations of tensor and $HOM$ complexes.   We list some of them  here.  The first  few can be recognized as ingredients in establishing the category of chain complexes as a closed symmetric monoidal category, which we will discuss further below. The last few involve dual chain and cochain complexes and are relevant in topology when applied to chain and cochain complexes associated to simplicial sets or other topological space categories.\\

Each chain map in the statements is to be interpreted as a natural transformation of functors.  In all these statements the serious point is that the signs in the boundary formulas work out.  We first point out that in the category $\cG\cR$ of graded modules  there is a very elementary adjoint isomorphism $Hom_{\cG\cR}(B_* \otimes C_*, D_*) \simeq Hom_{\cG\cR}(B_*, Hom_{\cG\cR}(C_*, D_*))$, which is an easy extension of the same isomorphism for modules. The first bullet point below states that this $\cG\cR$ adjoint isomorphism is an isomorphism in the category of chain complexes. 

\begin{prop}\label{3.1}
\end{prop}
\begin{itemize}

\item{\bf 1}
There is a chain map isomorphism $$Ad \colon HOM(B_* \otimes C_*, D_*) \simeq HOM(B_*, HOM(C_*, D_*))$$   $$with\  u \leftrightarrow \mu \ \ where\  \mu(b) (c) = u(b \otimes c).$$

In particular, cycles in degree 0 coincide, which says the restricted map $Hom_{\cC}(B_* \otimes C_*, D_*) \simeq Hom_{\cC}(B_*, HOM(C_*, D_*))$ is an isomorphism of modules.  Thus $HOM( \bullet, D_*)$ is characterized as an adjoint of the functor $ B_* \otimes  \bullet$ in the category $\cC$ of chain complexes over the ground ring.

\item{\bf2:}
Tensor complexes satisfy the following associativity property:
$$ (B_* \otimes C_*) \otimes D_*  \simeq B_* \otimes (C_* \otimes D_* )\ \ \ (b \otimes c) \otimes d \leftrightarrow b \otimes (c \otimes d)$$  is  a   chain isomorphism.\\

\item{\bf3:}
Tensor complexes satisfy the following commutativity property:
$$ C_* \otimes D_*  \xrightarrow{\tau} D_* \otimes C_* \ \ \ c \otimes d \leftrightarrow \tau(c \otimes d) = (-1)^{|c||d|} d \otimes c$$  is  a   chain isomorphism.

\item{\bf4:}
The evaluation operation is a chain map, $$HOM(C_*, D_*) \otimes C_* \to D_* \ \ \ where\  u \otimes c \mapsto u(c).$$

Function composition $$HOM(C_*, D_*) \otimes HOM(B_*, C_*) \to HOM(B_*, D_*)\ \ \ u \otimes v \mapsto u \circ v$$ is a chain map. That is, $d(u \otimes v) \mapsto du \circ v + (-1)^{|u|} u \circ dv$.\\

Function composition is associative. Evaluation is the special case of composition with $B_* = \FF$.

\item{\bf5:}
If $f \in Hom_\cC(A_*, C_*)$ and $g \in Hom_\cC(B_*, D_*)$ are chain maps, then by functoriality of $\otimes$ the map $(f \otimes g)(a \otimes b) = f(a) \otimes g(b)$ is a chain map $A_* \otimes B_* \to C_* \otimes D_*$.\\

More generally, the following map is a chain map
$$HOM(A_*, C_*) \otimes HOM(B_*, D_*) \xrightarrow{\underline{\otimes}} HOM(A_* \otimes B_*, C_*\otimes D_*)\ \  f\otimes g \mapsto f \underline \otimes g $$ $$ where\ \ (f  \underline\otimes g)(a \otimes b)= (-1)^{|a||g|}f(a) \otimes g(b).$$ 

The product $\underline \otimes$ is associative for triple tensor products of $HOM$ complexes.\\

The map $f \underline{\otimes} g$ is also the composition $$(f \underline{\otimes} Id_D) \circ (Id_A \underline{\otimes} g) \colon A_* \otimes B_* \to A_* \otimes D_* \to C_* \otimes D_*.$$

\item{\bf6:}
The tensor operation $\underline \otimes$ and the composition operation $\circ$ for $HOM$ complexes are related as follows.  Suppose $\alpha_i \in HOM(A_{i*}, B_{i_*}), \beta_i \in HOM(B_{i*}, C_{i*}),\ i = 1,2.$  Then 
$$\beta_1 \circ \alpha_1\ \underline \otimes\ \beta_2 \circ \alpha_2 = (-1)^{|\alpha_1| |\beta_2| } \beta_1 \underline \otimes \beta_2\ \circ\ \alpha_1 \underline \otimes \alpha_2 \in HOM(A_{1*} \otimes A_{2*}, C_{1*} \otimes C_{2*}).$$ 
By induction, if $\alpha_i \in HOM(A_{i*}, B_{i*})$ and $\beta_i \in HOM(B_{i*}, C_{i*})$ for $1 \leq i \leq r$ then in $HOM( \otimes_iA_{i*}, \otimes_i C_{i*})$ one has 
$$\beta_1 \circ \alpha_1\ \underline \otimes\ \ldots \underline \otimes\ \beta_r \circ \alpha_r =  (-1)^k \beta_1 \underline \otimes \ldots \underline \otimes \beta_r\ \circ\ \alpha_1 \underline \otimes \ldots \underline \otimes \alpha_r,$$ 
where $(-1)^k $ is the Koszul sign that shuffles $\beta$'s across $\alpha$'s.

\item{\bf7:}
There is a chain map that is an isomorphism if  $C_*$ is finitely generated and free in each degree  $$B_* \otimes HOM(C_*, D_*) \to HOM(C_*, B_*\otimes D_*)\ where\  (b \otimes \gamma)(c) = b\ \otimes \gamma(c).$$

\item{\bf8:}
There are chain maps relating $HOM$ complexes and dual complexes.  Both of the maps below are chain maps that are isomorphisms if $C_*$ is finitely generated and free in each degree.
$$B_* \otimes C^* \to HOM(C_*, B_*), \ where \ (b \otimes \gamma) (c) = b\ \gamma(c)$$ and
$$C_* \to HOM(C^*, \FF) = C^{**}: \ c \mapsto c^*,\ where\ c^*(\gamma) = (-1)^{|c|} \gamma(c)$$

\item{\bf9:}
 The two maps below are chain maps, that are isomorphisms with finite free assumptions.  $$A^* \otimes B^* \to (A_* \otimes B_*)^*\ where\ <\alpha \otimes \beta, a \otimes b> = (-1)^{|a||\beta|}\alpha(a) \beta(b).$$ and 
$$HOM(C_*, D_*) \to HOM(D^*, C^*): \   u \mapsto u^*,\   u^*(\delta) (c) =   (-1)^{|u||\delta|} \delta(u(c)).$$ 
The second  map is related to composition by $(v \circ u)^* = (-1)^{|u| |v|} u^* \circ v^*$.

\end{itemize}

\begin{proof} One method of proof that all  the maps above are chain maps is to just unravel  the definitions and use the boundary formulas, keeping careful track of signs.\footnote{But it is very easy to make mistakes!} But another method of proof is to examine adjoints. By $\bullet 1$, a map is a chain map if its adjoint is a chain map, since chain complex isomorphisms induce bijections of 0 cycles.\\

$\bullet 2$ and $\bullet 3$. Both  are trivial.  By the Yoneda Lemma,  associativity of $\otimes$ and the second statement in $\bullet 1$ implies the first statement in $\bullet 1$.  Because associativity implies both $HOM$ complexes in the first statement of $\bullet 1$ represent the same functor $A_* \mapsto Hom_{\cC}(A_* \otimes B_* \otimes C_* , D_*)$.  \\

$\bullet 4$. The first statement  was already observed  when we defined the differential in $HOM(C_*, D_*).$\\

The adjoint of function composition is the composition of chain maps $$HOM(C_*, D_*) \otimes HOM(B_*, C_*) \otimes B_* \xrightarrow{Id \otimes ev}  HOM(C_*, D_*) \otimes C_* \xrightarrow{ev} D_*$$ which proves the second statement in $\bullet 4$. The remaining statements  are trivial.\footnote{If $C_* = D_*$, the differential $du$ is the bracket $[d, u]$ in the graded world.  The formula for $d(v \circ u)$ shows that  $HOM(C_*, C_*)$ becomes a differential graded algebra.  That is, the composition product $v \otimes u \mapsto v \circ u$ is a chain map $HOM \otimes HOM \to HOM$.  Also, $C_*$ becomes a differential graded left module over $HOM(C_*, C_*)$.}\\ 

$\bullet 5$. The first statement  follows from the fact that the $\cG\cR$ adjoint of the $ \underline \otimes $ map is the composition, which is a chain map, $$HOM(A_*, C_*) \otimes HOM(B_*, D_*) \otimes A_* \otimes B_* \to$$ $$  HOM(A_*, C_*) \otimes A_*  \otimes HOM(B_*, D_*)  \otimes B_* \to C_* \otimes D_*,$$  obtained by permuting the  two middle factors then tensoring two degree 0 evaluation chain maps.\\

The second statement, associativity of $\underline{\otimes}$, is easy to prove directly, but can also be proved by looking at adjoints and using associativity of composition.\\

$\bullet 6$. This result will be important for establishing properties of $End$ and $CoEnd$ operads in Section 18.  It seems the simplest proof for chain complexes is to just verify the adjoint statement by choosing $a_i \in A_i$ and applying both sides of the desired equation to $a_1 \otimes a_2$, keeping track of signs.  The key point is that the definition of the adjoint of $\underline{\otimes}$ in $\bullet 5$ is $(f  \underline\otimes g)(a \otimes b)= (-1)^{|a||g|}f(a) \otimes g(b)$.\\

One can replace this last formula containing a sign and two function evaluations by the more abstract formula for the adjoint of $\underline{\otimes}$ in $\bullet 5$, which reads $(f \otimes  g \otimes a \otimes b) \mapsto (ev \otimes ev) (f \otimes \tau(g \otimes a) \otimes b)$, where,  in the notation of $\bullet 5$, $$(f \otimes \tau(g \otimes a) \otimes b) \in HOM(A_*, C_*) \otimes A_* \otimes HOM(B_*, D_*) \otimes B_*.$$ Such an abstract approach makes sense in any closed symmetric monoidal category. Behavior of Koszul signs is replaced by the fact that  two morphisms between  tensor products with multiple factors  must coincide, if both are compositions of sequences of associativities and basic $\tau$ permutations.\\

$\bullet 7$. The $\cG\cR$ adjoint is $Id \otimes ev \colon B_* \otimes HOM(C_*, D_*) \otimes C_* \to B_* \otimes D_*$, which is an ordinary tensor product of chain maps.\\

 $\bullet  8$. The first map is the special case of $\bullet 7$ with $D_* = \FF$. The adjoint of the second map is $ev \ \tau \colon C_* \otimes HOM(C_*, \FF) \to HOM(C_*, \FF) \otimes C_* \to \FF$.\\
 
 $\bullet  9$. The first map  is the special case of  $\bullet 5$ with $C_* = D_* = \FF$. The adjoint of the second map is  $ \circ\ \tau \colon HOM(C_*, D_*) \otimes HOM(D_*, \FF) \to HOM(D_*, \FF) \otimes HOM(C_*, D_*) \to \FF.$ 
 \end{proof}

\begin{rem}\label{3.2}{*\bf Closed Symmetric Monoidal Categories*.} The collection of formulas in Proposition \ref{3.1} contain more than enough results to establish that  categories of chain complexes are closed symmetric monoidal categories, with $\otimes$ as the product and the $HOM$ complex adjoint to $\otimes$ as the internal hom functor. The isomorphism in $\bullet 2$ defines the associator.    The isomorphism $\tau$ in $\bullet 3$ that switches order of factors in $\otimes$ is the braiding. The formulas in $\bullet 2$ and $\bullet 3$ rather easily imply all ways of associating and braiding many factors in  tensor product diagrams give the same results.  This is a consequence of the fact that the Koszul sign in $\bullet 3$ can be viewed as the sign of a permutation in a big symmetric group, moving a block of objects across another block of objects. Then use the fact that the sign of a permutation is well-defind.\\

There are other symmetric monoidal category axioms involving a unit object, which for chain complexes is just the ground ring $\FF$ concentrated in degree 0.  Properties of the unit object  are completely trivial for chain complexes, so are not included in the proposition. Thus, it is rather easy to see that chain complexes form a symmetric monoidal category.  The adjoint functor statements of $\bullet 1$ then guarantee that chain complexes form a closed symmetric monoidal category.\\

In some sense, the signs involved in the various parts of Proposition \ref{3.1} are not so important, but rather it is the commutativity of various diagrams that is important.  However, some attention must be paid to basic signs to get a toehold on more complicated maps. It will be  useful in the construction of End and CoEnd operads that we take up in Section 18 to identify exactly which properties of closed symmetric monoidal categories are needed,  and that we have established those properties for chain complexes in Proposition 3.1. For $End$ and $CoEnd$ operads, the order of composition in $\bullet 4$  is quite important, as is the $\underline{\otimes}$ construction in $\bullet 5$ and the formula in $\bullet 6$.  $\qed$

 \end{rem}

{\bf *Comments on Cup Products.*} In a topological situation, the maps in bullet point 9 of Proposition \ref{3.1} will become the pairing between tensors of cochains and tensors of chains used to define cup products.  The sign in the map of bullet point 9 is required,  once we agree on the sign in the differential  in general cochain complexes, $<du, x> = (-1)^{|x|} <u, dx>$. \\

In topology we want the cochain algebras of spaces to be differential graded associative algebras. The sign convention for the coboundary  forces the cochain cup product formula in the topological case to include the ``Dold sign", in order that the coboundary is a derivation and that the cochains on a space form a $DGA$ algebra. Specifically, if $x = (x_0, x_1, \ldots, x_n)$ are vertices of an ordered  simplex and $\alpha$ and $\beta$ are cocycles whose degrees add to $-n$, the Dold sign convention for cup products using the $AW$ diagonal is the  definition\footnote{In the category of  simplicial sets a map $x \colon \Delta^n \to X$ is not determined by vertices. But the abuse of notation $x = (x_0, x_1, \ldots, x_n)$  makes it easy to refer to face operators.}  $$<\alpha \cup \beta, x> = \sum _{0
\leq i \leq n}(-1)^{i(n-i)} <\alpha, (x_0, \ldots,x_i)><\beta, (x_i, \ldots, x_n)>.$$

If the differential in a cochain complex $C^*$ is chosen to be the simple adjoint $\delta$, that is, $<\delta u, x> \ = \ <u, dx>$, then the cup product formula is  $$<\alpha \cup \beta, x>\  =\  \sum_{0 \leq i \leq n}  <\alpha, (x_0, \ldots,x_i)><\beta, (x_i, \ldots, x_n)>.$$

The conventions with no signs at all  might seem simpler, and is historically the way coboundaries and cup products of cochains were originally defined in topology.  But it is not the ``right" way to do things, because, among other reasons, one wants the coboundary to agree with the general choice for $HOM$ complexes.\\

We point out that there is an isomorphism of cochain complexes $(C^*, \delta) \simeq (C^*, d)$, defined by $\alpha \leftrightarrow (-1)^{|\alpha|(|\alpha|+1)/2} \alpha$.  In the topological situation, this isomorphism becomes an isomorphism between the two versions of the cup product $DGA$ cochain algebras of spaces with a fixed diagonal. $\qed$

\subsection{Group Actions on Tensor and $HOM$ Complexes}
In this subsection we discuss some aspects of group actions on chain complexes.  If a group $G$ acts on the left as chain maps of a chain complex $C_*$ and a group $H$ acts on the left on $D_*$  then $G \times H$ acts  on $C_* \otimes D_*$ by $(g, h) (a \otimes b) = ga \otimes hb$.  If $H = G$ then there is the associated diagonal $G$ action on $C_* \otimes D_*$, namely $g (a \otimes b) = ga \otimes gb$.\\

The group $G \times H$ also acts on the left of $HOM(C_*, D_*)$ by $((g, h) u) (c)  = h (u g^{-1}(c))$, which is the  composition of functions $h \circ u \circ g^{-1} \colon C_* \to C_* \to D_* \to D_*$.  
For example, with trivial group action on $\FF$ and a  left $G$ action on $C_*$, the left action on the  dual complex $HOM(C_*, \FF)$ is $(g\alpha)(x) = \alpha (g^{-1}(x))$, which is the composition of functions $g \alpha = \alpha \circ g^{-1} \colon C_* \to C_* \to \FF$.\\

Of course right group actions can always be converted to left group actions, and vice-versa,  by the definition $ g y = y g^{-1}$.   For example, the most natural way to view the action on a dual complex is a right action, $\alpha g^{-1} = \alpha \circ g^{-1}$, which converts to the left action $g \alpha = \alpha \circ g^{-1}$ defined above.\\
 
The natural action of $\Sigma_p$ on a tensor power $C_*^{\otimes p}$, as a group of chain complex automorphisms, is also a right action, $$(a_1  \otimes \ldots \otimes a_p)g = (-1)^k\  a_{g1} \otimes \ldots \otimes a_{gp},\ k = k(a,g).$$
To understand this, it is convenient to first view the data of a basic tensor $a_1 \otimes \ldots \otimes a_p$ as a function $a \colon \{1, \ldots, p\} \to C_*$.  Then $ag = (-1)^k\ (a \circ g)$ as functions. We need a sign because the automorphism of the tensor complex determined by the permutation $g$ must commute with the boundary operator.  The sign, from Proposition \ref{3.1}, is the Koszul sign that counts the parity of the number of odd degree pairs whose order is swapped by the permutation $g$ or $g^{-1}$.  That is, $(-1)^{k(a,g)} = \tau (g|_{|a_i| =  odd})$, where $\tau$ is the parity sign of the permutation $g$ interpreted as rearranging the order of the indicated  subset of the $\{ a_i\}$. The conversion of the right action on the tensor power to a left action is thus $g^{-1}(a_1 \otimes \ldots \otimes a_p) = (-1)^{k(a,g)}\ (a_{g1} \otimes \ldots \otimes  a_{gp})$. \\ 

Left action of a permutation $g^{-1}$ on tensor powers extends to chain maps $g^{-1} \colon (C_1)_* \otimes  \ldots \otimes (C_p)_* \to (C_{g1})_* \otimes  \ldots \otimes (C_{gp})_*$ between tensor products of different chain complexes.  One can also just view this as a left action of $\Sigma_p$ on a direct sum of complexes $\bigoplus_{h \in \Sigma_p}  (C_{h1})_* \otimes  \ldots \otimes (C_{hp})_*$.  But caution is required, $g^{-1}$ will map the indicated summands to $ (C_{hg1})_* \otimes  \ldots \otimes (C_{hgp})_*$ since permutations are applied to the {\it positions} of  tensor factors,  not the subscripts of the tensor factors. For example $(12) (a_{h1} \otimes a_{h2} \otimes \ldots) =  (a_{h2} \otimes a_{h1} \otimes \ldots)$, the first and second factors are switched.  In fact, already for powers of a fixed $C_*$ this observation applies to the fixed dimension components of $C_*^{\otimes p}$, which are direct sums of tensor factors of terms of different degrees.  \\

Since any permutation is a composition of transpositions of adjacent indices, the properties of symmetric group actions on tensor products can be viewed as iterations of  associativities and transpositions from bullet points 2 and 3 of Proposition \ref{3.1}. The inverse of one transposition is itself, but to obtain left actions one must pay attention to the order of compositions of transpositions. For example, with cycle notation and up to factors $\pm 1$,  $$(12)(23) (a_1 \otimes a_2 \otimes a_3 \otimes \ldots ) = (12) (a_1\otimes  a_3 \otimes a_2 \otimes \ldots ) =    (a_3\otimes  a_1 \otimes a_2 \otimes \ldots ).$$ Then the left action/inverse business works out, $g = (12)(23) = (231)$ and $g^{-1} = (132)$.  Koszul signs in these iterated permutation actions on general tensor products also take care of themselves.\\

If $B_*$ is an $H$-complex, $C_*$ is a $G$-complex, and $\iota \colon H \to G$ is a homomorphism, a chain map $\phi \colon B_* \to C_*$ is { \it $\iota$-equivariant} (or just {\it equivariant} if the groups and $\iota$ are understood) if $\phi (hx) = \iota(h) \phi(x)$ for all $h \in H, x \in B_*$. All the statements in the bullet points of Proposition \ref{3.1} have immediate equivariant extensions, given appropriate homomorphisms between groups that act on the complexes in the statements.

\subsection{Remarks on Chain Homotopies}
For homotopy theory in general, and specifically for our intended study of explicit chain level constructions involving contractions, chain homotopies play a central role. Here is a simple observation that is immediate from the definition of the differential $d = d_{HOM}$ in $HOM$ complexes.
\begin{lem}\label{3.3} (i). Suppose $H \colon A_* \to B_{*+1}$ is any linear map of degree 1.  Then $d_{HOM} (H) = d_B H + H d_A \in HOM_0(A_*, B_*)$.\\

(ii). Thus two chain maps $f, g \colon A_* \to B_*$, that is, cycles in $HOM_0(A_*, B_*)$, are chain homotopic exactly when $f - g $ is a boundary in the hom complex $HOM(A_*, B_*)$.   
\end{lem}

It is useful to record some other basic results  concerning compositions of chain homotopies $H \colon B_* \to C_{*+1}$  with chain maps and concerning suitably interpreted tensor products of chain homotopies with chain maps.
\begin{prop}\label{3.4} Suppose $f, g \colon B_* \to C_*$ are homotopic chain maps with $dH + Hd = f - g$.\\

(i). Suppose $\alpha \colon A_* \to B_* $ and $\gamma \colon C_* \to D_*$ are chain maps.  Then $H \circ \alpha$ is a chain homotopy between the chain maps $f \circ \alpha$ and $ g \circ \alpha \colon A_* \to C_*$.  Also  $\gamma \circ H$ is a chain homotopy between the chain maps $\gamma \circ f$ and $ \gamma \circ g \colon B_* \to D_*$.\\

(ii).  Suppose $\beta \colon X_* \to Y_*$ is a chain map. Then $H \underline{\otimes} \beta$ is a chain homotopy between chain maps $f \otimes \beta$ and $g \otimes \beta \colon B_* \otimes X_* \to C_* \otimes Y_*$. Also $\beta \underline{\otimes} H$ is a chain homotopy between chain maps $\beta \otimes f$ and $\beta \otimes g \colon X_* \otimes B_* \to Y_* \otimes C_*$.
 \end{prop}
\begin{proof} Part (i) is quite trivial since $$d_{HOM}(H \circ \alpha) = d(H\alpha) + (H \alpha) d = dH\alpha + Hd \alpha = (f-g) \circ \alpha$$  $$d_{HOM}(\gamma \circ H ) = d (\gamma H) + (\gamma H) d = \gamma dH + \gamma H d = \gamma \circ (f - g).$$ The point is, the chain maps $\alpha$ and $\gamma$ commute with boundaries.\\

Part (ii) is somewhat more subtle because of signs that enter computations involving the chain map $$\underline{\otimes} \colon HOM(B_*, C_*) \otimes HOM(X_*, Y_*) \to HOM(B_* \otimes X_*, C_* \otimes Y_*)$$ from bullet point 5 of Proposition \ref{3.1}.  From the chain map $\underline{\otimes}$ we obtain
$$d_{HOM}(H \underline{\otimes} \beta) = d_{HOM}( H) \underline{\otimes} \beta - H \underline{\otimes} d_{HOM}(\beta) = (dH + Hd) \otimes \beta = (f-g) \otimes \beta.$$
With the order of all tensor products reversed, we also obtain
$$ d_{HOM}(\beta \underline{\otimes }H) = d_{HOM}(\beta) \underline{\otimes} H + \beta \underline{\otimes} d_{HOM}(H) = \beta \otimes (dH+Hd) = \beta \otimes (f-g).$$
The points are that $H$ has degree 1 and $f, g, \beta$ and $d_{HOM}(H)$ are cycles of degree 0 in hom complexes. For tensor products of degree 0 chain maps the $\underline{\otimes} $ product  is just the ordinary functorial $\otimes$ product in the category of vector spaces.\\

The one-line proofs of the statements in part (ii) are painless, but only because we proved bullet point 5 of Proposition \ref{3.1} by an adjoint argument.  Here only very special cases are needed and it is an easier exercise to verify these special cases by evaluating the homomorphisms involved on tensors in $B_* \otimes X_*$ or $X_* \otimes B_*$. Specifically $H \underline{\otimes} \beta (b \otimes x) = H(b) \otimes \beta(x)$ while  $\beta \underline{\otimes} H (x \otimes b) = (-1)^{|x|} \beta(x) \otimes H(b)$. It still requires some effort to keep track of all the signs in the direct verifications of the two statements in part (ii).\\

Versions of Proposition \ref{3.4} for equivariant chain maps and chain homotopies are routine.
\end{proof}

\subsection{*Preview of Cochain Level Steenrod Operation*}In this asterisked subsection we preview the classic construction of the Steenrod cyclic reduced power cohomology operations beginning with certain chain maps involving tensor powers.  We discuss this rather abstractly here, but in the classic cases the coinvariant complexes $B_*/G$ to be introduced below are specifically chain complexes of classifying spaces $BG$ of groups $G$, and the complexes $C_*$ and $C^*$ are chain and cochain complexes associated to topological spaces, for example the normalized chains and cochains $N_*(X)$ and $N^*(X)$ of simplicial sets $X$.\\

The arguments sketched in this subsection are old arguments that can mostly be found  in the classic book of Steenrod and Epstein [31, and in many other places. We are including them here in the spirit that we regard our project as a textbook that starts from the beginning and develops Steenrod operations much more at the chain and cochain level rather than only at the cohomology level. The discussion here makes use of several of the basic constructions with chain complexes given in Section 3. The results sketched in this subsection will be covered in much greater detail later in Part IV.  As discussed in the introduction, the important results in Parts I, II, and III of this paper are not specifically about Steenrod operations but about operads. Steenrod operations will be the focus of the separate paper Part IV. What will be new in Part IV, compared to the classical arguments that we sketch here, is that we will work with explicitly defined chain maps and chain homotopies, rather than with maps whose existence and uniqueness up to chain homotopy is deduced by acyclic model methods.\\

We will work in the category of $\FF = \FF_p$ chain complexes, $p$ prime. Suppose $B_*$ is a  contractible\footnote{This means homotopy equivalent as $\FF$ complexes  to the unit complex $\FF$.}  free left $\FF[G]$ complex, where $G \subseteq \Sigma_p$ is a permutation group that contains a $p$-cycle $t$.     Equivalently $p$ divides  $|G|$.  Then $<t>\ \subseteq G$ is a Sylow subgroup.   If $p = 2$ then $G = \Sigma_2$, $\FF = \FF_2$, and there are no signs  in the discussion of this section, as well as other simplifications. We may as well assume $t = (23\ldots p1)$, the cyclic permutation\\

Suppose  $\phi \colon B_* \otimes C_* \to C_* ^ {\otimes p}$ is a $G$-equivariant chain map, where $G$ acts by left permutations on the target.\footnote{Such maps $\phi$ were discussed in the introductory Subsection 1.2.  Basic facts about $HOM$, tensor powers, and group actions were established here in Section 3. There is no reason to wait more than 200 pages to see how they can be used.} We can regard $\phi$ as a degree 0 cycle in $HOM(B_* \otimes C_*, C_*^{\otimes p})$.  Then the adjoint  map $Ad(\phi) \colon B_* \to HOM(C_*, C_* ^{\otimes p})$ from Proposition \ref{3.1} is also a 0 cycle, that is, a chain map, and is equivariant.  We can compose this equivariant adjoint map with the  duality maps in the last two bullet points  from Proposition \ref{3.1}  to get $$ B_* \xrightarrow{Ad(\phi)} HOM(C_*, C_*^{\otimes p}) \to HOM((C_*^{\otimes p})^*, C^*) \to HOM((C^*)^{\otimes p}, C^*).$$ All maps in this sequence are left equivariant chain maps, with the suitably formed left actions.  There are signs in the second two maps, given in Proposition \ref{3.1}. \\

This last composition  has an adjoint  $\Phi \colon B_* \otimes (C^* )^{\otimes p} \to C^* $,
which has the form $$\Phi(b \otimes (\alpha_1 \otimes \ldots \otimes \alpha_p))(c) =  (-1)^{|b| |a|}< \alpha_1 \otimes \ldots \otimes \alpha_p,\ \phi (b \otimes c)>, $$ where $|a| = | \alpha_1 \otimes \ldots \otimes \alpha_p|$.   In particular, if $|b|$ or $|a|$ is even the sign goes away.  The evaluation is 0 unless $-|c| = |b| + |\alpha_1 \otimes \ldots \otimes \alpha_p|$. The term $\phi(b \otimes c) \in C_*^{\otimes p}$  will be a sum of $p$-tensors $\sum c_1 \otimes  \ldots \otimes c_p$, each of total degree, $|b| + |c|$ and from Proposition \ref{3.1} the last evaluation is computed using $$< \alpha_1 \otimes \ldots \otimes \alpha_p,\  c_1 \otimes \ldots \otimes c_p>\  =\  (-1)^{\ell(\ell -1)/2} \prod <\alpha_j,\  c_j >$$ This evaluation on $p$-tensors is 0 unless $ |\alpha_j | = - |c_j|$.  The integer  $\ell$ is the number of $c_j$ of odd degree. If all $\alpha_i = \alpha$ and $| \alpha | = - |c_j|$  then the full $\Phi$ evaluation becomes $\Phi(b \otimes \alpha^{\otimes p})( c) = \sum (-1)^{|b| |\alpha | m} \prod <\alpha, c_j>$, where $m = (p-1)/2$.\\

The $\Sigma_p$ action on $C^*$ is trivial here, so the equivariance property of $\Phi$ is $$\Phi = \Phi \circ g  \colon B_* \otimes (C^*)^ {\otimes p} \to B_* \otimes (C^*)^ {\otimes p} \to C^*,\ \ g \in G.$$  
In other words, the map $\Phi$ factors through a map $\bar{\Phi}$ defined on the  coinvariant complex obtained by dividing by the diagonal action of $\Delta G \subset G \times G$, $$\Phi   \colon B_* \otimes (C^*)^ {\otimes p} \xrightarrow{/\Delta G}  B_* \otimes_{\Delta G} (C^*)^ {\otimes p} \xrightarrow{\bar{\Phi}} C^*.$$
Alternate formulations of the equivariance are $$\Phi (gb \otimes g (\alpha_1 \otimes \ldots \otimes \alpha_p))  = \Phi (b \otimes  \alpha_1 \otimes \ldots \otimes  \alpha_p)$$  $$\Phi (gb \otimes  \alpha_1 \otimes \ldots \otimes \alpha_p) = \Phi (b \otimes g^{-1} (\alpha_1 \otimes \ldots \otimes \alpha_p)) = \pm \Phi (b \otimes  (\alpha_{g1} \otimes \ldots \otimes \alpha_{gp})).$$
 Note there are Koszul signs in the evaluations of  any $g (\alpha_1 \otimes \ldots \otimes \alpha_p), g \in \Sigma_p$. If $t = (23 \ldots p1)$ then $t(\alpha_1 \otimes \alpha_2 \ldots, \alpha_p) = \pm (\alpha_p \otimes \alpha_1, \otimes  \ldots \otimes \alpha_{(p-1)}) $.\\

{\bf Discussion of Invariants and Coinvariants.} Denote $\bar{B}_* = B_* /G$, the coinvariants.  Suppose first that $G$ consists only of even permutations, for example $G =\ <t>$. Then  for any cocycle $\beta \in C^q$ the tensor $\beta^{\otimes p}$ is a $G$-invariant cocycle.  If $(\ \ )^G$ denotes the invariant subcomplex of a $G$-complex then there is an obvious well-defined chain map  inclusion $\bar{B}_* \otimes ((C^*)^{\otimes p})^G \subset B_* \otimes_{ \Delta G} (C^*)^{\otimes p}$ that maps $\bar{b} \otimes z$ to the $\Delta G$-orbit $b \otimes_{\Delta G} z$ of $b \otimes z$.  If $b \in B_*$ is such that its image $\bar{b} \in \bar{B}_*$ is a cycle (resp. boundary), then $\bar{b} \otimes \beta^{\otimes p}$ names a well-defined cycle (resp. boundary) in the coinvariant complex.  Hence $\bar{\Phi}(\bar{b} \otimes \beta^{\otimes p}) \in C^*$ is a cocycle (resp. coboundary). \\

If $G$ contains odd permutations and $deg(\beta) = q$ is odd then $\beta^{\otimes p}$ is not invariant, since $g \beta^{\otimes p} = \tau(g) \beta^ {\otimes p}$, where $\tau(g) \in \{\pm 1\}$ is the parity character.  For any $G$-complex, define a new action of $G = \widetilde{G} $ on the same complex by setting the new action of $\tilde{g}$ to be the original action of $\tau(g) g$.\footnote{Equivalently, tensor the original $G$-complex with $\widetilde{\FF}$, meaning the complex $\FF$ concentrated in degree 0 where $g$ acts on $\FF$ by $\tau(g) \in \{ \pm 1\}$.}  So the group $G$ and complex $B_*$ don't change, and the $\widetilde{G}$ notation simply indicates a different $G$ action. The diagonal actions of $\Delta G$ and $\Delta \widetilde{G}$ on  $B _* \otimes (C^*)^{\otimes p}$ are identical, since the two $\tau(g)$'s cancel, hence they have the same coinvariants and  the map $\bar{\Phi}$ can be viewed as being a map $\widetilde{\Phi}$ defined on $\Delta \widetilde{G}$-coinvariants.\\

Denote the new coinvariants of $B_*$ by $\widetilde{B}_* = B_* / \widetilde{G}$. We then have an inclusion $\widetilde{B}_* \otimes ((C^*)^{\otimes p})^{\widetilde{G}} \subset B_* \otimes_{\Delta \widetilde{G}} (C^*)^{\otimes p}$ that maps $\tilde{b} \otimes z$ to the $\Delta \widetilde{G}$-orbit $b \otimes_{\Delta \widetilde G} z$ of $b \otimes z$. If $\beta$ has odd degree $q$ then $\beta^{\otimes p} \in ((C^*)^{\otimes p})^{\widetilde{G}}$ is invariant.  Thus if $\tilde{b} \in \widetilde{B}_*$ is a cycle (resp. boundary) then $\widetilde{\Phi}(\tilde{b} \otimes \beta^{\otimes p}) \in C^*$ is a cocycle (resp. coboundary).\\

{\bf The Definition of Operations.} For simplicity in the proof of the following result, we will make use of additional assumptions about $B_*$.  We assume that in degree 0, $B_0 = \FF_p[G]$ and we assume that all homology classes of $B_* / G$ or $B_* / \widetilde{G}$ are represented by $b \in B_*$ for which the $t$-norm $Nb = \sum t^i b \in B_*$ is a boundary. Since $t$ is an even permutation this condition does not depend on which $G$ action is meant. These assumptions easily hold for $B_*$ a contractible free $\FF_p[G]$ complex, since they trivially hold if $G = \ <t>$ is cyclic, as we will see in Example \ref{5.4}. Then even for bigger groups we will only use homology classes that obviously come from the cyclic group. But regardless of that,  because $<t>$ is a Sylow subgroup of any bigger $G \subseteq \Sigma_p$,  a well known result is that the map of $\FF_p$ coefficient group homology of $<t>$ to group homology of $G$ is surjective.\\

We remind that if $C_*$ is positively graded then $C^*$ is negatively graded, so cocycles will have degrees $q \leq 0$.  We use supercripts to designate specific cochain and cohomology groups, $C^q$ and $H^q(C^*)$. 
 
\begin{prop}\label{3.5} The map $\bar{\Phi} \colon \bar{B}_* \otimes ((C^*)^{\otimes p})^G \to C^*$  induces a  linear homomorphism of degree $q(p-1) + k$ for $q$ even, $D \colon H_k(\bar{B}_*) \otimes H^q(C^*) \to H^{pq+k}(C^*)$, defined at the chain level by $D(\bar{b} \otimes \beta) = \bar{\Phi}(\bar{b} \otimes \beta^{\otimes p}) = \Phi(b \otimes \beta^{\otimes p})$.\\

The map $\widetilde{\Phi} \colon \widetilde{B}_* \otimes ((C^*)^{\otimes p})^{\widetilde{G}} \to C^*$  also induces a  linear homomorphism of degree $q(p-1) + k$ for $q$ odd, $D \colon H_k(\widetilde{B}_*) \otimes H^q(C^*) \to H^{pq+k}(C^*)$, defined at the chain level by $D(\tilde{b} \otimes \beta) = \widetilde{\Phi}(\tilde{b} \otimes \beta^{\otimes p}) = \Phi(b \otimes \beta^{\otimes p})$. 

\end{prop}
 \begin{proof} Of course if $G$ consists only of even permutations then $\bar{B}_*= \widetilde{B}_*$, and there is only one homomorphism $D(\bar{b} \otimes \beta) = \bar{\Phi}(\bar{b} \otimes \beta^{\otimes p})$, defined for all $q$. In this case the homomorphisms $D$ define cohomology operations on $H^*(C^*)$, one for each homology class $[\bar{b}] \in H_*(\bar{B}_*)$. The Steenrod operations arise from generators of the homology groups of  $\bar{B}_* = B_*/ <t>$,  which are  classifying spaces of the cyclic groups of order $p$.  It is well known that these homology groups are isomorphic to $\FF_p$ in each degree.\\
 
The construction of  the maps $D$ is provided in  paragraphs  above the proposition, along with the facts that the maps depend only on the homology classes of $\bar{b}$ and $\tilde{b}$.  Additivity in the first variable is trivial.  It remains to prove that if $\beta = d\alpha$ is a coboundary then $\Phi(b \otimes \beta^{\otimes p})$ is a coboundary and that the induced homology maps are linear. We will sketch proofs. We first consider what happens if $\beta = d \alpha$.  This seems surprisingly non-trivial compared to the observation that $\Phi(b \otimes \beta^{\otimes p})$ is a cocycle if $\beta$ is a cocycle..\\

We need to explain why in the coinvariant complexes the cycles $\bar{b} \otimes (d \alpha)^{\otimes p}$ or $\tilde{b} \otimes (d \alpha)^{\otimes p}$ are boundaries. We form the  small acyclic subcomplex $(\alpha, \beta)$ of $C^*$. Now, there are two actions of $G$ on $B_* \otimes (\alpha, \beta)^{\otimes p}$.  There is the diagonal action of $\Delta G$ and there is the action of $G$ just on the first factor $B_*$. It turns out that using the acyclicity of $(\alpha, \beta)^{\otimes p}$ we can use the methods we begin developing in Section 6 for using contractions to recursively construct chain maps to prove these two $G$ or $\widetilde{G}$-complexes are {\it equivariantly isomorphic}, extending the obvious isomorphism in the lowest degree. In that degree both complexes are $\FF_p[G] \otimes (\beta^{\otimes p})$, with the same actions. In fact, an equivariant isomorphism exists extending the identity on $B_* \otimes (\beta^{\otimes p})$. Therefore, the coinvariant complexes are isomorphic, $(B_* / G) \otimes (\alpha, \beta)^{\otimes p} \simeq B_* \otimes_{\Delta G} (\alpha, \beta)^{\otimes p}$, and similarly in the $\widetilde{G}$ case. But on the left side, $\bar{b} \otimes \beta^{\otimes p} = \pm d(\bar{b} \otimes (\alpha \otimes \beta^{\otimes (p-1)}))$  so the equivariant isomorphism produces a corresponding boundary formula for $\bar{b} \otimes \beta^{\otimes p}$ in the $\Delta G$-complex. The same argument applies in the $\widetilde{G}$ case.\\
   
Next we take up the linearity claim. The non-commutative expansion of $(\beta + \gamma)^{\otimes p}$ has the form $ \beta^{\otimes p} + \gamma^{\otimes p}+ \sum (terms) $, where $\sum (terms)$ consists $2^p - 2$ summands that are tensor products of  both $\gamma$'s and $\beta$'s. Note $\sum(terms)$ is $G$ or $\widetilde{G}$ invariant, depending on the parity of the degrees, since the three tensor power terms are invariant.\\

The terms fall into $(2^p - 2) / p$ blocks, where each block is characterized by some number $1 \leq k  < p$ of $\beta$'s and a cyclic placement of the $\gamma$'s and  $\beta$'s.   We are assuming that $G$ contains a $p$-cycle $t$. Then each block of terms has the form $Ny$, where $y$ is one of the tensor product terms in the block and $N = 1 + t + \ldots + t^{p-1}$ is the `$t$-norm'.\footnote{If  $\gamma$ and $\beta$ are linearly dependent, then $Ny = 0$.}   There are $\binom{p}{k}$ blocks with $k$ $\beta$'s. To be specific about the $y$'s, choose the tensor product term $y_{jk}$, $1 \leq j \leq \binom{p}{k}$, from each such cyclically  ordered block that comes first in lexicographic $\beta, \gamma$ order. Thus each $y_{jk}$ is a cocycle and $ Ny_{jk} = tNy_{jk}$ is a $t$-invariant cocycle.\\

Set $Y = \sum_{j,k} y_{jk}$. We have observed $ \sum(terms) = NY$ is $G$ or $\widetilde{G}$-equivariant.  In fact,  each $g t^iy_{jk}$ is uniquely a $ \pm t^{i'} y_{j'k}$, since $g$ permutes the set of tensors with $k$ $\beta$'s, with a Koszul sign when $deg(\beta) = deg(\gamma)$ is odd.  The Koszul sign is $\tau(g)$.\\ 

We are assuming $b$ can be chosen so that  $Nb \in B_*$ is a boundary.  Then each $Nb \otimes y_{jk}$ is a boundary. In the coinvariant complex $B_* \otimes_{\Delta G} (C^*)^{\otimes p}$ we have $tb \otimes y_{jk} \equiv b \otimes  t^{-1} y_{jk}$, hence $Nb \otimes y_{jk} \equiv b \otimes Ny_{jk}$ is a boundary. Then $\sum_{j,k} b \otimes Ny_{jk} = b \otimes NY $ is a boundary in the coinvariant complex. In the even degree case this boundary term is $\bar{b} \otimes NY$ and in the odd degree case it is $\tilde{b} \otimes NY$. Applying $\bar{\Phi}$ or $\widetilde{\Phi}$ implies $D$ is linear in the second variable.
\end{proof}

Explicit chain maps of form $\phi \colon B_* \otimes C_* \to C_*^{\otimes p}$ and $\Phi \colon B_* \otimes (C^*)^{\otimes p} \to C^*$ will turn up in Parts II, III, and IV when we discuss how chains $C_* = N_*(X)$ on simplicial sets are coalgebras over certain operads with components  $B_*$, and how cochains $C^* = N^*(X)$ are algebras over those same operads. The operad algebra structures on  cochains lead to explicit cochain level definitions of Steenrod operations and proofs of their properties in Part IV. It is the explicit form of various chain maps and chain homotopies that is a key point of our project.  For example, referring to the sketch of the proof above, we will ultimately have explicit formulas writing the $Nb$ as boundaries and we will have explicit formulas writing the $\bar{b} \otimes_{\Delta G} (d \alpha)^{\otimes p}$ as boundaries.\\

One might wonder that since the mod $p$ Steenrod operations on cocycles $\beta$ are defined in all degrees just using a classifying space for the cyclic group $<t>$   of order $p$, why are the $\widetilde{G}$ considerations needed for bigger groups? The answer is, many of the cohomology operations defined on $H^*(C^*)$ using $D$ as in Proposition \ref{3.5} turn out to be 0 precisely because the explicit maps $\Phi \colon B_* \otimes (C^*)^{\otimes p} \to C^*$ when $B_* /<t>$ is the chain complex of a classifying space model for the cyclic group, do extend  naturally to classifying space models  for the full symmetric group.  Certain cycles in  cyclic  group classifying spaces become explicit boundaries in symmetric group classifying spaces, as we will prove in Section 9 of Part I, hence these cycles determine trivial cohomology operations.\\

Also, with the $\bar{B}_*$ chosen as chains on classifying spaces for  symmetric groups $\Sigma_n$, we will use explicit operad algebra structure  maps $\Phi \colon B_* \otimes (C^*)^{\otimes n} \to C^*$ for $n = 2p$ and $n = p^2$ developed in Parts II and III to obtain  cochain level proofs of the Cartan formula and the Adem relations for the cochain Steenrod operations in Part IV.  Classical proofs of these formulas also made use of these classifying spaces but  took place only at the cohomology level, using acyclic model methods that produced maps only defined up to chain homotopy.

\newpage

\section{Contractions}

We will repeat some sentences from the beginning of the previous section. We work primarily with positively graded chain complexes $C_*$ over a ground ring $\FF$, which will usually be the integers or a field.   Positively graded means 0 in negative degrees. However,  cochain complexes are negatively graded chain complexes and ccasionally we  work with  $\ZZ$ graded $HOM$ complexes.  For completeness, we will define contractions for rather general $\ZZ$ graded chain complexes.
\subsection{Contractions of  Augmented Based Chain Complexes}
We regard the ground ring $\FF$ as a chain complex concentrated in degree 0.  Our chain complexes will have an augmentation chain map $\epsilon \colon C_* \to \FF$, that is $\epsilon d_1 = 0$,  and a base point chain map $\iota \colon \FF\to C_0$, that is $d_0 \iota = 0$, with $\epsilon \circ \iota = Id_{\FF}$.  A base point   is the same data as a cycle $c_0 = \iota(1) \in C_0$,  with $\epsilon (c_0) = 1$.  We set $\rho = \iota \circ \epsilon \colon C_* \to \FF\to C_0 \subset C_*$, and sometimes also refer to  the chain map $\rho$ as the base point. Of course if $C_*$ is positively graded then any $\iota(1) \in C_0$ is a cycle.\footnote{The category of augmented based chain complexes seems like a good thing. The unit object $\FF$ is both an initial object and a terminal object in that category.  Perhaps the closed symmetric monoidal category properties of chain complexes in Proposition \ref{3.1} should also be observed for augmented based chain complexes.}\\

{\bf DEFINITION 4.0.} A {\it contraction} of an augmented based chain complex  is a chain homotopy $h \colon C_* \to C_{*+1}$ with $dh + hd = Id - \rho$.  Thus $\epsilon$ and $\iota$ are chain  homotopy  inverses of each other.  If a contraction exists we call $C_*$ a {\it contractible complex}. If a contraction is chosen, we call $C_*$ a {\it contracted} complex.\\

One can also form an augmented complex $\widehat{C}_*$ by adding a summand $\FF$ in degree $-1$.  Set $\hat{d}_0 = d_0 \oplus \epsilon$ and set $\hat{d}_{-1} = d_{-1} \oplus 0$.  Extend the contraction $h$ to $\hat{h}$ by setting $\hat{h}_{-1} = h_{-1} \oplus \iota$ and $\hat{h}_{-2} = h_{-2} \oplus 0$. Then the degree 1 map $\hat{h}$ becomes a chain homotopy between  the identity and zero on the augmented complex.   That is, $\hat{d} \hat{h} + \hat{h} \hat{d} = Id$ in all degrees. Thus the augmented complex is acyclic, meaning 0 homology in all degrees.

\begin{rem}\label{4.1} For the positively graded complexes of interest to us, $C_0$ is not only free over $\FF$ but has a preferred basis.  Therefore there is an obvious choice of augmentation, sending basis elements to $1 \in \FF$. There are also obvious basepoints, namely basis elements. The augmented complex $\widehat{C}_*$ is especially simple. Namely, $\FF$ is added in degree $-1$, $\hat{d}_0 = \epsilon$ and $\hat{h}_{-1} = \iota$.
\end{rem}

\begin{rem} \label{4.2}{\bf *Twisted Augmentations and Contractions.*} Suppose $C_*$ is a $G$ complex.  If $G$ also acts on $\FF$, say by $\alpha \colon G \to Aut(\FF)$ what should be meant by an augmentation that includes the group action data?  Denote by $\widetilde{\FF}$ the  new $G$ module structure on $\FF$. We define a {\it twisted augmentation} to be a chain map $\tilde{\epsilon} \colon C_* \to \widetilde{\FF}$ with $\tilde{\epsilon}(gx) = \alpha(g) \tilde{\epsilon}(x)$.  A basepoint just means $\iota \colon \FF\to C_*$ with $\tilde{\epsilon} \circ \iota(1) = 1.$ We also call $\tilde{\rho} = \iota \tilde{\epsilon} $ a basepoint.  Then $\tilde{\rho} (gx) = \tau(g) \tilde{\rho}(x)$.\\

Define a {\it twisted contraction} to be $h \colon C_* \to C_{*+1}$ with $dh + hd = Id - \iota \tilde{\epsilon}$.  Then $C_*$ and $\widetilde{\FF}$ are chain homotopy equivalent.\\

Also, as before, adding an $\widetilde{\FF}$ in degree $-1$ yields an acyclic complex $\widehat{C}_*$ with a null-homotopy $\hat{h}$ with $d \hat{h} + \hat{h} d = Id.$  $\ \ \ \ \qed$
\end{rem}

\subsection{Contractions with $h^2= 0$}
Of course when contractions exist, they are usually highly non-unique.   It turns out that a very useful additional property of contractions is $h \iota = 0$ and $h^2 = 0$, which we will show in the next proposition can always be assumed.  This is the same as $\hat{h}^2 = 0$ for the associated null-homotopy of the augmented complex $\widehat{C}_*$.\\

In the $\ZZ$-graded based augmented case, we will first show that $h^2 = 0$  implies $\epsilon h = 0$. Since $\iota$ is injective, it is equivalent to show $\iota \epsilon h = 0$.  If $x \in C_{-1}$, then $(dh + hd)hx = hx - \iota \epsilon hx$.  But $h^2 = 0$ implies $(dh+hd)hx = hdhx = h(x- hdx) = hx$.  Of course $\epsilon h = 0$  is a vacuous statement in the positively graded case since there is no $h_{-1}$.\\

{\bf More General Contraction Data.} Contraction data $\FF\leftrightarrows C_* \xrightarrow {h} C_{*+1}$, with  $Id = \epsilon \iota  \colon \FF\to \FF$,  $dh + hd = Id - \iota \epsilon$,  and $h^2 = 0, hi = 0, \epsilon h = 0$, is a special case of what Rubio and Serveraert [25] call a {\it reduction} of a complex $C_*$ in their work on constructive algebraic topology.  Specifically, $D_* \leftrightarrows C_* \xrightarrow {h} C_{*+1}$ for some complex $D_*$, with the same composition  formulas as ours.  Such data also occurs in homological perturbation theory.  Lambe and Stasheff [14] refer to it as {\it strong deformation retract data} and  Real [24] uses the term {\it contraction} even in this more general case.\footnote{We thank Robert Lipshitz for pointing out to us this related work.}  We think it likely that many of the constructions in our paper can be extended somehow to this more general setting. The following result is a known example.
\begin{prop}\label{4.3} If a chain complex admits a contraction then it admits a contraction $h$ with $h^2 = 0$ and $h \iota = 0$.Then $Ker(h) = Im(h)$ in degrees $\not= 0$ and $Ker(h) = Im(h) \oplus Im(\iota)$ in degree 0. 
\end{prop}
\begin{proof}Any contraction of $C_*$  defines a null-homotopy $\hat{h}$ of the augmented complex $\widehat{C}_*$.  There are then split short exact sequences for the augmented complex,  
$$0 \to \widehat{Z}_n \to \widehat{C}_n \leftrightarrows \widehat{B}_{n-1} \to 0,$$
where the backwards arrow on the right is $\hat{h}$, and where the boundaries in $\widehat{C}_*$ coincide with the cycles in all degrees.  Given such splittings,  one has direct sum decompositions of $\widehat{C}_n$. In fact, $c_n = d\hat{h} c_n + \hat{h}d c_n = z_n + \hat{h}z_{n-1}$.  Then $h|_{\widehat{Z}_n} = \hat{h}$ and $h|_{\hat{h}\widehat{Z}_{n-1}} = 0$ also defines a null-homotopy of the augmented complex.\\

In fact, $h(c_n) = h(z_n) = \hat{h}(z_n) $, so $dh(c_n)= d \hat{h}(z_n) = z_n = d\hat{h}(c_n).$  Also $d(c_n)$ is a cycle, so $hd(c_n)  = \hat{h}d(c_n)$. Clearly $h^2 = 0$.  This produces a contraction $h$ of $C_*$ with $h^2 = 0 $ and $h \iota = 0$. The last statement of the proposition  follows easily from $dh(x) + hd(x) = x - \iota \epsilon (x)$ and $\epsilon h = 0$.
\end{proof}

\begin{rem} \label{4.4}{\bf *Change of Basepoint.*} We briefly discuss change of basepoint, even for $\ZZ$ graded $C_*$, with a fixed preferred augmentation $\epsilon.$  Suppose we have two basepoint cycles  in $C_0$, say $\iota(1) = c_0$ and $\iota'(1)  = c'_0$, with $\epsilon(c_0) = \epsilon(c'_0) = 1$. Suppose $h$ is a contraction for the basepoint $\iota$.  That is, $dh + hd = Id - \iota  \epsilon.$ Define $h' \colon \C_* \to C_{*+1}$ by $h'(c) = h(c) - \epsilon (c) h(c'_0)$.  So $h = h'$ except in degree 0. The following is just an exercise in all the definitions, but it is a little trickier than it might look.
\begin{prop}\label{4.5} $h'$ is a contraction for the basepoint $\iota'$.  That is, $dh' + h'd = Id - \iota' \epsilon.$  If $h^2 = 0$ and $h\iota = 0$ then $(h')^2 = 0$ and $h' \iota' = 0$.   
\end{prop}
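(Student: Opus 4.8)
The plan is a direct verification of the three asserted identities, with essentially all the work being bookkeeping about where the degree-$0$ correction term in $h'$ matters. Write $\rho = \iota\epsilon$ and $\rho' = \iota'\epsilon$, and keep three structural facts in view: the augmentation is a chain map, so $\epsilon d = 0$; both basepoints give cycles with $dc_0 = dc'_0 = 0$ and $\epsilon(c_0) = \epsilon(c'_0) = 1$; and $\rho(c) = \epsilon(c)\,c_0$ as an element of $C_0$.

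First I would establish $dh' + h'd = Id - \rho'$. Since $\epsilon d c = 0$, the correction in $h'd(c) = h(dc) - \epsilon(dc)\,h(c'_0)$ drops out, so $h'd = hd$ identically. For the other term, $dh'(c) = dh(c) - \epsilon(c)\,dh(c'_0)$, and because $c'_0$ is a cycle the contraction identity for $h$ yields $dh(c'_0) = c'_0 - \rho(c'_0) = c'_0 - c_0$. Adding the two contributions, $dh'(c) + h'd(c) = (dh+hd)(c) - \epsilon(c)(c'_0 - c_0) = c - \rho(c) - \epsilon(c)c'_0 + \epsilon(c)c_0$; since $\rho(c) = \epsilon(c)c_0$, the $c_0$-terms cancel and $c - \epsilon(c)c'_0 = c - \rho'(c)$ remains.

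Next, assuming $h^2 = 0$ and $h\iota = 0$, I would dispatch the two quadratic identities. The relation $h'\iota' = 0$ is immediate from $\epsilon(c'_0) = 1$: one has $h'(c'_0) = h(c'_0) - \epsilon(c'_0)h(c'_0) = 0$. For $(h')^2 = 0$ the only extra ingredient is $\epsilon h = 0$, which is automatic in the positively graded case and, as noted just before Proposition~\ref{4.1}, is forced by $h^2 = 0$ in the $\ZZ$-graded case. Then $h'(h(x)) = h^2(x) - \epsilon(h(x))\,h(c'_0) = 0$ for every $x$, in particular for $x = c$ and for $x = c'_0$, so expanding $(h')^2(c) = h'(h(c)) - \epsilon(c)\,h'(h(c'_0))$ yields $0$.

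The computation is genuinely short, so the only obstacle is discipline: one must not discard the degree-$0$ correction term before the algebra calls for it, and must invoke $\epsilon d = 0$, $dc'_0 = 0$, and $\rho(c) = \epsilon(c)c_0$ at precisely the right steps — the last being the identity that converts the $\rho$ coming from $h$ into the $\rho'$ required for $\iota'$. I would also remark that the hypothesis $h\iota = 0$ is in fact not needed for any of the three conclusions; in the quadratic identities it is $\epsilon h = 0$, a consequence of $h^2 = 0$, that does all the work.
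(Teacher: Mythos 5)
Your proof is correct: the paper states this proposition without proof (calling it "an exercise in all the definitions"), and your direct verification is exactly the intended argument, using $\epsilon d = 0$, $dc'_0 = 0$, and $\rho(c) = \epsilon(c)c_0$ at the right moments and invoking $\epsilon h = 0$ (which the paper derives from $h^2 = 0$ just before Proposition 4.1) for the $\ZZ$-graded case of $(h')^2 = 0$. Your closing observation that $h\iota = 0$ is not actually needed for any of the three conclusions is also accurate.
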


In practice this remark could be  useful when $\iota'$ is a complicated basepoint that we really want to use, and $\iota$ is a much simpler basepoint, but one for which it is fairly easy to find a contraction. $\qed$
\end{rem}
{\it For the remainder of the paper we will assume all  contractions satisfy $h^2 = 0$ and $h \iota = 0$, which we  sometimes abbreviate as simply $h^2 = 0$.  When appropriate we will try to point out in various situations why this assumption holds or just how this assumption is being used.}
\newpage

\section{Examples of Contractions}
We often work with simplicial sets $X$.  We denote by $N_*(X)$ the normalized simplicial set chain complex of $X$ with $\FF$ coefficients,  obtained from the full simplicial  set chain complex by dividing by the subcomplex spanned by degenerate simplices.  If $N_*(X, \ZZ)$ is contractible  and $X$ is simply connected then the geometric realization $|X|$ is topologically contractible by the Whitehead Theorem.
\subsection{Simplices, Products, and MacLane Models}
\begin{exam}\label{5.1} \textbf{The Simplex.}  If $\Delta$ is an ordered simplex with vertices $\{0,1, \ldots, k, k+1, ...\}$ then there is a contraction $h(i_0,  \ldots, i_n) = (0, i_0, \ldots, i_n)$ of $N_*(\Delta)$. The base point is vertex $ 0 \in N_0(\Delta)$, and the augmentation sends all vertices to $1 \in \FF$. A routine calculation using the boundary formula $$d(i_0, \ldots, i_n) = \sum_{j = 0}^n (-1^j(i_o, \ldots, \widehat{i_j}, \ldots, i_n)$$  implies $h$ is a contraction.  Also $h^2 = 0$ and $h \iota = 0$ since $(0,0,x)$ is degenerate.\\

We point out that if $C_*(\Delta)$ is the {\it unnormalized} augmented, based,  simplicial set chain complex associated to the  ordered simplex then the same map $h$ is a contraction, but $h^2 \not= 0$.  The procedure of Proposition \ref{4.3}  yields a contraction with square zero, but it is a big mess. $\qed$
\end{exam}
\begin{exam}\label{5.2}\textbf{Products of Simplicial Sets.}  If $X$ and $Y$ are simplicial sets then $X \times Y$ is a simplicial set, with $(X \times Y)_n = X_n \times Y_n$.  The face and degeneracy operators act in the obvious diagonal way on pairs of simplices, and the full boundary operator is the usual alternating sum of the codimension one face operators, $d(\sigma, \tau) = \sum_j (-1)^j (d_j \sigma, d_j \tau)$.  \\

If $N_*(X)$ and $N_*(Y)$ are contractible  then $N_*(X \times Y)$ is contractible because it is free and  we know the homology by the Eilenberg-Zilber/Kunneth Theorem.  But it is not so easy to give an explicit contraction in general.  The tricky point is that a pair of $n$-simplices $(\sigma,\tau)$ can define a non-degenerate simplex in $X \times Y$ even if both $\sigma$ and $\tau$ are degenerate.  Thus a contraction for $N_*(X \times Y)$ must be defined at the level of all simplices in the separate factors, not just on the normalized chain complexes.\\

If $X$ and $Y$ are both ordered simplices, a contraction is given by $$h((i_0,  \ldots, i_n), (j_0, \ldots, j_n)) = ((0,  i_0, \ldots, i_n), ( 0, j_0, \ldots, j_n)).$$  The proof just amounts to the easy calculation of $dh + hd$, which is the same calculation as in  the case of one simplex.  Obviously $h^2 = 0$.\\

We occasionally  use the notation $I = \Delta^1$ for the 1-simplex.  There are fairly obvious  geometric contractions, in fact, simplicial set homotopies $ \Delta \times I \to \Delta$ and $  \Delta \times \Delta \times I\to \Delta \times \Delta$ between the identities and the constant base point maps. These homotopies factor through simplicial maps $CX \to X$, where $CX$ is the cone on $X = \Delta$ or $\Delta \times \Delta$. The maps from cones  induce the above contractions on normalized chain complexes of simplices and products of simplices via the  maps $N_*(X) \to N_{*+1}(CX) \to N_{*+1}(X)$.\footnote{The first map requires identifying the cone on $\Delta^k$  with $\Delta^{k+1} $ and saying precisely what is meant by $CX$. For us this seems to work better if the cone vertex is the first vertex and $CX = X \times I / X \times 0.$} $\qed$
 \end{exam}

\begin{exam}\label{5.3} \textbf{MacLane Models.} We look at $ N_*(EG)$, where $EG$ is a standard  construction of a contractible free simplicial $G$-set for a  group $G$.  Specifically $EG(n) = G^{n+1}$, the ``$n$ simplices'' of $EG$ are $n+1$ ordered tuples of group elements.  The simplicial set face operators delete a vertex and the degeneracy operators repeat a vertex. There is a canonical base point $e = 1 e \in N_0(EG) = \ZZ[G]$, where $e \in G$ is the identity element.  The augmentation sends all group elements to $1 \in \FF$.\\

A contraction is $h_G(x) = (e, x)$, where $x = (g_0, g_1, \ldots, g_n) \in N_n(EG)$ is an $n$-simplex generator.  Again $h_G^2 = 0$.  The boundary formula is  the same as that of a simplex, $dx = \sum (-1^j(g_0, \ldots, \widehat{g_j}, \ldots, g_n)$, as is the proof that $h_G$ is indeed a contraction. The contraction $h_G$ also arises from a  simplicial set homotopy $ EG \times I \to CEG \to EG$.\\

The boundary operator in $N_*(EG)$  is $G$-equivariant, where the left group action is given by $g(g_0, g_1, \ldots, g_n) =   (gg_0, gg_1, \ldots, gg_n)$.  Thus the augmented complex $N_*(EG) \to \ZZ $ is a free $\ZZ[G]$ resolution of $\ZZ$ regarded as a trivial $G$-module.  The coinvariant complex $ N_*(EG)/G = N_*(BG)$, where $BG =  EG/G$ is  a simplicial set classifying space for principal $G$ bundles.\\

{\bf Naming Simplices of $\bf BG$.} The non-degenerate $n$-simplex generators of $N_*(BG)$ are named by tuples $[f_1, f_2, \ldots, f_n]$ of group elements with $f_j \not= e$.  The quotient projection of $(g_0, g_1, \ldots, g_n)$ is given by $f_j = g_{j-1}^{-1} g_j,\ 1 \leq j$.  A natural section is given by $g_0 = e,\  g_j = \prod_{1 \leq i \leq j} f_i$.  With this choice of a projection to orbits and a section, the boundary formula in $N_*(BG)$ induced from that of $N_*(EG)$  is given by $$d[f_1, \ldots, f_n] = [f_2, \ldots, f_n] +  (-1)^n [f_1, \ldots, f_{n-1}]  + $$ $$     \sum_{1 \leq j < n} (-1)^j [f_1, \ldots, f_{j-1}, (f_j f_{j+1}), f_{j+2}, \ldots, f_n]. \ \ \ \ \   \qed$$

The contractions $h_G$ of $N_*(EG)$ are functorial in $G$.  In fact, {\it any} function $G \to G'$ will first induce a simplicial map $EG \to EG'$ and then a chain map $N_*(EG) \to N_*(EG')$. Functions that correspond identity elements  yield chain maps commuting with contractions. Of course these chain maps are not equivariant unless the function between groups is a homomorphism.  There will be some partial equivariance if the function $G \to G'$ is a homomorphism on certain subgroups of $G$.  This happens in the important case of certain  maps $$N_*(E(\Sigma_r \times \Sigma_{s_1} \times \ldots \times \Sigma_{s_r})) \to N_*(E\Sigma_{s_1+ \ldots + s_r})$$ related to the  operad structure maps for the Barratt-Eccles operad $\cE$ to be studied in Part III.\\

{\bf *Semidirect Products*.} If $H$ and $K$ are two groups, there is an obvious identification $E(H \times K) \simeq EH \times EK$.  The canonical contraction for the product group identifies with the product contraction of the factors, defined just as for the product of two simplices. \\

But somewhat more is true.  Suppose $G$ is a semidirect product of groups, a normal subgroup  $H \lhd G$ and  $K \subset G$ with $H \cap K = \{e\}$.  Then elements of $G = HK = KH$ can be uniquely written as products of subgroup elements, $ hk = k (k^{-1} h k) $, which can be identified with pairs in $H \times K$ or $K \times H$. The  product in $G$ in terms of pairs then becomes $(h, k) (h', k') = (h (kh'k^{-1}), kk')$ in $H \rtimes K$,  or      $(k',h')(k,h) = ( k'k, (k^{-1} h' k) h))$ in $K \ltimes H$, take your choice.  Thus there are still  obvious identifications\footnote{In the symbols $\ltimes$ and $\rtimes$, the little $\lhd$ points toward the normal subgroup. If $X$ and $Y$ are simplicial sets then there is an obvious isomorphism $X \times Y \simeq Y \times X$ with $(\sigma, \tau) \leftrightarrow (\tau, \sigma)$.}  $$E(H \rtimes K) \simeq EH\times EK \simeq EK \times EH \simeq E(K \ltimes H)$$ as simplicial sets, hence there is a canonical product contraction of the normalized chain complexes.  What is different is the action of the twisted group $H \rtimes K$  on $EH \times EK$, or $K \ltimes H$ on $EK \times EH$. $\qed$

\end{exam}
\subsection{The Minimal Model for Cyclic Groups}
\begin{exam}\label{5.4} \textbf{Minimal Models for Cyclic Groups.} Here is the first of a number of examples not directly related to simplicial sets.  Let $C_n = C =\  <T> $, the cyclic group of order $n$.  Our next augmented chain complex will be $M_* \to \ZZ$, the standard minimal free  $\ZZ[C]$ left module resolution of $\ZZ$, with $C$ acting trivially on $\ZZ$.  So $M_j \simeq \ZZ[C]$, with generator $y_j$.   The $C$-equivariant boundary operator is given on generators for $k \geq 1$ by $$ d y_{2k-1} = (T - 1) y_{2k-2}\ and\  d y_{2k} = N y_{2k} =   (T^{n-1} + \ldots + T^2 + T^1  + 1) y_{2k-1}.$$
For $p$ prime and field of coefficients $\FF_p$, we point out that the boundary operator in the coinvariant complex $\bar{M}_* = C_p \backslash M_*$ is 0 and there is one generator $\bar{y}_j \in \bar{M}_j$ in each degree. Also, since modulo $p$ the norm is $N = (T^p - 1) / (T -1) = (T-1)^{p-1}$, we see that for both even and odd $j$ we have $Ny_j \in M_j$ is a boundary.\footnote{The usefulness of this was mentioned in Subsection 3.5, previewing Steenrod operations.}\\  

Although $M_*$ is not the chain complex associated to a simplicial set, it is the cellular chain complex $C_*(W)$ associated to a free $C$ action on a regular cell complex $W$, specifically from a regular cell structure on the infinite sphere $S^\infty$, with $n$ cells in each degree.   Thus the coinvariant complex $ M_* / C$ is a model for the chains on a cellular classifying space $BC = W / C$, which is an infinite dimensional  lens space.\\

{\bf The Contraction of $M_*$.} The base point of $M_*$  is $\iota(1) = y_0$ and the augmentation is $\epsilon(T^iy_0) = 1$, so $\rho(T^i y_0) = y_0$. A contraction $h \colon M_* \to M_*$ with $dh+hd = Id - \rho$ is given by 
$$h(T^i y_{2k}) = \sum_{0 \leq j < i} T^j y_{2k+1}\hspace{.2in}  [So\  h(y_{2k}) = 0\ and\ h(Ty_{2k}) = y_{2k+1}]$$
$$h(T^i y_{2k+1}) = 0\ if\ i < n-1 \ \ \ \ \  and \ \ \ \ \ \   h(T^{n-1}y_{2k+1}) = y_{2k+2}.$$
The proof that $h$ is a contraction, including $h^2 = 0$, is a simple computation that divides into even degree and odd degree cases.  Also, the contraction $h$ arises from a  geometric cellular homotopy $W \times I \to CW \to W$.$\qed$

\end{exam}

\subsection{Tensor  Complexes}
\begin{exam}\label{5.5}   Here we construct contractions of tensor products of contractible complexes.  Tensor products of complexes were discussed in Section 3.  The augmentation of the tensor product of two augmented complexes  is the tensor product of the two separate augmentations, $ C_* \otimes D_* \to \FF\otimes \FF= \FF$. The base point of the tensor product of two based complexes is also the tensor product of the two base points, $\FF\otimes \FF \to C_0 \otimes D_0$.  We then have the composition of augmentations and base points, $\rho = \rho_C \otimes \rho_D \colon C_* \otimes D_*  \to \FF\otimes \FF\to C_0 \otimes D_0$.  If $C_*$ and $D_*$ are free, then so is $C_* \otimes D_*$.\\

Given contractions $h_C, h_D$ of $C_*$ and $D_*$, one can use general methods for dealing with compositions of chain equivalences and tensor products of chain equivalences to construct a contraction  $h$  of $ C_* \otimes D_*$.  Combining chain homotopy equivalences  $$C_* \otimes D_* \leftrightarrows \FF\otimes D_* \leftrightarrows \FF\otimes \FF= \FF,$$ one can derive a resulting contraction formula for  $C_* \otimes D_*$.\\

{\bf The Contraction of a Tensor Product.} The contraction formula is  $h_\otimes = h_C \otimes Id_D + \rho_C \otimes h_D$.  Explicitly,   $$h_\otimes(a \otimes b) = h_C(a) \otimes b + \rho_C(a) \otimes h_D(b). $$  One can also just begin with this formula and compute directly that it is a contraction of $C_* \otimes D_*$.  If the contractions of $C_*$ and $D_*$ satisfy the conditions $h^2 = 0, h \iota = 0$, then so does this contraction $h_\otimes$ of $C_* \otimes D_*$.\\

Symmetry is necessarily broken here, but this turns out to be the preferred choice for our purposes.
Using $C_* \otimes D_* \leftrightarrows C_* \otimes \FF\leftrightarrows \FF\otimes \FF$ instead, the resulting formula is $$h'_\otimes(a \otimes b) =  h_C(a) \otimes \rho_D(b) +  (-1)^{|a|}a \otimes h_D(b).$$  This second contraction can also be obtained by conjugating the first contraction method by the isomorphism $D_* \otimes C_* \simeq C_* \otimes D_*$ from Proposition \ref{3.1}, which includes Koszul signs.\\

The reason we prefer the first formula is that in topological situations it leads to the classical Alexander-Whitney diagonal map $N_*(X) \to N_*(X) \otimes N_*(X)$, as we will see in Sections 6 and 8.   The other formula leads to an alternate  diagonal, which perhaps has some merit.\\

{\bf Iterated Tensor Products.} If $C_* = D_*$ we have a contraction $h^{(2)} =   h \otimes Id + \rho \otimes h  $ of $C_* \otimes C_*.$  Then one can repeat the construction and define a contraction $h^{(3)}$ of $C_* \otimes C_* \otimes C_*$. The formula, viewing $C_* ^ {\otimes 3} = C_* \otimes C_* ^{\otimes 2}$,  is
$$h^{(3)} = h \otimes Id^{\otimes 2} + \rho \otimes h^{(2)}  =   h \otimes Id \otimes Id + \rho \otimes h \otimes Id + \rho \otimes \rho \otimes h.$$
The formula makes sense not just for tensor powers but for any triple tensor product of contractible complexes $C_*' \otimes C_*'' \otimes C_*'''$.  $\ \ \ \qed$\\

 This construction can be iterated to define preferred contractions  of $C_*^{\otimes {(k+1)}}$, or of any $(k+1)$-fold tensor product, $$ h^{(k+1)}  =  h \otimes Id^{\otimes k} + \rho \otimes h^{(k)}   =  \sum_{i \geq 0} \rho^{\otimes i} \otimes h \otimes Id^{ \otimes (k-i)} = \sum_{i \geq 0} h_i^{(k+1)}.$$

If all contractions of the tensor factors satisfy $h^2 = 0$ and $h \iota = 0$ then these last summands $h_i$ for any tensor product satisfy $h_i h_j = 0$, since $\rho h = 0$ and $h \rho = 0$ on each tensor factor.  Hence  $(h^{(k)})^2 = 0$ for all $k$, and  $h^{(k)} \iota ^{\otimes k} = 0$.  It follows that  $Ker(h^{(k)}) = Im(h^{(k)}) \oplus Im(\iota^{(k)})$ and  $ Im(h^{(k)}) = \sum Im(h_i^{(k)})$ for any $k$.\\

This sum decomposition of contractions of tensor products is remarkably similar to the structure of contractions of surjection complexes that we will study in Part II. Anticipating some later terminology, for $\ell \geq 1$ we will refer to  tensors in the image of  $h_{\ell-1} = \rho^{\otimes( \ell -1)} \otimes  h \otimes Id^{ \otimes (k - \ell)} $ as {\it clean tensors}, or more precisely {\it tensors clean at $\ell $}, where $\ell$  is the subscript naming the position of $h$ in the tensor $h_{\ell-1}$. The span of these clean tensors is always the image of the preferred contraction of a tensor product. In order to apply some of our uniqueness theorems later it is important to be able to recognize elements in the image of contractions. \\ 

 The preferred contractions of tensor products also satisfy an easily verified associativity property.  Namely, for $i+j = k$ and $h^{(1)} = h$:
$$ h^{(k+1)}  = \rho^{\otimes i} \otimes h^{(j)} + h^{(i)} \otimes Id^{\otimes j}.$$ 
These same contractions of tensor products, and more general versions,  can be found in J. R. Smith [28], Real [24], and no doubt other places. $\qed$

\end{exam}
\subsection{*$HOM$ Complexes*}
\begin{exam}\label{5.6}   Representing adjoints of tensor product functors $B_* \mapsto  B_* \otimes C_*$ are  complexes of homomorphisms $HOM(C_*, D_*)$, which in degree $n$ are given by $\prod_k Hom_\FF(C_k, D_{k+n})$. These were discussed in detail in Section 3. In particular, these complexes are more complicated because they can be non-zero in all degrees $n \in \ZZ$.  Included are the dual cochain complexes $C^* = HOM(C_*, \FF)$.  In general, even if $C_*$ and $D_*$ are free complexes, $HOM(C_*, D_*)$  is not free unless $C_*$ has finite type, or other assumptions are made about  $\FF$.  This will not be an issue because we will only map other complexes to $HOM$ complexes, and freeness of the target is not needed.\\

In the dual cochain complex of a based augmented chain complex, the  base point and the augmentation now become switched adjoints $\iota_{C^*} = \epsilon^* \colon \FF= \FF^* \to C^0$, and $\epsilon_{C^*} = \iota^* \colon C^0 \to \FF^* = \FF.$\\
 
In the general case of two based augmented complexes, the augmentation of   $HOM(C_*, D_*)$   is $\epsilon(u) = \epsilon_D \circ u \circ \iota_C$, which is a map  $HOM(C_*, D_*) \to HOM(\FF, \FF) = \FF$.  A base point is  $\iota(x) = \iota_D \circ x \circ \epsilon_C $, which is a map  $\FF= HOM(\FF, \FF) \to HOM(C_*, D_*)$.  In the form $ \rho = \iota \circ \epsilon \colon HOM(C_*, D_*) \to HOM (C_*, D_*)$, the basepoint is $\rho(u) = \rho_D \circ u \circ \rho_C$.\\ 

{\bf *The Contraction of a $HOM$ Complex.*} Suppose $h_C, h_D$ are contractions of $C_*, D_*$.  Then keeping track of the chain homotopies in the induced chain equivalences  $$HOM(C_*, D_*) \leftrightarrows HOM(\FF, D_*) \leftrightarrows HOM(\FF, \FF) = \FF,$$  a contraction of $HOM(C_*, D_*)$ with $dh + hd = Id - \rho$ is given by $$h_{HOM}(u) = h_D \circ u \circ \rho_C + (-1)^{|u|} u \circ h_C.$$  If the contractions of $C_*$ and $D_*$ satisfy the conditions $h^2 = 0, h \iota = 0$, then so does the contraction $h_{HOM}$ of $HOM(C_*, D_*)$. \\ 

If one replaces the middle term $HOM(\FF,  D_*)$  in the sequence above  by $HOM(C_*, \FF)$, the resulting contraction of $HOM(C_*, D_*)$  is  $$h_{HOM}'(u) = h_D \circ u + (-1)^{|u|} \rho_D \circ u \circ h_C.$$   
In the case of the dual $C^* = HOM(C_*, \FF)$ of a contractible complex, both contraction formulas yield the contraction $h^*(u) = (-1)^{|u|} u \circ h_C.$ 
\end{exam}
In the case of free complexes, the second contraction $h_{HOM}'$  of $HOM(C_*, D_*)$ can  be obtained from the inclusion  $HOM(C_*, D_*) \to HOM(D^*, C^*)$, using the dual complex contractions and restricting the first contraction method on the right.$\ \qed$\\

We include this treatment of $HOM$ complex contractions because of the reasonably pleasant parallels with tensor complex contractions.  We will not make much use  of $HOM$ complex contractions in our paper.  For one thing, in situations where we might use $HOM(C_*, D_*)$, the appropriate basepoint is not the simple one given here, but rather a more complicated cycle in $HOM_0(C_*, D_*)$.  For example, the Alexander-Whitney diagonal. We would  then need to use the change of basepoint method of Remark \ref{4.4}.  \\

Also, $HOM$ complexes are often not bounded below.  It would then be difficult to see how to get started constructing a chain map using our recursive contraction based procedure of the next section.\\

Regarding the adjoint isomorphism $$Ad\colon HOM(B_*\otimes C_*, D_*) =  HOM(B_*, HOM(C_*, D_*))$$ for three complexes with contractions, the desired relation $ h (Ad\ u)  = Ad (hu )  $ does hold, and similarly if all tensor and hom contractions are replaced by their $h'$ alternates.  However, it seems surprisingly  difficult to relate a single map $B_* \otimes C_* \to D_*$ constructed using a contraction of $D_*$ to  its adjoint map $B_* \to HOM(C_*, D_*)$, using contractions of $C_*$ and $D_*$. 

 \subsection{*Twisted Coefficients*}
 Suppose a based augmented complex $C_* \xrightarrow{\epsilon} \FF \to 0 $ is a free $\FF[G]$ acyclic resolution of the trivial $G$-module $\FF$.  Suppose $1 \not= -1\in \FF$.  A non-trivial homomorphism  $\tau\colon G \to \{ \pm 1\}$ defines another $G$ action on $\FF$, by $g*a  = \tau(g) a$.  With $\tau$ fixed, we refer to this twisted $G$-module as $\widetilde{\FF}$.\footnote{More generally one could twist by homomorphisms $G \to \FF^*$, the units in $\FF$, or even by $G \to Aut(\FF)$, the automorphisms of the abelian group $\FF$.  We could even study $C_* \otimes_\FF L_* \to \FF\otimes_\FF L_* = L_*$ for  $G$-complexes $L_*$.} As discussed in Remark \ref{4.2}, we can go from free acyclic resolutions of $\FF$ to free acyclic resolutions of $\widetilde{\FF}$, by the correspondence $C_* \leftrightarrow   C_* \otimes _\FF\widetilde{\FF}$, with diagonal $G$-action on the right,  $$g(x \otimes 1) = gx \otimes \tau(g) 1= \tau(g) gx \otimes 1 $$ for all $g \in G,\ x \in C_*$.\\ 

We regard  $\widetilde{C}_* = C_* \otimes _\FF\widetilde{\FF}$ as a tensor product of two based augmented $G$-complexes, with augmentation values in $\FF\otimes \widetilde{\FF}$.     The tensor product augmentation on $\widetilde{C}_*$ is  $\tilde{\epsilon}(x \otimes 1) = \epsilon(x) \otimes 1$, and the tensor product base point is  $\tilde{\iota}(1) = \iota(1) \otimes 1$. We see $\tilde{\epsilon}$ is indeed equivariant for the  $G$-action on $\widetilde{C}_*$ and the $G$ action on $\widetilde{\FF}$.  That is,  $$\tilde{\epsilon} (g (x \otimes 1)) =  \tilde{\epsilon}(gx \otimes \tau(g)) = \tau(g)(\epsilon(gx) \otimes  1) = \tau(g)(\epsilon(x) \otimes 1) = g \tilde{\epsilon}(x \otimes 1). $$
 
The two  chain complexes $C_*$ and $\widetilde{C_*}$ are canonically identified as chain complexes of $\FF$-modules, with $x \leftrightarrow x \otimes 1$,    so there is no real harm writing simply $\widetilde{C}_* = C_*$ as chain complexes of $\FF$-modules, with a new group action $g*x = \tau(g)gx$. It is convenient to refer to this second $G$ action on $C_*$ as the $\widetilde{G}$ action. The augmentation and base point remain the same, but now the computation just above shows that the augmentation is a $\widetilde{G}$ equivariant map from the $\widetilde{ G}$ action on $C_*$ to the action on the  twisted module $\widetilde{\FF}$.\\

One should be careful.  The coinvariant complexes $ C_*/ G$ and  $  C_*/  \widetilde{G} $ are certainly different.  For example, if $C_* = N_*(EG)$ then the homology of the coinvariant complex $ N_*(EG)/ \widetilde{G} = \widetilde{N}_*(BG)$ is the homology of $BG$ with twisted coefficients $\widetilde{\FF}$.\\

If $h$ is a contraction of $C_*$ then $h$ also defines a contraction of $\widetilde{C}_* = C_* \otimes \widetilde{\FF}$, namely $\tilde{h}( x) =  h(x) \otimes Id$.  This is a special case of a contraction of a tensor product. One will have $d \tilde{h} + \tilde{h}d = Id - \tilde{\rho}$. \\

Note as $G$-modules $\widetilde{\FF} \otimes \widetilde{\FF} = \FF$, so tensoring a $G$-resolution $\widetilde{C}_* \to \widetilde{\FF} \to 0$ with $\widetilde{\FF}$ yields a resolution $C_*$ of the trivial module $\FF$.  The two constructions are obviously  inverses of each other.\\

We will have occasion to use the twisting and untwisting construction described here in the case $G = \Sigma_n$ and $\tau$ the unique non-trivial $\{ \pm 1 \}$-valued ``parity" character.  A specific example,  given in Sections 11 and 12 of  Part II, leads to an alternate version of a surjection complex.  We regard the details of that surjection complex and its relations with other surjection complexes, as an interesting part of our work.\\

Also,  twisted complexes arising from the parity character are used in Part IV to explain quite naturally some subtle differences between cochain level Steenrod operations applied to even and odd degree cocycles.  In fact, in Subsection 3.5 we included a brief outline of a more general algebraic formulation of those constructions.

\newpage
\section{Using Contractions to Construct Chain Maps}
\subsection{The Basic Standard Procedure}

There is a standard recursive procedure that we will use in many different situations to construct chain maps $\phi \colon B_* \to C_*$.  The common ingredients are that $B_*$ and $C_*$ are positively graded, based, augmented complexes. Also,  $C_*$ has a given contraction and $B_*$ is free, either over a ground ring $\FF$ or a group ring $\FF[G],$ with a chosen basis.  In the equivariant case, $C_*$ will also be a $G$-complex and the constructed chain maps will be equivariant.\\

The constructions usually begin quite simply in degree 0, exploiting base points and augmentations in both domain and range.  In the equivariant case we assume the augmentations to be $G$-maps, usually with the trivial action on $\FF$, but possibly with the same $G$ action arising from some $\tau \colon G \to \{\pm 1\} $, as studied in Subsection \ref{5.5}, for both the $B_*$ and $C_*$ augmentations.  We always assume the basepoint $\iota_B(1)$ is a basis element in degree 0. \\

In degree 0, our preferred canonical maps will commute with basepoints and augmentations. That is, they will satisfy  $\phi_0 \iota_B = \iota_C$ and $\epsilon_C \phi_0 = \epsilon_B$. Then also $\phi_0 \rho_B = \iota_C \epsilon_B = \rho_C \phi_0$, where $\rho = \iota \epsilon$, so canonical maps commute with augmentations and basepoints, in  strong ways.\\

{\bf Comments on Maps in Degree 0.} Since dealing with chain maps can be a little tricky in low degrees, we point out here a couple of useful observations about canonical maps. The basepoint condition simply says $\phi_0(b_0) = c_0$, where $b_0 = \iota_B(1)$ and $ c_0 = \iota_C(1)$ are the basepoint  elements.  Note that by Proposition \ref{4.5} we are free to choose any $c_0 \in C_0$ with $\epsilon_C(c_0 ) = 1$ that we want as a basepoint by choosing an appropriate contraction of $C_*$.\\

If $\phi_0$ is $G$-equivariant and if the augmentation  condition $\epsilon_C \phi_0(b) = \epsilon_B(b)$ holds on basis elements $b \in B_0$ then it will hold on all elements of $B_0$  by a simple equivariance argument. Also, given the basepoint condition,  the  augmentation condition implies $\phi_0(d_1B_1) \subset d_1C_1$, since $Im(d_1) \subseteq Ker(\epsilon) $ in general, with equality in the contractible case.\\

In degree 0 the canonical map could be defined by $\phi_0(b) = \iota_C \circ \epsilon_B(b)$ on basis elements of the domain, and then extended equivariantly.  If $rank(B_0) =1$ this is the only choice that satisfies  our preference $\phi_0(b_0) = c_0$, and in general it is  the only such choice that also vanishes on $Ker(\epsilon_B)$. If $rank(B_0) > 1$  there are other choices that  satisfy our two preferences $\phi_0 \iota_B = \iota_C$ and  $\epsilon_C \phi_0 = \epsilon_B$ but are non-zero on $Ker(\epsilon_B)$. Remarkably, in every example of any importance in our paper, $rank(B_0) = 1$ always holds. $\ \ \ \ \qed$\\

We now formalize what we mean by the standard canonical procedure for extending equivariant maps in degree 0 that satisfy $\phi_0 \iota_B = \iota_C$ and $\epsilon_C \phi_0 = \epsilon_B$, given a basis of the domain and a contraction  of the range.\\

{\bf DEFINITION 6.0. The Recursive Standard Procedure.} For a basis element $b \in B_n$, with $n \geq 1$, the recursive  definition of the canonical chain map, or standard procedure chain map extending $\phi_0$,  is $$ \phi(b) = h_C( \phi (d_Bb)).$$  One extends $\FF$-linearly or $\FF[G]$-linearly.  By properties of $h_C$ and induction, the extension $\phi$ is automatically a chain map. Namely, by equivariance it suffices to check on basis elements, where we compute
$$d_C\phi(b) = d_Ch_C (\phi (d_B b)) = \phi(d_B b) - h_Cd_C (\phi(d_B b)) - \rho_C(\phi(d_B b)) $$ $$ = \phi(d_B b) - h_C(\phi(d_B d_B b)) - 0 =  \phi(d_B b). \  \qed$$

Note that in the language of based, augmented complexes from Section 4, with $B_{-1} = C_{-1} = \FF$, it is natural to begin the canonical map with $\phi_{-1} = Id$ in degree $-1$ and then $\phi_0 = h_C \circ \phi_{-1} \circ d_B =   h_C \circ d_B = \iota_C \circ \epsilon_B$ on basis elements of the domain.  But, as proved,  we can  actually begin with any  equivariant $\phi_0$ with $\phi_0 \iota_B = \iota_C$ and $\epsilon_C \phi_0 = \epsilon_B$  and extend using the same recursive definition.  So we do not insist $\phi_0 = \iota_C \epsilon_B$,  although that almost always holds.\footnote{If $B_0$ has rank 1, beginning {\it just} with  $\epsilon_C \phi_0 = \epsilon_B$, we can  declare the basepoint of $C_*$ to be $\iota_C  = \phi_0 \iota_B$ and then choose a contraction $h_C$ of $C_*$  for that basepoint, as described in Remark \ref{4.4}.  Then proceed with the canonical extension of $\phi_0$ in Definition 6.0 for those choices.}\\

Although  the recursive procedure does not easily yield a closed formula for $\phi$ in all degrees, it turns out that in many cases one can compute $\phi$ in low degrees, then see a pattern that suggests a closed formula, which can be proved by induction once it is written down. Or, maybe someone else already claimed a chain map formula and one can prove by induction that their formula is the result of the standard recursive procedure. Several examples will be given.

\begin{exam}\label{6.1} The first example is a bit silly.  If we use the contraction of the simplex given in Example \ref{5.1}, along with the standard $\ZZ$-basis,  the standard procedure produces a map $N_*(\Delta) \to N_*(\Delta)$.  In degree zero, this map sends all vertices to the base point vertex $(0)$, and is the zero map in all positive degrees.      So the construction just gives the map of chain complexes induced by the constant map from the simplex to the base point. \\

On the other hand, if for some group $G$ the complex $B_*$ is a free contractible $G$-complex with $B_0  = \FF[G]$ with the canonical augmentation $\epsilon(g) = 1$ and base point $\iota(1) = e$, and if the chosen basis of $B_*$ belongs to $Im(h) = Ker(h)$ in positive degrees, then  the canonical equivariant chain map $\phi \colon B_* \to B_*$ is the identity. Namely, $\phi_0 = Id$, and then by induction  if $b$ is a basis element of positive degree, $$\phi(b) = h \phi (db) = h(db) = b - d(hb) = b - d0 = b.$$ 
The set of complexes $B_*$ for which these assumptions hold is closed under tensor products and includes minimal models $M_*$, MacLane models $N_*(EG)$,  and  surjection complexes $S_*(n)$ to be studied in Part II. $\ \ \ \qed$
\end{exam}
\subsection{Fundamental Uniqueness Theorems and Compositions}
We will insert here some uniqueness results that seem very useful and interesting, and reinforce our emphasis that there are strongly preferred maps at the chain level that clarify various homological results. The following two uniqueness results are very important.
\begin{prop}\label{6.2} Suppose $C_*$ is a  $G$-complex with a contraction  $h_C$ with $h_C^2 = 0$ and suppose $B_*$ is  free over $\FF[G]$, with  basis elements $\{ b \}$.  Suppose $\psi \colon B_* \to C_*$ is an equivariant chain map with $\psi_0 \iota_B = \iota_C$, $\epsilon_C \psi_0 = \epsilon_B$, and with $\psi(b) \in Im(h_C) = Ker(h_C)$, for all basis elements $b$ in positive degrees.  Then $\psi$ is necessarily the standard procedure extension $\phi$ of $\psi_0$. 
\end{prop}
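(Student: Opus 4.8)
The plan is to argue by induction on degree, using only the contraction relation for $h_C$ together with the two standing hypotheses $h_C^2=0$ and $\psi(b)\in Ker(h_C)$ for basis elements $b$ of positive degree. First I would observe that in degree $0$ there is nothing to prove: by definition the standard procedure extension $\phi$ of $\psi_0$ agrees with $\psi_0$ there, and the assumption $\epsilon_C\psi_0=\epsilon_B$ is precisely what makes $\psi_0$ an admissible degree-$0$ map to which the recursion of Definition 6.0 applies. So I would assume $\psi=\phi$ in all degrees $<n$ with $n\ge 1$, fix a basis element $b\in B_n$, and aim to show $\psi(b)=\phi(b)=h_C\bigl(\phi(d_B b)\bigr)$; by the inductive hypothesis the right-hand side already equals $h_C\bigl(\psi(d_B b)\bigr)$, so it suffices to prove $\psi(b)=h_C\bigl(\psi(d_B b)\bigr)$.

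The crux is to apply the contraction identity $d_C h_C + h_C d_C = Id - \rho_C$ to the element $\psi(b)\in C_n$, which gives
$$d_C h_C\,\psi(b) + h_C d_C\,\psi(b) = \psi(b) - \rho_C\,\psi(b).$$
I would then note that two terms drop out. Since $\psi(b)$ lies in positive degree $n\ge 1$, where $\epsilon_C$ vanishes, $\rho_C\,\psi(b)=\iota_C\epsilon_C\,\psi(b)=0$. And $h_C\,\psi(b)=0$: this is exactly where $h_C^2=0$ is used, because the hypothesis puts $\psi(b)\in Im(h_C)$, whence $h_C\psi(b)\in Im(h_C^2)=0$ (equivalently, by Proposition~\ref{4.1}, $Im(h_C)=Ker(h_C)$ in positive degrees, so the two phrasings of the hypothesis coincide). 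Thus the display collapses to $h_C d_C\,\psi(b)=\psi(b)$. Since $\psi$ is a chain map, $d_C\,\psi(b)=\psi(d_B b)$, so $\psi(b)=h_C\bigl(\psi(d_B b)\bigr)=h_C\bigl(\phi(d_B b)\bigr)=\phi(b)$, using first the inductive hypothesis and then the recursive definition of $\phi$. Finally, since $\psi$ and $\phi$ are both $\FF[G]$-linear and now agree on the $\FF[G]$-basis of $B_n$, they agree on all of $B_n$, closing the induction.

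I do not anticipate a genuine obstacle here; the entire content is careful bookkeeping in low degrees and with basepoints, and the one subtlety worth isolating is the role of $h_C^2=0$, which enters solely to kill the term $d_C h_C\,\psi(b)$ by forcing $h_C\,\psi(b)=0$ from $\psi(b)\in Im(h_C)$. It is also worth flagging what is \emph{not} needed: $B_*$ is never required to be contractible or to carry its own contraction, only to be positively graded and free with a distinguished $\FF[G]$-basis so that Definition 6.0 makes sense; and the core computation is pointwise on basis elements, with equivariance invoked only at the very last step to propagate the agreement to all of $B_n$.
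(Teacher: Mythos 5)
Your proof is correct and follows essentially the same route as the paper's: induct on degree, use the chain map property to rewrite $h_C\psi(d_Bb)$ as $h_Cd_C\psi(b)$, and invoke the contraction identity together with $h_C\psi(b)=0$ (the only place $h_C^2=0$ enters) to collapse this to $\psi(b)$, then extend by equivariance. The paper merely packages the same computation as a single chain of equalities starting from $\phi(b)=h_C\phi(d_Bb)$, suppressing the $\rho_C$ term you rightly note vanishes in positive degrees.
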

\begin{proof} Both $\phi$ and $\psi$ are equivariant, so it suffices to compute on basis elements. Since $\psi$ is a chain map, by induction the standard procedure extension of $\psi_0$ is defined recursively on basis elements $b$ of positive degree by $$\phi(b) = h_C \psi (d_B b ) = h_C d_C \psi (b) = \psi(b) - d_C h_C \psi(b) = \psi(b) - 0.$$  
Note the last step in this uniqueness result requires $h_C^2 = 0$, which we are universally  assuming for all contractions.\\

Here is an alternate formulation of the proof.  For $x \in C_*$ with $deg(x) > 0$, one has $x = 0$ if $d_Cx = 0$ and $h_Cx = 0$, since $x  = (dh + hd)(x)$.  For $b \in B_*$ a basis element, both $\phi(b)$ and $\psi(b)$ belong to $Im(h_C) = Ker(h_C)$.  By induction, $d_C(\phi(b)) = \phi(d_B(b)) = \psi(d_B(b)) = d_C\psi(b)$.  Thus $\phi(b) - \psi(b)$ lies in both $Ker(h_C)$ and $Ker(d_C)$, hence is 0.
\end{proof}

\begin{prop}\label{6.3}(i). Suppose $B_*$ is free over $\FF[G]$, with a contraction $h_B$, and suppose $C_*$ is an $\FF[G]$ complex with a contraction $h_C$.   Further suppose that $B_*$ has a chosen $\FF[G]$ basis consisting of elements $h_B(x)$, including in degree 0 with basis element $e = i_B(1) \in B_0 = \FF[G]$.  Suppose $\psi \colon B_* \to C_*$ is an equivariant  map of graded modules with $h_C \circ  \psi = \psi \circ  h_B$.  If either $h_B^2 = 0$ or $h_C^2 = 0$ then $\psi = \phi$, the canonical equivariant chain map constructed using $h_C$ and the chosen $\FF[G]$ basis of $B_*$.\\

(ii). In particular, there is a unique equivariant map $B_* \to C_*$ commuting with contractions (if one such exists) and it is automatically a standard procedure chain map independent of the choice of $\FF[G]$ basis of $B_*$ in the image of $h_B$.  If $C_*$ satisfies the same hypotheses as $B_*$ then $\psi = \phi$ is surjective.  If $C_* = B_*$ then $\psi = \phi = Id$.\\

(iii). Now just assume the standard procedure map $\phi \colon B_* \to C_*$ satisfies the weaker condition $h_C \phi h_B = 0$. Equivalently $\phi(Im(h_B)) \subset Im(h_C)$ and $\phi(Im (\iota_B)) \subset Im (\iota_C)$.  If $b = h_B(x) \in B_*$ is a basis element then $\phi(b) = \phi h_B(x) = h_C \phi(x)$. 
\end{prop}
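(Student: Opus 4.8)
The plan is to prove, by induction on degree, that $\psi$ coincides on the basis elements of $B_*$ with the standard procedure chain map $\phi$ determined by $\phi_0:=\psi|_{B_0}$; since $\psi$ and $\phi$ are both $\FF[G]$-linear this forces $\psi=\phi$ on all of $B_*$, and then $\psi$ is automatically a chain map (it equals the chain map $\phi$), is a standard procedure map, and — because the hypothesis ``commutes with the contractions'' never refers to a basis — is independent of the chosen $\FF[G]$-basis of $B_*$ inside $\operatorname{Im}h_B$. Under the running normalization $\epsilon_C\psi_0=\epsilon_B$ one has $\psi(e)=\iota_C(1)$, so $\phi$ is precisely \emph{the} canonical chain map and the base case of the induction, $\psi=\phi$ in degree $0$, holds by definition. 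Note that $\psi$ is not assumed to be a chain map; that it is one drops out at the end, and the identity $h_C^2=0$ (or $h_B^2=0$) is exactly what makes this work.

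The crux is the identity $\psi(b)=h_C\,\psi(d_Bb)$ for every basis element $b$ in positive degree. Every such $b$ lies in $\operatorname{Im}h_B$, so write $b=h_B(x)$. \emph{Case $h_B^2=0$:} from $d_Bh_B+h_Bd_B=Id-\rho_B$ together with $h_B^2=0$ and $h_B\iota_B=0$ one gets $h_Bd_Bh_B=h_B$, hence $h_Bd_Bb=h_Bd_Bh_B(x)=h_B(x)=b$, and applying the hypothesis $\psi h_B=h_C\psi$ to $d_Bb$ gives $\psi(b)=\psi(h_Bd_Bb)=h_C\psi(d_Bb)$ at once. \emph{Case $h_C^2=0$:} here $\psi(b)=\psi(h_B(x))=h_C\psi(x)$ by the hypothesis; expanding $d_Bb=d_Bh_B(x)=x-\rho_Bx-h_Bd_Bx$, the term $h_C\psi(\rho_Bx)$ equals $\epsilon_B(x)\,h_C\psi(e)=\epsilon_B(x)\,\psi(h_B(e))=0$ since $h_B(e)=h_B\iota_B(1)=0$, and $h_C\psi(h_Bd_Bx)=h_C\bigl(h_C\psi(d_Bx)\bigr)=h_C^2\psi(d_Bx)=0$, so $h_C\psi(d_Bb)=h_C\psi(x)=\psi(b)$. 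Either way the inductive step is then automatic: assuming $\psi=\phi$ on $B_{<n}$ and taking a degree-$n$ basis element $b$, we get $\psi(b)=h_C\psi(d_Bb)=h_C\phi(d_Bb)=\phi(b)$ by the recursive definition of $\phi$.

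The ``in particular'' clauses then follow formally. An equivariant graded-module map commuting with the contractions equals $\phi$, hence is unique. For surjectivity when $C_*$ also satisfies the standing hypotheses — $h_C^2=0$, an $\FF[G]$-basis inside $\operatorname{Im}h_C$, and $C_0=\FF[G]\cdot\iota_C(1)$ — I would induct on degree: the base $\psi(B_0)=\FF[G]\cdot\iota_C(1)=C_0$ is clear, and a degree-$n$ basis element $h_C(y)$ of $C_*$ has $y\in C_{n-1}=\psi(B_{n-1})$ by the inductive hypothesis, say $y=\psi(w)$, so $h_C(y)=h_C\psi(w)=\psi(h_B(w))\in\operatorname{Im}\psi$; since $\operatorname{Im}\psi$ is an $\FF[G]$-submodule, $C_n\subseteq\operatorname{Im}\psi$. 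Finally, if $C_*=B_*$ then the identity map is an equivariant chain map that trivially commutes with $h_B$, so the uniqueness just established gives $\psi=\phi=Id$, recovering the computation of Example \ref{6.1}.

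I expect the real friction to be bookkeeping rather than ideas: keeping the degree-$0$ normalization honest so that $\phi$ is genuinely \emph{the} canonical chain map and the induction can start, confirming that every positive-degree basis element is honestly of the form $h_B(x)$ with $x$ one degree lower, and, if one insists on the fully general $\ZZ$-graded $C_*$ rather than the positively graded situation, handling the degree $-1$ summand of the augmented complex with a little care. The split into the cases $h_B^2=0$ and $h_C^2=0$ is slightly inelegant, but each is a two-line computation, so I would not work to unify them.
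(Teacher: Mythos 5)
Your proof is correct and follows essentially the same route as the paper: induction on degree over the $\FF[G]$-basis, with the two case computations ($h_B^2=0$ via $h_Bd_Bh_B=h_B$, and $h_C^2=0$ via expanding $d_Bh_B(x)$ and killing the $\rho_B$ and $h_Bd_B$ terms) matching the paper's almost line for line, and the surjectivity and $C_*=B_*$ clauses handled by the same easy inductions. The only cosmetic difference is that you dispose of $h_C\psi(\rho_Bx)$ using $h_B\iota_B=0$, whereas the paper uses $\psi(e)=\iota_C(1)$ together with $h_C\iota_C=0$; under the paper's standing conventions both are legitimate.
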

\begin{proof}  The `commuting with contractions' hypothesis is meant to include that in degree 0, $\psi_0 \iota_B (1) = \iota_C(1)$.  Thus $\psi_0 = \phi_0$.  Now we proceed by induction on degree, assuming the result holds in degrees less than $n$. Again, by equivariance it suffices to compute on basis elements of degree $n$. We first use the hypothesis $h_B^2 = 0$. Consider such a basis element $b \in Im(h_B)$, so $h_Bb = 0$.  Then
$$\phi(b) = h_C \phi (d_Bb) = h_C \psi (d_Bb) = \psi h_B (d_Bb) = \psi(b - d_Bh_B b) = \psi(b) - 0.$$
Next we assume $h_C^2 = 0$, including $h_C \iota_C = 0$.  Write basis element $b = h_Bx$.   By induction, 
$$ \phi(b)  = h_C \phi (d_B h_B x) = h_C \psi (d_Bh_B x) = h_C \psi(x - h_B d_B x - \rho_B x)$$ $$ = h_C \psi(x) - h_Ch_C \psi(d_Bx) - h_C \rho_C \psi (x) = h_C \psi(x) = \psi (h_B x) = \psi(b).$$
Of course we generally assume both $h_B^2 = 0$ and $ h_C^2 = 0$, but the two independent arguments are amusing.\\

The statements in part (ii) of the proposition  follow by trivial induction on degree. One notes that by the hypotheses and induction, basis elements of $C_*$ can be written $h_C(y) \in C_*$, and these are all in the image of $\psi$.\\

For part (iii), since $b = h_B(db)$ we can write $x = db + h_B(y)$.  Then $\phi(x) = h_C \phi (db) + h_C \phi h_B(y) = h_C \phi (db) = \phi(b)$. Of course one can always discard summands of $db$ that are themselves basis elements, but being able to discard other summands in $Im(h_B)$ will prove to be very useful in Parts II and III. 
\end{proof}

{\bf Characterization of Contracted MacLane Models.} The contracted complex $(N_*(E_G), h_G) = (B_*, h_B)$, where $h_G(x) = (e, x)$, has the following obvious three properties.  First, $h_B^2 = 0$ and $h_B \iota = 0$.  Second, $B_*$ is free over $\FF[G]$, with a basis consisting of elements $b \in Im(h_B) = Ker(h_B)$, including in degree 0 with $B_0 = \FF[G]$ and basis element $\iota_B(1) = e$.  Third,  $Im(h_B)$ coincides with the $\FF$-span of the basis elements $\{b\}$. Note in positive degrees a basis element $b = h_B (d b)$, since $d h_B (b) = d 0 = 0$.

\begin{prop}\label{6.4}

(i). Assume the three properties above for a contracted complex $(B_*, h_B)$.  Let $C_*$ be a $G$-complex with a contraction $h_C$, of course with $h_C^2 = 0$ and $h_C \iota_C = 0$.   The standard procedure map $\phi \colon B_* \to C_*$, with $\phi_0(e) = \iota_C(1)$ and given on positive degree basis elements by $\phi(b) = h_C \phi (db)$ and then extended equivariantly, commutes with contractions.  That is, $\phi h_B(z) = h_C \phi(z)$ for all $z$.\\ 

(ii). If $C_*$ also has a basis in $Im(h_C)$ then one has a standard procedure splitting $ B_* \leftrightarrows C_*$, expressing $C_*$ as a canonical direct summand of $B_*$.\\
 
(iii). The three properties above uniquely characterize the pair $(B_*, h_B)$, up to canonical equivariant isomorphism. That is, $(B_*, h_B) \simeq (N_*(E_G), h_G)$.
\end{prop}
\begin{proof}
We first prove (i) by induction. From the hypothesis any element $h_B (z) =  \sum \epsilon_i b_i = h_B(\sum \epsilon_i db_i)$ for basis elements  $b_i$ and constants $\epsilon_i \in \FF$.  Since $Im( h_B) = Ker (h_B)$, we get $z = \sum \epsilon_i db_i + h_B(y)$ for some $y$.  Then  $$\phi h_B (z) = \phi(\sum \epsilon b_i) =   h_C \phi(\sum \epsilon_i db_i)  =  h_C \phi(z) - h_C \phi h_B(y) = h_C \phi(z), $$ since by induction $\phi h_B(y) = h_C \phi(y)$.\\

Parts (ii) and (iii) follow by combining part (i) with Proposition \ref{6.3}. For part (iii), one sees two standard procedure  maps $(B_*, h_B) \leftrightarrows (N_*(EG), h_G)$ with both compositions identity maps.
\end{proof}

{\bf Compositions of Standard Procedure Maps.} We insert here some easily checked statements that will turn out to be important later.  These are related to compositions of standard procedure chain maps. In fact, it is exactly  variants of parts (iii) and (iv) of the following proposition that allow us in Part III to prove relatively painlessly that the surjection complexes form an operad $\cS$,  that $\cS$ is a quotient of the Barratt-Eccles operad $\cE$, and that $\cS$  is a suboperad of the Eilenberg-Zilber operad $\cZ$.
\begin{prop}\label{6.5} Consider a composition of chain maps $A_* \xrightarrow{\alpha} B_* \xrightarrow{\beta} C_*$, perhaps in the  equivariant context.  Assume $B_*$ and $C_*$  have contractions satisfying $h^2 = 0$ and $h \iota = 0$. In part (i) assume $A_*$ has a contraction. \\

(i). If both maps commute with contractions then so does the composition. Thus if $A_*$ satisfies the domain hypotheses of Proposition 6.3, then both $\alpha$ and the composition $\beta \alpha$ are the standard procedure chain maps.\\

(ii).  If the second map commutes with contractions and the first map is a standard procedure chain map, then the composition is the standard procedure chain map. In particular, this always holds if $B_* = N_*(EG)$, since a standard procedure map with domain $N_*(EG)$  will always commute with contractions by Proposition \ref{6.4}.\\

(iii). If both maps are standard procedure chain maps and if for basis elements $a \in A_*$ one has $\alpha(a) = \sum \epsilon_i b_i$ for constants $\epsilon_i \in \FF$ and basis elements $b_i \in B_*$, then the composition is the  standard procedure chain map. (This includes cases other than $B_* = N_*(EG)).$\\

(iv).  If both maps are standard procedure chain maps and if for basis elements $a \in A_*$ it holds that $\beta \alpha(a) \in Im(h_C)$, then the composition is the standard procedure chain map. Note this hypothesis holds if $\beta(Im(h_B)) \subset Im(h_C)$, that is, if $h_C \beta h_B = 0$.
\end{prop}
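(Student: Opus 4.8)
The plan is to dispatch part (i) by a one-line computation and to organize parts (ii)--(iv) around a single observation: once one knows that $\beta\circ\alpha$ carries every basis element $a$ of $A_*$ in positive degree into $Im(h_C)=Ker(h_C)$, the uniqueness theorem Proposition~\ref{6.2} identifies $\beta\circ\alpha$ with the standard procedure extension --- relative to the chosen basis of $A_*$ and the contraction $h_C$ --- of its own degree-$0$ part, which is exactly the assertion to be proved. Part (i) itself is formal: the hypotheses give $(\beta\circ\alpha)\circ h_A=\beta\circ(h_B\circ\alpha)=(h_C\circ\beta)\circ\alpha=h_C\circ(\beta\circ\alpha)$, with nothing further to check.

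Parts (ii) and (iii) will therefore be proved by verifying the hypothesis of part (iv), namely $\beta\alpha(a)\in Im(h_C)$ for basis elements $a$ of positive degree, after which (iv) is the bare application of Proposition~\ref{6.2}. For (ii): since $\alpha$ is a standard procedure map, $\alpha(a)=h_B(\alpha(d_Aa))\in Im(h_B)$ when $\deg a\geq 1$, and since $\beta$ commutes with contractions, $\beta(h_Bz)=h_C(\beta z)\in Im(h_C)$ for every $z$; composing these in fact yields the recursion $\beta\alpha(a)=h_C(\beta\alpha(d_Aa))$ on the nose. The remark in the statement that this holds when $B_*=N_*(EG)$ is then immediate from Proposition~\ref{6.4}(i), by which any standard procedure map out of $N_*(EG)$ automatically commutes with contractions. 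For (iii): write $\alpha(a)=\sum_i\epsilon_ib_i$ with the $b_i$ genuine basis elements of $B_*$ and $\epsilon_i\in\FF$; because $\alpha$ preserves degree, every $b_i$ sits in degree $\deg a\geq 1$, so the recursion for the standard procedure map $\beta$ gives $\beta(b_i)=h_C(\beta(d_Bb_i))\in Im(h_C)$, whence $\beta\alpha(a)=\sum_i\epsilon_i\beta(b_i)\in Im(h_C)$. For (iv) the membership is precisely the hypothesis, and for its closing remark one notes that $\beta(Im(h_B))\subseteq Im(h_C)$ forces $\beta\alpha(a)=\beta(h_B\alpha(d_Aa))\in Im(h_C)$ for positive-degree $a$, while, all elements of $\beta(Im h_B)$ having positive degree, the inclusion $\beta(Im h_B)\subseteq Im(h_C)=Ker(h_C)$ is the same condition as $h_C\circ\beta\circ h_B=0$.

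Two points will need care. The first, and the one I regard as the genuine subtlety, surfaces in (iii): one must use that the $b_i$ are honest basis elements with scalar (not group-ring) coefficients, since $h_C$ is not equivariant and the recursion $\beta(b)=h_C(\beta(d_Bb))$ holds only on basis elements --- for $g\cdot b$ one instead has $\beta(gb)=g\,h_C(\beta(d_Bb))$, which need not lie in $Im(h_C)$. The second is the low-degree bookkeeping that licenses the appeal to Proposition~\ref{6.2}: one needs $\epsilon_C\circ(\beta\alpha)_0=\epsilon_A$, i.e.\ that $\beta$ and $\alpha$ respect augmentations. For a standard procedure map this is built into the normalization $\phi_0=\iota\circ\epsilon$; for a map commuting with contractions it follows from the fact that such a map also commutes with the basepoint projection $\rho=\iota\epsilon$ (apply $dh+hd=Id-\rho$ to both sides). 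I would record this compatibility once at the outset, together with the decompositions $Im(h_C)=Ker(h_C)$ in positive degrees and $Ker(h_C)=Im(h_C)\oplus Im(\iota_C)$ in degree $0$ from Proposition~\ref{4.1}, after which each of (ii)--(iv) collapses to the membership statement above and everything else is routine.
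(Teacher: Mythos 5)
Your proposal is correct and follows essentially the same route as the paper: part (i) is the same formal computation, part (ii) is exactly the paper's one-line recursion $\beta\alpha(a)=\beta h_B(\alpha(da))=h_C(\beta\alpha(da))$, part (iii) rests on the same observation that $\beta$ can be evaluated on scalar combinations of genuine basis elements (with the same caveat about group-action summands spoiling this), and part (iv) is the same direct appeal to Proposition~\ref{6.2}. Your extra bookkeeping on augmentations and the degree-$0$ normalization, and the packaging of (ii)--(iii) as instances of the $Im(h_C)$ membership criterion, are harmless refinements of what the paper leaves implicit.
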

\begin{proof} The first statement in part (i) is trivial. For part (ii),  if  $a \in A_*$ is a basis element, $\beta \alpha(a) =  \beta h_B(\alpha(da)) = h_C(\beta \alpha( da)).$
The point of part (iii) is that it is  easy to compute the second standard procedure chain map $\beta$ on images  $\alpha(a) = \sum \epsilon_i b_i$.  This doesn't work if the $\alpha(a)$ require non-trivial group action summands $g_ib_i \in B_*$.\\

Part (iv) is immediate from the  uniqueness result Proposition \ref{6.2}. Note part (iv) easily implies parts  (ii) and (iii).  Also, the uniqueness result Proposition \ref{6.3} implies the first map $\alpha$ in part (i) is a standard procedure map, so part (ii) implies the second statement in part (i). Thus we can view all the statements as consequences of the basic uniqueness theorems.  But sometimes one sees the hypotheses of (i), (ii), or (iii) directly, and the direct proofs from the definitions are easy, so it seems reasonable to isolate those statements.
\end{proof}
It is not true  in general  that  if the two maps are both standard procedure chain maps then the composition is a standard procedure chain map, even if the first map does commute with contractions.  For example, if $C_* = A_*$ and if $B_*$ is much smaller,  perhaps the only standard procedure chain map $A_* \to A_*$ extending the map in degree 0 is the identity, which couldn't factor through $B_*$.\\

For certain interesting compositions we will have to work pretty hard later to show they are indeed standard procedure chain maps. In other situations we will have to work pretty hard to deal with compositions of standard procedure maps  that are not standard procedure maps.\\

We turn to more examples.

\subsection{Alexander-Whitney Maps for MacLane Models.}  Given two groups, the standard procedure using the contraction of the range produces an $H \times G$ equivariant chain map $$AW \colon N_*(EH \times EG) \to N_*(EH) \otimes N_*(EG).$$  Since the domain is a MacLane model, $AW$ will commute with contractions. The construction begins in degree 0 with $AW(x ,y) = x \otimes y \in \FF[H] \otimes \FF[G]$, which defines the obvious isomorphism $\FF[H \times G] = \FF[H] \otimes \FF[G]$.  The $H \times G$-basis of the domain $N_*(EH \times EG)$ is given by non-degenerate pairs of sequences of group elements,  with both initial entries identity elements.  The standard procedure recursive formula for a basis element $b$ is $\phi(b) = h_\otimes \phi(db)$, where the contraction of the range is $h_\otimes = h^{(2)} =   h_H \otimes Id_G + \rho_H \otimes h_G $. 
Some low dimensional direct computations, using the contraction  of the range and equivariance,  pretty quickly leads to the following guess.
\begin{prop}\label{6.6}  For all pairs of simplices in the domain of the map $AW \colon N_*(EH \times EG) \to N_*(EH) \otimes N_*(EG)$  constructed by the standard procedure, one has $$AW(x_0, x_1, \ldots, x_n), (y_0, y_1, \ldots, y_n) =  \sum_{i = 0}^n (x_0, \ldots,  x_i) \otimes (y_i, \ldots, y_n).$$  
\end{prop}
\begin{proof} We want to illustrate a few of the above theoretical principles about standard procedure chain maps to explain this formula, so we will say much more than we need to.  First, one easily checks that the map given by this formula is equivariant and commutes with contractions, so Proposition \ref{6.3} applies. It is not necessary to verify that the formula defines a chain map. This brings out a `uniqueness' aspect of the iconic $AW$ formula.\\

We will also illustrate  more direct inductive arguments that prove  the standard recursive procedure yields the $AW$ formula.    Since the domain is a MacLane model, the standard procedure map $\phi$ commutes with the contractions, $h_\times$ and $h_\otimes$. The basis generators of the domain are given by $$b = (e_H, x_1, \ldots, x_n), (e_G, y_1, \ldots, y_n) = h_\times(\vec{x}, \vec{y}).$$  Observe that $db$ is a sum of many terms, but only $(x_1, \ldots, x_n), (y_1, \ldots, y_n)$ is {\it not} another basis generator.  Applying $h_\otimes \phi$ to the basis boundary terms gives 0.  Thus, we have two reasons why $\phi(b) = h_\otimes \phi (\vec{x}, \vec{y})$.  The formula  $h_\otimes = h_H \otimes Id_G + \rho_H \otimes h_G$ and induction gives $$h_\otimes(\vec{x}, \vec{y}) = \sum_{i=1}^n (e_H, x_1, \ldots, x_i) \otimes (y_i, \ldots y_n) + e_H \otimes (e_G, y_1, \ldots, y_n) = AW(b),$$ as desired.  The last summand is explained  by $\rho_H(x_1, \ldots, x_i) = 0$ unless $i = 1$, in which case $\rho_H(x_1) = e_H$ and $h_G(y_1, \ldots, y_n) = (e_G, y_1, \ldots, y_n)$.
\end{proof}
{\bf Iterations of $\bf {AW}$ Maps.} The inductive arguments  extend routinely to the MacLane models for products of three or more groups.  In particular, with three groups one can iterate the construction with $H \times (G \times K)$ or with $(H \times G) \times K$ and get the same result as given by using our canonical preferred contraction of triple tensor products.  But one  can also view this as an application of Proposition \ref{6.3}.  Just write down the iterated formula and observe that it is equivariant and commutes with contractions.\\

 Or, even easier,  use the uniqueness result Proposition \ref{6.2}.  Both ways around the associativity diagram 
$$ \begin{CD}
N_*(EH \times EG \times EK) & \xrightarrow{AW}\ \ \  \ \  N_*(EH \times EG) \otimes N_*(EK) \\ 
 \downarrow {AW} & \ \ \ \ \downarrow{AW \otimes Id} \\
 N_*(EH) \otimes N_*(EG \times EK) & \xrightarrow{Id \otimes AW}   N_*(EH) \otimes N_*(EG) \otimes N_*(EK) 
\end{CD}$$

are compositions of equivariant  chain maps, each of which commutes with contractions. Thus the compositions map basis elements  to elements in the image of contractions. Without even knowing the full $AW$ formula one sees  that  both compositions  around the diagram  agree with the standard procedure map. There are somewhat more general relevant results in Proposition \ref{6.11} below and Proposition \ref{6.5} above that clarify this, but they are not needed in this easy special case. $\qed$

\begin{rem}\label{6.7} Of course, for any simplicial sets $X, Y$ there are two-variable functorial Alexander-Whitney maps $AW \colon N_*(X \times Y) \to N_*(X) \otimes N_*(Y)$ given by exactly the same formula as in Proposition \ref{6.6}, which is the well-known sum of tensors of front faces and back faces.   There is  an explanation of the general $AW$ maps using  the contraction of tensor products of chains on simplices, along with the idea of minimal acyclic carriers discussed in Section 1.  We will explain this in Section 8. If $X = \Delta^m$ and $Y = \Delta^n$ are simplices, the map produced  by directly using the contraction of $N_*(\Delta^m) \otimes N_*(\Delta^n)$ definitely does not agree with the functorial $AW$ map.  In fact, these maps disagree already in degree 0. The issue is that the direct contraction method for a product of simplices ignores the functoriality requirement, already for vertices.   In the MacLane model case, equivariance replaces functoriality. $\qed$
\end{rem}
 \subsection{Eilenberg-Zilber Maps for MacLane Models.}   Suppose $X = EH$ and $Y = EG$ for groups $H$ and  $G$.  Then the standard procedure directly constructs an $H \times G$ equivariant map $$EZ \colon N_*(EH) \otimes N_*(EG) \to N_*(EH \times EG) = N_*(E(H \times G)),$$ using the obvious $\FF[H \times G]$ basis in the domain and the contraction  $h_{H \times G} = h_\times$ of the range.  An $\FF$-basis of $N_*(EH) \otimes N_*(EG)$ is given by   $\{\vec{x} \otimes \vec{y}\}$.  Here,  $\vec{x} = (x_0, \ldots, x_m)$ is a non-degenerate sequence of group elements in $H$ and  $\vec{y} = (y_0, \ldots, y_n)$ is a non-degenerate sequence of group elements in $G$. The preferred  $\FF[H \times G]$-basis is given by those tensors with $x_0 = e_H$ and $y_0 = e_G$.\\
 
  We can write simplex generators  in $N_{m+n}(EH \times EG)$ either as $$((h_0, h_1, \ldots, h_{m+n}), (g_0, g_1, \ldots, g_{m+n}))\ \ or\ \    ((h_0, g_0), (h_1, g_1), \ldots (h_{m+n}, g_{m+n})).$$
 The simplex is non-degenerate if $(h_i, g_i) \not= (h_{i+1}, g_{i+1})$ for all $i$, but the simplices $\vec{h}$ and $\vec{g}$ in the factors can certainly still be degenerate.  The preferred $\FF[G]$-basis is given by pairs with $(h_0, g_0) = (e,e)$.  The contraction  is $h_\times(\vec{h}, \vec{g}) = ((e, \vec{h}), (e, \vec{g}))$.\\

We will first state the Eilenberg-Zilber formula for MacLane models.  This will be followed by some discussion and then the proof.\\

\textbf{The Eilenberg-Zilber Formula.} The standard procedure equivariant chain map $EZ \colon N_*(EH) \otimes N_*(EG) \to N_*(EH \times EG)$  is given by $$EZ( \vec{x} \otimes \vec{y}) = \sum_{(I,J)} (-1)^{A(I, J)} ((h_0, g_0), (h_1, g_1), \ldots (h_{m+n}, g_{m+n})) \in N_*(EH \times EG).$$  

We clarify the meaning of all terms in this formula. The $h$'s are $x$'s and the $g$'s are $y$'s, with $(h_0, g_0) = (x_0, y_0)$ and $(h_{m+n}, g_{m+n}) = (x_m, y_n)$.    The $(I, J)$'s correspond to paths of length $m+n$ in the rectangle $[0, m] \times [0, n]$ from $(0,0)$ to $(m, n)$, increasing by one unit in either the horizontal or vertical direction at each step.  We can write $(I,J)$ either as  $$((i_0, \ldots, i_{m+n}), (j_0, \ldots, j_{m+n}))    \ \ or\ \ ((i_0,j_0), (i_1,j_1), \ldots (i_{m+n}, j_{m+n})).$$
An $(I, J)$ path determines a sequence of $h$'s and $g$'s, with the $h$ coordinate moving from one $x_i$ to the next if the path moves horizontally and the $g$ coordinate moving from one $y_j$ to the next if the path moves vertically.  One can also name an $(I,J)$ path by a sequence of $m$ $i$'s and $n$ $j$'s, where an $i$ or a $j$ in the sequence tells you whether to move in the horizontal or vertical direction in the rectangle.\\

The sign exponent $A(I,J)$ is the area between the edge path determined by $(I,J)$ and the path across the bottom of the rectangle and up the right side.  This last edge path is named  $(I,J) = ((0,0), (1,0), \ldots (m,0), (m,1), \ldots (m,n)$.  The parity of $A(I,J)$ is clearly the same as the parity of the number of swaps of an adjacent $i$ and $j$ moving the sequence $(i,i, \ldots, i, j, j, \ldots, j)$ to the sequence corresponding to the $(I,J)$ path.  This parity is the same as the parity of a shuffle permutation of $(i(1), \ldots, i(m), j(1), \ldots j(n))$, keeping the $i$'s and $j$'s in order.\\

In Section 8 we will explain the functorial version of the Eilenberg-Zilber map and we will reinterpret the notation here in terms of simplices in a triangulation of a prism. The details are almost identical to the details here, with simplices in $EH$ and $EG$ replaced by acyclic model simplices $\Delta^m$ and $\Delta^n$. The sign is explained in terms of orientations of the simplices and the prism.\\

{\bf Proof of the $EZ$ Formula.} We turn now to the proof of the Eilenberg-Zilber formula. The following discussion is an example of the situation discussed towards the end of  preview Section 2, for standard procedure chain maps having the form $\phi(x) = \sum \pm \cT x$.  More serious examples involving maps related to operads will be given in Part III.\\

The general $EZ $ formula is easily seen to be $H \times G$ equivariant. Therefore it suffices to prove the formula in   the special case of $H \times G$-basis tensors. We calculate by induction, assuming the $EZ$ formula in lower degrees,
$$EZ((e, \vec{x}) \otimes (e, \vec{y})) = h_\times \big[EZ( d(e, \vec{x}) \otimes (e, \vec{y}) + (-1)^m (e, \vec{x}) \otimes d(e, \vec{y}))\big]$$
$$ = h_\times \big[EZ(\vec{x} \otimes (e, \vec{y}) + (-1)^m (e, \vec{x}) \otimes \vec{y})\big].$$
We clarify the second equality.   There are far more than two boundary summands in the first line.  However, all but the two indicated in the second line have first entry of both tensor factors equal $e$, hence are basis generators in one lower degree.  Thus the value of $EZ$ on these generators is in $Im(h_\times) = Ker(h_\times)$.\\

This is the main point, and could be used as a starting point, along with some low degree calculation,  to perhaps {\it discover} the $EZ$ formula, rather than to just prove the stated formula is correct.\\

Ignoring signs, the desired formula for $EZ$ is very easily proved by induction.  The contraction $h_\times$ simply places identity element $(e,e)$ in front of each simplex tuple of the product.  Then observe that the terms in the conjectured sum beginning with $(e, x_1,\ldots), (e,e,\ldots)$ arise by applying $h_\times$ to the boundary term $EZ((x_1,\ldots, x_m) \otimes (e,\ldots, y_n))$, and the  terms $(e,e,\ldots), (e, y_1,\ldots)$ arise by applying $h_\times$ to the boundary term $EZ((e,\ldots, x_m) \otimes (y_1,\ldots, y_n))$.\\

Finally we deal with the signs $(-1)^{A(I,J)}$. The  term  $EZ((x_1, \ldots, x_m) \otimes (e, y_1, \ldots, y_n))$  corresponds to allowed paths in the rectangle $[1, m] \times [0, n]$.  The term  $EZ((-1)^m (e,x_1, \ldots, x_m) \otimes (y_1, \ldots, y_n))$ corresponds to allowed paths in the rectangle $[0, m] \times [1, n]$.  The main key now  is that the area in the full rectangle $[0, m] \times [0, n]$ cut off by a path beginning with   $(0,0), (1,0)$ and continuing in the smaller  rectangle $[1, m] \times [0, n]$ is  the same as the area cut off by the continued path. The area in the full rectangle  cut off by a path beginning with  $(0,0), (0,1)$ and continuing in the smaller rectangle $[0, m] \times [1, n]$ is $m$ plus the area cut off by the continued path in $[0, m] \times [1,n]$.  Done, the signs check by induction.     $\qed$ \\  

We point out that for basis tensors, $EZ( (e,\vec{x}) \otimes (e, \vec{y}))$ has initial entry of each summand tuple equal to the identity $(e,e)$.  These summands are basis generators of the range, up to sign.\\

{\bf Iterations of $\bf {EZ}$ Maps.} One can consider $EZ$ maps for products of three or more MacLane models.  Results such as associativity and commutativity can be proved immediately using the uniqueness result Proposition \ref{6.2}. The point is, chain maps defined by going different ways around various diagrams share the same equivariance properties, agree in degree 0,  and satisfy the property that images of basis elements are obviously in the image of contractions.  First we look at the associativity diagram.\\
$$ \begin{CD}
N_*(EH) \otimes N_*(EG) \otimes N_*(EF) &\  \xrightarrow{EZ \otimes Id}\ \ \    N_*(EH \times EG) \otimes N_*(EF) \\ 
 \downarrow {Id \otimes EZ} & \ \ \ \ \downarrow{EZ} \\
 N_*(EH) \otimes N_*(EG \times EF) & \xrightarrow{ EZ}   N_*(EH) \times N_*(EG) \times N_*(EF) 
\end{CD}$$
We have observed that all arrows map basis elements to sums of basis elements, up to signs.  Thus both compositions around the diagram result in the same standard procedure map. This is basically an example of Proposition \ref{6.5}(iii).\\

In the case of three or more groups the $EZ$ formula will include signs $(-1)^{A(I, J, \ldots, K)}$, where $A(I, J, \ldots, K)$ is the area of any collection of rectangles in a higher  dimensional box whose boundary is the union of two edge paths with the same endpoints.   This area is well defined modulo 2 because two such choices of rectangles add together to form a cycle in the box, which must be the boundary of some collection of cubes. But a cube has six rectangle faces, so the boundary of any collection of cubes has an even number of rectangular faces. The sign can also be interpreted as the sign of a shuffle permutation that moves a sequence of $i$'s, $j$'s, and $k$'s to $(i,i, ..., j,j,...,k,k,..)$ keeping identical letters in their original order.\\

Next we look at the commutativity diagram.
$$ \begin{CD}
N_*(EH) \otimes N_*(EG) &\  \xrightarrow{EZ}\      N_*(EH \times EG)  \\ 
 \downarrow {\tau} & \ \ \ \ \downarrow{\tau} \\
 N_*(EG) \otimes N_*(EH)  & \xrightarrow{ EZ}   N_*(EG \times EH) 
\end{CD}$$
The right vertical arrow take basis elements to basis elements with no sign.  The left vertical arrow takes basis elements to basis elements times a Koszul sign. The horizontal arrows take basis elements to sums of basis elements, up to signs in both cases.  Thus again both compositions agree with the standard procedure map.  Of course all these commutativities could be verified using the formula for $EZ$.  But our point is that general results can be used to prove equality of various maps without knowing formulas.\\

The $EZ$ map does not commute with contractions.  However,  the map $AW$  does commute with contractions, so Proposition \ref{6.5}(ii) implies that $AW \circ EZ$ is the standard procedure endomorphism of $N_*(EH) \otimes N_*(EG)$.  Then the second part of Example \ref{6.1} implies $AW \circ EZ = Id$ without using or even knowing the formulas for $AW$ and $EZ$.   $\qed$

\subsection{The Standard Procedure  Maps $  M_* \leftrightarrows N_*(EC)$.}
\begin{exam}\label{6.8} \textbf{A Chain Map $\bf \phi \colon M_* \to N_*(EC)$.} In this example and the next we want to relate the Examples \ref{5.3} and \ref{5.4} of the previous section.  With $C  = <T>$ the cyclic group of order $n$, we will  use the contraction of $N_*(EC)$ from  \ref{5.3} and freeness of $M_*$  from \ref{5.4} to construct a $C$-equivariant chain map $\phi \colon M_* \to N_*(EC)$.  The map $\phi$ does not commute with contractions because an attempt to define $\phi h_M(z) = h_C \phi(z)$ runs afoul of the equivariance requirement.  $Im(h_M)$ is not independent of the span of elements $g Im(h_M),\ g \not= 1 \in C$, unlike the situation of Proposition \ref{6.4}(i).  Also, if  such a $\phi$ did commute with contractions, Proposition \ref{6.3}(ii) would imply  it is surjective, which is clearly impossible. \\

We need to define $x_j = \phi(y_j) \in N_j(EC)$.  We begin with
$$x_0 = (1)\ and\   x_1 = (1, T).$$
The recursive formula for the $x_j$ is   $\phi(y_j) = h \phi (d y_j)$. Using the boundary formula in $M_*$ and the simple contraction formula $h(x) = (1, x) \in N_*(EC)$, this gives 
$$ x_{2k} = (1, (T^{n-1} + \ldots + T + 1)x_{2k-1})\  and\  x_{2k+1} = (1, (T-1) x_{2k}).$$
Then $\phi$ extended $C$-linearly is an equivariant  chain map.\\

Note that if $n = p$ is prime and $\FF = \FF_p$ then the images $\bar{x}_k \in N_*(BC, \FF_p)$ are cycles, since the $\bar{y}_k$ are cycles in $ M_*/ C$. \\

All summands of all $x_k$ begin $(1, \cdots)$, so the terms $(1, \pm x_{j-1})$ that occur in the definition of $x_j$ give degenerate simplices.    One gets rid of many degenerate terms by changing  the definition of $x_j$ by removing these terms.  Here are tidy formulas for the result.  Write $\bar{N} = T^{n-1} + \ldots + T$.  Then
$$ x_0 = (1),\ \ x_1 = (hT)x_0,\ \  x_{2k} = (h\bar{N}) x_{2k-1},\ \  x_{2k+1} = (hT) x_{2k}.$$
Here are the resulting closed formulas with that change, easily proved by induction:
\begin{prop}\label{6.9} The canonical equivariant chain map $\phi: M_* \to N_*(EC)$ produced by the standard procedure is given by $$ \phi(y_{2k}) = x_{2k} = \sum (1, T^{j_1}, T^{j_1+1}, T^{j_2}, T^{j_2+1}, \ldots, T^{j_k}, T^{j_k +1})$$
$$\phi(y_{2k+1}) = x_{2k+1} = \sum (1, T, T^{j_1}, T^{j_1+1}, T^{j_2}, T^{j_2+1}, \ldots, T^{j_k}, T^{j_k +1}).$$
Here the summation is over all $j_i \in \ZZ / n$.  To remove remaining degeneracies, simply add conditions $j_1 \not= 0$ for $x_{2k}$, $j_1 \not=1$ for $x_{2k+1}$, and also $ j_{i + 1} \not= j_i +1$ in both cases.
\end{prop}
This formula for a chain map $\phi$ was also known to Medina-Mardones [13] and probably others, perhaps going all the way back to Bensen.  We will give a geometric interpretation of the map $\phi$ in Subsection 16.3 of Part II.
\end{exam} 
\begin{rem}\label{6.10}
We have $x_k \in N_*(EC) \subset N_*(E\Sigma_n)$ , where $T \mapsto t = (2, 3, \ldots,  n, 1) \in \Sigma_n$, an $n$-cycle.  Since the inclusion $N_*(EC) \subset N_*(E\Sigma_n)$ commutes with the contractions $(z) \mapsto (1,z)$, the composed map $M_* \to N_*(EC) \to N_*(E\Sigma_n)$ is the map constructed by the standard procedure.  Of course this is trivial to see directly in this case.  Manipulation in MacLane models don't change just because everything belongs to a bigger group.\\

If $n = p$ is prime, in even degrees only  the homology classes of the cycles $\bar{x}_{2i(p-1)}$  are non-zero in  $H_*(B\Sigma_p; \FF_p)$, which we will prove in Section 9. These give rise to the Steenrod reduced $p^{th}$ power operations, at least on even degree cocycles.\footnote{The Steenrod operations on odd degree cocycles need more care.  The issue is that a homology group with twisted coefficients $H_*(B\Sigma_p; \widetilde{\FF})$ enters the full discussion.  Then in even degrees only the cycles $\bar{x}_{(2i+1)(p-1)}$ are non-zero in $H_*(B\Sigma_p; \widetilde{\FF})$, as we will also see in Section 9.  These cycles evaluate on tensor powers of odd degree cocycles to yield the Steenrod operations. Some of this was discussed in Subsection 3.5.}\\

In fact, a standard procedure functorial method  introduced  in Section 8 below gives rise to explicit functorial equivariant standard procedure chain maps $$ M_* \xrightarrow{\phi} N_*(E\Sigma_p) \xrightarrow{\Phi_{func}} HOM_{func}(N^*(X)^{\otimes p}, N^*(X))$$ and the image of $y_{2i(p-1)} \in M_*$ evaluated on a cocycle $\alpha^{\otimes p}$ of even degree $-2qp$ is a cocycle that up to a constant represents the Steenrod operation $P^{i-q}(\alpha) \in H^*(X)$ for $0 \leq i \leq q$.  But to establish key  properties of the Steenrod operations it is important that the map on $M_*$ factors through $N_*(E\Sigma_p)$.  We pursue all  this in Parts III and IV.  The details of a dual homology version of a functorial map equivalent to $\Phi_{func}$ are given in Section 17 of Part III.$\qed$
\end{rem}

\begin{exam}\label{6.11} \textbf{A Retraction $\bf \pi \colon N_*(EC) \to M_*$.}
 Again $C $ is the cyclic group of order $n$.  Retraction means $\pi \circ \phi (y_k) = y_k$ all $k$, where $\phi \colon M_* \to N_*(EC)$ is the map of Proposition \ref{6.9}. We will  use the retraction $\pi$ in a crucial way in Part IV in a cochain level proof of the Cartan formula for Steenrod operations and also for the Adem relations.\\

We first point out some easy consequences of previous results.  The standard procedure map $\pi \colon N_*(EC) \to M_*$ commutes with contractions by Proposition \ref{6.4} since the domain is a MacLane model.  Then by Proposition \ref{6.5} (ii), the composition $\pi \circ \phi \colon M_* \to N_*(EC) \to M_*$ is a standard procedure map.   This also follows from Proposition \ref{6.5} (i) and the formula for $\phi(y_i) \in N_*(EC)$ as a sum of basis generators.  Next, clearly both $\phi$ and $\pi$ are identity maps in degree 0.  The formulas for the contraction $h$  of $M_*$, which we will restate below, show $y_i \in Im(h)$.  Therefore, by the easy induction in the second paragraph of Example \ref{6.1}, $\phi \circ \pi =  Id$ must hold, without even knowing the formula for $\pi$.\\

{\bf The Procedure for Computing the Retraction $\bf \pi$.} Simplex generators of $N_*(EC)$ are named by sequences of powers of $T$, but it will be more efficient to just record the exponents and write a $k$-simplex  as $(i_0, i_1, \ldots, i_k)$.  The $i$'s can be arbitrary integers, interpreted modulo $n$.  The non-degenerate simplices with $i_0 = 0$ form a basis over the group ring $\FF[C]$.  Of course an equivariant chain map $\pi$ is determined by the standard recursive procedure from its values on these basis simplices.  But in the recursive determination and the proof that the formula we will give is correct, it is better to explain the value of $\pi$ on all simplices.   We have already pointed out that we know in advance that  the map produced by the standard procedure will be the (unique) equivariant map commuting with contractions.  That is, $$\pi(0, j_1, \ldots, j_k) = h \pi(j_1, \ldots, j_k), \ \   \pi(i_0, i_1, \ldots , i_k) =  T^{i_0} h \pi(i_1 - i_0, \ldots, i_n - i_0).$$

So, there are two approaches.  We could out of the blue write down a formula for $\pi$ and observe it is equivariant and commutes with contractions.  Or we can begin computing the standard procedure map in low degrees and observe how induction continues the procedure to produce a formula in all degrees. We will  carry out the latter.\\

Before  beginning,  we restate the formulas for the contraction $h$ of $M_*$.
$$h(T^i y_{2k}) = \sum_{0 \leq j < i} T^j y_{2k+1}\hspace{.5in}  [So\  h(y_{2k}) = 0\ and\ h(Ty_{2k}) = y_{2k+1}]$$
$$h(T^i y_{2k+1}) = 0\ if\ i < n-1\  and\  h(T^{n-1}y_{2k+1}) = y_{2k+2}.$$

We start the construction of the retraction  with $\pi(0) = y_0$.  Then $\pi(i) = T^i y_0$. Next, on basic 1-simplices $(0, i)$ the retraction $\pi$ is given by $$\pi(0, i) = h \pi (i) = hT^i y_0 = \sum_{j=0}^{i-1} T^j y_1.$$
Before going further, it will help to introduce some terminology. We view the powers $T^i$ on the unit circle in the complex plane, as powers of $e^{2\pi \sqrt{-1}/ n}$.  Thus these powers $T^i$, which we just name by the exponent,  inherit the positive cyclic  (counterclockwise) order.   We will write $\prec i', i''\succ$ for the cyclic {\it open} interval consisting of all powers $i$ with $i' \prec i \prec i''$ in the cyclic order.\footnote{Note $\prec$ is not a binary relation but $i' \prec i \prec i''$ makes sense.  Ordinary inequalities  will still be written $<$ and $\leq$.}  That is, start at $i'$ and continue counterclockwise around the circle to $i'' $.  The open interval consists of all $i$ that you pass. For example, $\prec i'-1, i' \succ$ is empty,  $\prec i', i' \succ$ consists of all $i \not= i'$, and if $i > 0$ then $\prec -1, i \succ = \{ 0 \leq j \leq i-1\}$.\\

We use this notation to describe the value of $\pi$ on 1-simplices $(i_0, i_1)$, where $0 \leq i_0, i_1 < n$.  $$\pi(i_0, i_1) = T^{i_0} \pi(0, i_1 - i_0) = \sum_{i\ \in \prec i_0-1, i_1\succ} T^i y_1.$$
The formula is correct even if $i_1 - i_0$ is negative or zero.  In the degenerate case $i_1 = i_0$, one has the  empty sum.  To see it in the non-degenerate case $i_1 < i_0$, add $n$ to $i_1 - i_0$. \\

It will turn out going forward  that for every simplex $\sigma \in N_*(EC)$ of even degree $2k$, we will have $\pi(\sigma) = 0$ or $\pi(\sigma) = T^i y_{2k}$ for some $i$.  For simplices $\sigma$ of odd degree $2k-1$, we will have $\pi(\sigma) = 0$ or $\pi(\sigma) = \displaystyle \sum_{i\ \in \prec i'-1, i'' \succ} T^i y_{2k-1}$ for some $0 \leq i', i'' < n$.  Note $T^{n-1}y_{2k-1}$ occurs in such a sum over an interval  if and only if $i''  < i'$.  This is important for the inductive arguments because $hT^{n-1} y_{2k-1} = y_{2k}$, but $hT^i y_{2k-1} = 0$ for all  $i < n-1$. Here are the general formulas.

\begin{prop}\label{6.12}  Write simplex generators in $N_*(EC)$ as sequences of exponents $0 \leq i < n$.  The equivariant chain map map $\pi \colon N_*(EC) \to M_*$ produced by the standard procedure is given on simplices of degree $2k$ by
$$\pi(i_0, i_1, \ldots, i_{2k} )= T^{i_0} y_{2k}$$ $$if\ k = 0\ or\ if\   i_{2j} \prec i_{2j-1} \prec i_{2j-2}\ for\ all\ 1 \leq j \leq k.$$
Otherwise $\pi(i_0, i_1 \ldots, i_{2k}) = 0.$\\

For simplices of degree $2k+1$,
$$\pi(i_0, i_1, \ldots, i_{2k+1} )= \sum_{i \in \prec i_0 -1, i_1 \succ} T^iy_{2k+1} $$ $$ if\ k = 0\ or\ if\ i_{2j+1} \prec i_{2j} \prec  i_{2j-1} \ \ for\ all\ 1 \leq j \leq k.$$
Otherwise $\pi(i_0, i_1, \ldots, i_{2k+1} ) = 0$.  In the non-zero cases here, $T^{n-1}y_{2k+1}$ occurs in the sum if and only if $i_1 < i_0$.\\

The map $\pi$ defined by the formulas is equivariant and commutes with the contractions of both domain and range.  Hence by Proposition \ref{6.3} it is indeed the equivariant chain map map produced by the standard construction.  The map $\pi$ is a retraction, that is, $\pi \circ \phi(y_k) = y_k.$
\end{prop}
Useful observations in an inductive proof are that, for any integers, the cyclic interval $\prec i' - i, i'' - i \succ$ is obtained by rotating $\prec i', i'' \succ$ clockwise $i$ units, and that the simplex $$(i_0, i_1, \ldots i_k) = T^{i_0}(0, i_1-i_0, \ldots, i_k - i_0).$$  Also, for a basis simplex one has $\pi (0, i_1, \ldots, i_k) = h \pi (i_1, \ldots, i_k)$.  $\qed$\\
\end{exam}
{\bf Examples Related to  $\bf {\pi \phi = Id}$}. As examples of the proposition, consider the terms that occur in the summation formulas for $x_{2k} = \phi(y_{2k})$ and $x_{2k+1} = \phi(y_{2k+1})$ given in Proposition \ref{6.9}.  In the even degree case, written in terms of exponents, the terms are $$(0, j_1, j_1+1, j_2, j_2+1, \ldots, j_k, j_k +1).$$
It is not hard to see that the only term not in the kernel of $\pi$ is the term $(0, n-1, 0, n-1,0, \ldots, n-1,0)$.  The reason is one cannot have $j_1+1  \prec j_1 \prec 0$ unless $j_1 = n-1$.  Then continue applying the cyclic order conditions to higher $j_k$ and conclude all $j_k = n-1$.  Then one concludes from the formula above involving the single remaining term that $\pi(x_{2k}) = y_{2k}$.\\

The odd degree case is almost the same.  The only summand in the formula for $x_{2k+1}$ not in the kernel of $\pi$ is the term $(0,1,0,1, \ldots, 0,1)$.  Then one concludes $\pi(x_{2k+1}) = y_{2k+1}$.\\

We have now verified directly  that $\pi \phi = Id$, so $\pi$ really is a retraction from $N_*(EC)$ back to $M_*$.    The earlier conceptual  argument that $\pi$ is a retraction would seem to give less information than the argument that all summands but one in $x_k = \phi(y_k)$ are in the kernel of $\pi$, and the remaining summand maps by $\pi$ back to $y_k$.  However, there are no signs in either $\phi$ or $\pi$,  hence no possible cancellations in the calculation of $\pi \phi(y_k)$. Thus it does follow from $\pi \phi = Id$ that for each $k$, exactly one of the simplicies in the sum formula for $\phi(y_k)$ is not in $kernel(\pi)$. \\

{\bf *The Retraction $ \bf {\bar{\pi} \colon N_*(BC) \to \bar{M}_*} =  M_*/C.$ *} The formulas in Proposition \ref{6.12}, especially the cyclic order conditions on triples of group elements, simplify somewhat for the induced map between coinvariant complexes.  We continue the notation that integers $f$ name cyclic group elements $T^f$, and also the notation from the end of Example \ref{5.3} that tuples $[f_1, \ldots, f_k]$ with $0 < f_j < n$ therefore name simplex generators of $N_*(BC)$.  To calculate $\bar{\pi}$ we make use of the section of the coinvariant map introduced in Example \ref{5.3}, $[f_1, \ldots, f_k] \mapsto (0, g_1, \ldots, g_k)$, where $g_j = \prod_{1 \leq i \leq j} f_i \in < T >$. The following is an exercise, translating conditions on the $g$'s to conditions on the $f$'s.

\begin{prop}\label{6.13}For generators of even degree,  $\bar{\pi}[f_1, \ldots, f_{2k}] = \bar{y}_{2k}\ or\ 0$.  The result is 0 unless $k = 0$ or if in the cyclic interval notation $f_{2j} \prec 0 \prec -f_{2j-1}$ for  $1 \leq j \leq k$.\\

For generators of odd degree,  $\bar{\pi}[f_1, \ldots, f_{2k+1}] = f_1 \bar{y}_{2k+1}\ or\ 0$.  The result is 0 unless $k = 0$ or if $f_{2j+1} \prec 0 \prec -f_{2j}$ for $ 1 \leq j \leq k$.   $\qed$
\end{prop}

In the odd degree case,  by slight duplication of notation, the $f_1$ on the right  just names the positive integer, not a power of $T$. The {\it integer} $f_1$ is the number of summands $T^iy_{2k+1}$ in the odd degree case of Proposition \ref{6.12}, namely the number of indices in the open cyclic interval $\prec -1, f_1 \succ $.\\

If $n$ is odd, another way to name simplices in $BC$ is as tuples $[e_1,e_2, \ldots, e_k]$ where $e_j \in \{\pm1, \pm 2, \ldots, \pm m \}$, where $m = (n-1)/2$.  The result on $\bar{\pi}$ then reads as follows.  For generators of even degree $2k$ the value of $\bar{\pi}$ is $\bar{y}_{2k}$ if each adjacent pair $(e_{2j}, e_{2j-1})$ has two negative entries or one positive and one negative entry with $e_{2j} + e_{2j-1} \geq 0$.  For generators of odd degree $2k+1$ the value of $\bar{\pi}$ is $e_1 \bar{y}_{2k+1}$ if each adjacent pair $(e_{2j+1}, e_{2j})$ satisfies those same condtions.  Otherwise the value of $\bar{\pi}$ in both even and odd degree cases  is 0. $\qed$

\subsection{A Standard Procedure Map Related to $\ell^{th}$ Powers}
\begin{exam}\label{6.14} \textbf{\bf $T^\ell$-Equivariant Chain Maps.}  
A group homomorphism $\iota \colon H \to G$ directly defines an $\iota$-equivariant map that we will also call $\iota \colon N_*(EH) \to N_*(EG)$.  One can also use freeness of the domain over $\FF[H]$ and the preferred contraction of the range to construct such an equivariant chain map, beginning with $\iota$ in degree 0.  These two maps are the same.\\

For an abelian group $G$, the $\ell$-th power map is a homomorphism.  Let us denote by $\iota_\ell$  the $\ell$-th power map for the cyclic group, $\iota_\ell(T) = T^\ell$.  Thus for the cyclic group $C_p $ one has a preferred $\iota_\ell$-equivariant self-map $\iota_\ell \colon  N_*(EC_p) \to N_*(EC_p)$.  It is pretty easy to see when $p$ is prime what the induced map in homology and cohomology with $\FF_p$ coefficients on the coinvariant complex $N_*(BC_p)$ must be.  It is the identity in degree 0 and multiplication by $\ell$ in degrees 1 and 2.  The structure of the cohomology ring then determines the cohomology map, and hence also the homology map.  In degrees $2k$ and $2k-1$, the homology map is multiplication by $\ell^k$.  However, it is not so clear exactly how the images of the classes $\bar{x}_{2k}$ and  $\bar{x}_{2k-1}$ under this map differ by  explicit boundaries from $\ell^k \bar{x}_{2k}$ and $\ell^k \bar{x}_{2k-1}$ in $N_*(BC_p)$.\\

To begin clarifying this, we  construct by the standard method an $\iota_\ell$-equivariant chain map $\lambda \colon M_* \to M_*$ extending $\lambda (y_0) =  y_0$. Then $\lambda T^iy_0 = T^{i\ell }y_0$.  We remind that the contraction of $M_*$ for the cyclic group $C_p$  is given by  the formulas:
$$h(T^i y_{2k}) = \sum_{0 \leq j < i} T^j y_{2k+1}\hspace{.5in}  [So\  h(y_{2k}) = 0\ and\ h(Ty_{2k}) = y_{2k+1}]$$
$$h(T^i y_{2k+1}) = 0\ if\ i < p-1\  and\  h(T^{p-1}y_{2k+1}) = y_{2k+2}.$$
Then $d y_1 = (T-1)y_0$, so $$\lambda (y_1) = h \lambda (d y_1) = h(T^\ell - 1) y_0 = \sum_{i=0}^{\ell-1} T^i y_1.$$
Next, using $d y_2 = N y_1$, where $N = \sum T^i$ is the norm, one gets from $NT^i = N$ that $\lambda(y_2) = \ell y_2$.  The inductive continuation is easy.\footnote{The $\iota_\ell$-equivariant chain map $\lambda$ was  exploited in Steenrod-Epstein [30] in the same way that we will use it.  They wrote down a formula and proved it was a chain map.  We knew the standard procedure produced a chain map and then we easily found its formula.}
\begin{prop}\label{6.15} The $\iota_\ell$-equivariant chain map $\lambda \colon M_* \to M_*$ is given by 
$$\lambda (y_{2k+1}) = \sum_{i=0}^{\ell-1} \ell^k T^i y_{2k+1} \ \ \ and \ \ \  \lambda(y_{2k}) = \ell^k y_{2k}.$$ 
\end{prop} 
We will postpone until Section 9 the use of Proposition \ref{6.14} to show that the images of the cycles $\bar{x}_{2k}, \bar{x}_{2k-1} \in N_*(BC_p)$ under the $\ell^{th}$ power map $\iota_\ell \colon N_*(BC_p) \to N_*(BC_p)$ differ by  explicit boundaries from $\ell^k \bar{x}_{2k}, \ell^k \bar{x}_{2k-1}$.  $\qed$
\end{exam}

\subsection{*Tensor Products of Standard Procedure Maps*}
\begin{exam}\label{6.16} \textbf{Tensor Products.}  Suppose we have constructed $H$ and $G$ equivariant chain maps  by the standard procedure  $\alpha \colon A_* \to C_*$ and $\beta \colon B_* \to D_*$.  Assume $A_*$ is free over $\FF[H]$, $B_*$ is free over $\FF[G]$, and suppose, as always,   contractions of $C_*$ and $D_*$ satisfy the assumptions $h^2 = 0, h \iota = 0$.

\begin{prop}\label{6.17}(i). With the assumptions above, $\alpha \otimes \beta \colon A_* \otimes B_* \to C_* \otimes D_*$, is the standard procedure $H \times G$ equivariant chain map  constructed using the tensor product $\FF[H \times G]$-basis of the domain and the tensor product contraction of the range.  If $\alpha$ and $\beta$ commute with contractions then so does $\alpha \otimes \beta$. By induction, the corresponding statements hold for any number of tensor product factors.\\

(ii). Assume basis elements of $A_*$ and $B_*$ are in $Im(h)$,  with $A_0 = \FF[H]$ and $B_0 = \FF[G]$.  Then the canonical  isomorphism $\tau \colon A_* \otimes B_* \to B_* \otimes A_*$, equivariant with respect to $H \times G \to G \times H$,  is the standard procedure chain map. The corresponding statement holds for a permutation isomorphism with any number of tensor factors.
\end{prop}

\begin{proof} These claims are exercises in the definitions, although they are a little tricky in low degrees where the basepoints of the complexes enter the computations.  A variant of this result will play a role towards the end of Part III of our paper when we investigate certain chain complex operads, so we give some details here.\\

(i). In degree 0, `standard procedure' means on basis elements $\alpha(a) = \iota_C \epsilon_A(a)$ and $\beta(b) = \iota_D \epsilon_B(b)$.  Thus $(\iota_C \otimes \iota_D) (\epsilon_A \otimes \epsilon _B )(a \otimes b) = \alpha(a) \otimes \beta(b)$.  Also, as pointed out at the beginning of this section,  $\rho_C \alpha(a) = \alpha (a)= \alpha \rho_A(a)$, both of which follow easily from $\epsilon \iota = Id$.\\
 
Now we could proceed by induction on basis elements, using $\alpha(a) = h_C \alpha (da)$ and $\beta(b) = h_D \beta(db)$ in positive degrees.   If $\phi$ denotes the standard procedure chain map on the tensor product, then $\phi(a \otimes b) = h_\otimes \phi(da \otimes b +(-1)^{|a|} a \otimes db)$, and we can apply induction to $\phi$ in one lower dimension.\\ 

But this proof is a little tricky in low dimensions, so we will leave that as an exercise.  Instead we will deduce the result more quickly from the uniqueness result Proposition \ref{6.2}.  Certainly $\alpha \otimes \beta$ is an equivariant chain map.  We need to show that for basis elements in positive degree, $\alpha(a) \otimes \beta(b) \in Im(h_{C_* \otimes D_*})$. If $deg(a) = 0$ then $$\alpha(a) \otimes \beta(b) = \rho_C\alpha(a) \otimes h_D \beta(db) = h_\otimes(\alpha(a) \otimes \beta(db)).$$  This does use $h_C(\alpha(a)) = h_C\iota_C\epsilon_A(a) = 0$. Similarly, if $deg(b) = 0$ then $$\alpha(a) \otimes \beta(b) = h_\otimes (\alpha(da) \otimes \beta(b)).$$  This uses $h_D(\beta(b)) = h_D \iota_D \epsilon_B(b) = 0$. Finally, if $deg(a), deg(b)$  are both positive, $$\alpha(a) \otimes \beta(b) = h_A \alpha(da) \otimes h_B \beta(db) = h_\otimes(\alpha(da) \otimes h_B \beta(db)).$$

It is routine to see that tensor products of maps that commute with contractions also commute with contractions.  This does use $\alpha \circ \rho_A = \rho_C \circ \alpha$.\\

(ii).  Again, it is fairly easy to see that the statement can be deduced directly from the definitions, with some care needed in degree 0 and some care with Koszul signs.  That proof will be left as an exercise.  But with the hypotheses about basis elements $a$ and $b$ belonging to $Im(h)$ in the two complexes,  it is quite easy to see that the equivariant chain map $\tau$ with $\tau(a \otimes b) = (-1)^{|a||b|} b \otimes a$ maps basis elements to elements in $Im(h_\otimes)$.  Namely, $hx \otimes hy = h_\otimes(x \otimes hy)$. Therefore Proposition \ref{6.2} applies again.   The result with more than two factors can be proved directly, or by composing isomorphisms given by swapping adjacent pairs.  Such compositions will be standard procedure maps by  Proposition  \ref{6.5}(iii), since up to sign  basis elements map to basis elements in each swap map.

\end{proof}
\end{exam}

\newpage

\section{Diagonal Chain Maps from Contractions}
\subsection{Diagonals for MacLane Models}
Given a contraction of a complex $C_*$, we constructed preferred contractions of the multiple tensor products $C_* ^{\otimes k}$ in Example \ref{5.5}.  If $C_*$ is free, we can use the standard recursive procedure to construct (equivariant) diagonal chain maps $C_* \to C_* ^{\otimes k}$, beginning with an obvious map in degree 0.  In this section we give two examples. The first example will be the MacLane model, from Example \ref{5.3}.

\begin{exam}\label{7.1} Since $N_*(EG)$ and $N_*(EG \times EG)$ are MacLane models, we can use the $G$-equivariant map $AW$ constructed in Example \ref{6.6}  to   construct a $G$-equivariant diagonal 
$$\delta = AW \circ \delta_\times \colon N_*(EG) \to N_*(EG \times EG) \to N_*(EG) \otimes N_*(EG),$$ where  $\delta_\times$ is the obvious diagonal map $N_*(EG) \to N_*(EG \times EG)$ induced by the map of cells of simplicial sets $\gamma \mapsto (\gamma, \gamma)$.  Both $AW$ and $\delta_x$ are equivariant and commute with contractions, hence the same is true of $\delta$.  Thus by Proposition \ref{6.5}(i) we get the Alexander-Whitney diagonal formula  $$ \delta(g_0, g_1, \ldots,  g_n) = \sum_{j=0}^{j=n} (g_0, \ldots, g_j) \otimes (g_j, \ldots, g_n)$$ as a standard procedure map.   Of course this is essentially just a special case of the two variable $AW$ map for MacLane models, and could have been discussed directly.\\

Next, using the recursive contractions  $ h^{(k+1)}  =   h \otimes Id^{(k)} + \rho \otimes h^{(k)} $ of $N_*(EG)^{\otimes (k+1)}$, one constructs $G$-equivariant  higher diagonals  $$\delta^{(k+1)} = AW^{(k+1)} \circ \delta^{(k+1)}_\times\colon N_*(EG) \to N_*(EG\times \ldots \times EG) \to N_*(EG)^{\otimes (k+1)}.$$ These also turn out to be the higher iterated Alexander-Whitney diagonals.    That is, $ \delta^{(k+1)} = ( Id \otimes \delta^{(k)})\circ \delta.$ The iterated diagonals are coassociative.  A simple proof is to observe that the iterated Alexander-Whitney diagonals are equivariant and commute with $h$ and $h^{(k+1)}$, so Propositions \ref{6.3} and \ref{6.5}(i) apply again. The formula for the iterated diagonal is, of course, $$\delta^{(k+1)}(g_0, g_1, \ldots, g_n) = \sum(g_0, \ldots, g_{i_1}) \otimes (g_{i_1}, \ldots, g_{i_2}) \otimes \ldots \otimes (g_{i_{k}}, \ldots, g_n),$$ where the sum is over all the indicated ordered overlapping splittings of $(g_0, \ldots, g_n)$ into $k+1$ tensor factors. $\qed$

\end{exam}
\subsection{Diagonals for Minimal Models}
\begin{exam}\label{7.2}
Next we study diagonals for the minimal complex $M_*$ for the cyclic group $C_n$ from Example \ref{5.4}.  We build a $C_n$-equivariant diagonal $\Delta \colon M_* \to M_* \otimes M_*$ using freeness of the domain and the chosen contraction of the range.  That contraction is $h^{(2)} =  h \otimes Id + \rho \otimes h  $, where $h$ is the contraction of $M_*$ defined previously, and given by the formulas:
$$h(T^i y_{2k}) = \sum_{0 \leq j < i} T^j y_{2k+1}\hspace{.5in}  [So\  h(y_{2k}) = 0\ and\ h(Ty_{2k}) = y_{2k+1}]$$
$$h(T^i y_{2k+1}) = 0\ if \  i < n-1\  and\  h(T^{n-1}y_{2k+1}) = y_{2k+2}.$$

We begin the diagonal with $\Delta y_0 = y_0 \otimes y_0$.  The recursive construction is $\Delta y_j = h^{(2)} \Delta (d y_j)$. We remind that the differential in $M_*$ is given by $dy_{2k+1} = (T-1)y_{2k}$ and $dy_{2k} = Ny_{2k-1}$, where $N = 1+T + \ldots T^{n-1}$.\\

Before writing down some formulas, let's make a prediction for $n = p$ prime and ground ring $\FF_p$ the field of order $p$.    Let $\bar{M}_* =  M_*/ C_p$ be the coinvariant complex.  So there is one generator $\bar{y}_j$ in each degree and the boundary operator is 0. \\

The prediction is that the diagonal of $M_*$ should cover the diagonal of $\bar{M}_*$ given by
$$ \Delta( \bar{y}_{2k}) = \sum_{i+j = k} \bar{y}_{2i} \otimes \bar{y}_{2j}\ \ \ and\ \ \  \Delta( \bar{y}_{2k+1}) = \sum_{i+j = 2k+1} \bar{y}_i \otimes \bar{y}_j.$$
The reason for this prediction is the known cohomology algebra of the cyclic group with $\FF_p$ coefficients.\\

Now let's explicitly compute the diagonal associated to the  contraction $h^{(2)} =  h \otimes Id + \rho \otimes h $ of $M_* \otimes M_*$.  
\begin{itemize}
\item$\Delta(y_0) = y_0 \otimes y_0,\ \ \ \  \Delta(y_1) = h^{(2)}\Delta(T-1)y_0 = y_0 \otimes y_1 + y_1 \otimes Ty_0$
\item$\Delta(y_2) = h^{(2)} \Delta N y_1 = y_0 \otimes y_2 +       y_2 \otimes y_0  +  \bigg( \displaystyle \sum_{i=1}^{n-1} h(T^iy_0) \otimes T^iy_1 \bigg)   $
\item$\Delta(y_3) = h^{(2)}\Delta (T-1)y_2 = y_0 \otimes y_3 + y_1 \otimes Ty_2 + y_2 \otimes y_1 + y_3 \otimes Ty_0$
\item$\Delta(y_4) = h^{(2)}\Delta N y_3 =  y_0 \otimes y_4 +    y_2 \otimes y_2   + y_4 \otimes y_0 \\ + \bigg( \displaystyle \sum_{i=1}^{n-1} h(T^iy_0) \otimes T^iy_3  \bigg)  + \bigg(  \displaystyle  \sum_{i=1}^{n-1} h(T^iy_2) \otimes T^iy_1 \bigg) $ 

\end{itemize}
The pattern for the odd $y_{2k+1}$ as $2k+1$ increases  is transparent.  The pattern for the even $y_{2k}$ contains the transparent terms with even subscripts, then groups of terms with odd subscripts, each group projecting to 0 in $\bar{M}_* \otimes \bar{M}_*$ when $n = p$ is prime.  In fact, each group turns out to be  a sum of $1+2+\ldots + p-1 = p(p-1)/2$ terms, all with the same projection to  $\bar{M}_* \otimes \bar{M}_*$.   The pattern of those groups of  odd subscript  terms is also transparent as $2k$ increases.   Specifically, for each $j+\ell = k,\ \ell > 0,$ there is a group of terms  $$\displaystyle \sum_{i=1}^{p-1} h(T^iy_{2j})\otimes T^iy_{2\ell-1}  = \sum_{i = 1}^{p-1} \bigg(y_{2j+1} + Ty_{2j+1} + \ldots + T^{i-1}y_{2j+1}\bigg)\otimes T^i y_{2\ell-1}.$$   So the prediction about the projection of the diagonal  holds true.\\

The diagonal calculation method above works for any $n$ and any ground ring $\FF$.  Once  discovered, the formula is easily proved by induction on degrees of the $M_*$ generators. There are really just two cases, the even degree generators and the odd degree generators. Here is the result.\footnote{This diagonal on the minimal model was also exploited in Steenrod-Epstein [30].  In that text they just wrote down the formula for $\Delta$, then proved it was a chain map.  We constructed a chain map by the standard recursive procedure, then found its formula.}
\begin{prop}\label{7.3} The standard contraction procedure diagonal $\Delta \colon M_* \to M_* \otimes M_*$ is given by $$\Delta(y_{2k+1}) = y_0 \otimes y_{2k+1} + y_1 \otimes Ty_{2k} + \cdots + y_{2k} \otimes y_1 +  y_{2k+1} \otimes Ty_0$$
$$\Delta(y_{2k}) = y_0 \otimes y_{2k} + y_2 \otimes y_{2k-2} + \cdots  + y_{2k} \otimes y_0
 + \bigg( \displaystyle  \sum_{i=1}^{n-1} h(T^iy_0) \otimes T^iy_{2k-1} \bigg)$$  $$ + \bigg( \displaystyle \sum_{i=1}^{n-1} h(T^iy_2) \otimes T^iy_{2k-3}  \bigg)   +    \cdots + \bigg(\displaystyle \sum_{i=1}^{n-1} h(T^iy_{2k-2}) \otimes T^iy_1 \bigg). \qed $$
\end{prop}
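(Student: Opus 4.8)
The plan is to verify the claimed closed formula for $\Delta$ by induction on the degree of the generator, checking the two cases $y_{2k-1}$ and $y_{2k}$ separately, using the recursive definition $\Delta(y_j) = h^{(2)}\Delta(d y_j)$ with $h^{(2)} = h\otimes Id + \rho\otimes h$ and the differentials $dy_{2k+1} = (T-1)y_{2k}$, $dy_{2k} = N y_{2k-1}$. The base cases $\Delta(y_0) = y_0\otimes y_0$ and $\Delta(y_1) = y_0\otimes y_1 + y_1\otimes Ty_0$ are already computed in the text. Alternatively, and probably more cleanly, I would observe that by Proposition \ref{6.3} it suffices to check that the map defined by the asserted formula is $C_n$-equivariant and commutes with the contractions $h$ and $h^{(2)}$; since $M_*$ is free over $\FF[C_n]$ with basis $\{y_j\}$ in $Im(h)=Ker(h)$ (note $h(y_{2k})=0$ and $h(y_{2k+1})=0$ in the relevant range, except $h(T^{n-1}y_{2k+1})=y_{2k+2}$, so the generators $y_j$ themselves lie in $Ker(h)$), the hypotheses of that uniqueness theorem are exactly met, and any such map is forced to be the standard procedure diagonal. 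I would present the direct inductive verification as the main line of argument since it is concrete, and mention the uniqueness shortcut as a remark.

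For the inductive step I would proceed as follows. For the odd generator: $dy_{2k+1} = (T-1)y_{2k}$, so $\Delta(dy_{2k+1}) = (T-1)\Delta(y_{2k}) = T\Delta(y_{2k}) - \Delta(y_{2k})$, using equivariance and the inductive formula for $\Delta(y_{2k})$. Applying $h^{(2)} = h\otimes Id + \rho\otimes h$, the key computations are: $h(T^i y_{2j}) - $ pieces telescope because $h(T^i y_{2j}) = \sum_{0\le \ell < i} T^\ell y_{2j+1}$ and $h((T-1)\cdot)$ differences collapse most terms; and $\rho(T^i y_{2j}) = y_0\cdot[\text{degree }0] $ kills all but the degree-zero tensor factor, producing the terms $y_{2m+1}\otimes Ty_{2(k-m)}$ and $y_0\otimes y_{2k+1}$. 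The "transparent" even-subscript-plus-one terms should drop right out, and the groups $\sum_{i=1}^{n-1} h(T^i y_{2j})\otimes T^i y_{2\ell-1}$ from $\Delta(y_{2k})$ must be shown to vanish after applying $h^{(2)}$ to $(T-1)$ times them — this uses that $(h\otimes Id)$ applied to $h(T^i y_{2j})\otimes(\cdots)$ gives zero since $h^2=0$, and $(\rho\otimes h)$ applied gives zero since $\rho$ kills positive-degree terms like $h(T^i y_{2j})$. For the even generator: $dy_{2k} = N y_{2k-1}$ with $N = 1 + T + \cdots + T^{n-1}$, so $\Delta(dy_{2k}) = N\Delta(y_{2k-1}) = \sum_i T^i \Delta(y_{2k-1})$; here one uses $NT^i = N$ and $N y_{2k-1} = dy_{2k}$ relations, plus the identity $\sum_{i=0}^{n-1} h(T^i x) = \sum_{i=0}^{n-1}\big(\sum_{0\le\ell<i} T^\ell y_{|x|+1}\big)$, to produce both the clean even-even terms $y_{2m}\otimes y_{2(k-m)}$ and the $h$-groups $\sum_{i=1}^{n-1} h(T^i y_{2j})\otimes T^i y_{2\ell-1}$.

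The main obstacle I anticipate is bookkeeping the sign-free but index-heavy telescoping in the odd case and, more delicately, correctly matching the $\rho\otimes h$ versus $h\otimes Id$ contributions when the differential $(T-1)$ or $N$ is distributed across a sum that already contains $h$-terms in its second tensor slot. One has to be careful that applying $h^{(2)}$ to $\Delta(y_{2k})$'s messy groups genuinely yields zero and does not leak spurious terms; the clean way to see this is the pair of observations $h^2 = 0$ (kills $h\otimes Id$ on an $h$-term) and $\rho h = 0$ (kills $\rho\otimes h$ when the first slot is already an $h$-image, hence positive degree). A secondary nuisance is the boundary behavior at $i = n-1$ where $h(T^{n-1}y_{2k+1}) = y_{2k+2}$ is nonzero; one must confirm this special value is consistent with the asserted formula and does not introduce an extra term — it should be absorbed because in the sums over $i = 1,\dots,n-1$ the contribution of the top index is exactly what advances the even-degree generator's subscript.

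Finally, for completeness I would remark that the analogous verification applied over the coinvariant complex $\bar M_* = C_p\backslash M_*$ with $\FF_p$ coefficients recovers the predicted formulas $\Delta(\bar y_{2k}) = \sum_{i+j=k}\bar y_{2i}\otimes\bar y_{2j}$ and $\Delta(\bar y_{2k+1}) = \sum_{i+j=2k+1}\bar y_i\otimes\bar y_j$, since each $h$-group $\sum_{i=1}^{p-1} h(T^i y_{2j})\otimes T^i y_{2\ell-1}$ consists of $1 + 2 + \cdots + (p-1) = p(p-1)/2 \equiv 0 \pmod p$ copies (with the same image) of $\bar y_{2j+1}\otimes\bar y_{2\ell-1}$, hence maps to zero mod $p$, leaving only the clean terms. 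This serves as a useful sanity check on the formula and ties the computation back to the known cohomology ring of the cyclic group that motivated the prediction in the text.
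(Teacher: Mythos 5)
Your main line of argument -- the direct induction on the degree of the generators, split into the odd case (using $dy_{2k+1}=(T-1)y_{2k}$) and the even case (using $dy_{2k}=Ny_{2k-1}$), with the recursion $\Delta(y_j)=h^{(2)}\Delta(dy_j)$ and the observations $h^2=0$, $\rho h=0$ controlling which summands of $h^{(2)}$ survive -- is exactly the paper's proof, which is stated tersely as ``easily proved by induction on degrees of the $M_*$ generators'' after the low-degree computations through $\Delta(y_4)$.

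However, the shortcut you offer as a remark is not available: Proposition \ref{6.3} requires the candidate map to commute with the contractions, i.e.\ $h^{(2)}\circ\Delta=\Delta\circ h$, and this fails for $n\ge 3$. For instance, $\Delta h(T^{n-1}y_1)=\Delta(y_2)$ contains every term $h(T^iy_0)\otimes T^iy_1$ for $1\le i\le n-1$, whereas
$$h^{(2)}\Delta(T^{n-1}y_1)=h(T^{n-1}y_0)\otimes T^{n-1}y_1+y_0\otimes y_2+y_2\otimes y_0$$
only produces the $i=n-1$ group; the two sides differ already by $y_1\otimes Ty_1$ when $n=3$. (The paper itself flags this in the proof of Proposition \ref{7.5}: ``Proposition \ref{6.3} does not help here since the various diagonals do not sufficiently commute with contractions.'') If you want a uniqueness-theorem route, the applicable statement is Proposition \ref{6.2}: the asserted formula is equivariant and sends each $y_j$ into $Im(h^{(2)})$ (every first tensor factor appearing is in $Im(h)$ or is $y_0$ paired with an $Im(h)$ second factor), but 6.2 additionally requires knowing the formula defines a chain map, which is essentially the same verification as your induction. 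So keep the induction as the proof and either drop the remark or restate it in terms of Proposition \ref{6.2}. Your closing sanity check on the coinvariant complex matches the paper's discussion and is fine.
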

\end{exam}

\begin{exam}\label{7.4} {\bf Multidiagonals for Minimal Models.} Let us turn to multidiagonals for $M_*$.  In general, given a free complex $C_*$ with a contraction $h$, one also has the contraction $h^{(2)} =  h \otimes Id + \rho \otimes h $ of $C_*  \otimes C_*$, from which one constructs a diagonal $\Delta$ on $C_*$. But this diagonal is not necessarily coassociative.  Given $\Delta$, one can  build by iteration various higher diagonals  $C_* \to C_*^{\otimes k}$.  For example, one choice is $\Delta^{(k+1)} = (Id \otimes \Delta^{(k)}) \circ \Delta$.  But one also has preferred  contractions of the $C_* ^{\otimes(k+1)}$ given by $h^{(k+1)} =   h \otimes Id^{\otimes k} + \rho \otimes h^{(k)}  $, and these contractions, which do have a strong associativity property,  directly lead to higher diagonals.  Are these contraction diagonals related to higher diagonals given by simply iterating the original $\Delta$?\\

{\bf Iterated vs Standard Procedure Multidiagonals.} Some bad news is that the diagonal for $M_*$  itself is not coassociative. The two ways of associating to form triple diagonals already disagree on $y_2$.\footnote{Curiously, when $n = 3$ the other triple diagonal $(\Delta \otimes Id) \circ \Delta(y_2)$ is a sum of twelve terms, and exactly one of them, $Ty_0 \otimes Ty_1 \otimes T^2y_1$, is not in $Ker(h) = Im(h)$.}
 Of course the two equivariant triple diagonals are equivariantly chain homotopic, which is easy to see explicitly up to $y_2$ and perhaps in general for $M_*$.  But we definitely have a preferred contraction of $M_*^{\otimes (k+1)}$, so that associated diagonal is the one we always want.\\

Some good news is that in the case of the cyclic group $C$, it works out that the preferred triple diagonal of the minimal complex, $M_* \to M_*^{\otimes 3}$, agrees with the iterated diagonal $(Id \otimes \Delta) \circ \Delta$.  More generally, we have the following. Note statement (ii) is consistent with the structure of $H^*(BC)$.
\begin{prop}\label{7.5}(i). The diagonal associated to the contraction $h^{(k)}$ of $M_*^{\otimes k}$ is the iterated diagonal $\Delta^{(k)} = (Id \otimes \Delta^{(k-1)}) \circ \Delta$.\\

(ii). The induced iterated diagonal for the coinvariant complex $\bar{M}_*^{\otimes k}$ is 
$$\bar{\Delta}^{(k)} (\bar{y}_{2\ell}) = \sum_{\sum a_j = 2 \ell} \bar{y}_{a_1} \otimes \bar{y}_{a_2} \otimes \ldots \otimes \bar{y}_{a_k},\ where\ all\  a_j\ are\ even.$$
$$\bar{\Delta}^{(k)} (\bar{y}_{2\ell+1}) \sum_{\sum a_j = 2 \ell+1} \bar{y}_{a_1} \otimes \bar{y}_{a_2} \otimes \ldots \otimes \bar{y}_{a_k},\ where\ exactly\ one\ a_j\ is\ odd.$$
\end{prop}
 \begin{proof} The proof of (i) is an application of Proposition \ref{6.2}.  We first look at the triple diagonal. On basis elements $y_m$, the first tensor factor of any summand of $\Delta (y_m)$ is always in $Im(h) = Ker(h)$. Occasionally the first tensor factor is $y_0$, with $\rho(y_0) = y_0$, which we interpret as $h(1) = \iota(1)$.  Then one looks at the second tensor factor of summands of $\Delta(y_m)$ to compute $(Id \otimes \Delta) \circ \Delta(y_m)$.  One sees the image of the iterated diagonal on basis elements $y_m$ is a sum of clean tensors, that is, tensors in the image of the summands of $h^{(3)} = h \otimes Id \otimes Id + \rho \otimes h \otimes Id + \rho \otimes \rho \otimes h$.  The first tensor factor of most summands of $\Delta(y_m)$  is in $Im(h)$, but one also sees summands with first factor $y_0$ and second factor in $Im(h)$,  and one summand $y_0 \otimes y_0 \otimes y_m$.\\

Computationally it seems  easier to iterate the basic diagonal $\Delta\colon M_* \to M_* \otimes M_*$, rather than directly use the contraction of $M_*^{\otimes 3}$. \\

The argument for the triple diagonals of $M_*$ extends to higher diagonals.  That is, the diagonal associated to the preferred contraction $h^{(k)}$ of $M_*^{\otimes k}$ is the iterated diagonal $$\Delta^{(k)} = (Id \otimes \Delta^{(k-1)}) \circ \Delta = \cdots \circ(Id \otimes Id \otimes \Delta) \circ (Id \otimes \Delta) \circ \Delta.$$  The proof is an induction, but is essentially the same as the $k = 3$ case, also based on the uniqueness result Proposition \ref{6.2} and the formula for $h^{(k)}$.\\

The formulas in statement (ii) follow by an easy induction using statement (i) and the basic diagonal formula in Proposition \ref{7.3}.
   \end{proof}

Eventually we will need to use the $p$-fold tensor diagonal for the minimal model  $M_*$ in order to study the Steenrod $p^{th}$ power operations. The next subsection introduces some of the key ideas.
\end{exam}
\subsection{*Equivariant Steenrod Extensions of Multidiagonals*} 

The cyclic group $C = C_p = <T>$ acts in two ways on $M_*^{\otimes p}$.  First, there is the rotation action $T_R(u_1\otimes u_2 \otimes \ldots \otimes u_p) = (u_p \otimes u_1 \ldots \otimes u_{p-1})$.  This is the restriction of the $\Sigma_p$ action on $M_*^{\otimes p}$ discussed in Section 3 to the subgroup generated by the cyclic permutation $t = (12...p)$.  We remind that $t(u_1\otimes u_2 \otimes \ldots \otimes u_p) = (u_{t^{-1}(1)} \otimes u_{t^{-1}(2)} \otimes  \ldots \otimes u_{t^{-1}(p)})$.  Secondly, there is the diagonal action $T_\Delta$, for which the multidiagonal $\Delta^{(p)} \colon M_* \to M_*^{\otimes p}$ is equivariant. The rotation action and diagonal action commute, thus $C \times C = <T_R, T_\Delta>$ acts on $M_*^{\otimes p}$.\\

The multidiagonal  is not equivariant for the rotation action.  However $C \times C$ acts freely on $M_* \otimes M_*$ and we have a preferred contraction $h_\otimes$ of $M_*^{\otimes p}$, hence we have a standard procedure $C \times C$ equivariant chain map $\gamma \colon M_* \otimes M_* \to M_* ^{\otimes p}$ that begins with the multidiagonal $\Delta^{(p)} \colon \{y_0\} \otimes M_* \to M_*^{\otimes p}$. On $C \times C$ basis elements $b = y_i \otimes y_j$ the inductive formula is $\gamma(b) = h_\otimes \gamma (db)$, which is then extended equivariantly. In principle the map $\gamma$ is easy to recursively program, but for odd primes $p$ the calculations quickly get unmanageably large and no nice patterns emerge on which to base a guess of a formula.\\

There is also a $C \times C$ equivariant  standard procedure chain map $\gamma \colon N_*(EC) \otimes N_*(EC) \to N_*(EC)^{\otimes p}$ beginning with the multidiagonal on $\{e\} \otimes N_*(EC)$.  We have the standard procedure inclusion $\phi \colon M_* \to N_*(EC)$ from Example \ref{6.8} and Proposition \ref{6.9} and the standard procedure retraction $\pi \colon N_*(EC) \to M_*$ that commutes with contractions from Example \ref{6.11} and Proposition \ref{6.12}.

\begin{prop}\label{7.6} The diagram below commutes, where $\phi\colon M_* \to N_*(EC)$ is the standard procedure map from Proposition \ref{6.9}, the $\gamma$ are the contraction based standard procedure maps, and $\pi \colon N_*(EC) \to M_*$  is the standard procedure retraction from Proposition \ref{6.12}.

$$\begin{CD}  N_*(EC) \otimes N_*(EC) & \xrightarrow{\gamma_{EC}}&  N_*(EC)^{\otimes p}  \\ 
 \uparrow \phi \otimes \phi &  & \downarrow {\pi ^{\otimes p}}\\
 M_* \otimes M_* & \xrightarrow{\gamma}\ \   &  M_*^{\otimes p} 
\end{CD}$$
\end{prop}

\begin{proof} The vertical maps are identities in degree 0. The composition $\gamma_{EC} \circ (\phi \otimes \phi) $ obviously satisfies the condition of the uniqueness theorem Proposition \ref{6.2} by the formula for $\phi $ in Proposition \ref{6.9} along with Proposition \ref{6.5} (iii). That is, the maps $\phi$ and $\phi\otimes \phi$  take basis elements to sums of basis elements. Then we post-compose with $\pi^{\otimes p}$. The point then is $\pi^{\otimes p}$ commutes with contractions, hence by Proposition \ref{6.5} (ii) this second  composition is also a standard procedure map, hence must be $\gamma$.
\end{proof}

We can divide both complexes on the left side of the diagram by the second $C$ factor, which acts diagonally on both complexes on the right side. Then we can divide the right side by the product $C^p$  and induce a commutative diagram, where the horizontal maps are recursively defined, as quotients of the $N_*(EC)$ and $M_*$ diagram.
$$\begin{CD}  N_*(EC) \otimes N_*(BC) & \xrightarrow{\bar{\gamma}_{EC}}&  N_*(BC)^{\otimes p}  \\ 
 \uparrow \phi \otimes \bar{\phi} &  & \downarrow {\bar{\pi} ^{\otimes p}}\\
 M_* \otimes \bar{M}_* & \xrightarrow{\bar{\gamma}}\ \   &  \bar{M}_*^{\otimes p} 
\end{CD}$$

{\bf Preview of a Formula for $\bar{\gamma}$.}  Eventually in Section 17 of Part III of this work we do prove a formula for $\gamma_{EC}$, and thus also for $\bar{\gamma}_{EC}$ and $\bar{\gamma}$,  due to Berger and Fresse.  The details of that formula are crucial for our cochain study of Steenrod operation in Part IV, especially for the Adem relations. \\

Nonetheless, without any formulas and without any reference to $EC$ and $BC$ we can prove the following result about the chain map $\bar{\gamma}$ for all $p$ in the same manner that Adem proved it. Let  $T_R$ and $N_R$ denote the  rotation and the sum of rotations norm $N_R = \sum T_R^\ell$ acting on $\bar{M}_*^{\otimes p}.$\\

Basic tensors in $\bar{M}_*^{\otimes p}$ can have the symmetric form $\bar{Y}_\ell = (\bar{y}_\ell \otimes \bar{y}_\ell \times \ldots \otimes \bar{y}_\ell)$ if the total degree is $\ell p$.  Consider other basic non-symmetric tensors  of the form $\bar{Y}_I = (\bar{y}_{i1} \otimes \bar{y}_{i2} \otimes \ldots \otimes \bar{y}_{ip})$, where the tuple $I = (i_1, i_2, \ldots, i_p)$ appears first among all its cyclic rotations $T^j I$ in the lexocographic order with $0 < 1 < \cdots$.

 \begin{prop}\label{7.7} The map $\bar{\gamma} \colon M_* \otimes \bar{M}_* \to \bar{M}_*^{\otimes p}$ has the following form.  There are  constants $c_\ell, c_I, c'_\ell, c_{I,i} \in \FF_p$, which are functions of $k$ and $j$, with\\
 
  $\bar{\gamma}(y_{2k} \otimes \bar{y}_j) = \sum c_\ell \bar{Y}_\ell + \sum c_I N_R(\bar{Y}_I)$.\\
  
 $\bar{\gamma}(y_{2k+1} \otimes \bar{y}_j) =  \sum c'_\ell \bar{Y}_\ell + \sum c_{I, i}(T_R^i - Id) (\bar{Y}_I)$.\\
 
 Of course, symmetric tensors  only appear when the degree $2k+j$ or $2k+1+j$ is a multiple  $\ell p$. It is also understood that the degrees on the left side match the degrees of the $\bar{Y}_I$ on the right side.
 \end{prop}

\begin{proof} The main point is, since the differential is 0 in both $\bar{M}_*$ and $\bar{M}_*^{\otimes p}$,  and since $\bar{\gamma}$ is equivariant for the rotation action, we have two formulas depending on the parity of degrees of generators for the boundary operator in $M_*$,  $$0 = \bar{\gamma}(d (y_{2k+1} \otimes \bar{y}_j)) = (T_R-1)(\bar{\gamma} (y_{2k}  \otimes \bar{y}_j)) \in \bar{M}_*^{\otimes p}$$ $$0 = \bar{\gamma}(d (y_{2k} \otimes \bar{y}_j)) = N_R(\bar{\gamma} (y_{2k-1}  \otimes \bar{y}_j)) \in \bar{M}_*^{\otimes p}.$$ 
 
 Now, the graded $C$-module $\bar{M}_*^{\otimes p}$ decomposes as a direct sum of $C$-submodules with $\FF$-basis either a $T_R$-orbit $\{ T_R^i(\bar{y}_{i1}  \otimes  \bar{y}_{i2} \otimes \ldots  \otimes  \bar{y}_{ip}), 0 \leq i \leq p-1 \}$ or a $T_R$-fixed generator $(\bar{y}_\ell \otimes \bar{y}_\ell \otimes \ldots \otimes  \bar{y}_\ell)$. The $T_R$-action rotates tensor factors with Koszul signs. If $p$ is odd then $p-1$ is even, hence the symmetric generators are fixed by $T_R$ and are in the kernel of both $T_R - 1$ and $N_R$\\
 
 Since non-trivial orbit summands  are $C$-isomorphic to copies of $\FF_p[C]$, on such summands $Ker(T_R - 1) = Im(N_R)$ and $Ker(N_R) = Im(T_R - 1)$. In fact, in $\FF_p[C]$ elements in $ker(T - 1)$ are multiples of $N = \sum T^\ell$ and elements in $Ker(N)$ are elements $\sum a_iT^i$ with $\sum a_i = 0$, which are  uniquely sums of multiples of $T^{i+1} - T^i = (T-1)T^i$.  Another basis is given by the $T^i - Id$.
 \end{proof}

{\bf Discussion of the Unknown Constants.} When $k = 0$ one can just look at the simple formula for the diagonals $\bar{\Delta}^{\otimes p}(\bar{y_j})$ in Proposition \ref{7.5}(ii) and see these diagonals are  $T_R$-invariant, which are thus sums of non-trivial $T_R$ orbits, hence norms with coefficients 1, plus a symmetric term with coefficient 1 when $j = 2\ell p$ or $j = (2\ell +1)p$. What about the higher degree constants in Proposition \ref{7.7}?  First it is  necessary to develop a  {\it functorial} standard procedure $C$-equivariant chain map $\gamma_X \colon M_* \otimes N_*(X) \to N_*(X)^{\otimes p}$ for simplicial sets $X$, where  the cyclic group acts by rotations on the tensor power range. In Section 8 below we describe a general functorial recursive standard procedure, and  in Subsection 8.4 we specialize to maps that include the  $\gamma_X$.  We also prove there that the contraction based standard procedure map $\gamma_{EC}$ and its quotient $\bar{\gamma}_{EC}$ from the commutative diagrams above agree with the functorial standard procedure maps for the simplicial sets $EC$ and $BC$.\\

 The formula of Berger and Fresse [3], which we will prove in Section 17, is  a formula for all $\gamma_X$, hence includes the recursively defined  standard procedure map $ \bar{\gamma}_{EC} = \gamma_{BC} \colon M_* \otimes N_*(BC) \to N_*(BC)^{\otimes p}$. Therefore, one obtains a formula for $\bar{\gamma} \colon M_* \otimes \bar{M}_* \to \bar{M}^{\otimes p}$ from the commutative diagrams above.\\

The retraction  $\bar{\pi}$ is made quite precise at the end of Subsection 6.5. However, the problem with the Berger-Fresse formula is that it seems quite obscure how the many seemingly random basis summands that occur in the resulting expressions for $\bar{\gamma}( y_k \otimes \bar{y}_j)$ can be organized as sums of specific constants times symmetric tensors and specific constants times  $T_R^i -1$ or $N_R$ applied to non-symmetric tensors, which we have proved is always possible.\\

Constructively, one could just write out all the summands in the Berger-Fresse formula for $\gamma_{BC}$ and count multiples of the same basis tensors.  We would only need the counts modulo $p$.  Adem realized that the Cartan formula implies the  mod $p$ values of the constants for the symmetric terms, which is sufficient for the cohomological Adem relations.  But for the explicit coboundary form of Adem relations it would be nice to know the other coefficients.\\

When $p = 2$ the map $\gamma_X \colon M_* \otimes N_*(X) \to N_*(X)^{\otimes 2}$ is exactly Steenrod's  map from the 1940's  that defines the $\cup_i$ operations.   A simplifying feature is that for $p = 2$, $M_* = N_*(EC_2)$ the MacLane model, which is a simplicial set model.  For odd primes the MacLane model $N_*(EC)$ is much bigger than $M_*$, which is not a simplicial set model,  and the chain complex of the classifying space $N_*(BC)$ is also much bigger than the quotient $\bar{M}_* =  M_*/ C$.\\

In the paper [7] on $p = 2$ Adem relations we gave explicit combinatorial formulas for Steenrod's maps $\gamma_X \colon N_*(EC_2) \otimes N_*(X) \to N_*(X)^{\otimes 2}$ for $X = EC_2$ and $X = BC_2$.   Formulas for $\gamma_{EC}$ and $\gamma_{BC} = \bar{\gamma}_{EC}$  for odd $p$ are much more elusive. Nonetheless, determining the constants of Proposition \ref{7.7} from the Berger-Fresse formula is just a problem in combinatorics, so will most likely eventually be solved by somebody.

 \newpage

\section{ Functorial  Contraction Procedure Chain Maps}

In this section we discuss the method of  `minimal contractible carriers' that constructs natural transformations of functors when given preferred contractions of various complexes associated to minimal carriers.

\begin{defn}\label{8.1}{\bf  The Functorial Recursive Standard Procedure.} Suppose that we have a free $\FF[G]$ chain complex functor $F_*(X_1, \ldots, X_k)$, and an $\FF[G]$ chain complex functor $K_*(X_1, \ldots, X_k)$, both of the same number of simplicial set variables.  We suppose we have a preferred contraction of  $K_*$ when the  $X_i = \Delta_i$ are simplices, with  $h_K^2 = 0$.  We suppose that in degree 0  there is an obvious  equivariant map  $F_0 \to K_0$ when the $X_i$ are points, related to augmentations and base points for the complexes $F_*$ and $K_*$.  We suppose that there is a preferred  $\FF[G]$ basis of elements $u \in F_*(X_1, \ldots, X_k) = F_*(X_i)$ so that the sets $\{u, 0\}$ of basis elements together with 0 are functorial in the $X_i$.  Finally we assume there are universal elements $\bar{u} \in F_*(\Delta^{a_1(u)}, \ldots, \Delta^{a_k(u)}) = F_*(\Delta^{a_i(u)})$ for suitable simplices,  and minimal carrier maps $\sigma_i \colon  \Delta^{a_i(u)} \to X_i$ canonically associated to $u$, inducing chain maps  $\sigma_{\#} \colon F_*(\Delta^{a_i(u)}) \to F_*(X_i)$ with  $\sigma_{\#} (\bar{u}) = u$.  We then define an equivariant natural transformation $$\phi_{func} \colon  F_*(X_1, \ldots, X_k) \to K_*(X_1, \ldots, X_k)$$ inductively on degree  by extending equivariantly  the formula $$\phi_{func} (u) = \sigma_{\#} h_{K_*(\Delta^{a_i(u)})}(\phi_{func}(d \bar{u})). \ \ \ \ \qed$$ 
\end{defn} 
The above paragraph may be a bit vague, but we assume readers are familiar with the classical minimal acyclic carrier construction. The main point is that the $\bar{u}$ and the $\sigma_i$ are determined by $u$, so $\phi_{func}(u)$ is well-defined. We gave a similarly vague discussion in Section 1.  The important thing is that in all of our examples the details are quite simple and ambiguities disappear.\\

The  difference with the classical viewpoint is that here we don't make  new arbitrary choices of chains with given boundary for the minimal contractible carriers at each inductive step.   Instead our contractions $h_K$ of $K_*(\Delta^{a_i(u)})$ make the choices for us. The basic property of contractions, $dh_K + h_K d = Id$ in positive degrees and on $Im(d_1)$, and the usual  acyclic model arguments, imply that our procedure does define an equivariant  chain map $F_*(X_1, \ldots, X_k) \to K_*(X_1, \ldots, X_k)$ functorial in the $X_i$. 

\subsection{Uniqueness Theorems for  Functorial Chain Maps}
There is a variant of the uniqueness result Proposition \ref{6.2} in the functorial setting.
\begin{prop}\label{8.2} Suppose $\psi \colon F_*(X_i) \to K_*(X_i)$ is an equivariant functorial chain map of $\FF[G]$ complexes, agreeing with the standard functorial procedure map $\phi_{func}$ in degree 0.  Suppose also that the universal elements in minimal carriers $\bar{u} \in F_*(\Delta^{a_i(u)})$ satisfy $\psi(\bar{u}) \in Im(h_K)$. Then $\psi = \phi_{func}$.
\end{prop}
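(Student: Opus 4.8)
The plan is to reproduce the argument of Proposition~\ref{6.2} inside the functorial framework of Remark~\ref{8.1}, using the minimal carriers to push every computation down to the universal elements that live on simplices. The first reduction is formal: both $\psi$ and $\phi_{func}$ are equivariant natural transformations of $\FF[G]$-complexes agreeing in degree~$0$, so by $\FF[G]$-linearity and naturality it is enough to prove $\psi(\bar u)=\phi_{func}(\bar u)$ for every universal element $\bar u\in F_*(\Delta^{a_i(u)})$. Indeed, once this is known, an arbitrary basis element $u=\sigma_\#(\bar u)$ satisfies $\psi(u)=\sigma_\#\psi(\bar u)=\sigma_\#\phi_{func}(\bar u)=\phi_{func}(u)$, the last equality being exactly the recursion that defines $\phi_{func}$ on $u$.

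The heart of the proof is then a single induction on degree. In degree~$0$ the two maps agree by hypothesis. Assume $\psi$ and $\phi_{func}$ coincide on $F_m(X_i)$ for all $m<n$ and all tuples $(X_i)$, and let $\bar u$ be a universal element of degree $n\ge 1$ in $F_*(\Delta^{a_i(u)})$. Because $\bar u$ is its own universal element with identity minimal carrier, the recursion of Remark~\ref{8.1} reads $\phi_{func}(\bar u)=h_K\big(\phi_{func}(d\bar u)\big)$, where $h_K$ is the contraction of $K_*(\Delta^{a_i(u)})$. Now $d\bar u$ is a chain of degree $n-1$, so $\phi_{func}(d\bar u)=\psi(d\bar u)$ by the inductive hypothesis, and since $\psi$ is a chain map this equals $h_K d_K\psi(\bar u)$. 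Feeding $\psi(\bar u)$ into the contraction identity $dh_K+h_Kd=\mathrm{Id}-\rho$ gives
$$h_K d_K\psi(\bar u)=\psi(\bar u)-\rho\,\psi(\bar u)-d_K h_K\psi(\bar u).$$
The term $\rho\,\psi(\bar u)$ vanishes because $\rho$ vanishes in positive degrees and $n\ge 1$, and $d_K h_K\psi(\bar u)$ vanishes because $\psi(\bar u)\in\mathrm{Im}(h_K)=\mathrm{Ker}(h_K)$ --- this is the one point where the standing assumption $h_K^2=0$ is used, via Proposition~\ref{4.1}. Hence $\phi_{func}(\bar u)=\psi(\bar u)$, which closes the induction.

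I do not expect any genuine difficulty beyond careful bookkeeping in the setup of Remark~\ref{8.1}: one must check that the canonical carrier attached to a universal element is the identity simplex, so that the recursion for $\bar u$ has the clean form used above; that $d\bar u$ is indeed an $\FF[G]$-combination of basis (equivalently universal) elements of degree $n-1$, so the inductive hypothesis applies to it; and that the normalization conventions make degenerate contributions disappear. None of this needs an idea not already present in Proposition~\ref{6.2}; the only mild subtlety is tracking the low-degree basepoint term $\rho$, and once the functorial scaffolding is in place the algebra is line-for-line the non-functorial computation.
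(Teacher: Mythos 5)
Your proposal is correct and follows the same route as the paper, which simply states that the proof is an induction essentially identical to that of Proposition \ref{6.2}, applied to the universal elements $\bar{u}$; your computation $\phi_{func}(\bar u)=h_K\phi_{func}(d\bar u)=h_K\psi(d\bar u)=h_Kd_K\psi(\bar u)=\psi(\bar u)$ is line-for-line that argument, with the vanishing of $d_Kh_K\psi(\bar u)$ coming from $\psi(\bar u)\in Im(h_K)=Ker(h_K)$ exactly as in 6.2. The reduction to universal elements via $\psi(u)=\sigma_\#\psi(\bar u)$ and the handling of the basepoint term $\rho$ in positive degrees are the only bookkeeping points, and you have them right.
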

\begin{proof} The proof is an induction, essentially identical to the proof of Proposition \ref{6.2}, applied to the universal basis elements $\bar{u}$.
\end{proof}

There is also a  variant of the commuting with contractions  result Proposition \ref{6.3} in the functorial setting. 
\begin{prop}\label{8.3} Suppose $\psi \colon F_*(X_i) \to K_*(X_i)$ is an equivariant natural transformation of the underlying graded $\FF[G]$ modules that  agrees with $\phi_{func}$ in degree 0.  Assume $\psi$ commutes with contractions $h_F$ and $h_K$  and that either $h_F^2 = 0$ or $h_K^2 = 0$ when the $X_i =  \Delta^{n_i}$ are simplices.    Finally suppose  that for each degree the universal elements  $\bar{u} \in F_*(\Delta^{a_i(u)})$ consist of elements in the image of $h_F$.  Then $\psi = \phi_{func}$.  In particular, $\psi$ is automatically a chain map.
\end{prop}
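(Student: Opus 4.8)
The plan is to run the same induction-on-degree argument that proved Proposition~\ref{6.3}, transported verbatim to the functorial setting, with the universal elements $\bar u$ playing the role of the $\FF[G]$-basis elements $h_B(x)$. Since everything is equivariant and functorial, and since a basis element $u$ of $F_*(X_i)$ is by hypothesis $\sigma_\# (\bar u)$ for a minimal carrier $\sigma$, it suffices to check $\psi(\bar u) = \phi_{func}(\bar u)$ on each universal element $\bar u$; then applying $\sigma_\#$ and extending equivariantly gives agreement everywhere. So the whole proof reduces to the model case $X_i = \Delta^{a_i(u)}$.

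First I would set up the induction: in degree $0$, $\psi$ and $\phi_{func}$ agree by hypothesis. Assume they agree in all degrees $< n$, and let $\bar u \in F_n(\Delta^{a_i(u)})$ be a universal element, which by hypothesis lies in $\operatorname{Im}(h_F)$, say $\bar u = h_F(x)$. The recursive definition gives $\phi_{func}(\bar u) = \sigma_\# h_K \phi_{func}(d\bar u)$, but on the model $\Delta^{a_i(u)}$ the carrier $\sigma$ is the identity, so this is just $\phi_{func}(\bar u) = h_K \phi_{func}(d\bar u) = h_K \psi(d\bar u)$ by the inductive hypothesis. Now I would use whichever of the two square-zero hypotheses is available. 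If $h_F^2 = 0$: since $\bar u = h_F(x)$, we have $h_F(\bar u) = 0$, so $d\bar u = \bar u - h_F d \bar u - \rho_F \bar u$... more directly, $h_F d\bar u = h_F d h_F x = h_F(x - h_F dx - \rho_F x)$, and using $h_F^2=0$, $h_F\iota_F = 0$ this reduces appropriately; then
\[
\phi_{func}(\bar u) = h_K \psi(d\bar u) = \psi h_F(d\bar u) = \psi(\bar u - d h_F \bar u - \text{(terms killed by }h_F)) = \psi(\bar u),
\]
exactly as in the $h_B^2=0$ half of Proposition~\ref{6.3}. If instead $h_K^2 = 0$: write $\bar u = h_F x$, expand $d\bar u = d h_F x = x - h_F dx - \rho_F x$, apply $\psi$, then $h_K$, and use $h_K^2 = 0$ and $h_K\psi(\rho_F x) = 0$ (which needs that $\psi$ carries the image of the basepoint into the image of $\iota_K$, hence is killed by $h_K\iota_K = 0$; this is part of the degree-$0$ agreement hypothesis together with the standard behaviour of $\phi_{func}$ on basepoints) to collapse everything to $h_K\psi(x) = \psi h_F(x) = \psi(\bar u)$. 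Either way $\psi(\bar u) = \phi_{func}(\bar u)$, completing the induction; and since a chain map is determined on a functorial basis by equivariance and naturality, $\psi = \phi_{func}$ globally, and in particular $\psi$ is a chain map because $\phi_{func}$ is.

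The main obstacle I anticipate is not the algebra of the inductive step—that is a line-for-line copy of Proposition~\ref{6.3}—but the bookkeeping needed to reduce the global statement to the universal elements. One must be careful that the minimal carrier maps $\sigma_i$ are genuinely canonically determined by $u$, so that applying $\psi$ (an honest natural transformation) and $\sigma_\#$ commutes correctly, and that the functoriality of the set $\{u,0\}$ rather than $\{u\}$ alone (degenerate images collapse to $0$) does not cause trouble—but since both $\psi$ and $\phi_{func}$ send such degenerate images to $0$, this is harmless. A secondary point to handle cleanly is the degree-$0$ and basepoint behaviour: one needs $\psi \rho_F = \rho_K \psi$ on the model, which follows from $\psi$ agreeing with $\phi_{func}$ in degree $0$ and the observation (made just before Definition~6.0) that standard-procedure maps commute with basepoints in the strong sense; this is what licenses the $h_K\psi(\rho_F x) = 0$ step in the $h_K^2 = 0$ case. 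Once those are in place the proof is essentially immediate.
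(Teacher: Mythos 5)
Your proposal is correct and is essentially the argument the paper intends: the paper's proof of Proposition \ref{8.3} is simply the statement that one runs the induction of Proposition \ref{6.3} on the universal elements $\bar{u}$ in the models, which is exactly what you do (including the reduction to models via naturality and the two square-zero variants of the inductive step). No further comment is needed.
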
Again the proof is an induction, essentially the same as the proof of Proposition \ref{6.3}. $\qed$

\begin{rem}\label{8.4}  We do not seem to have interesting examples of Proposition \ref{8.3} when $G$ is non-trivial, although there might be some. The sticky point is the commuting with contractions for simplices hypothesis.\\

Regardless of whether $\phi_{func}$ commutes with contractions on all elements when the $X_i$ are simplices, it is sometimes possible to get $\phi_{func}(\bar{u}) = h_K \phi_{func} (\bar{v})$ where $h_F(\bar{v}) = \bar{u}$ and $\bar{v}$ is much simpler than $d\bar{u}$.  Specifically, this will hold if $d \bar{u} = \bar{v} + \bar{w}$ and $\phi_{func}(\bar{w}) \in Im(h_K)$.
 $\qed$
\end{rem}
\subsection{The Functorial $AW$ Standard Procedure Map}
We  turn to examples of Proposition \ref{8.3} and Proposition \ref{8.2} where there are no group actions.  For any simplicial sets $X, Y$ we have classical functorial Alexander-Whitney, Eilenberg-Zilber, and diagonal approximation  maps $$AW\colon N_*(X \times Y) \to N_*(X) \otimes N_*(Y)$$ $$EZ\colon N_*(X) \otimes N_*(Y) \to  N_*(X \times Y)$$  $$\delta = \delta_\otimes\colon N_*(X) \to N_*(X) \otimes N_*(X).$$  These complexes are not contractible in general, but there is still a common explanation of all three maps, using the preferred contraction of chains on simplices, chains on  products of simplices, and tensor products of chains on simplices, along with the idea of minimal contractible carriers outlined above. There is also an obvious diagonal map $$\delta_\times \colon N_*(X) \to N_*(X \times X),$$   given on simplices by $\delta_\times(\sigma) = (\sigma, \sigma)$.\\

We take up the $EZ$ map in the next subsection. We can use either Proposition \ref{8.2} or \ref{8.3} to prove that the recursive functorial procedure diagonals and general $AW$ maps are given by the well-known classical formulas.  In particular, if Proposition \ref{8.3} is used it is not necessary to prove the known formula is a chain map.

\begin{exam}\label{8.5} \textbf{The Map $\bf \delta_\times.$}  This map obviously commutes with the contractions of domain and range when $X$ is a simplex  Also, $N_n(\Delta^n)$ is generated by $(0, 1, \ldots, n) = h (1, \ldots n)$.  So Proposition \ref{8.3} applies and $\delta_\times = \phi_{func}$.    Of course this example is trivial to analyze directly. $\qed$
\end{exam} 

\begin{exam}\label{8.6} \textbf{The Map $ AW$.}  The classical formula for $AW(\sigma, \tau)$ is a sum of front faces of $\sigma$ tensored with back faces of $\tau$.  In degree 0, this begins with $AW(x_0, y_0) = x_0 \otimes y_0$.    The general formula is $$AW((x_0, \ldots, x_n), (y_0, \ldots, y_n)) = \sum_{j=0}^n (x_0 \ldots, x_j) \otimes (y_j, \ldots y_n).$$

It is quite easy to see that this functorial formula commutes with contractions when $X$ and $Y$ are simplices.  Also, the universal base simplex $(0,1, \ldots, n), (0,1, \ldots n)$ of $\Delta^n \times \Delta^n$ obviously belongs to $Im(h_\times)$.  Therefore Proposition \ref{8.3} applies, proving the classical $AW$ formula is the standard procedure functorial map.\\

If one first proves the classical $AW$ formula is a chain map then it is easy to see that  the universal elements map to elements in the image of contractions, hence Proposition \ref{8.2} also applies.\\

{\bf Warning.} Of course simplices in a general simplicial set $X$ are not determined by their vertices.  Our notation $(x_0, \ldots, x_n)$ really refers to a  map of simplicial sets $\sigma \colon\Delta^n \to X$.  But the notation makes it easy to denote face operators in $X$, which are restrictions of simplicial set maps $\sigma$  to faces of $\Delta^n$, hence {\it named} by subsets of $(x_0, x_1, \ldots, x_n)$.  But to interpret a subset as a simplex in $X$ it is necessary to interpret the subset as the result of a sequence of basic face operators $d_i$ in $X$ applied to $\sigma$.\\

It sometimes  helps to keep in mind the case that $X$ is the singular simplicial set of a space.  But in more abstract examples, the simplices and face operators in $X$ might have very little geometric meaning. $\qed$
\end{exam}

\begin{exam}\label{8.7} \textbf{The Map  $\delta_\otimes$.}  The classic formula for the Alexander-Whitney diagonal is $$ \delta_\otimes(x_0, x_1, \ldots, x_n) = \sum_{j = 0} ^ n (x_0, \ldots, x_j) \otimes (x_j, \ldots, x_n).$$  This map  is, of course, the composition $AW \circ \delta_\times$.  Since both $AW$ and $\delta_\times$ commute with contractions for appropriate simplex domains, so does the composition.  Thus $\delta_\otimes$ is  the standard procedure functorial map.\footnote{The relations between the classical $AW$ maps and maps constructed using our preferred contractions was  the main reason we chose the tensor product contraction that we did back in Example \ref{5.5}.  An alternate tensor product contraction would produce a different $AW$ map and a different  diagonal.}  $\qed$
\end{exam}

{\bf Comparison of  Two Constructions for MacLane Models.} We pointed out earlier in Remark \ref{6.8} that when $X = EH$ and $Y = EG$, the equivariant map $AW\colon N_*(EH \times EG) \to N_*(EH) \otimes N_*(EG)$ constructed directly, using the obvious $\FF[H \times G]$ basis in the domain and the preferred contraction of the range, produces  the same sum of front faces tensor back faces $AW$ formula as does the functorial minimal carrier construction.  In the MacLane model case, we actually carried out the inductive argument that led to the formula.\\

A direct inductive argument using simplices also quickly leads to the functorial $AW$ formula.  The computations with simplices are essentially identical to the MacLane model case because of the similarity of the two boundary formulas and the two contractions $h_G(g_0,g_1,\ldots,g_k) = (e, g_0, g_1, \ldots, g_k)$ and $h_\Delta(x_0, x_1, \ldots, x_k) = (0, x_0, x_1, \ldots, x_k)$. The equivariance requirement for MacLane models plays a similar role as the  functoriality requirement involving vertices of simplices. \\

In fact,  the group action maps $g \colon N_*(EG) \to N_*(EG)$ are simplicial maps, hence commute with maps constructed as natural transformations for all simplical sets.  Thus to show the two maps above coincide it suffices to show this for $G$-basis generators of the MacLane models, where the comparison with simplices is especially transparent. These comments apply both to the $AW$ map and the $EZ$ map to be discussed in the next subsection. $\qed$\\

{\bf *The Diagonal for the Simplicial Set $ BG = EG/ G$.*}  In Example \ref{5.3} we discussed classifying spaces $BG$ as simplicial sets.  The simplices are $G$-orbits of simplices of the MacLane model $EG$.  We name the orbit of $(g_0, g_1, \ldots, g_n)$ by the $n$-tuple of group elements $[f_1, f_2, \ldots, f_n] = [g_0^{-1}g_1, g_1^{-1}g_2, \ldots, g_{n-1}^{-1}g_n]$. The empty set $[\emptyset]$ names the only 0-simplex, the orbit of group elements $g_0 \in G$. A canonical lift of $[f_1, f_2, \ldots, f_n]$ is $(1, f_1, f_1f_2, \ldots, f_1f_2\cdots f _n)$.  The degenerate simplices of $BG$ are those with some $f_j = e \in G$.\\

The face operators in $BG$ are given as follows.  The first and last face operators $d_0$ and $d_n$ applied to $[f_1, \ldots, f_n]$ just delete $f_1$ or $f_n$.  Intermediate operators $d_i$ replace two adjacent entries in $[ \dots f_i, f_{i+1} \ldots]$ by their product $f_if_{i+1}$.  The boundary operator is $d = \sum_{j=0}^n\ (-1)^j d_j$.\\

Elements $f \in G$ name 1-simplices, that is, edges. One can keep all this straight in a picture of the standard $n$-simplex by labeling consecutive edges $(i-1, i)$ for $i \geq 1$ by $f_i$.  Other edges are named by products.  Faces in general correspond to subsets of the standard simplex vertices $(0, 1, \ldots, n)$ and are interpreted as compositions of basic face operators, hence correspond to various ordered sets of various products of the $f_i$. For example, the 2-simplex face of the 5-simplex $[f_1,f_2, f_3, f_4, f_5]$ corresponding to the simplex vertex subset $(1,2,5)$ is $[f_2, f_3 f_4f_5]$.\\

 The simplicial sets $BG$ are somewhat good examples of the warning in Example \ref{8.6} that face operators of simplicial sets in general may have very little geometric meaning. However, the picture of labeling edges of the standard simplex with group elements and products of group elements does serve as a good guide. The 2-simplex $(1,2,5)$ can be thought of as the composite of two edges $(1,2)$ and $(2,5) = (2,3)(3,4)(4,5)$, which separately are labeled $f_2$ and $f_3f_4f_5$.\\

A pleasant exercise is the formula for the diagonal for $N_*(BG)$, $$\bar{\delta} [f_1, f_2, \ldots, f_n] = \sum [f_1, \ldots, f_i] \otimes [f_{i+1}, \ldots, f_n],$$ where $i $ is allowed to be 0 or $n$, meaning an empty set for the first or last tensor factor, that is, the 0-cell. This can be deduced from the $G$-equivariant diagonal for $N_*(EG)$, or from the simplicial set functorial formula applied to $BG$.\\

Higher diagonals of $BG$ are given by $$\bar{\delta}^{(k)} [f_1,f_2, \ldots, f_n] = \sum [S_1]\otimes [S_2] \otimes  \cdots \otimes [S_k],$$ where the sum is over all non-overlapping ordered partitions of the ordered set $[f_1, f_2, \ldots, f_n]$.  Some $S_j$ can be empty, with $[S_j]$  the 0-cell. Otherwise, subscripts of elements of $S_i$ are strictly less than subscripts of elements of $S_{i+1}$. The data of a partition can be named  by inserting $k-1$ vertical dividers in the sequence $[f_1, f_2, \ldots, f_n]$. $\ \ \ \ \qed$

 \subsection{The Functorial $ EZ$ Standard Procedure Map}  The Eilenberg-Zilber map is more complicated than the Alexander-Whitney map.  We again want to use induction, functoriality,  the standard contraction of chains on products of simplices, and the tensor products of  simplices, which represent minimal carriers, to explain the functorial formula for $EZ \colon N_m(X) \otimes N_n(Y)\to N_{m+n}(X \times Y)$.  In degree 0 there is no problem, functoriality forces $EZ(x_0 \otimes y_0) = ( x_0, y_0)$.\\
 
We will explain in detail how the $EZ$ formula arises inductively using contractions of products of simplices. There is a great amount of overlap with the discussion of the $EZ$ map for MacLane models in Subsection 6.4.\\

{\bf Our Approach to the $\bf EZ$ Formula.} The formula for $EZ(\Delta^m \otimes \Delta^n) \in N_{n+m}(\Delta^m \times \Delta^n)$ is a sum, with orientation signs,  of all non-degenerate $m+n$ simplices in a triangulation of the prism.  Once that formula is found, naturality immediately gives the $EZ$ formula for all X, Y.  If one compares this formula for $EZ$ in terms of a triangulation with the formula for the oriented geometric boundary of $\Delta^m \times \Delta^n$, then $EZ$ is rather obviously a chain map. Also, inspection of the formula reveals easily that $EZ(\Delta^m \otimes \Delta^n)$ is in the image of the contraction of $N_{n+m}(\Delta^m \times \Delta^n)$.  Therefore Proposition \ref{8.2} applies, hence $EZ$ is the functorial standard procedure map.\\

But there is quite a bit involved in the paragraph above.  We want to indicate how low degree computations with the standard procedure  {\it lead} to the EZ formula.  We have the boundary formula $$d(\Delta^m \otimes \Delta^n) = (1, 2, \ldots, m) \otimes (0,1,\ldots, n) + (-1)^m (0,1 \ldots, m) \otimes (1,\ldots, n)$$  $$ +\  many\ other\ terms.$$ The other terms are tensors of faces with both first entries 0.  A map induced by face inclusions $N_*(\Delta^{m_i} \times  \Delta^{n_j}) \to N_*(\Delta^m \times \Delta^n)$ with $(0,0) \mapsto (0,0)$  commutes with contractions.  Thus functoriality and induction implies that $h_\times \circ EZ$ vanishes on these  terms in the boundary of a tensor of basic simplices.  This is an example of the simplification discussed in Remark \ref{8.4}. $\qed$\\

{\bf A Formula up to Signs.} Computations with  small values of $m, n$ lead  to the following `conjecture', which is an example of the discussion in Subsection 2.4 concerning standard procedure maps of the form $\phi(x) = \sum \pm \cT x$.
$$EZ(\Delta^m \otimes \Delta^n) = h_\times \circ EZ(d(\Delta^m \otimes \Delta^n)) = \sum_{(I,J)} \pm (\sigma_I, \tau_J),$$ where the sum is over all pairs $(\sigma_I, \tau_J)$ of simplices of $\Delta^m$ and $\Delta^n$, both  of degree $m + n$.  Of course each of these separate simplices are degenerate if $m,n > 0$.   In the conjecture we can remove pairs of  simplices that are degenerate in the product.  Degenerate means some adjacent pair of coordinates in the separate simplices both repeat at the same time.  A non-degenerate pair $(\sigma_I, \tau_J)$ must have first entries $(0,0)$ and last entries $(m,n)$, and corresponds to an edge path $(I, J)$ of length $m+n$ in the box $[0,m] \times [0,n]$ that increases one of the coordinates by 1 at each step.\\

The conjecture is very easily proved by induction.  It is only necessary to observe that the terms in the conjectured sum of form $(0,1,\ldots), (0,0,\ldots)$ arise by applying $h_\times$ to the boundary term $EZ((1,\ldots, m) \otimes (0,\ldots, n))$, and the  terms $(0,0,\ldots), (0,1,\ldots)$ arise by applying $h_\times$ to the boundary term $EZ((0,\ldots, m) \otimes (1,\ldots, n))$.  As mentioned above, applying $h_\times$ to $EZ$ of other boundary terms gives 0 by functoriality. $\ \  \qed$ \\

{\bf Determination of the Signs.} So the remaining issue is to determine the signs, which are of course determined recursively by the contraction procedure.   For that,  we bring in the inductive formula, which we have seen simplifies to
$$EZ (\Delta^m \otimes \Delta^n) =  h_\times \circ EZ (d (\Delta^m \otimes \Delta^n)) $$
  $$ = h_\times\ [EZ( (1, \ldots, m) \otimes (0, \dots,  n))] + (-1)^m h_\times\ [EZ((0,\ldots, m) \otimes (1, \ldots, n))].$$

The contraction $h_\times$  simply places $(0,0)$'s in front of pairs of simplices. We have observed that   $EZ((1, \ldots, m) \otimes (0, \ldots, n))$  corresponds to allowed paths in the rectangle $[1, m] \times [0, n]$.  The term  $EZ((-1)^m(0, \ldots, m) \otimes (1,2, \ldots, n))$ corresponds to allowed paths in the rectangle $[0, m] \times [1, n]$.  The key now, just as it was in the case of MacLane models in Subsection 6.4,  is the observation that the area $A(I, J)$ in the full rectangle $[0, m] \times [0, n]$ below a path $(I,J)$ beginning with   $(0,0), (1,0)$ and continuing in the smaller  rectangle $[1, m] \times [0, n]$ is  the same as the area  below the continued path in $[1,m] \times [0,n]$. The area $A(I,J)$  in the full rectangle below a path $(I, J)$ beginning with  $(0,0), (0,1)$ and continuing in the smaller rectangle $[0, m] \times [1, n]$ is $m$ plus the area below the continued path in $[0, m] \times [1,n]$. So the sign in front of $(\sigma_I, \sigma_J)$ is $(-1)^{A(I,J)}.$ $\ \ \ \ \qed$ \\

This proof of the functorial $EZ$ formula by the method of analyzing inductively a proposed formula $\phi(x) = \sum \pm \cT x$ is an easy example of the same strategy we will use  for the proofs of the two hardest results in our paper, namely Proposition \ref{17.3}, concerning an equivariant  functorial map $\phi \colon \cS_*(n) \otimes N_*(X) \to N_*(X)^{\otimes n}$, and Proposition \ref{20.3}, concerning a twisted equivariant  map for surjection complexes $\phi \colon \cS_*(r) \otimes \cS_*(s_1) \otimes \ldots \otimes \cS_*(s_r) \to \cS_*(s)$, where $s = \sum s_i$.  The proofs of those  two formulas by the $\phi(x) = \sum \pm \cT x$ method are much longer than the proof of the functorial $EZ$ formula. But we consider it  important that this one technique applies in many settings. We discussed all this in the introductory Subsection 2.4, as a hint of things to come. \\

{\bf *The Eilenberg-Zilber Formula as a Triangulation of Prisms.*} We have now proved that the complete  formula for the standard procedure Eilenberg-Zilber map on tensors of fundamental chains on simplices is  $$EZ(\Delta^m \otimes \Delta^n) =  \sum_{(I,J)} (-1)^{A(I,J)}(\sigma_I, \tau_J)  \in N_*(\Delta^m \times \Delta^n).$$

We will include here the explanation  that these simplices with signs  in the simplicial set product $\Delta^m \times \Delta^n$  correspond  to the non-degenerate simplices of maximal dimension  in the ordered simplical complex associated to the product of posets $(0<1,\ldots<m) \times (0<1<\ldots<n)$.  In other words,  simplices in a canonical triangulation of the prism. The signs arise from orientations.\\

The data $(I, J)$ for a non-degenerate $m+n$ simplex is also determined by a sequence consisting of $m$ $i$'s and $n$ $j$'s.  Namely, such a sequence records at each step whether the $i$ coordinate or the $j$ coordinate increases by 1.  Such a sequence can be viewed as  a shuffle permutation of $\{i,i,\ldots, i, j,j, \ldots, j\}$, generated by swaps of an adjacent $i$ and $j$.  Such a permutation has a sign $(-1)^{sh(I, J)}\in \{ \pm 1\}$ that records  the parity of the number $sh(I, J)$ of such swaps.  This sign is the parity character   of a shuffle permutation of  $m+n$ distinct objects $\{i(1), \ldots, i(m), j(1), \ldots, j(n)\}$ that keeps all the $i$'s and all the $j$'s in their given order.  It is clear that the parity of $sh(I, J)$ is the same as the parity of the area $A(I, J)$ below the edge path $(I, J)$ in $[0,m] \times [0, n]$.\\

The prism is canonically oriented as the product of two oriented manifolds.  Each non-degenerate $m+n$ simplex is a codimension 0 sub-manifold of the prism, hence is oriented as such, and  is also canonically oriented in its own right  as an ordered simplex.  There is thus a sign $(-1)^{o(I, J)} \in \{\pm 1\}$, where $o(I,J) \in \{0,1\}$ is defined by comparing the two orientations of the simplex.\\

The canonical orientation of the product manifold agrees with that of the simplex corresponding to the path sequence $(i, \ldots,i ,j, \ldots j)$ along the bottom then up the right side of the rectangle.  Each swap of an adjacent $i$ and $j$ changes the comparison sign $(-1)^o$  of the orientation of the corresponding simplices.  This is because the two ordered simplices corresponding to  two such path sequences have a common interior ordered $m+n-1$ face in the oriented product manifold, with the opposite vertices inserted in the same ordered position.  Therefore $(-1)^{sh(I,J)} = (-1)^{o(I, J)}$. \\

The  formula for the Eilenberg-Zilber map on tensors of fundamental chains on simplices, expressed as a sum of oriented simplices $(\sigma_I, \sigma_J)$ in the prism, can thus also be written $$EZ(\Delta^m \otimes \Delta^n) = \sum_{(I,J)}(-1)^{ o(I,J)} (\sigma_I, \sigma_J) = \sum_{(I,J)} (-1)^{sh(I,J)} (\sigma_I, \sigma_J).$$
When $m = 1$ the  $EZ$ formula becomes $$EZ((0,1) \otimes (0, \ldots, n)) = \sum_{j=0}^n\  (-1)^j ( (0, 0), \ldots, (0, j), (1, j), \ldots, (1, n)).$$ This is because the shuffle sign comparing $(0,1, \ldots, 1) $  and $(1, \ldots, 1, 0,1, \ldots , 1)$, when there are $j$ 1's before the 0, is $(-1)^j$. $\ \ \ \ \qed$\\

\begin{rem}\label{8.8}  {\bf Products With More Factors.} Our discussions of the functorial $AW$ and $EZ$ maps extend rather routinely to products of three or more spaces.  Both maps are associative and the $EZ$ map is commutative.  One can extend all the arguments above directly for multiple products, using the contractions of multiple products in the range. The signs $(-1)^{A(I,J, 
\ldots, K)}$ occurring in the $EZ$ formula for multiple products are also orientation signs, associated to maximal dimension simplices in a canonical triangulation of a multiple product of simplices.\\

But associative diagrams for both the $AW$ and $EZ$ maps  involving three or more spaces, and commutative diagrams for the $EZ$ map, are also easy consequences of the uniqueness result Proposition \ref{8.2}.  Namely, all maps in a diagram of standard procedure maps for simplices are chain maps.  In the $EZ$ case the diagrams are the same as the diagrams for MacLane models in Subsection 6.4, with simplices replacing the MacLane models. Compositions of the maps in relevant diagrams easily satisfy the criteria of the uniqueness result 8.2 that basis elements map to elements in the image of contractions.  Note that an attempt to prove a commutativity result for the $AW$ map fails, from our point of view, because a permutation of tensor factors in $N_*(\Delta^m) \otimes N_*(\Delta^n)$  does not preserve elements in the image of the contraction. $\qed$
\end{rem}

Our rather detailed discussion of triangulations of prisms and the $EZ$ map will play a prominent role in Parts II and III when we study  morphisms $\cE \leftrightarrows \cS$ between the Barratt-Eccles operad and the surjection operad from the viewpoint of constructing canonical chain maps using preferred contractions.  The $EZ$ map also plays an important role in the discussion of chain homotopies  in Section 9.

\subsection{Functorial Maps $ \gamma_X \colon B_* \otimes N_*(X) \to N_*(X)^{\otimes n}$}

We have in mind that $B_*$ is a free augmented based $\FF[H]$ complex for some subgroup $H \subset \Sigma_n$, with $B_0 = \FF[H]$. Then $H$ acts on $N_*(X)^{\otimes n}$ by permuting tensors, as discussed in Section 3.  A basis of $B_* \otimes N_*(X)$ is provided by $\{b \otimes \sigma \}$, where the $\{b\}$ form an $H$-basis of $B_*$ and the $\{\sigma \colon \Delta^m \to X\}$ form an $\FF$-basis of $N_*(X)$.  Then the standard functorial procedure constructs an $H$-map $\gamma_X \colon B_* \otimes N_*(X) \to N_*(X)^{\otimes n}$.\\

Restricted to $e \otimes N_*(X)$, the map $\gamma_X$ is the functorial $AW$ diagonal. In addition, suppose  a group $G$ acts freely and simplicially on the simplicial set $X$. Then $G$ also acts diagonally on the target $N_*(X)^{\otimes n}$, and this $G$ action commutes with the $H$ action. Thus $H \times G$ acts on the target  and also acts freely on the domain of $\gamma_X$.  By functoriality, $\gamma_X$ induces an $H$-equivariant map $\bar{\gamma}_X \colon  B_* \otimes N_*( X/ G)  \to N_*( X/G)^{\otimes n}$.\\

In particular, we can take $H$ to be $C_n$ or $\Sigma_n$, and $X = EG$ with  $ X/ G = BG$.  Then $N_*(EG)^{\otimes n}$ has a preferred contraction. There are many possibilities for $B_*$, including minimal models, MacLane models, and surjection complexes, that we will study in Section 17 of Part III. 

\begin{prop}\label{8.9}  The functorially defined  map $\gamma_{EG} \colon B_* \otimes N_*(EG) \to N_*(EG)^{\otimes n}$ is $H \times G$ equivariant and coincides with the $H \times G$ equivariant standard contraction procedure map $\gamma \colon B_* \otimes N_*(EG) \to N_*(EG)^{\otimes n}$.
\end{prop}
\begin{proof} The actions of $g \in G$ on $EG$ are simplicial maps. On the MacLane model  $e \otimes N_*(EG)$ the two $ G$-equivariant  maps $\gamma_{EG}$ and $\gamma$ are both the $G$-equivariant $AW$ multidiagonal, as observed in Subsection 8.2.  The proposition then follows from the fundamental uniqueness result Proposition \ref{6.2} if the functorial map $\gamma_{EG}$ maps basis generators $b \otimes (e, g_1,  \ldots,  g_m)$ to elements in the image of the contraction of $N_*(EG)^{\otimes n}$.\\

The functorial map on such a generator is induced by the simplicial map $\sigma \colon \Delta^m  \to EG$, with $\sigma (0,1, \ldots, m) = (e, g_1, \ldots, g_m)$, along with the induced map $N_*(\Delta^m)^{\otimes n} \to N_*(EG)^{\otimes n}$.    The recursive formula for both standard procedure maps is $\phi(b \otimes \sigma) = h_{\otimes}(\phi (db \otimes \sigma + (-1)^{|b|} b \otimes d \sigma))$. The key is that the chain maps induced by $\sigma$ commute with contractions, as well as boundaries. Thus $\gamma_{EC}(b \otimes (e, g_1, \ldots, g_m) ) $ is in the image of the contraction of $N_*(EG)^{\otimes n}$.\\

Instead of quoting Proposition \ref{6.2}, the observations about the simplicial map $\sigma$ and the two identical standard  recursive procedures, along with agreement of both maps with the $AW$ map on $e \otimes N_*(EG)$, rather immediately implies $\gamma_{EC} = \gamma$ by induction.
\end{proof}

The result Proposition \ref{8.9} supplements the discussion in Subsection 7.3 concerning applications of the chain maps  $\gamma$ and $\gamma_X$ to proofs of properties of Steenrod operations, especially the Adem relations. So Subsection 7.3 is important and could be re-read now.

\subsection{*Functorial Diagonals for Multisimplicial Sets*}

As another example of using contractions of models to construct functorial chain maps, we translate into our language some results of Medina-Mardones, Pizzi, and Salvatore [23] on diagonals for multisimplicial sets.  We will return to this example briefly in Section 14 in our discussion of the McClure-Smith surjection complex, but otherwise multisimplicial sets play no further role in our project.\\

{\bf Contractible Models for Multisimplicial Sets.} An {\it n-fold multisimplicial set} $X$ is a contravariant functor from the n-fold product of the simplex category with itself to sets, $\bold{\Delta}^n \to (Sets)$. The representing objects can be interpreted as prisms $ P = P(m_1, \ldots, m_n) = \Delta^{m_1} \times \cdots \times \Delta^{m_n}$, with  face and degeneracy operators in each variable.  A multisimplicial set has a geometric realization $|X|$, which is a cell complex whose open cells are interiors of such prisms.  The boundary operator is the prism boundary operator, which is the usual tensor product boundary operator in $N_*(\Delta^{m_1}) \otimes \cdots \otimes N_*(\Delta^{m_n})$.  This tensor product is the cellular chain complex $C_*(P)$ of the prism and has a preferred tensor product contraction.  For a multisimplicial set $X$, the cellular chain complexes of the prisms of $X$ fit together to give a (normalized) chain complex $C_*(X)$, which is the same as the ordinary cellular chain complex of  $|X|$.\\

We also have the preferred tensor product contractions of the $C_*(P) \otimes C_*(P)$, and therefore the functorial acyclic model recursive procedure produces functorial diagonal maps $\delta_X \colon C_*(X) \to C_*(X) \otimes C_*(X)$ for n-fold multisimplicial sets.  On the fundamental classes $P$ of the prisms, the recursive diagonal is $\delta(P) = h_\otimes \delta (dP)$.  In the spirit of comments following Proposition \ref{6.4}, only the boundary terms $\pm \Delta^{m_1} \otimes \ldots \otimes d_0\Delta^{m_i} \otimes \ldots \otimes  \Delta^{m_n}$ of $dP$ contribute non-zero terms to the diagonal.  But instead of working out the diagonal recursively, we can write down a functorial chain map diagonal and apply  the uniqueness result Proposition \ref{8.2}  to see that this coincides with the standard procedure diagonal.  The method is to exploit the functorial Alexander-Whitney diagonals $\delta_i$ that we already have in each simplex factor.

\begin{prop}\label{8.10} The composition $\delta_P = \tau \circ \otimes \delta_i$ $$ C_*(P) = \otimes_i N_*(\Delta^{m_i}) \xrightarrow{\otimes \delta_i} \otimes_i (N_*(\Delta^{m_i}) \otimes N_*(\Delta^{m_i})) \xrightarrow{\tau} C_*(P) \otimes C_*(P),$$ where $\tau$ is the permutation isomorphism between the indicated tensor products, is the standard procedure functorial diagonal  of the universal prism model $P$.
\end{prop}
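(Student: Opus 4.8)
The plan is to verify the hypotheses of the functorial uniqueness theorem, Proposition~\ref{8.2}, for the composite $\Psi := \tau \circ (\otimes_i \delta_i)$ appearing in the statement, and thereby identify $\Psi$ with $\phi_{func}$. First I would observe that $\Psi$ is a natural transformation of chain-complex functors on $n$-fold multisimplicial sets: each $\delta_i$ is the Alexander--Whitney diagonal acting in the $i$-th simplicial coordinate, natural in that coordinate by functoriality of $AW$ and trivially natural in the others, while $\tau$ is the natural permutation isomorphism between the two tensor functors; since all the $\delta_i$ have degree $0$ there are no Koszul signs in $\otimes_i \delta_i$, so $\Psi$ is an unambiguous functorial chain map. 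In degree $0$ it sends a vertex $(v_1,\dots,v_n)$ of a prism to $(v_1,\dots,v_n)\otimes(v_1,\dots,v_n)$, which is forced by functoriality and hence agrees with $\phi_{func}$ there.

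The substantive point is that on the universal model prism $P = P(m_1,\dots,m_n)$ the chain $\Psi(P)$ lies in the image of the preferred contraction $h_K$ of $K_*(P) = C_*(P)\otimes C_*(P)$. We may assume every $m_i \ge 1$, the factors with $m_i = 0$ being units $N_*(\Delta^0)=\FF$ for the tensor product and contributing nothing. By the associativity property of tensor-product contractions (Example~\ref{5.5}), $h_K$ coincides with the standard $2n$-fold tensor contraction of $N_*(\Delta^{m_1})\otimes\cdots\otimes N_*(\Delta^{m_n})\otimes N_*(\Delta^{m_1})\otimes\cdots\otimes N_*(\Delta^{m_n})$, whose image is the span of the clean tensors. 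Using the $AW$ formula $\delta_i(\Delta^{m_i}) = \sum_{j_i=0}^{m_i}(0,1,\dots,j_i)\otimes(j_i,\dots,m_i)$ and applying $\tau$, each basic summand of $\Psi(P)$ carries, in tensor slots $1,\dots,n$, the front faces $(0,\dots,j_i)$ and, in slots $n+1,\dots,2n$, the back faces. Each front face is either the basepoint vertex $0$ (when $j_i=0$), which lies in $Im(\rho)$, or equals $h_{\Delta^{m_i}}(1,\dots,j_i)$ (when $j_i\ge 1$), which lies in $Im(h_{\Delta^{m_i}})$; and when all the leading $j_i$ vanish one uses that slot $n+1$ then holds the fundamental chain $(0,1,\dots,m_1)\in Im(h_{\Delta^{m_1}})$. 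Reading slots left to right and stopping at the first one in the image of the relevant $h$ exhibits each summand as a clean tensor, so $\Psi(P)\in Im(h_K)$.

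Granting these three facts, Proposition~\ref{8.2} forces $\Psi = \phi_{func}$, and in particular $\Psi(P) = \delta(P)$, which is the claim. I would emphasize that the commuting-with-contractions route of Proposition~\ref{8.3} is \emph{not} directly available here: the preferred contraction of a tensor product is not symmetric, so the permutation $\tau$ does not preserve its image, which is precisely why the more computational Proposition~\ref{8.2} is the right tool. The main obstacle is then the bookkeeping of the middle paragraph — one must be careful that the notion of clean tensor permits an arbitrary, possibly empty, $Im(\rho)$-prefix ahead of the single $Im(h)$-slot, and that the degree-$0$ basepoint counts as $\rho$ of itself, so that summands of $\Psi(P)$ in which several leading front faces degenerate to the basepoint still qualify. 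As is usual with the contraction method, no cancellation and no sign analysis is needed for the verification.
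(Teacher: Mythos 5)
Your proposal is correct and follows essentially the same route as the paper: the paper's (very brief) proof likewise treats $\tau \circ \otimes_i \delta_i$ as a functorial chain map agreeing with $\phi_{func}$ in degree $0$ and reduces everything to checking that its values on the universal prism classes lie in the image of the preferred tensor-product contraction, which is exactly the clean-tensor verification you carry out in detail (the paper only gestures at this, deferring the explicit bookkeeping to the front-face/back-face formula of Proposition \ref{8.12}). The only discrepancy is that the paper's proof cites Proposition \ref{8.3}, although the argument it actually describes (functorial chain map plus images of universal elements in $Im(h_K)$) is the hypothesis set of Proposition \ref{8.2}; your remark that the literal commuting-with-contractions route is unavailable because $\tau$ does not respect the asymmetric tensor contraction is correct and, if anything, sharpens the paper's citation.
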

\begin{proof} The argument is to just carefully show that Proposition \ref{8.2} applies, by checking that various formulas produce elements in the image of tensor product contractions.  Note that the permutation isomorphism $\tau$ introduces Koszul signs.  If we were to directly work with the standard recursive procedure, the signs arise from signs in  the relevant boundary terms in $dP$.  But since we know the composition $\tau \circ \otimes \delta_i$ is a functorial chain map, the uniqueness result Proposition \ref{8.2} saves us the trouble of working directly with the recursive procedure.  However, a direct discussion is not so different from a direct discussion of AW maps, as we explain next.
\end{proof}
{\bf A `Front Prism $\otimes$ Back Prism' Formula.} We can make quite explicit the diagonal formula $\delta_P$ as a sum with signs of `front subprisms tensor back subprisms', quite analogous to the AW diagonal formula.
Specifically, each integral vertex $I = (i_1, \ldots, i_n) \in [0, m_1], \times \ldots \times [0, m_n]$ defines a front prism face $P_I = \{(j_1, \ldots, j_n) | j_\ell \leq i_\ell\}$ and a back prism face $_I P = \{(j_1, \ldots, j_n) | j_\ell \geq i_\ell\}$.  Then 
\begin{prop}\label{8.11} $\delta_P = \sum_I \pm\    P_I\ \otimes\ _IP$.  The sign is the Koszul sign that moves the tensor of simplex faces $([0, i_1] \otimes [i_1, m_1]) \otimes \ldots \otimes ([0, i_n] \otimes [i_n, m_n])$ to the tensor $([0,i_1] \otimes \ldots \otimes [0, i_n]) \otimes ([i_1, m_1] \otimes \ldots \otimes [i_n, m_n]).$
\end{prop}
\begin{proof}The vertices $I = (0, \ldots, 0)$ and $(1,0, \ldots, 0)$ are special.  The corresponding diagonal terms arise from the contraction $h_\otimes = \rho_P \otimes h_P + h_P \otimes Id$ of $C_*(P) \otimes C_*(P)$ applied to the diagonal summand of $\delta(d_0\Delta^{m_1} \times \Delta^{m_2} \times \ldots \times \Delta^{m_n})$ corresponding to the vertex $(1,0, \ldots, 0)$ of that box. This is the only non-zero occurrence of the term $\rho_P \otimes h_P$ in the evaluation. Diagonal terms corresponding to other vertices $I = (i_1, \ldots, i_n )$ with $i_1 \geq 1$ or $I = (0, \ldots, 0, i_\ell, \ldots, i_n)$ with $i_\ell \geq 1$ arise as the unique non-zero term obtained by applying $h_P \otimes Id$  to the diagonal term of $\Delta^{m_1} \times \ldots d_0\Delta^{m_\ell} \ldots \times \Delta^{m_n}$  corresponding to the same vertex $I$ of that box. 
\end{proof}
In Parts II and III, following [23], we will look again at this diagonal in the case $C_*(X) = S_*^{ms}(n)$, the McClure-Smith surjection complex, which is indeed the normalized chain complex of an $n$-fold multisimplicial set $X$. 

\newpage

\section{Contractions and  Chain Homotopies}

\subsection{The Basic Procedure and a Uniqueness Theorem}
The standard recursive procedure for constructing equivariant chain maps between positively graded complexes also yields a standard procedure for constructing equivariant chain homotopies.  Suppose $\phi_0, \phi_ 1\colon B_* \rightrightarrows C_*$ are two equivariant chain maps with $\epsilon_C \phi_0 = \epsilon_C \phi_1$ in degree 0.  Also assume $B_*$ is free over $\FF[G]$ and $C_*$ has a contraction $h_C$ with $h_C^2 = 0$.  Then there is a preferred recursively defined equivariant  chain homotopy $H\colon B_* \to C_{*+1}$ from $\phi_0$ to $\phi_1$,  that is\footnote{The `from-to' language pays attention to the assymetry. We think of $\phi_0$ on the bottom of a cyclinder and $\phi_1$ on the top.},  $$dH + Hd = \phi_1 -  \phi_0.$$

As usual, we define $H(b)$ on $\FF[G]$ basis elements $\{b\}$ and extend linearly and equivariantly. In degree 0, $H(b_0) = h_C (\phi_1(b_0) - \phi_0(b_0))$.  For basis elements $b \in B_n$, $n \geq 1$, note by induction $\phi_1(b) - \phi_0(b) - H(db)$ is a cycle in $C_*$, hence a boundary. In fact, a specific boundary.  We define recursively $$\boxed {H(b) = h_C(\phi_1(b) -\phi_0(b) - H(db)).}$$

If there is no $G$ action the formula simplifies to $H(b) = h_C(\phi_1(b) - \phi_0(b))$ in all degrees since then $H(db) \in Im(h_C)$.  But if there is a $G$ action then $db = \sum g_i c_i$ and $H(db) = \sum g_i H(c_i)$ need not belong to $Im(h_C)$.\\
 
The method  extends routinely  to a functorial version of chain homotopies.  For example, the functorial standard procedure produces a preferred chain homotopy from $Id$ to  $EZ\circ AW$ that presumably agrees with a known such chain homotopy.  There is also a uniqueness theorem.\\

{\bf Proposition 9.0}  Any equivariant homotopy $H'$ that agrees with $H$ in degree 0 and maps basis elements to $Im(h_C) = Ker(h_C)$ coincides with $H$ in all degrees.
\begin{proof}  Here is an  argument analogous to the second given proof of the basic uniqueness theorem Proposition \ref{6.2}. Since $h_Cd_C + d_Ch_C = Id$ in positive degrees, it suffices to prove $h_C(H'b) = h_C(Hb) = 0$ and $d_C(H'b)) = d_C(Hb)$.  The first holds since both $H'b$ and $Hb$  belong to $ Im(h_C) = Ker(h_C)$.  The second holds  by induction since $$d_C (H'b) = \phi_1(b) - \phi_0(b) - H'(db) = \phi_1(b) - \phi_0(b) - H(db) = d_C(Hb).$$

\end{proof}
It turns out we will not make much direct use of the standard recursive construction of chain homotopies.  Instead, a rather explicit construction using joins will play a larger role.  In many cases we are able to use the uniqueness result  to relate the explict join homotopies to the recursive homotopies.\\

However, the general recursive construction of preferred chain homotopies is completely consistent with our view that preferred choices of contractions yield preferred choices of chain maps between chain complexes.  One might anticipate that the method extends to a standard construction of preferred higher homotopies between homotopies, and so on. Although in general closed formulas for recursive homotopies seem elusive, it is possible that the recursive construction of  homotopies has better theoretical properties than the explicit chain homotopies we introduce now.

\subsection{Joins and the Universal $ EZ$ Chain Homotopy}
One of the primary tools in our chain level study of Steenrod operations begun in [7] was the construction of chain homotopies between certain pairs of chain maps $ N_*(X) \to N_*(EG)$, using the join operation in the range.  The join operation just writes simplices next to each other, $(g_0, \ldots, g_n) * (g'_0, \ldots, g'_m) = (g_0, \ldots, g_n, g'_0, \ldots, g'_m)$, and extends multilinearly.  This is the same operation as the geometric join of an $n$-simplex and an $m$-simplex, resulting in an $(n+m+1)$-simplex, in terms of ordered vertices.  The domain should be a chain complex associated to a connected simplicial set, so that face operators are defined. The  boundary formula for joins is $$d(x * y) = dx * y - (-1)^{|x|} x * d y + \epsilon(x)y - (-1)^{|x|}x\epsilon(y),$$ where $|x| = deg(x)$, and $\epsilon \colon N_*(EG) \to \FF$ is the augmentation.\\ 

The join chain homotopies will be built from the universal functorial $EZ$ chain homotopy $EZ \colon N_*(I) \otimes N_*(X) \to N_*(I \times X)$, found towards the end of Subsection 8.3  to be given by
$$EZ((0, 1) \otimes (x_0, \ldots, x_n)) = \displaystyle \sum_{j=0}^n (-1)^j ((0, x_0), \ldots (0, x_j), (1, x_j), \ldots (1, x_n)).$$
The simplices on the right side can be viewed as non-degenerate $(n+1)$-simplices $(\ (0, \ldots 0, 1, \ldots 1), (x_0, \ldots x_j, x_j, \ldots x_n)\ )$ in the simplicial set $I \times X$. They are also joins of a bottom simplex  and a top simplex  in a triangulation of $I \times X$.
\begin{prop}\label{9.1}
Consider  two chain maps $\phi_0, \phi_1 \colon N_*(X) \to N_*(EG)$ that satisfy $\epsilon \phi_0(x) = \epsilon \phi_1(x) = 1 \in \FF$ for vertices $x \in  N_0(X) $.  A chain homotopy from $\phi_0$ to $\phi_1$, in terms of front and back faces of simplices of $X$,  is given by $$\boxed {J(x_0, x_1, \ldots, x_n) = \sum_{j = 0}^n (-1)^j\phi_0(x_0, \ldots, x_j)* \phi_1(x_j, \ldots, x_n).}$$ If a group $G'$ acts on $X$ and $\phi_0$ and $\phi_1$ are equivariant for some group homomorphism $G' \to G$, then $J$ is also equivariant. 
\end{prop}
\begin{proof}This can be proved by a direct computation, using the boundary formula for joins.  But a more conceptual proof arises from viewing the $EZ$ map in terms of triangulations of prisms and their boundaries.  Define a degree zero chain map $H \colon N_*(I \times X) \to N_*(EG)$ as follows.  On the top and bottom copies of $X$, define $$H ((1, \ldots, 1), (y_0, \ldots y_{n+1})) = \phi_1(y_0, \ldots,  y_{n+1})$$ and $$H ((0 \ldots, 0), (y_0, \ldots, y_{n+1})) = \phi_0(y_0, \ldots,  y_{n+1}).$$  On a simplex not on the top or bottom, define $$H ((0, \ldots 0, 1 \ldots 1), (y_0\, \ldots y_j,  y_{j+1}, \ldots y_{n+1})) = \phi_0(y_0, \ldots,  y_j) * \phi_1(y_{j+1}, \ldots, y_{n+1}).$$  The boundary formula for joins implies that $H$ is indeed a chain map.  It is here that the  hypothesis in degree 0 is used.\\

Define $EZ_h(x_0, x_1, \ldots, x_n) = EZ((0,1) \otimes (x_0, x_1, \ldots, x_n))$.  Since $EZ$ is a chain map, $$(dEZ_h + EZ_h d) (x_0, \ldots, x_n) = ((1, \ldots 1), (x_0, \ldots, x_n)) - ((0, \ldots, 0), (x_0, \ldots, x_n)).$$  Observe that $J(x_0, \ldots, x_n) = H \circ EZ_h (x_0, \ldots, x_n)$.  Then $$(dJ + Jd) ( x_0, \ldots, x_n) = H \circ (d EZ_h + EZ_h d) (x_0, \ldots, x_n)$$ $$ = \phi_1 (x_0, \ldots, x_n) - \phi_0(x_0, \ldots, x_n). $$
The statement concerning equivariance of $J$ is clear from the formula since the face operators in $X$ and the join operator in $N_*(EG)$ are equivariant.
\end{proof}

\begin{exam}\label{9.2}
 As an  example of an equivariant situation, elements $g \in G$ act by right multiplication on $G$.  This induces a left $G$-equvariant map $\boldsymbol{\cdot} g \colon N_*(EG) \to N_*(EG)$, defined by the right multiplication by $g$ on simplices $(g_0, g_1, \ldots, g_n) \boldsymbol{\cdot} g = (g_0g, g_1g, \ldots, g_ng) \in N_*(EG)$.  The identity is also left $G$-equivariant. Thus there is a canonical left $G$-equivariant join chain homotopy $K_g$ from $Id$ to $\boldsymbol{\cdot} g $ on $N_*(EG)$.\\

It is not necessary that either map $\phi_0$ or $\phi_1$ in the proposition be some kind of inclusion.  As an example, the basepoint $\rho = \iota \epsilon\colon N_*(EG) \to \FF\to N_*(EG)$ is chain homotopic to the identity, and the standard chain homotopy is just the contraction $J(g_0, \ldots,  g_n) = (e, g_0, \ldots, g_n)$, which can be seen to be a special case of the universal $EZ$ homotopy, the sum degenerating to a single term. $\qed$
\end{exam}

{\bf Join Homotopies Sometimes Determine Recursive Homotopies.} One can ask when does the join homotopy agree with the recursive standard procedure homotopy?  The answer is provided by the uniqueness theorem Proposition 9.0  for the standard procedure homotopy stated earlier in this section.

\begin{prop} \label{9.3} If $\phi : N_*(X) \to N_*(EG)$ is the standard procedure chain map constructed using a simplex basis of $N_*(X)$ and  the contraction of $N_*(EG)$, and if $\phi$ maps all front faces of basis simplices of $N_*(X)$ to sums of basis simplices of $N_*(EG)$,  then the join homotopy $J$ of Proposition \ref{9.1} from $\phi$ to $\phi_1$  is the standard procedure homotopy $H$.\\

More generally, the recursive homotopy $H$ between two chain maps $\phi_1$ and $\phi_2$  is the difference $H  = J_2 - J_1$, where $J_i$ is the join ($=$ recursive) homotopy from  $\phi$ to $\phi_i$, where $\phi$ is the standard procedure chain map.
\end{prop}
\begin{proof}The reason is, the joins of  basis simplices in $N_*(EG)$ with other simplices are basis simplices, hence in $Image(h_G)$.  Note the hypothesis holds if front faces of basis simplices of $X$ are themselves basis simplices, since $\phi$ is assumed to be a standard procedure map.\\

The second statement is obvious from the definition of the recursive homotopies at the beginning of this section.
\end{proof}
In general, to reverse the order of $\phi_0$ and $\phi_1$, one can either replace $H$ by $-H$, or $J$ by $-J$, or reverse the roles of the $\phi_i$ in the formula of 9.1.  These give different homotopies.  Since one of our themes is to single out preferred choices of chain maps, the same goal should apply to chain homotopies.  Therefore, the join method of Proposition \ref{9.1} is suspect unless one of $\phi_0, \phi_1$ is the preferred chain map.  In some of our examples above and below, it does seem to be the case that one of the chain maps $\phi_i$ is the standard procedure map.  But in any case, the second statement of the Proposition may apply. \\ 

Finally, we emphasize that some degree 0 hypothesis  is necessary for constructing chain homotopies.  If $\phi_0$ and $\phi_1$ do not induce the same map on homology in degree 0 they cannot be chain homotopic.  We will give several applications of Proposition \ref{9.1} and Proposition \ref{9.3} in the following subsections, but we will leave to  the reader  the detail of checking the degree 0 hypothesis and the hypothesis about front faces of basis simplices of $X$.

\subsection{A Chain Homotopy Useful for the Cartan Formula}

There are  other important situations where the  $EZ$ chain homotopies are very useful.
\begin{exam}\label{9.4}{\bf A Join Homotopy Between  Two Diagonals.}
For the cyclic group $C$ of prime order $p$ we have defined elements $x_k  = \phi(y_k) \in N_*(EC)$  that project to homology generators of $N_*(BC)$ with $\FF_p$ coefficients.  Here the equivariant chain map $\phi \colon M_* \to N_*(EC)$ was constructed in Example \ref{6.8}.  One of the problems encountered in proving the Cartan formula for Steenrod operations at the cochain level for odd primes $p$ is that, unlike the case for $p = 2$,   $\Delta_{AW}(x_k) \in N_*(EC) \otimes N_*(EC)$ is complicated, and even more complicated is its image $EZ \circ  \Delta_{AW}( x_k) \in N_*(EC \times EC)$.  We will overcome this difficulty in Part IV  by exploiting the  equivariant chain map retraction $\pi \colon N_*(EC) \to M_*$, constructed in Example \ref{6.11}.  Retraction means $\pi(x_k) = y_k$.  We consider the diagram of $C$-equivariant chain maps where $C$ acts diagonally on the right side
$$\begin{CD}  N_*(EC) & \xrightarrow{\delta_*}&  N_*(EC \times EC) \\ 
 \downarrow \pi  &  & \uparrow {EZ \circ (\phi \otimes \phi)}\\
 M_* & \xrightarrow{\Delta_M}  & M_* \otimes M_* 
\end{CD}$$
Here $\delta_*$ is induced by the simplicial set diagonal $\delta \colon EC \to EC \times EC$, $\sigma \mapsto (\sigma, \sigma)$. The diagram does not commute, but  it commutes up to equivariant chain homotopy.  Since we have joins in $N_*(EC \times EC)$, we have  an equivariant   join chain homotopy $J$ from $\delta_*$ to $EZ \circ \Delta$, where $\Delta = (\phi \otimes \phi)  \circ \Delta_M \circ  \pi$.  Here $J$ will also agree with the standard recursive homotopy, since $\delta_*$ is the standard procedure chain map, induced by the group diagonal $C \to C \times C$.  \\

Although the formula for join homotopies in Proposition \ref{9.1} is conceptually simple on individual simplices, at the end of the day it must be extended multilinearly.  For example, to apply the $J$ in the current discussion to the classes $x_k$, one must take front and back faces of each simplex in the summation formulas for the $x_k$, which depend on the parity of $k$.  For example, from Proposition \ref{6.9}, $x_{2k} = \sum (1, T^{a_1}, T^{a_1+1}, \ldots, T^{a_k}, T^{a_k+1})$.  For each separate simplex $(1, T^{a_1}, T^{a_1+1}, \ldots, T^{a_k}, T^{a_k+1})$ one must sum over successive front and back faces.  Applying $EZ \circ \Delta$ to each back face of these simplices  results in additional complicated summations.  The retraction $\pi$ applied to back faces  eliminates most terms, because of  Proposition \ref{6.12} and the example that followed, but it is rather awkward to express exactly which terms remain. The important point for us is that $J$ is an explicit chain homotopy from $EZ \circ \Delta$ to $\delta_*$.\\

The diagonal $EZ \circ \Delta$  is exactly what we will want, because $$(\phi \otimes \phi)  \Delta_M   \pi (x_k) = (\phi \otimes \phi)  \Delta_M ( y_k) \in N_*(EC) \otimes N_*(EC)$$ is a sum of known tensor products $T^u x_i \otimes T^v x_j$, computed in Proposition \ref{7.3}, so the $EZ$ image is a sum of terms  $ EZ(T^u x_i \otimes T^v x_j)$ that can be handled by the Barratt-Eccles operad mechanism that we will study in Part III.\\

In particular, projecting to $N_*(BC \times BC)$, and writing for simplicity $\bar{\Delta}$ for the diagonal on coinvariants induced by $EZ \circ \Delta $, one has extremely simple looking formulas for the chain homotopically improved diagonals of the classes $ \bar{x}_k = \phi(\bar{y}_k)$:\  $$EZ \circ \bar{\Delta}(\bar{x}_{2k}) = \sum_{i+j = k} \bar{x}_{2i} \times \bar{x}_{2j} \ \ \ \ and\ \ \ \ \ EZ\circ \bar{\Delta}(\bar{x}_{2k+1}) = \sum_{i+j = 2k+1} \bar{x}_i \times \bar{x}_j,$$ where $\bar{x}_m \times \bar{x}_n$ means  $EZ(\bar{x}_m \otimes \bar{x}_n)$.\\

One can iterate to form higher simpler diagonals, by repeatedly applying $EZ \circ \Delta$ and $EZ \circ \bar{\Delta}$ to the last variable. The iterations are awkward at the $EC$ level, but are extremely simple at the $BC$ level. $\qed$, 
\end{exam}
\subsection{Homotopies Implying Many $ [\bar{x}_n] \in H_*(B\Sigma_p)$ are Zero }
\begin{exam}\label{9.5}{\bf The Homology Classes $[\bar{x}_n] \in H_*(BC_p)$.}
We can also use the universal $EZ$ chain homotopy  to study the  images of the homology classes $[\bar{x}_{2k}], [\bar{x}_{2k-1}] \in H_*(BC_p) \to H_*(B\Sigma_p)$,  induced by the  inclusion of MacLane models $\iota \colon N_*(EC_p) \to N_*(E\Sigma_p)$, where  $T \mapsto t = (23 \ldots p 1)$, a $p$-cycle.  First, we return to the discussion of $\ell^{th}$ power maps $\iota_\ell$ begun in Example \ref{6.14}.  We will find explicit boundary formulas for the differences $\ell^k \bar{x}_{2k} -\iota_\ell(\bar{x}_{2k})$ and $\ell^k \bar{x}_{2k-1} -\iota_\ell(\bar{x}_{2k-1})$.  In Example \ref{6.14} we observed that these two differences are null-homologous.\\

We are denoting by $\iota_\ell$ both the $\ell^{th}$ power map on $C_p$ and the induced  map  $\iota_\ell \colon N_*(EC_p) \to N_*(EC_p)$, which coincides with the standard procedure map.   Then we have two $\iota_\ell$-equivariant chain maps, $\iota_\ell$ and  $\phi \lambda \pi \colon N_*(EC_p) \to M_* \to M_* \to N_*(EC_p)$, where $\pi $ is the retraction of Example \ref{6.11} and $\lambda$ is the $\iota_\ell$-equivariant chain map of Example \ref{6.14}.    Since we have the join operation in $N_*(EC_p)$, these two chain maps are $\iota_\ell$-equivariantly chain homotopic by a join chain homotopy $J$ from $\iota_\ell$ to $\phi  \lambda\ \pi$, which coincides with the recursively defined homotopy by Proposition \ref{9.3}.\\

From Proposition \ref{6.14}, $\lambda( y_{2k}) = \ell^k y_{2k}$ and $\lambda (y_{2k-1}) = \sum_{i=0}^{\ell-1} \ell^{k-1} T^i y_{2k-1}$.  Since $\pi(x_n) = y_n$ and $\phi(y_n) = x_n$ we then have $$\ell^k x_{2k} -\iota_\ell(x_{2k}) = (d J + J d) x_{2k}\ \ \ \ and$$ 
$$ \sum_{i=0}^{\ell-1} \ell^{k-1} T^i x_{2k-1} - \iota_\ell (x_{2k-1})   = (dJ + Jd) x_{2k-1}.$$  Since the chain homotopy $J$ covers a chain homotopy  $\bar{J}$ of $BC_p$, and since the  classes $\bar{x_j}$ are cycles, after  passing to coinvariants we get  $$\ell^k \bar{x}_{2k} -\iota_\ell(\bar{x}_{2k})  = d \bar{J}  (\bar{x}_{2k} )\in N_*(BC_p).$$  Since the $T^i x_{2k-1}$ all have coinvariant image $\bar{x}_{2k-1}$, we also have  $$\ell^k \bar{x}_{2k-1} - \iota_\ell(\bar{x}_{2k-1})   = d \bar{J} ( \bar{x}_{2k-1}) \in N_*(BC_p).$$  Thus we have explicit boundary formulas for the differences $\ell^k \bar{x}_{2k} -\iota_\ell(\bar{x}_{2k})$ and $\ell^k \bar{x}_{2k-1} -\iota_\ell(\bar{x}_{2k-1})$ in $N_*(BC_p)$, as promised in Example \ref{6.14}. $\qed$
\end{exam}
\begin{exam}\label{9.6} {\bf A Formula for Certain Multiples of $\bf [\bar{x}_n] \in H_*(B\Sigma_p)$.} Next let $\iota \colon N_*(EC_p) \to N_*(E\Sigma_p)$ be the map induced by the homomorphism sending $T$ to the $p$-cycle $t = ( 23 \ldots p1)$.  Let $\ell$ be a primitive $(p-1)^{th}$ root of unity in $\FF_p = \ZZ/p$.   The permutation $g(j) = j\ell \pmod p$ is a  $p-1$ cycle in $\Sigma_p$ regarded as permutations of $\ZZ/p = \{1,2. \ldots, p\}$.  In this representation, $t(j) = j+1$ and  $gt = t^\ell g$.  Thus $g\iota = (\boldsymbol{\cdot}g) \circ \iota \iota^\ell $. From Example \ref{9.2} the right multiplication map $\boldsymbol{\cdot} g$ on $N_*(E\Sigma_p)$ is $\Sigma_p$-equivariantly chain homotopic to the identity by a join chain homotopy $K \colon N_*(E\Sigma_p) \to N_{*+1}(E\Sigma_p)$.  Then, by an equivariant version of Proposition \ref{3.4}(i),  $$K \iota \iota_\ell \colon N_*(EC_p) \to N_* EC_p) \to N_*(E\Sigma_p) \to N_{*+1}(E\Sigma_p)$$ is an $\iota \iota_\ell$-equivariant chain homotopy from $ \iota \iota_\ell$ to $(\boldsymbol{\cdot}g) \circ  \iota \iota_\ell = g  \iota$.  Again, the join homotopies $K$ and $K \iota \iota_\ell$ coincide with the recursively defined homotopies.\\  

Combining the two $\iota \iota_\ell$-equivariant join chain homotopies $\iota J$ and $K\iota \iota_\ell$, by Proposition \ref{9.3} the difference $L = \iota J - K\iota \iota_\ell$ is a standard procedure homotopy. We get:
$$ \ell^k\iota (x_{2k})  - g \iota (x_{2k})    = (d L  + L d) x_{2k} \in N_*(E\Sigma_p)$$
$$ (\sum_{i = 0}^{\ell -1} \ell^{k-1}t^i) \iota (x_{2k-1})  - g \iota (x_{2k-1}) = (dL  + Ld) x_{2k-1} \in N_*(E\Sigma_p).$$
On coinvariants, where $g$ and $t$ act as the identity and the $\bar{x}_j$ are cycles, we get in $N_*(B\Sigma_p) = N_*(E\Sigma_p)/ \Sigma_p$ $$(\ell^k -1)\iota (\bar{x}_{2k}) = d \bar{L} ( \bar{x}_{2k})\ \ \ and \ \ \ (\ell^k - 1)\iota (\bar{x}_{2k-1} ) = d \bar{L} ( \bar{x}_{2k-1}).$$ 
We remark that since we have an explicit formula for any join homotopy from Proposition \ref{9.1} we  have explicit formulas for the two chain homotopies $\iota J$ and $K\iota \iota_\ell$ and hence also for  $L$. We do not write out these formulas here.  They are quite complicated, for reasons similar to those discussed in Example \ref{9.4}.  But their explicitness will be important in Part IV in applications to a cochain development of properties of Steenrod operations. $\qed$
\end{exam}
\begin{rem} \label{9.7}{\bf Vanishing of Many Steenrod Reduced Powers.}  We continue the discussion in the previous example. If  $\ell$ is a primitive $(p-1)^{th}$ root of unity mod $p$ and  if $k$ is not a multiple of $p-1$, we can multiply the last formulas in Example \ref{9.6} by a constant and obtain explicit formulas writing $\iota (\bar{x}_{2k})$ and $\iota (\bar{x}_{2k-1})$ as coboundaries in $N_*(B\Sigma_p)$.  In particular, this computation will lead to a  cochain level proof that the cyclic reduced $p^{th}$ power cohomology operations  determined by the classes $\iota (\bar{x}_{2k})$ and $\iota (\bar{x}_{2k-1})$ on {\it even degree} cocycles $\alpha \in N^*(X)$ are 0 unless $2k$ is an even multiple of $(p-1)$. The point will be that $\alpha^{\otimes p}$ is $\Sigma_p$-equivariant if $deg(\alpha)$ is even.\\

The study of the reduced powers on {\it odd degree} cocycles requires a somewhat different discussion because homology of the symmetric group with a non-trivial coefficient module enters the story.  Specifically, going back to Subsection \ref{5.4}, let $\widetilde{\FF}_p$ denote the $\Sigma_p$ module $\FF_p$ with twisted group action arising from the parity character $\tau \colon \Sigma_p \to \{ \pm 1 \}$.  We look at $\widetilde{N}_*(E\Sigma_p) = \widetilde{\FF}_p \otimes N_*(E\Sigma_p)$, which we view as the complex $N_*(E\Sigma_p)$, but with twisted group action $g * x = \tau(g) gx$.  In particular the augmentation $\widetilde{N}_*(E\Sigma_p) \to \widetilde{\FF}_p \to 0$ is a free acyclic resolution of the non-trivial module $\widetilde{\FF}_p$.\\

Since $p$-cycles are even permutations, the $C_p$-action on $ N_*(E\Sigma_p)$ is the same in the twisted and untwisted complexes.  Therefore the discussion of the $\iota \iota_\ell$-equivariant chain homotopies $\iota J$ and $K \iota \iota_\ell$ goes through exactly as above, until the very final step concerning coinvariants.  The point is, the $(p-1)$-cycle $g \in \Sigma_p$ that conjugates $t$ to $t^\ell$ is an {\it odd} permutation.  Therefore, in the twisted complex $\widetilde{N}_*(E\Sigma_p)$ we have $$g * \iota(x_{2k}) = -g\iota(x_{2k})\ \ and\ \ g * \iota(x_{2k-1}) = -g\iota(x_{2k-1}).$$  Thus in the coinvariants  $ N_*(B\Sigma_p; \widetilde{\FF}_p) = \Sigma_p \backslash \widetilde{N}_*(E \Sigma _p)$ we have $$(\ell^k + 1)\iota (\bar{x}_{2k}) = d \bar{L} ( \bar{x}_{2k})\ \  and  \ \ (\ell^k + 1) \iota(\bar{x}_{2k-1}) = d \bar{L} ( \bar{x}_{2k-1}).$$
Since $\ell$ is a primitive $(p-1)^{th}$ root of unity, we have $(\ell^k + 1) = 0 \pmod p$ if and only if $k$ is an odd multiple of $(p-1)/2$.  In particular, this computation will lead to a  cochain level proof that the cyclic reduced $p^{th}$ power cohomology operations  determined by the classes $\iota (\bar{x}_{2k})$ and $\iota (\bar{x}_{2k-1})$ on odd degree cocycles are zero unless $2k$ is an odd multiple of $(p-1)$. The point will be that $g \alpha^{\otimes p} = \tau(g) \alpha^{\otimes p}$ if $g \in \Sigma_p$ and $deg(\alpha)$ is odd. $\qed$
\end{rem}
\newpage
\addcontentsline{toc}{section}{PART II: The Surjection Complexes}

\section*{ II: The Surjection Complexes}

\section{Preview of the Surjection Complexes.}
We begin Part II of our paper with a brief discussion of  variants of  chain complexes  underlying a symmetric operad in the monoidal category of chain complexes, referred to by various authors as the surjection operad, the sequence operad, or the step operad.  We will postpone the discussion of operad structure until Part III, and concentrate in the first few sections  of Part II only on   chain complexes $\cS_*^{aj}(n),  \cS^{bf}_*(n), \cS^{ms}_*(n)$. In the last section of Part II we compare the surjection complexes to the MacLane complexes $N_*(E\Sigma_n)$.\\

The superscript initials of the latter two surjection complexes refer to complexes studied by Berger-Fresse [3], [5]  and McClure-Smith [19]. All three complexes are  free $\FF[\Sigma_n]$ resolutions of $\FF$, which is given the trivial $\Sigma_n$ action.   The  complex $\cS_*^{aj}(n)$ is newer but seems quite natural.  It  appears as part of the paper by Adamaszek and Jones [1], and perhaps elsewhere. \\

We want to emphasize from the outset that although our treatment of the three complexes is self-contained, it is entirely founded on the seminal  work of Berger-Fresse and McClure-Smith.  We believe we have `cleaned up' some details in the development of the surjection complexes, and their relations with other complexes.  Of course it has been nearly 25 years since the original works appeared, so it is not surprising some simplifications and reformulations could be found.  In fact, McClure and Smith have given more sophisticated versions of complexes related to their original construction in [21]. We believe the surjection complexes are important in algebraic topology, and we believe our reformulation of the details will serve a purpose making their development more accessible.\\

For all three surjection complexes, elements of a free basis over $\FF$ in each degree $k$ are named by non-degenerate surjections $$x \colon \{1,2, \ldots, n+k\} \to \{1,2, \ldots, n\}.$$  Degenerate means  $x(i) = x(i+1)$ for some $i$, and we set these equal 0.  We identify a function $x$ with the sequence $x = (x(1), x(2), \ldots, x(n+k))$, which we often write for notational reasons as $(x_1, x_2, \ldots, x_{n+k})$.   Functions that are not surjections are also declared to be 0 in the three complexes.  Note in degree $k = 0$ this gives $\FF[\Sigma_n]$ as an $\FF$-module.\\

In all three complexes the boundary operator has the form $$d (x) = d(x_1, x_2, \ldots, x_{n+k}) = \sum_j \gamma(j) (x_1, \ldots, \widehat{x_j}, \ldots, x_{n+k}),$$ where $\gamma(j) \in \{-1, 0, +1\}$ and $\widehat{x_j}$ means that term is deleted from the surjection $x$.\\ 

Throughout  Part II we will name the ordered vertices of the standard $N-1$ simplex $(1, 2, \ldots, N)$, rather than naming the first vertex 0.  The boundary of a simplex named by an ordered list of vertices  is then $$d(x_1, \ldots, x_N) = \sum (-1)^{j-1}(x_1, \ldots, \widehat{x_j}, \ldots, x_N).$$  The main reason for the change is to maintain consistency with the notation for surjection complex generators.  But there are other reasons this makes sense.  The most symmetrical view of the $N-1$ simplex is the convex hull of the unit basis vectors in affine space $R^N$, with its ordered basis. Also, the (ordered) join of such simplices in products of affine spaces is transparent as a convex hull. Tangent vectors are transparent as differences of unit basis vectors, which can also be expressed as an ordered pair (tail, head) of vertices.  The canonical orientation of $R^N$ induces, by the outward normal first convention, the standard orientation of the $N-1$ simplex viewed as part of the boundary of its join with the origin.  The standard orientation of the simplex corresponds to the ordered list of tangent vectors $\{12, 13, \ldots, 1N\}$.  An equivalent orientation is given by the list $\{12, 23, \ldots, (N-1)N\}$.  \\

It could be said that the only differences between the three surjection complexes are choices of sign conventions.  Technically this may be true, but we believe that the sign differences result from some initial conceptual differences that  lead to  sign choices.  For example, a surjection generator $x \colon \{1, 2, \ldots, n+k\} \to \{1, 2, \ldots , n\}$ can be interpreted geometrically  in different ways.  In the complex $\cS_*^{aj}(n)$, we interpret a generator as a simplicial map  $\Delta^{n-1+k} \to \Delta^{n-1}$, and then as an element of the normalized relative simplicial singular complex of the pair $(\Delta^{n-1}, \partial \Delta^{n-1})$.  In $\cS_*^{ms}(n)$ the data of a generator is interpreted as a prism  $\prod_{1 \leq \ell \leq n} \Delta^{k_\ell -1}$ with a total order on the combined set of vertices of the factor simplices.   The integer $k_\ell$ is the cardinality of $x^{-1}(\ell)$, the number of times $\ell$ appears in the sequence $x = (x_1, x_2, \ldots, x_{n+k})$. There is a non-degeneracy condition that no two vertices of the same simplex factor can be adjacent in the total order. In both $\cS_*^{aj}(n)$ and $\cS_*^{ms}(n)$ the boundary operator and the $\Sigma_n$ action arise from natural geometric considerations.  We don't really know how to think about $\cS_*^{bf}(n)$.  It mysteriously works, and in several ways has simpler properties than the other two complexes.\\

The base point in all three complexes will be $\iota(1) = 1e = e \in \FF[\Sigma_n] $, where $e \in \Sigma_n$ is the identity element.  The augmentations in the second two complexes send all permutations to $1 \in \FF$ and the permutation action in degree 0 is the obvious left regular representation action on $\FF[\Sigma_n]$. \\

In the complex $\cS_*^{aj}(n)$ the augmentation will send a permutation $g$ to $\tau(g) \in \{ \pm 1\}$, where $\tau$ is the parity character.  But in this case, the permutation action on $\FF[\Sigma_n]$ in degree 0 will be $g*g' = \tau(g) gg'$.  Note  this representation is $ \widetilde{\FF}[\Sigma_n] = \widetilde{\FF} \otimes \FF[\Sigma_n]   $, where $\widetilde{\FF}$ is the twisted $\Sigma_n$-module defined in Subsection 5.5 of Part I.   We think of $\Sigma_n$ as acting on coefficients as well as by multiplying group elements in sums $\sum a_i g_i, a_i \in \FF$.  Thus the $\tau$ augmentation is indeed an equivariant map $\widetilde{\FF}[\Sigma_n] \to\FF$, with the {\it trivial} module structure on $\FF$.\\

 We will define for all three complexes, $\cS_*^{aj}(n)$,  $\cS_*^{bf}(n)$, and $\cS_*^{ms}(n)$, a $\Sigma_n$ action, an equivariant differential, and a contraction.  Continuing our convention from Part I, contractions in Part II will always satisfy $h^2 = 0$ and $h \iota = 0$.  We will also establish isomorphisms between the three complexes, preserving all the structure.  The details in the first several sections to follow are somewhat lengthy, because there is a lot of structure data to be given, but they are not especially difficult.  Their importance will be brought out in Part III, where we explain how these complexes act on multitensors of cochains on simplicial sets $X$, in an operadic manner. This, of course, was part of the original motivation for introducing these complexes.\\

In the last section of Part II we present our approach to the equivariant chain maps $N_*(E\Sigma_n) \leftrightarrows \cS_*(n)$ studied by Berger and Fresse [3], [5] for their complex.  Our approach is based on the contraction and standard procedure methods of Part I.\\ 

 As always, we urge readers to not get stuck.  If you already know something, or if you find yourself somewhat bogged down, turn the page.
\newpage

\section{The Twisted Complex $\bf \widetilde{\cS}_*(n)$}
We actually begin with a twisted coefficient complex $\widetilde{S}_*(n)$ that arises very naturally.  Then we obtain an untwisted complex $\cS_*^{aj}(n)$ by tensoring with $\widetilde{\FF}$, as described in Subsection 5.4 of Part I.\\

The key observation is that the $\FF$-basis in degree $k$ given by certain surjections $x \colon \{1,2, \ldots, n+k\} \to \{1,2 \ldots , n\}$ coincides {\it exactly} with a basis of the normalized relative simplicial singular chain group $NSS_{n-1+k}( \Delta^{n-1}, \partial \Delta^{n-1})$ with $\FF$ coefficients. Generators of the simplicial singular chain group are given by the subset of the singular complex of the simplex consisting of  all affine linear maps $\Delta^{n-1+k} \to \Delta^{n-1}$ taking vertices to vertices. In the relative complex maps to the boundary of the simplex are declared 0. We then form the normalized graded $\FF$-module by dividing by the subcomplex of the singular complex spanned by degenerate simplices,  and shift degrees down by $n-1$. We call this shifted graded module $\widetilde{\cS}_*(n)$.  Precisely, a function $x$ applied to ordered vertices defines a map $\Delta^{n-1+k} \to \Delta^{n-1}$ between simplices. If two adjacent entries of a surjection coincide the resulting singular simplex is degenerate, hence zero in the normalized complex, and if the function $x$ is not surjective the image of the singular simplex lies in the boundary $\partial \Delta^{n-1}$.  We see that $\widetilde{\cS}_0(n) = \FF[\Sigma_n]$ as $\FF$-module.\\

We then take as the boundary operator in $\widetilde{\cS}_*(n)$  the standard simplical chain complex boundary map of the relative simplicial singular complex. That is,  $$dx = \sum_j (-1)^{j-1} (x_1, \ldots, \widehat{x_j}, \ldots x_{n+k}).$$  There are two ways a boundary summand could be 0, namely, the entry $x_j$ might occur only once in $x$ or one could have $x_{j-1} = x_{j+1}$.  Note that it is obvious to topologists that $d^2 = 0$.  This will also be the case for the differential in $\cS_*^{ms}(n)$, but not as much for $ \cS_*^{bf}(n)$.\\

For the left $\Sigma_n$ action on $\widetilde{\cS}_*(n)$ we simply post-compose simplicial maps $\Delta^{n-1+k} \to \Delta^{n-1}$ with permutations of $\{1,2, ..., n\}$, regarded as maps of the base $\Delta^{n-1}$ to itself.  Obviously the boundary operator is equivariant, and defines a free $\Sigma_n$ action in each degree. The relative homology of the simplex mod boundary is 0 in degrees other than $n-1$, and is $\FF$ in degree $n-1$. This last homology is shifted down to degree $0$.\\

What about the augmentation $\tilde \epsilon \colon \widetilde{\cS}_0(n) = \FF[\Sigma_n] \to \FF$?  The normalized long exact sequence of the pair comes with a connecting  ``boundary" map $$NSS_{n-1}( \Delta^{n-1}, \partial \Delta^{n-1}) \to H_{n-2}(\partial \Delta^{n-1}) = \FF,$$ which is easily  seen on an $\FF$-basis  to just record the degree of an automorphism of $\Delta^{n-1}$ given by a permutation of ordered vertices.  In other words, the ``boundary" map for the pair  identifies with the parity $\{\pm 1\}$-valued character $\tau$ of $\Sigma_n$.  Thus, we set $\tilde{\epsilon}(g) = \tau(g) \in \widetilde{\FF}$, where we write $\widetilde{\FF}$ to remind that  $\tilde{\epsilon}$ is equivariant for the twisted action on $\FF$.\\

OK.   $NSS_*(\Delta^{n-1}, \partial \Delta^{n-1})$, the normalized relative  singular simplicial chain complex of a simplex mod boundary, together with the augmenting connecting  homomorphism to the homology of the boundary, all shifted down in degrees, becomes an acyclic free $\FF[\Sigma_n]$ resolution  of the twisted module $\widetilde{\FF}$, and we call this complex $\widetilde{\cS}_*(n)$.  The base point is $\iota (1) = e$, and then in the form $\tilde {\rho} = \iota \tilde{\epsilon} \colon \widetilde{\cS_0}(n) \to \widetilde{\cS_0}(n)$ it is $\tilde{\rho}(g) = \tau(g) e$.
\newpage
\section{The Untwisted Complex $\bf \cS_*^{aj}(n)$}
From $\widetilde{\cS}_*(n)$ we  obtain $\cS_*^{aj}(n) = \widetilde{\cS}_*(n) \otimes_{\FF} \widetilde{\FF}$, an acyclic free resolution of the trivial module $\FF$, as in Subsection 5.4 of Part I.  Elements of the two complexes $\cS_*^{aj}(n)$ and $\widetilde{\cS}_*(n)$ have the `same' names and boundary operator, via the correspondence $x \leftrightarrow x \otimes 1$.  The only thing that changes from the shifted simplicial singular complex of the pair is the new $\Sigma_n$ action, which is now $g* x =  \tau(g) gx$ for $g \in \Sigma_n$.  The augmentation  $\epsilon \colon \cS_0^{af}(n) \to \FF$ is  $\epsilon(g) = (\tilde{\epsilon} \otimes Id)(g \otimes 1) = \tau(g) \in \widetilde{\FF} \otimes \widetilde{\FF} = \FF$.  This augmentation is indeed equivariant for the {\it twisted} group action on $\cS_*^{aj}(n)$ and the {\it trivial} action on $\FF$, as explained back in Subsection 5.4.  The basepoint is still $\iota(1) = e$. In the form $\rho = \iota \epsilon$, the base point is $\rho(g) \equiv (\iota \otimes Id)(\tilde{\epsilon} \otimes Id)(g \otimes 1) \equiv \tau(g) e.$ \\

\begin{rem}\label{12.1}{\bf The Contraction of $\bf \cS_*^{aj}(n).$} Topology tells us $\cS_*^{aj}(n)$ is contractible, since the augmented complex is free over $\FF$ and acyclic, but we want to give a contraction with $h^2 = 0$. We follow an idea from McClure-Smith [19], where they proved by induction that their complexes $\cS_*^{ms}(n)$ were contractible without actually giving a contraction.\footnote{The formula for the contraction  of the McClure-Smith complex that we present in Section 14 can also be found in the paper [13] of R. Kaufmann and A. Medina-Mardones.}  The operation on surjections $s(x) = (1, x)$ is not quite a contraction but satisfies a formula $ds + sd = Id - i\hat{r}$, where $\hat{r}$ and $i$ are maps of degree 0,  involving $\cS_*(n-1)$.\\

Specifically, $\hat{r} \colon \cS_*^{aj}(n) \to \cS_*^{aj}(n-1)$ is $0$ on any surjection $x$ that takes the value $1$ more than once.  If $x = (x_1, x_2, \ldots, x_{n+k})$ does contain a single $1$, say $x_j = 1$, then $$\hat{r}(x) = (-1)^{j-1}(x_1 -1, \ldots,  x_{j-1} - 1, x_{j+1}-1, \ldots x_{n+k}-1).$$  In words, remove the $x_j =1$ from $x$, subtract 1 from each remaining entry, and put in a sign.\\

The map $i \colon \cS_*^{aj}(n-1) \to \cS_*^{aj}(n)$ is defined by $$i(y_1, \ldots, y_{n-1+k} ) = (1, y_1 + 1, \ldots, y_{n-1+k} +1).$$  In words, add 1 to each entry of $y$, and then put a 1 in front.  Thus, if $x$ contains a single 1, then $i\hat{r}(x)$ just moves the 1 in $x$ to the front, and puts in a sign $(-1)^{j-1}$ depending on where the single 1 occurred in $x$.  The exponent $j-1$ in the sign counts the number of entries in the surjection $x$ that $x_j = 1$ moves past to get to the front of the new surjection $i\hat{r}(x)$.
\begin{prop}\label{12.2}
(i). The maps $\hat{r}$ and $i$  satisfy $d\hat{r} = -\hat{r}d$ and $di = -id$.\\  

(ii). It holds that $ds + sd = Id - i\hat{r}.$\\

(iii). A contracting homotopy $\hat{h} =  \colon \cS_*^{aj}(n) \to \cS_{*+1}^{af}(n)$ is given by $$\hat{h} = s - i s \hat{r} + i^2 s \hat{r}^2 - .... \pm i^{n-2} s \hat{r}^{n-2}.$$
(iv). It also holds that  $\hat{h} \iota = 0$ and $$\hat{h}^2 = (s - i s \hat{r} + i^2 s \hat{r}^2 - ....)( s - i s \hat{r} + i^2 s \hat{r}^2 - ....) = 0.$$ 
\end{prop}
\begin{proof}For verification of (i), one just needs to look  at the simplicial boundary formula with alternating signs. The $i$ case is quite simple.\\

In the $\hat{r}$ case, one looks separately at surjections $x$ that contain one, two, or more than two 1's.   If $x$ contains more than two 1's then $d\hat{r}(x) = 0 = \hat{r}d(x)$.  If $x$ contains two 1's, then $d\hat{r}(x) = 0$.  Suppose $x_p = x_q = 1$, with $p < q$.  Then $dx$ contains two terms with a single 1, with signs $(-1)^{p-1}$ and $(-1)^{q-1}$.  Then $\hat{r}d(x)$ will consist of two identical terms  but with opposite signs $(-1)^{q-2}(-1)^{p-1}$ and $(-1)^{p-1}(-1)^{q-1}$.  So $\hat{r}d(x) = 0$.   If $x$ contains a single 1, then $\hat{r}d(x)$ and $d\hat{r}(x)$ have the same number of terms which pair up with opposite signs. \\

A similar argument proves (ii), again considering separately the cases where $x$ contains one or more than one 1's.  In the more than one 1 case, $i\hat{r}(x) = 0$ and the proof that $ds(x) + sd(x) = x$ is  the same as verifying the contraction formula for a simplex in Example 5.1 of Part I.  This works because the alternating sign boundary formulas and the contraction formulas  are the `same' in the two cases.    In the case of a single 1, write it out and find $ds(x) + sd(x) + i\hat{r}(x) = x.$  The difference in the two cases is that $sd(x)$ is `missing' a term $(-1)^{j-1}(1, x_1, \ldots, \widehat{x_j}, \ldots , x_N) = i\hat{r}(x)$ corresponding to the degenerate term in $dx$ where the $x_j = 1$ gets deleted. So you need to add back that term to get $x$. \\

Statement (iii), that $\hat{h}$ is a contracting homotopy, follows from a telescoping formula
$$(d\hat{h} + \hat{h} d) (x) = [x - i\hat{r}(x)] + [i\hat{r}(x) - i^2\hat{r}^2(x)] + ..... = x - i^{n-1} \hat{r}^ {n-1} (x),$$
which is easily proved using the relations between $ s, \hat{r}, i$ and $d$ in (i) and (ii). For example,
$$[d (is\hat{r}) + (is\hat{r}) d] (x) = - i [ ds + sd] (\hat{r}x) = -i (\hat{r}x - i\hat{r}\hat{r}x) = - i\hat{r}x + i^2\hat{r}^2x.$$
We also need to prove $ i^{n-1} \hat{r}^ {n-1} (x) = \rho(x),$  where $\rho \colon \cS_*^{aj}(n) \to \cS_0^{aj}(n)$ is the base point.  It is easy to describe in words the maps $i^m\hat{r}^m$ and $i^m s \hat{r}^m$.  Note $\hat{r}^m(x) \not=0$ only if $x$ contains a single $1,2, \ldots, m$.  Then $i^m \hat{r}^m(x)$ removes those singletons from $x$ and inserts $1,2, \ldots, m$ at the front, along with a sign,  and $i^m s \hat{r}^m$ removes those singletons from $x$ and inserts $1,2, \ldots, m,  m+1$ at the front with the same sign.\\

It is easy to see that if $deg(x) > 0$ then $i^{n-1}\hat{r}^{n-1}(x) = 0$, since this sequence must end with repeated entries $n$.  If $deg(x) = 0$, we must look at the sign.

Now $x$ is just a permutation in $\Sigma_n$ and the sign associated to the compositions of $\hat{r}$'s in $i^{n-1} \hat{r}^{n-1}(x)$ is the parity of the number of moves it takes to move the 1 in $x$ to the front, then move the 2 to follow the 1, and finally move the $n-1$.   Thus $i^{n-1}\hat{r}^{n-1}(x) =  \tau(x)(1,2, \ldots, n) =  \tau(x) e = \rho(x)$.\\

For statement (iv),  $\hat{h}^2 = 0$  follows easily from the obvious identities $s^2 = 0,\ si = 0,\ \hat{r}s = 0,\ \hat{r}i = Id$.  Also $\hat{h} \iota(1) = \hat{h}(1,2, \ldots, n) = 0$.
\end{proof}
\end{rem}
\newpage

\section{The Berger-Fresse Complex $\bf \cS_*^{bf}(n)$}
 We now turn to the second surjection complex $\cS_*^{bf}(n) $.  The $\Sigma_n$ action $gx$ will simply post-compose surjections $x$ with permutations $g$.  The augmentation sends all permutations $g \in \cS_0^{bf}(n) = \FF[\Sigma_n]$ to $ \epsilon(g) = 1 \in \FF$. The base point is $\iota(1) = e \in  \FF[\Sigma_n]$. The fact that there are no signs in the $\Sigma_n$ action is the reason the Berger-Fresse complex has some good properties.

\begin{rem}\label{13.1}{\bf The Boundary Operator in $\bf \cS_*^{bf}(n)$.} The $\Sigma_n$-equivariant boundary operator in $\cS_*^{bf}(n)$ also  has the form $$d(x_1, x_2, \ldots, x_{n+k}) = \sum_{j=1}^{n+k} \gamma(j) (x_1, \ldots, \widehat{x_j}, \ldots x_{n+k}),$$ but the signs $\gamma(j) \in \{-1, 0, +1\}$ are tricky.\footnote{With no signs in the $\Sigma_n$ action, something exotic must occur in the boundary operator in order that the augmentation is an equivariant map to the {\it trivial} module $\FF$.} The signs are explained clearly in Berger-Fresse [3], in terms of signs attached to the entries of what they call the `table form' of a surjection.  We will explain these signs next.  The $\Sigma_n$ equivariance of $d$ will be obvious from the sign algorithm.  \\ 

Divide the entries of a surjection $x$ into rows, by inserting a separator  $|$ after each entry $x_j$ that is {\it not} the final occurrence of the value $x_j$ in the surjection.  The signs $\gamma(j) \in \{\pm 1\}$ in the boundary formula associated to these entries alternate, beginning with $+1$.  These entries $x_j$ are called the {\it caesuras} of the surjection $x$.\\

The remaining entries in the surjection, those not immediately preceding a separator $|$,  represent the final occurrences of values $x_i$.  Attach to such an $x_i$ the sign $\gamma(i)$ {\it opposite} to the sign already assigned to the entry $x_j$ with $x_j = x_i$ and $x_j$ the {\it immediately preceding} caesura occurrence of the value $x_i$ in the surjection.  If a value $x_i$ occurs only once in the surjection, a sign $\gamma(i)$ is irrelevant, since the function with the term $x_i$ deleted is not a surjection.  One just removes these terms from the boundary formula, or sets $\gamma(j) = 0$.  Other boundary terms may become degenerate, hence 0, even though they have an attached sign,  if deleting an entry results in two equal adjacent entries.\\

{\bf Example.}  Consider the surjection $x = (2,1,2,3,4,2,3,1,5,4, 1,2)$.  First insert the $|$'s after the caesuras, $x = (2|, 1|, 2|, 3|, 4|, 2|, 3, 1|, 5,4,1,2)$.   Next place alternating signs next to the caesuras, yielding $$(+2|, -1|, +2|, -3|, +4|, -2|, 3, +1|, 5,4,1,2).$$ This leaves the final occurrences of $3,5,4,1,2$ without signs.    The $5$ is a singleton and receives  no sign, or sign  0.  The signs assigned to the final occurrences of $3,4,1,2$ are opposites of the signs of the immediately preceding occurrences of $3,4,1,2$.  So, the final result is the surjection $x$  with a sign $+$ or $-$, equivalent to $\gamma(j)$, written before each entry $x_j$ that is not a singleton, $$x = (+2|, -1|, +2|, -3|, +4|,  -2|, +3, +1|, 5, -4, -1, +2).$$ From this the boundary $dx$ is easily read off.  Note deleting the first 1 entry results in a degenerate surjection, and deleting the 5 results in a function that is not a surjection, so these terms do not appear in the boundary.  $\qed$\\

We point out that $d^2 = 0$ is true, but somewhat awkward to verify.  The most elementary proof is to group the non-degenerate  terms of $d^2(x)$ in pairs, identical except for the signs.  The pairs correspond to pairs of the indices of the entries of $x = (x_1, x_2, \dots, x_{n+k})$.  Then consider various cases and argue the signs are opposite for each pair.  The cases are a pair of caesuras of $x$, a pair of non-caesuras of $x$, and pairs consisting of one caesura and one non-caesura.  The latter two cases have subcases.\\

It turns out the formula for $d$ is forced inductively by $d^2 = 0$ and two other properties. First, in all degrees the coefficient of the boundary term obtained by deleting the first caesura should be +1.  Second, in degree 1, the image of $d$ must be in the kernel of the augmentation, which means the coefficient of the term obtained by deleting the only non-caesura should be $-1$. $\qed$
\end{rem}
\begin{rem}\label{13.2}{\bf The Contraction of $\bf \cS_*^{bf}(n)$.} A contraction $h$ of $\cS_*^{bf}(n)$ is quite similar to the contraction $h$ of $\cS_*^{aj}(n)$ in the preceding section.  To begin, $s(x) = (1, x)$ will again satisfy a relation $ds + sd = Id - ir$.  Here the map $r \colon \cS_*^{bf}(n )\to \cS_*^{bf}(n-1)$ is 0 on any surjection $x$ that contains the entry 1 more than once.  Otherwise, $r(x)$ deletes the 1 and reduces all other entries by 1.   So, $r$ is the same as the previous $\hat{r}$ for the complex $\cS_*^{aj}(n)$, but without a sign.  The map $i \colon \cS_*^{bf}(n-1) \to \cS_*^{bf}(n)$ is the same as the previous map $i$, namely increase each entry of a surjection $y$ by 1 and put a 1 in front.  
\begin{prop}\label{13.3}
(i). The maps $r$ and $i$ are chain maps, that is,  $dr = rd$ and $di = id$.\\
  
(ii). It holds that $ds + sd = Id - ir.$\\

(iii). A contracting homotopy $h  \colon \cS_*^{bf}(n) \to \cS_{*+1}^{bf}(n)$ is given by $$h = s + isr + i^2 s r^2 + ... + i^{n-2} s r^{n-2}.$$
(iv). It also holds that  $h \iota = 0$ and $$h^2 = (s + isr + i^2 s r^2 + ... ) (s + isr + i^2 s r^2 + ... ) = 0.$$ 
\end{prop}
\begin{proof}
That  $dr= rd$ and $di = id$  is somewhat harder to verify than the corresponding results for $\cS_*^{aj}(n)$, again  because the boundary formula in $\cS_*^{bf}(n)$  is more complicated. One must pay  attention to the relation between the signs in the boundary $d$ and the caesura/non-caesura structure of $x$. In the $r$ case one considers cases where $x$ contains one, two, or more than two 1's.  \\

Just as in the previous section, the proof that $ds + sd = Id - ir$ proceeds by considering cases where $x$ contains one or more than one $1$'s.  Again one needs to be careful with the signs in the boundary map $d$. Note $sx = (1,x)$ is either degenerate or places a new caesura entry 1 in front of $x$.   \\

Pretty much the same telescoping sum argument as in Proposition \ref{12.1} of the previous section shows that $$h = s + isr + i^2 s r^2 + ... + i^{n-2} s r\ ^{n-2}$$ is a contraction of $\cS_*^{bf}(n)$, with $h^2 = 0$ and $h \iota = 0$.  The argument that $(dh + hd)(x) = x - \rho(x)$ is slightly simpler than the corresponding argument for $\cS_*^{aj}(n)$ because there are no signs to worry about and the base point $\rho$ is simply $\rho(g) = e$ for $g \in \FF[\Sigma_n] = \cS_0^{bf}(n).$ The main points, similar to the argument for $S_*^{aj}(n)$, are first  the telescoping argument that shows $(dh +hd)(x) = x - i^{n-1}r^{n-1}(x)$, and then $i^{\ell}  r ^\ell(x)  = 0$ unless $x$ contains $1,2 \ldots, \ell$ as singletons, in which case those entries are removed from $x$ and  $12 \ldots \ell$  is  placed in front, with no sign. \\
 
 We also  point out that $i^{\ell} s r\ ^\ell(x)  = 0$ unless $x$ contains $1,2 \ldots, \ell$ as singletons, in which case those entries are removed from $x$ and  $12 \ldots \ell(\ell+1)$ is placed in front,  with no sign.
\end{proof}
\end{rem}
\newpage
\section {The McClure-Smith Complex $\bf \cS_*^{ms}(n)$}
 Finally we turn to the third complex $\cS_*^{ms}(n)$, which historically appeared first [19].  McClure and Smith developed their surjection complexes from the outset as forming a suboperad of the Eilenberg-Zilber operad  $\cZ$ of natural multivariable cochain operations.  So the boundary operator and the $\Sigma_n$ action and the action on tensors of cochains as $n$ varies were all blended together.  This was a very natural approach, as the surjection generators acting on tensors of cochains provided a natural generalization of Steenrod's $\cup_i$ products.  In addition, they developed a filtration of their surjection operad, with terms related to the little cubes operads.  All in all, a very impressive piece of work, but not so easy to digest all at once.  We are less ambitious and develop at first only the surjection complex $\cS_*^{ms}(n)$ itself.\\
 
 The boundary operator and group action at first look complicated, but when viewed correctly they are really pretty simple, and geometrically motivated.  The $\Sigma_n$ action on surjection generators  will be postponed until after we define the boundary operator.  The augmentation sends all permutations $g \in \cS_0^{ms}(n) = \FF[\Sigma_n]$ to $ 1 \in \FF$. The base point is $\iota(1) = e \in  \FF[\Sigma_n]$.\\

Recall that we interpret surjection generators $x = (x_1, x_2, \ldots, x_{n+k})$ of $\cS_*^{ms}(n)$  as prisms (abstract for now), $Prism(x) = \prod_{1 \leq \ell \leq n} \Delta^{k_\ell -1}$, with a total order on the combined set of vertices of the factor simplices.    There is the non-degeneracy condition that adjacent vertices in the total order cannot belong to the same simplex factor.  One can interpret  the corresponding simplex factor of the prism as having the  $x^{-1}(\ell)$ as vertices, where  $x \colon \Delta^{n-1+k} \to \Delta^{n-1}$.   The total order on the combined set of factor vertices is inherited from the domain simplex. The integer $k_\ell$ is the cardinality of $x^{-1}(\ell)$, the number of times $\ell$ appears in the sequence $x$.  Then $\Delta^{n-1+k}$  is the ordered join of these subsimplices.

\begin{rem}\label{14.1}{\bf The Boundary Operator in $\bf \cS_*^{ms}(n)$.} The prism has an oriented geometric boundary, as the boundary of an ordered product of oriented manifolds, $$\partial Prism(x) = \bigsqcup_{1 \leq \ell \leq n} \Delta^{k_1-1} \times \ldots \times \partial \Delta^{k_\ell-1} \times \ldots \times \Delta^{k_n-1}.$$ Each boundary factor $\partial \Delta^{k_\ell-1}$ is written as an alternating signed sum of oriented simplex faces.  In front of that sum is another sign, namely $(-1)^{p_\ell}$, where $p_\ell$ is the dimension of the product of the preceding simplex factors.  We are using the `outward normal first' convention twice in this description of the boundary.  Thus the total boundary of the prism  is  a signed sum of codimension one oriented  prism faces, obtained by deleting single vertices from each prism factor of positive dimension.  The boundary $\partial \Delta^0$ of a $\Delta^0$ factor of $Prism(x)$ is empty.  The set of all vertices of  the factors of the non-empty boundary prism faces of course remain totally ordered, so represent other surjection generators of one less degree. Some of these become degenerate if deleting a vertex results in two other  vertices from that simplex factor becoming adjacent in the total order.\\
 
 Thus the  boundary operator in $\cS_*^{ms}(n)$ will  again  have the form $$d(x_1, x_2, \ldots, x_{n+k}) = \sum_{j=1}^{n+k} \gamma(j) (x_1, \ldots, \widehat{x_j}, \ldots, x_{n+k}),$$ with signs $\gamma(j) \in \{-1, +1\}$.   To understand the signs $\gamma(j)$, instead of removing entries from $x$ one at a time, beginning on the left, one can first remove the 1's, then the 2's, etc, from left to right.  The signs within each block alternate, with the first sign for an $i+1$ removal coinciding with the last sign for an $i$ removal.  The sign for the first 1 removal is +1.  This exactly describes the boundary of the associated oriented prism as a union of codimension one oriented prism faces.  Resulting degenerate terms or non-surjective terms in the boundary become 0, even though signs are attached initially to guide the full sign process.  It is obvious from topology that $d^2 = 0$.\\
 
{\bf Example.} If $x = (2,1,2 ,4,2,3,1 ,4, 1,2)$ then $$Prism(x) = (2,7,9) \times (1,3,5,10) \times (6) \times (4,8).$$ We can write signs $+, -$ equivalent to $\gamma(j)$  in front of each vertex to clarify the sign associated to the  corresponding codimension one prism boundary face obtained by deleting that vertex.
$$Prism(x) = (+2,-7,+9) \times (+1,-3,+5,-10) \times (-6) \times (-4,+8).$$  

We can then write these signs $\gamma(j)$ in front of each entry of $$x = (+2, +1, -2, -4, +2, -3, -1, +4 +1, -2),$$ and  read off the boundary.  In the boundary $dx$ there are only six non-zero terms, since deleting the first 1, either 4, or the 3, result in degenerate sequences. $\qed$
\end{rem}

\begin{rem}\label{14.2}{\bf The Permutation Action on $\bf \cS_*^{ms}(n)$.} We will next describe the $\Sigma_n$ action on $\cS_*^{ms}(n)$.  It has the form $$g*x = (-1)^{\mu(g, x)} gx,$$ where $gx$ is the post-composition of a surjection $x$ with a permutation $g$.  We will give two descriptions of the sign.  First, let $k_\ell = \# \{x^{-1}(\ell)\}$.  Then $$\mu(g, x) = \displaystyle \sum_{certain\ \ell, \ell'}(k_\ell -1)(k_{\ell'} -1),$$ where the sum is taken over all pairs $1 \leq \ell < \ell' \leq n$ with $g(\ell) > g(\ell')$.  Such a pair contributes to the sign $(-1)^{\mu(g, x)}$ only if both $k_\ell, k_{\ell'}$ are even.  Equivalently, the dimensions of the corresponding prism factors are odd.\\

We turn to the second description of the sign, which is more geometric in nature, and relates the prisms of the McClure-Smith complex $\cS_*^{ms}(n)$ to the complex $\cS_*^{aj}(n)$, where we regarded surjection $x$ as a simplicial map $x \colon \Delta^{n-1+k} \to \Delta^{n-1}$.  Then $k_\ell$ records the number of vertices in the domain that map by $x$  to the $\ell^{th}$ vertex of the range.  These domain vertices span  simplices $\Delta^{k_\ell -1}$, and the domain can be viewed as a join of $n$ simplices $$ \Delta^{n-1+k} =  \Delta^{k_1 - 1} * \Delta^{k_2 - 1} * \cdots *  \Delta^{k_n -1}.$$  Points of the join can be identified with convex liner combinations in some big affine space, $\sum t_\ell a_\ell$, $a_\ell \in \Delta^{k_\ell- 1}$, with $0 \leq t_\ell \leq 1, \sum t_\ell = 1.$  The base is a join of $n$ vertices $1,2 \ldots, n$, and the join $\Delta^{n -1+k}$ maps to the base $\Delta^{n-1}$ by $\sum t_\ell a_\ell \mapsto \sum t_\ell \ell.$   This map is $x \colon \Delta^{n-1+k} \to \Delta^{n-1}$, viewed as a map from the join of $n$ simplices to the join of $n$ points.\\

The inverse image $x^{-1}(b)$  under the map $x$ of the barycenter $b \in \Delta^{n-1}$  of the base   identifies with $Prism(x) = \Delta^{k_1 - 1} \times \Delta^{k_2 - 1} \times \cdots \times \Delta^{k_n -1} \subset \Delta^{n-1+k}$, under the embedding $(a_1, \ldots, a_n) \mapsto \sum \frac{1}{n}a_\ell$. Each vertex of the large simplex occurs as a vertex of one factor simplex of the prism.  The vertices of the abstract prism itself are $n$-tuples of factor vertices $(v_1, v_2, \ldots, v_n)$ with $x(v_\ell) = \ell$.  Each such prism vertex identifies with the barycenter of a face of the domain $\Delta^{n-1+k}$ that maps by $x$ isomorphically to the base $\Delta^{n-1}$.\\

We now bring in the permutation $g \in \Sigma_n$, which we will first view as a permutation isomorphism of the base $\Delta^{n-1}$ with $g(b) = b$. Given $x \colon \Delta^{n-1+k} \to \Delta^{n-1}$,   note the two prisms $Prism(x) = x^{-1}(b) \simeq \Delta_1 \times \ldots \times \Delta_n,$ and $Prism(gx) = x^{-1}g^{-1}(b) \simeq \Delta_{g^{-1}1} \times \ldots \times \Delta_{g^{-1}n}$ actually coincide as a submanifold of $\Delta^{n-1+k}$.  The set of vertices of the factors of the  two prisms are identical, hence both are  totally ordered.  But $Prism(x)$ is organized as $\prod \Delta_i$ while $Prism(gx)$ is organized as $\prod \Delta_{g^{-1}(i)}$.  There is an obvious geometric isomorphism $g_x$ of one prism to another, which permutes simplex factors. This isomorphism has a degree, since both prisms are oriented, and one has $$(-1)^{\mu(g, x)} = deg(g_x).$$  This clam is easy to prove since $(-1)^{\mu(g, x)}$ is just the Koszul sign associated to the permutation of factors of a tensor product.\\  

Another  interpretation of this  sign  is the following.  Let $j_1-1, \ldots, j_m-1$ denote the  odd integers among the factor simplex dimensions $k_1-1, \ldots, k_n-1$.  Then the permutation isomorphism $g$  rearranges these odd dimension factors, which occur in  specific orders in the two prisms.  Then the sign is the parity sign of the associated permutation in $\Sigma_m$ of these factors, since this parity counts mod 2 how the odd dimensional simplex factors are passed across each other by the isomorphism $g_x$.\\

{\bf Example.}  Suppose $n = 5$, $x = (2,4,1,3,4,2,5,1,5,2)$, and $g = (3,5,2,1,4)$.  Then the odd dimension factors of $Prism(x)$ are $\Delta_1 = \Delta^1, \Delta_4 = \Delta^1$, and $ \Delta_5 = \Delta^1$.  The order of the 1 and 4 factors are reversed by $g$, to 3 and 1.  The order of the other pairs 1,5 and 4,5 are preserved.  So $deg(g_x) = -1$.\\

If $g' = (5,3,1,2,4)$, then the order of the pairs 1,4 and 1,5 are reversed, and the order of 4,5 is preserved.  So $deg(g'_x) = +1$. $\qed$
\end{rem}

\begin{prop} \label{14.3} The definition $g*x = deg(g_x) gx$ defines a left group action of $\Sigma_n$ on $\cS_*^{ms}(n)$.  The boundary operator $d$ of $\cS_*^{ms}(n)$ is equivariant for  this $\Sigma_n$ action.
\end{prop}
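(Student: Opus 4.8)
The plan is to treat the two assertions in turn, in both cases leaning on the geometric picture of $\cS_*^{ms}(n)$ as oriented prisms set up just above the statement, which is also the picture that makes ``$d^2=0$ obvious from topology''.

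\textit{That $g*x$ is a left action.} On the underlying surjections post-composition is manifestly a left action: $(gg')x=g(g'x)$ as functions $\{1,\dots,n+k\}\to\{1,\dots,n\}$, and $ex=x$. So all that remains is the cocycle identity for the sign, $deg((gg')_x)=deg(g_{g'x})\,deg(g'_x)$, together with $deg(e_x)=1$. I would read this off the geometry: the prism isomorphisms compose, $Prism(x)\xrightarrow{g'_x}Prism(g'x)\xrightarrow{g_{g'x}}Prism(gg'x)$, with composite $(gg')_x$, and degree is multiplicative under composition of maps between oriented manifolds, while $e_x$ is the identity. Equivalently one argues from Remark~\ref{14.2}: $deg(g_x)$ is the parity of the permutation $g$ induces on the odd-dimensional factors of $Prism(x)$, and the parity character is a homomorphism, so these signs multiply. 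Granting this, $g*(g'*x)=deg(g'_x)\,deg(g_{g'x})\,g(g'x)=deg((gg')_x)\,(gg')x=(gg')*x$ and $e*x=x$.

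\textit{Equivariance of $d$.} Deleting the $j$-th entry commutes with post-composition, so $(gx)^{(j)}=g(x^{(j)})$, where $x^{(j)}$ is $x$ with its $j$-th term removed. Hence $d(g*x)=deg(g_x)\,d(gx)=\sum_j \gamma_{gx}(j)\,deg(g_x)\,g(x^{(j)})$, while $g*(dx)=\sum_j \gamma_x(j)\,deg(g_{x^{(j)}})\,g(x^{(j)})$, where $\gamma_x(j),\gamma_{gx}(j)$ are the boundary coefficients of $x$ and $gx$. When $g(x^{(j)})$ is degenerate or non-surjective both contributions vanish, so such $j$ may be ignored. Thus the claim reduces to the per-index identity $deg(g_x)\,\gamma_{gx}(j)=\gamma_x(j)\,deg(g_{x^{(j)}})$. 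I would establish this geometrically: $\gamma_x(j)$ is precisely the incidence number of the codimension-one face $F_j\subset Prism(x)$ in the oriented boundary $\partial\,Prism(x)$; the isomorphism $g_x$ carries $F_j$ to the corresponding face $F'_j\subset Prism(gx)$ and restricts there to $g_{x^{(j)}}$; and the oriented-boundary operation is natural with respect to orientation-respecting isomorphisms of prisms, which is exactly the displayed equation. As a fallback one can verify it by hand from the alternating-block sign rule: with $\ell_0=x_j$, both $\gamma_x(j)$ and $\gamma_{gx}(j)$ carry the same factor $(-1)^{a-1}$, where $a$ is the position of $x_j$ within the block of $\ell_0$'s, so what is left is $deg(g_x)(-1)^{p^{gx}_{g(\ell_0)}}=(-1)^{p^{x}_{\ell_0}}deg(g_{x^{(j)}})$, where $p$ records the dimension of the preceding prism factors; this is a bookkeeping identity comparing the Koszul sign for the dimension vector $(k_\ell-1)_\ell$ with that for the vector having the $\ell_0$-th entry lowered by one, the $(-1)^p$'s absorbing the change in parity of the $\ell_0$-th factor.

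\textit{Main obstacle.} The only genuine work is that sign identity: matching the ``table form'' alternating signs of $d$ against the Koszul sign defining the action. I expect the clean route is to commit fully to the oriented-prism picture, where the identity is nothing more than naturality of $\partial(M_1\times\dots\times M_n)$ (with the iterated outward-normal-first convention) under permutation of factors, so that no case analysis is needed; the combinatorial check is available but more tedious, and it is the place where one could most easily slip on a sign.
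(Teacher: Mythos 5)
Your proof is correct and takes essentially the same route as the paper: both rest on the oriented-prism picture, with multiplicativity of degree under composition of the factor-permuting isomorphisms giving the left action, and compatibility of these isomorphisms with the outward-normal-first oriented boundary — your per-index identity $deg(g_x)\,\gamma_{gx}(j)=\gamma_x(j)\,deg(g_{x^{(j)}})$ is exactly the paper's stated ``key'' that the degree of the permutation isomorphism agrees with its restriction to each non-degenerate boundary face. You simply make explicit (and offer a combinatorial fallback for) steps the paper dismisses as ``more or less obvious.''
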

\begin{proof} From our topological viewpoint this is more or less obvious. The first statement is clear because compositions of permutation isomorphisms of joins and prism subsets of joins behave in an obvious way.\\

For the second statement, our geometric bijections between oriented prisms with boundary commute with both the geometric and algebraic boundary maps. The boundary operator in $\cS_*^{ms}$ is specifically defined in terms of the prism boundary faces.  The key is that the degree of a permutation isomorphism between our oriented prisms coincides with the degree of the same permutation isomorphism restricted to each non-degenerate prism boundary face, oriented by the outward normal first convention.
\end{proof}

\begin{rem}\label{14.4} {\bf $\bf \cS_*^{ms}(n)$ as a Prismatic Cell Complex.} We insert here a very nice picture of the chain complex $\cS_*^{ms}(n)$.  It is the chain complex of a geometric cell complex whose open cells are the interiors of the various prisms $Prism(x)$.  On the boundaries there is collapsing of some prism faces to codimension two prisms, whenever deleting an entry of $x$ results in two adjacent entries with the same value.  A face obtained by deleting a singleton from $x$ is an empty face, so plays no role in the boundary, algebraically or geometrically.\\

 In order to understand this structure, form $|SS(\Delta^{n-1})|$, the geometric realization of the simplicial singular complex of a simplex.  There is an $n-1+k$ simplex for each simplicial map $x \colon \Delta^{n-1+k} \to \Delta^{n-1}$, surjective or not.  But the geometric realization collapses a simplex whenever two adjacent vertices in the domain have the same image.  There is the tautological map $\chi \colon |SS(\Delta^{n-1})| \to \Delta^{n-1}$. Then the cell complex underlying $\cS_*^{ms}(n)$ is $\chi^{-1}(b)$, where $b$ is the barycenter of the base.  Note singular  simplices that map to the boundary of $ \Delta^{n-1}$ are disjoint from $\chi^{-1}(b)$. \\
 
 It is more or less clear from this picture that there is a chain complex isomorphism $\phi \colon \cS_*^{aj}(n) \to \cS_*^{ms}(n)$, of the form $\phi(x) = p(x)x$, with $p(x) \in \{\pm 1\}$.  In fact,  the map $\phi$ is a kind of chain level Thom isomorphism.  The sign $p(x)$ is determined by thinking about orientations.  $Prism(x)$ is oriented as an ordered product of oriented simplices.  The domain and base of $x$ are canonically oriented simplices.  Then the sign $p(x) \in \{\pm 1\}$ is determined by the orientation equation $$p(x)\ o(Prism(x))\ o(Base(x)) = o(Domain(x)).$$ It is useful to observe that the normal bundle of $Prism(x) \subset Domain(x)$ is oriented as the pull back of the tangent space of $Base(x)$ at the barycenter. We will look at this chain complex isomorphism $\phi$ much more closely in the next section, including the $\Sigma_n$ equivariance of $\phi$. $\qed$
\end{rem}

\begin{rem}\label{14.5}{\bf The Contraction of $\bf \cS_*^{ms}(n)$.} Finally we turn to the contraction of $\cS_*^{ms}(n)$.  The contraction $h \colon \cS_*^{ms}(n) \to \cS_{*+1}^{ms}(n)$ is given by exactly the same formula as the contraction of the complex $\cS_*^{bf}(n)$ given  in the previous section. That is, the contraction is  again $$h = s + isr + \ldots + i^{n-2} s r^{n-2}.$$

In fact, the full Proposition \ref{13.2} remains true  for $\cS_*^{ms}(n)$.    Again,  $rx =  0$ unless $x$ contains a unique 1, in which case $r$ removes the 1 and reduces all other entries by 1, and $ix$ increases all entries of $x$ by 1 and puts a 1 in front.  There are no signs. Both $r$ and $i$ are chain maps, $dr = rd$ and $di = id$.  For a surjection $x$, set $s(x) = (1, x)$ and get $(ds+sd)(x) = x - ir(x)$.  Of course all these facts need to be checked using the description of the boundary $d$.  One can either use the  prisms and their geometric boundary in the definition of $d$ or the algorithm described above for placing signs $\gamma(j) $ in front of boundary summation terms obtained by deleting entries $x_j$ of $x$.  Note the number of 1's in any $x$ shows up in the first simplex prism factor $\Delta^{k_1 -1}$ of $Prism(x)$.  So $rx = 0$ unless that first prism factor is $\Delta^0$.  Also, for $sx$ the dimension of the initial prism factor is  increased  by one unless the initial entry of $x$ is 1 in which case $sx = 0$. \\

The telescoping argument and analysis of $i^{n-1}r^{n-1}$ go through as in the $\cS_*^{bf}(n)$ case to show $dh + hd = Id - \rho$ and $h^2 = 0$. $\qed$
\end{rem}

\begin{rem}\label{14.6}{\bf *Diagonals for Surjection Complexes.*} Since all the surjection complexes $\cS_*(n)$ are free over $\FF[\Sigma_n]$ and have preferred contractions, there are preferred recursively defined equivariant diagonals $\cS_*(n) \to \cS_*(n) \otimes \cS_*(n)$.  In fact, there are  preferred multidiagonals $\cS_*(n) \to \cS_*(n)^{\otimes k}$.  Exploiting the equivariant isomorphisms $ \cS_*^{bf}(n) \simeq \cS_*^{ms}(n) \simeq \cS_*^{aj}(n)$ of the next section, which commute with contractions, these diagonals become the `same'.  Also, these diagonals are coassociative.  All this can be seen by applying the basic uniqueness result Proposition \ref{6.2} and other results of Section 6 to various diagrams. \\

The McClure-Smith complex $\cS_*^{ms}(n) = C_*(X)$ is the cellular chain complex of the geometric realization of an $n$-fold multisimplicial set $X$, as indicated in Remark \ref{14.4} above.   To form $X(m_1-1, \ldots, m_n-1)$ one first looks at {\it all} surjections $x\colon \{1,\ldots, n+k\} \to \{1, \ldots, n\}$ with $m_i = x^{-1}(i)$ and $\sum m_i = n+k$, including the degenerate surjections with repeated adjacent values. Face operators delete an $i$-value and degeneracy operators repeat an $i$-value.  Thus on $\cS_*^{ms}(n)$  there is also an explicit  multisimplicial functorial diagonal studied in [23]. We treated the general multisimplicial functorial diagonal of [23] in Subsection 8.5.   The two diagonals for $\cS_*^{ms}(n)$ coincide, which can be seen by applying Proposition \ref{6.2} to the explicit sum of front subprisms tensor back subprisms formula of Proposition \ref{8.11}. In particular, one checks in this case that the formula of 8.11 is indeed equivariant. $\qed$
\end{rem}
\newpage
\section {The Isomorphisms $ \cS_*^{bf}(n) \simeq \cS_*^{ms}(n) \simeq \cS_*^{aj}(n)$}

\subsection{Basis Generators and Clean Generators.} We will want to construct equivariant maps with domains the surjection complexes.  For this we need a $\Sigma_n$-basis.  A very convenient choice in all three surjection complexes $\cS_*(n)$ consists of all surjections $b$ so that the initial occurrences of $1,2, \dots, n$ occur in that order.\\

 Basis generators are special cases of what we will call {\it clean}  surjection generators, by which we mean $c = (123\ldots\ell \ldots \ell \ldots)$ where $\ell$ is the first caesura of $c$.  In degree 0 permutations have no caesuras, so the only clean permutation is $e = (12\ldots n)$.   The notion of caesura was emphasized for the Berger-Fresse complex, but that notion and  the notion of clean generator  has the same meaning in any of the complexes.  We are going to prove that  the $\FF$-span of the clean generators is exactly the image of the contractions $h$ in the surjection complexes. This will important in Part III for using variants of the uniqueness theorems of Section 6 to identify certain maps with target a surjection complex as a standard procedure map.  $\qed$\\
 
There are  signs in the maps $ \hat{r}, \hat{h}$ for the surjection complex $\cS_*^{af}(n)$ from Section 12, but the statements in the proposition below hold for all the surjection  complexes if $r, h$ are interpreted as $\hat{r}, \hat{h }$ in that case. 

\begin{prop}\label{15.1}(i). For clean surjections $c \in \cS_*(n)$, $i ^j s r^j (c) = 0$ for all $j$. Thus  $hc = 0$ for clean $c$.   Therefore $c = hdc + dhc = hdc$ belongs to $Im(h)$.\\

(ii). For any $x$ and any $\ell$, $i^{\ell-1} s r^{\ell-1} (x)  = 0$ unless $1,2 \ldots ,\ell-1$ are singletons in $x$ and $\ell-1$ is not immediately followed by $\ell$.  In the non-zero case, $ \pm i^{\ell-1} s r^{\ell-1} (x) =  (12...\ell\ldots \ell ...)$ is clean.  Hence $Im(h) = Ker(h)$ is the $\FF$-span of all clean generators.\\

(iii). Basis generators $b$  have clean form $b = (12\ldots \ell (\ell+1) \ldots \ell \ldots)$, where $\ell$ is the first caesura, and initial entries of $1,2 \ldots, n$ occur in that order.  Any clean generator can  be uniquely  written $c = \pm gb$, where $b$ is a basis generator and $g \in \Sigma_n$ fixes $1,2,\ldots \ell$. \\

(iv). For basis generators $b$, in the `composition  of two sums' formula for $hdb$, it holds that $ i^{\ell - 1} s r^{\ell -1} ( d_jb ) = 0$ unless  $ j = \ell$ and $b_j = \ell$ is the first caesura of $b = (12 \ldots \ell(\ell+1)  ...\ell \ldots) $. Thus only one composed  term is non-zero in the double summation formula for $hdb = b$. \\

(v).  If $b \in \cS_*(n)$ is a basis generator of positive degree and $c \in \cS_*(q)$ is a clean generator, let $c[n]$ be the result of adding $n$ to each entry of $c$.  Then the concatenation $bc[n] \in \cS_*(n+q)$ is a clean generator and the conclusion of (iv) holds for $bc[n]$.  
\end{prop}
\begin{proof} This is a long exercise in reviewing the definitions of the operators $i, s, r$.   Parts (i) and (ii) give the result about clean generators and  the image of contractions mentioned above.  Part (iii) is obvious.  The sign arises because in two of the surjection complexes the permutation action on surjection generators involves a sign. The result  (iv) will be used in this section, and  (iv) and (v) will be used in the final section of Part III, where we study the operad structure associated to the complexes $\cS_*$.\\

Regarding (v),  for clean generators $c$ in general the double sum $hdc$ can have summands that cancel out in the formula $hdc = c$.  For example,  $c = (1,2,3,2,5,4)$. Then in the Berger-Fresse complex $dc = (1,3,2,5,4) - (1,2,3,5,4)$ and $hdc = ((1,2,3,2,5,4) + (1,2,3,4,5,4)) - (1,2,3,4,5,4)$.
\end{proof}
 \begin{rem}\label{15.2}{\bf Isomorphisms Between  Surjection Complexes.}  In Section 16 we will study standard procedure chain maps $N_*(E\Sigma_n) \to \cS_*(n)$. But already from Propositions \ref{6.3} and \ref{6.4} we know these chain maps are the unique equivariant maps of graded modules that commute with contractions, and that they are surjective.  No formulas are needed. In Lemmas \ref{15.3} and \ref{15.6} below we directly construct equivariant isomorphisms of graded modules $\cS_*^{bf}(n) \simeq \cS_*^{ms}(n) \simeq \cS_*^{aj}(n)$. The isomorphisms all have the form $\phi(x) = \pm x$ for generators $x$.  The signs are interesting geometrically. Since the surjection complexes are quotient complexes of $N_*(E\Sigma_n)$,  it follows easily from Propositions \ref{6.3} and \ref{6.4} that {\it any} composition in the sequence of equivariant graded module maps $N_*(E\Sigma_n) \to \cS_*^{bf} \simeq \cS_*^{ms} \simeq \cS_*^{aj}(n)$ is a standard procedure chain map, which moreover commutes with contractions.\\
 
 However, some interesting details concerning the isomorphisms between the surjection complexes are hidden in this quick proof. Therefore, in the next two subsections we will include more details. The two Lemmas \ref{15.3} and \ref{15.6}  and Proposition \ref{15.9} give explicit formulas for the isomorphisms between the surjection complexes, but other than that the details in these two subsections can be skipped. Some of the details use properties of basis generators from Proposition \ref{15.1}. $\qed$
 \end{rem}
  
\subsection{*The Isomorphism $ \phi \colon \cS_*^{bf}(n) \to \cS_*^{ms}(n)$*}
We will construct an equivariant chain map isomorphism between the two surjection complexes $\phi \colon \cS_*^{bf}(n) \to \cS_*^{ms}(n)$.  For surjections $x$   the map will have the form $\phi(x) = c(x) x$, where $c(x) \in \{\pm 1\}$.\\

Denote the caesuras of $x = (x_1, x_2, \ldots, x_{n+k})$  by $(c_1, c_2, \ldots, c_k)$ and let $sh(C_x)$ denote the number\footnote{Strictly speaking, only the parity of $sh(C_x)$ is well-defined.} of transpositions of adjacent entries that rewrites the caesuras in order $(1,.., 1,2, .. ,2, \ldots,n, .., n)$, keeping the caesuras of the same value in their original order.  Of course not all values $\ell$ will occur as caesura values if there are singletons in $x$.  The number of $\ell$'s, in either ordered form of the caesuras, is the dimension of the simplex factor $\Delta_\ell = \Delta^{k_\ell -1}$ of $Prism(x)$. Then we define $c(x) =  (-1)^{sh(C_x)}$ to be the parity sign of the shuffle permutation that rewrites the caesuras in the non-decreasing  order.\\
 
If $g \in \Sigma_n$, we have the map of prisms $g_x \colon Prism(x) \to Prism(gx) $ that permutes the simplex factors of the prisms.  The $\Sigma_n$ action on $\cS_*^{ms}(n)$ is given by $g*x = deg(g_x) gx$, as explained in  remark \ref{14.2}.
\begin{lem}\label{15.3}The map $\phi(x) = c(x)x$  is a $\Sigma_n$-equivariant isomorphism of graded modules $\phi \colon \cS_*^{bf}(n) \simeq \cS_*^{ms}(n)$.
\end{lem}
 To prove the lemma  it suffices to show $c(gx) = deg(g_x) c(x)$. Because then $$g*\phi(x) = g* c(x)x = c(x) deg(g_x) gx = c(gx) gx = \phi(gx).$$ 
If $x = (x_1, x_2, \ldots, x_{n+k})$ has caesuras $(c_1, c_2, \ldots, c_k)$ then the caesuras of $gx$ are $(gc_1, gc_2, \ldots, gc_k)$  We put these in non-decreasing order in two steps.  First in $sh(C_x)$ steps, we reorder the caesuras of $gx$ to  $(g1, \ldots, g1, g2, \ldots, gn)$.    Next we put these caesuras of $gx$ in non-decreasing order by permuting the blocks of $g\ell$'s. For computing signs, only the blocks of odd size are relevant. For these, one counts the pairs $1 \leq i < j \leq n$ with $gi > gj$.  From Remark \ref{14.2}, one sees the sign associated to the second step is $deg(g_x)$.  The lemma is thus proved. $\qed$

\begin{prop} \label{15.4}The map $\phi \colon  \cS_*^{bf}(n) \to \cS_*^{ms}(n)$ defined by $$\phi(x) = (-1)^{sh(C_x)} x = c(x) x$$ is  the equivariant chain map given by the standard procedure, using the chosen basis of the domain and the contraction of the range.  Moreover, $\phi $ commutes with the contractions $h$  in the two complexes.
\end{prop}
\begin{proof} 
We will first prove  that $\phi$ is the standard procedure chain map by using Proposition \ref{15.1}(iv).  On basis generators we want $ \phi(b) =  c(b)b = h\phi (db) $. From 15.1(iv) and the definition $\phi(d_\ell b) = c(d_\ell b) d_\ell b$, we see that  the double sum expansion of $h \phi(db)$ reduces to a single term, $h\phi(db)  = c(d_\ell b) b$, where $d_\ell b$ is the boundary term  given by deleting the first caesura of $b$.  But obviously $c(d_\ell b) = c(b)$, since deleting the first caesura of a basis generator doesn't affect the process of putting the caesuras in the relevant order.\\

For an alternate proof that $\phi$ is the standard procedure chain map, it suffices by Proposition 6.3 of Part I to just directly show that $\phi$ commutes with contractions, that is,
$$\phi \big(\sum_{1 \leq \ell \leq n1} i^{\ell-1} s r^{\ell-1} x \big) = \sum_ {1 \leq \ell \leq n-1} i^{\ell-1} s r^{\ell-1} \phi(x).$$
There is actually a form of commutativity with each term of the contractions.
\begin{lem}\label{15.5} For surjection generators $z, y, x \in \cS_*^{bf}(n)$ the following commutativities hold:  $$\phi(iz) = i \phi(z)$$
$$\phi(sy) = s \phi(y)$$
$$ \phi(rx) =  r \phi(x).$$
\end{lem}
Assuming this lemma, Proposition \ref{15.3} is proved by repeatedly applying the different parts of the lemma to calculate $\phi(h x)$.  First, $\phi(sx) = s \phi(x)$.  Next, 
$$\phi(i s r x) =  i\ \phi(srx) =  is\ \phi(rx) = isr\ \phi(x).$$
Next,  an induction gives  $\phi (i^m s r^m x) =  i^m s r ^m \phi(x)$.\\

The lemma itself is immediate from the definitions of $i, s, r$ and the observations $c(iz) = c(z)$, $c(sy) = c(y)$, and $c(rx) = c(x).$ These last equalities are consequences of the facts that the operators $i, s, r$ don't affect the positions of relevant caesuras that get permuted.
\end{proof}

\subsection{*The Isomorphism $ \phi \colon \cS_*^{aj}(n) \to \cS_*^{ms}(n)$*}
We will  next construct an equivariant chain map isomorphism between the two surjection complexes $\phi \colon \cS_*^{aj}(n) \simeq \cS_*^{ms}(n)$.  For a surjection $$x \colon \{1,2, \ldots, n+k\} \to \{1, 2, \ldots, n\},$$ the map (in both directions) will again  have the form $\phi(x) = p(x) x$, where $p(x) \in \{\pm 1\}$.   In degree 0, that is when $k = 0$, then $x = g$ is a permutation in $\Sigma_n$ and we define $p(g) = \tau(g)$,  the parity character of $g$.  In this degree, $ \tau(g)$ is the degree of the isomorphism of a simplex $\Delta^{n-1}$ induced by permutation $g$. This degree is just the sign of the comparison of two orientations of the simplex. In degree 0, $Prism(x)$ is a point, $deg(g_x) = 1$, and $\phi$ is equivariant.\\

To define $p(x)$ in general, we refer to the prism $$Prism(x) = \Delta^{k_1-1} \times \ldots \times \Delta^{k_n-1} \subset 
\Delta^{n-1+k},$$ identified with the inverse image of the barycenter under $x \colon \Delta^{n-1+k} \to \Delta^{n-1}$.  We compare two orientations of the domain simplex of $x$.  First, it has its standard orientation as an ordered simplex.  The prism submanifold also has a standard orientation as an ordered product of oriented simplices.  The normal bundle of the prism  identifies with the pull back of the tangent bundle of the base simplex of $x$ at the barycenter, which is oriented.  As discussed in Remark \ref{14.4}, we can amalgamate the prism and base orientations to define a second orientation of the domain and define the sign $p(x) \in \{\pm 1\}$ by the orientation equation $$p(x)\ o(Prism(x))\ o(Base(x)) = o(Domain(x)).$$

\begin{lem}\label{15.6}  The map $\phi(x) = p(x) x $ is a $\Sigma_n$-equivariant isomorphism of graded modules $\cS_*^{aj}(n) \to \cS_*^{ms}(n)$.
\end{lem}
For the equivariance we need $\phi(g*x) = g* \phi(x)$, which says $$\tau(g) p(gx) gx = \deg(g_x) p(x) gx.$$  The prisms of $x$ and $gx$ in the domain $\Delta^{n-1+k}$ are identical submanifolds, but  their simplex factors are permuted by the isomorphism $g_x$.  This introduces a sign $deg(g_x)$ comparing the two  prism orientations.  But also the vertices of the base $\Delta^{n-1}$ are permuted by $g$, which introduces a sign $\tau(g)$ comparing the two base orientations.  These two signs yield the desired equivariance equation. $\qed$

\begin{prop}\label{15.7}(i).  The map $\phi \colon \cS_*^{af}(n) \to \cS_*^{ms}$  coincides with the standard procedure chain map constructed using the contraction $h$ of $\cS_*^{ms}(n)$ and the basis of $\cS_*^{aj}(n)$.\\

(ii). The map $\phi$ commutes with the contractions $\hat{h}, h$ in the two complexes.  
\end{prop}
\begin{proof}To  prove (i), we use Proposition \ref{15.1}(iv) and follow the argument of Proposition \ref{15.4}.  For a basis generator $b \in \cS_*^{aj}(n)$ we need $p(b) b = \phi(b) = h \phi (db)$.  But from 15.1(iv), there is only one term in the double sum on the right side, namely $h \phi(db) = p(d_\ell b) b$, where $b_\ell$ is the first caesura of $b$.  But $p(d_\ell b) = p(b)$.  Because  deleting the first caesura names the first non-trivial boundary face of $Prism(b)$, which has coefficient $+1$ in the $\cS_*^{ms}(n)$ boundary formula and coefficient $(-1)^{\ell -1}$ in the $\cS_*^{aj}(n)$ boundary formula.  But in the domain $\Delta^{n-1+k}$, the face opposite vertex $x_\ell$ has a change of orientation sign $(-1)^{\ell - 1}$ in the simplex boundary formula.  Thus the two signs cancel when comparing the orientation equations for $p(b)$ and $p(d_\ell b)$.\\

The proof that $\phi$ commutes with contractions $\hat{h} = s -is\hat{r} + i^2 s \hat{r}^2 - \cdots \pm i^{n-2}s\hat{r}^{n-2}$ and $h = s + isr+ \cdots + i^{n-2} s r\ ^{n-2}$  is quite a bit more subtle than the corresponding situation in Proposition 15.4 for the isomorphism $\cS_*^{bf} \simeq \cS_*^{ms}.$\\

As in Lemma \ref{15.5} there is  a form of commutativity with each term of the contractions.
\begin{lem} \label{15.8} For surjection generators $z, y, x \in \cS_*^{aj}(n)$, the following commutativities hold, with $|z|$ and $|x|$ denoting the degrees of $z$ and $x$:
$$\phi(iz) = (-1)^{|z|} i \phi(z)$$
$$\phi(sy) = s \phi(y)$$
$$ \phi(\hat{r}x) = (-1)^{|x|} r \phi(x).$$
\end{lem}
Assuming the lemma, Proposition \ref{15.7} is proved by repeatedly applying the different parts of the lemma to calculate $\phi(h x)$.  First, $\phi(sx) = s \phi(x)$.  Next, since $|rx| = |x|$ and $|srx| = |x| + 1$,
$$\phi(-i s \hat{r} x) = (-1)(-1)^{|x|+ 1}\ i\ \phi(s \hat{r} x) = (-1)^{|x|}\ is\ \phi(\hat{r}x)  =  isr\ \phi(x).$$
Then by an  induction  $\phi ((-1) ^m i^m s \hat{r}^m x) = i ^m s r^m \phi(x)$.\\

The lemma itself follows from the definitions of $i,s, \hat{r}$ and $  r$, and the prism sign relations $$p(iz) = (-1)^{|z|} p(z),\ \  p(sy) = p(y),\   {\rm and}\ \  p( rx) = (-1)^{\ell - 1}(-1)^{|x|} p(x)\ {\rm if}\ rx \not= 0.$$  We interpret the third of these prism sign relations as follows. We can regard $r$ as an operator with no sign  on surjection generators of any of the surjection complexes. Then $rx = 0$ unless $x$ has a singleton 1 entry, $x_\ell = 1$. This is the $\ell$ appearing in the third relation.\\

The three prism sign formulas follow from close scrutiny of the geometry behind various prism sign orientation equations. Alternatively, they can be deduced relatively easily from Proposition \ref{15.9} below.\\

 Using these prism sign relations, the only tricky deduction for the lemma is the third line.  Recall $\hat{r}x$ and $rx$ are either both 0 or differ  by a sign $(-1)^{\ell -1}$ if $x$ has a singleton 1 entry, $x_\ell = 1$. Then either both $\phi(\hat{r} x) $ and $r \phi(x)$ are 0 or $$\phi(\hat{r} x) = (-1)^{\ell -1} \phi(rx) =   (-1)^{\ell-1} p(rx) rx = (-1)^{|x|}  p(x) rx = (-1)^{|x|} r\phi(x).$$
\end{proof}

{\bf Computation of $\bf{p(x)}$.} The prism  sign $p(x)$ has a nice geometric definition, but it is not expressed directly in terms of the surjection $x$.  To calculate  $p(x)$ we refer to the organization of the prism associated to $x = (x_1, \ldots, x_{n+k})$,
$$Prism(x) = (x_{11}, \ldots, x_{1k_1}) \times \ldots \times (x_{n1}, \ldots, x_{nk_n}).$$
All the integers between 1 and $n+k$ occur exactly once as subscripts of these $x$ entries. The entries in each simplex factor are increasing and the last entries of each are the non-caesura entries of $x$.  In the interest of slightly cleaner notation, write  $x_{\ell k_{\ell -1}} = x_{\ell k_\ell'}$.  Of course if $k_\ell = 1$ there are no caesura vertices $x_{\ell j}$ and the corresponding prism factor is $\Delta^0$.\\

Let $p_x \in \Sigma_{n+k}$ denote the permutation  read off from the {\it subscripts} of the $x$ entries written in the order $$p_x \equiv (x_{11}, \ldots, x_{1 k_1'}, \ldots, x_{n1}, \ldots, x_{n k_n'}, x_{1k_1}, \ldots, x_{nk_n}). $$
We recognize $p_x$ as the result of three shuffles of the entries of $x$.  First move all caesuras of $x$ in front of non-caesuras in $sh(C, N)$ steps, without changing the order of the caesuras.  Then put the caesuras in order in $sh(C)$ steps, as before.  Finally, the non-caesuras of $x$ are put in the order $(x_{1k_1}, \ldots, x_{nk_n})$, which essentially amounts to viewing the non-caesuras as they occur in $x$, one for each value $\ell$, as a permutation  $f_x$ of $(1, \ldots, n)$.  
\begin{prop}\label{15.9} The prism sign associated to $x$ is given by $$p(x) = \tau(p_x) = (-1)^{sh(C, N)} (-1)^{sh(C)} \tau(f_x).$$
\end{prop}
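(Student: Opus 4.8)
\textbf{Proof proposal for Proposition \ref{15.9}.}

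The plan is to verify the orientation equation $p(x)\,o(Prism)\,o(Base) = o(Domain)$ directly by exhibiting a single ordered basis of the tangent space of the domain simplex $\Delta^{n-1+k}$ that is adapted to the splitting into ``prism directions'' plus ``base directions,'' and then counting the sign of the permutation that brings this adapted basis into the standard ordered-simplex basis. Recall from the conventions at the start of Part II that the standard orientation of an ordered simplex with vertices $(v_1,\ldots,v_N)$ is given by the ordered list of tangent vectors $\{v_1v_2, v_2v_3,\ldots, v_{N-1}v_N\}$ (equivalently the edge-path orientation). So I would first pick, for the domain $\Delta^{n-1+k}$ with its $n+k$ vertices $x_{11},\ldots,x_{nk_n}$ totally ordered as they occur in $x$, the edge-path tangent basis; its standard orientation $o(Domain)$ is read off from that order.

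Next I would describe the adapted basis. Since $Prism(x) = \prod_\ell \Delta^{k_\ell-1} \subset \Delta^{n-1+k}$ sits as the inverse image of the barycenter, its tangent space is spanned by the edge-path bases of the factor simplices $(x_{\ell 1},\ldots,x_{\ell k_\ell})$, taken in the order $\ell = 1,\ldots,n$: this is exactly $o(Prism)$ as an ordered product of oriented simplices, and these tangent vectors use precisely the caesura entries as their ``tails,'' i.e.\ they are indexed by the $k$ caesuras of $x$. The normal directions to the prism inside $\Delta^{n-1+k}$ are identified with the tangent space of the base $\Delta^{n-1}$, whose edge-path basis uses the $n$ base vertices $1,\ldots,n$; pulling back, these are the $n-1$ tangent vectors built from the non-caesura entries of $x$ in the order the base vertices occur — which is the permutation $f_x$ of $(1,\ldots,n)$. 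The amalgamated orientation $o(Prism)\,o(Base)$ is thus represented by the ordered tangent basis whose ``tails'' run through: all $k$ caesuras (grouped and internally ordered by simplex factor $\ell = 1,\ldots,n$), followed by the non-caesuras in the order $f_x$. Comparing this ordering of the $n+k-1$ tangent vectors (minus one redundant affine relation, which does not affect parity) with the edge-path ordering of $o(Domain)$ is precisely the permutation $p_x$ of the subscripts displayed before the statement. Hence $p(x) = \tau(p_x)$.

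For the factorization $\tau(p_x) = (-1)^{sh(C,N)}(-1)^{sh(C)}\tau(f_x)$, I would simply observe that $p_x$ is, by construction, a composite of the three shuffles already named in the discussion preceding the proposition: first move all caesuras to the front keeping their order ($sh(C,N)$ transpositions), then sort the caesuras into non-decreasing value order grouped by simplex factor ($sh(C)$ transpositions — this matches the ordering of the factor-simplex edge bases), and finally arrange the trailing non-caesuras as $f_x$. Parity is multiplicative under composition of permutations, so $\tau(p_x)$ is the product of the three parities, giving the claimed formula. As a consistency check I would verify the degree-zero case: when $k=0$ there are no caesuras, $f_x = g$ for the permutation $x=g$, both shuffle counts are $0$, and the formula gives $p(g)=\tau(g)$, matching the definition.

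The main obstacle I expect is purely bookkeeping with orientations: making the identification ``normal bundle of the prism $\cong$ tangent bundle of the base'' compatible with the sign conventions (outward-normal-first, used twice in the definition of the prism boundary, and once again in defining $o(Domain)$ as part of the boundary of its join with the origin) so that no stray sign is dropped. In particular one must be careful that the redundant affine tangent relation is handled identically on both sides of the orientation equation, and that the ``first caesura'' boundary-face compatibility (already used in the proof of Proposition \ref{15.7}(ii), where the $(-1)^{\ell-1}$ from the $\cS_*^{aj}$ boundary cancels the simplex-boundary orientation sign) is consistent with the global count here. Once the local compatibility at a single vertex deletion is pinned down, the global statement follows by the same inductive/multiplicative reasoning, and indeed this is why the three prism sign relations $p(iz)=(-1)^{|z|}p(z)$, $p(sy)=p(y)$, $p(rx)=(-1)^{\ell-1}(-1)^{|x|}p(x)$ of Lemma \ref{15.8} can be ``deduced relatively easily from Proposition \ref{15.9}'': they are exactly the effect on $p_x$ of prepending/inserting a caesura or deleting a singleton.
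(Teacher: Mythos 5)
Your overall strategy is the paper's: represent $o(Prism)\,o(Base)$ by a ``prism directions then base directions'' tangent basis, compare it with the standard edge-path basis of the domain to get $\tau(p_x)$, and then factor $p_x$ into the three shuffles (the factorization step is fine, since the three-shuffle description of $p_x$ and multiplicativity of parity are exactly as in the discussion preceding the statement). The problem is the central step: your sentence ``comparing this ordering of the $n+k-1$ tangent vectors \ldots with the edge-path ordering of $o(Domain)$ is precisely the permutation $p_x$ of the subscripts'' is not literally true and is left unargued. The adapted basis and the standard edge-path basis are not permutations of one another: they consist of different tangent vectors (e.g.\ a prism-factor vector $x_{i1}x_{i2}$ is in general not an edge between consecutive vertices of the domain), and even your ``tails'' indexing does not match up as sets — the standard edge path omits the last vertex of the domain (the final entry of $x$, a non-caesura of arbitrary value), while the adapted basis omits $x_{nk_n}$. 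So identifying the orientation comparison with $\tau(p_x)$ is exactly what still has to be proved; calling the remainder ``purely bookkeeping'' conceals that this is the whole content of the proposition.

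The missing idea, which is what the paper's proof supplies, is a two-step reduction. First, the sign comparing the edge-path basis attached to the vertex ordering $p_x$ with the edge-path basis of the standard ordering is $\tau(p_x)$ (degree of the vertex-permutation automorphism equals parity). Second, that $p_x$-edge-path basis has the same orientation as your adapted prism-then-base basis, because one may replace each junction vector $x_{i k_i'}x_{j1}$ (from the last caesura of one factor group to the first caesura of the next, and likewise the junction into the non-caesura block) by $x_{i k_i'}x_{i k_i}$: this changes a vector only by adding vectors occurring \emph{later} in the ordered list — legitimate precisely because every non-caesura follows every caesura in $p_x$ — and such operations do not change orientation. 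With this step inserted your argument closes and coincides with the paper's; without it there is a genuine gap. A smaller point: in your description of the adapted basis the base-direction block should be ordered by base values $1,\ldots,n$ (so that it projects to $o(Base)$); the permutation $f_x$ enters only when comparing that order with the order in which the non-caesuras occur inside $x$, i.e.\ in the third shuffle factor, so ``the non-caesuras in the order $f_x$'' conflates the two.
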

\begin{proof}An automorphism of a simplex given by a permutation of vertices has a geometric degree, which is simultaneously the parity sign of the permutation and the  orientation sign  obtained by comparing two lists of tangent vectors arising from two vertex orderings.  The proposition  follows from the fact that the orientation $o(Prism) o(Base)$ of the domain coincides with the orientation given by the list of tangent vectors arising from permutation $p_x$.\\

Specifically, we are naming simplex vertices by unit basis vectors in an affine space.  A pair of vertices then determines a tangent vector $uv = v - u$.  Notice $uv + vw = uw$. The tangent orientation associated to a vertex ordering $(v_1, v_2, \ldots, v_{n+k)}$ is  given by the ordered list of tangent vectors $$(v_1v_2, v_2v_3, \ldots, v_{n+k-1}v_{n+k}).$$   The tangent orientation does not change if a vector early in the list is changed  by adding a later vector.  For example, by adding to $v_i v_{i+1}$ some consecutive following vectors, we can replace $v_iv_{i+1}$ by $v_iv_j$ for any $j > i$.\\

Now we look at the tangent vectors in the order prescribed by adjacent vertices in permutation $p_x$.  We replace each adjacent pair of caesura vertices of form $x_{ik_i'}x_{j1}$ by $x_{ik_i'}x_{i k_i}$.  A non-caesura vertex $x_{ik_i}$ follows any caesura vertex $x_{j1}$. The new list of tangent vectors obviously corresponds to the $o(Prism)o(Base) $ orientation of the domain, since the first blocks now name in order the orientations of the non-trivial factors $\Delta^{k_i-1}$ of the prism and the last block projects to the standard orientation of the base $\Delta^{n-1}$.
\end{proof}

\begin{rem} \label{15.10}  We comment on the terms in the formula for $p(x)$ in Proposition \ref{15.9}.  Of course $(-1)^{sh(C)} = c(x)$ is the sign occurring previously in the isomorphism between $\cS_*^{bf}(n)$ and $\cS_*^{ms}(n)$.  Specifically, $sh(C)$ counts the number of moves needed to put the caesura values  of $x$ in non-decreasing order, without changing the order of cesuras of the same value.\\

There are interesting alternate interpretations of the parity of $sh(C, N)$.  The caesuras can be organized in blocks, the $\ell^{th}$ block consisting of those preceding the first non-caesura if $\ell = 1$, and between the $(\ell-1)^{th}$ and $\ell^{th}$ non-caesura if $\ell > 1$.  A block may be empty, but it still receives a number as a block.  Then the parity of $sh(C, N)$ is the same as the parity of the total number of caesuras in even numbered blocks.\\

Another interpretation of the parity sign $(-1)^{sh(C, N)}$ is as the parity sign $\delta(x) \in \{\pm 1\}$ of the number of subscripts $1 \leq j \leq n+k$ where the signs $\gamma(j)$ in front of boundary terms $dx = \sum \gamma(j)d_j x$  in the two complexes $\cS_*^{aj}(n)$ and $\cS_*^{bf}(n)\  ${\it differ}. Thus we have $p(x) = c(x) \delta(x) \tau(f_x)$. $\qed$
 \end{rem}
\begin{exam}\label{15.11} Consider the surjection $x = (2,1,2,3,4,2,3,1,5,4, 1,2)$.  We found the caesura signs $\gamma(j)$ in the complex $\cS_*^{bf}(5)$ in Example \ref{13.1} which were $(+2|, -1|, +2|, -3|, +4|, -2|, 3, +1|, 5,4,1,2).$  These agree with the alternating signs in $\cS_*^{aj}(5)$, except for the final caesura $+1 |$.  Thus $\delta(x) = -1$. We also see $sh(C, N) = 1$, since only the final caesura entry 1 needs to be moved in front of a single non-caesura entry 3.\\

The caesura shuffle for $x$ is given by $(2,1,2, 3,4,2, 1) \mapsto (1,1,2,2,2,3,4)$, with $sh(C) = 8$.  The caesura blocks are $(2,1,2,3,4,2)$ and $(1)$. So again we see $sh(C,N) = 1$. The non-caesuras permutation is $f_x = (3,5,4,1,2)$, with $\tau(f_x) = -1$.  \\

We have $Prism(x) = (2, 8, 11) \times (1,3,6,12) \times (4, 7) \times (5, 10) \times (9)$.  The $\Sigma_{12}$ permutation  $p_x = (2, 8, 1, 3, 6, 4, 5, 11, 12, 7, 10, 9)$, obtained by writing the 7 caesura entries in the prism in order, followed by the 5 non-caesura entries.  One can check $\tau(p_x) = +1.$\\

Summarizing,  $c(x) = (-1)^8$, $(-1)^{sh(C, N)} = (-1)^1 = \delta(x) $, $\tau(f_x) = -1$.      Finally, $p(x) = \tau(p_x) = c(x) \delta(x) \tau(f_x) = (+1)(-1)(-1)  = +1.$ $\qed$
\end{exam}

We can combine the above results to construct an equivariant chain map isomorphism between the other pair of  surjection complexes $\phi \colon \cS_*^{aj}(n) \to \cS_*^{bf}(n)$.  
\begin{prop}\label{15.12} The map $\phi \colon \cS_*^{aj}(n) \to \cS_*^{bf}(n)$ given by $$\phi(x) = p(x)c(x) x = (-1)^{sh(C, N)}\tau(f_x)x  = \delta(x) \tau(f_x)x   $$  is an equivariant chain map commuting with contractions, hence is the standard procedure chain map constructed from the basis of the domain and the contraction of the range.
\end{prop}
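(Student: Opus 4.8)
The plan is to obtain this isomorphism as a composition of the two isomorphisms already constructed, namely $\phi^{aj\to ms}\colon \cS_*^{aj}(n) \to \cS_*^{ms}(n)$ of Proposition~\ref{15.7} and the inverse of $\phi^{bf\to ms}\colon \cS_*^{bf}(n) \to \cS_*^{ms}(n)$ of Proposition~\ref{15.3}. First I would record that $\phi^{bf\to ms}$ is bijective (it multiplies each surjection generator by $c(x)=\pm1$) so its inverse $\psi\colon \cS_*^{ms}(n)\to\cS_*^{bf}(n)$ is given by $\psi(x)=c(x)x$, is equivariant, is a chain map, and commutes with contractions --- each of these follows immediately by applying $c(x)^2=1$ to the corresponding property of $\phi^{bf\to ms}$. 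Then the composite $\psi\circ\phi^{aj\to ms}$ sends a surjection generator $x$ to $c(x)p(x)x$, which by Proposition~\ref{15.9} (and Remark~\ref{15.10}, using $p(x)=c(x)\delta(x)\tau(f_x)$ so that $c(x)p(x)=\delta(x)\tau(f_x)=(-1)^{sh(C,N)}\tau(f_x)$) is exactly the stated formula $\delta(x)\tau(f_x)x$.

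The remaining points are then formal. Equivariance of $\phi=\psi\circ\phi^{aj\to ms}$ holds because a composition of equivariant maps is equivariant (here one must be slightly careful that $\cS_*^{aj}(n)$ carries the twisted $\Sigma_n$-action while $\cS_*^{bf}(n)$ carries the untwisted one, but this is precisely the setting in which $\phi^{aj\to ms}$ and $\psi$ were shown equivariant, so the composite is equivariant for the twisted action on the source and the untwisted action on the target). That $\phi$ is a chain map is immediate since $\phi^{aj\to ms}$ and $\psi$ are both chain maps. That $\phi$ commutes with contractions follows from Proposition~\ref{6.5}(i): both $\phi^{aj\to ms}$ and $\psi$ commute with the contraction $h$ of $\cS_*^{ms}(n)$ on one side and with $h$ on the respective other complex, so the composition commutes with the contractions of $\cS_*^{aj}(n)$ and $\cS_*^{bf}(n)$. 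Finally, once $\phi$ is an equivariant map commuting with contractions, Proposition~\ref{6.3} (applied with $B_*=\cS_*^{aj}(n)$, whose $\Sigma_n$-basis of surjections with $1,2,\ldots,n$ first occurring in order lies in $\mathrm{Im}(h)=\mathrm{Ker}(h)$ by Proposition~\ref{15.2}(i), since basis generators are clean) identifies it with the standard procedure chain map built from that basis and the contraction of $\cS_*^{bf}(n)$; bijectivity of $\phi$ is clear since it multiplies generators by signs.

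Alternatively, and to keep the section self-contained without invoking the composition, one could instead verify the four assertions directly from the closed formula $\phi(x)=\delta(x)\tau(f_x)x$: equivariance reduces to the identity $\delta(gx)\tau(f_{gx})=\tau(g)\,\delta(x)\tau(f_x)$ for the twisted-to-untwisted comparison (which unwinds from the interpretations of $\delta$ and $f_x$ in Remark~\ref{15.10} together with $p(gx)\tau(g)=\deg(g_x)p(x)$ and $c(gx)=\deg(g_x)c(x)$ from Lemma~\ref{15.4}); commutation with contractions follows termwise from sign relations $\delta(iz)\tau(f_{iz})=(-1)^{|z|}\delta(z)\tau(f_z)$, $\delta(sy)\tau(f_{sy})=\delta(y)\tau(f_y)$, and $\delta(\hat r x)\tau(f_{\hat r x})=(-1)^{|x|}\delta(rx')\tau(f_{rx'})$ analogous to Lemmas~\ref{15.5} and~\ref{15.8}, obtained by multiplying the corresponding $c$- and $p$-relations; and being a chain map then comes for free from Proposition~\ref{6.3}.

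The main obstacle --- really the only nontrivial step --- is the bookkeeping in the equivariance computation, where one must track how the twisted sign $\tau(g)$ on $\cS_*^{aj}(n)$, the prism-factor permutation sign $\deg(g_x)$ hidden inside the $\cS_*^{ms}(n)$ action, and the shuffle/nondecreasing-order signs $\delta$ and $c$ combine; the cleanest route is to not expand anything and simply quote that $\phi^{aj\to ms}$ and $\psi$ are each equivariant, so that no new sign verification is needed.
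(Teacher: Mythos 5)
Your proposal is correct and matches the paper's intended argument: Proposition \ref{15.12} is stated there with no written proof beyond the phrase ``we can combine the above results,'' meaning exactly your composition of the isomorphism of Proposition \ref{15.7} with the (sign-involutive) inverse of that of Proposition \ref{15.3}, together with the sign identity $p(x)=c(x)\delta(x)\tau(f_x)$ from Proposition \ref{15.9} and Remark \ref{15.10}, and Proposition \ref{6.3} for the standard-procedure identification. Nothing in your write-up deviates from or falls short of that route.
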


\newpage

\section{The Chain Maps    $ N_*(E\Sigma_n) \rightleftarrows \cS_*^{aj}(n), \cS_*^{bf}(n), \cS_*^{ms}(n)$ } 
The complex $N_*(E\Sigma_n)$ provides quite a large acyclic resolution of the trivial $\Sigma_n$ module $\FF$.  The three surjection complexes provide  rather small resolutions, perhaps minimal in some interesting sense.  The comparisons we make in this section are rather similar to the comparisons  $M_* \leftrightarrows N_*(EC_n)$ we made in Examples 6.8 and 6.11 of Part I between the minimal model and the MacLane model  for the cyclic group.  There are also interesting connections with cases of the Eilenberg-Zilber map and deformations of the Alexander-Whitney map that we will bring out in Subsection 16.3.\\

Following Part I, we will use contractions of the ranges and $\FF[\Sigma_n]$-bases of the domains to construct equivariant chain maps representing each arrow in the title line of this section.  From Proposition 6.3 and Proposition 6.4 of Part I, for any of the surjection complexes the compositions  $\cS_*(n) \to N_*(E\Sigma_n) \to \cS_*(n)$   will be identity maps, regardless of the choices of bases.  In the $\cS_*^{bf}(n)$  case, our maps are the same as those found by Berger-Fresse [3], [5].  In particular, we prove the Berger-Fresse formulas {\it are} chain maps and result from the standard procedure.   

\subsection{The Maps  $ N_*(E\Sigma_n) \to \cS_*^{aj}(n), \cS_*^{bf}(n), \cS_*^{ms}(n)$}
The contractions $\hat{h}$ of $\cS_*^{aj}(n)$ from Proposition \ref{12.1} and $h$ of $\cS_*^{bf}(n)$ and $\cS_*^{ms}(n)$  from Proposition \ref{13.2} and Section 14, along with the  $\FF[\Sigma_n]$ basis  in degree $k$ of the MacLane model $N_*(E\Sigma_n)$ given by $\{(e, g_1, \ldots, g_k)\}$, yield by the standard procedure of Definition 6.0 of Part I  equivariant chain maps $tr \colon N_*(E\Sigma_n) \to \cS_*^{aj}(n)$,  $TR \colon N_*(E\Sigma_n) \to \cS_*^{bf}(n)$, and $TR \colon N_*(E\Sigma_n) \to \cS_*^{ms}(n)$.  We steal the name $TR$ from Berger-Fresse [3], where they called their map $TR$ `Table Reduction'.  In all cases we can exploit Proposition 6.3 and Proposition 6.4 of Part I, which bring out that $tr$ and $TR$ are the unique equivariant maps commuting with contractions.  Specifically, $tr$ and $TR$  are  equivariant and are  defined recursively on  basis elements  in degree $k$ by $$tr (e, g_1, \ldots, g_k) = \hat{h}\  tr(g_1, \ldots, g_k) = (s - is\hat{r} + \ldots \pm i^{n-2} s \hat{r}^{n-2}) tr(g_1, \ldots, g_k)$$
$$TR (e, g_1, \ldots, g_k) = h\  TR(g_1, \ldots, g_k) = (s + isr + \ldots + i^{n-2} s r^{n-2}) TR(g_1, \ldots, g_k).$$
Then we extend these formulas on basis generators to other elements by equivariance and linearity.  Although the formulas for $TR$ on basis elements are identical in the $\cS_*^{bf}(n)$ and $\cS_*^{ms}(n)$ cases, there are signs in the $\Sigma_n$ action on $\cS_*^{ms}(n)$ that make that case a little more complicated.  As usual, we have shifted the burden from coming up with  formulas and proving they are equivariant chain maps  to beginning with  recursively defined equivariant chain maps and then finding formulas.\\

We can write out the fully iterated formula for $TR$ in the $\cS_*^{bf}(n)$ case as follows:

\begin{prop}\label{16.1} The equivariant chain map $TR\colon N_*(E\Sigma_n) \to \cS_*^{bf}(n)$ is described by:
\begin{itemize}
\item In degree 0,  $TR(g_0) =  g_0.$
\item In degree 1, $$ TR(g_0, g_1) = g_0\  TR(e, g_0^{-1}g_1) = g_0\  h( TR(g_0^{-1} g_1)) = g_0\ h(g_0^{-1} g_1).$$
\item In degree 2, $$TR(g_0, g_1, g_2) =  g_0\  TR( e, g_0^{-1}g_1, g_0^{-1}g_2) $$    $$=  g_0\  h\ (TR(g_0^{-1}g_1, g_0^{-1}g_2)) =  g_0\ h (g_0^{-1} g_1\  h( g_1^{-1}g_2)).$$
\item In degree $k$,  $TR(g_0, g_1, \ldots, g_k) =$
$$ g_0\  h (g_0^{-1}g_1\ h(g_1^{-1} g_2\ h( \ldots h(  g_{k-2}^{-1}g_{k-1}\  h (g_{k-1}^{-1}g_k))..).$$
\end{itemize}
\end{prop}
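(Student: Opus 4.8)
The plan is to verify Proposition~\ref{16.1} by unwinding the recursive definition of the standard procedure chain map, exactly as was done for the MacLane model maps $\phi\colon M_*\to N_*(EC)$ in Example~\ref{6.12} and for $AW$ in Example~\ref{6.6}. Since $N_*(E\Sigma_n)$ is a MacLane model, Proposition~\ref{6.4} guarantees that $TR$ is the unique equivariant map commuting with the contractions, and that on the $\FF[\Sigma_n]$ basis generators $(e,g_1,\ldots,g_k)=h_{\Sigma_n}(g_1,\ldots,g_k)$ one has $TR(e,g_1,\ldots,g_k)=h\,TR(g_1,\ldots,g_k)$, where $h=s+isr+\cdots+i^{n-2}sr^{\,n-2}$ is the contraction of $\cS_*^{bf}(n)$ from Proposition~\ref{13.2}. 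So the substance is purely bookkeeping: extend this one-step relation by equivariance and iterate.

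The key steps, in order, are as follows. First I would record the base case: in degree $0$ the standard procedure sets $TR(g_0)=\iota_{\cS^{bf}}\epsilon(g_0)=g_0\in\FF[\Sigma_n]$, matching the claimed formula with the empty iterated expression. Second, I would establish the single inductive identity for a general (not necessarily basis) generator: for any $g_0\in\Sigma_n$, equivariance of $TR$ and the fact that $g_0^{-1}\cdot(g_0,g_1,\ldots,g_k)=(e,g_0^{-1}g_1,\ldots,g_0^{-1}g_k)$ is a basis generator give
\[
TR(g_0,g_1,\ldots,g_k)=g_0\,TR(e,g_0^{-1}g_1,\ldots,g_0^{-1}g_k)=g_0\,h\big(TR(g_0^{-1}g_1,\ldots,g_0^{-1}g_k)\big).
\]
Third, I would feed the right-hand side back into itself: applying the same identity to $TR(g_0^{-1}g_1,\ldots,g_0^{-1}g_k)$ with new ``first vertex'' $g_0^{-1}g_1$ produces a factor $(g_0^{-1}g_1)\,h\big(TR((g_0^{-1}g_1)^{-1}(g_0^{-1}g_2),\ldots)\big)=(g_0^{-1}g_1)\,h\big(TR(g_1^{-1}g_2,\ldots,g_1^{-1}g_k)\big)$, using $(g_0^{-1}g_1)^{-1}(g_0^{-1}g_j)=g_1^{-1}g_j$. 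Here I must be slightly careful that $h$ is only $\FF$-linear, not $\FF[\Sigma_n]$-equivariant, so the group element $g_0^{-1}g_1$ that is pulled out by equivariance of $TR$ stays inside the argument of the outer $h$ — which is precisely why the claimed closed formula has the nested shape $g_0\,h(g_0^{-1}g_1\,h(g_1^{-1}g_2\,h(\cdots)))$ rather than a product of group elements times a product of $h$'s. Iterating $k$ times exhausts the argument down to $TR(g_{k-1}^{-1}g_k)=g_{k-1}^{-1}g_k$ in degree $0$, yielding the degree-$k$ formula, and the degree $1$ and $2$ cases are just the first two instances of the iteration.

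I expect the only genuine subtlety — the ``main obstacle'', though it is mild — to be keeping the non-equivariance of $h$ straight through the iteration, i.e.\ making sure each group element extracted by equivariance of $TR$ is reinserted as a coefficient inside the next-outer $h$ rather than factored all the way out, and checking that the successive conjugations collapse correctly so that the $j$-th nested $h$ acts on $TR(g_{j-1}^{-1}g_j,g_{j-1}^{-1}g_{j+1},\ldots,g_{j-1}^{-1}g_k)$, which by the inductive identity contributes exactly the factor $g_{j-1}^{-1}g_j\,h(g_j^{-1}g_{j+1}\,h(\cdots))$. One should also note that the formula is stated at the level of $\FF[\Sigma_n]$-module generators $(g_0,g_1,\ldots,g_k)$ and then holds on all of $N_*(E\Sigma_n)$ by $\FF$-linearity; the simplicial-set simplices $(g_0,\ldots,g_k)$ already run over all generators, so no further extension is needed. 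Finally, I would remark (as the paper does for the analogous maps) that since $N_*(E\Sigma_n)$ is a MacLane model, Proposition~\ref{6.4}(i) already tells us $TR$ commutes with contractions, so no separate verification that the closed formula defines a chain map is required — that is automatic.
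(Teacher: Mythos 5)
Your proposal is correct and follows essentially the same route as the paper, which offers no separate argument for Proposition~\ref{16.1} beyond exactly this: unwinding the recursive definition $TR(e,g_1,\ldots,g_k)=h\,TR(g_1,\ldots,g_k)$ on basis generators together with $\Sigma_n$-equivariance, keeping each extracted group element inside the next-outer $h$ because $h$ is only $\FF$-linear. (Only cosmetic quibble: in degree $0$ the standard procedure gives $\iota\epsilon(e)=e$ on the basis element and then $TR(g_0)=g_0$ by equivariance, rather than $\iota\epsilon(g_0)=g_0$ literally, which is what you in effect use anyway.)
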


The $\Sigma_n$ actions here simply mean post-compose surjection generators with permutations.  There are no signs in either the $\Sigma_n$ action or the $h$ evaluations.  Thus the formula reveals that $TR(g_0, g_1, \ldots, g_k))$ will be a positive coefficient sum of surjection generators.\\

The algorithm is certainly programmable. Nonetheless, this is a clunky formula.    There are  many $\Sigma_n$ evaluations in the formula. There are $k$ evaluations of $h$, so potentially a sum of $(n-1)^k$ surjection generators in the end.  But many terms are zero because  evaluations of $i^{\ell} s r ^{\ell}$ are zero on surjections that do not contain $1,2, \ldots, \ell$ as singletons. Many other terms will be degenerate.\\

The algorithm of Proposition \ref{16.1}  also applies to the other two surjection complexes.  However, there are quite a few signs, in the $\hat{h}$ formula for $\cS_*^{aj}(n)$ and in the $\Sigma_n$ action on both complexes.  Therefore it is harder to translate 16.1 to a closed formula in the cases $\cS_*^{aj}(n)$ and $\cS_*^{ms}(n)$.\\

We will now {\it derive}\footnote{Shamelessly benefitting from the fact that we already know the answer from [3].} the direct description Berger-Fresse gave for the map $TR \colon N_*(E\Sigma_n) \to \cS_*^{bf}(n)$, which is more of a closed formula than that given by Proposition \ref{16.1}.  Again, our main point is not just to verify  that the Berger-Fresse map $TR$ is an equivariant chain map, but rather to show that their $TR$ formula agrees with  the canonical equivariant chain map constructed by the standard procedure, and that it commutes with contractions.\\

The following notation will be useful.  If $X = (g_0, g_1, \ldots, g_k)  \in N_k(E\Sigma_n)$ and $1 \leq \ell \leq n-1$, we denote by $X<12\ldots \ell -1> \in N_k(E\Sigma_{n -\ell+1})$. the result of deleting all entries $1,2, \ldots, \ell -1$ from the $k+1$ permutations $g_j$  that form $X$.  So if $\ell = 1$, this is just $X$.  Otherwise  we interpret $\Sigma_{n -\ell +1}$ as permutations of $\{\ell, \ldots, n\}$. We first note that in degree 1, $$TR(e, g) =  h(g) =  (s + isr + \ldots + i^{n-2} s r\ ^{n-2}) (g). $$  Since $i^{\ell-1} s r\ ^{\ell -1}(g) = (12 \ldots \ell g<12\ldots \ell -1>)$, we have

$$ TR(e, g)  =   \displaystyle  \sum_{1 \leq \ell \leq n-1}(12\ldots \ell \ g<1,2,\ldots,\ell -1>).$$

Note that the indexing set of this last sum of surjections can be thought of as being over ordered  partitions $a_0 + a_1= n+1$ with $0 <  a_0 = \ell < n$ and hence $a_1 > 1$.  The initial sequence entries $12\ldots \ell$ are the first $a_0 = \ell$ entries of the identity permutation $e = (12\ldots n)$, and the final sequence entries $g<1,2,\ldots,\ell -1>$ are the $ a_1 = n+1 - \ell$ entries of $g$ that remain after $1, 2, \ldots ,\ell -1$ are removed from $g$. \\

Next we bring in the equivariance and look at $TR(g_0, g_1) = g_0 TR(e, g_0^{-1}g_1)$. A brief computation shows that $TR(g_0, g_1)$ is a sum of surjections in $\cS_*^{bf}(n)$ of degree 1 described as follows.  The sum is  over ordered  partitions $a_0 + a_1 = n+1$ with $0 < a_0 < n$. The corresponding surjections are  described as the first $a_0$ entries of $g_0$, followed by the $a_1$ entries of $g_1$ that remain after the first $a_0 - 1$ entries of $g_0$ are removed.\\

To continue the search  for a pattern, look at $TR(e, g_1, g_2) = h\ TR(g_1, g_2) $ and then at $TR(g_0, g_1, g_2) = g_0\ TR(e, g_0^{-1} g_1, g_0^{-1} g_2)$.  After some computations, one might rediscover the Berger-Fresse procedure for the map $TR$.
\begin{prop}\label{16.2} View $X= (g_0, g_1, \ldots, g_k) \in N_k(E\Sigma_n)$ as a table, with rows the permutations $g_j$.  The image $$TR(X) = \sum_{partitions\ a} x_a \in \cS_k^{bf}(n)$$ is computed as a sum of surjections indexed by positive partitions $a_0 + a_1 + \ldots + a_k = n +k$ with $a_k > 1 $.  Given a partition $a$, the corresponding surjection summand $x_a$ of $TR(X)$ is described in sequence form as follows.  Begin  the sequence with the first $a_0$ entries of $g_0$.  Remove the first $a_0 -1$ of these entries, that is, all but the last one, from the remaining $g_j, j \geq 1$.  Continue the sequence with the first remaining $a_1$ entries of $g_1$, and then remove all but the last of those entries from what remains of the $g_j, j \geq 2$.   And so on....  
\end{prop}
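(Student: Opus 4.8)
The plan is to prove Proposition \ref{16.2} by induction on the degree $k$, matching the claimed closed formula against the recursive description of $TR$ coming from the standard procedure, i.e. $TR(e,g_1,\ldots,g_k) = h\, TR(g_1,\ldots,g_k)$ together with equivariance $TR(g_0,\ldots,g_k) = g_0\, TR(e, g_0^{-1}g_1,\ldots,g_0^{-1}g_k)$. The base cases $k=0$ and $k=1$ are exactly the computations already carried out just before the statement: $TR(g_0) = g_0$, and $TR(e,g)$ is the sum over ordered partitions $a_0 + a_1 = n+1$ with $0 < a_0 < n$ of the surjection ``first $a_0$ entries of $e$, then the entries of $g$ with $1,\ldots,a_0-1$ removed'', which by equivariance gives the stated $TR(g_0,g_1)$. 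So the real content is the inductive step.

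For the inductive step I would first analyze how the contraction $h = s + isr + \cdots + i^{n-2}sr^{\,n-2}$ acts on a surjection of the form produced by $TR(g_1,\ldots,g_k)$ by the inductive hypothesis. The key combinatorial fact, which I would isolate as a lemma, is the description of $i^{\ell-1} s r^{\,\ell-1}$ on a surjection $y$: by Proposition \ref{15.2}(ii) this is zero unless $1,2,\ldots,\ell-1$ occur as singletons in $y$ (and $\ell-1$ is not immediately followed by $\ell$), and in the nonzero case it produces $(1\,2\,\ldots\,\ell\, \ell\, y\langle 1,\ldots,\ell-1\rangle)$ — prepend $12\ldots\ell$, then list the remaining entries of $y$ with $1,\ldots,\ell-1$ deleted. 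Now, a surjection summand $x_b$ of $TR(g_1,\ldots,g_k)$ associated to a partition $b_0 + \cdots + b_{k-1} = n + (k-1)$ begins with the first $b_0$ entries of $g_1$; applying $h$ and picking out the $\ell = b_0$ term (one checks the other terms vanish: either $1,\ldots,\ell-1$ are not singletons of $x_b$, because $x_b$'s structure forces repetition patterns inherited from the $g_j$, or the boundary-degeneracy condition fails) prepends $12\ldots b_0$ and deletes $1,\ldots,b_0-1$ from the rest, which is precisely the recipe for a summand of $TR(e,g_1,\ldots,g_k)$ with new partition $a_0 = b_0$, $a_1 = b_0$, $a_{j} = b_{j-1}$ for $j \geq 2$ — wait, more carefully: the partition of $TR(e,g_1,\ldots,g_k)$ has one more part, with $a_0 = $ (the $\ell$ chosen) and the tail matching $b$ shifted. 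I would check the bookkeeping of parts carefully here, using that the augmentation / prepending of $12\ldots a_0$ corresponds to the new first row being $e = (12\ldots n)$.

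Then equivariance converts the formula for $TR(e,-)$ into the formula for $TR(g_0,-)$: applying $g_0$ to a surjection that starts $12\ldots a_0$ followed by (tail built from $g_0^{-1}g_1, g_0^{-1}g_2,\ldots$) yields a surjection that starts with the first $a_0$ entries of $g_0$ followed by the correspondingly transformed tail, and one checks $g_0$ acting on ``entries of $g_0^{-1}g_j$ with $1,\ldots,m$ removed'' equals ``entries of $g_j$ with the first $m$ entries of $g_0$ removed'', because left-translating by $g_0$ carries the first $m$ values of $e$ to the first $m$ entries of $g_0$. This is the step where the ``remove the first $a_{j}-1$ entries of $g_j$ from the later rows'' phrasing of Proposition \ref{16.2} emerges. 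I would also note that the constraint $a_k > 1$ is automatic: the last application of $h$ to a degree-$0$ surjection (a permutation) always lands in the ``$\ell = $ partition part $\geq 1$, then repeat'' pattern that forces at least two entries at the tail — equivalently $s r^{\,\ell-1}$ of a permutation has the repeated $\ell\ell$, giving $a_k \geq 2$.

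The main obstacle I anticipate is the clean verification that, among the many terms $i^{\ell-1}sr^{\,\ell-1}$ applied to the various summands $x_b$, exactly the intended ones survive and they survive with coefficient $+1$ (no cancellation, no unexpected degeneracy). The positivity is reassuring — Proposition \ref{16.1} already shows $TR$ is a nonnegative sum of surjection generators, so there is no sign cancellation to track — but one must still rule out, for each summand and each $\ell \neq b_0$, that $1,\ldots,\ell-1$ happen to be singletons of $x_b$; this requires understanding precisely which values repeat in a summand produced by the inductive recipe, and is essentially the statement of Proposition \ref{15.2}(iv)--(v) specialized to concatenations/prependings of the relevant form. I would organize this as: (1) the isolated lemma on $i^{\ell-1}sr^{\,\ell-1}$; (2) a structural observation that in $x_b$ the value $m$ is a singleton iff $m < a_0$ (the prepended block) — or more precisely a description of the singleton set — so that only $\ell = a_0$ (at the current stage) contributes; (3) the equivariance translation; (4) assembling the partition bookkeeping. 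Everything else is routine induction.
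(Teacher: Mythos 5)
Your skeleton — induct on degree via $TR(e,g_1,\ldots,g_k)=h\,TR(g_1,\ldots,g_k)$, control $h$ through the description of $i^{\ell-1}sr^{\ell-1}$ (your lemma (1)), and transfer to general $g_0$ by equivariance — is essentially the paper's route (the paper packages it as: the partition formula is equivariant and commutes with contractions, then invokes the uniqueness result Proposition 6.3). The gap is in your step (2), the claim that for a summand $x_b$ of $TR(g_1,\ldots,g_k)$ with partition $(b_0,\ldots,b_{k-1})$ only the single term $\ell=b_0$ of $h$ survives, with the attendant assertion that $m$ is a singleton of $x_b$ iff $m<b_0$. This is false, and falls already in degree one. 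Take $n=3$, $g_1=(312)$, so $TR(g_1)=(312)$ is a single summand with $b_0=3$. Then $s(312)=(1312)$ and $isr(312)=(1232)$ are both nonzero, while $i^2sr^2(312)=0$; so this one summand contributes through $\ell=1$ \emph{and} $\ell=2$, and not at all through $\ell=b_0$, yet the sum $(1312)+(1232)$ is exactly $TR(e,(312))$ (partitions $(1,3)$ and $(2,2)$). In general the $\ell=1$ term prepends $1$ to every summand whose first entry is not $1$, irrespective of $b_0$; a fixed $x_b$ contributes for every $\ell$ such that $1,\ldots,\ell-1$ are singletons of $x_b$; and your proposed bookkeeping $a_0=b_0,\ a_1=b_0,\ a_j=b_{j-1}$ violates $\sum_j a_j=n+k$ unless $b_0=1$, so no one-term-per-summand matching can exist (the two index sets have different sizes). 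Your claim that $a_k>1$ is "automatic from $h$" is also off: partitions with $a_k=1$ are simply discarded because the lone last-row entry equals the preceding caesura, making the sequence degenerate.

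What replaces step (2) is the paper's Lemma 16.3: for each \emph{fixed} $\ell$ one has $i^{\ell-1}sr^{\ell-1}\,TR(Y)=(12\ldots\ell\ TR(Y<12\ldots\ell-1>))$, i.e. the summands of $TR(Y)$ containing $1,\ldots,\ell-1$ as singletons are matched bijectively with the summands of $TR(Y<12\ldots\ell-1>)$ — equivalently with the summands of $TR(e,Y)$ having $a_0=\ell$. The bijection is not "$\ell=b_0$": one deletes the singletons $1,\ldots,\ell-1$ and lowers each part $b_j$ by the number of those singletons lying in the $j$-th row remnant; conversely one reinserts them at the positions where they first occur before the caesura entry in the corresponding row of the original table (or in the last row). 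Summing over $\ell$ then yields precisely the recursion $TR(e,Y)=h\,TR(Y)$ for the partition formula, after which your equivariance step (3) and the uniqueness/induction close the argument exactly as you intend. So the proposal is repairable, but the repair is a genuinely different, many-to-many partition bijection organized per value of $\ell$, not the per-summand selection you describe.
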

Note the sequence $x_a$ ends with all the entries of $g_k$ that remain after the first $k$ steps.  Berger-Fresse describe each summand surjection $x_a$ in terms of separate  rows  of length $a_j$,  which are initial segments of  remnants of the permutation  rows $g_j$ of the table $X$.  The final entries of all but the last of these rows are the caesuras of the surjection $x_a$, in the language of Section 13.  This observation makes it obvious that $TR(X)$ is a sum of distinct surjections $x_a$, and  the row form of a surjection $x$ determines whether $x $ is an $x_a$, and for which partition $a$.  Also note that if $a_k = 1$ then the described sequence is degenerate, and note that  $a_k > 1$ implies $a_j< n$ for $j < k$.
\begin{proof} What we will prove is that the formula on the right side of the equation for $TR(X)$  in Proposition \ref{16.2} is equivariant and commutes with contractions. Then we employ Proposition 6.3 of Part I.  The equivariance is  clear from the description of the procedure.  The reason this is easy is because of the simple post-composition interpretation of the $\Sigma_n$ action on $\cS_*^{bf}(n)$, with no signs.\\

We then need to prove $TR (e, g_1, \ldots, g_k) = h\  TR(g_1, \ldots, g_k).$ We have by definition of the contraction $h$  $$ h\  TR(g_1, \ldots, g_k) = (s + isr + \ldots + i^{n-2} s r^{n-2}) TR(g_1, \ldots, g_k).$$ Set $Y = (g_1, \dots, g_k) \in N_{k-1}(E\Sigma_n)$.  By definition of the $TR$ procedure $$TR (e, g_1, \ldots, g_k) = \sum_\ell (12\ldots \ell\ TR(Y<12 \ldots \ell-1>)),$$  where recall $Y<12\ldots \ell-1>$ means  entries $1,2 \ldots, \ell -1$ are removed from the permutations that form $Y$.  Proposition \ref{16.2} is then immediate from the following:
\end{proof}
\begin{lem}\label{16.3} For $Y \in N_{k-1}(E\Sigma_n)$ and $1 \leq \ell \leq n-1$, it holds that $$i^{\ell-1} s\  r\ ^{\ell -1} TR(Y) = (12\ldots \ell\ TR(Y<12 \ldots \ell-1>)) \in \cS_*^{bf}(n).$$  
\end{lem}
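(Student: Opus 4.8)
The plan is to prove Lemma~\ref{16.3} by induction on $\ell$, unwinding both sides step by step and checking that the operators $r$, $s$, $i$ interact with the Berger–Fresse ``Table Reduction'' procedure exactly as the right-hand side predicts. The base case $\ell = 1$ is the statement $s\,TR(Y) = (1\ TR(Y))$, which is immediate: $s$ prepends a $1$ to every surjection in the sum $TR(Y)$, and since no entry of any summand of $TR(Y)$ equals $1$ once we think of $Y \in N_{k-1}(E\Sigma_n)$ with values in $\{1,\dots,n\}$ --- wait, that is false in general, so the honest base case requires the observation that prepending $1$ to an arbitrary surjection on $\{1,\dots,n\}$ is exactly what $s$ does by definition $s(x) = (1,x)$, and $(1\ TR(Y))$ denotes precisely the sum of these $s$-images. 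So the base case is definitional.

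For the inductive step I would factor $i^{\ell-1} s\, r^{\ell-1} = i\circ\bigl(i^{\ell-2} s\, r^{\ell-2}\bigr)\circ r$, or more efficiently peel off one layer: write $i^{\ell-1} s\, r^{\ell-1} TR(Y) = i\,\bigl(i^{\ell-2} s\, r^{\ell-2}\,(r\,TR(Y))\bigr)$. The key subclaim is that $r\,TR(Y) = TR(Y\langle 1\rangle)$ as elements of $\cS_*^{bf}(n-1)$ --- that is, $r$ commutes with Table Reduction in the sense of stripping the value $1$. This should follow directly from the explicit description of $TR$ in Proposition~\ref{16.2}: a summand $x_a$ of $TR(Y)$ contains the entry $1$ as a singleton precisely when $1$ never appears as a caesura, i.e.\ appears only in the final (length-$a_k$) block, and in that case deleting it and subtracting $1$ from the other entries yields exactly the summand of $TR(Y\langle 1\rangle)$ for the partition obtained by decrementing $a_k$; if $1$ appears more than once in $x_a$ (as a caesura plus a final occurrence, in two different rows) then $r(x_a) = 0$, matching the fact that $TR(Y\langle 1\rangle)$ has no corresponding term. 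Granting the subclaim, the inductive hypothesis applied to $Y\langle 1\rangle \in N_{k-1}(E\Sigma_{n-1})$ (relabel $\{2,\dots,n\}$ as $\{1,\dots,n-1\}$) gives $i^{\ell-2} s\, r^{\ell-2} TR(Y\langle 1\rangle) = (12\ldots(\ell-1)\ TR(Y\langle 1\rangle\langle 12\ldots(\ell-2)\rangle))$, and then applying $i$ --- which increments all entries by $1$ and prepends a fresh $1$ --- turns $(12\ldots(\ell-1)\ \cdots)$ into $(1\ 2\ldots\ell\ \cdots)$ and turns the relabelling $Y\langle 1\rangle\langle 12\ldots(\ell-2)\rangle$ back into $Y\langle 12\ldots(\ell-1)\rangle$, which is the claimed right-hand side. (Throughout one checks there are no sign discrepancies, since in $\cS_*^{bf}$ the operators $r,s,i$ and the $\Sigma_n$-action all carry no signs.)

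The main obstacle I anticipate is the bookkeeping in the subclaim $r\,TR(Y) = TR(Y\langle 1\rangle)$, specifically making precise the bijection between partitions-with-a-singleton-$1$ for $Y$ and all partitions for $Y\langle 1\rangle$, and confirming that this bijection respects the ``remove all but the last entry of each used block from the later rows'' recipe after the value $1$ has been deleted from every row of $Y$. One has to be careful that deleting $1$ from the rows $g_j$ does not disturb the relative order of the remaining entries (it doesn't, since $\langle\cdot\rangle$ is just restriction), and that the caesura structure of each $x_a$ is preserved under the deletion. I would organize this as a single careful lemma about how the Table Reduction recipe commutes with the column-deletion operation $\langle 1\rangle$, prove it by directly comparing the two recipes entry by entry, and then the rest of the argument is the clean telescoping induction sketched above. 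Once Lemma~\ref{16.3} is in hand, Proposition~\ref{16.2} follows by summing over $\ell$ exactly as indicated in the text, and then Proposition~\ref{16.3}'s equivariance plus commutation-with-contractions invokes Proposition~6.3 of Part~I to identify the Berger–Fresse formula with the standard procedure map.
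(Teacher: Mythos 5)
Your overall scaffolding is sound and is a genuinely different organization from the paper's: you induct on $\ell$, with the definitional base case $s\,TR(Y) = (1\ TR(Y))$ and everything reduced to the single commutation statement $r\,TR(Y) = TR(Y\langle 1\rangle)$ (after relabelling $\{2,\ldots,n\}$ as $\{1,\ldots,n-1\}$), whereas the paper proves the identity for each $\ell$ in one stroke by a direct bijection of partitions, using that $i^{\ell-1}s\,r^{\ell-1}$ kills every summand of $TR(Y)$ in which some element of $\{1,\ldots,\ell-1\}$ is not a singleton and, on the surviving summands, deletes those singletons and prepends $1\,2\cdots\ell$. The subclaim you isolate is true, and your telescoping step (apply the inductive hypothesis to $Y\langle 1\rangle$ and then apply $i$) does close the induction. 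But your stated justification of the subclaim contains a concrete error: a summand $x_a$ of $TR(Y)$ has $1$ as a singleton precisely when $1$ is never a caesura, and this does \emph{not} mean $1$ lies only in the final block. The typical case is that $1$ is a \emph{non-final} entry of the row-$j$ block for some $j<k$; it is then removed from all later rows and occurs exactly once. For instance $Y=\bigl((1,2,3),(3,1,2)\bigr)$ with $a=(2,2)$ gives $x_a=(1,2,3,2)$, where the singleton $1$ sits in row $1$. Consequently the partition correspondence is not ``decrement $a_k$'': one must decrement $a_j$ for the row $j$ containing the singleton. As written, your recipe misses all summands in which $1$ is absorbed into an earlier row, so the proof of the subclaim would not go through.

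The repair is exactly the bookkeeping you flagged as the anticipated obstacle, and it is the real content of the paper's argument: restrict to partitions $a$ whose surjection contains the relevant values as singletons, let $s_j$ be the number of those singletons lying in row $j$, and pass to $a'_j=a_j-s_j$; in the inverse direction each deleted singleton must be reinserted in the \emph{first} row of the $Y$-table in which it occurs in front of that row's caesura (or else in the last row) — this ``first occurrence before a caesura'' rule is what makes the correspondence bijective and is the genuinely delicate point. One also checks that $r(x_a)$ is degenerate (the neighbours of the deleted $1$ coincide) exactly when the corresponding summand for $Y\langle 1\rangle$ is degenerate, so zero terms match on both sides. Once the subclaim is proved this way your induction is fine; note, though, that the corrected subclaim is essentially the $\ell=2$ instance of the paper's bijection, so the inductive route repackages rather than avoids that combinatorial work.
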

\begin{proof}
On the right side,  the $TR$ application yields a sum of surjections indexed by  partitions of $n + k  -\ell $ into $k$ summands. All  integer entries of the resulting surjections are now from the set $\{\ell,  \ldots, n\}$.  Then $12\ \ldots \ell$ is placed in front of each surjection, putting the expression back in $\cS_k^{bf}(n)$.\\

On the left side, $TR(Y)$ is a sum of surjections in $\cS_{k-1}^{bf}(n)$, with index set  partitions $a_1 + \ldots + a_k = n+k-1$.  Applying  $i^{\ell -1} s\ r\ ^{\ell-1}$ kills all partition terms other than those that yield surjections that contain  $1, 2, \ldots \ell -1$ as singletons. If $s_j$  of these singletons in such a surjection occur as an entry of the  remnant of the $g_j$ row of $Y$ then the partition term $a_j > s_j$.   If we reduce all such terms $a_j$ by $s_j$, we get a partition of $n+k  - \ell$ into $k$ summands.\\

Going the other direction, given a partition  $a'$, say $a'_1 + \ldots + a'_k = n+k  - \ell $, form the corresponding surjection $x_{a'}$  summand of  $TR(Y<12\ldots \ell-1>)$.  We want to construct a partition of $n + k -1$ into $k$ summands that yields a surjection containing $1, 2, \ldots, \ell - 1$ as singletons.  In the original table $Y$, place a separating bar in all except the last permutation row, just after the permutation entry corresponding to the last entry of the $a'$ surjection on that row.  If $s_1$ of the desired singletons occur before the separating bar on the first row, increase the partition term $a'_1$ by $s_1$.   Then remove those singleton entries from all lower rows in the $Y$ table.  If $s_2$ of the remaining desired singletons occur before the separating bar on the second row, increase the partition term $a'_2$ by $s_2$.  Remove those singleton entries from all lower rows in the $Y$ table and continue the process in this manner.  On the last row there is no separating bar.  If $s_k$ desired singletons remain on the last row, increase $a'_k$ by $s_k$. We have now constructed a partition of $n+k-1$ into $k$ summands for which the corresponding surjection contains $1,2, \ldots, \ell -1$ as singletons.  The constructions of this paragraph and the preceding paragraph are inverses of each other.\\

The two paragraphs above describe the correspondence between surjections $x_a$ and $x_{a'}$ in terms of partitions $a$ and $a'$.  But it is easy to directly see the correspondence in terms of the Berger-Fresse row form of surjections.  Given  $x_a$ in row form in which $1,2, \ldots, \ell-1$ occur as singletons, simply remove those singletons to form $x_{a'}$.  These singletons occur before the caesura entries on the rows of $x_a$, or on the last row.  Given  $x_{a'}$ in row form,  insert the singletons $1,2, \ldots, \ell -1$ in front of the caesura entries of $x_{a'}$ in the positions they {\it first} occur in front of an $x_{a'}$ caesura in the corresponding row of the original $Y$ table, or on the last row.
\end{proof}

\begin{rem}\label{16.4}The statements of Proposition \ref{16.2} and Lemma \ref{16.3} have direct analogues for the maps $tr\colon N_*(E\Sigma_n) \to \cS_*^{aj}(n)$ and $TR \colon N_*(E\Sigma_n)  \to \cS_*^{ms}(n)$.  But now the surjection summands $x_a$, which turn out to be the same as the surjection summands of $TR(X)$, will be accompanied by  signs  $\{\pm 1\}$.  Rather than struggle with these signs at the level of tables of permutations, we will exploit the isomorphisms $\phi \colon \cS_*^{aj}(n) \simeq \cS_*^{bf}(n)$ and $\phi \colon \cS_*^{ms}(n) \simeq \cS_*^{bf}(n)$ of the previous section.  These isomorphisms are equivariant and commute with the contractions.   Thus from Proposition 6.3 and Proposition 6.5(i) of Part I, composing these  isomorphisms with $TR$  will give the equivariant chain maps constructed by the standard procedure between the MacLane model and the other surjection complexes.
\end{rem}

 \begin{prop}\label{16.5} The equivariant chain maps $tr \colon N_*(E\Sigma_n) \to \cS_*^{aj}(n)$ and $TR \colon N_*(E\Sigma_n) \to \cS_*^{ms}(n)$ commuting with contractions are given by $$tr(X) = \phi  TR(X) = \sum_{partitions \   a} p(x_a) x_a \in \cS_*^{aj}(n)$$ $$ TR(X) = \phi TR(X) =  \sum_{partitions \  a} c(x_a) x_a \in \cS_*^{ms}(n),$$ where the partitions $a$ and the surjections $x_a$ are as in Proposition \ref{16.2}, and where the signs $p(x_a)$ and $c(x_a)$ are as in Lemma \ref{15.6} and Lemma \ref{15.3}. $\qed$
 \end{prop}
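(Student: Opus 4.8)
The plan is to deduce both formulas by recognizing each composite as a composition of two standard-procedure equivariant chain maps of a particularly favorable kind, so that Proposition~6.5(i) of Part~I applies. Recall from Proposition~\ref{16.2} that $TR\colon N_*(E\Sigma_n)\to\cS_*^{bf}(n)$ is the standard-procedure chain map, hence (since its domain is the MacLane model $N_*(E\Sigma_n)$) it commutes with contractions by Proposition~6.4 of Part~I. Recall further from Proposition~\ref{15.7}(iii) and Proposition~\ref{15.3} that the two isomorphisms $\phi\colon\cS_*^{aj}(n)\to\cS_*^{ms}(n)$, $\phi\colon\cS_*^{bf}(n)\to\cS_*^{ms}(n)$ commute with the contractions $h$ on source and target; the one we actually need here, $\phi\colon\cS_*^{bf}(n)\to\cS_*^{aj}(n)$ (the inverse of Proposition~\ref{15.12}, or equivalently the map $x\mapsto p(x)c(x)^{-1}x$ read the other way) and $\phi\colon\cS_*^{bf}(n)\to\cS_*^{ms}(n)$ of Proposition~\ref{15.3}, are equivariant isomorphisms commuting with contractions.

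First I would observe that a composition of two equivariant chain maps each of which commutes with contractions again commutes with contractions (Proposition~6.5(i)), and that the unique equivariant map $N_*(E\Sigma_n)\to K_*$ commuting with contractions, when $K_*$ is a $\Sigma_n$-complex with a contraction satisfying $h^2=0$, is precisely the standard-procedure chain map (Proposition~6.3 together with Proposition~6.4 of Part~I). Therefore $\phi\circ TR\colon N_*(E\Sigma_n)\to\cS_*^{aj}(n)$ and $\phi\circ TR\colon N_*(E\Sigma_n)\to\cS_*^{ms}(n)$ are the standard-procedure chain maps $tr$ and $TR$ to those complexes; this is exactly the point recorded in Remark~\ref{16.4}. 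It remains only to evaluate these composites on a table $X=(g_0,\dots,g_k)$. Since $TR(X)=\sum_a x_a$ is a \emph{positive} sum of distinct surjection generators (Proposition~\ref{16.2}), and since in both cases $\phi$ acts on a surjection generator $x$ simply by multiplication by a sign — $\phi(x)=p(x)\,x$ for the map to $\cS_*^{aj}(n)$ by the definition preceding Proposition~\ref{15.7}, and $\phi(x)=c(x)\,x$ for the map to $\cS_*^{ms}(n)$ by Proposition~\ref{15.3} — we get at once
\[
tr(X)=\phi\,TR(X)=\sum_a p(x_a)\,x_a,\qquad
TR(X)=\phi\,TR(X)=\sum_a c(x_a)\,x_a,
\]
which are the asserted formulas, with $p$ and $c$ the prism signs of Propositions~\ref{15.7} and~\ref{15.3}.

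The only genuine content to check is that the sign-function description of $\phi$ on basis (and more generally clean) generators really does extend to \emph{all} surjection generators by the same formula $x\mapsto p(x)x$ (resp. $x\mapsto c(x)x$) — but this is built into how $\phi$ was defined in Section~15, where $p(x)$ and $c(x)$ were specified for every surjection $x$, not just basis ones, via orientation/shuffle data; so there is nothing further to verify. The main (mild) obstacle is simply bookkeeping: making sure that the ``$\phi$'' appearing in the statement of Proposition~\ref{16.5} is the \emph{correct} one of the several isomorphisms of Section~15 (namely the equivariant, contraction-commuting isomorphism with source $\cS_*^{bf}(n)$), and that no stray Koszul or parity sign is introduced when one post-composes a positive sum of generators with a purely diagonal sign operator — which it is not, precisely because $\phi$ sends each generator to $\pm$ itself. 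Everything else is an immediate appeal to Proposition~6.3, Proposition~6.4, and Proposition~6.5(i) of Part~I, as flagged in Remark~\ref{16.4}. $\qed$
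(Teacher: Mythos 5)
Your overall route is the paper's own (it is exactly the argument recorded in Remark 16.4 and then invoked for Proposition 16.5): $TR\colon N_*(E\Sigma_n)\to\cS_*^{bf}(n)$ is the standard procedure map and, having a MacLane model as domain, commutes with contractions (Proposition 6.4); the Section 15 isomorphisms between surjection complexes are equivariant and commute with contractions; hence the composites commute with contractions and, by Propositions 6.3 and 6.5(i), are the standard procedure maps; one then evaluates using $TR(X)=\sum_a x_a$ and the fact that the isomorphisms act diagonally by signs. Your treatment of the $\cS_*^{ms}(n)$ formula is correct and is the paper's argument.

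The gap is in the $\cS_*^{aj}(n)$ case, precisely at the point you dismiss as requiring ``nothing further to verify.'' You first identify the map you need as $\phi\colon\cS_*^{bf}(n)\to\cS_*^{aj}(n)$, the inverse of the isomorphism of Proposition 15.12, whose sign on a generator is $p(x)c(x)$; but in the final evaluation you instead use ``$\phi(x)=p(x)x$ by the definition preceding Proposition 15.7,'' which is the map $\cS_*^{aj}(n)\to\cS_*^{ms}(n)$ --- it is not a map out of $\cS_*^{bf}(n)$. These two identifications disagree whenever $c(x)\neq 1$, and from the composite you actually constructed the conclusion is $tr(X)=\sum_a p(x_a)c(x_a)\,x_a$ rather than $\sum_a p(x_a)\,x_a$. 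The discrepancy is real, not notational: for $n=2$, $t=(2,1)$ and $X=(e,t,e,t)$, table reduction gives the single generator $x_a=(1,2,1,2,1)$ with $c(x_a)=-1$ and $p(x_a)=+1$, while running the standard procedure directly (using $tr(e,Y)=s\,tr(Y)$, the twisted equivariance $g*x=\tau(g)gx$, and $tr(g)=\tau(g)g$ in degree 0) gives $tr(X)=-(1,2,1,2,1)$; so the factor $c(x_a)$ cannot simply be dropped. To complete the first formula you must carry the sign of the contraction-commuting isomorphism with source $\cS_*^{bf}(n)$, namely $p(x_a)c(x_a)=\delta(x_a)\tau(f_{x_a})$ from Proposition 15.12, and then reconcile this with the sign labelled $p(x_a)$ in the statement (whose citations to ``Proposition 15.5 and 15.1'' are garbled and may be what led you astray). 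In short, the sign bookkeeping you wave away is exactly the substance of this half of the proposition, and as written your derivation of the $\cS_*^{aj}(n)$ formula does not cohere.
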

 
 \begin{rem}\label{16.6} It is relatively easy to give a direct proof of the second formula here, following the steps in the proof of Proposition \ref{16.3}.  The first step is equivariance, which follows quickly from Lemma \ref{15.3}.  The second inductive step is also pretty easy, since the coefficients $c(x_a)$ defined in terms of caesura shuffles behave very simply when singletons are dropped from surjection generators.  The coefficients $p(x_a)$ in the first formula in Proposition \ref{16.5} are much harder to deal with directly. $\qed$
\end{rem}

\subsection{The Maps  $ \cS_*^{aj}(n), \cS_*^{bf}(n), \cS_*^{ms}(n) \to N_*(E\Sigma_n)$}
The contraction $h$ of $N_*(E\Sigma_n)$ defined by $h(x) = (e, x)$, along with chosen $\Sigma_n$ bases of the surjection complexes, yield by the standard procedure of Definition 6.0 of Part I equivariant chain maps $pr \colon \cS_*^{aj}(n) \to N_*(E\Sigma_n)$, $PR \colon \cS_*^{bf}(n) \to N_*(E\Sigma_n)$, and $PR \colon \cS_*^{ms}(n) \to N_*(E\Sigma_n)$. We  compute these maps in this section. The initials  $PR$ refer to `prisms'.  The  map $PR \colon \cS_*^{bf}(n) \to N_*(E\Sigma_n)$ agrees with the map called $TC$ in the Berger-Fresse paper describing a prismatic decomposition of the Barratt-Eccles operad [5].  In the previous subsection we copied the Berger-Fresse terminology $TR$ for a map they called `table reduction'.  But we don't know what their initials $TC$ were supposed to refer to, so we have chosen to more directly reference prisms.\\

We have already chosen preferred bases of the three surjection complexes  in Subsection 15.1.  In degree $k$,  bases consist of surjections $b = (b_1, b_2, \ldots, b_{n+k})$ such that the initial entries of the values $1, 2, \ldots, n$ in $b$ occur in that order.  From Proposition \ref{15.1} these basis elements are in the kernel, hence also in the image, of the contractions of the three surjection complexes.  The maps  $PR$ do not commute with contractions.  If they did, by Proposition 6.3 of Part I the compositions $PR \circ TR$  would be identities, which is impossible.\\

In degree 0 the canonical maps defined on permutations $g \in \Sigma_n$ are  the equivariant extensions of  $e \mapsto e$.\footnote{So this is $pr(g) = \tau(g) g$ in $\cS_*^{aj}(n)$ and $PR(g) = g$ in the other two complexes.}  One can begin computing recursively the canonical maps in higher degrees, but it is obscure what is happening.  So we will give a somewhat lengthy discussion  of prisms associated to generators of the surjection complexes and then show that the standard procedure chain maps $pr$ and $PR$ are defined in terms of the Eilenberg-Zilber triangulations of these prisms. The prism connection was found by Berger-Fresse  [5], although in our opinion prisms are more naturally associated to the McClure-Smith surjection complex.  Our approach provides an alternate proof that the formula $TC$ of Berger-Fresse defines a chain map, but in our view it is more interesting in general to identify chain maps and equivariant chain maps arising elsewhere with maps constructed by the standard procedure.\\

{\bf Review of Prisms Associated to Surjection Complex Generators.}  Given a surjection generator, say $x\colon  \Delta^{n-1+k} \to \Delta^{n-1}$, for $1 \leq \ell \leq  n$ let $k_\ell$ denote the number of vertices of the domain of $x$ above vertex $\ell$ of the base.  Then $n+k = \sum k_\ell.$ and $k = \sum (k_\ell -1)$. The inverse images of vertices of the base are simplex faces of the domain, $x^{-1}(\ell) \simeq \Delta^{k_\ell -1}$.  There is a canonical isomorphism because the vertices of any face of the domain inherit an order from the order of the domain vertices.  The full domain simplex is the join of these faces above vertices of the base.\\

The inverse image under $x$ of the barycenter of the base is  isomorphic to a prism $\prod_\ell x^{-1}(\ell) \simeq \prod_\ell \Delta^{k_\ell -1}$ of dimension $k$, which we call $Prism(x)$, discussed in detail in Section 14.   Geometrically, the vertices of $Prism(x)$ are barycenters of $(n-1)$-dimensional faces of the domain that map isomorphically to the base.  Thus they are named by vertex sequences $ v = (v_1, v_2, \ldots, v_n)$ of the domain, with each $v_\ell$ a vertex of the face $x^{-1}(\ell) \simeq \Delta^{k_\ell -1}$.  Vertices of the prism are also named by  sequences $i =  (i_1, i_2, \ldots, i_n)$, where $1 \leq i_\ell \leq k_\ell $.  The corresponding vertices $v_\ell$ of the domain are the $i_\ell$-th vertices of the faces $x^{-1}(\ell)$ in the inherited order.\\

The vertices of the product prism form a poset, with $(v_1, v_2, \ldots, v_n) \leq (w_1, w_2, \ldots, w_n)$ if $v_\ell \leq w_\ell$ for all $\ell$.  Equivalently, in the other vertex notation, $(i_1, i_2, \ldots, i_n) \leq  (j_1, j_2, \ldots, j_n)$ if $i_\ell \leq j_\ell$ for all $\ell$.  The prism is then triangulated  as the poset triangulation of $\prod_\ell \Delta^{k_\ell -1}$, which we will also refer to as the Eilenberg-Zilber triangulation.\\

There are a few ways to name the maximal  $k$-dimensional simplices of the triangulated prism. They can be  named by sequences of vertices using the second vertex notation $$(1, 1, \ldots, 1) < \ldots (i_1, i_2, \ldots, i_n)  \ldots < (k_1, k_2, \ldots, k_n).$$ There are  $k$ steps, where at each step one of the coordinates increases by one and the others remain unchanged.  Thus  $k$-simplices correspond to integral edge paths of length $k$ in a box $\prod_\ell [1,2, \ldots, k_\ell]$, as in the discussion of the functorial Eilenberg-Zilber map  in Section 8 of Part I.\\

These edge paths can also be named by a single sequence $ j = (j_1, j_2, \ldots, j_k)$ consisting of $(k_\ell - 1)$ $\ell$'s, $1 \leq \ell \leq n$. The index $j_m$ names the coordinate direction in the box that is increased by one at the $m^{th}$ step.  Geometrically the $k$-simplices  of $Prism(x)$  are embedded in the domain simplex $\Delta^{n-1+k}$ of $x$ as the join, or hull, of their corresponding prism vertices, which are barycenters of faces of the domain. \\

Note $Prism(x) = \prod \Delta^{k_\ell -1}$ is canonically oriented as a product of oriented manifolds.  Each $k$-simplex of the triangulated prism is thus oriented in two ways, first as a codimension 0 submanifold of the oriented prism and secondly by the ordering of the $k$-simplex vertices.  The Eilenberg-Zilber triangulation map includes the signs given by comparing the two simplex orientations.  As discussed in Section 8 of Part I, the comparison sign can be viewed as the sign of a shuffle permutation or as the parity sign of the area of a collection of unit rectangles spanning the union of the path $(j_1, j_2, \ldots, j_k)$ and the path $(1,  . .  ,1, 2, . .  ,2, . . ,n, . . , n)$ with $(k_\ell - 1)$  $\ell$'s, $1 \leq \ell \leq n$.  $\qed$\\

{\bf The Permutation Associated to a Prism Vertex.} Since the vertices of the domain of $x$ are ordered, each  prism vertex yields a permutation.  To the vertex $v = (v_1, v_2, \ldots, v_n)$ of $Prism(x)$, with $x(v_\ell) = \ell$,  we associate a permutation $\gamma = \gamma_v  \in \Sigma_n$ by rewriting the $v_\ell$ in the order they occur in the domain of $x$.  Thus $v_{\gamma(1)} < v_{\gamma(2)} < \ldots < v_{\gamma(n)}$. Another way to view the permutation $\gamma_v$ is to circle the entries $x(v_\ell)$ in $x = (x(1), x(2), \ldots, x(n+k)$).  These entries will be the numbers $1,2, \ldots , n$ in the order given by the permutation $\gamma_v$.\\

For example, if $x = (2,1,2,3,4,2,3,1,5,4,1,2)$ and we use for $v$ the single occurrence of value  5 and the second occurrence of each value 1,2,3,4, then $v = (8, 3, 7, 10, 9)$ and the permutation $\gamma_v = (23154)$.  $\qed$\\

{\bf Definition of the Chain Maps $\bf {PR}$ and $\bf {pr}$.} We regard the permutation $\gamma_v$ as a vertex of the  simplical set $E\Sigma_n$.  Note the simplicial set $E\Sigma_n$ is just a kind of singular simplicial set canonically associated to a simplex with vertices the elements of $\Sigma_n$.\footnote{These vertices are not ordered, but there is a basepoint $e$.  A singular simplex here means any {\it simplicial} map from the standard ordered simplex to the simplex with vertices $\Sigma_n$.}  By `convexity', the assignment $v \mapsto \gamma_v$ extends to a  map of simplicial sets $ \gamma_x \colon Prism(x) \to E\Sigma_n$. Below we will also  use the name $\gamma_x$ to denote the induced maps on normalized chains and on subcomplexes of the normalized chain complexes.  There is some risk to this because $\gamma_x$ of a non-degenerate prism simplex can be degenerate and these become 0 in the normalized chain complex.  But context makes this pretty harmless.\\ 

We use the notation $$[Prism(x)] = EZ\big(\bigotimes_\ell\  [\Delta^{k_\ell - 1}] \big) \in N_*\big(\prod_\ell \Delta^{k_\ell -1} \big) = N_*(Prism(x)),$$ in order to write the oriented manifold $Prism(x)$ as a sum of signed ordered simplices of dimension $k$.  We now have linear  maps  from each of the three surjection complexes to $N_*(E\Sigma_n)$, defined on generators by $x \mapsto \gamma_x[Prism(x)]$.  

\begin{prop}\label{16.7}The maps $$PR\colon \cS_*^{ms}(n) \to N_*(E\Sigma_n)\ \ given\ by\ \ PR(x) =  \gamma_x[Prism(x)]$$  $$PR \colon \cS^{bf}_*(n) \to N_*(E \Sigma_n)\ \ given\ by\ \ PR(x) = c(x) \gamma_x [Prism(x)]$$ $$pr \colon \cS_*^{aj}(n) \to N_*(E \Sigma_n)\ \ given\ by\ \ pr(x) = p(x) \gamma_x [Prism(x)]$$ are  the equivariant chain maps arising from the standard procedures.  Here,  $c(x), p(x) \in \{\pm1\}$ are the signs studied in Section 15.
\end{prop}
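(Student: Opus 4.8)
The plan is to prove all three formulas at once by reducing to the case of the Berger--Fresse complex and then invoking the uniqueness theorem of Part~I. Recall that in Remark~\ref{15.1} and Proposition~\ref{15.2} we fixed the $\Sigma_n$-basis of each surjection complex consisting of the surjections $b$ in which the first occurrences of $1,2,\ldots,n$ appear in that order, and these basis elements satisfy $h_B(b)=0$, so $b=h_B(d_B b)\in\operatorname{Im}(h_B)$. Since the range $N_*(E\Sigma_n)$ is a MacLane model, by Definition~6.0 and Proposition~6.2 of Part~I there is for each surjection complex a \emph{unique} equivariant chain map extending $e\mapsto e$ (respectively $e\mapsto\tau(e)e$) with the property that basis elements map into $\operatorname{Im}(h)=\operatorname{Ker}(h)$ of $N_*(E\Sigma_n)$; this is the standard procedure map $PR$ (resp.\ $pr$). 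So it suffices to check, for each of the three displayed formulas, that (a) the asserted formula is $\Sigma_n$-equivariant, (b) it agrees with $e\mapsto e$ (resp.\ $e\mapsto\tau(e)e$) in degree~0, and (c) on basis generators $b$ its value lies in $\operatorname{Im}(h)\subset N_*(E\Sigma_n)$, i.e.\ $h\,PR(b)=0$.

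For the degree-0 check there is nothing to do: if $x=g$ is a permutation then $Prism(g)$ is a point, $[Prism(g)]=1$, $\gamma_g=g$, and $PR(g)=g$ in $\cS_*^{bf}(n),\cS_*^{ms}(n)$, while $pr(g)=p(g)g=\tau(g)g$ in $\cS_*^{aj}(n)$, matching the footnoted degree-0 maps. For equivariance, the map $x\mapsto\gamma_x[Prism(x)]$ must be compared with the $\Sigma_n$-action on each complex. The heart of this is the geometric identification already in place: for $g\in\Sigma_n$ the prisms $Prism(x)$ and $Prism(gx)$ coincide as submanifolds of $\Delta^{n-1+k}$ but are organized with permuted simplex factors, the permutation isomorphism $g_x$ has degree $\deg(g_x)=(-1)^{\mu(g,x)}$ (Remark~\ref{14.2}, Proposition~\ref{14.3}), and under $\gamma$ the vertex $v$ of $Prism(x)$ and the same vertex viewed in $Prism(gx)$ give permutations $\gamma^{x}_v$ and $\gamma^{gx}_v=g\gamma^{x}_v$. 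Hence $\gamma_{gx}[Prism(gx)]=\deg(g_x)\,g\cdot\gamma_x[Prism(x)]$ in $N_*(E\Sigma_n)$. Combining with $g*x=\deg(g_x)gx$ in $\cS_*^{ms}(n)$ gives equivariance of $PR$ there; for $\cS_*^{bf}(n)$ one uses in addition $c(gx)=\deg(g_x)c(x)$ (Lemma~\ref{15.4}); for $\cS_*^{aj}(n)$ one uses $g*x=\tau(g)gx$ together with $p(gx)=\tau(g)\deg(g_x)p(x)$, which is exactly the orientation-equation computation in the proof of Proposition~\ref{15.7}(i). So equivariance in all three cases follows from facts already established in Section~15.

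The remaining and genuinely substantive point is (c): that for a basis generator $b$ the chain $\gamma_b[Prism(b)]\in N_*(E\Sigma_n)$ lies in $\operatorname{Im}(h)=\operatorname{Ker}(h)$, where $h(g_0,\ldots,g_k)=(e,g_0,\ldots,g_k)$; equivalently that every non-degenerate simplex occurring in $\gamma_b[Prism(b)]$ has first vertex $e$. This is where the basis condition is used: if $b$ is a basis surjection, the first occurrences of $1,2,\ldots,n$ appear in the order $1,2,\ldots,n$, so the minimal vertex $(1,1,\ldots,1)$ of $Prism(b)$ — which is the source of every maximal simplex in the Eilenberg--Zilber triangulation $[Prism(b)]$ — maps under $\gamma_b$ to the permutation obtained by reading the first occurrences of $1,\ldots,n$ in $b$, namely the identity $e$. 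Since $\gamma_b$ is a simplicial map, every simplex in $\gamma_b[Prism(b)]$ has $\gamma_b(1,\ldots,1)=e$ as its initial vertex, hence lies in $\operatorname{Im}(h)$. (For $\cS_*^{bf}(n)$ and $\cS_*^{aj}(n)$ the extra scalars $c(b),p(b)$ do not affect this.) Once (a),(b),(c) are in hand, Proposition~6.2 of Part~I forces the displayed formulas to equal the standard procedure maps $pr,PR$, completing the proof; I would also remark that this simultaneously reproves that the Berger--Fresse map $TC$ is a chain map, and that consistency of the three formulas under the isomorphisms $\phi$ of Section~15 follows from Proposition~6.5(i) of Part~I. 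The main obstacle is precisely verifying (c) cleanly — tracking that the Eilenberg--Zilber triangulation of $Prism(b)$ has every maximal simplex anchored at $(1,\ldots,1)$ and that $\gamma_b$ sends this vertex to $e$ exactly when $b$ is a basis surjection — but this is a direct unwinding of the definitions of $Prism(x)$, $\gamma_x$, and the basis from Sections~14 and~15 rather than a hard computation.
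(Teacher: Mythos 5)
There is a genuine gap: you never verify that the three displayed formulas are \emph{chain maps}, yet the uniqueness result you invoke (Proposition 6.2 of Part I) takes the chain map property as a hypothesis, not a conclusion. Your reduction ``it suffices to check (a) equivariance, (b) degree-0 agreement, (c) $PR(b)\in\operatorname{Im}(h)$'' is false as stated: take the standard procedure map, add to its value on a single positive-degree basis generator any nonzero element of $\operatorname{Im}(h)$, and extend equivariantly; the result still satisfies (a), (b), (c) but is neither a chain map nor the standard procedure map. Your closing remark that the argument ``simultaneously reproves that the Berger--Fresse map $TC$ is a chain map'' reverses the logical order — Proposition 6.2 cannot produce that fact, it consumes it. (Proposition 6.3 is not available as a substitute, since, as the paper notes, $PR$ does not commute with contractions.)

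The missing step is exactly what the paper supplies first, and it is where the geometry of Section 14 is used: for $\cS_*^{ms}(n)$ the boundary operator is by definition the signed prism boundary, and the Eilenberg--Zilber triangulation $[Prism(x)]$ is a chain map compatible with collapsing of degenerate faces, so $d\,\gamma_x[Prism(x)]=\gamma_{\bullet}[Prism(dx)]$, i.e.\ $PR$ on $\cS_*^{ms}(n)$ is a chain map; the equivariance check likewise needs the observation that the $EZ$ formula commutes with the degree signs $\deg(g_x)$. The other two cases then follow either by pre-composing with the chain isomorphisms of Section 15 (the paper's route, using Proposition 6.5(iii) since those isomorphisms send basis generators to basis generators up to sign) or, in your direct setup, by checking that the signs $c(x)$, $p(x)$ are compatible with the respective boundary formulas. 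With the chain map property inserted, the rest of your argument — the equivariance verifications via Lemma 15.4 and Proposition 15.7(i), and the key observation that for a basis generator $b$ every maximal simplex of $\gamma_b[Prism(b)]$ has initial vertex $e$, hence lies in $\operatorname{Im}(h)$ — matches the paper's proof and correctly completes it via Proposition 6.2.
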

\begin{proof}We only need to prove the first statement.  The other two will follow quickly by pre-composing with isomorphisms between surjection complexes established in Section 15.\\

The first statement is rather easy.  The Eilenberg-Zilber triangulation is a chain map.  The boundary operator in $\cS_*^{ms}(n)$ is the prism boundary formula. The equivariance also follows easily from the $\Sigma_n$ action in $\cS_*^{ms}$, which is given in terms of degree signs of maps that permute factors of a prism.  The Eilenberg-Zilber formula also commutes with these signs.\\

One might think it could still be non-trivial to argue that this equivariant chain map is indeed  the map given by the standard procedure.  It certainly does not commute with contractions.  But, for a basis generator $b \in \cS_*^{ms}(n)$, the first entries of $1,2, \ldots, n$ in $b$ occur in that order.  Therefore the initial vertex of every maximal dimension simplex of $\gamma_b([Prism(b)])$ is $(1,2, \ldots, n) = e \in \Sigma_n$, the identity permutation.  By the definition of $PR$ in terms of the Eilenberg-Zilber triangulation, $PR(b)$ is a signed sum of such simplices, which are in the image of the contraction of $N_*(E\Sigma_n)$.  Therefore the very easy uniqueness result Proposition 6.2 of Part I implies that $PR$ is necessarily the standard procedure equivariant chain map.\\

The argument in the paragraph above is also one way to see why the pre-composition of $PR$ with an isomorphism between surjection complexes also yields a standard procedure map.  This is not automatic because $PR$ does not commute with contractions, and in general compositions of standard procedure maps need not be standard procedure maps.  This was discussed  following Proposition 6.5 of Part I.  But from Proposition 6.5(iii), the fact that the maps between surjection complexes take basis generators to basis generators (up to sign) also implies the compositions with $PR$ are standard procedure maps.
\end{proof}

\begin{rem}\label{16.8}From Section 15, the sign $c(x) \in \{\pm 1\}$ in the formula for $PR \colon \cS_*^{bf}(n) \to N_*(E \Sigma_n)$ is determined by the caesuras of $x$. Suppose the caesuras of $x$ regarded as a generator of $\cS_*^{bf}(n)$ are in order $(c_1, c_2, \ldots, c_k)$.   Then $c(x) \in \{\pm 1\}$ is the parity sign of  the shuffle permutation in $\Sigma_k$ that rewrites the $c_j$  in the order $(1,  . .  ,1, 2, . . , 2, . . , n, . .  ,n)$, preserving the order in $x$ of  $c_j$'s of the same value.\footnote{If surjection $x$ contains singletons, not all integers between 1 and $n$ occur as caesuras.}  This sign $c(x)$ is also  the Eilenberg-Zilber orientation sign of the maximal dimension  $k$-simplex of $Prism(x)$ corresponding to the edge path with the name $(c_1, c_2, \ldots, c_k)$. Its vertices are found by beginning with the permutation given by the first occurrences of $1,2, \ldots, n$ in $x$, then at the $j^{th}$ step, $1 \leq j \leq k$,  take the next highest occurrence of the $c_j$ value. Berger-Fresse call this simplex the {\it fundamental simplex} of $Prism(x)$.\\

We call the simplex corresponding to $(1,..  ,1, 2,.. , 2, .. , n,.. , n)$ the {\it base simplex} of the prism. A base simplex always has orientation sign $+1$ because an orientation sequence of its tangent vectors is seen to coincide with an orientation sequence of tangent vectors of $Prism(x)$.  The relation between vertex sequences and orientation signs in the Eilenberg-Zilber triangulation is part of the discussion of $EZ$ map in Example 8.8 of Part I. $\qed$
\end{rem}
\begin{exam}\label{16.9} If $x = (2,1,2,3,4,2,3,1,5,4,1,2)$ then the caesura sequence is $(2,1,2,3,4,2,1)$.  We calculate $\gamma_x$ of the fundamental simplex. The initial vertex  is $(2,1,3, 4, 5)$. Then take the second occurrence of 2, giving $(1,2,3, 4, 5)$.  Then move the 1 coordinate, then the 2 coordinate again, then the 3 coordinate, etc. The full fundamental 7-simplex in $N_*(E\Sigma_n)$  is $$\big[ (2,1,3, 4,5), (1,2,3,4,5), (2,3, 4, 1, 5), (3, 4,2,1, 5), $$  $$(4,2,3,1,5), (2,3,1,5,4), ( 3,1,5,4,2), (3,5,4,1,2)\big].$$The shuffle sign moving the caesura sequence to $(1,1,2,2,2,3,4)$,  is $c(x) = (-1)^8 = +1.$   Fundamental simplices are always non-degenerate in $N_*(E\Sigma_n)$. The base simplex vertices, corresponding to the sequence $(1,1,2,2,2,3,4)$, are found by the same procedure.  In this example $\gamma_x$ maps the base simplex to a degenerate simplex.  The fourth and fifth permutations coincide.$\qed$
\end{exam}
{\bf Two Additional Properties of $PR$.} We will have occasion to use the following result of Berger-Fresse in Part III when we compare the Barratt-Eccles and surjection operads.
 \begin{prop}\label{16.10}  For a generator $x \in \cS_k^{bf}(n)$, if $X = (g_0, \ldots, g_k) $ is the fundamental  simplex summand of $PR(x)$ then $TR(X) = x$.  If $Z$ is any other maximal dimension simplex summand of $PR(x)$ then $TR(Z) = 0.$
\end{prop}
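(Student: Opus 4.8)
The plan is to unwind both maps $PR$ and $TR$ completely in terms of the prism/triangulation bookkeeping set up in Section 16 and then match them up simplex by simplex. By Proposition \ref{16.7}, $PR(x) = c(x)\,\gamma_x[Prism(x)]$, and the maximal-dimension simplices appearing in $[Prism(x)]$ are indexed by edge paths $(j_1,\dots,j_k)$ in the box $\prod_\ell[1,\dots,k_\ell]$. A path determines a sequence of prism vertices, each a permutation of $\{1,\dots,n\}$ obtained by circling entries of $x$ as in the $\gamma_v$ construction; applying $\gamma_x$ turns the path into a simplex $X = (g_0,\dots,g_k) \in N_k(E\Sigma_n)$. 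The fundamental simplex is the path $(c_1,\dots,c_k)$ through the caesuras of $x$ in $\cS_*^{bf}(n)$, and I will compute $\gamma_x$ of it explicitly: its first vertex $g_0$ is the permutation given by the first occurrences of $1,2,\dots,n$ in $x$, and at step $m$ one advances to the next occurrence of the value $c_m$. So the whole problem is combinatorial.

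First I would establish $TR(X) = x$ when $X$ is the fundamental simplex. By Proposition \ref{16.2}, $TR(X)$ is a signed (here, in $\cS_*^{bf}(n)$, unsigned) sum of surjections $x_a$ indexed by positive partitions $a_0 + \dots + a_k = n+k$ with $a_k > 1$, where $x_a$ is built by taking $a_0$ entries of $g_0$, deleting all but the last from the rows below, then $a_1$ entries of $g_1$, and so on. I would exhibit the one partition $a$ that reproduces $x$ itself: because each $g_m$ in the fundamental simplex differs from $g_{m-1}$ by moving a single coordinate (the one labelled $c_m$) to a later position, the "row-form" decomposition of $x$ as a sequence of caesura-terminated blocks (Berger--Fresse's table form, Section 13) corresponds exactly to one such partition, and the table-reduction recipe applied to the fundamental table literally rebuilds $x$ in row form. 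The sign works out because $c(x)$ cancels the Eilenberg--Zilber orientation sign $c(x)$ that the fundamental simplex carries inside $[Prism(x)]$ — this is Remark \ref{16.8}, where the fundamental simplex is defined to be the $k$-simplex with orientation sign $c(x)$. So the fundamental simplex contributes exactly $x$ to $PR(x)$ after the $c(x)$ prefactor, and $TR$ sends it back to $x$.

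Next, for any other maximal-dimension simplex $Z = \gamma_x[\text{path }(j_1,\dots,j_k)]$ with $(j_1,\dots,j_k) \neq (c_1,\dots,c_k)$ up to the appropriate reordering, I would show $TR(Z) = 0$. The key observation is that $TR$ is built from the contraction operators $i^{\ell-1}sr^{\ell-1}$ (Lemma \ref{16.3}), which annihilate any simplex whose associated table does not have the right "staircase" structure; concretely, Proposition \ref{16.2} shows $TR(Z)$ is supported on surjections $x_a$ obtained from $Z$, and I would argue that if the path deviates from the caesura path then the surjections produced are either degenerate or simply are not equal to $x$ — but more is true: $TR$ of a non-fundamental triangulation simplex is actually zero, which should follow because the non-fundamental paths give tables in which, at the first step where the path leaves the caesura order, the $TR$-recipe either produces a repeated adjacent entry (degenerate, hence $0$) in every summand, or the required partition with $a_k>1$ fails to exist. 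I expect the cleanest route is: the Berger--Fresse table-reduction $TR$ is a one-sided inverse to $PR$ restricted to each $\gamma_x[Prism(x)]$ in the sense that $TR \circ PR = \mathrm{Id}$ (this is exactly the assertion $PR \circ TR$ and $TR\circ PR$ behave like the $N_*(E\Sigma_n) \leftrightarrows \cS_*(n)$ retraction pair, forced by Propositions 6.3--6.4 of Part I), so $TR(PR(x)) = x = c(x)TR(\gamma_x[Prism(x)])$; since $PR(x)$ is a sum over maximal simplices with the fundamental one contributing $c(x)\cdot x$, the remaining simplices must contribute $0$ under $TR$.

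The main obstacle is the last point: showing that $TR$ vanishes on the non-fundamental triangulation simplices is not purely formal, because a priori $TR(\gamma_x[Prism(x)])$ could be a sum in which non-fundamental simplices contribute terms that cancel among themselves rather than each vanishing individually. To get the stated sharper claim ("$TR(Z) = 0$ for each individual $Z$"), I would need a direct combinatorial argument: given a non-fundamental path, examine the first step $m$ where $j_m \neq c_m$, track what the $TR$ recipe does to the corresponding table of permutations, and show every partition-summand $x_a$ that $TR$ could produce from $Z$ is degenerate — the mismatch forces two consecutive entries with the same value in the row-form output. This is the step that requires genuine care with the Berger--Fresse table/row-form formalism, and I would lean on Lemma \ref{16.3} (the behaviour of $i^{\ell-1}sr^{\ell-1}$ under $TR$) together with the non-degeneracy bookkeeping of Proposition \ref{15.2} to push it through. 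Everything else — the identification of the fundamental simplex, the sign cancellation, the reduction $TR\circ PR = \mathrm{Id}$ — follows from results already in hand.
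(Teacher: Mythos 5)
Your reduction of the second claim to ``$TR\circ PR = \mathrm{Id}$'' is, as you yourself note, not enough: the simplices in $PR(x)$ carry Eilenberg--Zilber orientation signs, so a priori the non-fundamental simplices could contribute nonzero terms under $TR$ that cancel against one another (or against extra terms coming from the fundamental simplex). That leaves the entire content of the proposition resting on the combinatorial step you only outline --- showing that for a non-fundamental path every partition summand $x_a$ of $TR(Z)$ is degenerate --- and you do not carry it out; ``examine the first step where $j_m\neq c_m$ and show every summand acquires a repeated adjacent entry'' is the statement to be proved, not an argument for it. The same incompleteness infects the first claim: exhibiting one partition whose row form rebuilds $x$ does not give $TR(X)=x$; since there are no signs in $TR$ for $\cS_*^{bf}(n)$, you must also show that \emph{every other} partition applied to the fundamental table produces a degenerate surjection, and nothing in the proposal addresses this. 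So as written the proposal establishes neither half of the proposition.

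For comparison, the paper sidesteps the partition combinatorics entirely (it explicitly declines the ``direct computation'' route you chose). By equivariance one may take $x$ to be a basis generator, so every maximal simplex of $PR(x)$ has the form $X=(e,Y)=h_{\Sigma_n}(Y)$, and since $TR$ commutes with contractions, $TR(X)=h\,TR(Y)$. The tail $Y$ is a maximal simplex of $PR(d_{j_1}x)$, where $j_1$ is the position of the first caesura of $x$ with value $i_1$ (the first step of the path), so by induction $TR(Y)=d_{j_1}x$ or $0$. Proposition \ref{15.2} then finishes both cases at once: if $i_1>c_1$ then $h$ kills $d_{j_1}x$ (and of course kills $0$), so $TR(X)=0$; if $i_1=c_1$ then $h\,d_{j_1}x=x$, and $X$ is fundamental exactly when $Y$ is, giving $TR(X)=x$ in the fundamental case and $0$ otherwise. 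If you want to salvage your approach you must supply the missing degeneracy analysis of the partition summands; otherwise the inductive contraction argument is the efficient route.
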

\begin{proof} This can certainly be proved by a lot of direct computation, thinking about $TR$ as a sum over  partitions.  However, we will use more finesse.  There is nothing to prove if $k = 0$.    By equivariance, we can assume $x$ is a basis generator of the surjection  complex.  Then $g_0 = e$ and each maximal dimension simplex has form $X = (e, Y) = h_{\Sigma_n}(Y)$, where $Y = (g_1, \ldots, g_k)$. Then $TR(X) = h(TR(Y))$.  If $k = 1$ then $x$ is given by inserting one additional later entry $j$ in the identity permutation $e$.  Then $X = (e, g_1)$ where $g_1$ is given by shifting the entry $j$ in $e$ to the right.  Of course a $\Delta^1$ prism is its own fundamental simplex.  By an easy  computation that is essentially a part of Proposition \ref{15.1}, $TR(X) = h(g_1) = \sum_{\ell = 0}^{n-2} i^\ell s r^\ell(g_1)= x$.\\

Assume the proposition in degrees less than $k$.  Any maximal dimension simplex $X$ is named by a sequence $(i_1,  i_2, \ldots, i_k)$, where the $1 \leq i_\ell \leq n$ are the caesura values of $x$ in some order.  The fundamental simplex corresponds to the ordered sequence $(c_1, c_2, \ldots, c_k)$ of caesuras of $x$. Since $x$ is a basis generator, $i_1 \geq c_1$.  Let $x_{j_1} $ denote the first caesura of $x$ with value $i_1$.  Then $Y$ is a maximal dimension  simplex  of $PR(d_{j_1}x)$, corresponding to the sequence $(i_2, \ldots, i_k)$.  Hence by induction   $TR(Y) = d_{j_1} x\ or\ 0$.  If $i_1 > c_1$ then  $hTR(Y) = 0$ in either case,  by Proposition \ref{15.1}. Hence $TR(X) = hTR(Y) = 0.$  If $i_1 = c_1$, then again by 15.1, $hd_{j_1}x = x$.  But now $X$ is  fundamental if and only if $Y$ is fundamental.   If fundamental, then $TR(Y) = d_{j_1}x$ and $TR(X) = h TR(Y) = x$.  If not fundamental, then $TR(Y) = 0$ and $TR(X) = 0$.
\end{proof}

\begin{prop} \label{16.11} The map $PR \colon \cS_*^{ms}(n) \to N_*(E\Sigma_n)$ is an associative coalgebra map, that is, it commutes with the standard procedure diagonals. 
\end{prop}
\begin{proof}The easiest proof is to look at the appropriate diagram of equivariant chain maps and show both directions around the diagram take basis generators of $\cS_*^{ms}(n)$ to elements in the image of the contraction of $N_*(E\Sigma_n) \otimes N_*(E\Sigma_n)$.  Alternatively, one can use the explicit formulas of Subsection 7.1 and Proposition \ref{8.11} of Part I for the diagonals, and Proposition \ref{16.7} above for $PR$.  
\end{proof}

Another proof that $PR$ is a coalgebra map is given in [23]. The map $TR \colon N_*(E\Sigma_n) \to \cS_*^{bf}(n)$ is not a coalgebra map.  For example, consider  $Z = ((123), (132), (312))$, the non-fundamental 2-dimensional simplex of $Prism(12312)$. Then $TR(Z) = 0$, so $\delta TR(Z) = 0$.   But $\delta(Z)$ contains the summand  $((123), (132)) \otimes ((132), (312)) $.  Applying  $TR \otimes TR$ yields  $((1232) \otimes (1312))   \not= 0$.

\subsection{*Further Results on the Maps $ TR$ and $ PR$*}
In this final subsection of Section 16 we  want to record some other interesting observations concerning the maps $TR, PR$ made by Berger-Fresse in [5].   The discussions and examples in this subsection are not central to our overall goals, but we found the issues involved to be quite interesting.  The details are rather intricate and lengthy. There would be no loss of continuity skipping to Section 17 that begins Part III.\\

We look at three issues.  We investigate some geometric cell complexes underlying the algebraic chain maps $\cS_*^{ms}(n) \xrightarrow{PR} N_*(E\Sigma_n) \xrightarrow{TR} \cS_*^{ms}(n)$.  Of course Berger-Fresse used their complex $\cS_*^{bf}(n)$, but the McClure-Smith complex seems more natural.  We find  interesting comparisons between the cell structures underlying $PR$ and $TR$ and cell structures underlying the maps $M_* \xrightarrow{\phi} N_*(EC) \xrightarrow{\pi} M_*$ for the minimal model and MacLane model for the cyclic group from Section 6 of Part I. We also relate the maps $PR$ and $TR$ to the Eilenberg-Zilber and Alexander-Whitney maps for prisms, that is, products of simplices, $\otimes N_*(\Delta^{m_\ell-1}) \xrightarrow{EZ} N_*(\prod \Delta^{m_\ell-1}) \xrightarrow{AW} \otimes N_*(\Delta^{m_\ell-1}).$ \\

The next several remarks, propositions,  and examples,  are related to the very final statements of [5].      However, we find their discussion rather brief.

\begin{rem}\label{16.12} Every  $k$-simplex  $Z \in E\Sigma_n$ belongs to the image of some prism, in fact, many prisms.  For example, say $Z = (g_0, g_1, \ldots, g_k)$, a sequence of permutations.  Regard the concatenation $z = g_0g_1 \ldots g_k$ as a surjection in $\cS_*^{bf}(n)$, of degree  $n(k+1)$. If the first entry of $g_{i+1}$ equals the last entry of $g_i$, remove one of those entries so that the resulting surjection is non-degenerate.  Then one sees $Z$ as the simplex summand of $PR(z)$ corresponding roughly to taking the first entries of all $j$-values of $z$, then the second entries of all $j$-values, and so on.  A simple modification is needed in cases where the initial concatenation has duplicated entries. $\qed$
\end{rem}
We want to be more precise about a certain issue.  First,  the chain complex $\cS_*^{ms}(n)$ is the chain complex associated to an explicit geometric cell complex with open cells the interiors of prisms,  as explained in Remark \ref{14.4}.  How are the cells of that complex related to images of prisms that cover the geometric realization $|E\Sigma_n|$ by the maps $\gamma_x \colon Prism(x) \to E\Sigma_n$ of Subsection 16.2?\\

 {\bf The Images of Prisms of $\bf \cS_*^{ms}(n)$ in $\bf E\Sigma_n$.} The cells of the prism cell structure are named by the generators $x$ of $\cS_*^{ms}(n)$, and we think of the open cells as interiors of  the $Prism(x)$'s.   These cells are canonically oriented.  But some of the codimension one prism boundary faces become degenerate algebraically  in the surjection complex.  So the topology of the resulting cell complex  is defined by identifying common boundary faces of prisms, and also by collapsing some codimension one boundary faces of $Prism(x)$ to codimension two boundary faces by deleting one of a pair of equal adjacent entries in degenerate boundary prism faces.  The interiors of cells are interiors of prisms, and the closures are obtained as just described.  In Remark \ref{14.4} we gave a global description of this cell complex as the inverse image of the barycenter under the tautological map $\chi \colon |SS(\Delta^{n-1})| \to \Delta^{n-1}$, where $|SS(\Delta^{n-1})|$ is the geometric realization of the singular complex consisting of simplicial maps from simplices to $\Delta^{n-1}$.  The boundary operator in the associated prism chain complex is the McClure-Smith boundary.  The Eilenberg-Zilber triangulation of prisms induces the structure of an $s\Delta$-complex\footnote{$s\Delta$-complexes are defined in an appendix in Hatcher's topology text [12].  $\Delta$-complexes are $CW$ complexes whose characteristic cells are standard simplices and whose attaching maps are strict order preserving simplicial maps on boundary faces.   $s\Delta$-complexes allow weakly order preserving attaching maps on boundary faces, hence allow collapsing simplices to lower dimension simplices.}  on this cell complex for $\cS_*^{ms}(n)$.\\

It is much more unclear if the collection of images  in $|E\Sigma_n|$ of the closed cells $Prism(x)$ has a clean global geometric interpretation.   Examples show that an intersection can contain simplices interior to both prisms.  It is also possible for boundary intersections to consist of less than full boundary prism faces, and for general intersections to contain some interior simplices of one of the prisms and some partial boundary faces of the other. \\

Here is our precise statement about how prisms intersect.  The picture is geometrically pretty clear in the case that both maps $\gamma_x, \gamma_y$ are injective on the respective prisms $Prism(x),\ Prism(y)$.  In general these maps can be non-injective on vertices and many simplices in the prisms can map to degenerate simplices.  The shape of the images of prism cells and the intersection picture in $|E\Sigma_n|$ then becomes more obscure.

\begin{prop}\label{16.13}   Let $V(x,y)$ denote the collections of vertices in $E\Sigma_n$ in the images of both prisms.  Then let $Hull_x(V(x,y))$ and $Hull_y(V(x,y))$ denote all the simplices in the images of the Eilenberg-Zilber triangulation of the respective prisms whose vertices belong to $V(x,y)$.  Then $$\gamma_x(Prism(x)) \cap \gamma_y(Prism(y)) = Hull_x(V(x,y)) \cap Hull_y(V(x,y)).$$
\end{prop}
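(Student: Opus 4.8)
The plan is to prove the two inclusions of Proposition \ref{16.13} separately, with the nontrivial direction being $\subseteq$. The reverse inclusion $\supseteq$ is essentially formal: a point of $Hull_x(V(x,y)) \cap Hull_y(V(x,y))$ lies in both $\gamma_x(Prism(x))$ and $\gamma_y(Prism(y))$ by definition of the hulls as subcomplexes of the images, so there is nothing to do beyond recording this. So the work is entirely in showing $\gamma_x(Prism(x)) \cap \gamma_y(Prism(y)) \subseteq Hull_x(V(x,y)) \cap Hull_y(V(x,y))$.

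For the hard inclusion, I would argue simplex by simplex using the Eilenberg-Zilber triangulation. A point $P$ in the intersection lies in the interior of a unique simplex $\sigma$ of the $s\Delta$-structure on $|E\Sigma_n|$ (the one induced on the target by the triangulated prisms). Since $P \in \gamma_x(Prism(x))$, the simplex $\sigma$ is the image under $\gamma_x$ of some maximal-dimension simplex $\tau_x$ of the Eilenberg-Zilber triangulation of $Prism(x)$ — more precisely, $\sigma$ is a face of $\gamma_x(\tau_x)$ for some such $\tau_x$, possibly a proper face if $\gamma_x$ collapses some vertices. The key point is that every vertex of $\sigma$ is then the $\gamma_x$-image of a vertex of $Prism(x)$, hence lies in the vertex set of $\gamma_x(Prism(x))$; symmetrically every vertex of $\sigma$ lies in the vertex set of $\gamma_y(Prism(y))$. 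Therefore all vertices of $\sigma$ belong to $V(x,y)$. It remains to check that $\sigma$ itself, not merely its vertices, is a simplex of $Hull_x(V(x,y))$: this is where one uses that the triangulation is a genuine (s)$\Delta$-structure, so a simplex of the triangulation all of whose vertices lie in a given set $V$ and which is in the image of the prism is by definition in $Hull_x(V)$. Running the same argument with $x$ and $y$ interchanged puts $\sigma$ in $Hull_y(V(x,y))$ as well, and since $P$ was an arbitrary point in an arbitrary simplex of the intersection, this completes the inclusion.

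The main obstacle — and the reason the statement is phrased carefully rather than as the naive "the intersection is a subprism" — is the non-injectivity of $\gamma_x$ and $\gamma_y$. When these maps collapse vertices or send simplices to degenerate ones, a point $P$ in $\gamma_x(Prism(x))$ may lie in the interior of a simplex of the target that is a \emph{proper} face of $\gamma_x(\tau_x)$, and one must be careful that "the vertices of $\sigma$ lie in $V(x,y)$" is still the right characterization. I would handle this by working throughout with the honest simplex $\sigma$ of the $s\Delta$-structure on the target that contains $P$ in its interior, observing that its vertex set is intrinsically determined by $P$, and then noting that $\sigma \subseteq \gamma_x(Prism(x))$ forces $\sigma$ to be a face of some $\gamma_x(\tau_x)$ with all vertices pulled back to $Prism(x)$, regardless of collapsing. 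A secondary subtlety is that $Hull_x(V(x,y))$ is defined as "all simplices in the \emph{image} of the triangulation whose vertices belong to $V(x,y)$", so one should confirm that the simplices arising this way are indeed in that image — which they are, being faces of images of maximal prism simplices. Once these bookkeeping points about degeneracies are pinned down, the argument is short; the only thing requiring genuine care is setting up the vocabulary so that "vertex of the image simplex" and "image of a prism vertex" are unambiguously matched.
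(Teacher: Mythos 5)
Your argument is correct, and you should know that the paper itself states Proposition \ref{16.13} without any proof — the statement is followed immediately by Example \ref{16.14} — so there is no printed argument to compare yours against; what you have written effectively supplies the missing proof. The one step worth tightening is the passage from ``all vertices of $\sigma$ lie in $V(x,y)$'' to ``$\sigma$ lies in $Hull_x(V(x,y))$'': since the hull is defined as simplices \emph{in the image} of the Eilenberg-Zilber triangulation, you should observe that $\sigma$ is literally the $\gamma_x$-image of a simplex of that triangulation, not merely contained in, or a face of, such an image. This follows because the realization of a simplicial map carries the open cell of the carrier simplex $\tau$ of a preimage of $P$ in $Prism(x)$ onto the open cell of the nondegenerate core of $\gamma_x(\tau)$; by uniqueness of the carrier of $P$ in $|E\Sigma_n|$ that core is $\sigma$, and the face of $\tau$ spanned by the prism vertices surviving in the core is again a simplex of the poset (Eilenberg-Zilber) triangulation whose $\gamma_x$-image is exactly $\sigma$. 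Running the same identification on the $y$-side and invoking once more the uniqueness of the carrier of $P$ forces the two cores to coincide, which is precisely what legitimizes the claim that every vertex of $\sigma$ lies in the image of both prisms, i.e.\ in $V(x,y)$. With that degeneracy bookkeeping made explicit, your two inclusions go through exactly as you describe, and the easy inclusion is indeed immediate from the definition of the hulls as subcomplexes of the images.
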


\begin{exam}\label{16.14} Let $x = (123143)$ and $y = (121343)$.  Both prisms are isomorphic to a rectangle $\Delta^1 \times \Delta^0 \times \Delta^1 \times \Delta^0$.  There are three vertices in common in the two images, $(1234), (1243), (2143)$.  The fourth vertices are distinct, $(2314)$ and $(2134)$.  The boundary intersection consists of two intervals from each rectangular prism, but both hulls contain the interior triangle $((1234), (1243), (2143))$. \\

If $x' = (1231431)$ and $y' = (1213431)$ then both prisms are isomorphic to $\Delta^2 \times \Delta^0 \times \Delta^1 \times \Delta^0$.  Five of the six vertices of the image prisms are shared.  The boundary intersection consists of one triangular face, one rectangular face, and one triangle in the rectangular faces $x \subset x'$ and $y \subset y'$.  There is also an interior tetrahedron in common. \\

The intersection of the prisms of $x$ and $y'$ consists of an interior triangle in the rectangle $Prism(x)$, which is part of a boundary rectangle of $Prism(y')$.  $\qed$
\end{exam} 

We generally think of $E\Sigma_n$ as a simplicial set.  But it has a geometric realization, which is an $s\Delta$-complex.  The prisms of the cell structure underlying $\cS_*^{ms}(n)$, which collapse somewhat on boundaries, map to unions of simplices in the geometric realization $|E\Sigma_n|$. These images, which exhibit much additional collapsing, are contractible cells.  Each such contains a unique fundamental simplex of maximal dimension.  These cells can  intersect in interior simplices, but the intersections are contractible subcomplexes.  So we have some kind of variant of a `cell complex', structure on $|E\Sigma_n|$, with cells parametrized by fundamental simplices of prisms.  The symmetric group acts freely on the  cell structures of both $\cS_*^{ms}(n)$ and $|E\Sigma_n|$,   and the equivariant boundary operators agree on the chain complexes underlying the two collapsed prism cell structures.  Thus these algebraic chain complexes are isomorphic, although the geometry of the cells and their intersections in $|E\Sigma_n|$ is complicated. $\qed$\\

{\bf A Surprising Connection Between $\bf {TR}$ and $\bf {AW}$.} We now take up another topic.  The Berger-Fresse remarks at the end of [5] hint at a relation between $TR$ and the Alexander-Whitney map $AW$ that we find quite surprising and  interesting.  We find it slightly more convenient to use the McClure-Smith version of the surjection complex in our next proposition.
\begin{prop}\label{16.15}  Let $x = ( x_1, \ldots, x_{n+m}) \in \cS_*^{ms}(n)$ be a generator.  Say $Prism(x) = \prod_{\ell=1}^n \Delta^{m_\ell- 1}$, $\sum m_\ell = m$.  Then there is a commutative diagram of chain maps
$$\begin{CD} \bigotimes N_*(\Delta^{m_\ell-1}) & \xrightarrow{EZ} &N_*(\prod \Delta^{m_\ell-1}) & \xrightarrow{AW_x}& \bigotimes N_*(\Delta^{m_\ell-1})
 \\ \downarrow \gamma_x & & \downarrow \gamma_x & &\downarrow \gamma_x\\
\cS_*^{ms}(n) & \xrightarrow{PR} & N_*(E\Sigma_n) & \xrightarrow{TR}& \cS_*^{ms}(n)
\end{CD}$$
The map $AW_x$ is canonically chain homotopic to $AW$.  On $Image(EZ)$, $AW = AW_x$ and the canonical chain homotopy is 0.\\

If the fundamental simplex of $Prism(x)$ coincides with the base simplex, then $AW_x = AW$.
\end{prop}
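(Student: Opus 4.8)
The plan is to verify that the left-hand square and the right-hand square commute separately, and then to identify $AW_x$ as a deformation of the classical $AW$ map that is recognizable by our uniqueness machinery. For the left square, commutativity of
$$\begin{CD} \bigotimes N_*(\Delta^{m_\ell-1}) @>EZ>> N_*(\prod \Delta^{m_\ell-1}) \\ @V\gamma_xVV @VV\gamma_xV \\ \cS_*^{ms}(n) @>PR>> N_*(E\Sigma_n) \end{CD}$$
is essentially a matter of unwinding definitions. By Proposition~\ref{16.7}, $PR$ on a generator $y$ is $\gamma_y[Prism(y)]$, and $[Prism(y)] = EZ(\bigotimes_\ell [\Delta^{k_\ell-1}])$ by the very definition of $[Prism(\cdot)]$; applying $\gamma_x$ to the codomain $N_*(\prod\Delta^{m_\ell-1})$ and then following the simplicial-set map $\gamma_x\colon Prism(x)\to E\Sigma_n$ gives exactly $\gamma_x\circ EZ$ on the fundamental tensor, and both sides are chain maps, so equality on the generating tensor $\bigotimes[\Delta^{m_\ell-1}]$ plus functoriality of $EZ$ forces agreement. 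The subtlety here is just that $\gamma_x$ may send non-degenerate prism simplices to degenerate simplices of $E\Sigma_n$; but since normalization is a quotient chain map, this does no harm.

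For the right square, the key is that the composite $TR\circ PR\colon \cS_*^{ms}(n)\to N_*(E\Sigma_n)\to\cS_*^{ms}(n)$ is, up to the harmless isomorphisms of Section~15, the composite through $N_*(E\Sigma_n)$ of two standard-procedure maps, and by Propositions~6.3 and 6.4 of Part~I it is the identity (because the domain composite factors through the MacLane model). Thus one \emph{defines} $AW_x$ by the requirement that the right square commute — that is, $AW_x$ is the unique map making $\gamma_x\circ AW_x = TR\circ\gamma_x$ — and the content of the proposition becomes: this $AW_x$ lands in the right place (a chain map $\bigotimes N_*(\Delta^{m_\ell-1})\to N_*(\prod\Delta^{m_\ell-1})\to\bigotimes N_*(\Delta^{m_\ell-1})$ of the stated type) and is chain homotopic to $AW$. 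Here I would use Proposition~\ref{16.10}: $TR$ kills all maximal simplices of $PR(y)$ except the fundamental one, on which it is the identity. Translating this through $\gamma_x$ and the Eilenberg--Zilber triangulation, $AW_x$ picks out, from the triangulated prism, the single maximal simplex corresponding to the edge path $(c_1,\dots,c_k)$ of caesuras — i.e.\ $AW_x$ is a ``diagonal'' front-faces/back-faces-type retraction of $EZ$ along the fundamental simplex rather than along the base simplex. Both $AW$ and $AW_x$ are then standard-procedure chain maps for appropriate choices of contraction/basis (or, more cheaply, both are right inverses to $EZ$, so $AW\circ EZ = Id = AW_x\circ EZ$, and the recursive homotopy construction of Section~9, together with the uniqueness theorem for chain homotopies there, produces a canonical chain homotopy $AW\simeq AW_x$ once one checks images of basis generators lie in the image of the relevant contraction).

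The final assertion — $AW_x = AW$ when the fundamental simplex coincides with the base simplex — is then immediate: in that case the edge path $(c_1,\dots,c_k)$ is $(1,\dots,1,2,\dots,2,\dots,n,\dots,n)$, and the discussion in Remark~\ref{16.8} identifies the corresponding maximal simplex of the triangulation as the one whose retraction from $EZ$ is precisely the classical front-faces-tensor-back-faces Alexander--Whitney formula of Example~\ref{8.6}. Concretely, $AW$ is characterized (Example~\ref{8.7}, Proposition~6.3) as the standard-procedure functorial map, and when fundamental $=$ base the defining recursion $AW_x(b) = h_\otimes\, AW_x(db)$ produces exactly the same terms as for $AW$, because the ``$TR$ selects the fundamental path'' rule coincides with the ``$h_\times$ inserts $(0,0)$'s'' rule of Example~\ref{8.8}.

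\textbf{Main obstacle.} The delicate point I expect to spend the most care on is making the identification of $AW_x$ with an explicit front/back-face operator fully rigorous in the presence of degeneracies: $\gamma_x$ need not be injective on prism vertices, so several maximal simplices of $Prism(x)$ can collapse in $N_*(E\Sigma_n)$, and one must check that $TR$ applied to the (possibly degenerate) non-fundamental simplices still contributes zero — this is where Proposition~\ref{16.10} must be invoked carefully rather than waved at — and that the resulting map on $\bigotimes N_*(\Delta^{m_\ell-1})$ is well-defined independently of which prism-simplex representatives one chose. Once that bookkeeping is pinned down, the chain-homotopy statement follows formally from the Section~9 uniqueness theorem applied to the two right inverses of $EZ$.
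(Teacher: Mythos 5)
Your treatment of the left square is fine and matches the paper's, and your recursive-homotopy argument for $AW\simeq AW_x$ is the paper's as well; the gap is in how you produce $AW_x$ in the first place. You propose to \emph{define} $AW_x$ as ``the unique map making $\gamma_x\circ AW_x = TR\circ\gamma_x$,'' but the right-hand vertical map $\gamma_x\colon \bigotimes N_*(\Delta^{m_\ell-1})\to \cS_*^{ms}(n)$ is not injective: non-degenerate tensors of faces can map to degenerate (hence zero) surjection generators, as happens explicitly in Example 16.18, where one of the two tensors produced by the construction maps to a degenerate surjection. So $TR\circ\gamma_x$ does not determine a lift through $\gamma_x$, let alone a unique one, and your $AW_x$ is not well-defined; your ``main obstacle'' paragraph names this issue but does not resolve it. The paper's proof resolves it by constructing $AW_x$ explicitly: one enhances the table of permutations $Y=\gamma_x(\tilde Y)$ with subscripts recording which occurrence in $x$ each entry comes from (the matrix $\widehat Y$) --- information that is precisely what $\gamma_x$ forgets --- runs table reduction on $\widehat Y$ to get enhanced surjections $\hat y_a$, reads off from the subscripts tensors $\tilde y_a$ with $\gamma_x(\tilde y_a)=y_a$, and sets $AW_x(\tilde Y)=\sum_a c(y_a)\,\tilde y_a$, so the right square commutes by construction. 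The chain-map property of $AW_x$ then still needs an argument you do not supply: the paper adds singleton entries to $x$ so the middle $\gamma_x$ becomes injective, deduces the chain-map property for the enlarged $x'$ from injectivity and commutativity, and identifies $AW_{x'}$ with $AW_x$.

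Two consequences of this gap infect the rest of your argument. First, your characterization of $AW_x$ as ``picking out the single maximal simplex corresponding to the caesura path'' is correct only on top-dimensional simplices (that is the content of Proposition 16.10); on lower-dimensional $\tilde Y$, $AW_x(\tilde Y)$ is typically a sum of several terms (see Example 16.17), so $AW_x$ cannot be pinned down that way. Second, the final assertion is not ``immediate'': what must be shown is that $AW_x(\tilde Y)=AW(\tilde Y)$ for \emph{every} simplex $\tilde Y$ of $Prism(x)$ when the caesuras of $x$ occur in non-decreasing order, and the paper proves this by an explicit bijection between the nonzero summands of $AW(\tilde Y)$ (column-drop paths through the matrix $\tilde Y$) and the nonzero summands of $AW_x(\tilde Y)$ (partitions, i.e.\ caesura entries of the enhanced table reduction), using the non-decreasing hypothesis both to force all signs $c(y_a)=+1$ and to match non-vanishing terms. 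Example 16.17 shows the two maps genuinely differ on interior simplices when that hypothesis fails, so no formal argument at the level of fundamental versus base maximal simplices can yield the equality on all of $Prism(x)$.
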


\begin{proof} We will clarify the vertical maps $\gamma_x$ below. Before we begin the proof we point out that if $x'$ is a non-degenerate boundary face of $x$ of any codimension then there is an obvious  commutative  diagram that maps the diagram for $x'$ to the diagram for $x$.  Also, in the diagram for $x$ there is no $\Sigma_n$ action on the top row.  But there is a commutative diagram mapping the diagram for $x$ to the diagram for $gx$.  At each node the connecting map is an action of $g$, hence an isomorphism.  For the tensor complexes in the top rows, there are Koszul signs in the $g$ maps that permute tensor factors.  There are no signs in the $g$ maps between normalized chain complexes of products of simplices that permute simplex factors.\\

Suppose $\tilde{Y}$ is a $k$-simplex of $Prism(x)$, with the $EZ$ triangulation. 
Then $\tilde{Y}$ corresponds to a $(k+1) \times n$ matrix.  The rows are vertices of $Prism(x)$ whose entries are named by tuples consisting of one vertex from each prism factor simplex.  We record the entry in the $i^{th}$ row and $\ell^{th}$ column as an integer between 1 and $m_\ell$. That is, we name the vertices of the prism factors $\Delta^{m_\ell - 1}$ beginning with 1 rather than 0. The entries reading down a column are non-decreasing.  The columns are $k$-simplices of the factors $\Delta^{m_\ell - 1}$.  These individual  column simplices are typically degenerate.\\

The vertical arrows $\gamma_x$ exploit the total order on the set of factor vertices of $Prism(x)$.  The center arrow assigns to each row of $\tilde{Y}$ a permutation  in $\Sigma_n$, as part of the $PR$ map. The left and right arrows take a tensor product of faces of simplices to the element of $\cS_*^{ms}(n)$ obtained as the subsequence of $x$ given by the vertices of those faces.  The maps $\gamma_x$ are chain maps and the  left square commutes because we are using the McClure-Smith surjection complex, which is especially tuned to prisms.   There are hidden caesura shuffle signs in the $TR$ map for $\cS_*^{ms}$, which would disappear if we were to use $\cS_*^{bf}$, but then different signs would  appear in the $PR$ map and one would need to put signs in front of  the left and right $\gamma_x$ map as well.\\

We will now define $AW_x$.  We view $Y = \gamma_x(\tilde{Y})$ as a table of permutations.  In the McClure-Smith complex $TR(Y) = \sum c(y_a)y_a$ where the $y_a$ are surjections parametrized by partitions $a _0 + \ldots + a_k = n+k$, and the signs $c(y_a)$ are as in Proposition \ref{16.5}.  But $\tilde{Y}$ contains more information than the table of permutations $Y$.  Specifically, an entry in  the $i^{th}$ row of $\tilde{Y}$ is in a  column $\ell$, which is a value   of $x$, and the entry itself records the position of that value in $x$.  That is, the first value of $\ell$, or the second value of $\ell$, etc,   Thus, we can enhance the table of permutations $Y = \gamma_x(\tilde{Y})$ by putting a subscript on each permutation entry value that records the relative position in $x$ of that entry value.  We call the enhanced table of permutations $\widehat{Y}$.\\

Then to each partition $a$ associated to the table reduction of $Y = \gamma_x(\tilde{Y})$ we calculate the surjection summand $c(y_a)y_a$ of $TR(Y)$, following the method of Proposition \ref{16.2}.  The entries of each $y_a$ inherit subscripts from $\widehat{Y}$.  We denote by $\hat{y}_a$   these enhanced surjection terms and set $TR(\widehat{Y}) = \sum c(y_a)\hat{y}_a.$ Using the subscripts of repeated values of the entries of $\hat{y}_a$, we associate a tensor of faces of simplex factors, $\tilde{y}_a  \in \bigotimes N_*(\Delta^{m_\ell-1})$, and then observe $ y_a = \gamma_x(\tilde{y}_a)$.  (See Example \ref{16.17} below for specific illustrations of this procedure.)  We then set $$AW_x( \tilde{Y} ) = \sum_a c(y_a)\tilde{y}_a .$$
Clearly the right square in the diagram in Proposition \ref{16.15} commutes,  $\gamma_x \circ AW_x( \tilde{Y}) = \sum c(y_a)y_a = TR \circ \gamma_x (\tilde{Y})$.  The tensor factors of $\tilde{y}_a$ might or might not be degenerate, and then after that the image of the  full tensor $\tilde{y}_a$ under $\gamma_x$ might or might not be degenerate.

\begin{rem}\label{16.16} Early on in [5] Berger and Fresse remark without much discussion that given  surjection generators $x, y$, if $\gamma_yPrism(y)  \subset \gamma_xPrism(x)$ then $y$ is a subsequence of $x$, that is, a face of $x$.  The converse is obvious.  But a stronger statement than theirs follows easily from Proposition \ref{16.10} and  the construction of $AW_x$.  If the fundamental simplex of $\gamma_yPrism(y)$ belongs to $\gamma_xPrism(x)$ then $y$ is a face of $x$.  Namely, if the fundamental simplex is $Y = \gamma_x(\tilde{Y})$, then $TR(Y) = \pm y$ and by construction $AW_x(\tilde{Y})$ identifies a tensor that names a face of $x$ coinciding with $y$. $\qed$
\end{rem}

{\bf Proof that $\bf AW_x$ is a Chain Map.}  Both $EZ$ and $PR$ are injective chain maps, hence if the center $\gamma_x$ map is injective then so is the left and right $\gamma_x$ map.  Commutativity of the diagram then implies $AW_x$ is a chain map.  One can insert new singleton entries in any $x$, forming some $x'$, and arrange that the center map is injective on vertices, which implies it is injective on all simplices.  Thus $AW_{x'}$ is a chain map.\\

Adding singletons only changes the prism by adding $\Delta^0$ factors. The image of the top row of the diagram in the bottom row only sees subcomplexes generated by $x$ and its faces.  The top row of the diagram for the new enlarged $x'$ is isomorphic to the original top row for $x$. Looking carefully at the relevant table reductions, one sees that $AW_x$ and $AW_{x'}$ are also  identified.  Thus $AW_x$ is a chain map for any $x$. $\qed$ \\ 

{\bf The Chain Homotopy Between $\bf AW$ and $\bf AW_x$.}  The Alexander-Whitney map $AW$ is another map in the top row of the diagram with the same domain and range.  Both maps are essentially identity maps in degree 0.  Following a remark at the beginning of Section 9 of Part I, there is a canonical chain homotopy $H_x$ from $AW$ to $AW_x$ given in terms of the contraction $h_\otimes$ of the range. On generators $\tilde{Y}$ of the domain of degree 0, $H_x = 0$.  In higher degrees $$H_x(\tilde{Y}) = h_{\otimes} (AW_x(\tilde{Y}) - AW(\tilde{Y}) - H_x(d\tilde{Y})).$$ Then by induction $dH_x(\tilde{Y})+ H_xd(\tilde{Y}) = AW_x(\tilde{Y}) - AW(\tilde{Y})$.  We do not know a closed formula for $H_x$, but the map $AW_x$ that lifts $TR$ can be viewed as some kind of  twisting of the Alexander-Whitney map $AW$, related to the order of the caesuras of $x$.  One has of course $AW \circ $EZ$ = Id$. $\qed$\\

{\bf Comparison of $\bf AW$ and $\bf AW_x$ on $\bf Image(EZ)$.} We will now explain why  $AW_x = AW$ on $Image(EZ).$ It suffices to show this for the single top dimensional tensor generator of $\bigotimes N_*(\Delta^{m_\ell-1}) $, since other tensor generators correspond to top dimensional generators of prisms associated to faces of $x$. The top dimensional tensor maps by $\gamma_x$ to $x \in \cS_*^{ms}(n)$.\\

For maximal dimension simplex generators $\tilde{X}$ of the triangulated domain $Prism(x)$, all summands of the multidiagonal formula $AW(\tilde{X})$ are zero unless $\tilde{X}$ is the base simplex.  In that case, exactly one multidiagonal $AW$ summand is non-zero, and returns as value  the top dimensional tensor.   This is just one way of showing that $AW \circ EZ = Id$.\\

On the other hand, $AW_x$ is defined so that the diagram in Proposition \ref{16.15} commutes.   From Proposition \ref{16.10}  all partition summands of $TR \circ  \gamma_x(\tilde{X})$ are zero unless $\tilde{X}$ is the fundamental simplex of $Prism(x)$.  In that case,   exactly one $TR$ summand is non-zero.  The shuffle sign $c(x)$ occurs twice.  First, it is the $EZ$ triangulation sign associated to the fundamental simplex, hence  $ c(x) \gamma_x(\tilde{X}) = c(x)X$ occurs as a summand of  $PR(x)$.  Secondly, the shuffle sign occurs in the formula $TR(X) = c(x)x$.  The two signs cancel,  which proves $\gamma_x AW_x(\tilde{X}) = x$.  This forces $AW_x(\tilde{X})$ to also  be the top dimensional tensor. $\qed$\\

For any $x$ and for $\tilde{Y}$ of any degree there is a fairly natural bijection between the multidiagonal partitions that determine summands of $AW(\tilde{Y})$ and the partitions that determine summands of $TR(Y)$, or equivalently $AW_x(\tilde{Y})$.  But the relation between this bijection and the two formulas seems obscure.   In general not even the number of non-zero summands in the two cases, which typically are very few,  need agree.  Even when the number of non-zero summands does coincide, one does not expect $AW(\tilde{Y})$ to equal $AW_x(\tilde{Y})$, unless the base simplex and fundamental simplex of $Prism(x)$ coincide. 

\begin{exam}\label{16.17}{\bf Comparing $\bf AW$ and $\bf AW_x$ on Specific Simplices.}  Let $x = (3123413)$, so as an abstract prism  $$Prism(x) \simeq \Delta^1 \times \Delta^0 \times \Delta^2 \times \Delta^0 \simeq \{1,2\} \times \{1\} \times \{1,2,3\} \times \{1\}.$$  There are two triangular boundary prism faces corresponding to the prisms of $d_2x = (323413)$ and $d_6x = (312343)$.  There are three rectangular faces on the boundary corresponding to the prisms of $d_1x = (123413), d_4x = (312413), d_7x = (312341).$ Base simplices equal fundamental simplices except for $d_4x$ and $d_7x$, and of course for $x$ itself.\\

In degree 0 both maps in the top row of the diagram in Proposition \ref{16.15} are essentially identities, as are the two maps in the bottom row. There are twelve 1-simplices in $Prism(x).$ Nine of them are edges of the boundary prisms of $x$, which are themselves simple prisms. Calculation of all the maps in the diagram is rather simple for these.  The compositions in the two rows of the diagram are identities and $AW = AW_x$ for $\Delta^1$ prisms.\\

The other three 1-simplices are diagonals of the rectangular boundary face prisms.  We make some calculations for the diagonal of $Prism(d_1x) = Prism(123413)$, which in $Prism(x) = Prism(3123413)$ is named by  \\

$
\tilde{Y} =
\begin{bmatrix}
1 & 1 & 2 & 1 \\
2 & 1 & 3& 1 
 \end{bmatrix}
\quad
\gamma_x(\tilde{Y}) = Y =
\begin{bmatrix}
1 & 2 & 3 & 4 \\
2 & 4 & 1 & 3 
 \end{bmatrix}
\quad
\widehat{Y} = 
\begin{bmatrix}
1_1&2_1&3_2&4_1\\
2_1&4_1&1_2&3_3
\end{bmatrix}
$\\

We see $TR(\widehat{Y}) = 1_12_13_24_13_3 + 1_12_14_11_23_3$ and then
$$AW(\tilde{Y}) = 1 \otimes 1 \otimes 23 \otimes 1 + 12 \otimes 1 \otimes 3 \otimes 1 = AW_x(\tilde{Y}), \  H_x(\tilde{Y}) = 0.$$
This example is simple because the base simplex coincides with the fundamental simplex for $d_1x = 123413$.\\

More interesting is the diagonal of $Prism(d_4x) = Prism(312413)$.  In $Prism(x) = Prism(3123413)$ this diagonal is named by\\

$
\tilde{Y'} =
\begin{bmatrix}
1 & 1 & 1 & 1 \\
2 & 1 & 3& 1 
 \end{bmatrix}
\quad
\gamma_x(\tilde{Y'}) = Y' =
\begin{bmatrix}
3 & 1 & 2 & 4 \\
2 & 4 & 1 & 3 
 \end{bmatrix}
\quad
\widehat{Y'} = 
\begin{bmatrix}
3_1&1_1&2_1&4_1\\
 2_1&4_1&1_2&3_3
\end{bmatrix}
$\\

We see $TR(\widehat{Y'}) = 3_12_14_11_23_3  +3_11_12_14_11_2$ and $$AW_x(\tilde{Y'}) = 2 \otimes 1 \otimes 13 \otimes 1 + 12 \otimes 1 \otimes 1 \otimes 1.$$  We also see $$AW(\tilde{Y'}) = 1 \otimes 1 \otimes 13 \otimes 1 + 12 \otimes 1 \otimes 3 \otimes 1$$   $$H_x (\tilde{Y'}) = h_\otimes (AW_x(\tilde{Y'}) - AW(\tilde{Y'})) = 12 \otimes 1 \otimes 13 \otimes 1.  $$

So $H_x(\tilde{Y'})$ is the tensor form of the rectangular prism face of $Prism(x)$ corresponding to $d_4x$ and $AW_x(\tilde{Y'}) - AW(\tilde{Y'})$ is its boundary.\\

The fundamental simplex of $d_4x$ is named in $Prism(x) = Prism(3123413)$  by\\

$
\tilde{Z} =
\begin{bmatrix}
1 & 1 & 1 & 1 \\
1 & 1 & 3 & 1 \\
2 & 1 & 3 & 1 
 \end{bmatrix}
\quad
\gamma_x(\tilde{Z}) = Z = 
\begin{bmatrix}
3 & 1 &2 &4 \\
1&2&4&3\\
2&4&1&3
\end{bmatrix} 
\quad
\widehat{Z} = 
\begin{bmatrix}
3_1 & 1_1 &2_1 &4_1 \\
1_1&2_1&4_1&3_3\\
2_1&4_1&1_2&3_3
\end{bmatrix}
$\\

We see $AW(\tilde{Z}) = 0$ and $TR(\widehat{Z}) = (-1)3_11_12_14_11_23_3$.  Then $AW_x(\tilde{Z}) = (-1)12 \otimes 1 \otimes 13 \otimes 1$.  The sign is from the single non-zero term $z_a = 312413$, with $c(z_a) = -1$.  We remind that it is $c(z_a)\tilde{Z}$ that is a summand of the $EZ$ triangulation of $Prism(x)$, so $AW_x(c(x_a))\tilde{Z}) = 12 \otimes 1 \otimes 13 \otimes 1$.\\

  The base simplex of $d_4x$  is\\

$
\tilde{Z'} =
\begin{bmatrix}
1 & 1 & 1 & 1 \\
2 & 1 & 1 & 1 \\
2 & 1 & 3 & 1 
 \end{bmatrix}
\quad
\gamma_x(\tilde{Z'}) = Z' = 
\begin{bmatrix}
3 & 1 &2 &4 \\
3&2&4&1\\
2&4&1&3
\end{bmatrix}
\quad
\widehat{Z'} = 
\begin{bmatrix}
3_1 & 1_1 &2_1 &4_1 \\
3_1&2_1&4_1&1_2\\
2_1&4_1&1_2&3_3
\end{bmatrix}
$\\

We see $AW(\tilde{Z'}) = 12 \otimes 1 \otimes 13 \otimes 1$ and $TR(\widehat{Z'}) = 0$, $AW_x(\tilde{Z'}) = 0$.\\

The computations for $Prism(d_7x) $ and its diagonal are similar to those for $Prism(d_4x)$.\\

We will do one more computation for an interior 2-simplex of $Prism(x)$.\\

$
\tilde{Z''} =
\begin{bmatrix}
1 & 1 & 1 & 1 \\
1 & 1 & 2 & 1 \\
2 & 1 & 3& 1 
 \end{bmatrix}
\quad
\gamma_x(\tilde{Z''}) = Z'' = 
\begin{bmatrix}
3 & 1 &2 &4 \\
1&2&3&4\\
2&4&1&3
\end{bmatrix}
\quad
\widehat{Z''} = 
\begin{bmatrix}
3_1 & 1_1 &2_1 &4_1 \\
1_1&2_1&3_2&4_1\\
2_1&4_1&1_2&3_3
\end{bmatrix}
$\\

Then $AW(\tilde{Z''}) = 1 \otimes 1 \otimes 123 \otimes 1$.  Also $TR(\widehat{Z''}) = 3_11_12_13_24_13_3 + 3_11_12_14_11_23_3$, so $AW_x(\tilde{Z''}) = 1 \otimes 1 \otimes 123 \otimes 1 + 12 \otimes 1 \otimes 13 \otimes 1$.  Then $$ H_x (\tilde{Z''}) = h_\otimes(AW_x(\tilde{Z''}) - AW(\tilde{Z''}) - H_x(d\tilde{Z''})).$$
We have already computed $H_x$ on the three boundary edges of $ \tilde{Z''}$.  The only non-zero result is $H_x(\tilde{Y'}) = 12 \otimes 1 \otimes 13 \otimes 1$. Thus $H_x(\tilde{Z''})= h_\otimes (0) = 0.$ This example shows that for interior simplices of prisms the relation between $AW$ and $AW_x$ is pretty obscure if the base simplex and fundamental simplex of $Prism(x)$ do not coincide.  $\qed$

\end{exam}
{\bf The Case When Base and Fundamental Simplices  Coincide.} It can happen, although it is somewhat rare, that the fundamental simplex and base simplex of $Prism(x)$ do coincide.  Examples are $x = (1234321)$ and $x = (151262343214)$. The caesuras must occur in non-decreasing order.  In such a case note that the same situation holds for all prism faces of $Prism(x)$.  For these $x$ it turns out that $AW_x(\tilde{Y}) = AW(\tilde{Y})$ for all $\tilde{Y}$.  The surprising point is that this holds for {\it every} simplex of $Prism(x)$, not just for the fundamental simplex which coincides with the base simplex.\\

By composing with a permutation in $\Sigma_n$, there is no loss of generality assuming the caesura entry values of  $\tilde{Y}$ are $1, \ldots, \ell$ and the singleton entry values occur in the order $\ell+1, \ldots, n$. It is a rather amazing fact that there is any connection at all between table reduction and the Alexander-Whitney map. We sketch a proof of the last statement of Proposition \ref{16.15}.\\

We form  a bijection between the non-zero summands of $AW(\tilde{Y})$ and $AW_x(\tilde{Y})$. Given the $(k+1) \times n$ matrix $\tilde{Y}$, a non-zero summand of $AW(\tilde{Y})$ is named by a sequence of integers $1 \leq i_1\leq \ldots \leq  i_k \leq \ell$ where the $i_j$ name the columns where one drops down one row in a path connecting the upper left entry to the lower right entry of $\tilde{Y}$.  When one drops down a row, the entry in that column must increase.  We associate to such a path a summand of $AW_x(\tilde{Y})$.  These can be named by partitions $a_0 +\ldots + a_k = n+k$.  But they can also be named by simply indicating the last (caesura) entries in the  first $k$ rows of the  table form of the enhanced surjection generators $\hat{y}_a$.  These are computed from the enhanced permutation matrix $\widehat{Y}$, using the partition $a$ and the method of Proposition \ref{16.2}.  All the signs $c(y_a) = +1$ because the caesuras of $x$ and all its faces occur in increasing order.  It is also an exercise using this hypothesis that the tensors $\tilde{y}_a$ occurring in the formula for $AW_x(\tilde{Y})$ that arise from non-zero $AW(\tilde{Y})$ summands are non-zero.\\

Going the other direction, given a partition $a$ that deternmines a non-zero summand of $AW_x(\tilde{Y})$, we take the $i_j$ to be the caesura entry values of the associated enhanced surjection generator $\hat{y}_a$. Again, the hypothesis implies the resulting $AW(\tilde{Y}) $ summand is non-zero. This completes our very long discussion of Proposition \ref{16.15}.\\ 
\end{proof}

\begin{exam}\label{16.18} Consider $x = (151262343214)$.  We look at the following 3-simplex in $Prism(x)$.\\
 
$
\tilde{Y} =
\begin{bmatrix}
1 & 2 & 1 & 1 & 1 & 1\\
2 & 2 & 1 & 1  & 1 & 1\\
2 & 3 & 1 & 1 & 1 & 1 \\
3 & 3 & 2 & 2 & 1 & 1
 \end{bmatrix}
\quad
\widehat{Y} = 
\begin{bmatrix}
1_1 & 5_1 &6_1 &2_2  & 3_1 & 4_1\\
5_1&1_2&6_1&2_2 & 3_1 & 4_1\\
5_1&1_2&6_1&3_1 & 4_1 & 2_3\\
5_1 & 6_1 & 3_2 & 2_3 & 1_3 & 4_2
\end{bmatrix}
$\\

There are two sequences of column moves that produce non-zero $AW(\tilde{Y})$ terms, namely $123$ and $124$.  These  yield the tensor terms $$12 \otimes 23 \otimes 12 \otimes 2 \otimes 1 \otimes 1\ \ \rm{and}\ \ 12 \otimes 23 \otimes 1 \otimes 12 \otimes 1 \otimes 1.$$
On the $\widehat{Y}$ side, we compute the summands of $TR(\widehat{Y})$ with caesura entries $1_1,2_2,3_1$ and $1_1,2_2, 4_1$, respectively, in the first three rows.  The results are
$$(1_15_11_26_12_23_13_22_34_2)\ \ \rm{and}\ \ (1_15_11_26_12_23_14_12_34_2),$$ which yield the same tensors as the $AW(\tilde{Y})$ calculation.  Note the first of these tensors maps by $\gamma_x$ to a degenerate surjection element $(151623324)$, while the second maps to $TR \gamma_x(\tilde{Y}) = (151623424)$. $\qed$
\end{exam}

 \begin{rem}\label{16.19} We now take up another interesting aspect of the pair of maps $PR$ and $TR$.  We know for very simple reasons the fact that $TR$ commutes with contractions implies $TR \circ PR = Id$.  This is part of Proposition 6.3 of Part I. The dichotomy from Proposition \ref{16.10} that $TR(X) = x$ for one summand of $PR(x)$ and $TR(Y) = 0$ for all other summands of $PR(x)$  is analogous to the situation for the two maps $\phi \colon M_*(n) \to N_*(EC_n)$ and $\pi \colon N_*(EC_n) \to M_*(n)$ with $\pi \circ  \phi = Id$, for the minimal model and MacLane model of the cyclic group, from Section 6 of Part I.  In that case the dichotomy was forced because there were no signs and thus no possible cancellation in the double sum formula for $\pi \phi(y) = y$.  There are signs in the map $PR$, so possible cancellations could have occurred.\\

  It might seem somewhat mysterious why these two pairs of maps share such similar properties.  But there are strong geometric similarities.  The minimal complex $M_*(n)$  is the chain complex of a cell complex, with $n$ cells $T^i y_k$  in each dimension $k$ with known descriptions as triangulated geometric cells. In fact, in even dimensions the cells have the form $T^ie_k * S^{2k-1},\ 0 \leq k, \ 0 \leq i \leq n-1$, which is a $2k$-disk\footnote{If $k = 0$ the sphere $S^{2k-1}$  is empty, but join with the empty set is the identity operation.}, and in odd dimensions the cells have the form of a join with an interval $[T^ie_k, T^{i+1}e_k]* S^{2k-1}$, which is a $2k+1$ disk.  The $T^ie_k$ are vertices. The spheres $S^{2k-1}, \ 1 \leq k,$ are triangulated as simplicial sets as ordered  iterated joins of $k$ circles, with the $j^{th}$  circle  for $ 0 \leq j \leq k-1 $ triangulated with $n$ verticies $T^ie_j$ and $n$ edges $[T^ie_j, T^{i+1}e_j]$. These triangulated cells are analogous to the triangulated prisms that map to $N_*(E\Sigma_n)$. \\
  
  We pursue further similarities between the maps $PR \colon \cS_*(n) \to N_*(E\Sigma_n)$ and $\phi \colon M_*(n) \to N_*(EC_n)$.\\

{\bf A Lensmatic Decomposition of $\bf{EC_n}$.} The explicit map $\phi \colon M_*(n) \to N_*(EC_n)$ from Section 6 of Part I  can  be interpreted as a `lensmatic decomposition' of $EC_n$ analogous to the Berger-Fresse interpretation of the map $PR \colon \cS_*^{bf}(n) \to N_*(E\Sigma_n)$ as a `prismatic decomposition' of $E\Sigma_n$ [5]. Precisely, each vertex $T^ie_j$ of a triangulated cell of $M_*(n)$ maps to the vertex $T^i$ of $EC_n$.  By convexity, the map on vertices extends to the simplices of the triangulated cells, and then to normalized chain complexes of the triangulated cells. The result yields  the map $\phi \colon M_*(n) \to N_*(EC_n)$ given on generators $y_{2k} = e_k * S^{2k-1}$ of $M_*(n)$ and $y_{2k+1} = [e_k, Te_k] * S^{2k-1}$ by the formulas in Proposition 6.13 of Part I. These generators  $y_j \in M_*(n)$ are interpreted as the sums of the maximal dimension simplices of the corresponding triangulated cells.  It turns out there are no orientation signs needed as there are in the $EZ$ map for prisms. This is because the natural orientations of the simplices as ordered  iterated joins of points and intervals agrees with the full cell orientations.  Some of the simplices of the triangulated cells map to degenerate simplices in $EC_n$.  These match the degeneracies seen  in the recursive formulas $\phi(y_\ell) = h_C\phi(dy_\ell)$ from Section 6.\\

Every simplex of $EC_n$ is in the image of various triangulated cells $y_\ell.$  For example, consider $\sigma = (T^{a_0} T^{a_1} \ldots T^{a_k})$, with $a_{j+1} \not= a_j$.  Form the concatenation $$ (T^{a_0} T^{a_0+1}T^{a_1}T^{a_1+1} \ldots T^{a_k}T^{a_k+1}).$$ If all $a_j \not= a_{j-1}+1$ this will be the image of a codimension 1 simplex in the triangulated  $y_{2k+2}$ if $a_0 \not= 1$ and the image of  a codimension 0 simplex  in  the triangulated   $y_{2k+1}$ if $a_0 = 1$.  If some $a_j = a_{j-1}+1$, delete pairs $T^{a_j}T^{a_j+1}$  from the concatenation.  Call the resulting non-degenerate simplex  $\hat{\sigma}$. Then $\sigma$ is a face of $\hat{\sigma}$.  This construction is analogous to a previous argument  that all simplices of $E\Sigma_n$ belong to prisms associated to surjection generators. The intersection of images of $M_*(n)$ cells in $|EC(n)|$ seems complicated.$\qed$
\end{rem}
\begin{rem}\label{16.20}{\bf Fundamental Simplices.} There are also analogues of fundamental simplices in the $M_*(n)$ case.  The proof of Proposition \ref{16.10} above shows that the fundamental simplices of the $Prism(x)$'s can be defined by induction and equivariance, using the fact that $TR$ commutes with contractions.  Back in Section 6 of Part I, we identified the analogous fundamental simplices $\sigma_{2k} = (1, T^{n-1}, 1, \ldots, T^{n-1}, 1)$ and $\sigma_{2k+1} = (1,T, \ldots, 1,T)$ associated to the $y_{2k}$ and $y_{2k+1}$ by directly analyzing the formula for the retraction $\pi \colon N_*(EC_n) \to M_*(n)$.  We showed $\pi(\sigma_{2k}) = y_{2k}$ and $\pi(\sigma_{2k+1}) = y_{2k+1}$ and $\pi$ vanishes on all other simplex summands of the $\phi(y_j)$. This is analogous to a proof of Proposition \ref{16.10} that would directly use the sum over partitions formula for the retraction $TR \colon N_*(E\Sigma_n) \to S_*^{bf}(n)$.  But we can also give an inductive proof in the $\pi \colon N_*(EC_n) \to M_*(n)$ case, using the same method we used for Proposition \ref{16.10} above.  \\

Note $\sigma_{2k} =  h_C(T^{n-1}\sigma_{2k-1})$ and $\sigma_{2k+1} = h_C(T\sigma_{2k})$.  The main point now is that $\pi$ is equivariant and commutes with contractions.  The inductions go
$$\pi( \sigma_{2k}) = \pi h_C(T^{n-1}\sigma_{2k-1}) = h_M \pi(T^{n-1} \sigma_{2k-1} )  = h_M(T^{n-1} y_{2k-1}) = y_{2k}$$
and
$$\pi( \sigma_{2k+1}) = \pi h_C(T\sigma_{2k}) = h_M \pi(T \sigma_{2k} )  = h_M(T y_{2k}) = y_{2k+1}. $$
Non-fundamental simplex summands of the $\phi(y_j)$ also pair up in adjacent dimensions and the same inductions imply $\pi$ vanishes on these. Again, this is similar to the non-fundamental simplex part of Proposition \ref{16.10}. $\qed$
\end{rem}

\newpage
\addcontentsline{toc}{section}{PART III: The Barratt-Eccles and Surjection Operads}

\section*{  III: The Barratt-Eccles and Surjection Operads}

\section {Functorial Chain Maps $ \cS_*^{bf}(n) \otimes N_*(X) \rightarrow N_*(X)^{\otimes n} $ }

 In this first section of Part III of our paper we return to our main program and recover the Berger-Fresse equivariant natural transformation chain map of [3], $$\Phi \colon \cS_*^{bf}(n) \otimes N_*(X) \rightarrow N_*(X)^{\otimes n}, $$ as a special case of the explicit equivariant functorial standard procedure construction discussed in Section 8 of Part I.  We will denote the standard procedure map by $\gamma$, and the goal will be to prove $\Phi = \gamma$ after we give the Berger-Fresse formula for $\Phi$.\\
 
 We can replace $\cS_*^{bf}(n)$ by $\cS_*^{aj}(n)$ or $\cS_*^{ms}(n)$ by composing with the isomorphisms between surjection complexes constructed in  Section 15, since up to sign  these isomorphisms preserve the chosen basis elements.  It is somewhat trickier but true that  the composition $$\Phi \circ (TR \otimes Id) \colon N_*(E\Sigma_n) \otimes N_*(X) \to \cS_*^{bf}(n) \otimes N_*(X) \to N_*(X) ^{\otimes n}$$ is also the standard procedure map.  In this section, we deal only with the chain complex structures.  In Sections 19 and 20  below we return to the discussion and bring in the operad structures of the  Barratt-Eccles and surjection operads,  following a review of some basic concepts about operads in Section 18. Subsection 18.3 carefully treats the $End$ and $CoEnd$ operads for chain complexes and the related Eilenberg-Zilber operad of functorial chain  operations $\cZ(n) = HOM_{func}(N_*( - ), N_*( - )^{\otimes n})$.\\
 
We choose the $\FF[\Sigma_n]$ basis of $\cS_k^{bf}(n)$ consisting of $b =( b_1, b_2, \ldots, b_{n+k})$ so that the initial occurrences of $\{1,2, \ldots, n\}$ occur in that order.  With $e = 1 \in  \FF[\Sigma_n] = \cS^{bf}_0(n)$, the map $ \cS_0^{bf}(n) \otimes N_*(X) \to N_*(X)^{\otimes n}$ is the equivariant extension of the $n$-fold Alexander-Whitney multidiagonal map $\delta^{(n)} \colon \{e\} \otimes N_*(X) = N_*(X) \to N_*(X)^{\otimes n}$, in both the Berger-Fresse $\Phi$ and functorial standard procedure $\gamma$ contexts.  The left action of $g \in \Sigma_n$ on $N_*(X)^{\otimes n}$ is given by $$g(a_1 \otimes \ldots \otimes a_n ) = (-1)^{k(a,g)}(a_{g^{-1}(1)} \otimes \ldots \otimes a_{g^{-1}(n)}),$$ where $(-1)^k$ is a Koszul sign, as explained in Section 3 of Part I.

\subsection{The Berger-Fresse Map}
 
For each universal simplex generator $\Delta^m \in N_m(\Delta^m)$ and each surjection generator $x \in \cS_k^{bf}(n)$ with $k > 0$ we will define $\Phi (x \otimes \Delta^m) \in (N_*(\Delta^m)^{\otimes n})_{k+m}$, and prove that this agrees with the  acyclic model standard procedure  map of Remark 8.1 of Part I.  The inductive formula for the standard procedure map on surjection basis elements is $$\gamma (b \otimes \Delta^m) = h_{\otimes^n} (\gamma( db \otimes \Delta^m + (-1)^k b \otimes d \Delta^m)),$$ where $h_{\otimes^n}$ is the preferred contraction of $N_*(\Delta^m)^ {\otimes n}$.  It is important to pay attention to functoriality of $\gamma$ and interpret  boundary summands   $\gamma(b \otimes d\Delta^m)$ as lying in the images of various minimal carrier face maps $N_*(\Delta^{m-1}) ^{\otimes n} \to N_*(\Delta^m)^{\otimes n}$.\\

Throughout Part III the simplices $\Delta^m$ are unrelated to generators of surjection complexes.  Therefore we will name vertices  beginning with 0, $\Delta^m = (0,1, \ldots, m)$, as that seems notationally cleaner in the current context.\\
 
We will now define the Berger-Fresse map $\Phi(x \otimes \Delta^m)$ in all degrees. This will require introducing quite a bit of notation.   We have the sum of monomial tensors $\delta^{(n+k)}(\Delta^m) \in N_*(\Delta^m)^{\otimes{(n+k)}}$, where $\delta^{(n+k)}$ is the Alexander-Whitney multidiagonal.   We will use the notation $M \in \delta^{(n+k)}(\Delta^m)$ to refer to the monomial summands.  These have the form $M = M_1 \otimes M_2 \otimes \ldots \otimes M_{n+k}$, where the blocks $M_j = (m_{j-1}, \ldots, m_j)$ are faces of $\Delta^m$ given by strings of consecutive vertices.   For each such multidiagonal term $M$ we have $0 = m_0 \leq m_1 \leq \ldots \leq  m_{n+k} = m$. These strings partition the interval $[0, m]$ into subintervals $[m_{j-1}, m_j]$.  It is allowed that some $m_{j-1} = m_j$ in the multidiagonal formula, that is, the faces $M_j$ can be single vertices.\\

For a generator  $x =( x_1, x_2, \ldots, x_{n+k}) \in \cS_k^{bf}(n)$ and $1 \leq \ell \leq n$,  say the value $\ell$ occurs $k_\ell$ times in $x$.  Thus $\sum k_\ell = n+k$.  Denote by $M_{\ell1} ,M_{\ell2}, \ldots, M_{\ell k_\ell}$ the blocks of $M$, in order, corresponding to those  $M_j$ with $x_j = \ell$.
Denote by  $F_\ell(M) = F_\ell(M, x) = M_{\ell1}M_{\ell 2}\ldots M_{\ell k_\ell}$ the face of $\Delta^m$ named by the subset of vertices, in order, of all the blocks $M_j$ of $M$ with $x_j = \ell.$  If $x$ is degenerate one of the faces $F_i(M)$ is degenerate for every $M$.\footnote{Such a face will also be degenerate if all the blocks of $M$ between two blocks associated to  consecutive occurrences of $\ell$ in $x$ are single vertices.  But there is no real harm in including these faces in formulas.  Such faces become 0 in a normalized chain complex.} We point out that the face $F_\ell(M)$ is the join  of its component subfaces $M_{\ell i}$.\\
  
{\bf The Berger-Fresse Formula.} The definition is, for $M \in \delta^{(n+k)}(\Delta^m)$, $$\Phi (x \otimes \Delta^m) = \sum_M \epsilon (M) F_1(M) \otimes F_2(M)\otimes  \ldots  \otimes F_n(M) \in (N_*(\Delta^m) ^{\otimes n})_{m+k},$$ where $\epsilon(M) = \epsilon (M,x) \in \{\pm 1\} $ is a sign that we will  now define.  The sign $\epsilon(M)$ will be the product of two signs $sh(M) pos(M)$, both also depending on $x$. \\

 Note the block $M_{\ell k_\ell}$ corresponds to the non-caesura entry of $x$ of value $\ell$.  All other blocks $M_{\ell i}$ correspond to caesura entries of $x$ of value $\ell$.  Assign `lengths' $||M_{\ell i} ||$ to the blocks as follows.  If $M_{\ell i}$ is a caesura block, $||M_{\ell i} ||$ is  the number of vertices in the block. This is one more than the geometric dimension of the corresponding face simplex.  For non-caesura blocks  $|| M_{\ell k_\ell}|| $ is one less than the number of vertices, which is the same as the geometric dimension of the corresponding face.  Note that the geometric dimension of an amalgamated face $F_\ell(M)$ is exactly the sum of the `lengths' of its component blocks, since $F_\ell(M)$ is the join of its component faces $M_{\ell i}$.\\

For any monomial $M$ we can rearrange by a shuffle permutation  the blocks $M_1 M_2 \ldots M_{n+k}$ in the order $$M_{11}M_{12}\ldots M_{1k_1} M_{21}\ldots M_{2k_2} \ldots M_{n1}\ldots M_{nk_n}.$$   Define the {\it shuffle sign} $sh(M) \in \{ \pm 1 \}$ to be the `Koszul sign' associated to this shuffle permutation of blocks, using the product of two  `lengths' $|| M_{ij} ||$ to determine a sign when one block is passed by another block. \\

An alternate view of $sh(M)$ is obtained by discarding the final entries of all non-caesura blocks  and replacing all other entries of blocks $M_j$ by $x_j$.  This produces a string of $(m+k)$ numbers $(x_1 \ldots x_1\ x_2 \ldots x_2\ \ldots x_{n+k} \ldots x_{n+k})$, each of which is in the interval $(1,2 \ldots, n)$.  Then $sh(M)$ is the parity sign of the shuffle permutation that rewrites the $x_j$-string in the order $(1 \ldots 1\ 2 \ldots 2\ \ldots n \ldots n)$, keeping all entries of the same value $x_j$ in their original order. This parity sign is determined by the parity count of the number of pairs in the $x_j$ string consisting of an entry $x_j$ and an entry $x_i$ with $i < j$ and $x_j < x_i$. \\

The other sign $pos(M)$ in the definition $\epsilon(M) = sh(M) pos(M)$ is called the {\it position sign} by Berger-Fresse.  It is given by $$pos(M) = \prod_{caesuras\ x_j} (-1)^{m_j},$$ where the caesura blocks are $(m_{j-1}, \ldots,  m_j)$.  The 
position sign is determined by the parity count of all pairs consisting  of a final entry of a caesura block and an earlier entry that is not a final entry of any block.  This  invariant interpretation will be useful  later when dealing  with faces of $\Delta^m$ and functoriality.\\

We will give an example to illustrate the above conventions.
\begin{exam}\label{17.1} Let $n = 3,\ k = 4,\ m = 5$.  Consider $x = (1213213)$ and multidiagonal term $M = M_1M_2 \ldots M_7 =(0 | 01|12|2|234|45|5)$.  Then $$F_1 = M_1M_3M_6 = (0\ 12\ 45),\ F_2 = M_2M_5 = (01\ 234),\ F_3 = M_4M_7 = (2\ 5).$$
The caesura blocks $M_1, M_2, M_3, M_4$ have `lengths'  1,2,2,1 respectively.  The non-caesura blocks $M_5, M_6, M_7$ have `lengths' 2,1,0 respectively.  The shuffle permutation $M_1M_2M_3M_4M_5M_6M_7 \mapsto M_1M_3M_6M_2M_5M_4M_7$ has sign $sh(M) = (-1)^4(-1)^5(-1)^2 = -1$ since $M_3$ moves past $M_2$, then $M_6$ moves past $M_2, M_4, M_5$, then $M_5$ moves past $M_4$.  Alternatively, the $x_j$ sequence with final entries of non-caesura blocks removed is $(1 2 2 1 1 3 2 2 1)$, with corresponding shuffle sign $(-)^{11} = -1$.  Finally the position sign is seen to be $pos(M) = (-1)^0(-1)^1(-1)^2(-1)^2 = -1$.\\

The term of degree 9  in $N_*(\Delta^5)^{\otimes 3}$ corresponding to $x$ and $M$ is thus $$\epsilon(M)F_1(M) \otimes F_2(M) \otimes F_3(M) = (-1)(-1) (01245)\otimes (01234) \otimes (25)\ \  \qed$$ 
\end{exam}

\begin{rem}\label{17.2}{\bf The Step Diagram Picture.}  It is appropriate to review the step-diagram picture of the data consisting of a generator $x \in \cS_*(n)$ and a monomial $M$.  For the study of  Steenrod operations for an odd prime p, the number $(p-1)/2$ is typically called $m$.  Therefore, in this remark we will change the name of our basic simplex in the Berger-Fresse formula to $\Delta^q$.\\

For a fixed surjection generator $x$, we place closed  intervals $M_j$ in a box $[1, n] \times [0, q]$ with $n$ rows and $q+1$ columns. All intervals $M_j = [m_{j-1}, m_j]$ with $x_j = \ell$ are placed on row $\ell$, with endpoint coordinates  $(\ell, m_{j-1})$ and $(\ell, m_j)$.  These intervals are single points if $m_{j-1} = m_j$.  It is preferred to include in diagrams only those $M$ so that the faces $F_\ell(M, x)$ are non-degenerate.  The picture itself certainly determines $M$, but does not quite determine $x$ if there are columns containing more than one singleton interval.  The $x$ entries corresponding to singleton intervals in a column, which means adjacent singleton intervals in $M$, could be permuted without changing the diagram.  One could include additional decoration in the diagrams to clarify the order in $x$ in which multiple singletons occur in  columns.  But note the faces $F_\ell(M, x)$ are independent of these permutations of entries of $x$, which is probably a good thing.\\

{\bf *The Special Case of Diagrams That Completely Fill a Box.*} The results of the next few paragraphs will be used later to prove that for an odd prime $p$, the Steenrod operation $P^0 = Id.$ In the non-degenerate cases for $x$ of degree $k$, the number of integer coordinate points in the box that are covered by intervals is $n + k + q$.  The box itself has $n(q+1)$ integral points.   So with $n$ and $q$ fixed, all points can be covered non-degenerately if $n+k+q = nq + n$, that is, if $k =  q(n-1)$. In this case $\Phi(x \otimes \Delta^q)$ will be either 0 or $\pm (\Delta^q)^{\otimes n} \in (N_*(\Delta^q)) ^{\otimes n}$.\\

The only pairs $(M, x)$ that produce a non-degenerate term are $$M = (0| \ldots | 0 | 0 1| |1| \ldots |1| |12|  \ldots |(q-1) q| |q| \ldots |q)$$ with  $n$ occurrences each of $0,1, \ldots, q$, and $x$ formed by concatenating $q+1$ permutations $g_i$ of $\{1, \ldots, n\}$, $0 \leq i \leq q$,   with $g_i(n) = g_{i+1}(1)$, and removing one of each adjacent repeated entry in the concatenation of the $g_i$.\\

In this case we can calculate the signs $sh(M, x)$ and $pos(M, x)$ using the descriptions of these signs given above. The non-caesura entries of $x$ are the last $n$ entries, corresponding to the final permutation $g_q$. Inspection of $M$ shows that the values of $m_i$ corresponding to final entries of caesura blocks consist of $n-1$ values each of $1,2, \ldots, (q-1)$.  Thus $pos(M,x) = (-1)^{(n-1)q(q-1)/2}$.\\

Thus if $n$ is odd the position signs  $pos(M, x) = +1$. If $n$ is even, the position signs are $pos(M, x) = (-1)^{q(q-1)/2}$.\\

We make use of the second description given above for $sh(M,x)$. We need the shuffle sign for putting the concatenation of the first $m$ permutations $g_i \in\Sigma_n$ in the order $(1 \ldots 1\ 2 \ldots 2 \ldots n \dots n)$.  Here we recall that the entries of $M$ are also labeled by entries of $x$. We can first just put the entries of each $g_i$, $i < q$, in the order $(12 \ldots n) = Id_n$.  These moves do not change the order of $M$ entries labeled with the same $x$ value. The parity sign for this first step  is $\prod_{i < q} \tau(g_i)$.  Then we want to rearrange the entries of $q$ copies $(Id_n\ Id_n\ \ldots Id_n)$ in non-decreasing order.  Move the $1$'s to the front, then the $2$'s to follow the $1$'s, and so on. A simple count gives the parity sign $(-1)^{(n(n-1)/2)(q(q-1)/2)}$.  Thus $sh(M, x) = \prod_{i < q} \tau(g_i) (-1)^{(n(n-1)/2)(q(q-1)/2)}$.\\

For example, take $q = 2$ and $x = (14523 1245 1234 )$, thus permutations $g_0 = (14523), g_1 = (31245), g_2 = (51234)$.  So $n = 5$, $\tau(g_0) = +1, \tau(g_1) = +1$.  Labeling $M$ with $x$-entries produces $M_x = (145233124551234)$.  Remove the final 5 entries, corresponding to the final entries of non-caesura blocks of $M$, yielding $(1452331245)$.  We get $pos(M,x) = +1$ since $n$ is odd,  and $sh(M, x) = +1$.\\

We make two further observations about special diagrams related to evaluations of certain cochain operations that we take up in Part IV. If $k > q(n-1)$  then all terms in the Berger-Fresse formula are degenerate.  If $k = 0$ then $x$ is a permutation in $\Sigma_n$, $M = (0,m_1,\ldots, m_{n-1}, q)$, and the diagram consists of a single interval on each row $\ell = x_j$ with column coordinates $[m_{j-1}, m_j]$.  In the $k = 0$ case, the Berger-Fresse formula  essentially just records, as $x \in \Sigma_n$ varies,  the $\Sigma_n$-orbit of the  $n$-fold Alexander-Whitney diagonal of a $q$-simplex.     $\qed$
\end{rem}
We now take up the main result of this section.  We revert to calling our basic simplex $\Delta^m$. The following proposition is one of the most difficult results of our paper.
\begin{prop} \label{17.3}The Berger-Fresse map, for $M \in \delta^{(n+k)}(\Delta^m)$, $$\Phi (x \otimes \Delta^m) = \sum_M \epsilon (M) F_1(M) \otimes F_2(M)\otimes  \ldots  \otimes F_n(M) \in (N_*(\Delta^m) ^{\otimes n})_{m+k},$$ agrees with the standard  procedure chain map $\gamma \colon \cS_*^{bf}(n) \otimes N_*(X) \to N_*(X)^ {\otimes n}$.
\end{prop}
{\bf Discussion.} The proof that follows is an induction, comparing the Berger-Fresse formula with the standard procedure chain map in each new degree.  An early step is that the Berger-Fresse map is equivariant.  After that, the result would follow from Step 2 below, and the uniqueness result Proposition 8.2 of Part I.   But this would require first establishing that the Berger-Fresse formula is a chain map, which is no easy feat.  We believe that approach is  less informative than the proof we give. By identifying the somewhat mysterious Berger-Fresse map with the standard procedure map, our proof shows the Berger-Fresse formula is a chain map.  Also, the steps of our proof bring out some rather appealing structure that  shows up in other contexts.  This structure is part of the general discussion towards the end of  preview Subsection 2.3 of Part I concerning  chain maps of form $\phi(y) = \sum \pm \cT y$. Here the terms $\cT(x \otimes \Delta^m)$ are the tensors $F(M, x) = F_1(M, x) \otimes \ldots \otimes F_n(M, x)$, where the parameter set consists of the monomials $M \in \delta^{(n+k)}(\Delta^m)$.\\

Many of the tensors $F(M, x)$ will be degenerate, hence 0, but that does not affect the form of the answer.  In fact, a little computation in low degrees, beginning with the $n$-fold multidiagonals for simplices, suggests that the answer should be a formula exactly  like that in the Proposition, with unknown signs.  It turns out that we can actually prove by induction such a formula holds for the standard procedure chain map $\gamma$.  The key is to carefully prove by induction the claims in Steps 2 through 6 below, for the standard procedure map, with no reference to the Berger-Fresse formula.  So that is very similar to the discussion of the Eilenberg-Zilber map $EZ$ back in Sections 6 and 8 of Part I.  The arguments are rather pleasant, but since we eventually do need the signs we have chosen only to present an inductive proof that compares the Berger-Fresse formula with the standard procedure method.

\begin{proof} 

{\bf Step 0}.  It is an observation that if $k= 0$ the Berger-Fresse map $\Phi$ and the functorial procedure map $\gamma$ on $\cS^{bf}_0(n) \otimes \Delta^m = \FF[\Sigma_n] \otimes \Delta^m$  both agree with the equivariant extension of the $n$-fold multidiagonal $ \delta^{(n)} (\Delta^m)$. Permutations $g \in \cS_0^{bf}$ have no caesuras, so $pos(M,g) = 1$ for all monomials $M$ and $sh(M,g)$ is the Koszul sign of a permutation of tensors.  If $m = 0$ and $k > 0$ then $(N_*(\Delta^0)^{\otimes n})_k = 0$, hence both the Berger-Fresse map and the standard procedure map are trivial. $\qed$ \\

{\bf Step 1.}  {\bf The Berger-Fresse Map is Equivariant.}  Given a surjection generator $x$ and a permutation $g \in \Sigma_n$, we need to show $g \Phi(x \otimes \Delta^m) = \Phi(gx \otimes \Delta^m)$.  From the definition, for each monomial $M$,  $$g \bigotimes_{\ell = 1}^n F_\ell (M,x) = (-1)^k \bigotimes_{\ell = 1}^n F_{g^{-1}\ell}(M, x) = (-1)^k \bigotimes_{\ell = 1}^n F_\ell(M, gx).$$
Namely,  the face $F_\ell(M, gx)$ coincides with the face $F_{g^{-1}\ell}(M, x)$ since these faces just amalgamate blocks $M_j$ of $M$ with $g(x_j) = \ell$ and $x_j = g^{-1}\ell$ respectively.  The sign $(-1)^k$ is the Koszul sign of the permutation $g$ acting on the $n$-tensor. \\

The caesuras in the two generators $x$ and  $gx$ are in the same positions, so the position signs agree $pos(M,x) = pos(M, gx)$.  Finally we need to compare the shuffle signs $sh(M,x)$ and $ sh(M, gx)$ with the  sign associated to $g$ acting on the $n$-tensor.  The relation we want is $(-1)^k sh(M, x) = sh(M, gx)$.\\

After the shuffle of the $M_j$ using $x$, the blocks corresponding to $j$ with $x_j = \ell$ are adjacent and form the face $F_\ell(M, x)$.  The shuffle sign $sh(M,x)$ is determined using the `lengths' of the blocks $M_j$. The key now is that the sum of the `lengths' of the blocks forming $F_\ell(M, x)$  is the dimension of the face.  Now permute these faces in the tensor by $g$, including the true Koszul sign $(-1)^k$ which depends on the dimension of the faces.  The result is the same as the signed shuffle of the blocks $M_j$ that uses their individual  `lengths'  to form the faces $F_\ell(M, gx)$. That is, one obtains the same sign moving an entire face block across another face block using dimensions,  as one obtains moving the separate subfaces across subfaces  using `lengths'.\\

Perhaps this equivariance argument for the shuffle sign is easier to follow using the second description of the shuffle sign, in terms of sequences $(x_1 \text{'s} \  x_2  \text{'s} \ldots\ x_{n+k} \text{'s})$ and  $(x_{g1} \text{'s} \  x_{g2} \text{'s} \ldots\ x_{g(n+k)} \text{'s})$ of length $m+k$  obtained from $M$ by deleting final entries of caesura blocks and replacing remaining entries of $M_j$ by $x_j$ or $x_{gj}$.  Shuffle permute the first sequence to $(1 \text{'s} \ 2 \text{'s}\ \ldots n \text{'s})$, with parity sign $sh(M, x)$.   Then shuffle permute this string to the order $(g^{-1}(1) \text{'s}\ g^{-1}(2) \text{'s} \ \ldots g^{-1}(n) \text{'s})$, with parity sign $(-1)^k$.  The composed shuffle is conjugate as a permutation of $m+k$ objects to the shuffle that moves $(x_{g1} \text{'s} \  x_{g2}\text{'s} \ldots x_{g(n+k)}\text{'s})$ to  $(1 \text{'s} \ 2 \text{'s} \ldots n \text{'s})$, with sign $sh(M, gx)$.  $\qed$\\

{\bf Preview of the Induction.} Consider a basis generator $b$ of degree $k > 0$, which from Subsection15.1 of Part II means the initial entries in $b$ of $1,2, \ldots n$ occur in that order. Assume the proposition is proved in surjection degrees less than $k$.  By Step 0 we may assume $m > 0$. By induction and Step 1, it suffices to prove $$\Phi(b \otimes \Delta^m) = h_{\otimes^n} \Phi (db \otimes \Delta^m + (-1)^k b \otimes d \Delta^m)$$ for basis generators, where $h_{\otimes^n}$ is the contraction of $N_*(\Delta^m) ^{\otimes n}$. In the next five steps we will prove by induction that  this formula is correct if we ignore signs. In Steps 2, 3, and 4 we show that most summands of the boundary terms $db$ and $d\Delta^m$ contribute 0. In Steps 5 and 6 we compare $h_{\otimes^n}$ applied to the remaining boundary terms with the formula for $\Phi(b \otimes \Delta^m)$.   In the last step we deal with the signs.\\

{\bf Review of the Contraction of $\bf N_*(\Delta^m)^{\otimes n}$.} We remind that $$h_{\otimes^n} = h \otimes Id^{\otimes (n-1)} + \rho \otimes h \otimes Id^{\otimes (n-2)} + \ldots +  \rho^{\otimes (n-1)} \otimes h,$$ where $h$ is the contraction of $N_*(\Delta^m)$ and $\rho(j) = 0$ is the basepoint map of $N_*(\Delta^m)$.  Thus evaluating an $h_{\otimes^n} $ term could result in as many as $n$ summands and this will play a role in the proof evaluating $h_{\otimes^n}( b \otimes d \Delta^m)$.\\

Note  $Im(h_{\otimes^n}) = Ker(h_{\otimes^n})$ is spanned by all `clean' tensors $(0)  \otimes  \ldots \otimes (0) \otimes (0a...) \ldots$, where $a > 0$, since these are clearly in $Ker(h_{\otimes^n})$.  This point was emphasized in Subsection 5.3 of Part I, where the preferred contraction $h_{\otimes^n}$ of tensor products was introduced. Also, if $a' > 0$ $$h_{\otimes^n} [(0)\text{'s}.. \otimes (a'...)  \otimes .. (0)\text{'s} .. \otimes (0a...) \ldots] = [ (0)\text{'s}.. \otimes (0a'...)  \otimes .. (0)\text{'s} .. \otimes (0a...) \ldots]$$
This formula  is correct even if the $(a' ...)$ term is a singleton and more $h_{\otimes^n}$ summands must be evaluated. These additional summands will be 0.\\

{\bf Step 2.} {\bf Clean Surjection Generators.}  This is a major step. Suppose $c = (1, \ldots, \ell, \ldots, \ell, \ldots)$ is a clean surjection generator, as in Subsection 15.1.   So $\ell$ is the first caesura and the smaller entries are singletons.  In Proposition \ref{15.1} it was proved that the clean surjections span the image of the contraction $h_{\cS}$.  Then for any monomial $M$ it is easy to see that either the tensor $F(M, c)$ is degenerate or  $$F(M, c) = F_1(M) \otimes F_2(M) \ldots  \otimes F_n(M)  = (0) \otimes \ldots (0) \otimes (0a...) \otimes \ldots,$$ where the $(0a...)$ term occurs before or at the $\ell^{th}$ position.  So $\Phi$ maps $Im(h_{\cS})$ to $Im(h_{\otimes^n})$. Thus $\Phi(c \otimes \Delta^m) \in Im(h_{\otimes^n})$, and $h_{\otimes^n}\Phi(c \otimes \Delta^m) = 0.$\\ 

{\bf Remark.} A purely inductive proof of the claim in Step 2 for $\gamma$ rather than $\Phi$, namely that if $c$ is clean at $\ell$ then each tensor summand of $\gamma(c \otimes \Delta^m)$ is clean at or before $\ell$, procedes by writing $c = gb$, where $b$ is a basic generator clean at $\ell$ and $g$ fixes $1,2, \ldots, \ell$.  Then one studies by induction $g \gamma(b \otimes \Delta^m)$, also making use of Step 3 below.  The main point, which gets a bit delicate with $\ell$ and $\ell -1$, is that permutations in general do not preserve $Im(h_{\otimes})$ but if a permutation fixes small values then it will preserve the collection of tensors that are  clean at roughly  these same small values.  Purists can fill in the details. $\qed$\\

{\bf Step 3.} {\bf Relevant Boundary Terms From $\bf db \otimes \Delta^m$.}  If $b$ is a basis generator then the only terms in $h_{\otimes^n}\gamma(db \otimes \Delta^m)$ that can be non-zero are the terms arising from the boundary term  $d_\ell b$ obtained by deleting the first caesura $\ell = b_\ell$ of $b$. By induction, we know $\Phi = \gamma$ in the lower dimension.\\

The initial entries of $1,2, \ldots, n$ in $b$ occur in that order.  If $j < \ell$ then $d_jb = 0$ since $j$ is a singleton.  If $j > \ell$ and  $b_j = \ell$ then $d_jb$ is either degenerate or is also  a basis generator.  In the basis generator case $\gamma(d_jb \otimes \Delta^m)$ is in $Im(h_{\otimes^n}) = Ker(h_{\otimes^n})$, so $ h_{\otimes^n} \gamma(d_j b, \Delta^m) = 0$.\\

If $j > \ell$ and $b_j> \ell$   then $d_jb = (1,2,\ldots, \ell, \ldots, \ell, \ldots)$, with $b_j$ removed, is either  degenerate or a clean generator of one lower degree.  In the clean generator case, as observed in Step 2, $\Phi(d_jb \otimes \Delta^m) \in Im(h_{\otimes^n}) = Ker(h_{\otimes^n}) $ $\qed$ \\ 

{\bf Step 4.} {\bf Relevant Boundary Terms From $\bf b \otimes d\Delta^m$.} The only terms in $h_{\otimes^n} \gamma(b \otimes d\Delta^m)$ that can be non-zero are the terms arising from the boundary face $d_0 \Delta^m = (12 \ldots m)$.\\

Consider a face $d_j\Delta^m = (0, ... \hat{j} ... ,m)$, with $j > 0$.  We must pay attention to functoriality.  From the characterization of clean tensors preceding Step 2, the map $N_*(\Delta^{m-1})^{\otimes n} \to N_*(\Delta^m) ^{\otimes n}$ induced by the face inclusion carries $Im(h_{\otimes^n})$ to $Im(h_{\otimes^n})$, since $0 \mapsto 0$ and any $c \mapsto c$ or $c+1$.  In fact, such a face inclusion commutes with contractions. Then $\gamma(b \otimes \Delta^{m-1})$, which is in $Im(h_{\otimes^n})$ for $\Delta^{m-1}$, is carried by the face inclusion to $\gamma(b \otimes d_j \Delta^m)$, which is in $Im(h_{\otimes^n})$ for $\Delta^m$. Hence $h_{\otimes^n}$ vanishes on this element.  $\qed$ \\

{\bf Step 5.} {\bf Matching Summands of $\bf \gamma(b \otimes \Delta^m)$ and $\bf h_\otimes \gamma d(b \otimes \Delta^m)$.}  This is the main step.  We have observed that induction implies that if $b$ is a basis generator then the standard procedure map $\gamma$ satisfies 
$$\gamma(b \otimes \Delta^m) = h_{\otimes^n}( \gamma(d_\ell b \otimes \Delta^m + b \otimes d_0 \Delta^m)) = h_{\otimes^n}(\Phi(d_\ell b \otimes \Delta^m + b \otimes d_0 \Delta^m)).$$  We must show this last term coincides with $$\Phi(b \otimes \Delta^m)  = \sum_M \pm  F_1(M,b) \otimes \ldots \otimes F_n(M, b).$$ 
The monomials $M \in \delta^{(n+k)} (\Delta^m)$ divide into two types for each possibility for  the first caesura $b_\ell$ of $b$.  If the first caesura is $b_1 = 1$ then, ignoring signs, the sum of the terms in $\gamma(b \otimes \Delta^m)$ corresponding to monomials of the form $M = (0\vert 0...)$ coincides with $h_{\otimes^n} \gamma(d_1b \otimes \Delta^m)$.  The sum of the terms in $\gamma(b \otimes \Delta^m)$ corresponding to monomials of the form $M = (01...)$ coincides with $h_{\otimes^n} \gamma(b \otimes d_0\Delta^m)$.\\

If the first caesura of $b$ is $b_\ell$  then $b$ must begin $b = (12...\ell, \ell+1,...\ell ...)$, with $b_\ell = \ell$ and the entries before the $\ell$ being singletons.   The sum of the terms in $\gamma(b \otimes \Delta^m)$ corresponding to monomials of the form $M = (0\vert \ldots | 0 | 0...)$ with at least $\ell$ singleton 0's coincides with $h_{\otimes^n} \gamma(d_\ell b \otimes \Delta^m)$.  The sum of the terms in $\gamma(b \otimes \Delta^m)$ corresponding to monomials  $M$  with fewer than $\ell$ singleton 0's coincides with $h_{\otimes^n} \gamma(b \otimes d_0\Delta^m)$. $\qed$ \\

{\bf Step 6.} {\bf Proof of Step 5.}  We discuss the proof of the statements in Step 5.  Assume the first caesura is $b_1 = 1$. If $M' = (0...) \in \delta^{(n+k-1)}\Delta^m$ is obtained from $M = (0 | 0...)$ by dropping the first singleton 0 then the summand of $\gamma(b \otimes \Delta^m)$ corresonding to $M$ equals the summand of $h_{\otimes^n} \gamma(d_1b \otimes \Delta^m)$ corresponding to $M'$. In fact, $$F(M', d_1b) = (a'.. )\otimes (0) \otimes  \ldots  \otimes (0)\otimes (0a...) \dots $$ is obtained from $F(M, b)$ by dropping the first 0.  Here $a'$ is the first entry of the monomial block corresponding to the second occurrence of 1 in $b$.  Then applying $h_{\otimes^n}$ puts that 0 back.  Note only the first summand of $h_{\otimes^n}$ acts nontrivially.\\

It is possible that the summand of $\gamma(b \otimes \Delta^m)$ corresponding to $M$ is degenerate.  This can happen in a few ways.  But then the term obtained from $F(M', d_1b)$ by dropping the first 0 from $M$ and evaluating on $d_1b$ is also degenerate, or it is of the form  $(0) \otimes \ldots \otimes (0) \otimes (0a...)...$.  Either way applying $h_{\otimes^n}$ gives 0.\\

A simple example of the interesting case is $b = (1,2,1,3)$ and $M = (0 | 0 | 0 | 012)$. Then $F(M', d_1b) = (0) \otimes (0) \otimes (012)$, which is in the image of $h_{\otimes^n}$.\\

Continuing to assume the first caesura of $b$ is $b_1$, consider a monomial of the second type $M = (01...| ...)$ and  set $M'' = (1... |...)$ by dropping the first (and only) 0 from $M$.  Then the tensor summand of $\gamma(b \otimes d_0 \Delta^m)$ corresponding to monomial $M''$ is obtained from the tensor $F(M, b)$ by dropping the first (and only) 0.  That $M''$ tensor begins $(1...a...) \otimes ...$, where the $a$ is the first entry in the block of $M$ coming from the second occurrence of 1 in $b$.  Applying $h_{\otimes^n}$ gives back $F(M, b)$, the $n$-tensor of $\gamma(b \otimes \Delta^m)$ corresponding to $M$.  Again only one summand of $h_{\otimes^n}$ acts non-trivially.\\

Moving on, assume the first caesura of $b$ is $b_\ell = \ell$.  Then $b = (12...\ell...\ell ...)$, with  singletons before the first $\ell$.    Consider a first type monomial $M = (0 | 0 | 0...)$ with at least $\ell$ singleton 0's.  Again obtain $M' = (0 | 0..)$ by dropping the $\ell^{th}$ 0. Then the tensor $F(M', d_\ell b)$ is obtained from the tensor $F(M, b)$ by dropping the $\ell^{th}$ 0.  Again this makes sense even if $F(M, b)$ is degenerate.  Applying $h_{\otimes^n}$ formally puts that 0 back, the result of which will be 0 exactly if $F(m, b)$ is degenerate.\\

Next consider the second type monomials $M \in \delta^{(n+k)}(\Delta^m)$.  These have the form $M = (0 | 0 |.. | 01..| ...)$ where there are fewer than $\ell$ singleton 0's.  Form  $M'' = (1 | 1 ... |1..| ..) \in \delta^{(n+k-1)}(d_0\Delta^m)$ by deleting the final occurring 0 and changing all earlier (singleton) 0's to 1's. Now something new happens.  Let $j+1$, with $1 \leq j \leq \ell$, denote the total number of  0 and 1 entries in $M$, in or before the $\ell^{th}$ block of $M$.  We notice that there are $j-1$ singleton $(1)$ tensor factors  in $F(M'', b)$, the summand of $\gamma(b \otimes d_0\Delta^m)$ corresponding to $M''$. In fact, the tensor $F(M'', b)$ is obtained from the tensor $F(M, b)$ by deleting the final occurrence of 0, and changing any previous singleton $(0)$ tensor factors to $(1)$'s. Also there are $j $ different second type $M$'s yielding this same $M''$.  Namely, the first $j+1$ entries of $M$ (not blocks) could consist of $j'$ 0's and $(j +1 - j')$ 1's, where $1 \leq  j' \leq j$.  \\

At the same time, evaluating $h_{\otimes^n}$ on the $n$-tensor summand of $\gamma(b \otimes d_0 \Delta^m) $ corresponding to $M''$ yields a sum of $j$ terms.  It can happen that $F(M, b)$ is degenerate, but not before the $\ell^{th}$ tensor factor.  Then $F(M'', b)$ is also degenerate, so all summands of $h_{\otimes^n} F(M'', b)$ are degenerate.\\

For example, if $\ell = 5$, $M = (0 | 0 | 0 | 012 |2| ... )$  and $M'' = (1 |1 | 1 | 12| 2 | ...)$, then $j = 4$ and the term in $\gamma(b   \otimes d_0  \Delta^m)$ corresponding to $M''$ is $ (1) \otimes (1) \otimes (1) \otimes (12) \otimes (2a...) ...$, where $a$ is the first entry of the second block of $M$ corresponding to a $b$ value $\ell = 5$.  Evaluating $h_\otimes$ gives exactly the sum of the 4 terms in $\gamma(b \otimes \Delta^m)$ corresponding to all the $M$ terms yielding the same $M''$.   If $a = 2$, these are all degenerate.  $\qed$ \\

{\bf Step 7.} {\bf Reconciliation of Signs.} Finally we need to deal with the shuffle signs and the position signs in the Berger-Fresse map in all the steps above.  The contraction $h_{\otimes^n}$ introduces no signs.  The standard procedure recursively determines all associated signs. Again we use induction.  Both $\gamma$ and $\Phi$ are equivariant, so we only need to deal with basis elements $b$.  Assuming the signs for $\gamma$ and $\Phi$ agree in  lower degrees, we need to see why the Berger-Fresse sign of terms in $\Phi(d_\ell b \otimes  \Delta^m)$ and $\Phi((-1)^{|b|}b \otimes d_0 \Delta^m)$ arising from  monomials $M'$ and $M''$ agree with the Berger-Fresse signs of terms in $\Phi(b \otimes \Delta^m)$ arising from monomials $M$ that correspond to $M'$ and $M''$ by the process of the proof.\\

This turns out to be not too hard.  The process in the proof drops a 0 from $M$ monomials to produce $M'$ monomials, or in the second type  case drops a 0 and changes other 0's to 1's to form $M''$ monomials. The shuffles of blocks that occur in forming the $n$-tensors leave the first block in a monomial alone and just shuffle later blocks that follow the first block.  Later blocks do not change `lengths'. Thus the shuffle signs  $sh(M')$ or $sh(M'')$ and $sh(M)$ agree in all cases.\\

The position signs $pos(M')$ and $pos(M)$ also agree in terms arising from $\gamma(d_\ell b \otimes \Delta^m)$ .  This is because dropping the $\ell^{th}$ singleton 0 from $M$ to form $M'$ does decrease by one the number of caesura blocks when $d_\ell b$ is applied to $M'$, so a sign $(-1)^0 = 1$ is `lost' comparing $pos(M')$ and $pos(M)$.  But the process does not change the final vertices in other caesura blocks of $M$ and $M'$.\\

For example, if $b = (1213213)$ and $M = (0 | 01 | 12 | 2 | 2 | 234 | 45)$ then the caesura blocks of  $M$ are $(0), (01), (12), (2)$ and the caesura blocks of $M' = ( 01 | 12 | 2 | 2 | 234 | 45) $ with respect to $d_1b = (213213)$ are $(01), (12), (2)$.  Examples where the first caesura of $b$ is $b_\ell = \ell$, with $\ell > 1$ behave similarly, since the $M$ of first type begin with $\ell$ singleton 0's, only the last of which is a caesura block.\\

The position signs $pos(M'')$ and $pos(M)$ arising from $\Phi(b \otimes d_0 \Delta^m)$ and monomials $M$ of the second type do not necessarily agree.   There are the same total number of blocks, namely $n+k$,  and number of caesura blocks, namely $k = deg(b)$, when $b \otimes \Delta^m$ is applied to $M$  as when $b \otimes d_0 \Delta^m$ is applied to $M''$, since  blocks $M_j$ and $M''_j$ both correspond to  the entries $b_j$ of $b$.  Monomial $M''$ is obtained from $M$ by dropping the  0 from a block $(01..)$ and changing previous singleton 0's in $M$ to 1's.   But the position sign is obtained by counting how many times entries that are not final entries of blocks  occur in front of  final entries of caesura blocks. Since the dropped 0 occurs in front of all final caesura block entries, the count is reduced by $k$. \\

But also recall that there is a sign $(-1)^k b \otimes d_0 \Delta^m$ in the boundary $d(b \otimes \Delta^m)$, where $deg(b) = k$. Since the contraction $h_{\otimes^n}$ introduces no signs, one sees that indeed the signs in $h_{\otimes^n} \Phi((-1)^{|b|}b \otimes d_0 \Delta^m)$ agree exactly with the signs in the terms in $\Phi(b \otimes \Delta^m)$ arising from monomials $M$ of the second type. This completes our long discussion of the proof of Proposition \ref{17.3}.
\end{proof}
\begin{rem}\label{17.4} {\bf The Map  $\Phi$ is Injective.} Notice that given  an $n$-tensor summand $\pm F_1 \otimes \ldots \otimes F_n \in N_*(\Delta^m)^{\otimes n}$ of $\Phi(x\otimes \Delta^m)$, one can unravel the $n$-tensor to see exactly which $M \in \delta^{(n+k)}(\Delta^m)$ it came from.  Namely, just write in non-decreasing order all the vertices of $\Delta^m$ occurring in all the faces $F_j$.  Then put separating bars between repeated entries.  For example, from Example \ref{17.1}, consider the 3-tensor $(01245) \otimes (01234) \otimes (25)$, with $x = (1213213)$. Unravel to $M = (0 | 01 | 12 | 2 |  234 | 45| 5) $.  Then $F(M,x)$ is the named 3-tensor.\\

In general, assuming the tensor arises from $M$ and a basis generator $b$,  it is  immediate to read off from the single tensor summand  the first caesura of $b$ and whether the monomial $M$ is of first type or second type.  If $m$ is not too small $b$ itself can be recovered from relatively few tensor summands of $\Phi(b \otimes \Delta^m)$.  One can pursue this and prove that the adjoint $Ad(\Phi)\colon \cS_*(n) \to HOM(N_*(\Delta^m), N_*(\Delta^m)^{\otimes n})$ is injective. In fact, in degree $k$ it suffices to take $m = n+k$ and a single $M = (01|12| \ldots | (m-1)m)$.  Then the $M$-coordinate alone of $\Phi(x \otimes \Delta^m)$ is sufficient to see that $Ad(\Phi)$ is  injective on the $\FF$ vector space $\cS_*(n)$. $\qed$
\end {rem} 

\subsection{Compositions $ A_*(n)\otimes N_*(X) \to \cS_*^{bf}(n)\otimes N_*(X) \to N_*(X)^{\otimes n}$}
 We next want to use Proposition \ref{17.3} to find the canonical functorial procedure maps $A_*(n) \otimes N_*(X) \to N_*(X)^{\otimes n}$ for $A_*(n) = \cS_*^{aj}(n), \cS_*^{ms}(n), N_*(E\Sigma_n)$, and $M_*(n), N_*(EC_n)$.  The hard one is $N_*(E\Sigma_n)$, the others will follow easily from two lemmas. The results are not unexpected, but seem to require some arguments.  The issue is that compositions of standard procedure chain maps need not be standard procedure chain maps.  Therefore, when such a composition does turn out to be a standard procedure map, some proof must be given.
\begin{prop}\label{17.5} The functorial standard procedure map $N_*(E\Sigma_n) \otimes N_*(X)$  $\to N_*(X)^{\otimes n}$ is the composition $$\gamma \circ (TR \otimes Id) \colon N_*(E\Sigma_n) \otimes N_*(X) \to \cS_*^{bf}(n) \otimes N_*(X) \to N_*(X)^{\otimes n}.$$
\end{prop}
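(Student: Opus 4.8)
\textbf{Proof proposal for Proposition \ref{17.5}.}

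The plan is to reduce this to the functorial uniqueness criterion Proposition 8.2 of Part I, exactly as Proposition \ref{17.3} was handled, but now for the composite map $\psi = \phi \circ (TR \otimes Id)$. Both $TR \otimes Id$ and $\phi$ are equivariant functorial chain maps, so the composite $\psi$ is an equivariant functorial chain map; in degree $0$ it agrees with the standard procedure map, since $TR$ is the identity on $\cS_0^{bf}(n) = \FF[\Sigma_n]$ and $\phi$ restricted to degree $0$ is the equivariant extension of the $n$-fold Alexander--Whitney multidiagonal $\delta^{(n)}$. Thus by Proposition 8.2 it suffices to check that for each universal basis element $(e, g_1, \ldots, g_k) \otimes \Delta^m$ of the domain functor $N_*(E\Sigma_n) \otimes N_*(X)$ (with $(e, g_1, \ldots, g_k) \in N_k(E\Sigma_n)$ and $\Delta^m \in N_m(\Delta^m)$), the image $\psi\big((e, g_1, \ldots, g_k) \otimes \Delta^m\big)$ lies in $Im(h_{\otimes^n})$, the image of the preferred contraction of $N_*(\Delta^m)^{\otimes n}$.

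The key step is therefore the following claim: for \emph{every} surjection generator $x \in \cS_k^{bf}(n)$ that is a summand of $TR(e, g_1, \ldots, g_k)$, the element $\phi(x \otimes \Delta^m) = \Phi(x \otimes \Delta^m)$ is a sum of clean tensors, i.e.\ lies in $Im(h_{\otimes^n})$. By the description of $TR$ (Proposition \ref{16.2}), the summands $x_a$ of $TR(e, g_1, \ldots, g_k)$ are precisely the surjections whose first block, coming from the first row $g_0 = e = (12\ldots n)$, is an initial segment $(1, 2, \ldots, a_0)$ of the identity permutation; in particular every such $x_a$ begins $(1, 2, \ldots, \ell, \ldots)$ where $\ell$ is its first caesura, and the entries $1, \ldots, \ell-1$ are singletons of $x_a$. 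In other words, every summand of $TR(e, g_1, \ldots, g_k)$ is a \emph{clean} surjection generator in the sense of Remark 15.1 of Part II. But Step 2 in the proof of Proposition \ref{17.3} established exactly that for a clean generator $c$ one has $F(M, c) = (0) \otimes \cdots \otimes (0) \otimes (0a\ldots) \otimes \cdots$ for every monomial $M$, hence $\Phi(c \otimes \Delta^m) \in Im(h_{\otimes^n})$. Combining this over all summands $x_a$ gives $\psi\big((e, g_1, \ldots, g_k) \otimes \Delta^m\big) = \sum_a \pm\, \Phi(x_a \otimes \Delta^m) \in Im(h_{\otimes^n})$, and Proposition 8.2 of Part I then forces $\psi$ to coincide with the functorial standard procedure map.

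The main obstacle I anticipate is purely bookkeeping rather than conceptual: one must be slightly careful that the statement ``every summand of $TR$ applied to a basis generator $(e, g_1, \ldots, g_k)$ is clean'' is read off correctly from the partition description of $TR$ in Proposition \ref{16.2} --- specifically that the initial row being the identity permutation $e$ forces the leading block of each $x_a$ to be $(1, 2, \ldots, a_0)$ and hence the first $a_0 - 1$ entries to be singletons preceding the first caesura. An alternative, perhaps cleaner, route that avoids even this is to invoke Proposition 6.5(iii) of Part I together with Proposition 8.2: since $TR$ is a standard procedure map and $TR$ sends basis generators of $N_*(E\Sigma_n)$ to $\FF$-linear combinations (with signs $\pm 1$) of basis generators of $\cS_*^{bf}(n)$ --- namely the $x_a$, which are clean and in particular lie in the $\Sigma_n$-orbit of the chosen basis, each with coefficient $\pm 1$ and no nontrivial group-action summands --- the composition with the standard procedure map $\phi$ is again a standard procedure map. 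One should double-check that the $x_a$ are honest basis elements (first occurrences of $1, \ldots, n$ in order) and not merely clean; if some $x_a$ is clean but not a basis element, one applies equivariance to reduce to a basis element first, or falls back on the direct $Im(h_{\otimes^n})$ argument above, which only needs cleanness.
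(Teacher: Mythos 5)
Your main argument is correct and is essentially the paper's own proof: the paper restates the functorial uniqueness criterion (as Proposition \ref{17.9}), observes from the partition description of table reduction that every summand $x_a$ of $TR(e, g_1, \ldots, g_k)$ is a clean surjection generator of the form $(1,2,\ldots,\ell,\ldots,\ell,\ldots)$, and invokes Step 2 of the proof of Proposition \ref{17.3} to get $\phi(TR(e,X) \otimes \Delta^m) \in Im(h_{\otimes^n})$. Your alternative route via Proposition 6.5(iii) is precisely what the paper flags as unavailable --- the clean summands $x_a$ need not be basis elements of $\cS_*^{bf}(n)$ --- so the direct cleanness argument you give first is the right one, as you yourself anticipated.
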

We postpone the proof.  But the first step will be an application of the following general result.
\begin{lem}\label{17.6} Suppose $\phi \colon A_* \to B_*$ is a standard procedure equivariant chain map, with $A_*$  free over $\FF[G]$,  $h_B^2 = 0$, and $ h_B \circ  \iota_B = 0$.  Then  the map $\phi \otimes Id \colon A_* \otimes N_*(X) \to B_* \otimes N_*(X)$ is a standard procedure equivariant  functorial chain map.
\end{lem}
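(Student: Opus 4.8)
The plan is to invoke the functorial uniqueness theorem, Proposition 8.2 of Part I, rather than to verify the chain-map property or equivariance by hand. The claim amounts to: $\phi \otimes Id$ agrees with the standard functorial procedure map built from the tensor-product $\FF[G]$-basis of $A_* \otimes N_*(X)$ and the tensor-product contraction of $B_* \otimes N_*(X)$. So first I would set up the functorial framework: the minimal carriers for a basis element $u = a \otimes \sigma_*(\bar\sigma)$ of $(A_* \otimes N_*(X))_k$, where $a$ is an $\FF[G]$-basis generator of $A_*$ of degree $i$ and $\bar\sigma \in N_j(\Delta^j)$ is the fundamental simplex, $i + j = k$. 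The universal element is $\bar u = a \otimes \bar\sigma \in A_* \otimes N_*(\Delta^j)$, with minimal carrier map induced by the simplex $\sigma\colon \Delta^j \to X$ canonically determined by $\sigma_*(\bar\sigma)$. The range contraction is $h_{B \otimes N} = h_B \otimes Id + \rho_B \otimes h_{N_*(\Delta^j)}$, which squares to zero and kills the basepoint because both factor contractions do (Example 5.5 of Part I).

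The heart of the argument is then to check the two hypotheses of Proposition 8.2 for the candidate map $\psi = \phi \otimes Id$. The first hypothesis, agreement in degree $0$, is immediate: in degree $0$ the standard functorial procedure extends equivariantly $a \otimes x_0 \mapsto \iota_B\epsilon_A(a) \otimes x_0$, and since $\phi$ is a standard procedure map it sends a degree-$0$ basis generator $a$ to $\iota_B \epsilon_A(a)$, so $\psi$ does the same. The second hypothesis is that $\psi(\bar u) \in Im(h_{B\otimes N})$ for every universal element $\bar u = a \otimes \bar\sigma$. Here I would compute $\psi(a \otimes \bar\sigma) = \phi(a) \otimes \bar\sigma$ and show this lies in the image of the tensor-product contraction. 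This is exactly the kind of computation carried out in the proof of Proposition 6.11(i) of Part I: since $\phi$ is a standard procedure chain map with range contraction $h_B$ satisfying $h_B^2=0$ and $h_B\iota_B=0$, one has $\phi(a) \in Im(h_B)$ when $\deg a > 0$ (write $a = h_B da$ after discarding basis summands, or directly $\phi(a) = h_B\phi(da)$), and $\phi(a) = \iota_B\epsilon_A(a) \in Im(\iota_B) \subset Ker(h_B)$ when $\deg a = 0$; in parallel $\bar\sigma = (0,1,\ldots,j) = h_{N_*(\Delta^j)}(1,\ldots,j) \in Im(h_{N_*(\Delta^j)})$ when $j > 0$, and is the basepoint vertex $(0)$ when $j = 0$. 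Splitting into the cases $\deg a > 0$ and $\deg a = 0$ (and using $j > 0$ in the latter, since $k > 0$), the identities $h_\otimes(x \otimes h_N y) = \rho_B x \otimes \text{(stuff)}$-type manipulations from Example 5.5 show $\phi(a) \otimes \bar\sigma \in Im(h_{B \otimes N})$, using $h_B\phi(a)=0$ or $h_{N_*(\Delta^j)}\bar\sigma=0$ respectively to kill the unwanted summand of $h_\otimes$.

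Once both hypotheses are verified, Proposition 8.2 of Part I gives $\psi = \phi_{func}$, and in particular $\psi$ is automatically a functorial chain map, which is the assertion. I would also note, for the induction to go through, that $\phi \otimes Id$ is manifestly $G$-equivariant (with $G$ acting diagonally or as $\phi$'s group acts, trivially on the $N_*(X)$ factor as appropriate) and functorial in $X$, so the standard functorial procedure map it is being compared against is indeed $G$-equivariant and functorial. The main obstacle is really just bookkeeping in the low-degree cases: making sure the basepoint interactions ($\deg a = 0$ forcing use of $h_B\iota_B = 0$, or $j = 0$ which cannot occur once $k>0$ and $\deg a > 0$ is excluded — actually $j=0, \deg a = k$ is allowed) are handled correctly so that $\phi(a) \otimes \bar\sigma$ genuinely lands in $Im(h_{B\otimes N})$ rather than merely in its kernel. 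This is the same delicate point flagged in Proposition 6.11(i), and I expect it to be routine but requiring care; everything else follows formally from the uniqueness theorem.
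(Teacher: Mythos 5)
Your proof is correct, and it takes the route the paper mentions but does not write out: you verify the hypotheses of the functorial uniqueness theorem (Proposition 8.2), whereas the paper's own proof directly checks that $\phi \otimes Id$ satisfies the defining recursion on universal basis elements, namely $\phi(a)\otimes\Delta^m = h_\otimes\bigl(\phi(da)\otimes\Delta^m + (-1)^{|a|}\phi(a)\otimes d\Delta^m\bigr)$ with $h_\otimes = h_B\otimes Id + \rho_B\otimes h_\Delta$, split into the cases $|a|>0$, $|a|=0$, $m=0$. The paper explicitly notes that the lemma ``is also a consequence of the uniqueness result Proposition 8.2,'' adding that establishing its hypothesis ``is not so different from the direct argument'' --- and indeed your check that $\phi(a)\otimes\Delta^j \in Im(h_\otimes)$ rests on exactly the same identities as the paper's recursion check: $\phi(a)=h_B\phi(da)\in Im(h_B)=Ker(h_B)$ when $|a|>0$, and $h_B\iota_B=0$ together with $h_\Delta(\Delta^j)$ degenerate when $|a|=0$, $j>0$. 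What your route buys is that the chain-map property of $\phi\otimes Id$ comes for free (a tensor of degree-zero chain maps) and the recursion never has to be manipulated; what the paper's direct route buys is the explicit inductive formula, in the style of Proposition 6.11(i) --- which, as you note, the paper itself proved by the uniqueness route, so your argument follows that precedent. One small slip: the parenthetical ``write $a = h_B\,da$ after discarding basis summands'' is not meaningful, since $A_*$ carries no contraction in the hypotheses; the correct justification, which you also state, is simply $\phi(a) = h_B\phi(da)$ for basis generators of positive degree.
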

\begin{proof}  The argument we give here is basically a special case of a functorial version of Proposition 6.11 of Part I, which dealt with tensor products of standard procedure chain maps.  That argument can be reviewed to motivate the following lines. The lemma is also a consequence of the uniqueness result Proposition \ref{8.2} of Section 8 of Part I, but establishing the hypothesis of Proposition \ref{8.2} is not so different from the direct argument here. \\

In total degree 0 there is nothing to prove.  In positive degrees it suffices to show  for basis generators $a \in A_*$ that $$(\phi \otimes Id)(a \otimes \Delta^m) = \phi(a) \otimes \Delta^m  =  h_\otimes [ \phi(da) \otimes \Delta^m + (-1)^{|a|} \phi(a) \otimes d\Delta^m],$$ where $h_\otimes =  h_B \otimes Id_\Delta  + \rho_B \otimes h_\Delta $. Recall $\rho_B = \iota_B \epsilon_B$. If $m = 0$ and $|a| > 0$  the claim is obvious.\\

Suppose $m > 0$. If $|a| = 0$ the assumptions imply $\phi(a) = \iota_B \epsilon_A(a)$, hence $h_B\phi(a) = 0$.  Also  $\rho_B \phi(a) = \phi(a)$  and $h_\Delta(d\Delta^m) = \Delta^m$. \\

 If $|a| > 0$ then  $\phi(a) \otimes \Delta^m =  h_B \phi(da) \otimes \Delta^m$ and $h_\Delta(\Delta^m) = 0$. Also $\phi(a) \otimes d\Delta^m  \in Ker(h_\otimes),$ since $\phi(a) \in Im(h_B) = Ker(h_B)$. Thus the sum of the  four terms on the right side of the desired equation reduces to $\phi(a) \otimes \Delta^m$ in all cases.
\end{proof}
\begin{lem}\label{17.7} Suppose $\phi \colon A_*  \to B_*$ is a standard procedure equivariant chain map, with $A_*$ free over $\FF[G]$ and $B_*$ free over $\Sigma_n \supseteq G$, such that for $G$-basis elements $a \in A_*$ it holds that $\phi(a) = \sum \epsilon_i b_i \in B_*$ for constants $\epsilon_i \in \FF$ and $\Sigma_n$-basis elements $b_i \in B_*$.  If  $\Phi \colon B_* \otimes N_*(X) \to N_*(X)^{\otimes n}$ is a functorial standard procedure map then $\Phi \circ (\phi \otimes Id) \colon A_* \otimes N_*(X) \to B_* \otimes N_*(X) \to N_*(X)^{\otimes n}$ is also a functorial standard procedure map.
\end{lem}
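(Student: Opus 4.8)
\textbf{Proof proposal for Lemma \ref{17.7}.}

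The plan is to mimic the structure of the proof of Lemma \ref{17.6}, reducing everything to a computation on basis generators of $A_* \otimes N_*(X)$ in positive total degree, and then to use the hypothesis that $\phi$ sends each $G$-basis element $a$ to an $\FF$-linear combination $\sum \epsilon_i b_i$ of honest $\Sigma_n$-basis elements of $B_*$ (no nontrivial group-action summands) to compute the right-hand side of the recursive definition explicitly. Both maps in question, $\Psi := \Phi \circ (\phi \otimes Id)$ and the alleged standard procedure map, are equivariant and functorial, so it suffices to verify they agree on the universal basis elements $a \otimes \Delta^m$ with $a$ a $G$-basis generator of $A_*$. In total degree $0$ there is nothing to prove. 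So assume $|a| + m > 0$ and proceed by induction on total degree, with the standard procedure map $\psi$ defined recursively by $\psi(a \otimes \Delta^m) = h_{\otimes^n}\,\psi\big(d(a \otimes \Delta^m)\big) = h_{\otimes^n}\,\psi\big(da \otimes \Delta^m + (-1)^{|a|} a \otimes d\Delta^m\big)$, where $h_{\otimes^n}$ is the preferred contraction of $N_*(X)^{\otimes n}$.

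The key step is the following. First, by Lemma \ref{17.6}, the map $\phi \otimes Id \colon A_* \otimes N_*(X) \to B_* \otimes N_*(X)$ is itself a functorial standard procedure chain map. Thus $\Psi$ is a composition of two functorial standard procedure maps, and the issue is exactly the one flagged before Proposition \ref{17.5}: such a composition need not again be standard procedure. However, we are precisely in the situation of the functorial analogue of Proposition \ref{6.5}(iii): the first map $\phi \otimes Id$ sends a basis generator $a \otimes \Delta^m$ of $A_* \otimes N_*(X)$ to $\phi(a) \otimes \Delta^m = \sum_i \epsilon_i\, (b_i \otimes \Delta^m)$, an $\FF$-linear combination of basis generators of $B_* \otimes N_*(X)$ (here I use that $\{b_i \otimes \Delta^m\}$ are basis generators for the functorial model structure on $B_* \otimes N_*(X)$ when the $b_i$ are $\Sigma_n$-basis elements of $B_*$, and that no nontrivial group-element coefficients $g_i b_i$ occur). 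Since $\Phi$ is a functorial standard procedure map, evaluating it on a linear combination of basis generators is transparent: $\Phi\big(\sum_i \epsilon_i\, (b_i \otimes \Delta^m)\big) = \sum_i \epsilon_i\, \Phi(b_i \otimes \Delta^m)$, and each $\Phi(b_i \otimes \Delta^m) = h_{\otimes^n}\,\Phi\big(d(b_i \otimes \Delta^m)\big)$ lies in $\operatorname{Im}(h_{\otimes^n}) = \operatorname{Ker}(h_{\otimes^n})$. Hence $\Psi(a \otimes \Delta^m) \in \operatorname{Im}(h_{\otimes^n})$ for every basis generator. Combined with the fact that $\Psi$ is an equivariant functorial chain map agreeing with $\psi$ in degree $0$ (both being the equivariant functorial extension of the relevant map on $A_0 \otimes N_0(X)$, which factors through the degree-$0$ part of $\phi$ followed by the degree-$0$ part of $\Phi$), the functorial uniqueness result Proposition \ref{8.2} of Part I applies verbatim and forces $\Psi = \psi$.

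I expect the main obstacle to be purely bookkeeping rather than conceptual: one must check carefully that the functorial model structure on $A_* \otimes N_*(X)$ and $B_* \otimes N_*(X)$ is the expected one (universal elements $a \otimes \Delta^m$ and $b_i \otimes \Delta^m$, with the minimal carrier $\sigma \colon \Delta^m \to X$), that the contraction of $N_*(X)^{\otimes n}$ used by $\Phi$ is the one satisfying $h_{\otimes^n}^2 = 0$ and $h_{\otimes^n}\iota = 0$ so that $\operatorname{Im}(h_{\otimes^n}) = \operatorname{Ker}(h_{\otimes^n})$, and that the hypothesis $\phi(a) = \sum \epsilon_i b_i$ really does rule out group-action summands (this is where, in the applications, one invokes that the maps $\phi$ between surjection complexes, or the map $M_* \to \cS_*(n)$, send chosen bases to $\pm$(chosen bases)). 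Once those are in place, the degree-$0$ comparison and the ``image of the contraction'' verification are immediate, and Proposition \ref{8.2} finishes the argument with no further computation. An alternative, slightly more hands-on route that avoids citing Proposition \ref{8.2} is to run the induction directly: expand $\psi(a \otimes \Delta^m) = h_{\otimes^n}\psi(da \otimes \Delta^m + (-1)^{|a|} a \otimes d\Delta^m)$, use the inductive hypothesis to replace $\psi$ by $\Psi$ on the lower-degree terms, and then recognize the result as $h_{\otimes^n}\Psi(d(a \otimes \Delta^m)) = h_{\otimes^n} d_{N^{\otimes n}} \Psi(a \otimes \Delta^m) = \Psi(a \otimes \Delta^m) - d h_{\otimes^n}\Psi(a \otimes \Delta^m)$, the last summand vanishing because $\Psi(a \otimes \Delta^m) \in \operatorname{Im}(h_{\otimes^n}) = \operatorname{Ker}(h_{\otimes^n})$ by the computation above; this is the same telescoping as in the proof of Proposition \ref{6.2}.
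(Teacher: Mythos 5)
Your proposal is correct and follows essentially the same route as the paper: the whole content is that $\phi(a)=\sum_i\epsilon_i b_i$ with $\FF$-coefficients makes the evaluation of the functorial standard procedure map $\Phi$ on $\phi(a)\otimes\Delta^m$ transparent, so the composition either satisfies the defining recursion directly (the paper verifies $\Phi(\phi(a)\otimes\Delta^m)=h_{\otimes^n}\Phi(\phi\otimes Id)d_\otimes(a\otimes\Delta^m)$, as in your "hands-on" alternative) or, equivalently, lands in $\operatorname{Im}(h_{\otimes^n})$ so the functorial uniqueness result applies. Both phrasings are the functorial version of Proposition 6.5(iii), which is exactly how the paper argues.
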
 

\begin{proof} This is essentially a functorial version of the composition situation discussed in Proposition 6.5(iii) of Part I.  It suffices to consider generators $a \otimes \Delta^m$. The point is, the images $\phi(a)$ are $\FF$-linear combinations of  basis elements of $B_*$, hence $$\Phi  (\phi(a) \otimes \Delta^m) = h_{\otimes^n} \Phi d_\otimes (\phi(a) \otimes \Delta^m) = h_\otimes \Phi (\phi \otimes Id) d_\otimes (a \otimes \Delta^m).$$  
\end{proof}
\begin{prop}\label{17.8} The following compositions $$\cS_*^{aj}(n) \otimes N_*(X)  \to \cS_*^{bf}(n) \otimes N_*(X)  \to N_*(X)^{\otimes n}$$
 $$\cS_*^{ms}(n) \otimes N_*(X)  \to \cS_*^{bf}(n) \otimes N_*(X)  \to N_*(X)^{\otimes n}$$
$$N_*(EC_n) \otimes N_*(X)  \to N_*(E\Sigma_n) \otimes N_*(X) \to \cS_*^{bf}(n) \otimes N_*(X) \to N_*(X)^ {\otimes n}$$
$$M_*(n) \otimes N_*(X)  \to N_*(E\Sigma_n) \otimes N_*(X) \to \cS_*^{bf}(n) \otimes N_*(X) \to N_*(X)^ {\otimes n}$$
are  standard procedure functorial maps.
\end{prop}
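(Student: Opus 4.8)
\textbf{Proof proposal for Proposition \ref{17.8}.}

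The plan is to reduce all three compositions to the already-established fact (Proposition \ref{17.3}, refined by Proposition \ref{17.5}) that $\Phi \colon \cS_*^{bf}(n) \otimes N_*(X) \to N_*(X)^{\otimes n}$ is a standard procedure functorial map, and to do so by invoking Lemma \ref{17.7} for the outer composition and Lemma \ref{17.6} for any tensoring step. The common mechanism is that each of the first maps is of the form $\psi \otimes Id$, where $\psi$ is an equivariant standard procedure chain map between surjection complexes (or between $M_*(n)$ and a MacLane/surjection complex) that sends $G$-basis elements to $\FF$-linear combinations of $\Sigma_n$-basis elements of the target — up to sign, in fact to single basis elements. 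Once that hypothesis is checked, Lemma \ref{17.7} immediately gives that the composite with $\Phi$ is the standard procedure functorial map.

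For the first composition, $\psi = \phi \colon \cS_*^{aj}(n) \to \cS_*^{bf}(n)$ is the isomorphism of Proposition \ref{15.12}, which is a standard procedure equivariant chain map commuting with contractions; since $\phi(x) = \delta(x)\tau(f_x)\, x$, it sends each basis generator of $\cS_*^{aj}(n)$ (a surjection with initial occurrences of $1,\ldots,n$ in order) to $\pm$ the same sequence, which is again a basis generator of $\cS_*^{bf}(n)$. So the hypothesis of Lemma \ref{17.7} holds with all $\epsilon_i = \pm 1$ and a single $b_i$. The same argument applies verbatim to the second composition using $\phi \colon \cS_*^{ms}(n) \to \cS_*^{bf}(n)$, which is $\phi \circ \phi^{-1}$ of the two maps in Propositions \ref{15.3} and \ref{15.12}, or more directly the map $\phi(x) = c(x)^{-1}\delta(x)\tau(f_x)\,x$ — again carrying each $\cS_*^{ms}(n)$ basis generator to $\pm$ the corresponding $\cS_*^{bf}(n)$ basis generator. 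Here one should be slightly careful that $\Sigma_n$ acts with signs on $\cS_*^{ms}(n)$, but the basis generators are still the surjections with initial entries in increasing order and $\phi$ respects them up to sign, so the hypothesis of Lemma \ref{17.7} is unaffected.

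For the third composition there are two first arrows to absorb. The map $M_*(n) \to N_*(E\Sigma_n)$ is the standard procedure equivariant chain map $\phi$ of Example \ref{6.12}/Proposition \ref{6.13} (with $C = C_n \subset \Sigma_n$ the cyclic subgroup generated by the $n$-cycle $t = (2,3,\ldots,n,1)$, and extended equivariantly over $\Sigma_n$); by Proposition \ref{6.13} its values on the $C$-generators $y_j$ are positive-coefficient sums of MacLane simplices $(e, g_1,\ldots,g_j)$, i.e. of $\Sigma_n$-basis elements of $N_*(E\Sigma_n)$. Hence Lemma \ref{17.7} applies with $B_* = N_*(E\Sigma_n)$ and $\Phi$ replaced by the standard procedure functorial map $\Phi\circ(TR\otimes Id)$ of Proposition \ref{17.5}, yielding that the full three-fold composition is the standard procedure functorial map. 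Alternatively — and this is the cleaner bookkeeping — note that $\phi\circ(TR\otimes Id)$ is itself standard procedure by Proposition \ref{17.5}, and $M_*(n) \to N_*(E\Sigma_n)$ is standard procedure sending basis elements to sums of basis elements, so a single application of Lemma \ref{17.7} with $A_* = M_*(n)$, $B_* = N_*(E\Sigma_n)$ finishes it.

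The main obstacle I anticipate is purely the verification, in each case, that the relevant first map really does take $G$-basis generators to $\FF$-linear combinations of \emph{target basis generators} (not general group translates of basis generators), since Lemma \ref{17.7} fails without this — as the remarks after Proposition \ref{6.5} of Part I emphasize, compositions of standard procedure maps need not be standard procedure maps when nontrivial group-action summands appear. For the two surjection isomorphisms this is immediate from the explicit formulas in Section 15 (they multiply a basis generator by $\pm 1$ without permuting entries). For $M_*(n) \to N_*(E\Sigma_n)$ it is exactly the content of the closed formula in Proposition \ref{6.13}: every summand of $\phi(y_j)$ begins with the identity $e \in \Sigma_n$, hence is an $\FF[\Sigma_n]$-basis generator of $N_*(E\Sigma_n)$ up to (in fact without) sign. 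No fortuitous cancellation and no genuinely new computation is needed beyond citing these earlier formulas; the proof is a matter of assembling Lemmas \ref{17.6} and \ref{17.7} with Propositions \ref{17.3}, \ref{17.5}, \ref{6.13}, \ref{15.3}, and \ref{15.12}.
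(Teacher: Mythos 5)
Your proposal is correct and follows essentially the same route as the paper: the first two compositions are handled by Lemma \ref{17.7} together with the Section 15 isomorphisms (which carry basis surjections to $\pm$ basis surjections), and the third by the closed formula of Proposition \ref{6.13} (all summands of $\phi(y_j)$ begin with $e$, hence are $\Sigma_n$-basis elements), Lemma \ref{17.7} with $G = C_n \subset \Sigma_n$, and Proposition \ref{17.5}. The only quibble is your parenthetical ``extended equivariantly over $\Sigma_n$'': the map $M_*(n) \to N_*(EC_n) \subset N_*(E\Sigma_n)$ is only $C_n$-equivariant, which is all Lemma \ref{17.7} requires, so this does not affect the argument.
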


\begin{proof} The first two claims are immediate from Lemma \ref{17.7} and results from Section 15 where we compared surjection complexes.  The third and fourth claims follows from the calculation of $M_*(n) \to N_*(EC_n) \subset N_*(E\Sigma_n)$ in Example 6.13 of Part I, and Lemma \ref{17.7} and Proposition \ref{17.5} above.
\end{proof}
We now return to the proof of Proposition \ref{17.5}.
\begin{proof} This result is somewhat tricky to prove.  The first step is to observe that Lemma \ref{17.6} implies $TR \otimes Id$ is the standard procedure functorial chain map between its domain and range.\\ 

Proposition \ref{17.5} concerns two functorial  equivariant chain maps $N_*(E\Sigma_n) \otimes N_*(X) \to N_*(X)^{\otimes n}$.  Each is determined by a collection of maps $N_*(E\Sigma_n) \otimes \Delta^m \to N_*(\Delta^m)^{\otimes n}$ for fundamental classes of simplices $\Delta^m$. The two maps agree in degree 0. We need the functorial version of our key uniqueness result, Proposition 8.2 of Part I.  We will restate and prove a simple case relevant for us here.  
\begin{prop}\label{17.9}  Suppose $B_*$ is free over $\FF[\Sigma_n]$.  Consider an equivariant functorial chain map $\psi\colon B_* \otimes N_*(X) \to N_*(X)^{\otimes n}$ with the property that the images of basis generators $\psi(b \otimes \Delta^m)$ all belong to $Im(h_{\otimes^n}) = Ker(h_{\otimes^n}) \subset N_*(
\Delta^m)$.  Then $\psi$ is the standard procedure equivariant functorial chain map $\Phi$ that agrees with $\psi$ in degree 0.
\end{prop}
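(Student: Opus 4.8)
The plan is to run the same induction on total degree that underlies Proposition~8.2 of Part~I, but carried out in the functorial setting with the specific contraction $h_{\otimes^n}$ of $N_*(\Delta^m)^{\otimes n}$. Both $\psi$ and the standard procedure map $\Phi$ are equivariant and functorial, so it suffices to check they agree on the generating data, namely on elements $b\otimes\Delta^m$ where $b$ is an $\FF[\Sigma_n]$-basis generator of $B_*$ and $\Delta^m\in N_m(\Delta^m)$ is the fundamental chain of a simplex. The base case is total degree $0$, where $\psi$ and $\Phi$ agree by hypothesis (this is the ``agrees with $\psi$ in degree $0$'' clause; when $m=0$ and $|b|>0$ the target $(N_*(\Delta^0)^{\otimes n})_{>0}$ vanishes so there is nothing to check). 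Then I would assume $\psi=\Phi$ on all generators of total degree less than $N$ and compare the two maps on a generator $b\otimes\Delta^m$ of total degree $N$.

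The key computation is the following. Since $\Phi$ is the standard procedure functorial chain map, its value on $b\otimes\Delta^m$ (with $b$ a basis generator) is given recursively by
\[
\Phi(b\otimes\Delta^m)=h_{\otimes^n}\,\Phi\bigl(d(b\otimes\Delta^m)\bigr)
=h_{\otimes^n}\,\Phi\bigl(db\otimes\Delta^m+(-1)^{|b|}b\otimes d\Delta^m\bigr),
\]
and each summand of $d(b\otimes\Delta^m)$ is an $\FF[\Sigma_n]$-combination of generators of strictly smaller total degree (or a degenerate term, which is $0$), together with face-inclusion images governed by the minimal carrier maps; on all of these $\Phi=\psi$ by the inductive hypothesis, so the right-hand side equals $h_{\otimes^n}\,\psi\bigl(d(b\otimes\Delta^m)\bigr)$. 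Because $\psi$ is a chain map, $\psi\bigl(d(b\otimes\Delta^m)\bigr)=d\bigl(\psi(b\otimes\Delta^m)\bigr)$, so
\[
\Phi(b\otimes\Delta^m)=h_{\otimes^n}\,d\bigl(\psi(b\otimes\Delta^m)\bigr)
=\psi(b\otimes\Delta^m)-d\,h_{\otimes^n}\bigl(\psi(b\otimes\Delta^m)\bigr)-\rho_{\otimes^n}\bigl(\psi(b\otimes\Delta^m)\bigr),
\]
using $dh_{\otimes^n}+h_{\otimes^n}d=\mathrm{Id}-\rho_{\otimes^n}$. Now the hypothesis on $\psi$ enters: $\psi(b\otimes\Delta^m)\in\operatorname{Im}(h_{\otimes^n})=\operatorname{Ker}(h_{\otimes^n})$, so $h_{\otimes^n}\bigl(\psi(b\otimes\Delta^m)\bigr)=0$; and in positive total degree $\rho_{\otimes^n}$ kills everything (or, more carefully, $\rho_{\otimes^n}$ annihilates the image of $h_{\otimes^n}$ since $h_{\otimes^n}\iota=0$ forces $\rho h_{\otimes^n}=0$ as in Proposition~4.1 of Part~I). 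Hence $\Phi(b\otimes\Delta^m)=\psi(b\otimes\Delta^m)$, completing the induction.

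The one point requiring a little care, and the step I expect to be the main obstacle, is the bookkeeping with functoriality and minimal carriers: in the expansion of $d(b\otimes\Delta^m)$ the term $(-1)^{|b|}b\otimes d\Delta^m$ involves the fundamental chains of the faces $d_j\Delta^m$ viewed through the minimal carrier face inclusions $N_*(\Delta^{m-1})^{\otimes n}\to N_*(\Delta^m)^{\otimes n}$, and one must know that $\Phi$ on $b\otimes d_j\Delta^m$ is computed as the image under the face inclusion of $\Phi(b\otimes\Delta^{m-1})$ — this is exactly the content of the functorial standard procedure of Remark~8.1 of Part~I, and it is what lets the inductive hypothesis apply to these terms as well. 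Once that is granted, the argument is purely formal, using only $h_{\otimes^n}^2=0$, $h_{\otimes^n}\iota=0$, the chain-map property of $\psi$, and the containment $\psi(b\otimes\Delta^m)\in\operatorname{Ker}(h_{\otimes^n})$. I would remark at the end that Proposition~\ref{17.3} exhibits the Berger--Fresse map $\Phi$ as exactly such a standard procedure map, and that the containment hypothesis is verified for it in Step~2 and Step~5 of that proof via the description of ``clean'' tensors.
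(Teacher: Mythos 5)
Your proposal is correct and follows essentially the same route as the paper's own proof: induction on degree, replacing $\Phi$ by $\psi$ on the boundary via the inductive hypothesis, using the chain-map property of $\psi$ together with $dh_{\otimes^n}+h_{\otimes^n}d=\mathrm{Id}-\rho_{\otimes^n}$, and then killing the correction terms with the hypothesis $\psi(b\otimes\Delta^m)\in\operatorname{Ker}(h_{\otimes^n})$ (the paper compresses the $\rho$ and functorial-carrier points you spell out, but they are the same observations).
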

\begin{proof} The proof is an induction.  The claim about the functorial maps follows from computations with the acyclic model $\Delta^m$.  We assume $\psi$ and $\Phi$ agree up to some degree and consider basis elements  in the next degree
$$\Phi(b \otimes \Delta^m) = h_{\otimes^n} \Phi d_\otimes (b \otimes \Delta^m) = h_{\otimes^n} \psi d_\otimes (b \otimes \Delta^m) = $$ $$  h_{\otimes^n}  d_{\otimes ^n} \psi (b \otimes \Delta^m) = \psi(b \otimes \Delta^m) -  d_{\otimes^n} h_{\otimes^n}   \psi (b \otimes \Delta^m) =  \psi (b \otimes \Delta^m).$$
\end{proof}
To complete the proof of Proposition \ref{17.5} we need to show that 
$$\psi = \Phi \circ (TR \otimes Id) \colon N_*(E\Sigma_n) \otimes N_*(X) \to \cS_*^{bf}(n) \otimes N_*(X) \to N_*(X)^{\otimes n}$$
satisfies the hypothesis of Proposition 17.9.  The troublesome issue is that for basis elements $(e, x) \in N_*(E\Sigma_n)$, some summands of $TR(e, x)$ may not be basis elements of $\cS_*^{bf}(n)$.\\

Recall from Section 16 of Part I that the map $TR$ commutes with contractions. Therefore $TR(e, x)$ belongs to the image of the contraction of $\cS_*^{bf}(n)$, which we have characterized as the span of clean surjection generators.  More specifically, the summands of $TR(e, x)$ are parametrized by partitions.  Consider a partition $ a_0 + \ldots + a_k = n+k$ with first summand $a_0  = \ell$.  The corresponding surjection generator $x_a$ then has the form $x_a= (1, 2,  \ldots, \ell, ..., \ell \ldots )$, where $\ell$ is the first caesura entry.  Such $x_a$ are clean  surjection generators. Therefore, $\Phi(TR(e, x) \otimes \Delta^m) \in Im(h_{\otimes^n})$ follows from Step 2 of the proof of Proposition \ref{17.3}.
\end{proof}

{\bf *Extension of the Berger-Fresse Map to Multisimplicial Sets.*}  We conclude this subsection with some remarks about the $k$-fold multisimplicial category that relate our methods to results of Medina-Mardones, Pizzi, and Salvatore in [23].  In Remark \ref{14.6} we discussed multidiagonals for the surjection complexes $\delta \colon \cS_*(n) \to \cS_*(n)^{\otimes k}$.  Combining with the Berger-Fresse maps $\Phi$  of Proposition \ref{17.3}, or its equivalent for other surjection complexes, there are composition equivariant chain maps, functorial on the $k$-fold product category $\bold{\Delta}^k$:
$$\cS_*(n) \otimes [N_*(\Delta^{m_1}) \otimes \ldots \otimes N_*(\Delta^{m_k})] \xrightarrow{\delta \otimes Id} \cS_*(n)^{\otimes k} \otimes [N_*(\Delta^{m_1}) \otimes \ldots \otimes N_*(\Delta^{m_k})]$$  $$\simeq \bigotimes_i(\cS_*(n) \otimes N_*(\Delta^{m_i})) \xrightarrow{\otimes \Phi_i} \bigotimes_i [ N_*(\Delta^{m_i})^{\otimes n}] \simeq [ (N_*(\Delta^{m_1}) \otimes \ldots \otimes N_*(\Delta^{m_k})] ^{\otimes n}.$$
 Hence we obtain functorial maps $\Phi_n \colon \cS_*(n) \otimes C_*(X) \to C_*(X)^{\otimes n}$ for $k$-fold multisimplical sets $X$, where the normalized cellular chain complex $C_*(X)$ for multisimplicial sets is defined in  Subsection 8.5 of Part I.  Generators $b = x \otimes \bigotimes_i \Delta^{m_i}$ map to elements in the image of the target contraction $h$ for prisms.  Thus,  by a variant of Proposition \ref{8.2},  these maps $\Phi_n$ coincide with equivariant standard procedure functorial maps with $\gamma(b) = h \gamma (db)$.\\
 
 In Subsection 20.2 of Part III below we will prove that the adjoints of the Berger-Fresse maps $\cS_*(n) \otimes N_*(X) \to N_*(X)^{\otimes n}$ for {\it simplicial sets} $X$ define an operad morphism $\cS_*(n) \to HOM_{func}(N_*( -), N_*( - )^{\otimes n})$ from the surjection operad to the Eilenberg-Zilber operad.  That argument extends to the adjoints of the maps just described for {\it multisimplicial sets} $X$, that is $\cS_*(n) \to HOM_{func}(C_*( - ), C_*( - )^{\otimes n}).$  The extension does not start from scratch but uses the composition formula above  that relates the multisimplicial set case to the  simplicial set case, and then uses extensions of various uniqueness theorems for standard procedure chain maps.\\
 
 In [23] it is shown that the $C_*(X)$ for multisimplicial sets are coalgebras over a certain $E_\infty$-operad different from the surjection operad.  We do not know how that operad is related to the surjection operad $\cS$ or the Barratt-Eccles operad $\cE$ with components $N_*(E\Sigma_n)$.  We study the two operads $\cS, \cE$ in Sections 19 and 20 below, along with an operad morphism $\cE \to \cS$ defined by the maps $TR \colon N_*(E\Sigma_n) \to \cS_*(n)$ of Subsection 16.1.  In any event, as mentioned in the paragraph above, our methods in Subsection 20.2 do extend to prove the cellular chain complexes $C_*(X)$ for multisimplicial sets are $E_\infty$-coalgebras over the $E_\infty$-operads $\cS$ and $\cE$, parallel to results of [23]. $\qed$
 
\subsection {*A Proof that the Steenrod Operation $ P^0 = Id$*}
 In Part IV we will exploit the $C_p$-equivariant functorial standard procedure map $\gamma \colon M_*(p)  \otimes N_*(X) \to N_*(X)^{\otimes p}$ to define and study the odd prime $p$ Steenrod operations  $P^j$ of degree $2j(p-1)$.  When $p = 2$ the fact that the degree 0 operation $Sq^0 = Id$ is obvious from the definition of the Steenrod operations in terms of the cochain $\cup_i$'s. In the language of Remark \ref{7.2}, there is only one diagram that completely fills a box of size $2 \times (q+1)$ and begins in the upper left corner.  In the odd prime case, somewhat more is required to define the reduced powers $P^j$ and to prove $P^0 = Id$. Here is the result needed to prove $P^0 = Id$, in terms of certain even dimensional generators $y_{2k} \in M_{2k}(p)$ and fundamental classes $\Delta^q \in N_q(\Delta^q)$.
\begin{prop}\label{17.10} Set $m = (p-1)/2$ and  $c_{q,p} = (-1)^{(q(q-1)/2)(p(p-1)/2)}(m!)^q$. Then\footnote{For various reasons the degree 0 cohomology operation $P^0$ on cocycles $\alpha \in N^q(X), q \leq 0$,  is defined as a certain multiple of the dual operation $\gamma(y_{|q|(p-1} \otimes \alpha^{\otimes p}) \in N^q(X)$.  With $\FF_p$ coefficients that multiple is $(-1)^{|q|p(p-1)/2}\  c_{|q|, p}^{-1}$. Then $P^0 = Id$ follows from bullet point 9 of Proposition \ref{3.1} that evaluates $<\alpha^{\otimes p}, (\Delta^{|q|})^{\otimes p} > = (-1)^{|q|p(p-1)/2}<\alpha, \Delta^{|q|}>$.} $$\gamma(y_{q(p-1)} \otimes \Delta^q)  = c_{q,p}(\Delta^q)^{\otimes p} \in N_*(\Delta^q)^{\otimes p}.$$
Modulo $p$, if $q = 2\ell$ is even, $c_{q,p} \equiv (-1)^\ell \in \FF_p$.  If $q = 2\ell +1$ is odd, $c_{q,p} \equiv (-1)^\ell m! \in \FF_p$.
\end{prop}
\begin{proof}

The main step of the proof  is the computation in Remark \ref{17.2} of $\gamma(x \otimes \Delta^q)$ for certain surjection generators $x$ of degree $2k = q(p-1)$, where $\gamma$ is the Berger-Fresse map. We combine here various results proved in Section 6 of Part I, Section 16 of Part II, and this Section 17 of Part III.\\

From Propositions \ref {17.8} and \ref{17.5}, the map $\gamma$ is a composition of three maps
$$M_*(p) \otimes N_*(X) \to N_*(EC_p) \otimes N_*(X) \to N_*(E\Sigma_p) \otimes N_*(X) \to N_*(X)^{\otimes p}.$$
All chain complexes have $\FF_p$ coefficients. The first map is $\phi \otimes Id$, where in Section 6 we computed the standard procedure map $\phi \colon M_*(p) \to N_*(EC_p)$.  In particular, $$\phi(y_{2k}) = \sum (1, T^{b_1-1}, T^{b_1}, \ldots, T^{b_k-1}, T^{b_k }) \in N_*(EC_p),$$ where the sum is taken over all $b_j \in \{ 1, 2, \ldots, p\}$. Of course degenerate summands can be removed. Then we compose with the map $i_* \otimes Id$, where $i_* \colon N_*(EC_p) \to N_*(E\Sigma_p)$ is induced by $T \mapsto t = (23\ldots p 1) \in \Sigma_p$.  The third map is $\Phi \circ (TR \otimes Id)$.
The standard procedure table reduction map $TR\colon N_{2k}(E\Sigma_p) \to S_{2k}^{bf}(p)$ was computed in Section 16 of Part II, and the Berger-Fresse map $\Phi \colon S_{2k}^{bf}(p) \otimes (\Delta^q) \to (N_*(\Delta^q)^{\otimes p})_{2k+p}$ was defined at the beginning of Subsection 17.1, prior to the proof of Proposition \ref{17.3} which asserts that the Berger-Fresse map is the standard procedure map.\\

With $2k = q(p-1)$ this last map $\Phi(x \otimes \Delta^q) = \sum_M pos(x)sh(x)F(M, x)$ was computed in Remark \ref{17.2}.  The only non-zero terms arise from the monomial $$M = (0| \ldots | 0 | 0 1| |1| \ldots |1| |12|  \ldots |(q-1) q| |q| \ldots |q)$$ with  $p$ occurrences each of $0,1, \ldots, q$, and $x \in \cS_*^{bf}(p)$ formed by concatenating $q+1$ permutations $g_i$ of $\{1, \ldots, p\}$, $0 \leq i \leq q$,   with $g_i(p) = g_{i+1}(1)$, and removing one of each adjacent repeated entry.\\

The table reductions $TR(e, t^{b_1-1}, t^{b_1}, \ldots, t^{b_k-1}, t^{b_k })$, where $k = q(p-1)/2 = qm$ and $b_i \in \{1,2, \ldots, p\}$, are sums of many surjection generators  $x_a$.  These generators are parametrized by partitions $a_0 + a_1 + \ldots + a_{q(p-1)} = qp$. The non-caesura entries of the surjection generators $x$ described in the paragraph above are the $p$ entries of the final permutation $g_q$.  Thus any table reduction surjection generator $x_a$ that can be such an $x$ must have $p$ distinct entries in the last row, hence $a_i = 1$ for $i < q(p-1)$ and $g_q = t^{b_k}$\\

Next, for which sequences of $b_j$'s does this particular $TR$ summand $x_a$  have the form of $q+1$ concatenated permutations?  Note $t^b = (b+1, b+2, \ldots, b)$. The first $p$ entries of $x_a$, namely, $(1,b_1, b_1+1, \ldots b_m, b_m+1)$ must be named by one of the $m!$ permutations of $m$ adjacent pairs $(2,3), (4,5), \ldots, ((p-1),p)$. As a permutation  $ g_0 \in \Sigma_p$, this is an even permutation. Then $b_m+1$ followed by the next $p-1$ entries must be named by one of the $m!$ permutations of $m$ adjacent pairs from the cyclically ordered set $(1,2,\ldots, p)$ with $b_m+1$ removed.  Such a permutation $g_1 \in \Sigma_p$ is also an even permutation\\

The process continues, and we see that the allowable $b_j$ sequences are named by $q$ permutations of $m$ pairs, each of which corresponds to an even permutation $g_i \in \Sigma_p$.  There are $(m!)^q$ such collections of permutations.    Since $p$ is odd, from Remark \ref{17.2} the position sign of all the resulting Berger-Fresse terms is $+1$.  Since all parity signs $\tau(g_i) = +1$, again from Remark {17.2} the shuffle signs are all $(-1)^{(q(q-1)/2)(p(p-1)/2)}$.\\

These calculations complete the proof of the first statement in Proposition \ref{17.10}, which we point out is a counting formula with $\ZZ$ coefficients.  Working modulo $p$, by Wilson's Theorem $(p-1)! \equiv -1$, hence $(m!)^2 \equiv (-1)(-1)^m \equiv (-1)^{(m-1)}.$ The modulo $p$ calculations of the $c_{q,p}$ follow easily.
\end{proof}
\begin{exam}\label{17.11} Take $p = 5,\  m = 2,\ q = 2$. Then the monomial is $M =  (0|0|0|0|01|1|1|1|12|2|2|2|2)$. There are $4 = (2!)^2$ relevant $TR$ summands.  The $b$ sequence $(b_1, b_2, b_3, b_4)$ must begin with $(b_1, b_2) = (2,4)\ or\ (4,2)$.  In the first case, $(b_3, b_4) = (1,3)\ or\ (3,1)$. In the second case $(b_3, b_4) = (1,4)\ or\ (4,1)$.\\

The corresponding surjection generators are  $$x_a = (1234512345123), (1234534123451), (1452312451234), (1452345123451).$$ The position signs and shuffle signs are all $+1$.  Thus $\Phi(y_8 \otimes \Delta^2) = 4(\Delta^2)^{\otimes 5}$.  $\qed$
\end{exam}

\newpage
\section{Preliminaries on Operads}
 A {\it symmetric operad} is a collection of objects $P(n)$, $n \geq 1$, in a  symmetric monoidal category,\footnote{Symmetric monoidal categories were discussed in Section 3 of Part I. 
 For us, the only examples will be operads in categories of chain complexes.} together with structure maps $$\cO_P \colon P(r) \otimes P(s_1) \otimes \ldots \otimes P(s_r) \to P(s_1+\ldots + s_r)$$ that satisfy a number of conditions.  There should be a {\it left} group action of $\Sigma_r$ on each $P(r)$.  In all cases of interest to us,  the component $P(1)$ is a unit object for the product in the category.   The maps $\cO_P$ should satisfy a {\it unit axiom}, an {\it associativity axiom}, and an {\it equivariance axiom}.  The unit axiom is usually pretty obvious and deals with cases where $r = 1$ or all of the $s_i = 1.$  In both cases corresponding $\cO_P$ maps are identities.\\

In the sections following this one we introduce a modified version of our recursive standard procedure for constructing equivariant chain maps,   $$\cO_B \colon  B_*(r) \otimes B_*(s_1) \otimes \ldots \otimes B_*(s_r) \to B_*(s)\ \ \ \ \ \ s = s_1+\ldots+s_r, $$  using  preferred bases of the domains over products of symmetric groups and  preferred contractions of the ranges.  We assume the collection of chain complexes $B_*(n)$  are free over $\Sigma_n$, satisfy $B_0(n) = \FF[\Sigma_n]$, and have bases in the image of  contractions satisfying  $H_n^2 = 0$. From Proposition 6.3 and Proposition 6.4 of Part I, such complexes $B_*(n)$ are canonically direct summands of  $N_*(E\Sigma_n)$.  The point of our new recursive construction is that these maps $\cO_B$, as the $s_i$ vary, can be fit together to form maps  that `almost' form an operad.  \\

There is a difference between the new construction  and our previous examples of  equivariant recursive standard procedure constructions.  Namely,  the product of symmetric groups $\widehat{G}$ that acts freely on the domain sides embeds as a subset, but not a subgroup, of $\Sigma_s$.  We will recursively define the structure map $\cO_B$ on basis elements $b$ of the domain in the usual way by $\cO_B(b) = H_s \cO_B (db)$, where $H_s$ is the contraction of the range. But then in order to satisfy the operad equivariance axiom for the structure maps, to be discussed below,   we need to extend $\cO_B$ by a  twisted form of equivariance on product elements $\hat{g}b$.  We then need to prove our map is a chain map and satisfies the operad equivariance axiom for arbitrary product elements $\hat{g}x$.\\

The operad associativity axiom seems to require additional assumptions about the $B_*(n)$. The troublesome issue  for associativity is that compositions of standard procedure chain maps need not be standard procedure chain maps, as discussed back in Proposition 6.5 of Part I.   But the  additional assumptions easily hold for the $N_*(E\Sigma_n)$ and less easily for the surjection complexes $\cS_*(n)$. Therefore we obtain by a new method operad structure maps for the complexes $N_*(E\Sigma_n)$ and for the $\cS_*(n)$\\

Using uniqueness theorems extending those of Sections 6 and 8 of Part I to the twisted equivariant context, we prove our operad structures agree with previous constructions of Barratt-Eccles and surjection operads.  By this method  we eliminate quite a lot of what the original authors referred to as `tedious computations', which were sometimes not even included.  But there is no free lunch. We try to be careful with details, of which there are  many.\\

But first we need to record  our conventions and some preliminary facts about operads in general.  There is nothing new in this section, which we have included  for completeness.  In the following sections we do present some completely different operadic constructions that seem rather surprising, motivated, and attractive.

\subsection{Operad Strucure Maps and Associativity}
We  need to set out some conventions concerning the axioms for  operads.  We also need to develop carefully what is known as the {\it symmetric group operad} in the category of sets.  In Subsection 18.3 we develop carefully the $End$ and $CoEnd$ operads in the category of chain complexes and the functorial  Eilenberg-Zilber  operad for simplicial sets.
\begin{rem}\label{18.1}{\bf Associativity.}  The associativity axiom states that a certain big diagram commutes.   That diagram is built given sums $s = \sum_{i=1}^r s_i$ and $s_i = \sum_{j=1}^{r_i} s_{ij}$. The equal sign on the left side of the diagram below refers to the canonical isomorphism that permutes factors in the symmetric monoidal category.  For example, in categories of chain complexes, there will be some Koszul signs.

$$\begin{CD} \displaystyle P(r) \otimes \bigotimes_{i=1}^r \bigl[P(r_i) \bigotimes_{j=1}^{r_i}P(s_{ij}) \bigr] & \xrightarrow{Id\otimes \bigotimes\cO_P} & \displaystyle P(r) \otimes \bigotimes_{i=1}^r P(s_i)  \xrightarrow{\cO_P} & P(s)\\
\vert \vert \ \sigma&  \boxed{18.1} & & \vert \vert\\
\displaystyle \bigl[P(r) \otimes \bigotimes_{i=1}^r P(r_i)\bigr] \otimes \bigotimes_{j=1}^{r_i}P(s_{ij}) & \xrightarrow{\cO_P \otimes Id} & \displaystyle P(\Sigma r_i) \otimes \bigotimes_{j=1}^{r_i} P(s_{ij})  \xrightarrow {\cO_P}  & P(s)
\end{CD}$$
\end{rem}
The equivariance axiom is somewhat more complicated and important and requires some preliminary constructions to even state.\\

Before that, we acknowledge that we will only be considering a few special  operads and we ignore all issues involving a $P(0)$. We also ignore the unit axiom concerning $P(1)$, which is trivial in all our examples. Perhaps more relevant is the fact that most treatments work with right actions of the symmetric groups.  But one of our main examples is (any of) the surjection operads $\cS$, with components $\cS_*(n)$, in a symmetric monoidal category of chain complexes.  The symmetric group $\Sigma_n$ acts most naturally on the left of surjection generators $x \colon \{1,\ldots, n+k\} \to \{1, \ldots, n\}$ by post-composition, sometimes with signs depending on which surjection complex is meant.  The $\Sigma_n$ actions on the various surjection complexes were studied in Part II. Another crucial example for our purposes is the Barratt-Eccles operad $\cE$, with components $N_*(E\Sigma_n)$. Here the symmetric group could equally well act on the left or the right, but because of the importance of our comparisons $\cS_*(n) \xrightarrow{PR} N_*(E\Sigma_n) \xrightarrow{TR} \cS_*(n)$ we want the left action.\\

Another key example for us is the Eilenberg-Zilber operad  of natural transformations of functors $\cZ(n) = HOM_{func}(N_*( - )), N_*( - )^{\otimes p})$ or a cochain version $HOM_{func}(N^*( - )^{\otimes p}, N^*( - )$, of normalized chain or cochain complexes of simplicial sets.   We view a cochain complex $N^*(X)$ as an {\it algebra} over an $E_\infty$ operad $\cP(n)$, such as $ N_*(E\Sigma_n)$ or $ \cS_*(n)$, via  an operad morphism $\cP(n) \to End(n) = HOM(N^*(X)^{\otimes n}, N^*(X))$.  The images of these maps lie in the functorial subcomplexes of the $HOM$ complexes. But we regard the cochain algebra structure to be an {\it emergent} phenomenon. The underlying phenomenon is the structure of a chain complex $N_*(X)$ as a {\it coalgebra}  via  $\cP(n) \to CoEnd(n) = HOM(N_*(X), N_*(X)^{\otimes n})$.  One obtains the $End(n)$ algebra structure  as an image of the $CoEnd(n)$ coalgebra structure, by composing $\cP(n) \to CoEnd(n)$, which was the topic of Section 17,  with  duality chain maps $CoEnd(n) \to End(n)$ from Proposition 3.1  of Part I.   Diagonal maps of chains precede cup products of cochains.

\subsection {The Symmetric Group Operad and Equivariance}
Before  developing the equivariance axiom for symmetric operads we will first review an operad $\Sigma$ in the category of sets called the {\it symmetric group operad}.  We have $\Sigma(r) = \Sigma_r$, the permutation group as a set.  The operad structure map, which is not a group homomorphism, $$\cO_\Sigma \colon \Sigma_r \times \Sigma_{s_1} \times \ldots \times \Sigma{s_r} \to \Sigma_s, \ \ \ s = s_1+ \ldots + s_r,$$ is defined as follows. Given $u \in \Sigma_r$ and  $v_i \in \Sigma_{s_i}$ we define a modified kind of block permutation $u_*(v_1, \ldots, v_r)  = \cO_\Sigma(u; v_i) \in \Sigma_s$.   We partition the integral interval $[1, s]$ into blocks $$B_1 = [1, s_1], B_2 = [s_1+1 , s_1 + s_2], \ldots, B_r = [s_1 + \ldots + s_{r-1} +1 , s_1 + \ldots + s_r].$$ We regard each $v_i$ as a permutation of the $i^{th}$ block $B_i$ by `identifying' that ordered block in the obvious way with $[1, \ldots, s_i]$. Then $(v_1, \ldots, v_r)$ acts on $[1,s]$ with each $v_i$ acting   on  $B_i$.  We then permute the rearranged blocks $v_iB_i$ according to how $u \in \Sigma_r$ acts on the left of the ordered set of $r$ blocks.\\

We could equally well first permute the original blocks by $u$ to $[B_{u(1)}, \ldots, B_{u(r)}]$, then apply the $v_i$ to the new locations of the $B_i$.  We can write the resulting permutation as a concatenation  of blocks $$\cO_\Sigma(u; v_i) = [v_{u(1)}B_{u(1)}, \ldots, v_{u(r)}B_{u(r)}] \in \Sigma_s,$$ which expresses a permutation in $\Sigma_s$ in function form as a list of the integers in the interval $[1, s]$ in some order.\\

Note that if all $s_i = 1$ then $\cO_\Sigma(u; v_i) = u$. We abbreviate $ u_*(Id_{s_1}, \ldots, Id_{s_r}) = u_*(s_1, \ldots, s_r)$, when the $v_i$ are all identity permutations.  \\

It helps to pretend the $s_i$ are distinct.  It is not quite enough to write $v_i \in \Sigma_{s_i}$, if $s_i$ occurs more than once in the ordered set $\{s_j\}$, it is necessary to keep track of which block of size $s_i$ is associated with which permutation $v_i$.  $\qed$

\begin{lem}\label{18.2} We have a composition identity in the permutation group $\Sigma_s = \Sigma_{s_1 + \ldots + s_r}$. $$\cO_\Sigma(u; v_i)  = (v_1 \oplus \ldots \oplus v_r) \circ u_*(s_1, \ldots, s_r) $$  $$ = u_*(s_1, \ldots s_r) \circ  (v_{u1} \oplus \ldots \oplus v_{ur}).$$
\end{lem}
\begin{proof} The direct sum notation for permutations means a permutation of an ordered union of disjoint sets.  So in the first formula, $v_1$ acts on the initial block $B_1$ of size $s_1$, wherever it occurs in the output of $u_*(s_1, \ldots, s_r)$, $v_2$ acts on the second initial block $B_2$ of size $s_2$, and so on.  This is  the second description of the permutation operad action above.  Although strictly speaking $v_i$ is in $\Sigma_{s_i}$, we routinely also denote by $v_i$ the `same' permutation of appropriate other blocks of $s_i$ consecutive integers.\\

Then in the second formula,  $v_{u1}$ acts on an {\it initial} block of size $s_{u1}$, that is, $[1, \ldots, s_{u1}]$, $v_{u2}$ acts on the following block of size $s_{u2}$, and so on.  This is followed by the permutation $u_*(s_1, \ldots, s_r)$.  So that is a little tricky.\\ 

In working with permutations that move blocks of integers in $[1,s]$ it is best to line up blocks of the same size in describing the permutations. In our situation, denote by $B_1, B_2, \ldots, B_r$ the consecutive blocks of sizes $s_1, s_2, \ldots, s_r$.  Denote by $B_1', B_2', \ldots, B_r'$ the consecutive blocks of sizes $s_{u1}, s_{u2}, \ldots, s_{ur}$.  So $|B_j'| = |B_{uj}| = s_{uj}$. Note $[1, s] = [B_1B_2 \ldots B_r] = [B_1' B_2' \ldots B_r']$. Then the composed permutation $(\oplus v_i) \circ u_*(s_1, \ldots, s_r) \in \Sigma_s$ can be viewed in function form by reading the display of blocks below from the top row to the middle row to the bottom row.
$$\begin{CD}
[B_1'\ \ \ B_2'\ \ \ \ldots\ \ \ B_r']\\
[B_{u1}\ \ B_{u2}\ \ \ldots\ \ B_{ur}] \\
[v_{u1}B_{u1}\ v_{u2}B_{u2}\ \ldots\ v_{ur}B_{ur}] 
\end{CD}$$
We point out that if $u_*$ moves a block $B'$ to a block $B$ of the same size then $u_*$ will move $B'$ scrambled by a permutation to $B$ scrambled by the `same' permutation. We can therefore display the composed permutation $u_*(s_1, s_2, \ldots, s_r) \circ (\oplus v_{ui})$ by the diagram below, which proves the lemma.
$$\begin{CD}
[B_1'\ \ \ B_2'\ \ \ \ldots\ \ \ B_r']\\
[v_{u1}B_1'\ v_{u2}B_2'\ \ldots\ v_{ur}B_r'] \\
[v_{u1}B_{u1}\ v_{u2}B_{u2}\ \ldots\ v_{ur}B_{ur}] 
\end{CD}$$
\end{proof}
{\bf Example.} Let $u = (231), v_1 = (21), v_2 = (3124), v_3 = (321)$.  Then we first form a permutation in $\Sigma_9$ by applying the $v_i$ to the initial blocks of size 2,4,3 in that order.  This yields the permutation $(21\ 5346\ 987) \in \Sigma_9$. (Recall we name a permutation by writing  $(12 \ldots s)$ in some order.)  Then we permute the new first, second, and third blocks by $u$, yielding the permutation $$u_*(v_1, v_2, v_3) = u(v_1B_1, v_2B_2, v_3B_3) = (v_2B_2, v_3B_3, v_1B_1) = (5346\ 987\ 21) \in \Sigma_9.$$

We have $u_*(2,4,3) = (3456\ 789\ 12)$.   The first and second formulas in Lemma \ref{18.2} give respectively $$  (21\ 5346\ 987) \circ (3456\ 789\ 12) = (5346\ 987\ 21)  =   (3456\ 789\ 12)  \circ(3124\ 765\ 98). \qed$$ 

\begin{rem}\label{18.3}{\bf*Connection Between $\cO_{\Sigma}$ and Semidirect Products.*} Given a group $G$ and subgroups $H, K \subset G$ where $K$ normalizes $H$ and $K \cap H = \{ e \}$, there is a semidirect product subgroup $HK = KH \subset G$.  Elements can be written uniquely either as products $hk$ or $k'h'$.  Thus the subgroup can be identified setwise with either $H \times K$ or $K \times H$, with  appropriate twisted products.\\

Suppose in the discussion of $\cO_{\Sigma}$ the $s_i$ coincide, say $s_i = t, 1 \leq i \leq r$.  Then there is a semidirect product subgroup $ HK = (\Sigma_t \times \cdots \times \Sigma_t)  (\Sigma_r )   \subset  \Sigma_{rt}$, involving $r$ successive blocks of length $t$, whose elements are identified with elements of  $(\Sigma_t \times \cdots \times \Sigma_t) \times (\Sigma_r)$, with a twisted product.  We write elements of the product of $\Sigma_t$'s subgroup as $(\oplus v_i) \in (\Sigma_t \times \cdots \times \Sigma_t) \subset \Sigma_{rt}$. These act with the $v_i$ permuting elements of the successive blocks $B_i$ of length $t$ in $[1, \ldots rt]$. The $\Sigma_r$ subgroup permutes those blocks, $u(B_1, \ldots, B_r) = (B_{u1}, \ldots, B_{ur})$, which is the permutation $u_* = u_*(t, \ldots, t)$. The subgroup $\Sigma_r$ normalizes the product subgroup by $u_* \circ (\oplus v_i) \circ u_*^{-1} = (\oplus v_{u^{-1} i}) \in \Sigma_{rt}$, which is the standard formula for a left action of a permutation group on a product set.   The twisted product on $(\Sigma_t \times \cdots \times \Sigma_t) \times  (\Sigma_r )$ in $\Sigma_{rt}$ is then
$$((\oplus v_i), u_*) \circ  ((\oplus v_i'), u_*') =  ((\oplus v_i) (u_* (\oplus v_i') u_*^{-1}), u_* u_*') = ((\oplus v_i v'_{u^{-1} i}), u_* u_*').$$

By Lemma \ref{18.2} the tuple $(u; v_1, \ldots, v_r)$ maps by $\cO_{\Sigma}$ to the permutation $ (\oplus v_i) \circ u_* = u_* \circ (\oplus v_{ui})$. Because the left term is simpler, we prefer to identify the semidirect product setwise with $(\Sigma_t \times \cdots \times \Sigma_t) \times (\Sigma_r)$.  The equivariance axiom for $\Sigma$ to be studied next then identifies $\cO_{\Sigma}$ with the subgroup inclusion of the semidirect product. To be precise, the equivariance axiom will say  $$  \cO_{\Sigma}(u; v_i) \cO_{\Sigma}(u'; v'_i)  = \cO_{\Sigma} (u u'; v_i v'_{u^{-1}i}).$$ But the permutation $\oplus ( v_i v'_{u^{-1}i}) = (\oplus v_i) (u_* (\oplus v_i') u_*^{-1})$.
In particular, the operad structure map $\cO_\Sigma$ in this case is identified with the subgroup inclusion $HK \subset \Sigma_{rt}$, if the domain $K \times H$ of $\cO_\Sigma$ is (benignly) identified with $H \times K$. This observation is of some importance for establishing the Cartan formula and the Adem relations for Steenrod operations.$\qed$
\end{rem}

\begin{rem}\label{18.4} {\bf Equivariance.} We will next write down the general equivariance axiom for operads, as it is often stated. One can give two separate formulas or combine them into a single formula. For $g \in \Sigma_r$, $u \in P(r)$, and $h_i \in \Sigma_{s_i}$, $v_i \in P(s_i)$, $s = s_1 + \ldots + s_r$, the axiom states $$\cO_P(g  u; v_1, \ldots, v_r) = g_*(s_1, \ldots, s_r)  \cO_P(u; v_{g1}, \ldots, v_{gr}) \in P(s)$$
$$\cO_P(u; h_1v_1, \ldots, h_rv_r) = (h_1 \oplus \ldots \oplus h_r)  \cO_P(u; v_1, \ldots, v_r) \in P(s)$$
$$  \cO_P(gu; h_1v_1, \ldots, h_rv_r) = \cO_\Sigma(g; h_1, \ldots, h_r) \cO_P(u; v_{g1}, \ldots, v_{gr}) \in P(s).$$

But there are some issues with what these statements mean, and with the consistency of these properties, that many articles about operads seem to ignore.  The first formula really means that a certain diagram commutes.
$$\begin{CD} \displaystyle P(r)  \otimes  \bigotimes_{i=1}^{r}P(s_{gi})   & \xrightarrow{g\otimes \tau(g)} & \displaystyle P(r) \otimes  \bigotimes_{i=1}^{r}P(s_i) \\
\downarrow \cO_P& \boxed{18.4}  & \downarrow \cO_P \\
\displaystyle  P(s)& \xrightarrow{g_*(s_1, \ldots, s_r) } & P(s)
\end{CD}$$

The diagram includes the canonical symmetric monoidal category permutation isomorphism $\tau(g) \colon \otimes_i P(s_{gi})  \simeq \otimes_i  P(s_i) $, where we think of $\Sigma_r$ acting on the {\it left} of the direct sum (coproduct) of all the $\bigotimes_{j=1}^{r}P(s_j)$, the sum  taken over all ordered partitions  $s = s_1 + \ldots + s_r$.  In particular,  for the left group action on a tensor product of chain complexes,  $\tau(g)$ acts as $g^{-1}$ on  subscripts that name the {\it ordered position}\footnote{Thus, in the top row of $\boxed{18.4}$, the point is $\tau(h)(\otimes_i v_{gi}) = \pm \otimes_i v_{gh^{-1}i}$, since $i$ names the ordered position. See Subsection 3.3 for a discussion of left group actions on tensor products.}  of a tensor term factor in a multi-tensor, along with Koszul signs in order that the permutation isomorphism defines a chain map.  But at least in chain complexes a tensor product of elements $u \otimes \bigotimes v_i$ makes sense.  In the categories Sets and Vector Spaces, the symmetric monoidal product is even more elementary. But who knows what $\bigotimes P(s_i)$ and $\tau(g)$ mean in abstract symmetric  monoidal categories.\\

The second formula in the equivariant axiom is somewhat more straightforward to express as a commutative diagram.
$$\begin{CD} \displaystyle P(r)  \otimes  \bigotimes_{i=1}^{r}P(s_i)   & \xrightarrow{Id \otimes \bigotimes_{i=1}^r h_i} & \displaystyle P(r) \otimes  \bigotimes_{i=1}^{r}P(s_i) \\
\downarrow \cO_P& \boxed{18.4'}  & \downarrow \cO_P \\
\displaystyle  P(s)& \xrightarrow{h_1 \oplus \ldots \oplus h_r } & P(s)
\end{CD}$$

Note that the permutation  identity in Lemma \ref{18.2} implies we get the same answer computing $\cO_P(gu; h_1v_1, \ldots,h_rv_r)$ by using the first two equivariance rules in either order.  This then explains the equivalence of the first two formulas with the third.\\

There is no problem simply composing $h$'s in each $\Sigma_{s_i}$ in the second formula in Remark \ref{18.4}.  That is, consistency of the second formula for $(h_ih'_i)v_i$ and $h_i(h'_i v_i)$ is immediate. But there is an issue composing $g$'s in $\Sigma_r$ in the first formula.  We will show below that the first formula is always consistent with the left group action on $P(r)$.  That is,  $\cO_P((hg) u; v_1, \ldots, v_r) = \cO(h (gu); v_1, \ldots, v_r)$ must hold. But simple iterated computation of the second term yields a different looking formula.  At the same time, we will show the general operad equivariance axiom does hold for the permutation operad $\Sigma$.   We found all of this less obvious than we expected. The key for both issues is the following.\footnote{In an earlier version of this paper we thought Lemma 18.5 was much harder than it really is.}

\begin{lem}\label{18.5} For $ h, g \in \Sigma_r$ and positive  $s_1, \ldots, s_r$ with $s = s_1 + \ldots + s_r$, it holds that $(hg)_*(s_1, \ldots, s_r) = h_*(s_1, \ldots, s_r) \circ g_*(s_{h1}, \ldots, s_{hr}) \in \Sigma_s.$
\end{lem}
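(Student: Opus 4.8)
The identity $(hg)_*(s_1,\ldots,s_r) = h_*(s_1,\ldots,s_r)\circ g_*(s_{h1},\ldots,s_{hr})$ is a statement entirely about the block-permutation construction $u_*(s_1,\ldots,s_r)\in\Sigma_s$, so the plan is to unwind the definition of $u_*$ on the level of blocks and track where an arbitrary integer $m\in[1,s]$ is sent by each side. I would begin by fixing notation: for a partition $s=s_1+\cdots+s_r$ let $B_1,\ldots,B_r$ be the consecutive blocks of sizes $s_1,\ldots,s_r$, so that $u_*(s_1,\ldots,s_r)$ is the permutation that carries block $B_i$ rigidly (order-preservingly) onto the slot occupied by $B_i$ after the blocks have been reordered by $u$ acting on the left of the ordered list $(B_1,\ldots,B_r)$. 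Concretely, $u_*(s_1,\ldots,s_r)$ sends the $j$-th block (in the source ordering) to the position $u^{-1}(j)$ among the blocks listed in the target, i.e.\ the target reads $(B_{u(1)},\ldots,B_{u(r)})$ and the $i$-th target block is the image of $B_{u(i)}$; this is exactly the ``second description'' of the permutation operad action recalled in Remark 18.1 with all $v_i$ equal to the identity.

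The heart of the argument is then a careful bookkeeping of block sizes. Note that $g_*(s_{h1},\ldots,s_{hr})$ is built from the partition whose $i$-th block has size $s_{hi}$; call these blocks $B'_1,\ldots,B'_r$, with $|B'_i|=s_{hi}$. I would verify that $B'_i$ is exactly the block $B_{hi}$ after relabelling, because the partition $(s_{h1},\ldots,s_{hr})$ is the reordering of $(s_1,\ldots,s_r)$ by $h$ acting on the left of the ordered list, which is precisely the list of blocks produced by $h_*(s_1,\ldots,s_r)$. Thus applying $g_*(s_{h1},\ldots,s_{hr})$ first permutes the $r$ blocks $(B_{h1},\ldots,B_{hr})$ by $g$, and then applying $h_*(s_1,\ldots,s_r)$ on the outside reindexes those blocks back through $h$; composing, one finds the blocks end up permuted by $h\circ g$ relative to the original list $(B_1,\ldots,B_r)$, which is the definition of $(hg)_*(s_1,\ldots,s_r)$. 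I would make this rigorous by writing each side as a function on $[1,s]$: given $m\in B_j$ with $m = (s_1+\cdots+s_{j-1})+t$, compute the offset of the image block under each side and check the within-block offset $t$ is preserved by both (all the maps in sight are order-preserving on blocks), so the two sides agree at $m$.

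The main obstacle I anticipate is purely notational: keeping straight the distinction between ``which block (by source index)'' and ``which slot (by target position)'', since the construction $u_*$ involves $u$ acting on positions while the partition data $(s_{h1},\ldots,s_{hr})$ involves $h$ acting on sizes, and a sign error in $h$ versus $h^{-1}$ would propagate. To control this I would run the block-index computation twice — once using the ``permute then apply'' description and once using the ``apply then permute'' description of $u_*$ from Remark 18.1 — and check the two computations agree, which forces the conventions to line up. Once Lemma 18.5 is established, it immediately gives both the consistency of the first equivariance formula with the $\Sigma_r$-action on $P(r)$ and the associativity/equivariance of the permutation operad $\Sigma$ itself, by substituting it into the iterated computations of $\cO_P((hg)u;v_i)$ and $\cO_\Sigma$, as the paragraph following the lemma statement indicates.
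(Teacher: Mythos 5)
Your plan is correct, but it takes a genuinely different route from the paper's proof. You argue directly for arbitrary $h$ and $g$: reading $u_*(t_1,\ldots,t_r)$ in one-line notation as a concatenation of blocks, the source position-groups of $h_*(s_1,\ldots,s_r)$ are exactly the blocks of the partition $(s_{h1},\ldots,s_{hr})$ from which $g_*(s_{h1},\ldots,s_{hr})$ is built, so the composite carries the $k$-th position-group of the partition $(s_{hg(1)},\ldots,s_{hg(r)})$ order-preservingly onto $B_{hg(k)}$, which is precisely $(hg)_*(s_1,\ldots,s_r)$; the pointwise check of block starts and within-block offsets that you propose makes this rigorous, since every map in sight is increasing on each position-group. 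The one spot to state carefully is the "relabelling" you mention: the block $B'_i$ of the partition $(s_{h1},\ldots,s_{hr})$ is not $B_{hi}$ as a subset of $[1,s]$ but rather the interval of source positions that $h_*(s_1,\ldots,s_r)$ maps order-preservingly onto $B_{hi}$; with that identification made explicit, the composed block permutation is $hg$ exactly as you claim. The paper instead reduces to the case where $h$ is a transposition of adjacent integers, proves that case by an explicit splitting of the two affected blocks (writing $B_iB_{i+1}$ as $B'B''B_{i+1}$ according to the relative sizes), and then inducts over a factorization of $h$ into adjacent transpositions; it also notes that the lemma follows abstractly by embedding $\Sigma$ into a big operad such as $End(A)$ for a free monoid $A$. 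Your direct argument is closer in spirit to that monoid/word observation made concrete, and it buys uniformity — no induction and no case analysis on relative block sizes — whereas the paper's reduction keeps each individual verification very low-tech at the cost of the inductive scaffolding; notably the authors remark that a direct proof "seems rather non-trivial," and your interval-plus-offset formulation shows it can be carried out cleanly.
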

\end{rem}
Before proving the lemma, we address the two points raised just above.  First we will use  Lemma \ref{18.5}  to establish the first operad equivariance axiom for $\Sigma$. We have from Lemma \ref{18.5} and Lemma \ref{18.2}, $$\cO_\Sigma(gu, v_1, \ldots, v_r) = (v_1 \oplus \ldots \oplus v_r) (gu)_*(s_1, \ldots, s_r)$$  $$ = (v_1 \oplus \ldots \oplus v_r)g_*(s_1, \ldots, s_r)u_*(s_{g1}, \ldots, s_{gr}) $$ $$  =  g_*(s_1, \ldots, s_r) (v_{g1} \oplus \ldots \oplus v_{gr}) u_*(s_{g1}, \ldots, s_{gr}) $$ $$ = g_*(s_1, \ldots, s_r) \cO_\Sigma (u; v_{g1}, \ldots, v_{gr}).$$
Here, $g, u \in \Sigma_r$ and $v_j \in \Sigma_{s_j}$.  The second equivariance axiom for $\Sigma$ is trivial.\\

Next, for the consistency of the first operad equivariance formula in Remark \ref{18.4}, we use  the  formula three times and then use Lemma \ref{18.5}.  We also use $\tau(g^{-1})(\otimes v_{hi}) = \otimes v_{hgi}$.   Suppressing the categorical $\otimes$ symbols in the subtleties of Diagram $\boxed{18.4}$, we write
$$\cO_P(h (gu); v_1, \ldots, v_r) = h_*(s_1, \ldots, s_r) \cO_P(gu; v_{h1}, \ldots, v_{hr}) $$  $$ = h_*(s_1, \ldots , s_r) g_*(s_{h1}, \ldots, s_{hr}) \cO_P ( u; v_{hg1}, \ldots, v_{hgr}).$$ 
$$ \cO_P((hg)u; v_1, \ldots, v_r) = (hg)_*(s_1, \ldots, s_r) \cO_P(u; v_{hg1}, \ldots, v_{hgr}).$$
Instead of referring to elements $u, v_i$, one can nicely prove the consistency of the equivariance axiom for general $P$ by concatenating two Diagrams $\boxed{18.4}$.\\

 {\bf Proof of Lemma \ref{18.5}.}  We use notation from the proof of Lemma \ref{18.2}. We denote successive initial blocks of size $s_i$ by $B_i$,  successive initial blocks of size $s_{hi}$ by $B_i'$, and successive initial blocks of size $s_{hgi}$ by $B_i''$.  Then we claim  the composed permutation  $h_*(s_1, \ldots, s_r) \circ g_*(s_{h1}, \ldots, s_{hr})$ is given by reading down from the top row to the middle row to the bottom row in the following diagram.  
$$\begin{CD}
[B_1''\ \ \ B_2''\ \ \ \ldots\ \ \ B_r']\\
[B_{g1}'\ \ B_{g2}'\ \ \ldots\ \ B_{gr}'] \\
[B_{hg1}\ B_{hg2}\ \ldots\ B_{hgr}] 
\end{CD}$$ 
Each column consists of blocks of the same size. But this diagram from top to bottom obviously defines $(hg)_*(s_1, s_2, \ldots, s_r)$.\\

We  explain why  the middle and bottom rows of the first diagram define $h_*(s_1, s_2, \ldots, s_r) $. The first view of $h_*(s_1, s_2, \ldots, s_r) $ would be the diagram
$$\begin{CD}
[B_1'\ \ \ B_2'\ \ \ \ldots\ \ \ B_r']\\
[B_{h1}\ \ B_{h2}\ \ \ldots\ \ B_{hr}] \\
 \end{CD}$$
 But blocks of the same size are lined up here, and the middle and bottom rows of the first diagram are exactly the same block pairs as in the second diagram permuted by $g$.  So the permutations of the integers in $[1, s]$ are the same, hence the middle and bottom row of the first diagram also define $h_*(s_1, s_2, \ldots, s_r)$.  $\qed$
  
\begin{rem}\label{18.6}{\bf Associativity Axiom for $\Sigma.$} To conclude this subsection we will establish the associativity axiom for $\Sigma$.  From Diagram $\boxed{18.1}$, the axiom amounts to a somewhat formally complicated identity in a permutation group $\Sigma_s$.  It seems best to just think about the block permutation form of the permutations arising from the top and bottom rows of Diagram $\boxed{18.1}$.  Divide the interval $[1,s]$ first into $r$ blocks $B_i$ of length $s_i$.  Then subdivide each block $B_i$  into $r_i$ subblocks $B_{ij}$ of length $s_{ij}$.\\

On the top row of Diagram $\boxed{18.1}$, first simultaneously permute the elements of block $B_i$ by applying separate permutations in  $\Sigma_{s_{ij}}$ to the subblocks $B_{ij}$, then permute those subblocks  by a permutation in $\Sigma_{r_i}$.  Finally permute the resulting rearranged blocks $B_i$ by a permutation in $\Sigma_r$.\\

Going down and then across the bottom row, first rearrange the subblocks $B_{ij}$ of each $B_i$ by permutations in $\Sigma_{r_i}$.  Then permute the rearranged blocks $B_i$ by a permutation in $\Sigma_r$.  Finally apply permutations in $\Sigma_{s_{ij}}$ to each block $B_{ij}$ in its new location.  The two described permutations in $\Sigma_s$ are seen to be the same. $\qed$
\end{rem}
\subsection{The End and CoEnd Operads}
 In this subsection  we will establish the operad properties of the $End(A_*)$ and $CoEnd(A_*)$ chain complex operads of multlinear maps defined for chain complexes $A_*$ by
 $$ End(A_*)(n) = HOM(A_*^{\otimes n}, A_*) $$ $$CoEnd(A_*)(n) = HOM(A_*, A_*^{\otimes n}).$$
 For chain complexes and cochain complexes of simplicial sets these operads  are central ingredients in the approach to cochain operations underlying Steenrod cohomology operations that we take up in Part IV.  It is actually the Eilenberg-Zilber operad $\cZ$ with components the chain complex of functorial graded linear maps $\cZ(n) = HOM_{func}(N_*( - ), N_*( - )^{\otimes n})$ for simplicial sets that is more important. We will suppress the $HOM_{func}$ notation in our discussion, but every step in the basic discussion of $CoEnd$ and $End$ operads for simplicial sets is natural, so they make sense if we interpret $ HOM(N_*(X), N_*(X)^{\otimes n})$ and $HOM(N^*(X)^{\otimes n}, N^*(X))$ directly for fixed $X$ or as ``natural transformations of functors". Of course the reason that the functorial operads are tractible is because of the functorial representability of bases of $N_q(X)$ by contractible models $\Delta^q \to X$.\\

We remind that cochain complexes  $A^* = HOM(A_*, \FF)$ are  chain complexes, negatively graded if $A_*$ is positively graded.  So in discussing $End$ and $CoEnd$ operads in general, there is no need to distinguish chain and cochain complexes.  The notation seems awkward if asterisks are used to denote both grading indices and the dual complex construction.  So in the general discussion to follow we will simply denote chain complexes by a single letter $A$ and denote the dual cochain complex by $A^*$.\\

We recall from bullet point 9 of Proposition \ref{3.1} in Section 3 of Part I that there are `duality chain maps' $Hom(A, B) \to Hom(B^*, A^*)$ and $(A^*)^{\otimes n} \to (A^{\otimes n})^*$,  and then a composition
$$ HOM(A, A^{\otimes n}) \to  HOM((A^{\otimes n})^*, A^*)  \to HOM((A^*)^{\otimes n}, A^*).$$
Koszul type signs are needed in the definition of these maps in order that they are chain maps.\\

This last composition is thus a chain map $CoEnd(A)(n) \to End(A^*)(n)$, which will be seen to define an operad morphism.  When the $A$ are restricted to  $N_*(X)$,  this morphism passes to the natural transformations of functors $HOM_{func}$, that is, to the Eilenberg-Zilber operads.\\
 
\begin{rem} \label{18.7}{\bf The End and CoEnd Operad Structure Maps.} Operad structure maps  $$\cO^* \colon End(r) \otimes \bigotimes_i End(s_i) \to End(s),\ s = \sum s_i$$ 
 $$\cO_* \colon CoEnd(r) \otimes \bigotimes_i CoEnd(s_i) \to CoEnd(s),\ s = \sum s_i,$$
  will  be compositions of functions.  Recall  from bullet point 5 of Proposition 3.1  of Part I that we have chain maps $ \otimes v_i \mapsto \underline \otimes v_i,$  $$\bigotimes_{1 \leq i \leq r} HOM(A^{\otimes s_i}, A) \xrightarrow{\underline{\otimes}} HOM(A^{\otimes s}, A^ {\otimes r}),$$
 $$\bigotimes_{1 \leq i \leq r} HOM(A, A^{\otimes s_i}) \xrightarrow{\underline{\otimes}} HOM(A^{\otimes r}, A^ {\otimes s}),  $$
  where signs are needed when evaluating the linear maps $\underline \otimes v_i$ on  tensors.\\

From bullet point 4 of Proposition 3.1, composition of functions is a chain map, but one must be careful with the order of the composition.  The correct chain map order, with $d(\alpha \circ \beta) = d\alpha \circ \beta + (-1)^{|\alpha|} \alpha \circ d\beta$,   is
$$HOM(C_*, D_*) \otimes HOM(B_*, C_*) \to HOM(B_*, D_*),\ \alpha \otimes \beta \mapsto \alpha \circ \beta.$$
 
 \begin{defn}\label{18.8}Given $u \colon A^{\otimes r} \to A$ and $v_i  \colon A^{\otimes s_i} \to  A$,  the operad structure map value  for the $End$ operad, $\cO^*(u\otimes \bigotimes_i  v_i) \colon A^{\otimes s} \to A,$ with $ s = \sum s_i$, is defined  to be the composition,
 $$ \cO^* = u \circ \underline \otimes v_i  \colon End(r) \otimes \bigotimes End (s_i) \to End(s).$$ 

Given $u \colon A \to A^{\otimes r}$ and $v_i  \colon A \to  A^{\otimes s_i}$,  the operad structure map value  for the $CoEnd$ operad, $\cO_*(u\otimes \bigotimes_i  v_i) \colon A \to A^{\otimes s}$ with $ s = \sum s_i$, is defined  to be the composition $ \cO_* =  ( \underline \otimes v_i \circ u) \circ \tau$, where $\tau(u \otimes  \bigotimes_i v_i ) = (-1)^{|u| | \otimes_i v_i |}  \bigotimes_i v_i \otimes u$, with the  Koszul sign, is the permutation of tensor factors. Thus $\cO_* = \cO_*'  \tau$,
$$ CoEnd(r) \otimes \bigotimes_i CoEnd(s_i) \xrightarrow{\tau} \bigotimes_i CoEnd(s_i) \otimes CoEnd(r) \xrightarrow{\cO_*'} CoEnd(s),$$
where $\cO_*'(\bigotimes_i v_i \otimes u) = \underline \otimes v_i \circ u$.
\end{defn}
Other authors have noticed that it would be sometimes advantageous to change the order of tensor factors in the basic definition of operad structure maps $\cO_P$, as it was in Remark \ref{18.3} involving semidirect products and the symmetric group operad. But it seemed like this would cause us more trouble than it was worth.  For example, $CoEnd$ gets better, but $End$ gets worse. You can also mess around with right vs left group actions, or composing functions in the other order.  But it is like a game of whack-a-mole. $\qed$
\end{rem}

{\bf Proof of Associativity of the End(A) and CoEnd(A) Operads.} To prove the associativity axiom for the $End(A)$ operad, look at  Diagram $\boxed{18.1}$ with $P(n) = HOM(A^{\otimes n}, A)$.  We take $u \in HOM(A^{\otimes r}, A)$, $u_i \in HOM(A^{\otimes r_i}, A)$, and $v_{ij} \in HOM(A^{\otimes s_{ij}} , A)$.  The equality of the two ways around the diagram essentially amounts to associativity of a composition of three chain maps $\alpha \circ \beta \circ \gamma $.  Here, $\alpha = u,\  \beta = \underline{\otimes}_i u_i$, and   $ \gamma = \underline{\otimes}_i \underline {\otimes}_j v_{ij}.$\\

The composition $(\alpha \circ \beta) \circ \gamma$ goes across the bottom of Diagram $\boxed{18.1}$.  Across the top of the diagram one sees the composition $u \circ \underline{\otimes}_i (u_i \circ \underline{\otimes}_j v_{ij})$.  From bullet point 6 of Proposition \ref{3.1} of Part I, the tensor $ \underline{\otimes}_i (u_i \circ \underline{\otimes}_j v_{ij})= \pm \underline{\otimes}_i u_i \circ \underline{\otimes}_i \underline {\otimes}_j v_{ij}.$ Then across the top one sees the composition $ \pm \alpha \circ (\beta \circ \gamma)$.  What is the sign?  It is the Koszul sign that shuffles various $u_k$ across various $\underline{\otimes}_j v_{\ell j}.$ But this is the same as the Koszul sign of the shuffle in the vertical canonical isomorphism on the left of Diagram $\boxed{18.1}$, so the diagram commutes.\\

Of course, the sophisticated way to handle signs is to just say ``Chain complexes form a closed symmetric monoidal category, and in such a category $End(A)$ and $CoEnd(A)$ are always operads because all diagrams of  certain types commute...".   We did observe  back in Section 3 that the Koszul sign for  a permutation of tensors does not depend on the specific iterations of basic symmetric monoidal braiding isomorphisms $\tau : B \otimes C \simeq C \otimes B$ and associativities $(B_* \otimes C_*) \otimes D_* \simeq B_* \otimes (C_* \otimes D_*)$  that one uses to accomplish the permutation. In fact, this is essentially just the argument that the sign of a permutation is well defined.  This is part of the proof that chain complexes  do form a symmetric monoidal category, which is quite easy.\\   

However, in {\it closed} symmetric monoidal categories, one must deal with diagrams in which internal hom complexes $HOM(A, B)$ and  compositions of maps between them and evaluation maps  $HOM(A, B) \otimes A \to B$ also appear.  That is more subtle. We do point out that keeping track of signs in a specific category like chain complexes, where $\otimes$ and $HOM$ are fairly simple concepts, is easier than an extended discussion of all the `coherence' involved in an abstract discussion of general closed symmetric monoidal categories.  It is  bullet points 4, 5, and  6 of Proposition 3.1 that are needed for the $End$ and $CoEnd$ operads. Although not completely trivial, it is still elementary to prove the formulas in these bullet points by  direct computations evaluating both sides of the formulas. That was our recommendation back in Section 3, rather than trying to master too much of the general theory of commuting diagrams in closed symmetric monoidal categories. \\

The proof of associativity for the $CoEnd$ operad is similar.  Start with $u \in HOM(A, A^{\otimes r})$, $u_i \in HOM(A, A^{\otimes r_i})$, and $v_{ij} \in HOM(A, A^{\otimes s_{ij}})$, and let $\alpha, \beta, \gamma$ be the same maps as above. But before taking some compositions, permute the top row of Diagram $\boxed{18.1}$ by moving the $u_i$ across the $\underline{\otimes}_j v_{ij}$ and then moving $u$ across everything.  Permute the bottom row of Diagram $\boxed{18.1}$ by moving $u$ across $\underline{\otimes}_i u_i$, then moving $\underline{\otimes}_i u_i \otimes u$ across the $\otimes_i \otimes_j v_{ij}$.  The Koszul signs of these two permutations of tensors in the two rows are the same. Then argue that commutativity of Diagram $\boxed{18.1}$ is  just the associativity $\gamma \circ (\beta \circ \alpha) = (\gamma \circ \beta) \circ \alpha$, along with checking that a sign from bullet point 6 of Proposition 3.1 and the  Koszul sign on the left side of the diagram are the same.  $\qed$\\

{\bf Proof of Equivariance for the CoEnd(A) Operad.} To make sense of the equivariance axiom for  $CoEnd(A)$ we first need to review the left $\Sigma_n$ action on $CoEnd(n)$.  This will be post-composition $$gu \colon A \xrightarrow{u} A^{\otimes n} \xrightarrow{g} A^{\otimes n}.$$  But what is the map $g$ here?  It must be the {\it left chain map} action on the $n$-fold tensor product, $g(x_1 \otimes \ldots \otimes x_n) = \pm (x_{g^{-1}1} \otimes \ldots \otimes x_{g^{-1}r})$, which includes a Koszul sign.  Left action so that  $(hg)u = h(gu),$ chain map so that $d(gu) = g du \in HOM(A, A^{\otimes n})$.\\

The second equivariance property in Remark \ref{18.4} is  fairly easy. The general statement is $$\cO_P(u; h_1v_1, \ldots, h_rv_r) = (h_1 \oplus \ldots \oplus h_r)  \cO_P(u; v_1, \ldots, v_r) \in P(s).$$  We point out that the direct sum of permutations $\oplus h_i \in \Sigma_s$ regarded as an operator on $A^{\otimes s}$ is the $\underline{\otimes}$ tensor product of the separate morphisms $h_i \colon A^{\otimes s_i} \to A^{\otimes s_i}$, with no signs. With $P(n) = HOM(A, A^{\otimes n})$, the formula we want is $\underline{\otimes}_i h_iv_i \circ u = (h_1 \oplus \ldots \oplus h_r)\  \underline{\otimes}_i v_i \circ u.$  But actions of permutations on $HOM(A, A^{\otimes n}) $ are just compositions of morphisms with degree 0 chain maps.  Thus our desired formula becomes $$\underline{\otimes}_i (h_i \circ v_i) \circ u = (h_1 \oplus \ldots \oplus h_r) \circ (\underline{\otimes_i} v_i \circ u).$$
But from bullet point 6 of Proposition \ref{3.1} of Part I we have $\underline{\otimes}_i (h_i \circ v_i) = \underline{\otimes}_i h_i \circ \underline{\otimes}_i v_i = (h_1 \oplus \ldots \oplus h_r ) \circ \underline{\otimes_i} v_i$, with no signs, which proves what we want.\\

The first equivariance property of Remark \ref{18.4} requires a closer look.  In the notation of Diagram $\boxed{18.4}$, we  need to prove the diagram below commutes.  Strictly speaking there are preliminary signs caused by the reversal of order of tensors in the basic operad structure map $\cO = \cO' \tau$.  But $|gu| |\underline \otimes_i v_i| = |u| |\underline \otimes_i v_{gi}| $, so those signs are harmless. 
$$\begin{CD} A & \xrightarrow{u} & A^{\otimes r} & \xrightarrow{g}&  A^{\otimes r} \\
 &  & \downarrow { (-1)^k\underline {\otimes} v_{gi}}& & \downarrow {\underline {\otimes v_i }}\\
 & A^{\otimes s} =\ & \otimes_i A ^{\otimes s_{gi} }& \xrightarrow{g_*(s_1, \ldots, s_r)} & \otimes_i A^{\otimes s_i} &\  = A^{\otimes s}
\end{CD}$$
The sign $(-1)^k$ is the Koszul sign of $v_1 \otimes \ldots \otimes v_r \mapsto v_{g1} \otimes \ldots \otimes v_{gr}$, which is part of the inverse of the top arrow in Diagram $\boxed {18.4}$. The maps $u$ and $v_i$ can take $\FF$-basis elements to sums of basic tensors, but linearity of all the operations takes care of that. We can work with single tensors.\\

Let us follow a tensor in the upper right that we will abbreviate $(gu)(x) = (x_1 x_2 \ldots x_r)$, where $x, x_i \in A$.  The desired commutativity can be written $$(-1)^k \underline  \otimes v_{gi} \circ g^{-1}(x_1 \ldots x_r) =    (g_*(s_1, \ldots, s_r))^{-1}\underline \otimes v_i (x_1 \ldots  x_r).$$ 
Both domain and range in the bottom row of the digram equal $A^{\otimes s}$, but with tensors organized differently in blocks. The key is to carefully keep track of evaluation signs and the {\it left}  group chain map actions of $g^{-1}$ on $A^{\otimes r}$ and $g_*(s_1, \ldots, s_r)^{-1}$ on $A^{\otimes s}$. The {\it inverse} of  permutation group elements are applied to position subscripts of basic tensors, and $(g^{-1})^{-1} = g$ and $(g_*^{-1})^{-1} = g_*$.\\

On the left in the desired equation the tensor  moves two times, each move with further Koszul signs. $$ (x_1x_2\ldots x_r) \mapsto (x_{g1} x_{g2} \ldots x_{gr})  \mapsto  (-1)^k(v_{g1} x_{g1})  \ldots (v_{gr} x_{gr}).$$
The sign in the first move is from the $g^{-1}$ action. In the second move the sign is from bullet point 5 of Proposition 3.1.\\

On the right the tensor also moves two times, with additional  Koszul signs from bullet point 5 of Proposition \ref{3.1} in the first move and from the $g_*^{-1}$ action in the second move. $$(x_1 \ldots x_r) \mapsto (v_1x_1 \ldots v_r x_r) \mapsto g_*^{-1}(v_1x_1 \ldots v_r x_r).$$

The block permutation $g_*(s_1, \ldots, s_r)$ rearranges concatenated blocks $B_1\ldots B_r$ of tensors of respective lengths $s_1\ldots s_r$ in the order $B_{g1}\ldots B_{gr}$.  Applying  $g_*(s_1, \ldots, s_r)$ to blocks of the $s$ tensor $(v_ 1x_1)  \ldots (v_r x_r)$  yields $(v_{g1} x_{g1})  \ldots (v_{gr} x_{gr})$, with a Koszul sign. This agrees with the two moves on the left, up to sign.\\

But because combined Koszul signs of a sequence of permuted tensors do not depend on the choice of the separate permutations, we have indeed proved the first equivariance axiom of Remark \ref{18.4}.   $\qed$

\begin{exam}\label{18.9} Suppose $r = 3, g = (231)$.  We will write $|x_i|$ and $|v_i|$ for the degrees of these elements.  The exponent $k$ in the argument for the permutation $v_1 v_2 v_3 \mapsto  v_2v_3v_1 $ is $k = |v_1||v_2| + |v_3||v_1|$.  The exponent for the Koszul sign of $x_1 x_2 x_3 \mapsto x_2 x_3 x_1$ is $|x_1|| x_2| + |x_3|| x_1|$.  The exponent for the evaluation $(v_1v_2v_3)(x_1x_2x_3) \mapsto (v_1x_1)(v_2x_2)(v_3x_3)$ is $|x_2||v_3| + |x_2||v_1| + |x_3||v_1|$. The exponent for the evaluation $(v_2v_3v_1)(x_2x_3x_1) \mapsto (v_2x_2)(v_3x_3)(v_1x_1)$ is $|x_2||v_3| + |x_2| |v_1| + |x_3||v_1|$. Finally, the exponent for the $g_*$ evaluation $(v_ 1x_1)(v_2 x_2) (v_3x_3) \mapsto(v_{g1} x_{g1}) (v_{g2}x_{g2} )(v_{g3} x_{g3})$ is $(|v_2| +|x_2|)(|v_1| + |x_1|) + (|v_3| + |x_3|)(|v_1| + |x_1|).$\\

The Koszul sign part of the proof is then the mod 2 computation $$ \big(|x_1||v_2| + |x_1||v_3| + |x_2||v_3|\big) + \big((|v_2|+|x_2|)(|v_1| + |x_1|) + (|v_3| + |x_3|)(|v_1| + |x_1|)\big)$$  $$ \equiv \big(|v_1||v_2| + |v_3||v_1|\big) + \big(|x_1|| x_2| + |x_3|| x_1|\big) + \big(|x_2||v_3| + |x_2| |v_1| + |x_3||v_1|\big).$$  
$\qed$
 \end{exam}
 
\begin{rem}\label{18.10} We will leave the proofs of the first and second equivariance properties for the $End$ operad as  exercises.  It is more of the same, combining the definition of the operad structure maps with manipulation of tensors,  permutations, compositions, and evaluations.\\

An easier result is the claim made earlier that the duality map $CoEnd(A) \to End(A^*)$ is an operad morphism.  The duality map in each arity is a composition of two maps and the proof amounts to showing the following diagram commutes. To simplify notation we have eliminated the $\otimes$ symbol  in powers of complexes.
$$\begin{CD} HOM(A, A^r) \otimes \bigotimes_i HOM(A, A^{s_i}) & \xrightarrow{(-1)^k \underline{\otimes}v_i \circ u} & HOM(A, A^s)  \\
  \downarrow{u^* \otimes \bigotimes_i v_i^*} &  & \downarrow{(\cdot)^*} \\
  HOM((A^r)^*, A^*) \otimes \bigotimes_i HOM((A^{s_i})^*, A^*) & \xrightarrow{(u^* \circ \iota) \circ \underline{\otimes}v_i^*} & HOM((A^s)^*, A^*)\\
\downarrow {\circ \iota} \otimes \bigotimes_i {\circ \iota}& & \downarrow{\circ \iota}\\
 HOM((A^*)^r, A^*) \otimes \bigotimes_i HOM((A^*)^{s_i}, A^*) & \xrightarrow{(u^*\circ \iota) \circ \underline{\otimes}(v_i^* \circ \iota)} & HOM((A^*)^s, A^*)\\
\end{CD}$$
Each $\iota$ is a map $(B^*)^n \to (B^n)^*$ for some $B$ and $n$, from Proposition \ref{3.1} of Part I. The integer $k$ in the top square is $|u| |\underline{\otimes}_i v_i|$.  That square commutes because of the  formula for compositions $(\alpha \circ \beta)^* = (-1)^{|\alpha| |\beta|} \beta^* \circ \alpha^*$.  The commutativity of the bottom square is routine. $\qed$

 \end{rem}
\newpage 
 \section {The Barratt-Eccles  Operad}

In this section we will first define, in rather general situations, candidates for operad structure maps using a modified version of the standard procedure of Part I of our paper that constructed equivariant chain maps.  Then we will specialize to $N_*(E\Sigma_n)$ and in the following section to $\cS_*(n)$.\\
 
Let $B_*(n)$ be arbitrary free $\FF[\Sigma_n]$   complexes with $B_0(n) = \FF[\Sigma_n]$ and with the obvious augmentation and basepoint.  The ground ring $\FF$ can be any commutative ring.  Assume contractions $H_n$ satisfying $H_n^2 = 0$ and $H_n \circ \iota_B = 0$. We also assume $\Sigma_n$-bases in the image of $ H_n$.   For the $N_*(E\Sigma_n)$ we will use as basis the tuples of permutations with first entry the identity.  For the $\cS_*(n)$ we will use as basis the surjections so that the first entries of $1,2,\ldots, n$ occur in that order.

\subsection{Candidates for Operad Structure Maps}

  \begin{rem}\label{19.1} We will  define operad structure map candidates  $$\cO_B \colon B_*(r) \otimes B_*(s_1) \otimes \ldots   \otimes B_*(s_r) \to B_*(s),$$ when the $B_*(n)$ satisfy the conditions in the paragraph above. The domain is free over $\widehat{G} = \Sigma_r \times \Sigma_{s_1} \times \ldots \times \Sigma_{s_r}$, with the  basis $\{b = (b_0 \otimes b_1 \otimes \ldots \otimes b_r)\}$ where each $b_j$ is a basis element of the corresponding $B_*(n)$ factor.  So this is just the product of separate $\Sigma_n$-bases of the tensor factors $B_*(n)$.  Product basis elements $b$ are in the image of the tensor product contractions. An $\FF$-basis is given by all products $\hat{g}b,\  \hat{g} = (g; h_1, \ldots, h_r) \in \Sigma_r \times \Sigma_{s_1} \times \ldots \times \Sigma_{s_r}$.\\ 

 In degree 0,  $\cO_B$ will be the operad structure map $\cO_\Sigma$ of the permutation group operad $\Sigma$. On basis generators of higher degree we  define recursively $\cO_B(b) = H_s \cO_B(db)$, where $H_s$ is the contraction of $B_*(s)$.  But for this to make sense we need to clarify the equivariance formula, since $db$ will not usually be a sum of basis elements.\\
 
 Let us abbreviate $$Id \otimes \tau(g^{-1}) = \tau_g \colon B_*(r) \otimes B_*(s_1) \otimes \ldots   \otimes B_*(s_r) \to B_*(r) \otimes B_*(s_{g1}) \otimes \ldots   \otimes B_*(s_{gr}).$$  So $\tau_{gg'} = \tau_{g' }\tau_g.$ It will be key that $\tau_g$ is a chain map and that it is equivariant for $\Sigma_r \times \Sigma_{s_1} \times \ldots \times \Sigma_{s_r} \to \Sigma_r \times \Sigma_{s_{g1}} \times \ldots \times \Sigma_{s_{gr}}.$  Then  we define $$\cO_B(\hat{g}b) =  \cO_\Sigma(\hat{g}) \cO_B(\tau_g b)$$ $$= \cO_\Sigma(g; h_1, \ldots, h_r)\cO_B(\pm b_0\otimes b_{g1} \otimes \ldots \otimes b_{gr}).$$
 Here we are playing close attention to Remark \ref{18.4} and Diagram $\boxed{18.4}$.\\
 
 We call  the formula $\cO_B(\hat{g}b) =  \cO_\Sigma(\hat{g}) \cO_B(\tau_g b)$ {\it twisted equivariance}.\\
 
 Note the argument $\tau_gb$ of the last $\cO_B$ evaluation is up to sign a basis element of a different tensor product domain.  So we are really defining the maps $\cO_B$ simultaneously by induction on $\FF$-basis elements for all positive ordered partitions $s = s_1 + \ldots + s_r$.  Then we  extend all these maps $\FF$-linearly.\\

Of course the definition of $\cO_B(\hat{g} b)$ is just a special case of the equivariance axiom for operads.  We need to extend that to all products $\hat{g}x$.  This turns out to be a consequence of permutation identities from Lemma \ref{18.2} and Lemma \ref{18.5}.  We will  then use the general equivariance result to prove that $\cO_B$ is a chain map. $\qed$
\end{rem}
\begin{prop}\label{19.2} Let $\hat{g} = (g; h_1, \ldots, h_r)$ and $x = (x_0 \otimes x_1 \otimes \ldots \otimes x_r).$\\

(i). It holds that $\cO_B(\hat{g} x) = \cO_\Sigma(\hat{g})\cO_B ( \tau_g x).$\\

(ii). It also holds that $d \cO_B(x) = \cO_B (d x)$.
\end{prop}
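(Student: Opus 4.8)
The plan is to establish part (i) first, by a direct reduction to an identity in the permutation group $\Sigma_s$, and then to deduce part (ii) by induction on total degree using (i). Recall from Remark \ref{19.1} that the maps $\cO_B$ are built for all ordered partitions $s = s_1 + \ldots + s_r$ simultaneously: on a product basis element $b = b_0 \otimes b_1 \otimes \ldots \otimes b_r$ one sets $\cO_B(b) = H_s\,\cO_B(db)$ (a genuine drop in degree), and then on a general $\FF$-basis element $\hat{g}b$ one sets $\cO_B(\hat{g}b) = \cO_\Sigma(\hat{g})\,\cO_B(\tau_g b)$, where $\cO_\Sigma(\hat g) \in \Sigma_s$ acts on $B_*(s)$ as a group element. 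Four standing facts will be used: $\cO_\Sigma$ is the structure map of the symmetric group operad, so it obeys the composition formulas of Remark \ref{18.3} and Lemma \ref{18.5}; $\tau_g$ is a chain map, is equivariant for the group isomorphism $\Sigma_r \times \prod_i \Sigma_{s_i} \to \Sigma_r \times \prod_i \Sigma_{s_{gi}}$ carrying $(g';h_1',\ldots,h_r')$ to $(g';h_{g1}',\ldots,h_{gr}')$, and carries a product basis element to $\pm$ a product basis element; $\cO_\Sigma(\hat g)$ commutes with the differential of $B_*(s)$; and in degree $0$, $\epsilon_{B(s)}\circ\cO_B$ coincides with the augmentation $\epsilon_{\mathrm{dom}}$ of the domain, since $\cO_\Sigma$ sends a tuple of permutations to a permutation and both augmentations send these to $1$.

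For (i): both sides are $\FF$-linear in $x$, so we may take $x = \hat{g}'b$ an $\FF$-basis element, $\hat g' = (g';h_1',\ldots,h_r')$. On the left, $\hat g x = (\hat g\hat g')b$ with $\hat g\hat g' = (gg';h_1h_1',\ldots,h_rh_r')$, so by definition $\cO_B(\hat g x) = \cO_\Sigma(\hat g\hat g')\,\cO_B(\tau_{gg'}b)$. On the right, equivariance and the chain-map property of $\tau_g$ give $\tau_g(\hat g'b) = (\hat g')^{\sigma}\,\tau_g b$ with $(\hat g')^{\sigma} = (g';h_{g1}',\ldots,h_{gr}')$ regarded inside $\Sigma_r \times \prod_i \Sigma_{s_{gi}}$; writing $\tau_g b = \varepsilon\,\tilde b$, $\varepsilon = \pm 1$, $\tilde b = b_0 \otimes b_{g1} \otimes \ldots \otimes b_{gr}$ a product basis element, the definition applied in the reindexed partition gives $\cO_B(\tau_g(\hat g'b)) = \varepsilon\,\cO_\Sigma((\hat g')^{\sigma})\,\cO_B(\tau_{g'}\tilde b)$. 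Since $\tau_{gg'}b = \tau_{g'}\tau_g b = \varepsilon\,\tau_{g'}\tilde b$, the factors $\cO_B(\tau_{g'}\tilde b)$ agree on the two sides, up to the common sign $\varepsilon$, and (i) reduces to the permutation identity $\cO_\Sigma(\hat g\hat g') = \cO_\Sigma(\hat g)\,\cO_\Sigma((\hat g')^{\sigma})$ in $\Sigma_s$. Expanding both sides by $\cO_\Sigma(u;v_i) = (v_1 \oplus \ldots \oplus v_r)\circ u_*(s_1,\ldots,s_r)$, splitting $(gg')_*(s_1,\ldots,s_r) = g_*(s_1,\ldots,s_r)\circ g'_*(s_{g1},\ldots,s_{gr})$ via Lemma \ref{18.5}, and using the conjugation relation $g_*(s_1,\ldots,s_r)^{-1}\circ(h_1'\oplus\ldots\oplus h_r')\circ g_*(s_1,\ldots,s_r) = (h_{g1}'\oplus\ldots\oplus h_{gr}')$ from Remark \ref{18.3}, the identity falls out. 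Note that no induction on degree is needed here: the recursive definition never disturbs the $\Sigma$-bookkeeping, so (i) is a formal consequence of the definitions together with the structure of the symmetric group operad.

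For (ii): induction on the total degree of $x$. Both sides are $\FF$-linear, so take $x = \hat g b$ an $\FF$-basis element; degree $0$ is immediate since then $\cO_B = \cO_\Sigma$ and $d$ vanishes on degree $0$. In positive degree, (i) gives $\cO_B(\hat g b) = \cO_\Sigma(\hat g)\,\cO_B(\tau_g b)$, while $\cO_B(d(\hat g b)) = \cO_B(\hat g\,db) = \cO_\Sigma(\hat g)\,\cO_B(\tau_g(db)) = \cO_\Sigma(\hat g)\,\cO_B(d(\tau_g b))$ using (i) for the general element $db$ and that $\tau_g$ is a chain map; since $\cO_\Sigma(\hat g)$ commutes with $d$, it suffices to show $d\,\cO_B(\tau_g b) = \cO_B(d(\tau_g b))$, and as $\tau_g b = \pm\tilde b$ with $\tilde b$ a product basis element we are reduced to proving $d\,\cO_B(\tilde b) = \cO_B(d\tilde b)$. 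By the definition $\cO_B(\tilde b) = H_s\,\cO_B(d\tilde b)$ and the contraction identity $dH_s + H_s d = \mathrm{Id} - \rho_s$,
\[
d\,\cO_B(\tilde b) = dH_s\,\cO_B(d\tilde b) = \cO_B(d\tilde b) - H_s\,d\,\cO_B(d\tilde b) - \rho_s\,\cO_B(d\tilde b).
\]
The inductive hypothesis applied to $d\tilde b$ gives $d\,\cO_B(d\tilde b) = \cO_B(d^2\tilde b) = 0$, killing the middle term. The last term vanishes because $\rho_s = \iota_{B(s)}\epsilon_{B(s)}$ is zero in positive degrees, and in the borderline case $\deg\tilde b = 1$ one has $\cO_B(d\tilde b) \in B_0(s)$ and $\epsilon_{B(s)}\cO_B(d\tilde b) = \epsilon_{\mathrm{dom}}(d\tilde b) = 0$ since $\epsilon_{B(s)}\circ\cO_B = \epsilon_{\mathrm{dom}}$ on degree $0$ and the domain augmentation is a chain map. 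Hence $d\,\cO_B(\tilde b) = \cO_B(d\tilde b)$, completing the induction.

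The main obstacle is the permutation identity $\cO_\Sigma(\hat g\hat g') = \cO_\Sigma(\hat g)\,\cO_\Sigma((\hat g')^{\sigma})$ to which (i) is reduced: one must match, block by block, which $\Sigma_{s_i}$-factor acts where once the various block permutations are composed. But this is precisely the content of Lemma \ref{18.5} together with the conjugation formula of Remark \ref{18.3}, so the obstacle is already dispatched; the only residual care is the Koszul sign introduced by $\tau_g$, which is harmless because $\tau_{gg'} = \tau_{g'}\tau_g$ forces it to cancel between the two sides. Everything else — the vanishing of $\rho_s\,\cO_B(d\tilde b)$ and the interplay of $d$ with the contraction $H_s$ — is routine, given the standing hypotheses $H_s^2 = 0$, $H_s\iota_B = 0$, and $\epsilon_{B(s)}\cO_B|_0 = \epsilon_{\mathrm{dom}}$.
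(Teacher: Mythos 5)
Your proof is correct and follows essentially the same route as the paper: part (i) reduces, after writing $x=\hat g'b$ and using $\tau_{gg'}=\tau_{g'}\tau_g$, to the symmetric-group-operad identity supplied by Remark \ref{18.3} and Lemma \ref{18.5}, and part (ii) is the same induction on degree, treating basis elements via $\cO_B(b)=H_s\cO_B(db)$ and the contraction identity (with the degree-one case handled by augmentation compatibility in degree $0$) and general elements via (i), equivariance of $d$, and the chain-map property of $\tau_g$. Your explicit tracking of the Koszul sign and of the $\rho_s$-term is only a more careful write-up of the same argument.
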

\begin{proof} We assume the $x_j$ are $\FF$-basis elements, not sums of such.  Then there is a unique formula $x = \hat{g}'b$, where $b = (b_0 \otimes b_1 \otimes \ldots \otimes b_r)$  is a basis element and $\hat{g}' = (g'; h_1', \ldots, h_r') \in \widehat{G}$.   We have on the left side of (i) $$\cO_B(\hat{g}x) = \cO_B (\hat{g} \hat{g}' b) = \cO_\Sigma(gg'; h_1h'_1, \ldots, h_rh'_r) \cO_B( \tau_{gg'} b).$$
To compare with the right side of (i), we first observe $$ \tau_g x = \tau_g(\hat{g}'b)  = (g'; h'_{g1}, \ldots, h'_{gr}) \tau_gb.$$
We compute $$\cO_\Sigma(\hat{g})\cO_B(\tau_g x)  = \cO_\Sigma(g; h_1, \ldots, h_r)\cO_\Sigma(g'; h'_{g1}, \ldots, h'_{gr})\cO_B(\tau_{g'} \tau_g b).$$
From the equivariance axiom of Remark \ref{18.4} for the operad $\Sigma$, which is a consequence of  Lemmas \ref{18.2} and \ref{18.5},  we have $$\cO_\Sigma(gg'; h_1h'_1, \ldots, h_rh'_r) =     \cO_\Sigma(g; h_1, \ldots, h_r)\cO_\Sigma(g'; h'_{g1}, \ldots, h'_{gr}),$$ which proves (i) since $\tau_{gg'} = \tau_{g'} \tau_g$.\\
  
To prove (ii), we work by induction.  In degree 0 there is nothing to prove, although it is relevant that $\cO_B = \cO_\Sigma$ does commute with augmentations and base points in that degree. Inductively we first compute for basis elements, using the full boundary property in one lower dimension. $$d \cO_B(b) = d H_s \cO_B (db) = \cO_B (db) - H_s d\cO_B(db) = \cO_B(db) - H_s 0 = \cO_B(db).$$
If $|b| = 1$, the second equality uses $\rho \cO_B(db) = \cO_B \rho(db) = \cO_B(0) = 0$, where $\rho$ is the basepoint.\\
 
For the general element $x = \hat{g}b$ we make use of the fact that the boundary operators satisfy the ordinary group equivariance.  Thus $$d \cO_B (\hat{g} b) = d(\cO_\Sigma(\hat{g})\cO_B (\tau_g b)) = \cO_\Sigma(\hat{g})\cO_B (d\tau_g b).$$
On the other side, $$\cO_B (d \hat{g} b) = \cO_B (\hat{g} db) = \cO_\Sigma(\hat{g})\cO_B(\tau_g db).$$  
Since $\tau_g$ is a chain map, (ii) is proved.
\end{proof}
{\bf Associativity.} Finally we want to ask when do the $\cO_B$ satisfy the associativity axiom of Diagram $\boxed{18.1}?$   The first observation is that in degree 0, the commutativity of Diagram $\boxed{18.1}$ is exactly the associativity axiom for the symmetric group operad $\Sigma$, discussed in Remark \ref{18.7}.\\

To go further, we review and extend the uniqueness result of Proposition 6.2 of Part I. That result asserted that an equivariant map $\psi_0 \colon B_0 \to C_0$ has a unique equivariant chain map extension to $\psi \colon B_* \to C_*$ satisfying $\psi(b) \in Im(h_C) = Ker(h_C)$ on basis elements, namely the standard procedure map $\phi$.  The proof was an easy induction, $$\phi(b) = h_C\phi(db) =  h_C \psi(db) = h_C d \psi(b) = \psi(b) - dh_C\psi(b) = \psi(b) - 0.$$ Then equivariance forces $\psi(gb) = \phi(gb)$. The proof of this uniqueness result extends by the same induction to the twisted $\widehat{G}$-equivariant situation $$\displaystyle { \cO_B \colon  B_*(r) \otimes B_*(s_1) \otimes \ldots   \otimes B_*(s_r) \to B_*(s)}$$ of Proposition \ref{19.2}, since on basis elements $\cO_B(b) = H_s \cO_B(db)$.
\begin{prop} \label{19.3}The standard procedure map $\cO_B$ of Remark \ref {19.1} and Proposition \ref{19.2} is the unique twisted $\widehat{G}$-equivariant chain map $\psi$ that extends $\cO_\Sigma$ in degree 0 and takes basis elements of the domain to elements in $Im(H_s) = Ker(H_s) \subset B_*(s)$.   $\qed$ 
\end{prop}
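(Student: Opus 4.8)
The plan is to run the same easy induction on degree that proved Proposition 6.2 of Part I, carried over verbatim to the twisted-equivariant setting. Let $\psi$ be any twisted $\widehat{G}$-equivariant chain map with $\psi = \cO_\Sigma$ in degree $0$ and $\psi(b) \in \mathrm{Im}(H_s) = \mathrm{Ker}(H_s)$ for every product basis element $b = b_0 \otimes b_1 \otimes \cdots \otimes b_r$ of the domain. We want $\psi = \cO_B$, where $\cO_B$ is the standard-procedure map of Proposition 19.2. First I would record that the two maps agree in degree $0$ by hypothesis, since both equal $\cO_\Sigma$ there. Then, arguing by induction on total degree, I would compute on a basis element $b$ of positive degree, using that $\cO_B(b) = H_s \cO_B(db)$ by definition, that $\cO_B(db) = \psi(db)$ by induction, that $\psi(db) = d_B \psi(b)$ because $\psi$ is a chain map, and that $H_s d_B \psi(b) = \psi(b) - d_B H_s \psi(b)$ by the contraction identity $d H_s + H_s d = \mathrm{Id} - \rho$ (the $\rho$ term dropping out because $H_s^2 = 0$ forces $\epsilon H_s = 0$, or directly because in positive degree $\psi(b)$ is a cycle modulo lower data — the cleanest route is simply $H_s \psi(b) = 0$ since $\psi(b) \in \mathrm{Ker}(H_s)$ by hypothesis). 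This yields the chain of equalities
\[
\cO_B(b) = H_s \cO_B(db) = H_s \psi(db) = H_s d_B \psi(b) = \psi(b) - d_B H_s \psi(b) = \psi(b) - 0 = \psi(b).
\]

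Having matched $\cO_B$ and $\psi$ on product basis elements, I would then invoke the twisted equivariance to conclude agreement on all $\widehat{G}$-translates $\hat g\, b$: both maps satisfy $\cO_B(\hat g\, b) = \cO_\Sigma(\hat g)\, \cO_B(\tau_g b)$ and $\psi(\hat g\, b) = \cO_\Sigma(\hat g)\, \psi(\tau_g b)$ — the former by the definition in Remark 19.1 and Proposition 19.2(i), the latter because $\psi$ is assumed twisted-equivariant in exactly the sense of Diagram $\boxed{18.4}$ — and $\tau_g b$ is again a product basis element (of a reindexed tensor-product domain) to which the induction already applies. Extending $\FF$-linearly over all $\FF$-basis elements $\hat g\, b$ then gives $\psi = \cO_B$ on the whole domain. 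The only point that needs a word of care is that $\cO_B$ itself qualifies as a candidate $\psi$ for the uniqueness statement: it is a chain map by Proposition 19.2(ii), it extends $\cO_\Sigma$ in degree $0$ by construction, it is twisted-equivariant by Proposition 19.2(i), and it takes basis elements into $\mathrm{Im}(H_s)$ because $\cO_B(b) = H_s \cO_B(db)$ visibly lies in $\mathrm{Im}(H_s) = \mathrm{Ker}(H_s)$ — so the proposition is non-vacuous and $\cO_B$ is the unique such map.

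I do not expect a genuine obstacle here; the statement is deliberately a routine transplant of Proposition 6.2 of Part I into the twisted context, and all the structural inputs (that $\tau_g$ is a chain map and equivariant for the reindexed group, that the permutation-operad identities of Remark 18.3 and Lemma 18.5 make $\cO_B(\hat g\, x) = \cO_\Sigma(\hat g)\cO_B(\tau_g x)$ hold for all $x$, and that $H_s^2 = 0$) have already been established in Proposition 19.2 and the surrounding discussion. The mild bookkeeping subtlety — and the closest thing to a ``hard part'' — is keeping straight that the induction is being run simultaneously over all positive ordered partitions $s = s_1 + \cdots + s_r$, since the equivariance step for a fixed partition feeds on the inductive hypothesis for the reindexed partition $s = s_{g1} + \cdots + s_{gr}$; but since both partitions have the same total degree stratification and $\tau_g$ strictly preserves degree, this is harmless and the induction closes.
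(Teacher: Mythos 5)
Your proposal is correct and follows essentially the same route the paper takes: the statement is proved by the same one-line induction as Proposition 6.2 of Part I, $\cO_B(b) = H_s\cO_B(db) = H_s\psi(db) = H_s d\,\psi(b) = \psi(b) - dH_s\psi(b) = \psi(b)$, using $\psi(b)\in \mathrm{Ker}(H_s)$, and then twisted equivariance (with the simultaneous induction over reindexed partitions via $\tau_g$) forces agreement on all of the domain. Nothing is missing.
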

\begin{proof} The inductive argument above proves $\psi = \cO_B$ on all basis elements.  If $b$ is a basis element then so is $\tau_g(b)$, hence the two twisted equivariance formulas give $\cO_B(\hat{g}b) = \cO_\Sigma(\hat{g})\cO_B(\tau_g(b) = \cO_\Sigma(\hat{g})\psi(\tau_g( b) = \psi (\hat{g} b)$.
\end{proof}
Proving the associativity axiom, that is, commutativity of Diagram $\boxed{18.1}$, involves some more complicated versions of twisted equivariance maps $\Phi$ than occur in the basic situation $\cO_B$ of Proposition \ref{19.2}.  There are two basic ingredients in all versions.  First, the product of symmetric groups $\widehat{G}$ that acts freely on the domain of $\Phi$, which will be a tensor product, embeds non-homomorphically in a symmetric group or product of symmetric groups  that acts on the range of $\Phi$, by a map $\cO$ that is a variant or extension of the structure map $\cO_\Sigma$ for the permutation group operad.  Secondly, the group $\widehat{G}$ will also permute tensor factors of domain basis elements, say by some operation $b \mapsto \tau_{\hat{g}} b$ with $\tau_{\hat{g}}\tau_{\hat{h}} = \tau_{\widehat{hg}}$.\\

In degree 0, $\Phi_0  = \cO_\Sigma$. Then the map $\Phi$ will always be  defined in two steps.  On basis elements of the domain, $\Phi(b) = H \Phi (db)$, where $H$ is the contraction of the range.  Then $\Phi(\hat{g} b) = \cO(\hat{g}) \Phi(\tau_{\hat{g}}b)$.  The occurrence of $\tau_{\hat{g}}$ in this preliminary equivariance formula means $\Phi$ is really  being defined by induction not on a single tensor product but on a direct sum of tensor products related by permutations of factors.\\

The proofs of Proposition \ref{19.2}(i),(ii)  extend routinely, once the map $\cO$ generalizing $\cO_\Sigma$ is clarified. In words, the preliminary equivariance in the definition extends to full equivariance $\Phi(\hat{g} x) = \cO(\hat{g}) \Phi(\tau_{\hat{g}}x)$, where $x = \hat{g}'b$. Also $\Phi$ is a chain map.  The uniqueness result Proposition \ref{19.3}  also extends quite routinely to the more complicated twisted contexts.\\  

We will give some examples related to Diagram $\boxed{18.1}$ in the case of interest to us where $P(n) = B_*(n)$.  We will see that the first map $\Phi' = Id \otimes \bigotimes \cO_B$ in the top row is a standard procedure twisted equivariant map for the inclusion $$\cO' = Id \times \prod_i \cO_\Sigma \colon \widehat{G}' \to \Sigma_r \times \prod_i \Sigma_{si}, \ where\ \widehat{G}' = \Sigma_r \times \prod_i [\Sigma_{ri} \times \prod_j \Sigma_{sij}].$$

We will also see that the first map $\Phi'' = \cO_B \otimes \bigotimes Id$ in the bottom row is a standard procedure twisted equivariant map for the inclusion $$\cO'' = \cO_\Sigma  \times \prod_{i,j} Id \colon \widehat{G}'' \to \Sigma_{\oplus ri} \times \prod_{i,j} \Sigma_{sij}, \ where\ \widehat{G}'' = [\Sigma_r \times \prod_i \Sigma_{ri}] \times \prod_{i,j} \Sigma_{sij}.$$
The second map in both rows are standard twisted equivariant maps $\cO_B$ from Proposition \ref{19.2}.\\

We have mentioned that in degree 0 in our case Diagram $\boxed{18.1}$ commutes, the result being the one map $\widehat{G}' \to \Sigma_s$ produced by the associativity axiom for the symmetric group operad.  That is, one gets the same map, say $\cO$,  by following $\cO'$ by the symmetric group operad map $\cO_\Sigma \colon \Sigma_r \times \prod_i \Sigma_{si} \to \Sigma_s$ or by first applying the reordering of factors  isomorphism $\widehat{G}' \simeq \widehat{G}''$ and then following $\cO''$ by $\cO_\Sigma \colon  \Sigma_{\oplus ri} \times \prod_{i,j} \Sigma_{sij} \to \Sigma_s$.  Thus there is a well-defined twisted equivariant standard procedure map from the top left to the lower right of Diagram $\boxed{18.1}$, twisted with respect to the non-homomorphic embedding $\cO$.  We would like to prove that both compositions around Diagram $\boxed{18.1}$ coincide with this standard map.  It is just a check using $\Sigma$ operad associativity that  both compositions satisfy the same twisted equivariance. 

 \begin{prop}\label{19.4} (i). The first  map $$\Phi' = Id\otimes \bigotimes\cO_B\colon \displaystyle B_*(r) \otimes \bigotimes_{i=1}^r \bigl[B_*(r_i) \otimes \bigotimes_{j=1}^{r_i}B_*(s_{ij}) \bigr]  \to  \displaystyle B_*(r) \otimes \bigotimes_{i=1}^r B_*(s_i) $$ on the top row of Diagram $\boxed{18.1}$ is a standard procedure chain map that satisfies the basic twisted equivariance formulas as in Proposition \ref{19.2} separately on each tensor factor $\cO_B \colon [B_*(r_i) \otimes \bigotimes_{j=1}^{r_i}B_*(s_{ij}) ] \to B_*(s_i)$, and ordinary equivariance on the first $Id$ factor. \\

(ii). The first map $$\Phi'' = \cO_B \otimes \bigotimes Id \colon \bigl[B_*(r) \otimes \bigotimes_{i=1}^r B_*(r_i)\bigr] \otimes \bigotimes_{j=1}^{r_i}B_*(s_{ij}) \to  B_*(\Sigma r_i) \otimes \bigotimes_{j=1}^{r_i} B_*(s_{ij})$$ on the bottom row of Diagram $\boxed{18.1}$ is a standard procedure chain map that satisfies the basic twisted equivariance formula on the first tensor factor $ \cO_B \colon  B_*(r) \otimes \bigotimes_{i=1}^r B_*(r_i)  \to B_*(\Sigma r_i)$, and ordinary equivariance on all the $Id$ factors. \\

(iii). The vertical isomorphism $$\sigma \colon B_*(r) \otimes \bigotimes_{i=1}^r \bigr[ B_*(r_i) \otimes \bigotimes_{j=1}^{r_i}B_*(s_{ij})\bigl] \to \bigl[ B_*(r) \otimes \bigotimes_{i=1}^r  B_*(r_i) \bigr]\otimes \bigotimes_{j=1}^{r_i}B_*(s_{ij}) $$ on the left side of Diagram $\boxed{18.1}$ that permutes tensor factors is a standard procedure chain map that satisfies ordinary equivariance with respect to an isomorphism between products of symmetric groups that permutes factors.
\end{prop}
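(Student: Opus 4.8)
The plan is to treat all three statements as instances of the same general principle already developed in the excerpt: a tensor product of standard procedure equivariant chain maps, equipped with the tensor product contraction on the range, is again a standard procedure map (Proposition 6.11 of Part I), and the tensor of maps that send basis elements into $\mathrm{Im}(h) = \mathrm{Ker}(h)$ again sends basis elements into the image of the contraction. The key point is that each of $\Phi'$, $\Phi''$, and $\sigma$ is, up to reorganization of tensor factors, a tensor product of maps of one of two types already understood: the twisted equivariant standard procedure maps $\cO_B$ of Proposition \ref{19.2}, identity maps, or (for $\sigma$) the permutation-of-factors isomorphism of Proposition 6.11(ii). So the strategy is to reduce each statement to Proposition 6.11 together with the uniqueness result Proposition \ref{19.3}.

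For part (i), I would argue as follows. Each inner map $\cO_B \colon [B_*(r_i) \otimes \bigotimes_j B_*(s_{ij})] \to B_*(s_i)$ is, by Proposition \ref{19.2} and Proposition \ref{19.3}, the twisted $\widehat{G}$-equivariant standard procedure map, and in particular it sends product basis elements into $\mathrm{Im}(H_{s_i}) = \mathrm{Ker}(H_{s_i})$. The first factor $Id \colon B_*(r) \to B_*(r)$ is trivially the standard procedure map and sends basis elements to basis elements (which lie in $\mathrm{Im}(H_r)$). Now the range $B_*(r) \otimes \bigotimes_i B_*(s_i)$ carries the iterated tensor product contraction, and by Example 5.5 of Part I the image of that contraction is the span of clean tensors, that is, tensors with all but one factor a basepoint and the remaining factor in the image of the corresponding $H$. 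A product basis element of the domain maps under $\Phi'$ to a tensor of the form $b_0 \otimes \cO_B(c_1) \otimes \cdots \otimes \cO_B(c_r)$, and since each $\cO_B(c_i) \in \mathrm{Im}(H_{s_i})$ while $b_0 \in \mathrm{Im}(H_r)$, a short computation exactly as in the proof of Proposition 6.11(i) shows this lies in $\mathrm{Im}(h_{\otimes})$ of the range. Combined with the fact that $\Phi'$ is a chain map (being a tensor of chain maps) and satisfies the stated combination of twisted and ordinary equivariance factorwise, the uniqueness result (the appropriate generalization of Proposition \ref{19.3} to this mixed twisted/ordinary context, which the excerpt has announced extends routinely) identifies $\Phi'$ with the standard procedure map. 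Part (ii) is entirely parallel, with the roles of which tensor factor carries the twisted $\cO_B$ and which carry identities swapped; the only thing to check is that the target $B_*(\Sigma r_i) \otimes \bigotimes_{j} B_*(s_{ij})$ again has the tensor product contraction and that $\cO_B(c_0) \in \mathrm{Im}(H_{\Sigma r_i})$ for product basis elements $c_0$, which is Proposition \ref{19.2} again.

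For part (iii), I would invoke Proposition 6.11(ii) directly: the complexes $B_*(n)$ all have $B_0(n) = \FF[\Sigma_n]$ and bases in the image of $H_n$, so a tensor product of such complexes has a basis in the image of the tensor contraction, and the canonical permutation isomorphism $\sigma$ (with its Koszul signs) sends basis elements to $\pm$ basis elements, hence into $\mathrm{Im}(h_{\otimes})$; by the uniqueness result it is the standard procedure map. The equivariance of $\sigma$ with respect to the reordering isomorphism $\widehat{G}' \simeq \widehat{G}''$ that permutes symmetric group factors is immediate from the definition. The main obstacle I anticipate is bookkeeping rather than conceptual: one must be careful that the ``mixed'' equivariance (ordinary on the $Id$ factors, twisted via $\cO_\Sigma$ on the $\cO_B$ factors) is exactly the form for which the uniqueness theorem applies, and that the contractions being used on the ranges are genuinely the iterated tensor product contractions $h^{(k)}$ with their $\mathrm{Im} = \mathrm{Ker}$ property and not some other choice. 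Once those identifications are pinned down, each part is a direct application of Proposition 6.11 and the (twisted) uniqueness theorem with no further computation; the signs all take care of themselves because every map in sight is a chain map and the contractions introduce no signs, as emphasized throughout Part I.
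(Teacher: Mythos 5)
Your proposal takes essentially the same route as the paper: the paper likewise reduces all three parts to (twisted) versions of Proposition 6.11(i),(ii) of Part I, with the key observation that tensor products of elements in the images of contractions lie in the image of the tensor-product contraction, and then applies the extended twisted uniqueness result (Proposition 19.3 in place of Proposition 6.2), leaving the low-degree bookkeeping as an exercise just as you do. One small correction that does not affect your argument: the image of the tensor contraction is spanned by images of the summands $\rho^{\otimes i}\otimes h\otimes Id^{\otimes(k-i)}$ (basepoints \emph{before} the $h$-slot, arbitrary factors \emph{after}), not by tensors with all but one factor a basepoint; your computation only uses the first summand $H\otimes Id^{\otimes k}$ together with the degree-zero cases handled as in Proposition 6.11(i), so it goes through.
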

\begin{proof} All three statements are modified cases of untwisted versions of tensor products of chain maps that we studied in Proposition 6.11(i), (ii) of Part I.   We offered two proofs of the various parts of that result.  One proof was an inductive proof for basis elements, beginning in degree 0, that used explicit formulas for contractions of tensor products.  Two formulas are equal if they agree on basis elements and satisfy the same equivariance. The other proof was simpler and made use of the uniqueness result Proposition 6.2 of Part I.  The key is that tensor products of elements in the image of contractions are in the image of the tensor product contraction. Both proofs carry over to all parts of the present proposition, with an extended Proposition \ref{19.3} replacing Proposition 6.2.  We leave the relatively easy details as exercises.
\end{proof}
 It follows immediately from the extended Proposition \ref{19.3}  that the collection of maps $\cO_B$ define an operad structure on the components $B_*(n)$ if the two routes around Diagram $\boxed{18.1}$, which we have named  $\cO_B \circ \Phi'$ and $\cO_B \circ \Phi'' \circ \sigma$, have the property that both composed maps send basis elements of the domain  to elements in $Im(H_s)$.  Going back to Proposition 6.5 of Part I, we know that compositions of standard procedure maps need not be standard procedure maps, but there are various hypotheses that imply compositions are standard procedure maps.  However, the real issue for establishing associativity for the candidate operad structure maps $\cO_B$ is not so much the extension of Proposition 6.11 and Proposition 6.5 to twisted equivariant situations, which is pleasant enough and one way to look at it,  but rather the following more direct criterion for associativity of the $\cO_B$.

\begin{prop}\label{19.5} Suppose the maps $\cO_B$ have the property that  for all $c \in B_*(r) \otimes \bigotimes B_*(s_i)$ that are tensor products of elements in the image of contractions of the factors, it holds that $\cO_B(c) \in Im(H_s) \subset B_*(s)$.   Then  both compositions naming the two routes around Diagram $\boxed{18.1} $ do send basis elements to elements in $Im(H_s)$, and therefore the $B_*(n)$ form an operad.
\end{prop}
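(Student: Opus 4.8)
The plan is to reduce the commutativity of the big associativity diagram $\boxed{18.1}$ to the uniqueness statement Proposition~\ref{19.3}, in its twisted-equivariant form. Both composites around $\boxed{18.1}$, namely $\cO_B\circ\Phi'$ and $\cO_B\circ\Phi''\circ\sigma$, are twisted-equivariant maps with respect to the {\it same} non-homomorphic embedding $\cO\colon\widehat G'\to\Sigma_s$ built from $\Sigma$-operad associativity (Remark~\ref{18.7}), and both agree in degree $0$ with the single map produced by the associativity axiom for the permutation operad. So by the extended version of Proposition~\ref{19.3}, it suffices to check that each of these two composites carries $\widehat G'$-basis elements of the top-left corner of $\boxed{18.1}$ into $Im(H_s)=Ker(H_s)\subset B_*(s)$. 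First I would record that a $\widehat G'$-basis element $b$ of the domain is a tensor of $\Sigma_n$-basis elements of the various factors, hence in particular a tensor of elements lying in the images of the corresponding contractions; this is the hypothesis needed to apply Proposition~\ref{19.5}.

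Next I would trace what $\Phi'$ and $\Phi''\circ\sigma$ do to such a $b$. By Proposition~\ref{19.4}(i), $\Phi'=Id\otimes\bigotimes\cO_B$ is itself a standard-procedure (twisted-equivariant) chain map, so it sends the basis element $b$ into the image of the tensor-product contraction of $B_*(r)\otimes\bigotimes_i B_*(s_i)$; equivalently, $\Phi'(b)$ is a sum of tensors of elements each in the image of the relevant factor contraction (``clean'' tensors, in the terminology of Example~5.5 of Part~I). The same goes for $\Phi''\circ\sigma$ by Proposition~\ref{19.4}(ii),(iii): $\sigma$ takes $b$ to another $\widehat G''$-basis element (a tensor of $\Sigma_n$-basis elements, merely reindexed), and then $\Phi''=\cO_B\otimes\bigotimes Id$ is a standard-procedure chain map, so $\Phi''\sigma(b)$ is again a sum of tensors of elements in images of factor contractions. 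Thus after the first map in either row, we are evaluating $\cO_B$ on elements $c$ that are (sums of) tensor products of elements in images of contractions of the factors — which is precisely the class of inputs covered by the hypothesis of the proposition. Applying that hypothesis, $\cO_B(c)\in Im(H_s)$ for each such $c$, so both composites send $b$ into $Im(H_s)$.

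With that in hand, the uniqueness result finishes the argument: $\cO_B\circ\Phi'$ and $\cO_B\circ\Phi''\circ\sigma$ are two twisted-equivariant chain maps (twisted along $\cO$) that agree in degree $0$ and both send basis elements into $Im(H_s)=Ker(H_s)$, so by the extended Proposition~\ref{19.3} they are equal — they both coincide with the canonical standard-procedure map from the top-left to the bottom-right of $\boxed{18.1}$. Hence $\boxed{18.1}$ commutes. Together with the already-established unit and equivariance axioms (Proposition~\ref{19.2} and the equivariance discussion following it) this shows the $B_*(n)$ form an operad.

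I expect the main obstacle to be purely bookkeeping rather than conceptual: one must carefully verify that the two composites really do satisfy the {\it same} twisted equivariance — i.e.\ that the non-homomorphic embedding $\cO\colon\widehat G'\to\Sigma_s$ obtained by $\cO_\Sigma\circ(Id\times\prod\cO_\Sigma)$ agrees with the one obtained via $\widehat G'\simeq\widehat G''$, $\cO''$, and $\cO_\Sigma$, which is exactly the associativity of the symmetric group operad $\Sigma$ (Lemma~\ref{18.5}, Remark~\ref{18.7}). A secondary care point is that $\Phi'$ and $\Phi''$ are only {\it partially} twisted — twisted on some tensor factors, ordinarily equivariant on the $Id$ factors — so in writing out the preliminary equivariance formula $\Phi'(\hat g b)=\cO'(\hat g)\Phi'(\tau_{\hat g}b)$ one must keep the factor-permutation operators $\tau_{\hat g}$ straight, but the proofs of Proposition~\ref{19.2}(i),(ii) and Proposition~\ref{19.4} already package this, so the remaining work is routine verification of indices and Koszul signs.
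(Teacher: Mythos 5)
Your proposal is correct and follows essentially the same route as the paper: the first map in either row (Proposition \ref{19.4}) carries a basis element to a tensor whose factors lie in the images of the factor contractions, the hypothesis then puts both composites' values in $Im(H_s)$, and the extended uniqueness result Proposition \ref{19.3} (together with $\Sigma$-operad associativity giving the common twisted equivariance in degree $0$) identifies both routes with the standard procedure map, so Diagram $\boxed{18.1}$ commutes. The only cosmetic difference is that the paper observes the factorwise claim directly ($\Phi'(b)=b_0\otimes\bigotimes_i\cO_B(\cdots)$ with each factor in the image of its contraction), rather than passing through the span of clean tensors as you phrase it, but the content is the same.
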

\begin{proof} Starting with a big tensor product of basis elements in either row of Diagram $\boxed{18.1}$,  the first map $\Phi'$ or $\Phi''$ will carry that tensor to a tensor of elements in the image of contractions.    Therefore by the assumed criterion applying another $\cO_B$ map  produces an element in $Im(H_s)$.
\end{proof}

\subsection{The Barratt-Eccles Operad Structure Maps}

We first observe that in the case $B_*(n) = N_*(E\Sigma_n)$ the criterion of Proposition \ref{19.5} trivially holds.  Namely, for each factor $N_*(E\Sigma_r)$ or $N_*(E\Sigma_{s_i})$  separately of a tensor product, $Im(H)$ is exactly the same thing as the $\FF$-span of basis elements, because of the contraction formula $H(X) = (e, X)$ for a tuple of permutations $X$.  Applying $\cO_B$ to a tensor product of basis elements is by definition in the image of $H_s$. Therefore, we have proved
\begin{prop}\label{19.6} The complexes  $N_*(E\Sigma_n)$  form an operad, using the standard twisted equivariant procedure chain maps of Remark \ref{19.1} and Proposition \ref{19.2} as structure maps.\ \ \ $\qed$
\end{prop}
In the case $B_*(n) = \cS_*(n)$, for any of the surjection complexes, there are elements in $Im(H_n)$ that are not $\FF$-linear combinations of basis elements.  For example, for $\cS_*^{bf}(n)$, from Section 13 of Part II we have $H_4(14324) = (s + isr + i^2 s r^2) (14324) = (124324) + (123434)$, but $(124324)$  is not a basis element.  We will need to work harder to establish the hypothesis of Proposition \ref{19.5}.\\

{\bf Reconciliation with the Symmetric Monoidal Functor View.}  We will next reconcile our approach to the Barratt-Eccles operad with the standard approach found in the literature.  Among other things, this provides a closed formula for the operad maps, something the recursive approach does not do immediately.\\

Barratt-Eccles in [2] first defined a symmetric operad $W$ in the category of simplicial sets with components $W(n) = E\Sigma_n$, the contractible MacLane models for the symmetric group.  The operad structure maps are induced from the symmetric operad $\Sigma$ of sets. The degree 0 simplices  (vertices)  of the $E\Sigma_n$ are elements of the symmetric groups $\Sigma_n$.  So in degree 0 the structure map $\cO_W \colon W(r) \times W(s_1) \times \ldots \times W(s_r) \to W(s)$ is just the $\Sigma$ operad structure map $\cO_\Sigma$. We point out that $$W(r) \times W(s_1) \times \ldots \times W(s_r) = E\Sigma_r \times E\Sigma_{s_1} \times \ldots \times E\Sigma_{s_r} =  E(\Sigma_r \times \Sigma_{s_1} \times \ldots \times \Sigma_{s_r}).$$
In degree $k$, a `simplex' in  $W(n)_k$ is just a $(k+1)$-tuple of permutations in $\Sigma_n$.   `Simplices' in products of simplicial sets  are tuples of `simplices' in the factors.  We thus also view a $k$-simplex of the simplicial set product as a $(k+1)$-tuple of vertices of the MacLane model for the product group. We define $\cO_W$ to be $\cO_\Sigma$ on each  vertex of the $(k+1)$-tuple of product vertices. It is routine to see that $\cO_W$ commutes with face and degeneracy maps, which just delete or repeat vertices.   Since $\Sigma$ is an operad in sets, it is immediate that $W$ is an operad in simplicial sets.    The operad structure map $\cO_W$ can then be viewed as the inclusion of MacLane models induced by the inclusion of groups  ({\it not} a homomorphism) $\cO_\Sigma \colon \Sigma_r \times \Sigma_{s_1} \times \ldots \times \Sigma_{s_r} \to \Sigma_s$.

\begin{rem}\label{19.7}Next we apply the normalized chain complex functor, which is a symmetric monoidal functor, and define the components of the Barratt-Eccles operad in the category of chain complexes to be $\cE(n) = N_*(E\Sigma_n)$.  The term {\it symmetric monoidal functor} refers to properties of the functorial Eilenberg-Zilber maps $EZ \colon N_*(X) \otimes N_*(Y) \to N_*(X \times Y)$.   Recall that any set theoretic inclusion of groups $G \to G'$ that corresponds identity elements induces a map of normalized chain complexes  $N_*(EG) \to N_*(EG')$ that commutes with the contractions $h_G, h_{G'}$.  This fact was discussed in Example 5.3 of Part I.  Combined with the Eilenberg-Zilber map $EZ$, we then obtain alternate candidates  $ N_*(\cO_W)\circ EZ$  for the operad structure maps  of the Barratt-Eccles symmetric operad in the category of chain complexes,     $$N_*(E\Sigma_r) \otimes N_*(E\Sigma_{s_1}) \otimes \ldots   \otimes N_*(E\Sigma_{s_r}) \xrightarrow{EZ} N_*(E( \Sigma_r \times \Sigma_{s_1} \times \ldots \times \Sigma_{s_r}))$$ $$ \xrightarrow{N_*(\cO_{W})} N_*(E \Sigma_{s_1 + \ldots + s_r}). $$
Of course it is necessary to prove these structure maps of chain complexes do satisfy the operad axioms. There are  easy proofs that simply show the maps $ N_*(\cO_W)\circ EZ$  coincide with the standard procedure twisted equivariant chain maps $\cO_B$  that we  have already proved define an operad. 
\end{rem}
\begin{prop} \label{19.8}   For $B_*(n) = N_*(E\Sigma_n)$, the  maps $ N_*(\cO_W)\circ EZ$ coincide with the standard twisted equivariant procedure  chain maps $\cO_B$ between domain and range from Remark \ref{19.1} and  Proposition \ref{19.2}, using the preferred basis of the domain over  products of symmetric groups and the preferred contraction of the range.
\end{prop}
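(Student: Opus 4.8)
The plan is to identify the composite map $N_*(\cO_W) \circ EZ$ with the standard procedure twisted equivariant chain map $\cO_{\cE}$ of Proposition \ref{19.2} by invoking the uniqueness result, Proposition \ref{19.3}. Since both maps have domain the free $\widehat{G}$-complex $N_*(E\Sigma_r) \otimes \bigotimes_i N_*(E\Sigma_{s_i})$ with $\widehat{G} = \Sigma_r \times \prod_i \Sigma_{s_i}$, and range $N_*(E\Sigma_s)$ equipped with the contraction $H_s(X) = (e, X)$ satisfying $H_s^2 = 0$, it suffices to check three things: that the two maps agree in degree $0$, that $N_*(\cO_W)\circ EZ$ satisfies the twisted equivariance formula $\Phi(\hat g x) = \cO_\Sigma(\hat g)\,\Phi(\tau_g x)$ that characterizes $\cO_{\cE}$, and that $N_*(\cO_W)\circ EZ$ carries product basis elements $b = b_0 \otimes b_1 \otimes \cdots \otimes b_r$ (each $b_j$ a tuple of permutations with identity first entry) into $\mathrm{Im}(H_s) = \mathrm{Ker}(H_s)$.

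For the degree $0$ agreement: in degree $0$ both $EZ$ and $N_*(\cO_W)$ restrict to the set-level permutation operad structure map $\cO_\Sigma$, which is exactly how $\cO_{\cE}$ is defined in degree $0$ in Remark \ref{19.1}. For the twisted equivariance: the key point is that a set-theoretic inclusion of groups $G \hookrightarrow G'$ respecting identities induces a chain map $N_*(EG) \to N_*(EG')$ commuting with the canonical contractions (Example \ref{5.3}), and that $EZ$ for MacLane models is itself a standard procedure map (Example \ref{6.9}) which is $\widehat{G}$-equivariant in the appropriate sense; combined with the fact that $\cO_W$ on vertices is $\cO_\Sigma$, one reads off that $N_*(\cO_W) \circ EZ$ intertwines the $\widehat{G}$-action on the domain (including the factor-permutation twist $\tau_g$ arising from reordering the $s_i$) with the $\cO_\Sigma(\hat g)$-action on $N_*(E\Sigma_s)$. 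This is really just unwinding the definitions of the group actions on MacLane models and the compatibility of $EZ$ with permuting tensor factors of products of groups, using the identification $E(G\times G') \simeq EG \times EG'$ and the fact (noted after Example \ref{8.8}) that group action maps are simplicial, hence commute with natural transformations like $EZ$.

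For the membership in $\mathrm{Im}(H_s)$: this is the step I expect to require the most care, though it is still elementary. A product basis element $b = b_0 \otimes \bigotimes_i b_i$ has each tuple $b_j$ beginning with the identity permutation $e$. Apply $EZ$; from the explicit $EZ$ formula for MacLane models (Example \ref{6.9}), every simplex summand of $EZ(b)$ in $N_*(E(\Sigma_r \times \prod_i \Sigma_{s_i}))$ has initial entry equal to the identity element of the product group, since $EZ(\ (e,\vec x)\otimes(e,\vec y)\ )$ always has each summand tuple beginning with $(e,e)$. The inclusion $\cO_W$ on vertices sends the identity of the product group to $e \in \Sigma_s$ (this is part of the unit-compatibility of the permutation operad structure map: $\cO_\Sigma(e; e, \ldots, e) = e$). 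Hence every simplex summand of $N_*(\cO_W)(EZ(b))$ begins with $e \in \Sigma_s$, which is precisely the condition that it lie in $\mathrm{Im}(H_s)$, as $H_s(X) = (e,X)$ and a tuple starting with $e$ is $H_s$ of its own ``tail.'' With all three hypotheses of Proposition \ref{19.3} verified, uniqueness forces $N_*(\cO_W) \circ EZ = \cO_{\cE}$, completing the proof. I would remark as an aside that this simultaneously yields a closed formula for the Barratt-Eccles operad structure maps — the explicit $EZ$ triangulation formula composed with $\cO_\Sigma$ on vertices — and a painless proof that these maps are chain maps and satisfy the operad axioms, since Proposition \ref{19.6} has already established the axioms for $\cO_{\cE}$.
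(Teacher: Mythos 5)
Your proposal is correct and follows essentially the same route as the paper, whose first proof is exactly the observation that the uniqueness result Proposition \ref{19.3} applies directly; you have merely spelled out the three hypotheses (degree-$0$ agreement, twisted equivariance of the composite, and basis elements landing in $\mathrm{Im}(H_s)$ via the explicit $EZ$ formula whose summands all begin with the identity). The paper also records a second, equivalent argument — $EZ$ is the standard procedure map and $N_*(\cO_W)$ commutes with contractions and is twisted equivariant, so the composite is the standard procedure map as in Proposition 6.5(ii) — but your verification is the same in substance as its first proof.
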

\begin{proof} The maps $\cO_B$ and $ N_*(\cO_W)\circ EZ$ agree in degree 0.  Also, as explained during the discussion of Subsection 6.4 of Part I, the map $N_*(\cO_W) \circ EZ$ takes basis elements of the domain to elements in the image of the contraction of the range. This holds because up to signs the $EZ$ map takes basis elements to sums of basis elements, which are in the image of the contraction, and then $N_*(\cO_W)$ commutes with contractions.\\

We next prove the twisted equivariance formula for $ N_*(\cO_W)\circ EZ$.  Then we simply observe that the uniqueness result Proposition \ref{19.3} applies.\\

The twisted equivariance property we need is $$N_*(\cO_W) EZ (\hat{g} b) = \cO_\Sigma(\hat{g}) N_*(\cO_W) EZ (\tau_g b).$$ Here $b$ is a basis element in  $N_*(E\Sigma_r) \otimes \bigotimes N_*(E\Sigma_{s_i})$.  Also $\hat{g} = (g; h_i) \in \Sigma_r \times \prod \Sigma_{s_i}$ and $\tau_gb = (Id \otimes \tau(g^{-1}))(b) \in N_*(E\Sigma_r) \otimes \bigotimes N_*(E\Sigma_{s_{gi}})$. The $EZ$ map is ordinary equivariant for the product of symmetric groups actions, so $EZ (\hat{g} b) = \hat{g} EZ (b)$.\\

$EZ(b)$ is a sum of (signed)  basis elements $(u; v_i) \in N_*(E(\Sigma_r \times \prod \Sigma_{s_i}))$. The equivariance property of $N_*(\cO_W)$ is $$N_*(\cO_W)( \hat{g} (u; v_i) )) =  \cO_\Sigma(\hat{g}) N_*(\cO_W)(u; v_{gi}) = \cO_\Sigma(\hat{g}) N_*(\cO_W)(Id \times \tau(g^{-1}))(u; v_i).$$ 

The commutativity property of $EZ$ from Subsection 6.4 implies that the following diagram commutes.
$$\begin{CD} N_*(E\Sigma_r) \otimes \bigotimes N_*(E\Sigma_{s_i})) & \xrightarrow {EZ} & N_*(E\Sigma_r) \times \prod E\Sigma_{s_i}) &  & \\
 \downarrow \tau_g = Id \otimes \tau(g^{-1}) &  & \downarrow Id \times \tau(g^{-1}) & &  \\
N_*(E\Sigma_r) \otimes \bigotimes N_*(E\Sigma_{s_{g_i}})) & \xrightarrow {EZ} & N_*(E\Sigma_r) \times \prod E\Sigma_{s_{g_i}}) & \xrightarrow{N_*(\cO_W)} & N_*(E\Sigma_n) \\
\end{CD}$$
We then compute as desired 
$$N_*(\cO_W) EZ(\hat{g}b) = N_*(\cO_W)(\hat{g} EZ (b)) $$  $$ =  \cO_\Sigma(\hat{g}) N_*(\cO_W)(Id \times \tau(g^{-1})(EZ (b))  = \cO_\Sigma(\hat{g}) N_*(\cO_W) EZ(\tau_g b).$$ 
The delicate issue here is about signs.   There is no sign in the center direct product vertical map $Id \times \tau(g^{-1})$. There is a  Koszul sign in the left-hand  tensor product vertical map $\tau_g$. But the products of simplices in the prisms involved in the two $EZ$ maps are permuted when passing from $b$ to $\tau_g b$. The $EZ$ maps require orientation signs on each prism, which explains why the diagram commutes.\\

Here are some alternate  arguments.  Once the equivariance property is clarified, instead of using the uniqueness result Proposition \ref{19.3}, we could view $N_*(\cO_W) \circ EZ$ as a composition of standard procedure chain maps. We could then use the fact that $EZ$ takes basis elements  to sums of basis elements, along with Proposition \ref{6.5}(iii) of Part I, to deal with the composition.   Also, we observed in Remark \ref{19.7} that $N_*(\cO_W)$ commutes with contractions.   Proposition \ref{6.5}(ii) of Part I would then also deal with the composition.
\end{proof}

 \begin{rem}\label{19.9} The simplicial set map switching factors $ E(\Sigma_r) \times E(\prod \Sigma_{s_i}) \simeq E( \prod \Sigma_{s_i} )\times E(\Sigma_r)$ lifts to an isomorphism of normalized chain complexes $ N_*(E(\Sigma_r) \times E(\prod \Sigma_{s_i})) \simeq N_*(E( \prod \Sigma_{s_i} )\times E(\Sigma_r))$, with no signs.  
Then the $EZ$ maps will commute with the Koszul sign map $\tau \colon N_*(E\Sigma_r) \otimes \bigotimes N_*(E\Sigma_{s_i}) \simeq \bigotimes N_*(E\Sigma_{s_i}) \otimes N_*(E\Sigma_r)$.\\

Suppose all $s_i = t$.  There is a natural free action of the semidirect product $\prod \Sigma_t  \rtimes \Sigma_r$ on the right hand tensor product $\bigotimes N_*(E\Sigma_t) \otimes N_*(E\Sigma_r)$,  just as there is on $N_*(E(\prod \Sigma_t \times \Sigma_r) = N_*(E (\prod \Sigma_t \rtimes \Sigma_r)$. Extending the discussion in Remark \ref{18.3}, by means of the Koszul sign identification $\tau$ we can  identify the operad structure map $\cO_\cE$ in this restricted case with an equivariant map for the semidirect product actions and the semidirect product subgroup inclusion, 
$$   \bigotimes N_*(E\Sigma_t ) \otimes N_*(E\Sigma_r) \xrightarrow{EZ} N_*(E (\prod \Sigma_t \rtimes \Sigma_r)) \subset N_*(\Sigma_{rt}).$$  
This statement can be proved directly for our twisted equivariant standard procedure construction of $\cO_{\cE}$, as well as for the $N_*(\cO_W) \circ EZ$ construction. $\qed$
\end{rem}

\begin{rem}\label{19.10} We make some comments that relate our $\cO_B$ constructions more directly to the symmetric monoidal functor viewpoint. Since $EZ$ is ordinary equivariant and $N_*(\cO_W)$ is twisted equivariant, the composition $ N_*(\cO_W) \circ EZ$  is twisted equivariant.   All that remains is the associativity axiom. The original point of view is that associativity follows from symmetric monoidal functor properties of the $EZ$ maps and the symmetric operad properties of $W$.  We studied the $EZ$ maps for MacLane models extensively in Sections 6  of Part I, in the context of our standard procedures for defining chain maps using bases of the domain and contractions of the range.  In particular, basic properties of the $EZ$ maps for MacLane models, such as full associativity and commutativity, drop out directly from the uniqueness result Proposition 6.2 of Part I.  The explicit  closed formula is not needed.\\

The operad properties of $W$ are trivial consequences of the operad properties of the symmetric group operad $\Sigma$. Then $EZ$ {\it injectively} maps Diagram $\boxed{18.1}$ for our $\cO_B$ maps  to $N_*$ applied to Diagram $\boxed{18.1}$ for the $\cO_W$ maps.  Squares connecting the two diagrams  certainly commute by the easy result Proposition \ref{19.3}. The terminal target in all rows of both diagrams is $N_*(E\Sigma_s)$.  The full commutativity and associativity properties of $EZ$ can be used to show the full three dimensional diagram commutes, which implies the operad associativity of our $\cO_B$.\\

This last argument is just a special case of the argument that symmetric monoidal functors in general produce operads in one category from operads in another category. But we prefer our direct proof  that the $N_*(E\Sigma_n)$ form an operad,  using the twisted equivariant procedure, followed by the uniqueness theorem identification of the twisted procedure maps $\cO_B$ with the maps $N_*(\cO_W) \circ EZ$  $\qed$
\end{rem}

\newpage
  
\section{The Surjection Operad}

In this section we show that the candidate structure maps $\cO_\cS$ of Proposition \ref{19.2} for the various surjection complexes $\cS_*(n)$ satisfy the associativity axiom, and hence the surjection complexes form operads.  Of course since the surjection complexes are all isomorphic, we can work with any one of them.  The signs are simplest for the Berger-Fresse complexes.\\

We will actually prove more, namely we will prove the diagram below commutes.  Each map, including the tensor product of $TR$ maps, is a standard procedure map.
 $$\begin{CD}  N_*(E\Sigma_r) & \otimes & N_*(E\Sigma_{s_1})& \otimes & \cdots &\otimes & N_*(E\Sigma_{s_r})  & \xrightarrow{\cO_{\cE} } &  N_*(E\Sigma_s)  \\
\downarrow TR&\otimes & \downarrow TR  & \otimes&\cdots & \otimes & \downarrow TR & & \downarrow TR \\
 \cS_*^{bf}(r) & \otimes & \cS_*^{bf}(s_1)&  \otimes &\cdots & \otimes &  \cS_*^{bf}(s_r) & \xrightarrow{\cO_{\cS} } & \cS_*^{bf}(s).
\end{CD}$$
The $TR$ are surjective so associativity for the Barratt-Eccles operad then provides another proof of associativity for the surjection operad.  But the diagram also says that the surjection operad is a quotient operad of the Barratt-Eccles operad, that is, $\cT \cR \colon \cE \to \cS$ is a surjective  operad morphism.\\

We will also show that the adjoints of the Berger-Fresse functorial standard procedure maps $\phi \colon \cS_*^{bf}(n) \otimes N_*(X) \to N_*(X)^{\otimes n}$ studied in Section 17 fit together to define an operad map  from the surjection operad to the $CoEnd$ operad,  $\Phi \colon \cS \to CoEnd$, where $CoEnd(n) = HOM_{func}(N_*( - ), N_*( - )^{\otimes n}).$\\

This final section is the longest and the most difficult section of the paper.

\subsection {The Surjection  Operad Structure Maps}

 \begin{rem}\label{20.1} {\bf Notation Needed to Define $\cO_{\cS}$.} In order to study the candidate structure maps $\cO_{\cS}$ of Proposition \ref{19.2} we need quite a bit of notation.  Given a tuple of integers $y$, denote by $y[t]$ the result of adding $t$ to each entry of $y$.  A {\it $k$-division} of $y$ is obtained by repeating $k-1$ not necessarily distinct entries of $y$ and inserting dividers between the repeated entries.  For example, if $y = (1,2,3,2,4)$ then one 3-division of $y[2] = (3,4,5,4,6)$ is given by $(3,4|4,5,4,6|6)$.  We refer to the $k$ delineated subtuples of a $k$-division as the {\it division subtuples.}  In the example, these are $(3,4), (4,5,4,6), (6)$. The tuple consisting of a single integer $(n)$ has a unique $k$-division $(n\ |\ldots |\ n)$.  Any $y$ has a unique $1$-division, namely $y$ itself.  Obviously $k$-divisions of $y$ and $y[t]$ are equivalent concepts.\\

Given a generator $x = (x_1, x_2, \ldots, x_{r+k}) \in \cS_*(r)$, with $k_i$ occurrences of the values $i$, and given generators $y_i \in \cS_*(s_i), 1 \leq i \leq r$ along with a $k_i$-division $\cD_i$ of each $y_i[t_i] $, where  $t_i = s_1+ \ldots +s_{i-1}$, we define $\cD x = (\cD_1, \ldots, \cD_r) x \in \cS_*(s),\ s = s_1 + \ldots + s_r$, to be the result of replacing the successive occurrences of the $i$-entries of $x$ by the successive division subtuples of the $y_i[t_i] $.  In all discussions in this section, the integer $k_i$ will always denote the number of $i$'s occurring in a generator $x$ that is part of the discussion.\\

We also want to associate a $\pm$ sign to each term $\cD x$.  The signs depend on which surjection complex we use, but the terms $\cD x$ do not. The sign will be related to a shuffle permutation of caesura {\it positions} of the caesuras of $x$ and the $y_i$.  The values of entries of $\cD x$ are what they are, but the positions of the caesuras of $\cD x$ only depend on the positions of the caesuras of $x$ and the positions of the caesuras of the $y_i$ with respect to the $k_i$-divisions of the $y_i$.  There are different ways to describe the relevant shuffle of caesuras and a sign, but here is one.\\

The number of caesuras of $\cD x$ is the degree, which is $|x|+ \sum |y_i|$.  The entries of the $y_i$ that are final entries of division subtuples that are repeated in a following subtuple generate caesura entries of $\cD x$ because they are repeated later in $\cD x$.  These correspond to the caesura entries of $x$.  The caesura entries of the $y_i$ in  division subtuples that come before the final entry of subtuples correspond bijectively  to the caesuras of the $y_i$.  These are then moved to positions in $\cD x$ that precede the positions of the corresponding final subtuple entry.  These non-final $y_i$-caesuras  also generate caesura entries of $\cD x$.  Define $sh(C_x, C_\cD)$ to be the number of pairs consisting of an  $x$-caesura of $\cD x$ and a $y$-caesura of $\cD x$ that precedes it.  The sign associated to  $\cD x$ in the surjection  complex $\cS_*^{bf}(s)$ will be $(-1)^{sh(C_x, C_\cD)}$.\\

 Note that this discussion simplifies somewhat if $y_j = (1) \in \cS_0(1)$ for all $j \not = i$ and a $k_i$-division is chosen for $y_i \in \cS_*(s_i)$.  For $j \not=i$ there is still an implicit $k_j$-division $(1|1| \ldots | 1)$ of $y_j$.  These simple cases are all that is needed in the {\it partial compositions} treatment of operads that we will discuss further below.
\end{rem}

\begin{exam}\label{20.2} Take $x = (1,2,1,3,2)$, and $y_1 = (1231),\  y_2 = (1,2,1,4,3)$ and $ y_3 = (1,2,1).$ Consider the $k_1 = 2$-division    $\cD_1y_1[0] = (1, 2 | 2, 3, 1),$ the $k_2 = 2$-division   $\cD_2y_2[3] = (4, 5, 4, 7 | 7, 6),$ and the $k_3=1$-division $ \cD_3 y_3[7] = (8,9,8).$ Then
$$\cD x = (\cD_1, \cD_2, \cD_3)x = (\underline{1, 2}, \underline{\underline {4, 5,4,7}}, \underline {2,3,1} , \underline{\underline{\underline {8,9,8}}}, \underline{\underline {7,6}}).$$
The two $x$-caesura entries of $\cD x$ are the 2 and 7, corresponding to final entries of $y_i[t_i]$ subtuples that replace caesuras of $x$. The three $y$-caesura entries are the $\underline{1}, \underline{\underline {4}}$, and $\underline{\underline{\underline{8}}}$.  The order of these caesuras in $\cD x$ is $ \underline{1}\ 2\ \underline{\underline {4}}\ 7\ \underline{\underline{\underline{8}}}$. Thus $sh(C_x, C_\cD) = 1+2$ and the sign associated to $\cD x$ is $-1$.\\

For a partial composition example, take $x = (1,2,1,3,2), y_1 = (1), y_3 = (1)$, $y_2 = (1,2,1,4,3)$ with the 2-division $\cD_2y_2[1] = (2,3,2,5 | 5,4).$ We also use the $k_1 = 2$-division $(1|1)$ of $y_1$.  Then $\cD x = (\underline{1}, \underline{\underline{2,3,2,5}}, \underline{1}, \underline{\underline{\underline{6}}}, \underline{\underline{5,4)}}).$ The $x$-caesuras are $1, 5$ and the $y$-caesura is $2$, in the order $1 \underline{\underline 2} 5$.  The sign associated to $\cD x$ is $-1$. $\qed$
\end{exam}
 
{\bf The Berger-Fresse Formula for $\cO_{\cS}$}. We now give a formula for the surjection operad map for $x \in \cS_*^{bf}(r), y_i \in \cS_*^{bf}(s_i)\ s = \sum s_i$. The proposition below is technically the most difficult result of our paper. The result was stated by Berger-Fresse  [3], [4], but details of a proof were not given.  The result is equivalent to formulas that can be found in the seminal work of McClure-Smith [19], [20], which is also technically difficult.  Our approach is different, and we regard our details as providing replication of important results of McClure and Smith and of Berger and Fresse about cochain operations and operads.  We do not claim our proof is simple but we do believe we offer a more structured viewpoint.

\begin{prop} \label {20.3}(i). With the $\cD_i$ indicating $k_i$-divisions of $y_i[t_i],$ as above, it holds that with suitable signs the equivariant twisted standard procedure map $\cO_\cS$ of Proposition \ref{19.2} coincides with a map 
$$\Phi(x; y_1, \ldots, y_r) = \sum_{all\ \cD = (\cD_1, \ldots, \cD_r) } \pm\  \cD x \in \cS_*(s).$$ 
(ii). For the surjection complex $\cS_*^{bf}(s)$ the sign of the $\cD x$ term   is $(-1)^{sh(C_x, C_\cD)}$.
\end{prop}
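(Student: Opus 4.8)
The plan is to prove Proposition~\ref{20.3} by the same strategy used for the Berger--Fresse map in Proposition~\ref{17.3}: show directly, by induction on total degree, that the explicit formula $\Phi$ is twisted $\widehat{G}$-equivariant and satisfies the defining recursion $\Phi(b) = H_s\Phi(db)$ on basis elements $b = x \otimes y_1 \otimes \cdots \otimes y_r$, so that $\Phi$ coincides with the standard procedure map $\cO_\cS$ of Proposition~\ref{19.2} and is in particular a chain map. Parts (i) and (ii) are established together, with the verification of signs deferred to the last step. Throughout one works with $\cS_*^{bf}$, where the $\Sigma_n$-action and the contraction $H_n$ are sign-free; once (ii) is proved, the statement for $\cS_*^{aj}$ and $\cS_*^{ms}$ follows by transporting along the equivariant isomorphisms of Propositions~\ref{15.3}, \ref{15.7}, \ref{15.12}, which commute with contractions and, up to sign, carry the chosen bases to the chosen bases --- the terms $\cD x$ are the same for all three complexes, only the signs get twisted. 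As with the Berger--Fresse map, once the induction succeeds it follows, using the twisted extension of the uniqueness result Proposition~\ref{19.3}, that $\Phi$ is a chain map, so the otherwise opaque formula for $\cO_\cS$ acquires a transparent origin.

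First I would carry out the equivariance step. The $\Sigma_{s_i}$-action on the $y_i$ is post-composition, carrying no sign in $\cS_*^{bf}$; it relabels the values of the division subtuples of $y_i[t_i]$ without altering their positions or the $k_i$-divisions $\cD_i$, hence permutes the terms $\cD x$ without changing $sh(C_x, C_\cD)$, which is exactly the $h_i$-equivariance formula of Remark~\ref{18.4}. The $\Sigma_r$-action by $g$ simultaneously relabels the value-blocks of $x$ and permutes the tensor factors $y_i$; here one uses the block-permutation description of $\cO_\Sigma$ (Remarks~\ref{18.3}, \ref{18.4} and Lemma~\ref{18.5}) to check that the terms $\cD(gx;\ldots)$ match $\cO_\Sigma(g;\ldots)$ applied to the terms $\cD'(x; y_{g\bullet})$, and that $sh(C_x,C_\cD)$ transforms as the first equivariance formula requires. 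The full equivariance $\Phi(\hat g\,x) = \cO_\Sigma(\hat g)\,\Phi(\tau_g x)$ for arbitrary product elements $\hat g x$ then follows from the operad identities for $\Sigma$, exactly as in the proof of Proposition~\ref{19.2}(i).

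The heart of the argument is the recursive step. The key lemma, the analog of Step~2 of the proof of Proposition~\ref{17.3}, is that $\Phi(x; y_\bullet)$ is an $\FF$-linear combination of clean generators --- hence lies in $Ker(H_s) = Im(H_s)$ by Proposition~\ref{15.2}(ii) --- whenever $x$ is clean or some $y_i$ is clean. This is checked by unwinding the formula: if, say, $x$ is clean, then (since each basis generator is itself clean, by Proposition~\ref{15.2}(iii)) the prefix of any term $\cD x$ up to its first caesura lies inside the blocks substituted for the singleton values $1,\ldots,\ell-1$ of $x$ and the first division subtuple substituted for its first caesura value $\ell$; this prefix is built from the clean prefix of one of the inputs and consists entirely of fresh singleton values, so $\cD x$ is clean (or degenerate, hence zero). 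The case where some $y_i$ is clean is similar. Granting this, in $\Phi(b)=H_s\Phi(db)$ every boundary term of the tensor boundary $d(x\otimes y_\bullet)$ --- obtained by deleting an entry of $x$ or of some $y_i$ --- except those deleting the first caesura of $x$ or the first caesura of some $y_i$ produces, by the basis-generator analysis recalled in Step~3 of the proof of Proposition~\ref{17.3} together with Proposition~\ref{15.2}(iii),(iv), a factor that is degenerate or clean, and is therefore annihilated by $H_s$. What survives is $H_s$ applied to $\cO_\cS=\Phi$ (known by induction in lower degree) of at most $r+1$ tensors of one lower degree; using $H_s = s + isr + i^2sr^2 + \cdots$ and the evaluation rule for $i^m s r^m$ on a generator (Proposition~\ref{15.2}(ii)), a term-by-term matching identifies these contributions with exactly the terms $\cD x$ of $\Phi(b)$, the bookkeeping parameter being which input supplies the clean prefix of $\cD x$ (and, for $y_\ell$, which division point does). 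This is the analog of Steps~5 and 6 of the proof of Proposition~\ref{17.3}.

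Finally the signs. Since $H_s$ introduces no sign and the $\cS_*^{bf}$-boundary signs are the alternating caesura/non-caesura signs of the Berger--Fresse complex, the recursion $\Phi(b)=H_s\Phi(db)$ forces the coefficient of each term $\cD x$ of $\Phi(b)$ to equal the coefficient of the corresponding lower-degree term times the tensor-boundary sign incurred in passing from $b$ to the surviving boundary piece. One verifies by induction that $(-1)^{sh(C_x,C_\cD)}$ obeys the same transformation law: deleting the first caesura of $x$ removes one $x$-caesura from $\cD x$, changing the count by the number of $y$-caesuras preceding it; deleting the first caesura of a $y_i$ removes one $y$-caesura; and the prepending operations $s$ and $i$ inside $H_s$ never move a caesura past another of the opposite type. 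This is the analog of Step~7 of the proof of Proposition~\ref{17.3}, with the single position-shuffle count $sh(C_x,C_\cD)$ playing the combined role of the shuffle sign $sh(M,x)$ and the position sign $pos(M,x)$ there. The main obstacle is precisely this last bit of bookkeeping: correctly enumerating the nonzero terms of $H_s\Phi(db)$ and confirming that their recursively forced signs are the asserted $(-1)^{sh(C_x,C_\cD)}$. What makes it tractable --- and the reason $\cS_*^{bf}$ is the convenient model --- is that both $H_s$ and the symmetric-group action on $\cS_*^{bf}$ carry no signs, so no fortuitous cancellation is required.
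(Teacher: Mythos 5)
Your overall strategy is exactly the paper's: prove the explicit formula is twisted equivariant, show it satisfies the recursion $\Phi(b)=H_s\Phi(db)$ on basis tensors via a cleanness argument, and let the recursion force the signs, in parallel with Proposition 17.3. But the crucial combinatorial input is misstated, and the gap matters. Your key lemma --- that $\Phi(x;y_1,\ldots,y_r)$ is a span of clean generators ``whenever $x$ is clean or some $y_i$ is clean'' --- is false as stated: if $x$ is clean at $\ell$ but an early factor is not clean (say $y_1=(2,1,2)$), then every $\cD x$ begins with a division subtuple of $y_1$ and so is not clean; likewise a clean $y_i$ buys nothing when $x$ is not clean. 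What the argument actually needs, and what the paper isolates as Lemma 20.6, is finer: with $x$ clean and the factors clean (or identities) up to and including the \emph{first} non-identity factor $y_n$, the output is clean even when the factors $y_i$ with $i>n$ are \emph{arbitrary}. That refinement is precisely what kills the boundary terms $d_{m_i}y_i$ for $i>n$ --- which your reduction keeps among the ``at most $r+1$'' survivors --- so that only $d_\ell x$ and $d_m y_n$ survive. Without it, your subsequent matching either overcounts or cannot be carried out.

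The matching itself is also not the bijection you describe. In the cases governed by the surviving term $\cD''\,d_\ell x$ (the paper's Cases (b), (b$'$), (b$''$)), applying $H_s$ to a single such term produces a \emph{sum} of $\min\{j,m\}$ distinct terms $\cD x$ --- those whose first two division subtuples of $y_\ell$ amalgamate to a fixed subtuple --- whereas in the cases coming from $d_m y_n$ (Cases (a), (a$'$), (a$''$)) exactly one summand $i^{t+m-1}sr^{t+m-1}$ of $H_s$ acts nontrivially and the correspondence is one-to-one. Tracking this many-to-one structure, and checking case by case that the recursively forced sign equals $(-1)^{sh(C_x,C_\cD)}$ --- in particular that the entry restored by $H_s$ in the $d_m y_n$ cases is a $y$-caesura preceding all $x$-caesuras, producing exactly the Koszul factor $(-1)^{|x|}$, while in the $d_\ell x$ cases the restored first $x$-caesura is preceded only by singletons so the count is unchanged --- is where the paper's six-case analysis does the real work. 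Your sketch asserts the conclusion of this step rather than supplying it.
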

The signs are recursively determined by the standard procedure process.  We state the proposition in two parts because we will first prove by induction that a formula (i) holds for all the surjection complexes. Then we will verify the signs given in (ii) are correct for the Berger-Fresse complex. The steps in the proof very much resemble the steps in the proof of Proposition \ref{17.3} that identified the functorial standard procedure map $\cS_*^{bf}(n) \otimes N_*(X) \to N_*(X)^{\otimes n}$ with a Berger-Fresse map.  As in that case, the result here provides an example of the general method described towards the end of preview Section 2.3 of Part I for studying standard procedure maps of form $\phi(x) = \sum \pm \cT x$.\\

{\bf Step 0}. {\bf Degree 0.}  In degree 0, $x$ and the $y_i$ are permutations, all $k_i = 1$, and the map $\Phi = \cO_\cS$ coincides with the operad structure map $\cO_\Sigma$ of Section 18.1.  Since permutations have no caesuras, all signs are $+1$. $\qed$\\ 

{\bf Step 1}. {\bf Equivariance.} The map $\Phi$ is twisted equivariant.  It is best to look at the separate formulas in Remark \ref{18.4}, where the  equivariant property for operad structure maps was defined. The second equivariance formula $\Phi(x; h_iy_i) = (\oplus h_i) \Phi(x; y_i)$ is trivial, on each $\cD x$ summand, because one is just permuting the entries of each $y_i[t_i]$ and their division subtuples, which can be done either before or after replacing occurrences  of $i$ in $x$ by division subtuples of $y_i[t_i]$.  The signs in part (ii) of Proposition \ref{20.3} also match because the relative positions of $y_i$ and $h_iy_i$-caesuras, compared to $x$-caesuras, coincide  in corresponding $\cD$ terms.\\

The first equivariance formula $\Phi(gx; y_i) = g_*(s_i)\Phi(x; y_{gi})$ is also seen by matching $\cD gx$ summands on the left with $\cD x$ summands on the right.   On the left, choosing a $k_{g^{-1}(i)}$-division of each $y_i[t_i]$ is equivalent to choosing on the right a $k_i$-division of each $y_{gi}[t_i']$, where the entries of the $y_{gi}[t_i']$ and their division subtuples are from successive intervals of length $s_{gi}$ ending at $s_{g1} + \ldots + s_{gi}$.  Use these to form a summand of $\Phi(x; y_{gi})$ in $\cS_*^{bf}(s)$, then act by the permutation $g_*(s_i)$. That permutation $g_*(s_i) = (B_{g1}B_{g2}\ldots B_{gr})$   takes  successive intervals of length $s_{gi}$ to the blocks $B_{gi}$, where $(1,2, \ldots, s) = (B_1, B_2, \ldots, B_r)$ with each block $B_i$ having length $s_i$.   The result is the element of $\cS_*^{bf}(s)$ obtained by replacing occurrences  of $i$ in $gx$ by division subtuples of $y_i[t_i]$, which is a summand of $\Phi(gx; y_i)$.\\

An advantage of the complexes $\cS_*^{bf}(n)$ is that the permutation group $\Sigma_n$ acts with no signs. Therefore the signs in part (ii) of the formula for $\Phi$ also match because the relative positions of $x$ and $y_{gi}$-caesuras match those of $gx$ and $y_i$-caesuras in corresponding $\cD$ summands.\\

It requires some diligence to parse this last argument because of the subtlety of the permutation $g_*(s_i)$.   Surprisingly it does not really seem  easier to follow in the case of partial compositions with a single non-trivial $y_i$. \ \ \ \ $\qed$
  
\begin{exam}\label{20.4} Take $r = 3$, $x = (1,2,1,3,2)$, $y_1 = (1,2,3,2,4,1)$, $y_2 = (1,2,1)$, and $y_3 = (1,2,1,3,1)$.  Take $g = (231) \in \Sigma_3$, so $gx = (2,3,2,1,3)$. To build a typical summand of $\Phi(x; y_2, y_3, y_1)$\footnote{We remind that it is the {\it order} the $y_i$'s appear in $\Phi(x; y)$, not their subscripts, that determines which $y_i$ subtuples replace which $x$ entries.}, consider 2-divisions $(1 | 121)$ of $y_2[0]$ and $(3,4,3,5 | 5,3)$ of $y_3[2]$.  Since 3 is a singleton in $x$, we use the trivial division $y_1[5] = (6,7,8, 7,9,6)$.\\

The associated summand of $\Phi(x; y_2, y_3, y_1)$ is 
$$(\underline{1}, \underline{\underline{3,4,3,5}}, \underline{1,2,1}, \underline{\underline{\underline{6,7,8,7,9, 6}}}, \underline{\underline{5, 3}}).$$
Act on this by $g_*(4,2,3) = (5,6,\ 7,8,9,\ 1,2,3,4)$. The result is
$$ (\underline{\underline{5}}, \underline{\underline{\underline{7,8,7,9}}}, \underline{\underline{5,6,5}}, \underline{1,2,3,2,4,1}, \underline{\underline{\underline{9,7}}})$$
which is the associated term of $\Phi(gx; y_1,y_2,y_3)$, using the 1-division of $y_1[0]$ and the chosen 2-divisions of  $y_2[4]$ and $y_3[6]$.  It is easy to observe  the similar structure and relative positions of $x$-caesuras and $y$-caesuras in the two $\cD$ terms, not just in this example but in all cases.   $\ \ \ \ \qed$
\end{exam}

\begin{rem}\label{20.5} {\bf The Image of $\bf H_s$.} Recall from Subsection 15.1 of Part II that to say $x$ is a clean generator means in degree 0 that $x$ is the identity permutation $Id_r = (1,2, \ldots,r)$ and in higher degrees $x = (1,2, \ldots, \ell, \ldots, \ell, \ldots)$, where $x_\ell = \ell$ is the first caesura.  We will use the terminology {\it $x$ is clean at $\ell$} to describe such generators. 
From Proposition 15.1 of Part II, $Im(H_s)$ is exactly the $\FF$-span of the clean generators, where $H_s$ is the contraction of $\cS_*(s)$.\\

 Of great importance is the following fact about the formula for $\Phi$ in Proposition \ref{20.3}(i). If $x$ and the $y_i$ are clean surjection generators then $\Phi(x; y_1, \ldots, y_r)$ is a sum of clean generators.  This will be an easy consequence of Lemma \ref{20.6} below.    The fact that $\Phi = \cO_{\cS}$ maps tensors of clean generators to sums of clean generators is the key to proving the associativity axiom for the surjection operad because that fact is exactly the criterion of Proposition \ref{19.5}.  The same fact is used to prove the surjection operad is a quotient of the Barratt-Eccles operad. The signs in Proposition \ref{20.3}(ii) are not needed for the associativity or for the comparison with the Barratt-Eccles operad.  $\qed$ \\
\end{rem}

{\bf Step 2.} {\bf A Key Lemma.}  The map $\Phi$ coincides with $\cO_\Sigma$ in degree 0, so cleanliness is no issue.  In higher degrees we have the following different cases, each of which is proved by simply reviewing the definitions. Recall $t_i = s_1 + \ldots + s_{i-1}$. Let $\cD = (\cD_1, \ldots, \cD_r)$ denote fixed $k_i$-divisions of the $y_i$.

\begin{lem}\label{20.6}(i). If $x = Id_r$ and if $n$ is least so that $y_n \not= Id_{s_n}$, with $y_n$ clean at $m$, then $\cD x$ is clean at $t_n + m$.\\

(ii). If all $y_i = Id_{s_i}$ and $x$ is clean at $\ell$ then $\cD x$ is clean at $t_\ell + j$, where $j$ is the length of the first division subtuple of $y_\ell = Id_{s_\ell}.$\\

(iii). If $x$ is clean at $\ell$ and if $n$ is least so that $y_n \not= Id_{s_n}$, with $y_n$ clean at $m$, let $j$ denote the length of the first division subtuple of $ y_\ell$.  Then there are cases.   If $n < \ell$  then $\cD x$ is clean at $t_n + m$. If $n > \ell$ then $\cD x$ is clean at $t_\ell + j$.  If $n = \ell$ and  $j \leq m$ then $\cD x$ is clean at $t_\ell + j$. If $n = \ell$ and $j > m$ then $\cD x$ is clean at $t_\ell + m$.
\end{lem}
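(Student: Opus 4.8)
The plan is to prove Lemma \ref{20.6} as a direct unraveling of the definition of the terms $\cD x$ introduced in Remark \ref{20.1}, organized according to the three cases. The common mechanism is this: a clean generator $z$ is characterized by the property that its {\it initial segment} before the first caesura is precisely $(1,2,\dots,m)$ for some $m$, with $z_m = m$ being the first repeated value. So for each of the three cases I would (a) locate the first caesura of $\cD x$, and (b) verify that everything strictly before that position in $\cD x$ is exactly $(1,2,\dots)$ up to that point and consists of singletons. The positions of the caesuras of $\cD x$ depend only on the positions of the caesuras of $x$ and of the $y_i$ relative to the chosen $k_i$-divisions $\cD_i$, as spelled out in Remark \ref{20.1}; the {\it values} in the initial segment are then forced by how $\cD x$ is assembled by replacing the successive $i$-entries of $x$ by the successive division subtuples of $y_i[t_i]$.

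First I would handle case (i): $x = Id_r = (1,2,\dots,r)$. Then $x$ has no caesuras, so the only caesuras of $\cD x$ come from the $y_i$ via their divisions. For $i<n$ we have $y_i = Id_{s_i}$, so $y_i[t_i]$ lists $t_i+1,\dots,t_i+s_i$ with no repeats, and since $x_i=i$ occurs once in $x$ the $k_i$-division is trivial and $y_i[t_i]$ is inserted verbatim, contributing the block $(t_i+1,\dots,t_i+s_i)$; concatenating these blocks for $i=1,\dots,n-1$ gives exactly $(1,2,\dots,t_n)$ as an initial run of singletons. The block for $i=n$ is $y_n[t_n]$, and since $y_n$ is clean at $m$, $y_n[t_n]$ begins $(t_n+1,\dots,t_n+m,\dots)$ with $t_n+m$ its first caesura. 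Hence the first caesura of $\cD x$ occurs at position $t_n+m$, everything before it is $(1,2,\dots,t_n+m-1)$ consisting of singletons, and therefore $\cD x$ is clean at $t_n+m$. Case (ii) is the mirror situation: all $y_i=Id_{s_i}$, so the only caesuras of $\cD x$ come from the first-caesura structure of $x$, which is clean at $\ell$; the entries $1,\dots,\ell-1$ of $x$ are singletons and get replaced respectively by the (unique, trivial) blocks $(t_i+1,\dots,t_{i+1})$, yielding $(1,2,\dots,t_\ell)$; the entry $x_\ell=\ell$ is the first caesura and is replaced by the {\it first} division subtuple of $y_\ell[t_\ell]=Id_{s_\ell}[t_\ell]$, which is $(t_\ell+1,\dots,t_\ell+j)$ where $j$ is its length, and $t_\ell+j$ becomes the first caesura of $\cD x$ because $y_\ell$'s first division subtuple is repeated (in part) later. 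So $\cD x$ is clean at $t_\ell+j$.

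Case (iii) combines the two competing sources of caesuras and is the one that requires care — this is where I expect the main obstacle, namely bookkeeping the comparison between the position $t_\ell+j$ (arising from the first caesura of $x$) and $t_n+m$ (arising from the first caesura of the earliest non-identity $y_n$), together with the subtlety of what happens when $n=\ell$ and the first division subtuple of $y_\ell$ interacts with the caesura of $y_\ell$ at position $m$. I would argue: the initial run of $\cD x$ up to position $\min(t_\ell,t_n)$ is again a run of singletons $(1,2,\dots)$ as in cases (i),(ii). If $n<\ell$, the block for $x_n=n$ is $y_n[t_n]$ (trivially divided since $n<\ell$ means $x_n$ is a singleton of $x$), which produces its first caesura at $t_n+m < t_\ell$, so $\cD x$ is clean at $t_n+m$ and the later $x$-caesura at $\ell$ is irrelevant. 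If $n>\ell$, then for all $i<\ell$ the $y_i$ are identities and contribute no caesuras, and for $\ell<i<n$ likewise; the first caesura is produced at $t_\ell+j$ by the first division subtuple of $y_\ell=Id_{s_\ell}$, exactly as in case (ii), so $\cD x$ is clean at $t_\ell+j$. If $n=\ell$: here $x_\ell$ is the first caesura of $x$ and $y_\ell$ is the first non-identity $y$, so the block replacing the first occurrence of $\ell$ in $x$ is the first division subtuple of $y_\ell[t_\ell]$, of length $j$; within that subtuple $y_\ell$'s own caesura at relative position $m$ (if it falls inside the first subtuple, i.e.\ $m\le j$) is repeated later in $\cD x$ and so becomes the first caesura of $\cD x$ at position $t_\ell+m$; but also the last entry of the first division subtuple is repeated in a later subtuple (since we did a nontrivial division because $k_\ell\ge 2$), giving a potential caesura at $t_\ell+j$. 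The first caesura of $\cD x$ is whichever of these occurs first, i.e.\ $t_\ell+\min(j,m)$, which is exactly the assertion: clean at $t_\ell+j$ if $j\le m$ and clean at $t_\ell+m$ if $j>m$. Throughout, I would double-check that the entries strictly before the first caesura are genuinely $(1,2,3,\dots)$ in order and all singletons — this follows because the only way a value could repeat early is if it belonged to an early caesura, contradicting our choice of the first caesura. Finally, as noted in Remark \ref{20.5}, Lemma \ref{20.6} immediately yields that $\Phi$ sends tensors of clean generators to sums of clean generators, which is the criterion of Proposition \ref{19.5} needed downstream; I would state that corollary right after the proof.
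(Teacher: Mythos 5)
Your case-by-case unraveling is correct and is exactly the ``review the definitions'' argument the paper intends: the paper offers no more than a one-line justification for Lemma \ref{20.6}, and your write-up simply supplies the details of that same approach (initial identity blocks give the run $(1,\dots,t)$ of singletons, the first caesura of $\cD x$ is the earlier of the shifted first caesura of the first non-identity $y_n$ and the shifted first division point of $y_\ell$, and disjointness of the shifted value ranges keeps everything before it a singleton). The only blemish is your closing ``double-check'' sentence, which as phrased is circular; it is harmless because the substantive verifications (values $1,\dots,m-1$ are singletons of the clean $y$, division points sit at relative positions $\geq j$, blocks for distinct $i$ use disjoint value ranges) are already present in your individual cases.
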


It is perhaps surprising that for $i > n$ the generators $y_i$ can be arbitrary and the conclusions still hold.  It is an obvious consequence of the lemma that if $x$ and the $y_i$ are all clean, for example basis generators, then $\Phi(x; y_i)$ is clean.   $\ \ \ \qed$\\

 We interrupt the proof of Proposition \ref{20.3} to explain some important things. First, from Step 0 and Step 1 above the map $\Phi$ agrees with $\cO_\Sigma$ in degree 0 and is twisted equivariant.  It is also possible to directly prove that $\Phi$ with the signs given in 20.3(ii) is a chain map.  Therefore, it follows from Lemma \ref{20.6} and the twisted equivariant uniqueness result Proposition \ref {19.3} that $\Phi$ is the standard procedure map.  However, it is not  easy to directly prove that $\Phi$ is a chain map.  This was accomplished by McClure-Smith in [19] for their surjection complex, with different signs, by essentially brute force computation.  We believe our proof of Proposition \ref{20.3} below, although long and complicated in its own way, offers a more structured viewpoint.  The steps in the proof are interesting by themselves and bring together quite a few things. \\ 

{\bf  $\bf \cS_*$ is an Operad and  $\bf \cE_* \to  \cS_*$ is an Operad Morphism.}  We will  explain here how Lemma \ref{20.6} and Proposition \ref{20.3}(i) imply that the surjection complex operad structure maps from Proposition \ref{19.2} satisfy all the operad axioms, especially the associativity axiom, that is, the commutativity of Diagram $\boxed{18.1}$.  In particular we do not need to know the signs in Proposition \ref{20.3}(ii).\\

The first parts of the argument are the same as the corresponding discussion for the Barratt-Eccles operad in the previous section, and is a universal argument that applies to any collection of complexes $B_*(n)$ to which Proposition \ref{19.2} applies.   The remaining issue then is why do the two maps around Diagram $\boxed{18.1}$ for the surjection complexes, which are both compositions of  standard procedure maps, agree with the standard procedure twisted equivariant map from upper left to lower right?  The reason is because  Lemma \ref{20.6} and Proposition \ref{20.3}(i), establish the criterion of Proposition \ref{19.5} for the surjection complexes.  The composed standard procedure maps in Diagram $\boxed{18.1}$ take basis generators to elements in $Im(H_s) \subset \cS_*(s)$.  Thus, assuming Proposition \ref{20.3}(i) we have proved 

\begin{prop}\label{20.7} The surjection complexes $ \cS_*(n)$  form an operad, using the standard twisted equivariant procedure chain maps of Proposition \ref{19.2} as structure maps.\ \ \ $\qed$
\end{prop}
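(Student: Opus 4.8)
\textbf{Proof proposal for Proposition~\ref{20.7}.}
The plan is to assemble the operad axioms for the collection $\{\cS_*(n)\}$ with structure maps given by the twisted equivariant standard procedure maps $\cO_{\cS}$ of Proposition~\ref{19.2}, reducing everything to facts already established. The unit axiom and the equivariance axiom hold automatically for any collection $B_*(n)$ satisfying the hypotheses of Proposition~\ref{19.2}: in degree $0$ the map $\cO_{\cS}$ is by construction the symmetric group operad map $\cO_\Sigma$, which satisfies the unit and equivariance axioms by Remarks~\ref{18.3}--\ref{18.7} and Lemma~\ref{18.5}; and Proposition~\ref{19.2}(i) extends the defining (preliminary) equivariance to full twisted equivariance for all elements $\hat g x$, while Proposition~\ref{19.2}(ii) shows $\cO_{\cS}$ is a chain map. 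So the only real content is the associativity axiom, i.e.\ commutativity of Diagram~$\boxed{18.1}$ for $P(n) = \cS_*(n)$.

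First I would invoke the universal machinery of Subsection~19.1 verbatim. In degree $0$ Diagram~$\boxed{18.1}$ commutes because it is precisely the associativity axiom for $\Sigma$ (Remark~\ref{18.7}), so there is a well-defined twisted equivariant standard procedure map from the top-left corner of the diagram to the bottom-right corner, twisted with respect to the non-homomorphic embedding $\cO$ obtained by iterating $\cO_\Sigma$. By Proposition~\ref{19.4}(i),(ii),(iii) the first maps $\Phi' = Id \otimes \bigotimes \cO_{\cS}$ and $\Phi'' = \cO_{\cS} \otimes \bigotimes Id$ in the two rows, and the reordering isomorphism $\sigma$ on the left edge, are all standard procedure chain maps satisfying the appropriate (possibly twisted) equivariance. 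A routine check using the operad associativity of $\Sigma$ shows both composites around the diagram satisfy the same twisted equivariance as the corner-to-corner standard procedure map. Hence, by the extended twisted uniqueness result Proposition~\ref{19.3}, it suffices to show that each composite carries the product basis elements of the domain into $Im(H_s) = Ker(H_s) \subset \cS_*(s)$; and by Proposition~\ref{19.5} this in turn follows once we know that $\cO_{\cS}$ maps any tensor of elements lying in the images of the contractions of the factors into $Im(H_s)$.

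The crucial input, therefore, is exactly the statement flagged in Remark~\ref{20.5}: by Proposition~15.2 of Part~II, $Im(H_n) \subset \cS_*(n)$ is the $\FF$-span of clean generators, so I need that $\cO_{\cS}$ (equivalently $\Phi$ of Proposition~\ref{20.3}(i)) sends a tensor of clean generators $x, y_1, \dots, y_r$ to a sum of clean generators of $\cS_*(s)$. This is supplied by Lemma~\ref{20.6}, which analyses case by case (whether the first non-identity factor among $x, y_1, \dots, y_r$ occurs in $x$ or in some $y_n$, and how the first caesura positions compare) and shows each summand $\cD x$ of $\Phi(x; y_1, \dots, y_r)$ is clean at an explicitly identified position. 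Since basis generators are clean (Remark~15.1 of Part~II), linearity then gives the criterion of Proposition~\ref{19.5}, and Proposition~\ref{20.7} follows. I would emphasise that only part~(i) of Proposition~\ref{20.3} and Lemma~\ref{20.6} are needed here; the sign computation in Proposition~\ref{20.3}(ii) plays no role in establishing the operad axioms.

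\textbf{Main obstacle.} The genuinely delicate point is not the formal bookkeeping above but Lemma~\ref{20.6} itself: one must verify that in the substitution $\cD x$ — obtained by replacing the successive occurrences of each value $i$ in $x$ by the successive division subtuples of $y_i[t_i]$ — the \emph{first caesura} of the resulting surjection always lands in the predicted spot, and in particular that for indices $i$ past the first non-identity factor the generators $y_i$ may be arbitrary without spoiling cleanliness. This requires tracking which entries of $\cD x$ are singletons versus repeated, and comparing the caesura position coming from $x$ with the one coming from the relevant $y_n$, across the several cases $n < \ell$, $n > \ell$, $n = \ell$ with $j \le m$, and $n = \ell$ with $j > m$. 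It is a careful-but-not-conceptually-hard review of the definitions, in the same spirit as Step~2 of the proof of Proposition~\ref{17.3}; the temptation to be avoided is trying to prove directly that $\Phi$ is a chain map (which McClure--Smith did by brute force for their complex in [19]) rather than deducing it from the standard procedure identification via Proposition~\ref{19.3}.
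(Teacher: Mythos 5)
Your proposal is correct and follows essentially the same route as the paper: unit and equivariance come for free from the universal Proposition~\ref{19.2} setup, and associativity is reduced via Propositions~\ref{19.3}--\ref{19.5} to the criterion that $\cO_{\cS}$ sends tensors of elements in the images of the contractions (i.e.\ spans of clean generators, by Proposition 15.2 of Part II) into $Im(H_s)$, which is exactly what Lemma~\ref{20.6} together with Proposition~\ref{20.3}(i) provides, with the signs of \ref{20.3}(ii) playing no role. Your identification of Lemma~\ref{20.6} as the genuinely delicate input likewise matches the paper's own emphasis.
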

We  also explain how the same reasoning  implies the following.
 \begin{prop}\label{20.8} The diagram below commutes.
 $$\begin{CD}  N_*(E\Sigma_r) & \otimes & N_*(E\Sigma_{s_1})& \otimes & \cdots &\otimes & N_*(E\Sigma_{s_r})  & \xrightarrow{\cO_{\cE} } &  N_*(E\Sigma_s)  \\
\downarrow TR&\otimes & \downarrow TR  & \otimes&\cdots & \otimes & \downarrow TR & & \downarrow TR\\
 \cS_*(r) & \otimes & \cS_*(s_1)&  \otimes &\cdots & \otimes &  \cS_*(s_r) & \xrightarrow{\cO_{\cS} } & \cS_*(s).
\end{CD}$$
Thus the surjection operad is a quotient operad of the Barratt-Eccles operad.
\end{prop}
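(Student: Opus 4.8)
The plan is to exploit the uniqueness machinery already assembled, treating every arrow in the diagram as a standard (twisted) procedure chain map and showing both routes around the square coincide with the same twisted-equivariant standard procedure map. First I would record that each vertical $\bigotimes TR$ is a standard procedure equivariant chain map: this follows from Proposition 19.4(i), since $TR\colon N_*(E\Sigma_n)\to\cS_*(n)$ is the standard procedure chain map (Proposition 16.2) which moreover commutes with contractions, so a tensor product of such maps is again standard procedure, with ordinary equivariance on each factor. Next, $\cO_\cE$ is the standard twisted-equivariant procedure map by Proposition 19.8, and $\cO_\cS$ is the standard twisted-equivariant procedure map by Proposition 20.3(i) together with Proposition 20.7. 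In degree $0$ all four maps restrict to the symmetric-group operad structure map $\cO_\Sigma$ (for the verticals, $TR_0=\mathrm{Id}$ on $\FF[\Sigma_n]$), so the degree-$0$ square commutes and both compositions agree there.

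Then I would invoke the twisted-equivariant uniqueness result Proposition 19.3 in the extended form discussed just after it: a twisted $\widehat G$-equivariant chain map with domain $N_*(E\Sigma_r)\otimes\bigotimes_i N_*(E\Sigma_{s_i})$ that agrees with $\cO_\Sigma$ in degree $0$ and sends the chosen $\widehat G$-basis (tuples of permutations with leading identity, tensored together) into $\mathrm{Im}(H_s)=\mathrm{Ker}(H_s)\subset\cS_*(s)$ is forced to be the standard procedure map. It is automatic that both compositions around the diagram are twisted $\widehat G$-equivariant chain maps agreeing with $\cO_\Sigma$ in degree $0$: for $\cO_\cS\circ(\bigotimes TR)$ this is because $TR$ is ordinary equivariant and $\cO_\cS$ is twisted equivariant, exactly as in the argument preceding Proposition 19.8; for $(\cdot\,)\circ\cO_\cE$ followed by the right-hand $TR$ it is because $\cO_\cE$ is twisted equivariant and $TR$ is ordinary equivariant. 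So the entire content reduces to the single claim: \emph{for a $\widehat G$-basis element $b=(e,X_0)\otimes(e,X_1)\otimes\cdots\otimes(e,X_r)$, the image of $b$ under each composition lies in $\mathrm{Im}(H_s)$}, i.e.\ is a sum of clean surjection generators.

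For the lower route this is immediate: $TR$ applied to each basis tuple $(e,X_i)$ lands in $\mathrm{Im}(H)\subset\cS_*(s_i)$, i.e.\ is a sum of clean generators (Proposition 16.2 and Proposition 15.2(i), since $TR$ commutes with contractions); then $\cO_\cS$ applied to a tensor of clean generators is a sum of clean generators by Lemma 20.6 (as spelled out in Remark 20.5); hence $\cO_\cS\circ(\bigotimes TR)(b)\in\mathrm{Im}(H_s)$. For the upper route: $\cO_\cE(b)\in N_*(E\Sigma_s)$ lies in $\mathrm{Im}(H_s^{E\Sigma_s})$ because $\cO_\cE$ is the standard procedure map and $b$ is a basis element, so $\cO_\cE(b)$ is an $\FF$-linear combination of tuples with leading identity permutation; then $TR$ of each such tuple again lies in $\mathrm{Im}(H_s)\subset\cS_*(s)$ since $TR$ commutes with contractions (Section 16). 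Thus both compositions meet the hypothesis of the extended Proposition 19.3, so both equal the unique standard twisted-equivariant procedure map from the top left to the bottom right, proving the square commutes; the identification of $\cT\cR$ as a surjective operad morphism then follows from Proposition 20.8 combined with the operad axioms already established for $\cE$ and $\cS$, surjectivity being clear since $TR\circ PR=\mathrm{Id}$ (Proposition 16.10, or Proposition 6.3 of Part I). The main obstacle is purely bookkeeping: carefully matching the two $\widehat G$-actions through the non-homomorphic embedding $\cO_\Sigma\colon\widehat G\hookrightarrow\Sigma_s$ and confirming that the extended uniqueness statement Proposition 19.3 genuinely applies verbatim to both compositions; once the "clean goes to clean" fact (Lemma 20.6) is in hand, no further computation is needed and in particular the signs of Proposition 20.3(ii) play no role.
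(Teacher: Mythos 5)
Your proposal is correct and follows essentially the same route as the paper: both reduce the statement to the twisted-equivariant uniqueness result (Proposition 19.3), using that the summands of $TR(e,X)$ are clean generators and that $\cO_\cS$ sends tensors of clean generators to clean generators (Lemma 20.6 / Proposition 20.3(i)), so that both compositions land in $Im(H_s)$ and must coincide with the standard procedure map. The only cosmetic difference is that the paper handles the top route by the composition criterion (twisted version of Proposition 6.5(ii), since the right-hand $TR$ commutes with contractions) rather than re-checking the $Im(H_s)$ condition there, as you do; both verifications are valid.
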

\begin{proof}The proof is the same argument for all  versions of the surjection operad and the table reduction map.  We have shown that the map $\cO_\cE$ in the top row is a twisted equivariant standard procedure chain map.  The vertical ordinary $\Sigma_s$-equivariant  map $TR$ on the right commutes with contractions.  Therefore by the twisted equivariant version of Proposition 6.5(ii), the composition across the top and down is the twisted equivariant standard procedure map.\\

The tensor product of $TR$ maps on the left is the standard procedure map by Proposition 6.11 of Part I, so in going down and across the bottom of the diagram we again need to deal with a composition of standard procedure maps. The vertical map is an ordinary equivariant map and the $\cO_\cS$ map is twisted equivariant.  So the composition is twisted equivariant.\\

We will again use the basic uniqueness result Proposition \ref{19.3} for twisted equivariant chain maps. The key point is that for a basis generator $(e, X)$ of any $N_*(E\Sigma_n)$, the summands of $TR(e, X)$ are parametrized by partitions, as described in Section 16.1.  Consider a partition $ a_0 + \ldots + a_k = n+k$ with $a_0  = \ell$.  The corresponding surjection summand $\pm x_a \in \cS_*(n)$ then has the form $x_a= (1, 2,  \ldots, \ell, ..., \ell, \ldots )$, where $\ell$ is the first caesura entry.  Such $x_a$ are clean  surjection generators, hence in the image of the contraction $H_n$.  A tensor product of such generators is in the image of the tensor product contraction.  Then, by  Lemma \ref{20.6} and Proposition \ref{20.3}(i), applying $\cO_\cS$ to such a tensor lands in $Im(H_s)$, hence the uniqueness result Proposition \ref{19.3} applies.\\

Note this argument is very similar to the proof of Proposition \ref{17.5} that the composition $N_*(E\Sigma_n) \otimes N_*(X)  \to \cS_*(n) \otimes N_*(X) \to N_*(X)^{\otimes n}$ is the functorial standard procedure map.
 \end{proof}

\begin{rem}\label{20.9} Proposition \ref{20.8} provides an indirect proof that the surjection operad  structure maps satisfy the associativity axiom, although the key idea used is Proposition \ref{20.3}(i), whch is  the same as in the direct proof in Proposition \ref{20.7}.  We have not yet discussed the signs in the operad structure map of Proposition \ref{20.3}(ii), but the signs are provided, in a somewhat hidden form, by the Barratt-Eccles operad.\\

Given basis surjection generators $x, y_i \in \cS_*^{bf}(n)$, let $X, Y_i \in N_*(E\Sigma_n)$ denote the corresponding fundamental simplices in the Barratt-Eccles operad, as discussed in Remark 16.8 of Part II.  Then from Proposition 16.10 of Part II,  $TR(X) = x$ and $TR(Y_i) = y_i$.  The $\cO_\cE$ map is the composition of the map $EZ$, which of course has explicit signs, followed by the non-homomorphic inclusion $N_*(E(\Sigma_r \times \prod \Sigma_{si})) \subset N_*(E\Sigma_s)$, which has no signs.  Thus $$\cO_\cS(x; y_i) = TR  \circ N_*(\cO_W) \circ EZ(X \otimes \bigotimes Y_i) \in \cS_*^{bf}(s)$$
is a formula that contains the signs.  It is possible, but not easy, to keep track of the $EZ$ signs and the $TR$ formula and reconcile the implicit signs in this formula for $\cO_\cS$  with the signs in Proposition \ref{20.3}(ii).  In fact, Berger and Fresse seem to have carried this out in Section 1.5 of their paper [3].  But we prefer to establish the signs as part of our inductive proof of Proposition \ref{20.3}.  That proof will be long, involving many cases, but keeping track of signs is a relatively small part of the total work involved.\\

It might seem that the implicit signs in this last formula for $\cO_{\cS}(x; y_i)$ involving $EZ$ is not sensitive to which surjection complex is meant.  But from Proposition 16.5 of Part II, for the complexes $\cS_*^{ms}(n)$ and $\cS_*^{aj}(n)$, the vertical $TR$ maps do have signs attached to  summands $x_a$ of an image $TR(e, X)$. These are the same signs that occur in the isomorphisms between surjection complexes in Section 15.  The strategy of using the uniqueness result Proposition \ref{19.3} to prove certain diagrams commute obviously also works to prove that the isomorphisms between surjection complexes are operad isomorphisms. The signs in the isomorphisms between surjection complexes could be used along with Proposition \ref{20.3}(ii) to determine the signs in the operad structure maps for $\cS_*^{ms}(n)$ and $\cS_*^{aj}(n)$.  $\qed$
\end{rem}
{\bf Resumption of the Proof of Proposition \ref{20.3}.}  Suppose $x$ and all the $y$'s are basis surjection generators. That is, the first occurrences of $1,2, \ldots$ occur in that order. Basis generators in positive degrees are clean at the first caesura.  We attack Proposition \ref{20.3} by induction. Thus in the first unknown degree the standard procedure map on basis generators can be written $$\cO_\cS(x; y_1, \ldots, y_r) = H_s \Phi \Big[ (dx; y_1, \ldots, y_r) + \sum_i \pm (x; y_1, \ldots, dy_i, \ldots, y_r)\Big],$$ where $H_s$ is the contraction of $\cS_*^{bf}(s)$.\\

{\bf Step 3}. {\bf Relevant Boundary Terms of $d(x; y_1, \ldots, y_r)$.}  If basis generator $x \not= Id_r \in \cS_*^{bf}(r)$  has first caesura at $\ell$ then,  just as in Step 3 of the proof of Proposition \ref{17.3}, the only boundary term of $dx$ that is not clean or degenerate is $d_\ell x$.   The same holds true for  the $dy_i$ for those $y_i \not= Id_{si}$, which let us assume have first caesuras  at $m_i$.   Since $H_s$ vanishes on clean generators, a conclusion from Lemma \ref{20.6} in one lower degree is that only two boundary terms are relevant.  To be precise
$$ \cO_\cS(x; y_1, \ldots, y_r) = H_s \Phi (d_\ell x; y_1, \ldots, y_r) + \sum_iH_s \Phi(\pm (x; y_1, \ldots, d_{m_i} y_i, \ldots, y_r))$$ $$= H_s \Phi (d_\ell x; y_1, \ldots, y_r) + (-1)^{|x|} H_s \Phi (x; y_1, \ldots, d_m y_n, \ldots, y_r),$$

where $n$ is least with $y_n \not= Id_{s_n}$ and $m$ is the first caesura of $y_n$. The point of the last equality is that for $i > n$ the boundary term $d_{m_i} y_i$ may not be clean, but by Lemma \ref{20.6} that doesn't matter.  If $x = Id_r$ then only the second summand occurs.  If all $y_i = Id_{s_i}$ then only the first summand occurs. The  sign is $(-1)^{|x|}$ because $|y_i| = 0$ for $i < n$.   $\qed$

\begin{rem}\label{20.10} {\bf Where We are Headed.} By induction we have formulas for the two $\Phi$ evaluations in one lower degree. To prove Proposition \ref{20.3}(i) we will apply $H_s$ to the two boundary terms and match the result with the asserted value of $\Phi(x; y_1, \ldots, y_r)$, up to signs.    Signs for the standard procedure map are forced recursively. We will also deal with signs inductively, hence prove cases of Proposition \ref{20.3}(ii) at the same time that we prove cases of part (i).  Note that for the Berger-Fresse surjection complex the contraction $H_s$ introduces no signs.\\

There are some additional cases where all $\cD$ summands of the boundary term $\Phi(d_\ell x, y_1, \ldots, y_r)$ or $\Phi(x; y_1, \ldots, d_my_n, \ldots, y_r)$ are clean. We are assuming here that $x$ and the $y_i$ are basis generators, $n$ is least with $y_n \not = Id_{sn}$, the first caesura of $x$ is $\ell$, and the first caesura of $y_n$ is $m$. We will denote by $\cD''$  choices of $k_i$-divisions of $y_i$ for $i \not= \ell$ and a $(k_\ell -1)$-division of $y_\ell$.  We denote by $\cD'$  choices of $k_i$-divisions of $y_i$ for $i \not= n$ and a $k_n$-division of $d_m y_n$.\\

If $ n < \ell$ then for each relevant division $\cD'' $ the summand $\cD'' d_\ell x$ is a clean generator, since   $y_n[t_n]$ is inserted early into $d_\ell x$.  Thus in this case we want $\Phi(x; y_i) = H_s \Phi (x; y_1, \ldots, d_my_n, \ldots, y_r)$, which can be written $$\sum_{\cD} \pm \cD x  = (-1)^{|x|}H_s(\sum_{\cD'} \pm\cD' x).$$  If $\ell < n $ then for each relevant division $\cD'$ the summand $\cD' x$ will be clean at some shifted value $t_\ell+\ell$.  Thus in this case we want $\Phi(x; y_i) = H_s \Phi(d_\ell x; y_i)$, which can be written $$\sum_{\cD} \pm \cD x =  H_s(\sum_{\cD''} \pm \cD'' d_\ell x).$$
\end{rem}

\begin{rem}\label{20.11}{\bf Review of Calculations of $\bf H_s$.} We need a digression to explain how $H_s$ will be computed on the two lower degree terms.  We recall that for the Berger-Fresse complex $\cS_*^{bf}(s)$ the contraction\footnote{We apologize for the double meaning of $s$ in this section, the first term in the contraction formula, $s(z) = (1,z)$, and the arity of a surjection complex. Hopefully this will not cause trouble.} $H_s = \sum_{j=0}^{s-2} i^j s r^j$ is described in Section 13.  If $z$ is a surjection generator $i^q s r^q (z) = 0$ unless $1,2, \ldots, q$ are singletons in $z$, in which case $i^q s r^q (z)$ is described by removing those singletons from $z$ and putting the sequence $12 \ldots q(q+1)$ at the front.  Thus if the first entry of $z$ after those singletons are removed is $q+1$ then still $i^q s r^q (z) = 0$. \\

We will encounter two types of $H_s$ evaluations.  All cases of the evaluations of summands $H_s(\cD' x)$ of $H_s \Phi(x; y_1, \ldots, d_my_n, \ldots,  y_r) $ will be variants of the results from Proposition 15.1(iv),(v) of Part II that for basis generators $y$ with first caesura $m$, the formula  $y = H dy$ consists of a single non-zero term $y = i^{m-1}sr^{m-1} d_my$. More generally, if $z $ is such a $y$ followed by a shifted clean generator with larger entries, then the same conclusion holds.  That is, $z = H dz$ consists of a single non-zero term $z = i^{m-1}sr^{m-1} d_mz$.\\

In the case $d_m y = (1,2, \ldots, m-1, m+1, ... , m, ...)$ there is either no repeated entry or the first repeated entry is another $m$ or it is some $m+q$, following a  string $(m+1, m+2, ...)$ that increases except possibly straddling a singleton $m$. From this, calculation of $H d y$ is clear. The same argument works for the more general $z$.\\

In our variants $H_s (\cD' x)$, the entries of $d_m y_n$ are shifted by some amount $t$, a string of consecutive singletons $1,2, \ldots t$ will occur in front, and larger entries forming a shifted clean generator will be adjoined after the shifted $d_my_n$ sequence. But the corresponding $\cD x$ will be a basis generator followed by a shifted clean generator, with $\cD' x = d_{t+m} \cD x$. Only the single $H_s$ term $i^{t+m-1} s r^{t+m-1} \cD' x = \cD x$ will be non-zero.\\ 

All cases of the evaluations of summands $H_s(\cD'' d_\ell x)$ of $H_s \Phi(d_\ell x; y_1, \ldots, y_r)$ evaluations will be variants of $H_s z$, where $z  = ( larger, 1, 2, \ldots, j,\ larger, j, ...)$ or $z = (larger, 1,2, \ldots, j,\ larger,...)$. Here `larger' means entries greater than $j$.  In both cases, $1,\ldots, j-1$ are singletons.  In the second case, $j$ is a singleton but  removing $1, \ldots, j$ will result in a shifted clean generator beginning with $j+1$.   In both cases  $H_s z$ will consist of a sum of $j$ terms $\sum_{q =0}^{j-1} i^q s r^q z$. In our variants $H_s( \cD'' d_\ell x)$  these entries of $z$ are shifted by some amount $t$ and  a string of consecutive singletons will occur in front. But the $H_s (\cD'' d_\ell x)$ evaluation will still consist of a similar sum of $j$ terms.  $\qed$
\end{rem}
{\bf Step 4}. {\bf Description of  Six Cases.} At the end of this subsection we prove various cases of Proposition \ref{20.3}.  In the various cases, we will  write some sentences then give examples.  The examples are supposed to help understand the sentences. It is all rather lengthy, but not hard once you get the hang of it.  The structure is actually rather attractive, and quite similar to the structure in the proof of Proposition \ref{17.3} identifying the functorial map $\cS_*^{bf}(n) \otimes N_*(X) \to N_*(X)^{\otimes n}$ with a Berger-Fresse map.\\ 

We divide the analysis of tensor products of basis generators $x \otimes \bigotimes_i y_i$ of positive degree into six cases.  There are  two cases, $(a)$ where $x = Id_r$ and some $y_n \not=Id_{sn}$, and $(b)$ where all $y_i = Id_{si}$ and $x \not= Id_r$. In the remaining four cases, say $x$ has first caesura $\ell$ and $n$ is least with $y_n \not = Id_{sn}$, with first caesura $m$. Two of these cases are $(a')$ where $n < \ell$, and $(b')$ where $\ell < n$.  Finally, when $\ell = n$, let $j$ denote the length of the first division subtuple of $y_\ell$.  Then we have the final cases $(a'')$ where $j > m$, and $(b'')$ where $j \leq m$.\\ 

We will formally introduce in Remark \ref{20.12} just below an alternate `partial operad composition map' method, which would simplify the proof  of Proposition \ref{20.3}.  In the partial composition situations a single $y_n \not= (1)$ and all other $y_i = (1)$.  Thus only the $y_n$ has non-trivial $k_n$-divisions.  The other $y_i$ have only the trivial $k_i$-divisions $(1,1, \ldots, 1)$.   In these special cases the relevant $H_s$ evaluations are also simplified.\\

The partial composition approach also relies on Lemma \ref{20.6} in order to justify that certain compositions of standard procedure partial operad chain maps are standard procedure chain maps.  We do not rely solely on the partial composition proof  because it seemed a little surprising to us that the direct proof for the full operad map was essentially no harder than the usually simpler partial operad composition map arguments.\\

Readers who prefer partial compositions are invited to digest Remark \ref{20.12}, and  then think about the six cases and examples for Proposition \ref{20.3} only for partial compositions.  Less timid readers can follow the full discussions of all the cases and examples.$\ \ \ \qed$

\begin{rem} \label{20.12}{\bf Partial Compositions.} 
Of importance in the operad structure map and division mechanism is the case $$\cO_{\cS}(x; 1, \ldots,1, y_i, 1, \ldots, 1),$$ when all but one $y_j = (1)$, the unit element  in $\cS_*(1) = \FF$.  These are usually called {\it partial operad compositions} and abbreviated $\cO_i(x; y).$  It is understood that $y_j = (1)$ for $j \not = i$ and the $k_j =1$-divisions for these $y_j[t_j]$ must be the unique trivial one.  Note $t_j = j-1$ for $j \leq i$, and $t_j = s_i + j-2$  for $j > i$.

\begin{exam} \label{20.13} Take $x = (1,2,1,3,2)$ and $y = y_2 = (1,2,1)$. There are three $k_2 = 2$-divisions of $y_2[1] = (2,3,2)$, namely $(2 | 2,3,2), (2,3 | 3,2)$ and $(2,3,2  | 2)$.  Then $y_1[t_1] = (1)$ and $y_3[t_3] = (4)$.  We obtain$$\cO_2(x; y) = (1, \underline 2 ,1,4, \underline{2,3,2}) - (1, \underline {2,3}, 1, 4,  \underline {3,2}) - (1, \underline{2,3,2},1,4, \underline 2).$$
The $y$-caesura is the second 2 in the first summand and the first 2 in the other two summands.  The $x$-caesuras are the other two caesuras  in all summands. These precede the $y$-caesura in the first summand and straddle the $y$-caesura in the last two summands,  which accounts for the signs.  $\ \ \  \qed$
\end{exam}

The partial composition operations $\cO_i(x; y) \in P(s+r-1)$ are defined for any operad $P$, with $x \in P(r)$ and $y = y_i \in P(s)$.    It is understood that there are really $r-1$ other $y_j = (1) \in P(1),\ j \not= i$.  It has long been understood that such partial operations satisfying certain axioms determine full operad compositions.  Often results about operads can be proved more easily by focusing on the partial compositions.  All we might use of that method is a formula equating the full operad operation to a composition of partial operations.  That identity, which can be proved in any operad  by repeated use of the associativity axiom and unit axiom  in special cases, is the following composition representation of $\cO_P(x; y_1 \ldots, y_r) \in P(s)$, where $x \in P(r),\ y_i \in P(s_i), \ s = \sum s_i$: 
$$ P(r) \otimes P(s_1) \otimes \ldots P(s_r) \xrightarrow{\cO_1 \otimes Id} P(s_1+r-1) \otimes P(s_2) \otimes \ldots \otimes P(s_r) \xrightarrow{\cO_{s_1+1} \otimes Id }$$ $$P(s_1+s_2+r-2) \otimes P(s_3) \otimes \ldots \otimes P(s_r) \xrightarrow{\cO_{s_1+s_2+1} \otimes Id} \ldots  \to P(s).$$
In the case of the surjection complexes it is easy to connect this composition with Proposition \ref{20.3}.  One first uses the $k_1$-divisions $\cD_1$ to replace the 1's in $x$ by $k_1$-divisions of $y_1$.  Then one replaces the original 2's in $x$, which have now become $(s_1+1)$'s, by the $k_2$-divisions $\cD_2$ of $y_2[s_1]$, and so on.\\

The signs in the full operad map from Proposition \ref{20.3}(ii) associated to divisions $\cD x$ are just the products of the signs in the composition steps for the indvidual $\cD_i$.  This is clear since after the $j^{th}$ step of the composition, each $y_i[t_i]$-caesura, $i < j$, still precedes the same number of $x$-caesuras, and new $y_j[t_j]$-caesuras only precede $x$-caesuras of (original) value at least $j$. \\

It is somewhat easier to prove some of the results and handle examples in the six cases of Proposition \ref{20.3} for partial operad composition maps than it is for the full operad composition map.  We have proved enough, especially Lemma \ref{20.6}, to  argue that  the composition of partial operation standard procedure maps is itself a standard procedure map, which is not automatic.  So that is interesting, and does provide an alternate proof of Proposition \ref{20.3}, including the signs.  But we are able to handle the full standard procedure map of Proposition \ref{20.3} directly, so we do so. $\qed$
\end{rem}
{\bf Details for the Six Cases.} We now continue Step 4 and begin the analysis of the six cases for Proposition \ref{20.3}.  Since we include many examples, this will take several pages.\\

{\bf Case $\bf{(a)}$:} Suppose $x = Id_r$ and suppose $n$ is least so that $y_n \not= Id_{sn}$, with first caesura $m$.  Then from Remark \ref{20.10} we want $ \Phi(x; y_1, \ldots, y_r) =  + H_s \Phi(x;y_1, .., d_m y_n, .., y_r) $.  The sign is $+1$ because $|x| = 0$. Now all $k_i = 1$, and there are only the  unique  1-divisions $\cD x, \cD' x$ on both sides. One observes that $\cD x =  \Phi(x; y_1, \ldots, y_r) $ is a basis generator with first caesura $t+m$, where $t = t_n$. In the partial composition case this is obvious.  Then $t = n-1$ and $$\cD x = (1, \dots, n-1, y_n[n-1], n +s_n, \ldots,  s).$$  Also, $$\cD' x = (1, \dots, n-1, (d_my_n)[n-1], n +s_n, \ldots,  s) = d_{t+m} \cD x$$ and $H_s (\cD' x) =  i^{t+m-1} s r^{t+m-1}\cD' x =  \cD x$.\\

In the general case, the $y_i,\ i < n$, can be higher $Id_{si}$ and the basis generators $y_i,\ i > n$, can be arbitrary.  But the shifts in the insertions guarantee that $\cD x$ is a basis generator. Also $\cD' x =  \Phi(x; y_1, .., d_my_n, .., y_r) = d_{t+m}  \Phi(x; y_1, \ldots, y_r)$.  Again the only the summand  $i^{t+m-1} s r^{t+m-1}$ of $H_s$ evaluates non-trivially and inserts the missing $t+m$ back where it belongs in $\cD x = \Phi(x; y_i)$.\\

In all cases, since $x = Id_r$ has no caesuras, the shuffle counts are 0 on both sides of the equation and the desired signs check out.\\

{\bf Examples $\bf{(a)}$:}  A partial composition example is provided by $x = (1,2,3,4)$, $y_1 = y_3 =y_4 = (1)$, and $y_2 = (1,2,3,4,3)$.  Then $\cD x = (1,2,3,4,5,4, 6,7)$.  Also $d_3 y_2 = (1,2,4,3)$ and $$  \cD' x = \Phi (x; (1), d_3y_2, (1), (1)) =  (1,\ 2, 3, 5,4,\  6, 7) = d_4 \cD x.$$ Applying the single $H_s$ summand $ i^3 s r^3$ yields the desired result.\\

For a more general example, take $x = (1,2,3,4),\ y_1 = (1,2),\ y_2 = (1,2,3,4,3)$.  Then $n = 2,\ m = 3$ and $(d_3 y_2)[2] = (3,4,6,5)$.  We find $$\cD x = \Phi(x; y_i)  = (1, 2, 3,4,5,6,5, y_3[6], y_4[t_4]),$$ where the entries of $y_4[t_4]$ are greater than entries of $y_3[6]$, which has first entry 7. We still see $\cD x$ is a basic generator since the first occurrences of all entries, including those of $y_3[6]$ and $y_4[t_4]$,  occur in increasing order.  Also $$  \cD' x = \Phi (x, y_1, d_3y_2, y_3, y_4) =  (1,2, 3, 4, 6,5, y_3[6], y_4[t_4]) = d_5 \cD x.$$
 Applying the single $H_s$ summand $ i^4 s r^4$ yields the desired result. If $y_2$ had more than two 3 entries, it would have been more obvious that only one summand of $H_s\cD' x$ was non-zero. $\ \ \ \ \ \qed$\\

{\bf Case $\bf{(a')}$:} Suppose basis generator $x$ is clean at $\ell$ and suppose $n$ is least so that $y_n \not= Id_{sn}$, with first caesura $m$.  Assume $n < \ell$.  From Remark \ref{20.10} we want $$ \sum \pm \cD x = \Phi(x; y_1, \ldots, y_r) = $$  $$(-1)^{|x|}H_s \Phi(x; y_1, ..., d_my_n, ... ,y_r) = (-1)^{|x|} \sum H_s (\pm \cD' x).$$  In this case there should be a sign change by $(-1)^{|x|}$ between all pairs of  corresponding $\cD x$ and $\cD' x$ terms. Part of the argument resembles Case (a) above.  Since $n < \ell$ implies $k_n = 1$, a shifted $y_n$ or $d_my_n$ is inserted into $x$ in the part of $x$ that still looks like $Id$. For $i \not= n$ inserting choices of $k_i$-divisions of $y_i[t_i]$ on both sides match up the choices of $\cD x$ and $\cD' x$ terms.\\

Case $(a')$ does become simpler in a partial composition situation, with only $y_n \not= (1)$. Then there is only a single, trivial, pair of divisions $\cD, \cD'$, since $n < \ell$ implies $y_n$ is not divided.  Also, it is not hard to see that $\cD x$ will be a basic surjection generator, which simplifies the $H_s (\cD' x)$ analysis.\\

In the general Case $(a')$, $\cD x$ is not necessarily a basis generator, but it is a basis generator followed by a shifted clean generator of larger values. The first caesura is $t+m$, where $t = t_n$, and  $\cD' x = d_{t+m} \cD x$.  For $i < n$ inserting  $y_i[t_i]$ terms just inserts a sequence of singletons.   For $n \leq i < \ell$, inserting  the shifted basis elements $y_i[t_i]$ terms does not interfere with the claim that the first occurrences of all entries of $\cD x$ up to that point occur in increasing order. Inserting division subtuples of the $y_i[t_i]$ for $\ell \leq i$ appends a shifted clean generator with larger entries.  The $H_s (\cD'x)$ calculation reduces to the single term $i^{t+m-1}s r^{t+m-1} \cD' x = \cD x$, which puts a $t+m$ back where it belongs. \\

The additional entry $t+m$ is a $y$-caesura that comes before all the $x$-caesuras in every summand $\cD x$.  That $y$-caesura is not present in the corresponding $\cD' x$ term in $\Phi(x; y_1, ..., d_my_n, ... ,y_r)$.  The other $y$-caesuras are in the same positions relative to $x$-caesuras on both sides. This confirms the desired sign change, $sh(C_x, C_\cD) = |x| + sh(C_x, C_{\cD'})$ in this case.\\

{\bf Examples $\bf{(a')}$:}  We first give a partial composition example. Take $x = (1,2,3,4,3, 4)$, $y_1 = (1,2,3,1)$ and $y_2 = y_3 = y_4 = (1)$.  Then $\cD x = (1,2,3,1,4,5,6,5,6)$, a basic surjection generator.  Replacing $y_1$ by $d_1y_1$ gives the corresponding $\cD' x = (2,3,1,4,5,6,5,6) = d_1 \cD x$. Then $H_6(\cD' x) = s \cD' x = (1, \cD' x) = \cD x$.  In $\cD' x$ there are two $x$-caesuras and no $y$-caesuras. In $\cD x$ there is the $y$ caesura 1 in front of the two $x$-caesuras 5, 6.  The shuffle counts and signs are in agreement with $|x| = 2$.  \\

As a more general example, take $x = (1,2,3,4,3)$, $y_1 = (1,2,3,1)$, $y_2 = (1,2)$, $y_3 = (1,2,3,4)$ with 2-division $(1,2 |, 2,3,4)$, and $y_4 = (1,2,1)$.  Then $\cD x = (1,2,3,1,4,5,6,7, 10, 11, 10, 7,8,9)$.  We see that $\cD x$ is a basic generator $(1,2,3,1,4,5)$ followed by a shifted clean generator  $ (6,7,10,11,10, 7,8,9)$.\\

The difference from the partial composition case is that with $y_\ell \not= (1)$ a $k_\ell$-division will split the entries of $\cD x$, and larger entries from a shifted $y_j, j > \ell$, can come before the first occurrences of some of the shifted $y_\ell$ entries.  In the specific example, we see an initial 10 and 11, before the first occurrences of 8 and 9.\\

Replacing $y_1$ with $d_1 y_1$ gives $\cD' x = (2,3,1, 4,5, 6,7,10, 11, 10, 7,8,9) = d_1 x$.  Then $H_{11} \cD'  x = s \cD' x = (1, \cD' x) = \cD x$.  There is one $x$-caesura, 7, in $\cD x$ and two $y$-caesuras, 1 and 10, in the order 1,7,10.   In $\cD' x$ there is the $x$-caesura, 7, and only one $y$-caesura, 10.  The shuffle counts are 1 and 0 respectively, confirming the desired sign relation since $|x| = 1$.\\

{\bf Case $\bf{(b)}$:}  Suppose all $y_i = Id_{si}$ and $x$ is clean at $\ell$. Then from Remark \ref{20.10} we want $$\sum \cD x = \Phi(x; y_i) = H_s \Phi (d_\ell x, y_i) = \sum H_s (\cD'' d_\ell x).$$   The $y_i = Id_{si}$ have no caesuras, so in this case all shuffle counts are 0 and the signs are correct.\\

In Case (b) there will not be a bijection between $\cD x$ summands and $\cD'' d_\ell x$ summands.  For each choice of $\cD''$ so that the first division subtuple of $y_\ell$ has length $j$ it will happen that $H_s (\cD'' d_\ell x)$ is a sum of $j$ summands $\cD x$.  We explain this assertion.\\

For $i < \ell$ the substitutions of the 1-divisions $y_i[t_i]$ into $x$ and $d_\ell x$, just introduce identical  strings of initial singletons on both sides, and do not materially affect the comparisons.  If $i > \ell$, inserting $k_i$-divisions of $y_i[t_i]$ on both sides inserts entries larger than the first repeated entry. These do not mess up the $H_s$ evaluation.    \\

So to compare all terms on both sides we only need to look closely at the $k_\ell $-divisions of $y_\ell = Id_{s\ell}$ in $\Phi(x; y_i)$ and the $(k_\ell -1)$-divisions in $H_s \Phi(d_\ell x; y_i)$.  The only relevant $(k_\ell-1)$-division subtuple of $y_\ell = Id_{s\ell}$ for the evaluation $H_s\Phi(d_\ell x; y_i)$ is the first subtuple, say $(1,2, \ldots, j)$, with an appropriate shift of those entry values.  One computes $H_s \Phi(d_\ell x; y_i)$ to be the   sum of terms corresponding to terms in $\Phi(x; y_i)$ with the first two division subtuples of $y_\ell$ being $(1 | 1, .., j), (1,2 | 2, .., j), \ldots, (1, 2, \ldots, j | j)$, also shifted.  Of course $j$ varies unless $k_\ell -1 = 1$ in which case $j = s_\ell$, but the total result is confirmation of $\Phi(x; y_i) = H_s \Phi (d_\ell x, y_i)$.\\

The prototype here for $H_s \Phi (d_\ell x, y_i)$ is the case $$z = (larger, 1, 2, \ldots, j,\ larger, j...)\ or\ (larger, 1,2, \ldots, j,\ larger, ...)$$ discussed in Remark \ref{20.11}, with a shift of entries and initial singletons at the beginning of such $z$.  The second form occurs when $k_\ell  = 2$.  Then it is important for the $H_s$ evaluation that if the entries of the  insertions of the $y_i[t_i]$  for $i < \ell$ and the 1-division  $y_\ell[t_\ell]$ are removed from $\cD'' d_\ell x$, the result is a shifted clean generator.  In  cases where $k_\ell > 2$ it is more obvious that $H_s$ is a sum of $j$ terms because of a  repeated  entry which is a shift of the $j$.\\

The following examples illustrate the essential features.\\

{\bf Examples ${\bf (b)}$:} We first look at a partial composition example. Take $x = (1,2,3,4,2,3,2, 4),\  y_1 = (1),\  y_2 = (1,2,3,4)\ y_3 = (1), y_4 = (1)$.  Then $\ell = 2$, $k_2 = 3$,  and $d_2 x = (1,3,4,2,3,2, 4)$.  The first $k_2-1 =2$-division subtuple of $y_2[1]$ can be $(2), (2,3), (2,3,4), (2,3,4,5)$. In the first case, the 2-division is $(2 | 2,3,4,5)$. Then $H_s \Phi (d_2x, y_i)$ has a summand $H_s \cD'' d_2 x$ given by
$$H_s(1, 6, 7, 2, 6, 2,3,4,5, 7) = (1,2, 6, 7, 2, 6, 2,3,4,5, 7),$$ which is the summand of $\Phi(x; y_i)$ corresponding to the 3-division $(2 | 2 | 2, 3, 4, 5)$ of $y_2[1]$.\\
 
Next, look at the   2-division of $y_2[1]$ given by $(2,3 | 3,4,5)$. Then $\Phi(d_2x, y_i)$ has a summand $(1, 6, 7, 2, 3, 6, 3,4,5, 7)$. The $H_s$ application will involve two non-zero summands $i s r + i^2sr^2$, yielding a sum  $$(1,2, 6, 7, 2,3,6, 3,4,5, 7)+ (1,2,3,6, 7, 3, 6, 3,4,5, 7).$$  These two terms are seen in $\Phi(x; y_i)$, with $y_2[1]$ subdivisions $(2 | 2,3 | 3,4,5)$ and $(2,3 | 3 | 3,4,5)$.  The first two division subtuples amalgamate to $(2,3)$.\\
 
Look at the next 2-division of $y_2[1]$, namely $(2,3,4 | 4,5)$.  Then compute $$H_s(1,6, 7, 2,3,4, 6, 4,5, 7) = (1,2,6, 7, 2,3,4, 6, 4,5, 7) $$ $$+(1,2,3, 6, 7,3,4, 6, 4,5, 7) + (1,2,3, 4, 6, 7, 4, 6, 4,5, 7),$$ giving three more terms from $\Phi(x; y_i)$, corresponding to 2-divisions of $(2,3,4)$.\\

Finally, the 2-division $(2,3,4,5 | 5)$ of $y_2[1]$ will yield $$H_s(1, 6,7, 2,3,4,5, 6, 5, 7),$$ giving four terms of $\Phi(x; y_i)$, corresponding to 2-divisions of $(2,3,4,5)$.\\

If $x$ is replaced by $(1,2,3,4,2,3,5,4)$ then $k_2-1 = 1$ and we only look at the 1-division $(2,3,4,5)$ of $y_2[1]$ in $\Phi(d_2x; y_i)$.  There will now be four summands in  $H_s(1, 6,7, 2,3,4,5, 6, 8,7)$ corresponding to the 2-divisions of $y_2[1] $ in $\Phi(x; y_i)$.  The reason is, if $1,... , 5$ are all removed, the sequence $(6,7,6,8,7)$ is a shifted clean sequence beginning with a 6.\\

For a general example, take $x = (1,2,3,4,2,3,2, 4),\  y_1 = (1,2),\  y_2 = (1,2,3,4),\ y_3 = Id_{s3},\ y_4 = Id_{s4}$.  Then $\ell = 2$, $k_2 = 3$,  and $d_2 x = (1,3,4,2,3,2, 4)$.  The first 2-division subtuple of $y_2[2]$ can be $(3), (3,4), (3,4,5)$, or $(3,4,5,6)$. In the first case, the 2-division is $(3 | 3,4,5,6)$. We get a term of $H_s \Phi(d_2x; y_i)$ given by the summand $i^2 s r^2$ of $H_s$,
$$H_s \cD'' x = H_s(1, 2, Y_3', Y_4', 3, Y_3'', 3,4,5,6, Y_4'') = (1,2, 3, Y_3', Y_4', 3, Y_3'', 3,4,5,6, Y_4''),$$ which is a summand of $\Phi(x; y_i)$ corresponding to the 3-division $(3 | 3 | 3, 4, 5, 6)$ of $y_2[2]$.  The $Y_3$ and $Y_4$ terms involve higher entries and depend on chosen subdivisions of shifts of $y_3 =Id_{s3}$ and $y_4 = Id_{s4}$.\\  

Next, look at the 2-division of $y_2[2]$ given by $(3,4 | 4,5,6)$. Then $\Phi(d_2x, y_i)$ has terms $(1, 2,Y_3', Y_4', 3,4, Y_3'', 4,5,6, Y_4'')$. The $H_s$ application will involve two non-zero summands $i^2 s r ^2+ i^3sr^3$, yielding a sum  $$(1,2, 3,Y_3', Y_4', 3,4,Y_3'', 4,5,6, Y_4'')+ (1,2,3, 4,Y_3', Y_4', 4, Y_3'', 4,5,6, Y_4'').$$  These two terms are seen in $\Phi(x; y_i)$, with $y_2[2]$ subdivisions $(3 | 3,4 | 4,5,6)$ and $(3,4 | 4 | 4,5,6)$.  The first two subtuples amalgamate to $(3,4)$.\\
 
Look at the next 2-division of $y_2[2]$, namely $(3,4,5 | 5,6)$.  Then compute $$H_s(1,2,Y_3', Y_4', 3,4,5, Y_3'', 5,6, Y_4'') = (1,2,3, Y_3', Y_4', 3,4,5, Y_3'', 5,6, Y_4'') $$ $$+(1,2,3, 4, Y_3', Y_4',4,5, Y_3'', 5,6, Y_4'') + (1,2,3, 4, 5, Y_3', Y_4', 5, Y_3'', 5,6, Y_4''),$$ giving three more terms from $\Phi(x; y_i)$, corresponding to 2-divisions of $(3,4,5)$.\\

Finally, the 2-division $(3,4,5,6 | 6)$ of $y_2[1]$ will yield $$H_s(1, 2, Y_3', Y_4', 3,4,5,6, Y_3'', 6, Y_4''),$$ giving four terms of $\Phi(x; y_i)$, corresponding to 2-divisions of $(3,4,5,6)$.\\ 

If $x$ is replaced by $(1,2,3,4,2,3,5,4)$ then $k_2-1 = 1$ and we only look at the 1-division $(3,4,5,6)$ of $y_2[2]$ in $\Phi(d_2x; y_i)$.  There will now be four summands in  $H_s(1, 2, Y_3', Y_4',3,4,5,6, Y_3'', Y_5, Y_4'')$ corresponding to the 2-divisions of $y_2[2] $ in $\Phi(x; y_i)$.  The reason there are not more is, with $1,... , 6$ removed, the sequence $(Y_3', Y_4', Y_3'', Y_5, Y_4'')$ is a shifted clean sequence beginning with a 7. This cuts off other possible non-zero summands of $H_s.$ $\ \ \ \ \qed$\\

{\bf Case $\bf{(b')}$: }Again suppose basis generator $x$ is clean at $\ell$ and suppose $n$ is least so that $y_n \not= Id_{sn}$, with first caesura $m$.  Assume now $ \ell < n$.  Then from Remark \ref{20.10}  we again want $\Phi(x; y_i) = H_s \Phi(d_\ell x; y_i)$.  The argument in part resembles Case (b) above.  Again $y_\ell = Id_{s\ell}$ and we compare $k_\ell $-divisions on the left with $(k_\ell -1)$-divisions on the right. Only the first division subtuples $(1,\ldots, j)$ of $y_\ell$ on the $\Phi(d_\ell x; y_i)$ side are relevant.  The calculation of the summands of $H_s \Phi(d_\ell x; y_i)$ proceeds exactly as in Case (b).\\

Partial compositions in Case $(b')$ are rather trivial since $y_\ell = (1)$, so only $j = 1$ is possible. All relevant $H_s$ evaluations  reduce to single terms.\\

To see that there is no sign change between corresponding terms in the $H_s$ evaluation we need to look at shuffle counts. The only difference between caesuras of $x$ and $d_\ell x$ is the initial $\ell$ caesura of $x$.  Since $ \ell < n$ the only $y$-caesuras in summands on both sides correspond to caesuras of $y_i,$ with $  \ell < i$.  None of these precede the first $\ell$ caesura of $x$. Therefore the shuffle counts of $y$-caesuras preceding  $x$-caesuras or $d_\ell x$-caesuras are the same for each summand on both sides of the equation.\\

{\bf Examples $\bf {(b')}$:} We first give a partial composition example. Take $x = (1,2,3,2,3)$ and $y_1 = y_2 = (1),  y_3 = (1,2,1)$. Then $\ell = 2 < 3 = n$ and $d_2x = (1,3,2,3)$.  There are three  2-divisions of $y_3[2] = (3,4,3)$, namely $(3 | 3,4,3), (3,4 | 4,3)$ and $(3,4,3  | 3)$.   We calculate $$\Phi(x; (1), (1), y_3) = (1,2,3,2,3,4,3) - (1,2,3,4,2,4,3) - (1, 2,3,4,3, 2,3).$$
The $y$ caesura is the second 3 in the first summand, which comes before no $x$-caesuras.  The $y$-caesura is the first 3 in the last two summands, which comes before one $x$-caesura.  This explains the signs.\\

We also calculate $$\Phi(d_2x; (1), (1), y_3) = (1,3,2,3,4,3) - (1,3,4,2,4,3) - (1,3,4,3, 2,3).$$ The $H_4$ calculation reduces to one term, $isr$, for each summand. The shuffle counts and signs also check.\\

We also give a more general example of Case $(b')$.  Let $x = (1,2,3,2,3,2)$ $y_1 = (12), y_2 = (1,2,3), y_3 = (1,2,1,3,4,2,3)$.  Then $ \ell = 2 < 3 = n$ and $d_2x = (1,3,2,3,2)$. For the $d_2 x$ calculation we will focus on just one 2-division of $y_2[2]$, say $(3,4 | 4,5)$ with $j = 2$, and one 2-division of $y_3[5]$, say $(6,7,6,8 | 8,9,7,8)$.\\

The corresponding summand of $\Phi(d_2 x; y_i)$ is $$(1,2,6,7,6,8,3,4,8,9,7,8,4,5)\ with\ sh(C_{d_2x}, C_{\cD''}) = 4.$$ Applying $H_9$ gives two terms $$(1,2,3,6,7,6,8,3,4,8,9,7,8,4,5) + (1,2,3,4, 6,7,6,8,4,8,9,7,8,4,5),$$ also each with $ sh(C_x, C_{\cD}) = 4.$ These are the two terms of $\Phi(x; y_i)$ corresponding to the two 3-divisions $(3 | 3,4 | 5)$ and $(3,4 | 4 | 4,5)$ of $y_2[2]$ and the fixed 2-division of $y_3[5]$.\\

{\bf Final Cases:} Lastly we assume $x$ is clean at $\ell$ and also that $\ell$ is least so that $y_\ell \not= Id_{s\ell}$.  Assume $y_\ell$ is clean at $m$.  Then we want $$\Phi(x, y_i) = H_s \Phi(d_\ell x; y_1, \ldots, y_r) + (-1)^{|x|}H_s \Phi(x; y_1,..., d_my_\ell, ... y_r).$$  In this case, there will be summands from both terms on the right.  Again, the divisions of $y_i$ for $i < \ell$ and $i > \ell$ just come along for the ride. The summands  $\cD x$ of $\Phi(x; y_i)$ will separate according to the first division subtuple of $y_\ell$.     If that subtuple is $(1,2, \dots, j)$ with $j > m$ then the terms come from $H_s \Phi(x; y_1, ..., d_my_\ell, ... ,y_r)$, and the computations are similar to Case $(a')$.  If  $j \leq m$ then the terms come from $H_s \Phi(d_\ell x, y_i)$.  The computations are similar to Case $(b')$. \\

{\bf Case $\bf{(a'')}$: }The terms $H_s \Phi(x; y_1, ..., d_my_\ell, ... ,y_r)$  are the easiest to clarify.  Given a division $\cD$ in which the first $y_\ell$ subtuple is $(1,2, \ldots, m-1, m, m+1, ...)$ of length $j > m$, observe that $\cD x$ is a basis generator followed by a shifted clean generator with larger entries. The first caesura is $t+m$, where $t = t_\ell$. Remove the first $m$ from the first $y_\ell$ subtuple,  to form the first subtuple of a $k_\ell$-division of $d_m y_\ell$.  Leave all other subtuples of the division $\cD_\ell $ alone, and also leave alone all divisions $\cD_i$ of the $y_i$ for $i \not= \ell$. This produces a collection of  divisions $\cD'$ paired up with the described divisions $\cD$.  For these $\cD'$, the first division subtuple of $d_m y_\ell$ has length at least $m$, and $\cD' x = d_{t+m} \cD x$.  Then $H_s \cD' x= i^{t+m-1} s r^{t+m-1}\cD' x = \cD x$. Any divisions $\cD'$  for which the first division subtuple of $d_m y_\ell$ has length less than $m$ will result in a clean term $\cD' x$ on which $H_s$ vanishes.\\

The term $\cD x$ has one new first $y$-caesura $t+m$ that comes before all the $x$-caesuras.  Other caesuras of $\cD' x$ and $\cD x$ are in the same relative positions.  Thus $sh(C_x, C_\cD) = |x| + sh(C_x, C_{\cD'})$, as in Case $(a')$.  This confirms the desired sign relation, involving terms $ (-1)^{|x|}  H_s(\Phi(x; y_1, \ldots, d_m y_\ell, \ldots, y_r)$ as summands of $\Phi(x; y_i)$.\\

{\bf Examples ${\bf (a'')}$:} Take $x = (1,2,3,4,2,3,2)$, $y_1 = (1),\ y_2 = (1,2,3,4,3)$, with $y_3, y_4$ arbitrary basis generators.  Then $\ell = n = 2$, $k_2 = 3$, and $m = 3$.  For a partial composition example, take $y_3 = y_4 = (1)$.  In the partial composition case the division subtuples called $Y_3', Y_3'', Y_4$ below simplify to singletons $(6), (6), (7)$, respectively, which contribute no $y$-caesuras and make the example a little easier to follow.\\ 

We have $d_3y_2[1] = (2,3,5,4)$, and we first consider all $k_2 = 3$-divisions of $d_3y_2[1]$ with first subtuple of length $i \geq m = 3$.   These are the 3-divisions $$(2,3,5 | 5,4 | 4),\ (2,3,5 | 5 | 5,4),\ (2,3,5,4 |4 | 4).$$
For each there will be exactly one non-zero $H_s$ summand $i^3 s r^3$, which are $$H_s(1, 2,3, \underline5, \underline{Y_3'}, Y_4, 5, \underline4, Y_3'', 4) = (1,2,3, \underline{\underline4}, \underline5, \underline{Y_3'}, Y_4, 5, \underline4,Y_3'',  4)$$
$$H_s(1, 2,3, \underline5, \underline{Y_3'}, Y_4, \underline5, Y_3'',  5, 4) = (1,2,3,\underline{\underline4}, \underline5, \underline{Y_3'}, Y_4, \underline5, Y_3'',  5,4)$$
$$H_s(1, 2,3,5, \underline4, \underline{Y_3'}, Y_4, \underline4, Y_3'', 4) = (1,2,3,\underline{\underline4},5,\underline4, \underline{Y_3'}, Y_4, \underline4,Y_3'', 4).$$
These account for all the summands $\cD x$ for which the first subtuple of $ y_2$ has length $j > m =3$.  Summands of $\Phi(x; y_1, d_3y_2, y_3, y_4)$ for which the first division subtuple of $d_3y_2$ has length $i < 3$ will result in clean surjection generators $\cD' x$, so $H_s$ will vanish on those.  For example $(2,3 | 3,5 | 5,4)$ results in $\cD' x = (1,2,3, Y_3', Y_4, 3,5, Y_3'', 5,4).$\\

In these examples with $i \geq m = 3$, we have underlined the $x$-caesuras on both sides.  In the case of the underlined $Y_3'$, the $x$-caesura will be the last entry of that division subtuple.  We have not given specific generators $y_3, y_4$ in this example.  There could be $y$-caesuras in the $Y_3', Y_3'', Y_4$ terms, but these are in the same positions relative to $x$-caesuras, so do not create any differences between the signs associated to the terms on the two sides.\\

$d_3y_2[1]$ has no $y$-caesuras, so there are no further $y$-caesuras on the left.  But $y_2[1]$ has a caesura 4, and we have double underlined that $y$-caesura on the right. Note $|x| = 3$.  On the right the $y$-caesura is in front of the three $x$-caesuras.  This is consistent with the term $(-1)^{|x|} H_s \Phi(x; y_1, \ldots, d_m y_n, \ldots, y_r)$ that is part of the formula for $\Phi(x; y_i)$. \ \ \  $\qed$\\

{\bf Case $\bf{(b'')}$:} Now we discuss $k_i$ divisions $\cD$ of the $y_i$ for which the first division subtuple of $\cD_\ell$ has length $j \leq m$.  Define divisions $\cD''$ of the $y_i$ by leaving all $\cD_i$ alone if $i \not= \ell$ and forming a $(k_\ell -1)$-division $\cD''_\ell$ of $y_\ell$ whose first subdivision tuple is the  amalgamation of the first two division subtuples of $\cD_\ell$.  Later division subtuples of $\cD_\ell$ are left unchanged in $\cD''_\ell$.  Then $H_s( \cD'' d_\ell x)$ will consist of $min\{j, m\}$ summands, including $\cD x.$ Note there are exactly $min\{j, m\}$ such $\cD_\ell $ with the same `amalgamation' $\cD''_\ell$.\\

The shuffle counts $sh(C_x, C_\cD)$ and $sh(C_{d_\ell x}, C_{\cD''})$ are the same because even though $\cD x$ has an additional first $x$-caesura, the $y$-entries that precede it are singletons, not caesuras.\\

{\bf Examples $\bf{(b'')}$: }We continue with the same $x, y_1, y_2, y_3, y_4$ as in Example $(a'')$, including the partial composition example where $y_3 = y_4 = (1)$. Again the terms called $Y_3', Y_3'', Y_4$ below then become singletons $(6), (6), (7)$. We have $d_2 x = (1,3,4,2,3,2)$. There are five $(k_2-1) = 2$-divisions of $y_2[1]$, used to compute $\cD'' d_2x$ summands. These will account for the summands  $\cD x$ for which the first subtuple of $ y_2[1]$ has length $j \leq m = 3$. First $(2 | 2, 3,4,5,4)$, $ (2,3 | 3,4,5,4)$, and $(2,3,4 | 4,5,4)$ are the 2-divisions of $y_2[1]$ with length of the first subtuple 1, 2, or 3.  Applying $H_s$ to the corresponding $\cD'' d_2 x$ terms we get 
$$ H_s(1, \underline{Y_3'}, Y_4, \underline2, Y_3'', 2, 3, \underline{\underline4}, 5, 4) = (1, \underline2, \underline{Y_3'},  Y_4, \underline2, Y_3'', 2, 3, \underline{\underline4}, 5, 4)$$

$$H_s(1, \underline{Y_3'}, Y_4, 2, \underline3, Y_3'', 3, \underline{\underline4}, 5, 4) = (1,\underline2, \underline{Y_3'},  Y_4, 2, \underline3,Y_3'', 3, \underline{\underline4}, 5, 4) + $$ $$(1, 2, \underline3, \underline{Y_3'}, Y_4, \underline3, Y_3'',  3, \underline{\underline4}, 5, 4)$$

$$H_s(1, \underline{Y_3'}, Y_4, 2, 3, \underline4, Y_3'',  \underline{\underline4}, 5, 4) = (1,\underline{2}, \underline{Y_3'}, Y_4, 2, 3, \underline{4}, Y_3'',  \underline{\underline4} , 5, 4) + $$ $$(1, 2, \underline{3}, \underline{Y_3'}, Y_4, 3, \underline{4}, Y_3'',  \underline{\underline4}, 5, 4) + (1, 2, 3, \underline{4}, \underline{Y_3'}, Y_4, \underline{4}, Y_3'', \underline{\underline4}, 5, 4).$$

The number of summands on the right is the number of 2-divisions of $(2)$, $(2,3)$, or $(2,3,4)$.\\

The $y$-caesuras in $Y_3', Y_3''$ and $Y_4$ occur in the same relative position to the $x$-caesuras on both sides. We have double underlined the one $y_2$-caesura, a 4,  in all terms. It appears after the two single underlined $x$-caesuras on the left and after all three $x$-caesuras on the right. So the associated shuffle signs are the same for every term, the $y_2$ caesura is not involved.  \\

Then there are the 2-divisions $ (2,3,4,5 | 5,4)$, and $(2,3,4,5,4 | 4)$ of $y_2[1]$ with length of the first subtuple  4 or 5.  Applying $H_s$ to the corresponding $\cD'' d_2 x$ terms gives three $\cD x$ summands each on the right. In each summand the amalgamation of the first two $\cD x$ $y_2[1]$ division subtuples is $(2,3,4,5)$ or $(2,3,4,5,4)$, subject to the constraint that the first subtuple is of length $j = 1, 2, \rm{or}\ 3$.

$$H_s(1, \underline{Y_3'}, Y_4, 2, 3, \underline{\underline4}, \underline{5}, Y_3'', 5, 4) = (1,\underline{2}, \underline{Y_3'}, Y_4, 2, 3, \underline{\underline4}, \underline5, Y_3'', 5, 4) +$$
$$  (1, 2 ,\underline{3}, \underline{Y_3'}, Y_4, 3, \underline{\underline4}, \underline{5}, Y_3'', 5, 4) + (1, 2, 3, \underline{4}, \underline{Y_3'}, Y_4, \underline{\underline4}, \underline5, Y_3'', 5, 4).$$

$$H_s(1, \underline{Y_3'}, Y_4, 2, 3, \underline{\underline4}, 5, \underline4, Y_3'', 4) = (1,\underline{2}, \underline{Y_3'}, Y_4, 2, 3, \underline{\underline4}, 5, \underline{4}, Y_3'', 4) +$$
$$  (1, 2, \underline{3}, \underline{Y_3'}, Y_4, 3, \underline{\underline4}, 5, \underline{4}, Y_3'', 4) + (1,2, 3, \underline{4}, \underline{Y_3'}, Y_4, \underline{\underline4}, 5, \underline4, Y_3'', 4).$$

In all terms of these last two examples the $y_2$-caesura double underlined 4 precedes the final $x$-caesura, which is a single underlined 5 in the first example and a single underlined 4 in the second example.  Thus in each example the shuffle signs associated to all terms on both sides are the same, as they should be.  But the sign in these last two examples is opposite the sign in the first three examples. The $y_2$-caesura now contributes +1 to all shuffle counts.\\

To repeat the pattern (without signs) in this last somewhat complicated Case $(b'')$, for each $\cD x$ term with first $y_\ell$ subtuple of length $j \leq m$, amalgamate the first two $y_\ell$ subtuples, producing a $\cD'' d_\ell x$ term, and calculate $H_s$.  Each such $H_s$ calculation will be a sum of $min(j, m)$ $\cD x$ terms, namely, the terms with the same amalgamation of the first two $y_\ell$ subtuples.  The signs are what they are, but are the same for a given $\cD'' d_\ell x$ term and all associated $\cD x$ terms.  $\qed$

\subsection{The Operad Morphism $\cS \to \cZ$}
In this final subsection of Part III we prove that the equivariant functorial maps $\Phi \colon \cS_*(n) \otimes N_*(X) \to N_*(X)^{\otimes n}$ studied in Section 17 fit together to give an operad morphism $\phi \colon \cS \to \cZ$ from the surjection operad to the Eilenberg-Zilber operad $\cZ$, which is the functorial $CoEnd$ operad studied in Subsection 18.3.

\begin{prop}\label{20.14} The diagram below commutes. 
$$\begin{CD} 
 \cS_*(r) & \otimes & \cS_*(s_1)&  \otimes &\cdots & \otimes &  \cS_*(s_r) & \xrightarrow{\cO_\cS } & \cS_*(s) \\
\downarrow \phi&\otimes & \downarrow \phi  & \otimes&\cdots & \otimes & \downarrow \phi & & \downarrow \phi\\
\cZ_*(r)& \otimes & \cZ_*(s_1) & \otimes & \cdots & \otimes & \cZ_*(s_r) & \xrightarrow{\cO_\cZ} & \cZ_*(s)
\end{CD}$$
where $\cZ_*(n) = HOM_{func}(N_*( - ), N_*( - )^ {\otimes n})$ and the vertical maps $\phi$ are adjoints of the maps  $\Phi$ as $X$ varies.
\end{prop}

\begin{proof}The vertical maps are equivariant chain maps with respect to appropriate permutation groups and the horizontal maps are twisted equivariant chain maps.  The two compositions around the diagram are thus twisted equivariant chain maps.  In degree 0 both compositions are the twisted equivariant extension of the map that takes identity elements $(e_r; e_{si})$ to the Alexander-Whitney diagonal $\delta^{(s)} \colon N_*(X) \to N_*(X)^{\otimes s}$.\\

It suffices to work with the universal acyclic model $X = \Delta^m$, in fact with the fundamental class $\Delta^m \in N_m(\Delta^m)$,  and take adjoints with respect to the lower right corner, giving an equivalent diagram.
 $$\begin{CD} 
 \cS_*(r) & \otimes & \cS_*(s_1)&  \otimes &\cdots & \otimes &  \cS_*(s_r) &\otimes& N_*(\Delta^m)& \xrightarrow{\cO_\cS \otimes Id } & \cS_*(s) \otimes N_*(\Delta^m) \\
\downarrow \phi&\otimes & \downarrow \phi  & \otimes&\cdots & \otimes & \downarrow \phi & \otimes & \downarrow Id& & \downarrow \Phi\\
\cZ_*(r)& \otimes & \cZ_*(s_1) & \otimes & \cdots & \otimes & \cZ_*(s_r) &\otimes &N_*(\Delta^m) & \xrightarrow{Ad\  \cO_\cZ}  & N_*(\Delta^m)^{\otimes s}.
\end{CD}$$
Now we are in position to use results about contractions and  twisted equivariant standard procedure chain maps.  Specifically we want to use the results from Section 17 and Subsections 18.3 and 20.1 to show that both ways around the diagram take basis elements to elements in the image of the contraction of the range.  Then the uniqueness result Proposition \ref{17.9}, extended routinely to a twisted equivariant version,   implies both maps coincide with the standard twisted equivariant chain map. Thus it suffices to prove the following.
\begin{lem}\label{20.15} Suppose $x \in \cS_*(r)$ and $y_i \in \cS_*(s_i)$ are basis elements.  Then both $$\Phi \circ (\cO_\cS \otimes Id)(x \otimes \otimes_i\ y_i \otimes \Delta^m)$$ and $$Ad\ \cO_\cZ \circ (\phi \otimes   \otimes_i\ \phi \otimes Id) (x \otimes \otimes_i\  y_i \otimes \Delta^m)$$ belong to $Im(h_{\otimes^s}) \subset N_*(\Delta^m)^{\otimes s}$.
\end{lem}

We first go across the top and down in the diagram. In the notation of Section 17 and  the previous subsection, $$\Phi \circ(\cO_\cS \otimes Id)(x \otimes \otimes_i y_i \otimes \Delta^m) =\sum_\cD  \Phi ( \pm \cD x\otimes \Delta^m) = \sum_\cD \sum_{\widehat{M}} \pm F(\widehat{M}, \pm \cD x)$$
where the $\widehat{M}$ are monomial summands of the multidiagonal $\delta^{(s+| \cD x|)}(\Delta^m) $ and $F(\widehat{M}, \cD x)$ are tensors in $N_*(\Delta^m)^{\otimes s}.$ From Lemma \ref{20.6} the $\cD x$ are clean surjection generators.  Then from Step 2 of the proof of Proposition \ref{17.3}, the tensors $F(\widehat{M}, \cD x)$ are in the image of the contraction of $N_*(\Delta^m)^ {\otimes s}.$\\

Going down and across in the diagram requires a closer look.  Recall from Subsection 18.3 that $\cO_\cZ (u \otimes \otimes_i v_i) = \pm \underline{\otimes} v_i \circ u$.  Then $Ad\ \cO_\cZ(u \otimes \otimes_i v_i \otimes \Delta^m) = \pm \underline{\otimes} v_i( u(\Delta^m))$, where $u(\Delta^m) \in  N_*(\Delta^m)^{\otimes r}.$\\

In our case, $u = \phi(x)$ and $v_i = \phi(y_i)$.  So $u(\Delta^m) = \Phi(x \otimes \Delta^m) = \sum_M \pm F(M, x)$, where the $M$ are monomial summands of the multidiagonal $\delta^{(r + |x|)} (\Delta^m)$, and the $F(M,x)$ are $r$-tensors.  Specifically, we can write $F(M, x) = F_1(M, x) \otimes \cdots \otimes F_r(M, x)$, where each $F_i(M, x) \in N_{m_i}(\Delta^m)$ is a face of some dimension, depending on the recipe of Section 17 for turning $M$ and $x$ into the tensor $F(M, x)$.  Then we need to calculate the terms $$\underline{\otimes} \phi(y_i) (\otimes F_i(M, x)) \in \bigotimes N_*(\Delta^m) ^{\otimes s_i} = N_*(\Delta^m)^{\otimes s}$$ as  sums of amalgamated $s_i$-tensors.  In particular, we could identify the face $F_i(M, x)$ with a standard $\Delta^{m_i} \subset \Delta^m$ and use functoriality to compute  $$\Phi(y_i \otimes \Delta^{m_i}) = \sum_{M(i)} \pm F(M(i), y_i)) \in N_*(\Delta^{m_i}) ^{ \otimes {s_i}} \subset N_*(\Delta^m)^{\otimes {s_i}},$$ where the monomials $M(i)$ are monomials in appropriate multidiagonals of the $\Delta^{m_i}$.\\

But the vertices of the faces $F_i(M, x)$ are already named as vertices of $\Delta^m$, so there is no real need to work with the standard simplex $\Delta^{m_i}$.  One can just interpret the monomials $M(i)$ directly in terms of $\Delta^m$ vertices.\\

\begin{exam}\label{20.16} Let $x = (1,2,1,2),\ y_1 = (1,2,1,3,2)\ y_2 = (1,2,3,2,3)$.  Take $m = 9$ and monomial $M = (01234|456|67|789)$.  Calculate the summand  of $\Phi(x, \Delta^9)$ given by  $$F(M, x) = F_1(M,x) \otimes F_2(M, x) = (01234\ 67) \otimes (456\ 789) \in N_*(\Delta^9)^{\otimes 2}.$$   So the face dimensions are $m_1 =  6$ and $m_2 = 5$. We have separated the vertices of the faces to emphasize that the faces have subfaces revealed in the calculation of $F(M, x)$.\\

Take monomials $$M(1) = (0|012|23|346|67)\ \ and\ \ M(2) = (45|56|67|789|9)$$ and form the summand of $\Phi(y_1 \otimes F_1(M,x)) \otimes \Phi(y_2 \otimes F_2(M,x)) \in N_*(\Delta^9)^{\otimes 3} \otimes N_*(\Delta^9)^{\otimes 3}$ produced by the monomials $M(1), M(2)$. This 6-tensor is $$(023) \otimes (012\ 67) \otimes (346) \otimes  (45) \otimes (56\ 789) \otimes (67\ 9).\ \ \ \ \ \qed$$
\end{exam}
The first thing we notice is that this tensor does belong to $Im(h_{\otimes^6})$.  In fact, we can complete the proof of Lemma 20.15.  In complete generality it is easy to see that the tensors $F(M(i), y_i) \in N_*(\Delta^{m_i})^{\otimes s_i}$ are in the image of the contractions, where we interpret the $M(i)$ as monomials in the vertices of the faces.\footnote{So for the tensor associated to $M(2)$, the initial vertex is the 4.}  In fact, this is  nothing but the definition of these tensors as summands of the standard procedure map of Proposition \ref{17.3} applied to a basis surjection generator. In our case we have an amalgamation of $r$ such tensors.  The first one, with $i = 1$, always begins with a 0.  It is possible that this first tensor is just a tensor of $(0)$'s. Then the first vertex of the second face will  be 0 and the second tensor will begin with a 0.  In positive degrees, eventually one sees $(0) \otimes \ldots \otimes (0) \otimes (0a ...) \in Im(h_{\otimes^s})$ as desired.
\end{proof}
\begin{exam}\label{20.17} Let $x = (1,2,3,2),\ y_1 = (1,2,3),\ y_2 = (1,2,3,1),\ y_3 = (1)$. Take $m = 5$ and monomial $M= (0|012|234|45)$. Calculate the summand of $\Phi(x, \Delta^5)$ given by
$$F(M, x) = F_1(M,x) \otimes F_2(M, x) \otimes F_3(M, x) = (0) \otimes (012\ 45) \otimes (234) \in N_*(\Delta^5)^{\otimes 3}.$$
Take monomials $M(1) = (0|0|0)$, $M(2) = (01|1|124|45)$, and $M(3) = (234)$.  Form the summand of $$\Phi(y_1 \otimes F_1(M,x)) \otimes \Phi(y_2 \otimes F_2(M,x)) \otimes \Phi(y_3, \otimes F_3(M, x))$$ produced by the monomials $M(i)$. This 7-tensor is $$(0) \otimes (0) \otimes (0) \otimes (01\ 45) \otimes(1) \otimes (124) \otimes (234).   \qed$$
\end{exam}

\begin{rem}\label{20.18} It is rather surprising perhaps that the arguments above do not require knowing the signs in Proposition \ref{20.3}(ii). The reader is welcome to assume that $\cS_*(n) = \cS_*^{bf}(n)$ is the Berger-Fresse complex, but everything works as stated for any of the surjection complexes. Since the $\phi$ are {\it injections}, and we know those signs, in principal the signs in $\cO_{\cZ}$ determine the signs in $\cO_{\cS}$.  Also since $\phi$ is injective, Proposition \ref{20.14} implies the associativity property for the surjection complexes.\\

These two comments are quite analogous to Remark \ref{20.9}, where we pointed out that the {\it surjections} $TR \colon N_*(E\Sigma_n) \to S_*(n)$ imply the associativity property for the surjection complexes, and also implicitly determine the signs.  Although this remark is quite amusing, we prefer the direct calculation of signs carried out in the proof of Proposition \ref{20.3}.  $\qed$
\end{rem}
{\bf *Matching Summands in Lemma \ref{20.15}.*} Although we have now proved Proposition \ref{20.14}, and thus proved that the maps $\cS_*(n) \to CoEnd(n)$ form an operad morphism, it is not much harder to actually get inside the identity from Lemma \ref{20.15} $$\sum_\cD \sum_{\widehat{M}} F(\widehat{M}, \cD x) = \sum_M \sum_{M(i)} \bigotimes_i F(M(i), y_i)$$ that directly expresses the commutativity of the operad morphism diagram, at least ignoring signs.  The terms in the two sums match up as follows.\\

Beginning with $\widehat{M}$ and a $\cD x$, the term $F(\widehat{M}, \cD x)$ is a tensor in $N_*(\Delta^m)^{\otimes s} = \otimes_i N_*(\Delta^m)^{\otimes s_i}$.  The factors, taken in consecutive blocks of $s_i$-tensors, are the terms $F(M(i), y_i)$, and we need to identify the associated monomials $M$ and $M(i)$.\\

$\cD x$ is a string of entries between 1 and $s$, organized in shifted subblocks $(y_1^{(1)}, y_2^{(1)}[t_2], \ldots, y_i^{(j)}[t_i], \ldots, y_r^{(k_r)}[t_r]),$ where the $y_i^{(j)}$ are the division subtuples of the $y_i$ determined by $\cD = (\cD_1, \ldots, \cD_r).$ We unravel the blocks of $s_i$-tensors to produce monomials, as described in Remark \ref{17.4}.  These monomials are the $M(i)$.\\

The monomial $M$ is determined as follows. The monomial blocks of $\widehat{M}$ can be labelled by entries  of $x$.  Namely, each $\cD x$ entry is an entry of one of the subblocks $y_i^{(j)}[t_i]$, so the corresponding subblock of $\widehat{M}$ receives label $i$.  Amalgamating adjacent subblocks of $\widehat{M}$ with the same $x$ entry label determines the subblocks of monomial $M$, and hence determines $M$.

\begin{exam}\label{20.19}  Take $x = (1,2,1,2)$ and $y_1 = (1,2,1, 3,2)$, $y_2 = (1,2,3,2,3)$, with the 2-divisions $(1,2, 1,3 | 3,2)$ and $(1,2,3 | 3,2,3)$.  So $s_1 + s_2 =3+3 = 6 = s$. Then $$\cD x = (1,2,1,3,\ 4,5,6,\  3,2,\ 6,5,6).$$  Take $m = 9$ and monomial $\widehat{M} = (0 | 01 | 1 | 1 | 123 | 34 | 4 | 4 | 456 | 6 | 678 | 89)$.  Then $$F(\widehat{M}, \cD x) = (0\ 1) \otimes (01\ 456) \otimes (1\ 4) \otimes (123) \otimes (34\ 678) \otimes (4\ 6\ 89) \in N_*(\Delta^9)^{\otimes 6}.$$
Unraveling the product of the first three tensor terms and then the second three tensor terms gives $$M(1) = (0 | 01 | 1 | 14 | 456)\ \  and\ \  M(2) = (123 | 34 | 46 | 678 | 89).$$ The first three tensor terms themselves form $F(M(1), y_1)$, a summand of $\Phi(y_1 \otimes (01 456))$.  Similarly, the second three tensor terms  form $F(M(2), y_2)$, a summand of $\Phi(y_2, (1234 6789))$.\\

The subblocks of $\widehat{M}$ have $x$-labels $(1,1,1,1,2,2,2,1,1,2,2,2)$.  Amalgamating those subblocks, we get the subblocks of $M$ to be  $M = (01 | 1234 |  456 | 6789)$ and $F(M, x) = (01\ 456) \otimes (1234\ 6789)$.\\

Finally, to go the other direction, beginning with $M$ and the $M(i)$,  the tensor $\bigotimes_i F(M(i), y_i) \in N_*(\Delta^m)^{\otimes s}$ can be unraveled to form $\widehat{M}$.  We need to identify $\cD x$, which means identify the lengths of the division subtuples of the $y_i$.  Equivalently, we need to put  labels $1,2, \ldots, r$ on the subblocks of $\widehat{M}$.  The blocks of $M$ also have  labels, with $k_i$ blocks labeled $i$.  Consider the $j^{th}$ block of $M$ labeled $i$, say $M(ij)$.  Note $M(i)$ will have $s_i + |y_i|$ blocks.  Count the number of those blocks that intersect the block $M(ij)$. This is the length of the division subtuple $y_i^{(j)}$.\\

In the example we have $M(11) = (01),\ M(12) = (456),\ M(21) = (1234)$, and $M(22) = (6789)$. We see $M(11)$ intersects four blocks of $M(1) = (0 | 01 | 1 | 14 | 456)$, and $M(12)$ intersects two blocks of $M(1)$.  The division subtuples of $y_1$ thus have lengths 4 and 2, and the 2-division $\cD y_1 = (1,2, 1,3 | 3,2)$.  The blocks $M(21)$  and $M(22)$ intersect three and three blocks of $M(2) = (123 | 34 |46 | 678 | 89)$, respectively.  Thus the 2-division $\cD y_2 = (1,2,3 | 3,2,3)$.  $\qed$\\

{\bf *Final Exam*}. Take $x = (1,2,1,3,2,1,3)$ and $y_1 = (1,2,3,2,1)\ y_2 = (1,2,1,2)$ and $y_3 = (1,2,3,4,2)$. Take $m = 5$ and $M = ( 0 | 01 | 1 | 12 | 23 | 345 | 5)$.  Then $F(M, x) = (0\ 1\ 345) \otimes (01\ 23) \otimes (12\ 5)$.  Take  $M(1) = (0 | 13 | 34 | 4 | 45)$, $M(2) = (0 | 012 | 23 | 3)$, $M(3) = (12 | 2 | 2 | 25 | 5)$.  Find $\cD x$ and $\widehat{M}$ so that  $$F(\widehat{M}, \cD x) = F(M(1), y_1) \otimes F(M(2), y_2) \otimes F(M(3), y_3) \in N_*(\Delta^ 5)^{\otimes 9}.$$
We have proved the operad morphism diagram for $\cS \to \cZ$ commutes, therefore signs of matched up summands will agree.  At various points we have clarified all signs in the separate morphisms of the operad morphism diagram.  Separately calculate the signs associated to the two matched expressions in this exercise for the Berger-Fresse operad $\cS_*^{bf}$.  $\qed$ $\qed$ $\qed$
\end{exam}

\newpage
\addcontentsline{toc}{section}{\bf REFERENCES}
\section*{\bf REFERENCES}

1. M. Adamaszek and J.D.S. Jones, The Symmetric Join Operad, arXiv:1110.2989, 2011.\\

2. M. Barratt and P. Eccles, $\Gamma^+$-Structures - I: A Free Group Structure for Stable Homotopy Theory, Topology, Vol 13, 1974, pp. 25-45.\\

3. C. Berger and B. Fresse, Combinatorial Operad Actions on Cochains, arXiv:math/0109158v2, 2002.\\

4. C. Berger and B. Fresse, Combinatorial Operad Actions on Cochains, Mathematical Proceedings of the Cambridge Philosophical Society, Vol. 137, Issue 1, July 2004, pp. 135-174.\\

5. C. Berger and B. Fresse, Une Decomposition Prismatique de l'Operade de Barratt-Eccles, arXiv:math/0204326v1, 2002.\\

6. C. Berger and B, Fresse, Une Decomposition Prismatique de l'Operade de Barratt-Eccles, C. R. Math.
Acad. Sci. Paris 335(4), 365–370 (2002).\\

7. G. Brumfiel, A. Medina-Mardones, J. Morgan, A cochain level proof of Adem relations in the mod 2 Steenrod algebra, arXiv:2006.09354v2, 2021.\\

8. G. Brumfiel, A. Medina-Mardones, J. Morgan,  A cochain level proof of Adem relations in the mod 2 Steenrod algebra, Journal of Homotopy and Related Structures (2021) 16:517–562 https://doi.org/10.1007/s40062-021-00287-3.\\

9. G. Brumfiel and J. Morgan, The Pontrjagin Dual of 3-Dimensional Spin Bordism, arXiv:1612.02860v2, 2018.\\

10. G. Brumfiel and J. Morgan, The Pontrjagin Dual of 4-Dimensional Spin Bordism, arXiv:1803.08147, 2018.\\

11. G. Brumfiel and J. Morgan,  Quadratic Functions of Cocycles and Pin Structures, arXiv:1808.10484, 2018.\\

12. A. Hatcher, Algebraic Topology, 2001.\\
 
13. R.M. Kaufmann and A.M. Medina-Mardones, Cochain Level May-Steenrod Operations, arXiv:2010.02571v4, 2021.\\

14. L. Lambe and J. Stasheff, Applications of Perturbation Theory to Iterated Fibrations, Manuscripta Mathematica, (1987) Volume :58, page 363-376.\\

15. J. Loday and B. Vallette, Algebraic Operads, Grundlehren der mathematischen Wissenschaften 346.\\
 
16. M. Mandell, $E_\infty$ Algebras and $p$-adic Homotopy Theory, Topology 40 (2001), no. 1, pp. 43-94.\\

17. M. Mandell, Cochains and Homotopy Type, arXiv:math/0311016, 2002.\\

18. M. Mandell, Cochains and Homotopy Type, Publications Mathématique de l'Institut des Hautes etudes Scientifiques, 103 (2006), pp. 213-246.\\

19. J.E.  McClure and J. H. Smith, Multivariable Cochain Operations and Little n-Cubes, arXiv:math/0106024v3, 2002.\\

20. J. E. McClure and J. H. Smith,  Multivariable Cochain Operations and Little n-Cubes, J. Am. Math. Soc. 16(3), 681–704 (2003).\\

21. J.E. McClure and J. H. Smith, Operads and Cosimplicial Objects: An Introduction, arXiv:math/0402117v1, 2004.\\

22. A. M. Medina-Mardones, An effective proof of the Cartan formula: the even prime, J. Pure Appl. Algebra, 224(12):106444, 18, (2020).\\

23. A.M. Medina-Mardones, A. Pizzi, and P. Salvatore, Multisimplicial chains and configuration spaces, arXiv:2012.02060v2, 2023.\\

24. P. Real, Homotopy Perturbation Theory and Associativity, Homology, Homotopy, and Applications, Vol. 2, No.5, 2000, pp 51-88.\\

25. J. Rubio and F. Sergeraert, Algebraic Models for Homotopy Types, Homology, Homotopy and Applications, vol.7(2), 2005, pp.139–160.\\

26. V. A. Smirnov,  Homotopy Theory of Coalgebras,  Mathematics of the USSR-Izvestiya (1986)27(3): 575.\\

27. J. R. Smith, M-Structures Determine Integral Homotopy Type, arXiv:math/9809151, 1998.\\

28. J. R. Smith, Operads and Algebraic Homotopy, arXiv:math/0004003v7, 2000.\\

29  J. R. Smith,  Cellular Coalgebras Over the Barratt-Eccles Operad I, arXiv:1304.6328v4 2013.\\

30. N. E. Steenrod, Cohomology Operations, written and revised by D.B.A.Epstein, Annals of Math Studies vol 50, Princeton University Press, 1962.\\

31. H. Whitney, Moscow 1935: Topology Moving Toward America, Hassler Whitney Collected Papers Volume I, J. Eels, Domingo Toledo editors,  1992.

\end{document}